\documentclass[12pt, chapterprefix=false]{report}
\usepackage{suthesis-2e}
\usepackage{geometry}
\usepackage[noadjust]{cite}
\usepackage{caption}
\usepackage{subcaption}
\usepackage{hyperref}
\usepackage{url}
\usepackage[table]{xcolor}
\usepackage{amsfonts}
\usepackage{graphicx}
\usepackage{color}
\usepackage{placeins}
\usepackage{float}
\usepackage{slashbox}
\usepackage{tabularx,colortbl}
\usepackage{amsmath}
\usepackage{amssymb}
\usepackage{amsthm}
\usepackage{bbm}
\usepackage{epstopdf}
\usepackage{breqn}
\usepackage{graphicx}
\usepackage{color}
\usepackage{placeins}
\usepackage{float}
\usepackage{tabularx,colortbl}
\usepackage{amssymb}
\usepackage{amsbsy}
\usepackage{bm}
\usepackage{amsthm}
\usepackage{bbm}
\usepackage{epstopdf}
\usepackage{breqn}
\usepackage{cite}
\usepackage{framed}
\usepackage{url}
\usepackage{amsfonts}
\usepackage{mathrsfs}
\usepackage{mathtools}
\usepackage{algorithm}
\usepackage{algpseudocode}
\usepackage{pifont}
\usepackage{dblfloatfix}
\usepackage{arydshln}
\usepackage{subfiles}
\usepackage{setspace}

\widowpenalty10000
\clubpenalty10000
\usepackage{sectsty}

\usepackage{titlesec}
\titleformat{\chapter}[hang] 
{\normalfont\Large\bfseries}{\chaptertitlename\ \thechapter:}{1em}{} 
\chaptertitlefont{\Large}
\titlespacing*{\chapter}{0pt}{-50pt}{40pt}

\DeclareGraphicsExtensions{.pdf}
\DeclareMathOperator*{\argmax}{arg\,max}
\DeclareMathOperator*{\argmin}{arg\,min}
\DeclareMathOperator*{\subjectto}{subject\,to}

\DeclareMathOperator*{\minimize}{minimize}
\DeclareMathOperator*{\maximize}{maximize}
\DeclareMathOperator*{\hertz}{Hz}

\DeclareMathOperator*{\support}{supp}

\newtheorem{thm}{Theorem}
\newtheorem{lemma}{Lemma}
\newtheorem{corollary}{Corollary}
\newtheorem{prop}{Proposition}
\newtheorem{definition}{Definition}

\newcommand*{\QEDB}{\hfill\ensuremath{\blacksquare}}%
\definecolor{purple1}{RGB}{100, 85, 255}
\definecolor{orange1}{RGB}{233, 146, 0}

\dept{Electrical and Computer Engineering}
\begin{document}

\graphicspath{ {./figures/} }

\title{\textbf{Compressed Sensing Beyond the I.I.D. and Static Domains:} \\\textbf{ Theory, Algorithms and Applications}}
\author{Abbas Kazemipour}

\principaladviser{Min Wu}
\coprincipaladviser{Behtash Babadi}
\firstreader{Shaul Druckmann}
\secondreader{Radu Balan}
\thirdreader{Prakash Narayan} 
\fourthreader{Johnathan Fritz} 
\singlespacing
  \begin{singlespace}

\pagenumbering{gobble}
\renewcommand{\baselinestretch}{1}
\small \normalsize

\begin{center}
\large{{ABSTRACT}}

\vspace{2em}

\end{center}
\hspace{-.15in}
\begin{tabular}{ll}
Title of dissertation:    & {\large  COMPRESSED SENSING BEYOND THE IID}\\
&                     {\large   AND STATIC DOMAINS: } \\
&                     {\large  THEORY, ALGORITHMS AND APPLICATIONS} \\
\ \\
&                          {\large  Abbas Kazemipour} \\
&                           {\large Doctor of Philosophy, 2017} \\
\ \\
Dissertation directed by: & {\large  Professors Min Wu and  Behtash Babadi} \\
&               {\large  Department of Electrical and Computer Engineering } \\
\end{tabular}

\vspace{1em}

\renewcommand{\baselinestretch}{2}
\large \normalsize
 \begin{singlespace}
Sparsity is a ubiquitous feature of many real world signals such as natural images and neural spiking activities. Conventional compressed sensing utilizes sparsity to recover low dimensional signal structures in high ambient dimensions using few measurements, where i.i.d measurements are at disposal. However real world scenarios typically exhibit non i.i.d and dynamic structures and are confined by physical constraints, preventing applicability of the theoretical guarantees of compressed sensing and limiting its applications. In this thesis we develop new theory, algorithms and applications for non i.i.d and dynamic compressed sensing by considering such constraints.

In the first part of this thesis we derive new optimal sampling-complexity tradeoffs for two commonly used processes used to model dependent temporal structures: the autoregressive processes and self-exciting generalized linear models. Our theoretical results successfully recovered the temporal dependencies in neural activities, financial data and traffic data.

Next, we develop a new framework for studying temporal dynamics by introducing compressible state-space models, which simultaneously utilize spatial and temporal sparsity. We develop a fast algorithm for optimal inference on such models and prove its optimal recovery guarantees. Our algorithm shows significant improvement in detecting sparse events in biological applications such as spindle detection and calcium deconvolution.

Finally, we develop a sparse Poisson image reconstruction technique and the first compressive two-photon microscope which uses lines of excitation across the sample at multiple angles. We recovered diffraction-limited images from relatively few incoherently multiplexed measurements, at a rate of 1.5 billion voxels per second.
\end{singlespace}
\end{singlespace}
  \begin{singlespace}
\beforepreface
\end{singlespace}
        \thispagestyle{empty}%
        \newpage
		\mbox{}
		\newpage
\setcounter{page}{2}
\doublespacing
\prefacesection{Preface}

Sparsity, coherence and dynamics are among the ubiquitous  features of many real world signals and systems. Examples include natural images, sounds and neural spiking activities. These signals exhibit sparsity, that is there exists a basis for which the effective dimension of the signal is much smaller than its ambient dimension. Moreover natural signals entail rich temporal dynamics. Theory of compressed sensing {\cite{donoho2006compressed, candes2006compressive, candes2006stable, candes2008introduction, needell2009cosamp, bruckstein2009sparse}} is concerned with reconstruction of such signals by utilizing their sparse structures, using very few measurements . Compressed sensing provides sharp trade-offs between the number of measurement, sparsity, and estimation accuracy when random i.i.d measurements are at disposal. However, in most practical applications of interest, the measured signals and the corresponding covariates are highly interdependent and follow specific temporal dynamics. Although, the theory of compressed sensing has not considered non-i.i.d and dynamic domains, the recovery algorithms suggested by it show remarkable performance once the structure of the underlying signal is taken into account.

On a high level, research in compressive sensing is conducted in three main branches: theory, algorithms and applications. In this thesis we revisit all three branches  for nonlinear models with interdependent covariates as well as dynamic compressive sensing with applications in neural signal processing, traffic modeling, financial data and media forensics. Three main proposed problems, our ongoing research and future work are included in details in the following chapters. The rest of this thesis is organized as follows: In Chapter \ref{chap:ar} we introduce the problem of stable estimation of high-order AR processes, generalizing  results from i.i.d compressive sensing to general class of stable AR processes with sub-Gaussian innovations. In doing so we will show that spectral properties of stationary processes determine the interdependence of the covariates and in general the sampling-complexity tradeoffs of AR processes.
In Chapter \ref{chap:hawkes} we introduce the problem of robust estimation of generalized linear models. We will prove theoretical guarantees of compressive sensing for such models, characterizing the sampling-complexity tradeoffs between the interdependence of the covariates and the number of measurement. We further corroborate our theoretical guarantees with simulated data as well as an application to retinal ganglion cell spiking activities. In Chapter \ref{chap:css} we provide theoretical guarantees of stable state estimation where the underlying dynamics follow a compressible state-space model. Moreover we show application of our results in denoising and spike deconvolution from calcium imaging recordings of neural spiking activities and show promising recovery results from compressed imaging data. We propose an application of this method in designing compressive calcium imaging devices. Finally, in Chapter \ref{chap:slapmi} we use ideas from projection microscopy to develop a two-photon imaging technique that scans lines of excitation across the sample at multiple angles, recovering high-resolution images from relatively few incoherently multiplexed measurements. By using a static image of the sample as a prior, we recover neural activity from megapixel fields of view at rates exceeding 1kHz.

\prefacesection{Acknowledgment}

I feel very fortunate to have the chance to learn from and collaborate with incredibly talented individuals in the past 5 years. First and foremost, I would like to thank my academic  advisor, Professor Min Wu, who supported and believed in me throughout my PhD. I feel extremely indebted to her, for her patience, critical thinking and giving me the opportunity to follow my research interests. Second, I would like to thank my co-advisor, Professor Behtash Babadi. His unique blend of creativity, generosity and energy makes him a role model for my future career. I would also like to thank my mentors at Janelia Research Campus. I would like to thank Professor Shaul Druckmann and Dr. Kaspar Podgorski for their incredible support of my work and great contributions to my thesis.

I would like to thank my committee members and group leaders at Janelia for their encouragements and my collaborators at University of Maryland and Janelia who provided critical feedback on my work. Many of the interesting research problems and ideas came from fruitful discussions with them. I would also like to thank Professor Ali Olfat at University of Tehran for his incredible support and believing in me so early in my career.

I would like to thank my friends at UMD and Janelia, who made the past 5 years a plausible experience for me. 

Last but not least, I would like to thank my family, especially my parents who always chose my happiness over their own convenience. Thank you for filling my life with love.



\prefacesection{Notations}
Throughout this thesis we use bold lower and upper case letters for denoting vectors and matrices, respectively. Parameter vectors are {denoted} by bold-face {Greek} letters. For example, $\boldsymbol{\theta}=[\theta_1,\theta_2,\cdots,\theta_p]'$ denotes a $p$-dimensional parameter vector, with $[\cdot]'$ denoting the transpose operator. For a vector $\boldsymbol{\theta}$, we define its decomposition into positive and negative parts given by:
\[
\boldsymbol{\theta} = \boldsymbol{\theta}^+ - \boldsymbol{\theta}^{-},
\]
where $\boldsymbol{\theta}^\pm = \max\{\pm\boldsymbol{\theta}, \mathbf{0}\}$.
It can be shown that
\[
\| \boldsymbol{\theta}^\pm\|_1= \mathbf{1}' \boldsymbol{\theta}^\pm = \frac{\|\boldsymbol{\theta}\|_1 \pm \mathbf{1}'\boldsymbol{\theta}}{2}
\]
are convex in $\boldsymbol{\theta}$. Similarly, for a summation 
\[
L = \sum_{i=1}^n l_i = L^+ -L^-,
\]
where $L^+ = \sum_{i=1} ^n\max\{l_i,0\}$, $L^- = -\sum_{i=1}^n \max\{-l_i,0\}$.
We will use the notation $\mathbf{x}_i^j$ to denote the vector $[x_i,\cdots,x_j]^T$ for any $i, j \in \mathbb{Z}$ with $i \le j$. We will denote the estimated values by $\widehat{(.)}$ and the biased estimates with the superscript $(.)^b$.  Throughout the proofs, $c_i$'s express absolute constants which may change from line to line where there is no ambiguity. By $c_\eta$ we mean an absolute constant which only depends on a positive constant $\eta$. We denote the support of a vector $\mathbf{x}_t \in \mathbb{R}^p$ by $\support (\mathbf{x}_t)$ and its $j$th element by $(\mathbf{x}_{t})_j$. 

Given a sparsity level $s$ and a vector $\mathbf{x}$, we denote the set of its $s$ largest magnitude entries by $S$, and its best $s$-term approximation error by $\sigma_s(\mathbf{x})= \|\mathbf{x}-\mathbf{x}_S\|_1$. When $\sigma_s(\mathbf{x}) \sim \mathcal{O}^{(1/2 - \xi)}$ for some $\xi \ge 0$, we refer to $\mathbf{x}$ as $(s,\xi)$--compressible.

For simplicity of notation, we define $\mathbf{x}_0$ to be the all-zero vector in $\mathbb{R}^p$. For a matrix $\mathbf{A}$, we denote restriction of  $\mathbf{A}$ to its first $n$ rows by $(\mathbf{A})_n$.

We use the convention $[T] = \{1,\cdots,T\}$ and $\mathbf{W}_{[T]} = \left[ \mathbf{w}_1,\cdots,\mathbf{w}_T \right] $, i.e. $\mathbf{w}_k$ represents the $k$th column of $\mathbf{W}_{[T]}$. $\odot$ and $\oslash$ denote elementwise multiplication and division respectively. Throughout the chapter we will use the terms innovations and spikes interchangeably. Unless otherwise stated, a function acts on a vector elementwise. For a matrix $\mathbf{A} =[a_{ij}] \in \mathbb{R}^{m \times n}$ its mixed $p,q$-norm is denoted by $\left\|\mathbf{A}\right\|_{p,q}$, i.e.
\[
\left\|\mathbf{A}\right\|_{p,q} = \left[  \sum_{i=1}^m \left( \sum_{j=1}^n \left|a_{ij}\right|^p \right)^{q/p} \right]^{1/q},
\]
and $\|\mathbf{x}\|_{\mathbf{\Sigma}}^2 = \mathbf{x}^T \mathbf{\Sigma}^{-1} \mathbf{x}$.

\afterpreface
\chapter{Introduction}
Data compression and its fundamental limits are one of the main questions in communication theory \cite{cover2012elements}. Classical communication theory treats data acquisition and data compression as two separate problems and has studied each of these {modules} extensively, for example  in information theory it is well known that the fundamental limit of data compression is given by the entropy of source. With the emergence of big data applications and the prohibitive  cost of sensing mechanisms for high resolution data, smarter sensing mechanisms seem to be inevitable. 

The theory of compressive sensing  \cite{foucart2013mathematical} takes its name from the premise that in many applications data acquisition and compression can be performed simultaneously. This is done by utilizing the sparse structure of many natural signals such as images, sound etc. For example, natural images are known to be sparse in Fourier bases. By taking advantage of the sparse structure one could go beyond the fundamental limits imposed by physical constraints and uncertainty principles. 

Research in compressive sensing can be categorized in three major branches: mathematical theory, algorithm design and applications.  In this thesis we focus on all three aspects. From a mathematical perspective, we use tools from empirical process theory and statistical signal processing in order to study sampling complexity tradeoffs of compressive sensing for dynamic and non i.i.d compressive sensing. This is motivated by the fact that interdependence and dynamic structures are ubiquitous features of natural signals. From the algorithmic perspective, we provide fast algorithms for signal recovery under these conditions, and finally we will apply our theory in several applications of interest.  

In order to facilitate reading of this of thesis we will provide a very brief introduction to compressive sensing and a few key results which will be used recurrently throughout the thesis. A more detailed treatment can be found in \cite{foucart2013mathematical}.

\section{Sparse Solutions of Underdetermined Systems}
A vector $\boldsymbol{\theta} \in \mathbb{R}^p$ is $s$-sparse if it has at most $s$ nonzero entries, i.e. if
\[
\|\boldsymbol{\theta}\|_0 := \text{card}\left(\support(\boldsymbol{\theta}) \right) \leq s.
\]
We define
\begin{equation}
\sigma_s(\boldsymbol{\theta}) := \|\boldsymbol{\theta}-\boldsymbol{\theta}_s\|_1
\end{equation}  
and \begin{equation}
\varsigma_s(\boldsymbol{\theta}) := \|\boldsymbol{\theta}-\boldsymbol{\theta}_s\|_2
\end{equation}
which are scalar functions of $\boldsymbol{\theta}$ and $s$, and capture the compressibility of the parameter vector $\boldsymbol{\theta}$ in the $\ell_1$ and $\ell_2$ sense, respectively. Note that by definition 
$\varsigma_s(\boldsymbol{\theta}) \leq \sigma_s(\boldsymbol{\theta})$. For a fixed $\xi \in (0,1)$, we say that $\boldsymbol{\theta}$ is \emph{$(s,\xi)$-compressible} if $\sigma_s(\boldsymbol{\theta}) = \mathcal{O}(s^{1-\frac{1}{\xi}})$ \cite{needell2009cosamp}. Note that when $\xi = 0$, the parameter vector $\boldsymbol{\theta}$ is exactly $s$-sparse.

Let
\begin{align}
\label{eq:comp1}
\mathbf{y} = \mathbf{A}\boldsymbol{\theta} + \mathbf{v},
\end{align}
where $\mathbf{A}\in \mathbf{R}^{n \times p}$ is a known measurement matrix and $\mathbf{y} \in \mathbb{R}^n $ consists of $n$ linear measurements of $\boldsymbol{\theta}$ and $\mathbf{v}$ is the bounded measurement noise satisfying
\[
\| \mathbf{v}\|_2 \leq \epsilon.
\]

Compressive sensing problem is concerned with solving (\ref{eq:comp1}) in the underdetermined setting, i.e. when $n < p$.  In this setup, the key assumption of $\boldsymbol{\theta}$ being (close to) an $s$-sparse vector can be used to recover $\boldsymbol{\theta}$. In the absence of observation noise $\boldsymbol{\theta}$ can be \textit{exactly} recovered from $\mathbf{y}$ if rank$(A) \geq 2s$, which requires at least $n \geq 2s$ measurements. Special designs of measurement matrices such as Vandermonde matrices or Fourier matrices have resulted in elegant reconstruction algorithms such as Prony's method \cite{parks1987digital}. 

In the presence of noise one would need higher number of measurements . Ideally one would like to solve the optimization of finding the \textit{sparsest} $\boldsymbol{\theta}$ which satisfies the bounded noise constraints, i.e.
\begin{align}
\label{eq:comp2}
\begin{tabular}{l}$\minimize_{\boldsymbol{\theta} \in \mathbb{R}^p} \quad \|\boldsymbol{\theta}\|_0$,\\
$\subjectto \quad \|\mathbf{Ax}-\mathbf{y}\|_2 \leq \epsilon.$
\end{tabular}
\end{align}

Unfortunately  ($\ref{eq:comp2}$) is an NP-hard problem and due to the combinatorial nature of $\ell_0$norm. The convex relaxation of (\ref{eq:comp2}) is therefore used most often, which is achieved by replacing the $\ell_0$ norm with an $\ell_1$ norm, i.e.

\begin{align}
\label{eq:comp3}
\begin{tabular}{l}$\minimize_{\boldsymbol{\theta} \in \mathbb{R}^p} \quad \|\boldsymbol{\theta}\|_1$,\\
$\subjectto \quad \|\mathbf{A} \boldsymbol{\theta}-\mathbf{y}\|_2 \leq \epsilon.$
\end{tabular}
\end{align}

\section{Restricted Isometry Property}
For the convex optimization problem (\ref{eq:comp3}) the following property is \textit{sufficient} for stable recovery of $\boldsymbol{\theta}$:
\begin{definition}[Restricted Isometry Property \cite{candes2008introduction}]
The matrix $\mathbf{A} \in \mathbb{R}^{n \times p}$ satisfies the restricted isometry property (RIP) \cite{candes2006compressive} of order $s$, if for all $s$-sparse $\boldsymbol{\theta}\in \mathbb{R}^p$, we have
\begin{equation}
\label{eq:tv_rip}
(1-\delta_s) \|\boldsymbol{\theta}\|_2^2 \leq \|\mathbf{A}\boldsymbol{\theta}\|_2^2 \leq (1+\delta_s)\|\boldsymbol{\theta}\|_2^2,
\end{equation}
where $\delta_s \in (0,1)$ is the smallest constant for which Eq. (\ref{eq:tv_rip}) holds. 
\end{definition}
Intuitively speaking, RIP requires that the linear measurements acts as an almost isometry on $s$-sparse vectors, resulting in \textit{invertibility} of the underdetermined system of equations (\ref{eq:comp1}). The following formalizes this idea:

\begin{thm}[Implications of the RIP \cite{RIP_orig} ]
\label{thm:thm1} 
Suppose that $\mathbf{A}$ satisfies the RIP of order $2S$ with $\delta_{2s} < {\sqrt{2}} -1$. Then any solution $\widehat{\boldsymbol{\theta}}$ to (\ref{eq:comp3}) satisfies
\begin{align}
\|\boldsymbol{\theta}-\widehat{\boldsymbol{\theta}}\|_1 \leq c_1 \sigma_s(\boldsymbol{\theta}) + c_2 \sqrt{s} \epsilon,
\end{align}
and
\begin{align}
\|\boldsymbol{\theta}-\widehat{\boldsymbol{\theta}}\|_2 \leq c_1' \frac{\sigma_s(\boldsymbol{\theta})}{\sqrt{s}} + c_2'  \epsilon.
\end{align}
\end{thm}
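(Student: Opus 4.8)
The plan is to follow the classical error-propagation argument for $\ell_1$ minimization under the restricted isometry property. Write $\widehat{\boldsymbol{\theta}}$ for an optimal solution of (\ref{eq:comp3}), set $\mathbf{h} := \widehat{\boldsymbol{\theta}} - \boldsymbol{\theta}$, and let $S_0$ denote the index set of the $s$ largest-magnitude entries of $\boldsymbol{\theta}$. The proof rests on two ingredients, which I would establish first: a \emph{tube condition} from feasibility and a \emph{cone condition} from optimality. Since $\|\mathbf{v}\|_2 \le \epsilon$, the true vector $\boldsymbol{\theta}$ is itself feasible for (\ref{eq:comp3}), so $\|\widehat{\boldsymbol{\theta}}\|_1 \le \|\boldsymbol{\theta}\|_1$; and by the triangle inequality $\|\mathbf{A}\mathbf{h}\|_2 \le \|\mathbf{A}\widehat{\boldsymbol{\theta}} - \mathbf{y}\|_2 + \|\mathbf{A}\boldsymbol{\theta} - \mathbf{y}\|_2 \le 2\epsilon$. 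Starting from $\|\boldsymbol{\theta} + \mathbf{h}\|_1 \le \|\boldsymbol{\theta}\|_1$ and splitting the $\ell_1$ norm over $S_0$ and $S_0^c$ yields the cone condition $\|\mathbf{h}_{S_0^c}\|_1 \le \|\mathbf{h}_{S_0}\|_1 + 2\sigma_s(\boldsymbol{\theta})$.

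Next I would sort $S_0^c$ in decreasing order of $|\mathbf{h}|$ and partition it into consecutive blocks $S_1, S_2, \dots$ of size $s$. The defining consequence of this ordering is $\sum_{j \ge 2}\|\mathbf{h}_{S_j}\|_2 \le s^{-1/2}\|\mathbf{h}_{S_0^c}\|_1$, whence, using the cone condition, $\|\mathbf{h}_{(S_0\cup S_1)^c}\|_2 \le \|\mathbf{h}_{S_0}\|_2 + 2 s^{-1/2}\sigma_s(\boldsymbol{\theta})$. I would then apply the RIP of order $2s$ to $\mathbf{h}_{S_0\cup S_1}$ and expand $\mathbf{A}\mathbf{h}_{S_0\cup S_1} = \mathbf{A}\mathbf{h} - \sum_{j\ge 2}\mathbf{A}\mathbf{h}_{S_j}$, bounding $|\langle \mathbf{A}\mathbf{h}_{S_0\cup S_1}, \mathbf{A}\mathbf{h}\rangle|$ via the tube condition and each cross term $|\langle \mathbf{A}\mathbf{h}_{S_0\cup S_1}, \mathbf{A}\mathbf{h}_{S_j}\rangle|$ via near-orthogonality of the RIP on disjoint supports. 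Combined with $\|\mathbf{h}_{S_0^c}\|_1 \le \sqrt{s}\,\|\mathbf{h}_{S_0\cup S_1}\|_2 + 2\sigma_s(\boldsymbol{\theta})$, this gives a scalar inequality of the form $(1-\delta_{2s})\|\mathbf{h}_{S_0\cup S_1}\|_2 \le 2\sqrt{1+\delta_{2s}}\,\epsilon + \sqrt{2}\,\delta_{2s}\big(\|\mathbf{h}_{S_0\cup S_1}\|_2 + 2 s^{-1/2}\sigma_s(\boldsymbol{\theta})\big)$.

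Finally, I would solve this inequality for $\|\mathbf{h}_{S_0\cup S_1}\|_2$: the hypothesis $\delta_{2s} < \sqrt{2} - 1$ is precisely what makes the coefficient $1 - \delta_{2s} - \sqrt{2}\,\delta_{2s}$ positive, yielding $\|\mathbf{h}_{S_0\cup S_1}\|_2 \le c_2'\epsilon + (c_1'/2)\, s^{-1/2}\sigma_s(\boldsymbol{\theta})$ for appropriate constants. The $\ell_2$ estimate then follows from $\|\mathbf{h}\|_2 \le \|\mathbf{h}_{S_0\cup S_1}\|_2 + \|\mathbf{h}_{(S_0\cup S_1)^c}\|_2 \le 2\|\mathbf{h}_{S_0\cup S_1}\|_2 + 2 s^{-1/2}\sigma_s(\boldsymbol{\theta})$, and the $\ell_1$ estimate from $\|\mathbf{h}\|_1 \le 2\|\mathbf{h}_{S_0}\|_1 + 2\sigma_s(\boldsymbol{\theta}) \le 2\sqrt{s}\,\|\mathbf{h}_{S_0\cup S_1}\|_2 + 2\sigma_s(\boldsymbol{\theta})$. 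I expect the main obstacle to be the cross-term step: getting the constant $\sqrt{2}\,\delta_{2s}$ right requires combining the RIP-based estimate $|\langle \mathbf{A}\mathbf{u}, \mathbf{A}\mathbf{w}\rangle| \le \delta_{2s}\|\mathbf{u}\|_2\|\mathbf{w}\|_2$ for disjointly supported $\mathbf{u},\mathbf{w}$ of total support size at most $2s$ with the inequality $\|\mathbf{h}_{S_0}\|_2 + \|\mathbf{h}_{S_1}\|_2 \le \sqrt{2}\,\|\mathbf{h}_{S_0\cup S_1}\|_2$, and it is exactly this bookkeeping that forces the threshold $\delta_{2s} < \sqrt{2}-1$.
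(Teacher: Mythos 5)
Your proposal is correct: it is the standard Cand\`es argument (tube and cone conditions, block decomposition of $\mathbf{h}_{S_0^c}$ into size-$s$ pieces, RIP plus near-orthogonality on disjoint supports giving the $\sqrt{2}\,\delta_{2s}$ coefficient and hence the threshold $\delta_{2s} < \sqrt{2}-1$), and the bookkeeping you describe does yield both the $\ell_1$ and $\ell_2$ bounds. The paper itself does not prove this theorem — it is quoted as background with a citation — and your argument coincides with the proof in the cited reference, so there is nothing to reconcile.
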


For an arbitrary value $\delta_s  \in (0,1)$ one can prove existence of a measurement matrix satisfying the RIP as long as $n > c_{\delta_s} s \log (p/s)$ namely:

\begin{thm}[RIP for Random Matrices \cite{foucart2013mathematical} ] 
Let $\mathbf{A}$ be an $n \times p$ subgaussian random matrix. Then there exits a constant $c>0$ depending only on subgaussian parameters, such that the restricted isometry constant of $\frac{1}{\sqrt{n} \mathbf{A}}$ satisfies $\delta_s \leq \delta$ with probability at least $1-\epsilon$ provided
\[
n \geq \frac{c}{\delta^2} \left( s \log\left( e p/s \right) + \log({2}/{\epsilon}) \right).
\]
\end{thm}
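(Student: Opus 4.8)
The plan is to combine a pointwise concentration-of-measure estimate with a covering (net) argument over $s$-sparse unit vectors, followed by a union bound over all possible support sets; this is the classical route for establishing the RIP of subgaussian ensembles.

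\textbf{Step 1 (pointwise concentration).} For a fixed $\mathbf{x}\in\mathbb{R}^p$, after rescaling so that $\|\mathbf{x}\|_2=1$, I would show $\frac{1}{n}\|\mathbf{A}\mathbf{x}\|_2^2$ concentrates about $1$. Writing $\mathbf{A}$ with i.i.d.\ rows $\mathbf{a}_1,\dots,\mathbf{a}_n$, each $\langle\mathbf{a}_i,\mathbf{x}\rangle$ is subgaussian with a parameter controlled by the ensemble's subgaussian constant and has unit second moment, so the centered squares $\langle\mathbf{a}_i,\mathbf{x}\rangle^2-1$ are i.i.d.\ mean-zero subexponential, and $\frac{1}{n}\|\mathbf{A}\mathbf{x}\|_2^2-1$ is their empirical average. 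Bernstein's inequality for sums of subexponentials then gives, for $0<t\le1$,
\[
\Pr\!\left[\left|\tfrac{1}{n}\|\mathbf{A}\mathbf{x}\|_2^2-\|\mathbf{x}\|_2^2\right|>t\|\mathbf{x}\|_2^2\right]\le 2\exp(-c_0 n t^2),
\]
with $c_0$ depending only on the subgaussian parameters. \emph{I expect this to be the main technical obstacle}: one has to relate the subgaussian norm of $\langle\mathbf{a}_i,\mathbf{x}\rangle$ to $\|\mathbf{x}\|_2$ and to the row's subgaussian constant, bound the subexponential norm of the centered squares, and invoke the correct two-regime Bernstein bound while restricting to the small-deviation regime $t\le1$ that is the one we need.

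\textbf{Step 2 (net to subspace).} Fix a support $S\subset\{1,\dots,p\}$ with $|S|=s$ and let $\mathcal{N}_S$ be a $\rho$-net, in $\ell_2$, of the unit sphere of the coordinate subspace $\mathbb{R}^S$; a standard volumetric bound gives $|\mathcal{N}_S|\le(1+2/\rho)^s$, which is $9^s$ for $\rho=1/4$. Applying Step 1 with $t=\delta/2$ at each point of $\mathcal{N}_S$ and union bounding, with probability at least $1-2\cdot 9^s\exp(-c_0 n\delta^2/4)$ every $\mathbf{z}\in\mathcal{N}_S$ obeys $\big|\frac{1}{n}\|\mathbf{A}\mathbf{z}\|_2^2-1\big|\le\delta/2$. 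Let $\mathbf{M}_S$ be the symmetric $s\times s$ matrix obtained by restricting $\frac{1}{n}\mathbf{A}^\top\mathbf{A}-\mathbf{I}_p$ to the rows and columns indexed by $S$; then the restricted isometry constant of $\frac{1}{\sqrt n}\mathbf{A}$ on vectors supported in $S$ equals $\|\mathbf{M}_S\|_{\mathrm{op}}=\sup_{\mathbf{z}\in S^{s-1}}|\mathbf{z}^\top\mathbf{M}_S\mathbf{z}|$. A standard approximation argument bounds this quadratic-form supremum by $(1-2\rho)^{-1}\sup_{\mathbf{z}\in\mathcal{N}_S}|\mathbf{z}^\top\mathbf{M}_S\mathbf{z}|$, which for $\rho=1/4$ is at most $\delta$. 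Hence, outside the bad event, $\frac{1}{\sqrt n}\mathbf{A}$ is a $\delta$-near-isometry on all $s$-sparse vectors supported in $S$.

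\textbf{Step 3 (union over supports).} There are $\binom{p}{s}\le(ep/s)^s$ choices of $S$, so the overall failure probability is bounded by
\[
\binom{p}{s}\cdot 2\cdot 9^s\exp(-c_0 n\delta^2/4)\le 2\exp\!\big(s\log(ep/s)+s\log 9-c_0 n\delta^2/4\big).
\]
Because $\log(ep/s)\ge1$, the $s\log 9$ term is at most a constant multiple of $s\log(ep/s)$, so it suffices to require $c_0 n\delta^2/4\ge c_1\big(s\log(ep/s)+\log(2/\epsilon)\big)$ to make the right-hand side at most $\epsilon$. Rearranging gives $n\ge\frac{c}{\delta^2}\big(s\log(ep/s)+\log(2/\epsilon)\big)$ for a constant $c$ depending only on the subgaussian parameters, which is exactly the claimed sampling bound.
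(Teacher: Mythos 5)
The paper does not actually prove this statement---it is quoted as background material from the cited reference \cite{foucart2013mathematical}---and your argument is precisely the standard proof given there and in the classical literature: Bernstein-type concentration for the mean-zero subexponential variables $\langle\mathbf{a}_i,\mathbf{x}\rangle^2-1$, a $1/4$-net on the unit sphere of each $s$-dimensional coordinate subspace combined with the factor-$(1-2\rho)^{-1}$ quadratic-form approximation lemma, and a union bound over the $\binom{p}{s}\le(ep/s)^s$ supports with the $s\log 9$ term absorbed into $s\log(ep/s)$. The steps are correct as written; the only implicit hypothesis worth making explicit is that the subgaussian entries are normalized to mean zero and unit variance (part of the standard definition of a subgaussian random matrix), which is what gives $\mathbb{E}\,\langle\mathbf{a}_i,\mathbf{x}\rangle^2=\|\mathbf{x}\|_2^2$ in your Step 1.
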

Setting $\epsilon = 2 \exp(-\delta^2 n/2c)$ yields the condition
\[
n \geq 2c/\delta^2 s \log \left( ep/s \right),
\]
which guarantees that $\delta_s \leq \delta$ with probability at least $1-2 \exp(-\delta^2n/2c)$.

We will next discuss the commonly used reconstruction algorithms for sparse recovery.

\section{Sparse Recovery Algorithms}
Popular sparse recovery algorithms in compressive sensing can be divided into three main categories: optimization methods, greedy methods, and thresholding-based methods \cite{foucart2013mathematical}. In this thesis we focus on developing algorithms for the optimization methods and greedy methods.

\subsection{Optimization Problems}
Three of the most popular optimization problems for sparse recovery are the quadratically constrained basis pursuit, lasso \cite{tibshirani1996regression} and basis pursuit denoising. The quadratically constrained basis pursuit algorithm solves the optimization problem
\begin{align}
\label{eq:qcbp}
\begin{tabular}{ll}$\minimize_{\boldsymbol{\theta} \in \mathbb{R}^p}$ & $\|\boldsymbol{\theta}\|_1$,\\
$\subjectto$ & $\|\mathbf{A}\boldsymbol{\theta}-\mathbf{y}\|_2 \leq \epsilon.$
\end{tabular}
\end{align}
Alternatively one can find the solution which is closest to the measurements while maintaining a controlled sparsity level. Such formulation is known as the lasso given by

\begin{align}
\label{eq:lasso_def}
\begin{tabular}{ll}$\minimize_{\boldsymbol{\theta} \in \mathbb{R}^p}$ & $\|\mathbf{A}\boldsymbol{\theta}-\mathbf{y}\|_2  $,\\
$ \subjectto$ & $ \|\boldsymbol{\theta}\|_1 \leq \tau.$
\end{tabular}
\end{align}

A closely related optimization problem via the lagrangian formulation of both problems is known as basis pursuit denoising, given by

\begin{align}
\label{eq:lasso_def}
\begin{tabular}{ll}$\minimize_{\boldsymbol{\theta} \in \mathbb{R}^p}$ & $\|\mathbf{A} \boldsymbol{\theta}-\mathbf{y}\|_2 + \lambda \|\boldsymbol{\theta}|\|_1.$
\end{tabular}
\end{align}

\subsection{Greedy Algorithms}

Although there exist fast solvers to convex problems of the type given by Eq. (\ref{eq:qcbp}), these algorithms are polynomial time in $n$ and $p$, and may not scale well with high-dimensional data.  This motivates us to consider greedy solutions for the estimation of sparse parameters. In particular, in this thesis we will consider generalizations of the Orthogonal Matching Pursuit (OMP) \cite{OMP} for general convex cost functions.  The main idea behind the OMP is in the greedy selection stage, where the absolute value of the gradient of the cost function at the current solution is considered as the selection metric. The OMP algorithm adds one index to the estimated support of $\boldsymbol{\theta}$ at each step. A flowchart of the algorithm is given in Table (\ref{tab:OMP_main}).

\begin{table}
\centering
\framebox{$\begin{array}{l}
\text{Input: } \mathbf{A}, \mathbf{y} \\
\text{Output: } \widehat{\boldsymbol{\theta}}_{\sf OMP}\\
\text{Initialization:}\Big\{\begin{array}{l}
\text{Start with the index set } S^{(0)}=\emptyset\\
\text{and the initial estimate }\widehat{\boldsymbol{\theta}}^{(0)}_{{\sf OMP}} = \mathbf{0}
\end{array}\\
\textbf{for } k=1,2,\cdots,s^\star\\
\text{  }\begin{array}{l}
j = \argmax \limits_i \left| \left(  \mathbf{A}' \left(\mathbf{y}-\mathbf{A}\widehat{\boldsymbol{\theta}}_{{\sf OMP}}^{(k-1)}\right) \right)_i\right|\\
S^{(k)}=S^{(k-1)}\cup \{j\}\\
\widehat{\boldsymbol{\theta}}_{{\sf OMP}}^{(k)} = \argmin \limits_{\support ({\boldsymbol{\theta}}) \subset S^{(k)}} \|\mathbf{y}-\mathbf{A}\boldsymbol{\theta}\|_2
\end{array}\\
\textbf{end }\\
\end{array}$
}
\caption{\small{Orthogonal Matching Pursuit (OMP)}}
\label{tab:OMP_main}
\end{table}

The choice of the maximum number of iterations $s^\star$ will be discussed in detail later. One can repeat the process until a stopping criterion is met. The advantage of the OMP algorithm is breaking the high ($p$)-dimensional optimization problem into several low ($\leq s^\star$)-dimensional problem which can usually be solved much faster.

\section{Theoretical Guarantees for  Convex Cost Functions}

In many applications of interest the measurements are not linear, or the noise is not additive or Gaussian. In these scenarios the objective function is not in the form of squared errors. We will now introduce the theoretical requirements which generalize RIP to general convex cost functions, and the corresponding theoretical guarantees. 

\subsection{Restricted Strong Convexity}
General convex cost functions are usually in the form of a loss function or a negative log-likelihood. Estimation problems for such cost functions are known as M-estimation problems. The general form of an M-estimation problem is given by
\begin{equation}
\label{eq:M_Est}
\widehat{\boldsymbol{\theta}}:=\argmin\limits_{\boldsymbol{\theta}\in \boldsymbol{\Theta}} \quad \mathfrak{L}(\boldsymbol{\theta}),
\end{equation}
and its $\ell_1$-regularized counterpart is given by
\begin{equation}
\label{eq:sp_est_pp_L}
\widehat{\boldsymbol{\theta}}_{{\sf sp}}:=\argmin\limits_{\boldsymbol{\theta}\in \boldsymbol{\Theta}} \quad \mathfrak{L}(\boldsymbol{\theta})+ \gamma_n\|\boldsymbol{\theta}\|_1.
\end{equation}
where, $\mathfrak{L}(\boldsymbol{\theta})$ is a cost function which is convex in $\boldsymbol{\theta}$, $\gamma_n > 0$ is a regularization parameter and $\boldsymbol{\Theta}$ is the convex set of admissible solutions.

The main sufficient condition for theoretical guarantees of general convex cost functions is the notion of Restricted Strong Convexity (RSC) \cite{Negahban}. By the convexity of the cost function, it is clear that a small change in $\boldsymbol{\theta}$ results in a small change in the cost function. However, the converse is not necessarily true. Intuitively speaking, the RSC condition guarantees that the converse holds: a small change in the cost implies a small change in the parameter vector, i.e., the cost function is not too \textit{flat} around the true parameter vector. A depiction of the RSC condition for $p=2$, adopted from \cite{Negahban}, is given in Figure \ref{fig:rsc}. In Figure \ref{fig:rsc}(a), the RSC does not hold since a change along $\theta_2$ does not change the log-likelihood, whereas the log-likelihood in Figure \ref{fig:rsc}(b) satisfies the RSC.

\begin{figure}[h]
\centering
\includegraphics[width=.8\columnwidth]{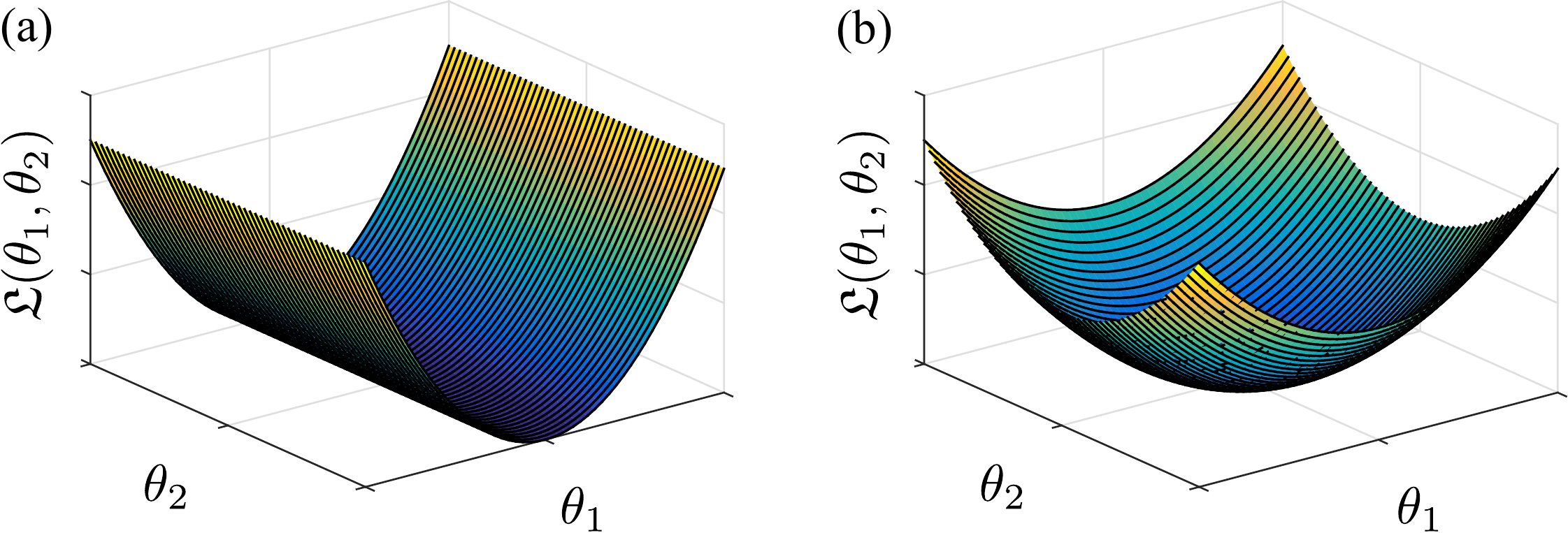}
\caption{\small{Illustration of RSC (a) RSC does not hold (b) RSC does hold.}}\label{fig:rsc}
\end{figure}
More formally, if the log-likelihood is twice differentiable at $\boldsymbol{\theta}$, the RSC is equivalent to existence of a lower quadratic bound on the negative log-likelihood:

\begin{equation}
\label{RSC}
\mathfrak{D_L}({\boldsymbol{\psi}},\boldsymbol{\theta}):= \mathfrak{L}(\boldsymbol{\theta}+{\boldsymbol{\psi}})-  \mathfrak{L}(\boldsymbol{\theta})-{\boldsymbol{\psi}}'\nabla\mathfrak{L}(\boldsymbol{\theta})\geq \kappa\|{\boldsymbol{\psi}}\|_2^2,
\end{equation}

for a positive constant $\kappa > 0$ and all { ${\boldsymbol{\psi}}\in\mathbb{R}^p$} in a carefully-chosen neighborhood of $\boldsymbol{\theta}$ depending on $s$ and $\xi$. Based on the results of \cite{Negahban}, when the RSC is satisfied, sufficient conditions akin to that in Theorem \ref{thm:thm1} can be obtained by estimating the {Euclidean} extent of the solution set around the true parameter vector. 
Here we  restate the main result of \cite{Negahban} concerning RSC and its implications in controlling the estimation error for general convex cost functions:
 
\begin{prop}[Implications of RSC (Theorem 1 of \cite{Negahban})]
\label{negahban_thm}
For a negative log-likelihood $\mathfrak{L}(\boldsymbol{\theta})$ which satisfies the RSC with parameter $\kappa$, every solution to the convex optimization problem (\ref{eq:sp_est_pp_L}) satisfies 
\begin{equation}
\label{error_bound}
\left\|\widehat{\boldsymbol{\theta}}_{\sf {sp}}-\boldsymbol{\theta}\right\|_2 \leq \frac{2\gamma_n \sqrt{s}}{\kappa}+\sqrt{\frac{2\gamma_n \sigma_s(\boldsymbol{\theta})}{\kappa}}
\end{equation}
with a choice of the regularization parameter

\begin{equation}
\label{reg}
\gamma_n \geq   2\left\|\nabla\mathfrak{L}(\boldsymbol{\theta})\right\|_\infty.
\end{equation}
\end{prop}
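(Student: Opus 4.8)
The plan is to run the standard variational argument for $\ell_1$-regularized M-estimators. Write $\widehat{\boldsymbol{\theta}} := \widehat{\boldsymbol{\theta}}_{\sf sp}$, set $\boldsymbol{\psi} := \widehat{\boldsymbol{\theta}} - \boldsymbol{\theta}$ for the error vector, and let $S = \support(\boldsymbol{\theta}_s)$ index the $s$ largest entries of $\boldsymbol{\theta}$. First I would exploit optimality of $\widehat{\boldsymbol{\theta}}$ for (\ref{eq:sp_est_pp_L}): since it minimizes $\mathfrak{L}(\cdot)+\gamma_n\|\cdot\|_1$, comparing its value to that at $\boldsymbol{\theta}$ gives $\mathfrak{L}(\boldsymbol{\theta}+\boldsymbol{\psi}) - \mathfrak{L}(\boldsymbol{\theta}) \le \gamma_n\left(\|\boldsymbol{\theta}\|_1 - \|\boldsymbol{\theta}+\boldsymbol{\psi}\|_1\right)$. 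Subtracting $\boldsymbol{\psi}'\nabla\mathfrak{L}(\boldsymbol{\theta})$ from both sides turns the left side into exactly the remainder $\mathfrak{D_L}(\boldsymbol{\psi},\boldsymbol{\theta})$ of the RSC definition (\ref{RSC}), while the extra term on the right is controlled by $\left|\boldsymbol{\psi}'\nabla\mathfrak{L}(\boldsymbol{\theta})\right| \le \|\boldsymbol{\psi}\|_1\|\nabla\mathfrak{L}(\boldsymbol{\theta})\|_\infty \le \tfrac{\gamma_n}{2}\|\boldsymbol{\psi}\|_1$, using the prescribed regularization level (\ref{reg}).

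Second, I would split $\boldsymbol{\psi} = \boldsymbol{\psi}_S + \boldsymbol{\psi}_{S^c}$ and use decomposability of the $\ell_1$ norm. The forward/reverse triangle inequalities yield $\|\boldsymbol{\theta}\|_1 - \|\boldsymbol{\theta}+\boldsymbol{\psi}\|_1 \le \|\boldsymbol{\psi}_S\|_1 - \|\boldsymbol{\psi}_{S^c}\|_1 + 2\sigma_s(\boldsymbol{\theta})$. Combining this with the first step and with $\mathfrak{D_L}(\boldsymbol{\psi},\boldsymbol{\theta}) \ge 0$ (convexity of $\mathfrak{L}$) produces, after collecting terms, the cone membership $\|\boldsymbol{\psi}_{S^c}\|_1 \le 3\|\boldsymbol{\psi}_S\|_1 + 4\sigma_s(\boldsymbol{\theta})$, i.e. $\boldsymbol{\psi}$ lies in the ``carefully-chosen neighborhood'' of $\boldsymbol{\theta}$ (parametrized by $s$ and $\sigma_s$) on which RSC is assumed, so RSC is applicable to this particular perturbation.

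Third, I would chain the RSC lower bound against the upper bound just obtained:
\[
\kappa\|\boldsymbol{\psi}\|_2^2 \;\le\; \mathfrak{D_L}(\boldsymbol{\psi},\boldsymbol{\theta}) \;\le\; \tfrac{3}{2}\gamma_n\|\boldsymbol{\psi}_S\|_1 + 2\gamma_n\sigma_s(\boldsymbol{\theta}) \;\le\; \tfrac{3}{2}\gamma_n\sqrt{s}\,\|\boldsymbol{\psi}\|_2 + 2\gamma_n\sigma_s(\boldsymbol{\theta}),
\]
where the last inequality uses $\|\boldsymbol{\psi}_S\|_1 \le \sqrt{s}\,\|\boldsymbol{\psi}_S\|_2 \le \sqrt{s}\,\|\boldsymbol{\psi}\|_2$. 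Reading this as a quadratic inequality $\kappa t^2 - bt - c \le 0$ in $t = \|\boldsymbol{\psi}\|_2$ and using the elementary bound $t \le b/\kappa + \sqrt{c/\kappa}$ (from $\sqrt{x+y}\le\sqrt{x}+\sqrt{y}$) gives the advertised estimate (\ref{error_bound}), with the $\tfrac32$-versus-$2$ gap in the leading constant absorbed into a slightly looser bookkeeping of the cone argument (or by using $\|\nabla\mathfrak{L}(\boldsymbol{\theta})\|_\infty \le \gamma_n/2$ less tightly).

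The main obstacle is the apparent circularity in the second step: RSC (\ref{RSC}) is posited only on a restricted cone, yet invoking it presupposes $\boldsymbol{\psi}$ lies in that cone. The resolution — and the delicate part of the write-up — is that the cone constraint is derived \emph{solely} from the optimality inequality and the choice of $\gamma_n$, \emph{before} RSC is used, so the logic is sound; one must simply track constants so that the derived cone radius matches the one on which RSC was assumed. A secondary point is handling the genuinely $(s,\xi)$-compressible (rather than exactly $s$-sparse) case, which is precisely what forces the $\sigma_s(\boldsymbol{\theta})$ contributions and the square-root term $\sqrt{2\gamma_n\sigma_s(\boldsymbol{\theta})/\kappa}$ in the final bound.
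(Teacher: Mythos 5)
Your proof is correct. Note first that the paper contains no proof of this proposition to compare against: it is imported verbatim as Theorem~1 of \cite{Negahban}, and the closest the paper comes is a separate lemma in its appendix (for the AR/LASSO instance) establishing exactly the cone ("vase") condition you derive in your second step. What you have written is the standard proof of the cited result, and every step checks out: the basic optimality inequality plus $\left|\boldsymbol{\psi}'\nabla\mathfrak{L}(\boldsymbol{\theta})\right| \le \tfrac{\gamma_n}{2}\|\boldsymbol{\psi}\|_1$ under (\ref{reg}); decomposability of the $\ell_1$ norm over $S$ and $S^c$ together with $\mathfrak{D_L}(\boldsymbol{\psi},\boldsymbol{\theta})\ge 0$ to obtain $\|\boldsymbol{\psi}_{S^c}\|_1 \le 3\|\boldsymbol{\psi}_S\|_1 + 4\sigma_s(\boldsymbol{\theta})$, which places the error in the cone on which RSC is posited (so, as you correctly observe, there is no circularity, since the cone membership uses only optimality and the choice of $\gamma_n$); and finally the quadratic inequality $\kappa t^2 \le \tfrac{3}{2}\gamma_n\sqrt{s}\,t + 2\gamma_n\sigma_s(\boldsymbol{\theta})$ with $t=\|\boldsymbol{\psi}\|_2$, yielding $t \le \tfrac{3\gamma_n\sqrt{s}}{2\kappa} + \sqrt{2\gamma_n\sigma_s(\boldsymbol{\theta})/\kappa}$. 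Your leading constant $\tfrac{3}{2}$ is actually tighter than the stated $2$, so the bound (\ref{error_bound}) follows a fortiori; no repair of the bookkeeping is needed.
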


The first term in the bound (\ref{reg}) is increasing in $s$ and corresponds to the estimation error of the $s$ largest components of $\boldsymbol{\theta}$ in magnitude, whereas the second term is decreasing in $s$ and represents the cost of replacing $\boldsymbol{\theta}$ with its best $s$-sparse approximation. 

Similarly the counterpart of OMP for general cost functions was introduced in \cite{zhang_omp} and is summarized in Table \ref{tab:OMP_gen}. 

\begin{table}
\centering
\framebox{$\begin{array}{l}
\text{Input: } \mathfrak{L}(\boldsymbol{\theta}) , s^\star\\
\text{Output: } \widehat{\boldsymbol{\theta}}_{\sf OMP}^{(s^\star)}\\
\text{Initialization:}\Big\{\begin{array}{l}
\text{Start with the index set } S^{(0)}=\emptyset\\
\text{and the initial estimate }\widehat{\boldsymbol{\theta}}^{(0)}_{{\sf OMP}} = 0
\end{array}\\
\textbf{for } k=1,2,\cdots,s^\star\\
\text{  }\begin{array}{l}
j = \argmax \limits_i \left| \left( \nabla \mathfrak{L} \; \left(\widehat{\boldsymbol{\theta}}_{{\sf OMP}}^{(k-1)}\right) \right)_i\right|\\
S^{(k)}=S^{(k-1)}\cup \{j\}\\
\widehat{\boldsymbol{\theta}}_{{\sf OMP}}^{(k)} = \argmin \limits_{\support ({\boldsymbol{\theta}}) \subset S^{(k)}} \mathfrak{L}(\boldsymbol{\theta})
\end{array}\\
\textbf{end }\\
\end{array}$
}
\caption{\small{Generalized Orthogonal Matching Pursuit }}
\label{tab:OMP_gen}
\end{table}

The main theoretical result regarding the generalized OMP is given by the following Proposition stating that the greedy procedure is successful in obtaining a reasonable $s^\star$-sparse approximation, if the cost function satisfies the RSC:

\begin{prop} [Guarantees of OMP (Theorem 2.1 of \cite{zhang_omp})]
\label{prop:omp_gen}
Suppose that $\mathfrak{L}(\boldsymbol{\theta})$ satisfies RSC with a constant $\kappa > 0$. Let $s^\star$ be a constant such that
\begin{equation}
\label{sstar}
s^\star = \mathcal{O}(s\log s),
\end{equation}
Then, we have
\begin{equation*}
\left \|\widehat{\boldsymbol{\theta}}^{(s^\star)}_{{\sf OMP}}-\boldsymbol{\theta}_s \right \|_2 \leq \frac{\sqrt{6} \epsilon_{s^\star}}{\kappa},
\end{equation*}
where $\epsilon_{s^\star}$ satisfies
\begin{equation}
\label{eps_bound}
\epsilon_{s^\star} \leq \sqrt{s^\star+s} \|\nabla\mathfrak{L}(\boldsymbol{\theta}_s)\|_\infty .
\end{equation}
\end{prop}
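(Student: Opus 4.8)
The plan is to combine a \emph{descent} analysis of the greedy iterations --- which drives the objective gap down to a ``noise floor'' of order $\epsilon_{s^\star}^2/\kappa$ within $s^\star=\mathcal{O}(s\log s)$ steps --- with a short \emph{conversion} step that turns a small objective gap into the claimed $\ell_2$ bound via the RSC inequality (\ref{RSC}). Write $\widehat{\boldsymbol{\theta}}^{(k)}$ for the $k$-th OMP iterate, $\mathbf{g}^{(k)}:=\nabla\mathfrak{L}(\widehat{\boldsymbol{\theta}}^{(k)})$, $\Delta_k:=\mathfrak{L}(\widehat{\boldsymbol{\theta}}^{(k)})-\mathfrak{L}(\boldsymbol{\theta}_s)$, and $\epsilon_{s^\star}:=\sup_{|T|\le s^\star+s}\|(\nabla\mathfrak{L}(\boldsymbol{\theta}_s))_T\|_2$, which trivially obeys $\epsilon_{s^\star}\le\sqrt{s^\star+s}\,\|\nabla\mathfrak{L}(\boldsymbol{\theta}_s)\|_\infty$, i.e. the bound (\ref{eps_bound}). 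Two elementary structural facts are used throughout: since $\widehat{\boldsymbol{\theta}}^{(k)}$ \emph{exactly} minimizes $\mathfrak{L}$ over vectors supported on $S^{(k)}$, first-order optimality forces $\mathbf{g}^{(k)}$ to vanish on $S^{(k)}$ (hence the greedy index is always new, $|S^{(k)}|=k$, and the nesting $S^{(k-1)}\subset S^{(k)}$ makes $k\mapsto\Delta_k$ non-increasing); and $\widehat{\boldsymbol{\theta}}^{(k)}-\boldsymbol{\theta}_s$ is supported on $S^{(k)}\cup S$, a set of size at most $k+s\le s^\star+s$.

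For the conversion step I would apply (\ref{RSC}) at base point $\boldsymbol{\theta}_s$ with increment $\boldsymbol{\psi}=\widehat{\boldsymbol{\theta}}^{(k)}-\boldsymbol{\theta}_s$, giving $\kappa\|\boldsymbol{\psi}\|_2^2\le\mathfrak{D_L}(\boldsymbol{\psi},\boldsymbol{\theta}_s)=\Delta_k-\langle\nabla\mathfrak{L}(\boldsymbol{\theta}_s),\boldsymbol{\psi}\rangle$. Because $\boldsymbol{\psi}$ has at most $s^\star+s$ nonzeros, $|\langle\nabla\mathfrak{L}(\boldsymbol{\theta}_s),\boldsymbol{\psi}\rangle|\le\epsilon_{s^\star}\|\boldsymbol{\psi}\|_2$, so $\kappa\|\boldsymbol{\psi}\|_2^2-\epsilon_{s^\star}\|\boldsymbol{\psi}\|_2-\Delta_k\le0$ and hence $\|\widehat{\boldsymbol{\theta}}^{(k)}-\boldsymbol{\theta}_s\|_2\le\tfrac{1}{2\kappa}\bigl(\epsilon_{s^\star}+\sqrt{\epsilon_{s^\star}^2+4\kappa\Delta_k}\bigr)\le\tfrac{\epsilon_{s^\star}}{\kappa}+\sqrt{\Delta_k/\kappa}$. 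Used at $k=s^\star$ together with a descent-stage bound $\Delta_{s^\star}\le c\,\epsilon_{s^\star}^2/\kappa$ for a small absolute constant $c$, this yields $\|\widehat{\boldsymbol{\theta}}^{(s^\star)}-\boldsymbol{\theta}_s\|_2\le\sqrt6\,\epsilon_{s^\star}/\kappa$, the constant $\sqrt6$ being whatever the quadratic inequality produces (with room to spare if the descent is pushed to $\Delta_{s^\star}\le\epsilon_{s^\star}^2/\kappa$). Used at a general index $k$, in the regime $\Delta_k\ge\epsilon_{s^\star}^2/\kappa$, it supplies the uniform control $\|\widehat{\boldsymbol{\theta}}^{(k)}-\boldsymbol{\theta}_s\|_2\le2\sqrt{\Delta_k/\kappa}$ that the descent stage needs.

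The heart of the proof, and the step I expect to be the main obstacle, is the descent stage: I would establish a geometric contraction $\Delta_k\le(1-\beta)\Delta_{k-1}$ with $\beta=\Theta(\kappa/(\rho s))$, valid as long as $\Delta_{k-1}$ exceeds the floor $\epsilon_{s^\star}^2/\kappa$, where $\rho$ is a restricted smoothness constant, i.e. an upper quadratic bound on $\mathfrak{L}$ along the $\le s^\star$-sparse directions visited by the algorithm. (This upper bound is the one ingredient not literally stated in the excerpt; it holds for the negative log-likelihoods of interest and is standard in greedy analyses.) The contraction is assembled from three estimates. First, a one-step decrease: comparing $\widehat{\boldsymbol{\theta}}^{(k)}$ to the line search $\widehat{\boldsymbol{\theta}}^{(k-1)}+\alpha\mathbf{e}_j$ along the greedily chosen coordinate $j$ and optimizing $\alpha$ with the quadratic upper bound gives $\Delta_{k-1}-\Delta_k\ge\|\mathbf{g}^{(k-1)}\|_\infty^2/(2\rho)$. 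Second, convexity of $\mathfrak{L}$ gives $\Delta_{k-1}\le\langle\mathbf{g}^{(k-1)},\widehat{\boldsymbol{\theta}}^{(k-1)}-\boldsymbol{\theta}_s\rangle$, and since $\mathbf{g}^{(k-1)}$ vanishes on $S^{(k-1)}$ while $\widehat{\boldsymbol{\theta}}^{(k-1)}$ is supported there, the right side equals $-\langle\mathbf{g}^{(k-1)},(\boldsymbol{\theta}_s)_{S\setminus S^{(k-1)}}\rangle\le\sqrt{s}\,\|\mathbf{g}^{(k-1)}\|_\infty\,\|\widehat{\boldsymbol{\theta}}^{(k-1)}-\boldsymbol{\theta}_s\|_2$ by Cauchy--Schwarz over the at most $s$ surviving coordinates of $\boldsymbol{\theta}_s$. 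Third, the uniform control $\|\widehat{\boldsymbol{\theta}}^{(k-1)}-\boldsymbol{\theta}_s\|_2\le2\sqrt{\Delta_{k-1}/\kappa}$ from the conversion step. Combining the last two gives $\|\mathbf{g}^{(k-1)}\|_\infty^2\ge\kappa\Delta_{k-1}/(4s)$; feeding this into the first gives $\Delta_{k-1}-\Delta_k\ge\kappa\Delta_{k-1}/(8\rho s)$, which is the asserted contraction with $\beta=\kappa/(8\rho s)$.

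Finally, since $\Delta_0=\mathfrak{L}(\mathbf{0})-\mathfrak{L}(\boldsymbol{\theta}_s)$ and the $\Delta_k$ are non-increasing, iterating the contraction shows that $\mathcal{O}\bigl(\tfrac{\rho s}{\kappa}\log(\Delta_0\kappa/\epsilon_{s^\star}^2)\bigr)$ steps bring $\Delta_k$ below the floor $\epsilon_{s^\star}^2/\kappa$, after which it stays there; under the standing assumption that the restricted curvature ratio $\rho/\kappa$ is bounded (and the logarithmic factor absorbed), this is exactly the $s^\star=\mathcal{O}(s\log s)$ of the hypothesis, so $\Delta_{s^\star}\le\epsilon_{s^\star}^2/\kappa$ and the conversion step closes the argument. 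I expect the delicate points to be: quantifying the restricted smoothness constant $\rho$ on the faces actually touched by the algorithm so that the contraction rate is genuinely $\Theta(1/s)$; keeping the support-size bookkeeping tight so that $s^\star+s$ (not a larger, iterate-dependent quantity) governs $\epsilon_{s^\star}$, and checking that the $(s^\star+s)$-sparse differences $\widehat{\boldsymbol{\theta}}^{(k)}-\boldsymbol{\theta}_s$ lie in the neighborhood on which the RSC of (\ref{RSC}) is assumed; and choosing the floor and the constants in the two regimes compatibly so that the final quadratic inequality delivers the stated constant $\sqrt6$.
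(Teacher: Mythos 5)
The paper never proves Proposition \ref{prop:omp_gen} itself: it is imported verbatim from Theorem 2.1 of \cite{zhang_omp} (and its specialization in the appendix is likewise dispatched with ``specialization of the proof of Theorem 2.1 in \cite{zhang_omp}''). Your reconstruction follows the same standard fully-corrective greedy analysis that underlies that cited result, and its individual steps check out: the vanishing of $\nabla\mathfrak{L}(\widehat{\boldsymbol{\theta}}^{(k)})$ on $S^{(k)}$, the support bookkeeping giving $(s^\star+s)$-sparse errors, the per-iteration decrease $\Delta_{k-1}-\Delta_k\ge\|\mathbf{g}^{(k-1)}\|_\infty^2/(2\rho)$ from a restricted smoothness bound, the convexity/Cauchy--Schwarz lower bound $\|\mathbf{g}^{(k-1)}\|_\infty^2\ge\kappa\Delta_{k-1}/(4s)$ above the floor, and the RSC quadratic inequality that converts $\Delta_{s^\star}\le\epsilon_{s^\star}^2/\kappa$ into $\|\widehat{\boldsymbol{\theta}}^{(s^\star)}-\boldsymbol{\theta}_s\|_2\le\frac{1+\sqrt5}{2}\,\epsilon_{s^\star}/\kappa<\sqrt6\,\epsilon_{s^\star}/\kappa$. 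You are also right that a restricted smoothness constant $\rho$ is an unavoidable extra ingredient not visible in the statement; in the paper's actual use it is exactly where the hidden constants live (the spectral spread $\rho$ in the appendix version $s^\star\ge 4\rho s\log 20\rho s$), so flagging it is a looseness of the statement, not of your argument.

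The one substantive shortfall is the iteration count. Your geometric contraction yields $s^\star\gtrsim(\rho/\kappa)\,s\,\log\!\bigl(\Delta_0\kappa/\epsilon_{s^\star}^2\bigr)$, whose logarithm measures the initial suboptimality relative to the noise floor; this is not $\mathcal{O}(\log s)$ in general, since in the paper's applications $\epsilon_{s^\star}\sim\sqrt{(s^\star+s)\log p/n}$ shrinks with $n$ while $\Delta_0=\mathfrak{L}(\mathbf{0})-\mathfrak{L}(\boldsymbol{\theta}_s)$ does not. So, as written, you prove the error bound under a slightly different (data-dependent) iteration requirement, and matching the cited form $s^\star\ge 4\rho s\log(20\rho s)$ — the form the paper actually invokes in Theorems \ref{thm:ar_1}--\ref{thm_OMP} — requires either an additional argument bounding $\Delta_0\kappa/\epsilon_{s^\star}^2$ in terms of $\rho s$, or the finer bookkeeping of the proof in \cite{zhang_omp}. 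Since the proposition as stated hides all constants inside $\mathcal{O}(s\log s)$, this is a gap in matching the advertised iteration budget rather than in the error analysis, but it is worth closing explicitly if you intend the proof to stand alone.
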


\section{Roadmap of the Thesis}

As noted earlier most of the theoretical guarantees of compressive sensing provide sharp trade-offs between the number of measurement, sparsity, and estimation accuracy when random i.i.d measurements are at disposal. However, in most practical applications of interest, the measured signals and the corresponding covariates are highly interdependent and follow specific dynamics. In Chapters \ref{chap:ar} and \ref{chap:hawkes} we will generalize these theoretical guarantees to two large classes of stationary processes, namely the autoregressive (AR) processes and generalized linear models (GLM), where the covariates are highly nonlinear and non-i.i.d. Our theoretical results provide insight into the tradeoffs in sampling requirements and a measure of dependence in these models.

In Chapters \ref{chap:css} and \ref{chap:slapmi} we provide fast iterative algorithms for compressible state-space models as well as general convex optimization problems with positivity constraints. We will use ideas from projection microscopy to develop a two-photon imaging technique that scans lines of excitation across the sample at multiple angles, recovering high-resolution images from relatively few incoherently multiplexed measurements. By using a static image of the sample as a prior, we recover neural activity from megapixel fields of view at rates exceeding 1kHz.

\chapter{Sampling Requirements for Stable Autoregressive Estimation}
\chaptermark{Robust Estimation of AR Processes}
\label{chap:ar}
Autoregressive (AR) models are among the most fundamental tools in analyzing time series. {They have been useful for modeling signals in many applications including  financial time series analysis \cite{sang2015simultaneous} and traffic modeling \cite{farokhi2014vehicular,ahmed1982application,ahmed1979analysis,barcelo2010travel,clark2003traffic,robinson2003time}.} Due to their well-known approximation property, these models are commonly used to represent stationary processes in a parametric fashion and thereby preserve  the underlying structure of these processes \cite{akaike1969fitting}. In general, the ubiquitous long-range dependencies in real-world time series, such as financial data, results in AR model fits with large orders \cite{sang2015simultaneous}. 

In this chapter, we close the gap in theory of compressed sensing for non-i.i.d. data by providing theoretical guarantees on stable estimation of autoregressive, where the history of the process takes the role of the interdependent covariates. In doing so, we relax the assumptions of i.i.d. covariates and exact sparsity common in CS. Our results indicate that utilizing sparsity recovers important information about the intrinsic frequencies of such processes. 

We consider the problem of estimating the parameters of a linear univariate autoregressive model with sub-Gaussian innovations from a limited sequence of consecutive observations. Assuming that the parameters are compressible, we analyze the performance of the $\ell_1$-regularized least squares as well as a greedy estimator of the parameters and characterize the sampling trade-offs required for stable recovery in the non-asymptotic regime. In particular, we show that {for a fixed sparsity level,} stable recovery of AR parameters is possible when the number of samples scale \emph{sub-linearly} with the AR order. Our results improve over existing sampling complexity requirements in AR estimation using the LASSO, when the sparsity level scales faster than the square root of the model order. We further derive sufficient conditions on the sparsity level that guarantee the minimax optimality of the $\ell_1$-regularized least squares estimate. Applying these techniques to simulated data as well as real-world datasets from crude oil prices and traffic speed data confirm our predicted theoretical performance gains in terms of estimation accuracy and model selection.

\section{Introduction}
Autoregressive (AR) models are among the most fundamental tools in analyzing time series. {Applications include financial time series analysis \cite{sang2015simultaneous} and traffic modeling \cite{farokhi2014vehicular,ahmed1982application,ahmed1979analysis,barcelo2010travel,clark2003traffic,robinson2003time}.} Due to their well-known approximation property, these models are commonly used to represent stationary processes in a parametric fashion and thereby preserve  the underlying structure of these processes \cite{akaike1969fitting}. In order to leverage the approximation property of AR models, often times parameter sets of very large order are required \cite{poskitt2007autoregressive}. For instance, any autoregressive moving average (ARMA) process can be represented by an AR process of infinite order. Statistical {inference} using these models is usually performed through fitting a long-order AR model to the data, which can be viewed as a truncation of the infinite-order representation \cite{shibata1980asymptotically, galbraith1997some, galbraith2001autoregression, ing2005order}. In general, the ubiquitous long-range dependencies in real-world time series, such as financial data, results in AR model fits with large orders \cite{sang2015simultaneous}.

In various applications of interest, the AR parameters fit to the data exhibit sparsity, that is, only a small number of the parameters are non-zero. Examples include autoregressive communication channel models, quasi-oscillatory data tuned around specific frequencies and financial time series \cite{baddour2005autoregressive, mann1999oscillatory, robinson2003time}. The non-zero AR parameters in these models correspond to significant time lags at which the underlying dynamics operate. {Traditional AR order selection criteria such as the Final Prediction Error (FPE) \cite{akaike1973maximum}, Akaike Information Criterion (AIC) \cite{akaike1970statistical} and Bayesian Information Criterion (BIC) \cite{schwarz1978estimating}, are based on asymptotic lower bounds on the mean squared prediction error.  Although there exist several improvements over these traditional results aiming at exploiting sparsity \cite{ing2005order,shibata1980asymptotically,wang2007regression}, the resulting criteria pertain to the asymptotic regimes and their finite sample behavior is not well understood  \cite{goldenshluger2001nonasymptotic}. Non-asymptotic results for AR estimation, such as \cite{goldenshluger2001nonasymptotic,nardi2011autoregressive}, do not fully exploit the sparsity of the underlying parameters in favor of reducing the sample complexity. In particular, for an AR process of order $p$, sufficient sampling requirements of $n \sim \mathcal{O}(p^4) \gg p$ and $n \sim \mathcal{O}(p^5) \gg p$ are established in \cite{goldenshluger2001nonasymptotic} and \cite{nardi2011autoregressive}, respectively.}  

{A relatively recent line of research employs the theory of compressed sensing (CS) for studying non-asymptotic sampling-complexity trade-offs for regularized M-estimators.} In recent years, the CS theory has become the standard framework for measuring and estimating sparse statistical models \cite{donoho2006compressed, candes2006compressive, candes2008introduction}. The theoretical guarantees of CS imply that when the number of incoherent measurements are roughly proportional to the sparsity level, then stable recovery of the model parameters is possible. {A key underlying assumption in many existing theoretical analyses of linear models is the independence and identical distribution (i.i.d.) of the covariates' structure. The matrix of covariates is either formed by fully i.i.d. elements \cite{rudelson2008sparse,baraniuk2008simple}, is based on row-i.i.d. correlated designs \cite{zhao2006model,raskutti2010restricted}, is Toeplitz-i.i.d. \cite{Toeplitz}, {or circulant i.i.d. \cite{rauhut2012restricted}}, where the design is extrinsic, fixed in advance and is independent of the underlying sparse signal. The matrix of covariates formed from the observations of an AR process does not fit into any of these categories, as the intrinsic history of the process plays the role of the covariates. Hence the underlying interdependence in the model hinders a straightforward application of existing CS results to AR estimation.} {Recent non-asymptotic results on the estimation of multi-variate AR (MVAR) processes have been relatively successful in utilizing sparsity for such dependent structures. For Gaussian and low-rank MVAR models, respectively, sub-linear sampling requirements have been established in \cite{loh2012high,han2013transition} and \cite{negahban2011estimation}, using regularized LS estimators, under bounded operator norm assumptions on the transition matrix. These assumptions are shown to be restrictive for MVAR processes with lags larger than 1 \cite{wong2016regularized}. By relaxing these boundedness assumptions for Gaussian, sub-Gaussian and heavy-tailed MVAR processes, respectively, sampling requirements of $n \sim \mathcal{O}(s \log p)$ and $\mathcal{O}((s \log p)^2)$ have been established in \cite{basu2015regularized} and \cite{ wong2016regularized, wu2016performance}. However, the quadratic scaling requirement in the sparsity level for the case of sub-Gaussian and heavy-tailed innovations incurs a significant gap with respect to the optimal guarantees of CS (with linear scaling in sparsity), particularly when the sparsity level $s$ is allowed to scale with $p$.}

{In this chapter, we consider two of the widely-used estimators in CS, namely the $\ell_1$-regularized Least Squares (LS) or the LASSO and the Orthogonal Matching Pursuit (OMP) estimator, and extend the non-asymptotic recovery guarantees of the CS theory to the estimation of univariate AR processes with compressible parameters using these estimators. In particular, we improve the aforementioned gap between non-asymptotic sampling requirements for AR estimation and those promised by compressed sensing by providing sharper sampling-complexity trade-offs which improve over existing results when the sparsity grows faster than the square root of $p$. Our focus on the analysis of univariate AR processes is motivated by the application areas of interest in this chapter which correspond to one-dimensional time series. Existing results in the literature \cite{loh2012high,han2013transition,negahban2011estimation,basu2015regularized,wong2016regularized, wu2016performance}, however, consider the MVAR case and thus are broader in scope. We will therefore compare our results to the univariate specialization of the aforementioned results.} Our main contributions can be summarized as follows:

First, we establish that {for a univariate AR process with sub-Gaussian innovations} when the number of measurements scales \emph{sub-linearly} with the product of the ambient dimension $p$ and the sparsity level $s$, i.e., $n \sim \mathcal{O}(s  (p \log p)^{1/2}) \ll p$, then stable recovery of the underlying AR parameters is possible using the LASSO and the OMP estimators, even though the covariates are highly interdependent and solely based on the history of the process. In particular, when $s \propto p^{\frac{1}{2} + \delta}$ for some $\delta \ge 0$ and the LASSO is used, our results improve upon those of \cite{wong2016regularized, wu2016performance}, when specialized to the univariate AR case, by a factor of $p^{\delta} (\log p)^{{3}/{2}}$. For the special case of Gaussian AR processes, stronger results are available which require a scaling of $n \sim \mathcal{O}(s \log p)$ \cite{basu2015regularized}. Moreover, our results provide a theory-driven choice of the number of iterations for stable estimation using the OMP algorithm, which has a significantly lower computational complexity than the LASSO.

Second, in the course of our analysis, we establish the Restricted {Eigenvalue} (RE) condition \cite{bickel2009simultaneous} for $n \times p$ design matrices formed from a realization of an AR process in a Toeplitz fashion, when $n \sim \mathcal{O}(s  (p \log p)^{1/2}) \ll p$. To this end, we invoke appropriate concentration inequalities for sums of \emph{dependent} random variables in order to capture and control the high interdependence of the design matrix. {In the special case of a white noise sub-Gaussian process, i.e., a sub-Gaussian i.i.d. Toeplitz measurement matrix, we show that our result can be strengthened from $n \sim \mathcal{O}(s (p \log p)^{1/2})$ to $n \sim \mathcal{O}(s (\log p)^2)$, which improves by a factor of $s / \log p$ over the results of \cite{Toeplitz} requiring $n \sim \mathcal{O} (s^2 \log p)$.}

Third, we establish sufficient conditions on the sparsity level which result in the minimax optimality of the $\ell_1$-regularized LS estimator. Finally, we provide simulation results as well as application to oil price and traffic data which reveal that the sparse estimates significantly outperform traditional techniques such as the Yule-Walker based estimators \cite{stoica1997introduction}. We have employed statistical tests in time and frequency domains to compare the performance of these estimators.

The rest of the chapter is organized as follows. In Section \ref{sec:ar_notations}, we will introduce the notations and problem formulation. In Section \ref{sec:ar_theory}, we will describe several methods for the estimation of the parameters of an AR process, present the main theoretical results of this chapter on robust estimation of AR parameters, and establish the minimax optimality of the $\ell_1$-regularized LS estimator. Section \ref{sec:ar_sim} includes our simulation results on simulated data as well as the real-world financial and traffic data, followed by concluding remarks in Section \ref{sec:conc}.

\section{Problem Formulation}
\label{sec:ar_notations}


Consider a univariate AR($p$) process defined by
\begin{equation}
\label{AR_def}
x_k = \theta_1 x_{k-1} + \theta_2 x_{k-2} + \cdots + \theta_p x_{k-p} + w_k =  \boldsymbol{\theta}' \mathbf{x}_{k-p}^{k-1} +w_k,
\end{equation}
where $\{w_k\}_{k=-\infty}^{\infty}$ is an i.i.d sub-Gaussian innovation sequence with zero mean and variance $\sigma^2_{\sf w}$. This process can be considered as the output of an LTI system with transfer function
\begin{equation}
\label{eq:ar_tf}
H(z) = \frac{\sigma^2_{\sf w}}{1-\sum_{\ell=1}^p \theta_\ell z^{-\ell}}.
\end{equation}

Throughout the chapter we will assume $\|\boldsymbol{\theta}\|_1 \leq 1-\eta <1$ to enforce the stability of the filter. We will refer to this assumption as \emph{the sufficient stability assumption}, since an AR process with poles within the unit circle does not necessarily satisfy $\| \boldsymbol{\theta} \|_1 < 1$. However, beyond second-order AR processes, it is not straightforward to state the stability of the process in terms of its parameters in a closed algebraic form, which in turn makes both the analysis and optimization procedures intractable.  {As we will show later, the only major requirement of our results is the boundedness of the spectral spread (also referred to as condition number) of the AR process. Although the sufficient stability condition is more restrictive, it will significantly simplify the spectral constants appearing in the analysis and clarifies the various trade-offs in the sampling bounds (See, for example, Corollary  \ref{cor:eig_conv}). }

The AR($p$) process given by $\{x_k\}_{k=-\infty}^\infty$ in (\ref{AR_def}) is stationary in the strict sense. Also by (\ref{eq:ar_tf}) the power spectral density of the process equals
\begin{equation}
\label{psd}
S(\omega)= \frac{\sigma_{\sf w}^2}{|1-\sum_{\ell=1}^p \theta_\ell e^{-j \ell \omega}|^2}.
\end{equation}

The sufficient stability assumption implies boundedness of the spectral spread of the process defined as 
\begin{equation*}
\label{def:ar_condition_nr}
\rho = \sup_{\omega} S(\omega) \Big \slash \inf_{\omega} S(\omega).
\end{equation*}
We will discuss how this assumption can be further relaxed in {Appendix \ref{app:ar_main}}. The spectral spread of stationary processes in general is a measure of how quickly the process reaches its ergodic state \cite{goldenshluger2001nonasymptotic}. An important property that we will use later in this chapter is that the spectral spread is an upper bound on the eigenvalue spread of the covariance matrix of the process of arbitrary size \cite{haykin2008adaptive}.

We will also assume that the parameter vector $\boldsymbol{\theta}$ is compressible (to be defined more precisely later), and  can be well approximated by an $s$-sparse vector where $s \ll p$. We observe $n$ consecutive snapshots of length $p$ (a total of $n+p-1$ samples) from this process given by $\{x_k\}_{k=-p+1}^n$ and aim to estimate $\boldsymbol{\theta}$ by exploiting its sparsity; to this end, we aim at addressing the following questions {in the non-asymptotic regime}:
\begin{itemize}
\item Are the conventional LASSO-type and greedy techniques suitable for estimating $\boldsymbol{\theta}$?
\item What are the sufficient conditions on $n$ in terms of $p$ and $s$, to guarantee stable recovery?
\item Given these sufficient conditions, how do these estimators perform compared to conventional AR estimation techniques?
\end{itemize}

Traditionally, the Yule-Walker (YW) equations or least squares formulations are used to fit AR models. Since these methods
do not utilize the sparse structure of the parameters, they usually require $n \gg p$ samples in order to achieve satisfactory performance. The YW equations can be expressed as
\begin{equation}
\label{yule}
\mathbf{R} \boldsymbol{\theta} = \mathbf{r}_{-p}^{-1}, \quad r_0 = \boldsymbol{\theta}' \mathbf{r}_{-p}^{-1} + \sigma^2_{\sf w},
\end{equation}
where $\mathbf{R}: = \mathbf{R}_{p\times p} = \mathbb{E}[\mathbf{x}_1^p \mathbf{x}_1^{p'}]$ is the $p \times p$ covariance matrix of the process and $r_k = \mathbb{E}[{x_ix_{i+k}}]$ is the autocorrelation of the process at lag $k$. The covariance matrix $R$ and autocorrelation vector $\mathbf{r}_{-p}^{-1}$ are typically replaced by their sample counterparts. Estimation of the AR($p$) parameters from the YW equations can be efficiently carried out using the Burg's method \cite{burg1967maximum}. Other estimation techniques include LS regression and maximum likelihood (ML) estimation. In this chapter, we will consider the Burg's method and LS solutions as comparison benchmarks. When $n$ is comparable to $p$, these two methods are known to exhibit substantial performance differences \cite{marple1987digital}.


When fitted to the real-world data, the parameter vector $\boldsymbol{\theta}$ usually exhibits a degree of sparsity. That is, only certain lags in the history have a significant contribution in determining the statistics of the process. These lags can be thought of as the intrinsic delays in the underlying dynamics. To be more precise, for a sparsity level $s < p$, we denote by $S \subset \{1,2,\cdots,p \}$ the support of the $s$ largest elements of $\boldsymbol{\theta}$ in absolute value, and by $\boldsymbol{\theta}_s$ the best $s$-term approximation to $\boldsymbol{\theta}$. We also define
\begin{equation}
\sigma_s(\boldsymbol{\theta}) := \|\boldsymbol{\theta}-\boldsymbol{\theta}_s\|_1~\text{and}~\varsigma_s(\boldsymbol{\theta}) := \|\boldsymbol{\theta}-\boldsymbol{\theta}_s\|_2,
\end{equation}
which capture the compressibility of the parameter vector $\boldsymbol{\theta}$ in the $\ell_1$ and $\ell_2$ sense, respectively. Note that by definition 
$\varsigma_s(\boldsymbol{\theta}) \leq \sigma_s(\boldsymbol{\theta})$. For a fixed $\xi \in (0,1)$, we say that $\boldsymbol{\theta}$ is \emph{$(s,\xi)$-compressible} if $\sigma_s(\boldsymbol{\theta}) = \mathcal{O}(s^{1-\frac{1}{\xi}})$ \cite{needell2009cosamp} and \emph{$(s,\xi,2)$-compressible} if $\varsigma_s(\boldsymbol{\theta}) = \mathcal{O}(s^{1-\frac{1}{\xi}})$. Note that \emph{$(s,\xi,2)$-compressibility} is a weaker condition than \emph{$(s,\xi)$-compressibility} and when $\xi = 0$, the parameter vector $\boldsymbol{\theta}$ is exactly $s$-sparse.

Finally, in this chapter, we are concerned with the compressed sensing regime where $n \ll p$, i.e., the observed data has a much smaller length than the ambient dimension of the parameter vector. The main estimation problem of this chapter can be summarized as follows: \emph{given observations $\mathbf{x}_{-p+1}^n$ from an AR process with sub-Gaussian innovations and bounded spectral spread, the goal is to estimate the unknown $p$-dimensional $(s,\xi,2)$-compressible AR parameters $\boldsymbol{\theta}$ in a stable fashion (where the estimation error is controlled) when $n \ll p$.} 

\section{Theoretical Results}\label{theoretical}
\label{sec:ar_theory}

In this section, we will describe the estimation procedures and present the main theoretical results of this chapter.

\subsection{$\ell_1$-regularized least squares estimation}
Given the sequence of observations $\mathbf{x}_{-p+1}^n$ and an estimate $\widehat{\boldsymbol{\theta}}$, the normalized estimation error can be expressed as:
\begin{equation}
\label{def:ar_L}
\mathfrak{L}\Big(\widehat{\boldsymbol{\theta}}\Big) :=  \frac{1}{n} \left \|\mathbf{x}_1^n-\mathbf{X}\widehat{\boldsymbol{\theta}}\right \|_2^2,
\end{equation}
\vspace{-.2cm}
where
\vspace{-.2cm}
\begin{equation}
\mathbf{X} = \left[ \begin{array}{cccc}
x_{n-1} & x_{n-2} & \cdots & x_{n-p}  \\
x_{n-2} & x_{n-3} & \cdots & x_{n-p-1} \\
\vdots & \vdots & \ddots & \vdots \\
x_{0} & x_{-1} & \cdots & x_{-p+1} \end{array} \right].
\end{equation}

Note that the matrix of covariates $\mathbf{X}$ is Toeplitz with highly interdependent elements. The LS solution is thus given by:

\begin{equation}
\label{eq:ar_LS}
\widehat{\boldsymbol{\theta}}_{{\sf LS}}=\argmin\limits_{\boldsymbol{\theta}\in \boldsymbol{\Theta} } \mathfrak{L}(\boldsymbol{\theta}),
\end{equation}
where 
\vspace{-.2cm}
\[\boldsymbol{\Theta} := \left \{ \boldsymbol{\theta} \in \mathbb{R}^p | \; \|\boldsymbol{\theta}\|_1 < 1- \eta \right \}\]
is the convex feasible region for which the stability of the process is guaranteed. {Note that the sufficient constraint of $\| \boldsymbol{\theta} \|_1 < 1- \eta$ is by no means necessary for stability. However, the set of all $\boldsymbol{\theta}$ resulting in stability is in general not convex. We have thus chosen to cast the LS estimator of Eq. (\ref{eq:ar_LS}) --as well as its $\ell_1$-regularized version that follows-- over a convex subset $\boldsymbol{\Theta}$, for which fast solvers exist. In addition, as we will show later, this assumption significantly clarifies the various constants appearing in our theoretical analysis. In practice, the Yule-Walker estimate is obtained without this constraint, and is guaranteed to result in a stable AR process. Similarly, for the LS estimate, this condition is relaxed by obtaining the unconstrained LS estimate and checking \emph{post hoc} for stability \cite{percival1993spectral}.} 

{Consistency of the LS estimator given by (\ref{eq:ar_LS}) was shown in \cite{sang2015simultaneous} when $n \rightarrow\infty$ for Gaussian innovations. In the case of Gaussian innovations the LS estimates correspond to conditional ML estimation and are asymptotically unbiased under mild conditions, and with $p$ fixed, the solution converges to the true parameter vector as $n \rightarrow \infty$. For fixed $p$, the estimation error is of the order $\mathcal{O}(\sqrt{p/n})$ in general \cite{Toeplitz}. However, when $p$ is allowed to scale with $n$, the convergence rate of the estimation error is not known in general.} 

In the regime of interest in this thesis, where $n \ll p$, the LS estimator is ill-posed and is typically regularized with a smooth norm. In order to capture the compressibility of the parameters, we consider the $\ell_1$-regularized LS estimator:

\begin{equation}
\label{eq:ar_lasso}
\widehat{\boldsymbol{\theta}}_{{\ell_1}}:=\argmin\limits_{\boldsymbol{\theta}\in \boldsymbol{\Theta}} \quad \mathfrak{L}(\boldsymbol{\theta})+ \gamma_n\|\boldsymbol{\theta}\|_1,
\end{equation}
where $\gamma_n > 0$ is a regularization parameter. {This estimator, deemed as the Lagrangian form of the LASSO \cite{tibshirani1996regression}, has been comprehensively studied in the sparse recovery literature \cite{wainwright2009sharp,knight2000asymptotics,meinshausen2006high} as well as AR estimation \cite{knight2000asymptotics,zhao2006model,nardi2011autoregressive,wang2007regression}. A general asymptotic consistency result for LASSO-type estimators was established in \cite{knight2000asymptotics}. Asymptotic consistency of LASSO-type estimators for AR estimation was shown in \cite{zhao2006model,wang2007regression}. For sparse models, non-asymptotic analysis of the LASSO with covariate matrices from row-i.i.d. correlated design has been established in \cite{zhao2006model,wainwright2009sharp}.}

In many applications of interest, the data correlations are exponentially decaying and negligible beyond a certain lag, and hence for large enough $p$, autoregressive models fit the data very well in the prediction error sense. {An important question is thus how many measurements are required for estimation stability? In the \emph{overdetermined} regime of $n \gg p$, the non-asymptotic properties of LASSO for model selection of AR processes has been studied in \cite{nardi2011autoregressive}, where a sampling requirement of $ n \sim \mathcal{O}(p^5)$ is established. {Recovery guarantees for LASSO-type estimators of multivariate AR parameters in the \emph{compressive} regime of $n \ll p$ are studied in \cite{loh2012high,han2013transition,negahban2011estimation,wong2016regularized,basu2015regularized,wu2016performance}. In particular, sub-linear scaling of $n$ with respect to the ambient dimension is established in \cite{loh2012high,han2013transition} for Gaussian MVAR processes and in \cite{negahban2011estimation} for low-rank MVAR processes, respectively, under the assumption of bounded operator norm of the transition matrix. In \cite{basu2015regularized} and \cite{wong2016regularized,wu2016performance}, the latter assumption is relaxed for Gaussian, sub-Gaussian, and heavy-tailed MVAR processes, respectively.} These results have significant practical implications as they will reveal sufficient conditions on $n$ with respect to $p$ as well as a criterion to choose $\gamma_n$, which result in stable estimation of $\boldsymbol{\theta}$ from a considerably short sequence of observations. The latter is indeed the setting that we consider in this chapter, where the ambient dimension $p$ is fixed and the goal is to derive sufficient conditions on $n \ll p$ resulting in stable estimation.}

It is easy to verify that {the objective function and constraints in Eq.} (\ref{eq:ar_lasso}) are convex in $\boldsymbol{\theta}$ and hence $\widehat{\boldsymbol{\theta}}_{\ell_1}$ can be obtained using standard numerical solvers. Note that the solution to (\ref{eq:ar_lasso}) might not be unique. However, we will provide error bounds that hold for all possible solutions of (\ref{eq:ar_lasso}), with high probability. 

Recall that, the Yule-Walker solution is given by
\vspace{-0.5mm}
\begin{equation}
\label{eq:ar_yw}
\widehat{\boldsymbol{\theta}}_{{\sf yw}}:=\argmin\limits_{\boldsymbol{\theta}\in \boldsymbol{\Theta}} \quad \mathfrak{J}(\boldsymbol{\theta}) = \widehat{\mathbf{R}}^{-1}\widehat{\mathbf{r}}_{-p}^{-1},
\end{equation}
\vspace{-2.5mm}

\noindent where $\mathfrak{J}(\boldsymbol{\theta}):= \|\widehat{\mathbf{R}} \boldsymbol{\theta}- \widehat{\mathbf{r}}_{-p}^{-1}\|_{{2}}$. We further consider two other sparse estimators for $\boldsymbol{\theta}$ by penalizing the Yule-Walker equations. The $\ell_1$-regularized Yule-Walker estimator is defined as:
\begin{equation}
\label{eq:ar_ywl21}
\widehat{\boldsymbol{\theta}}_{{\sf yw,\ell_{2,1}}}:=\argmin\limits_{\boldsymbol{\theta}\in \boldsymbol{\Theta}} \quad \mathfrak{J}(\boldsymbol{\theta})+ \gamma_n\|\boldsymbol{\theta}\|_1,
\end{equation}
where $\gamma_n > 0$ is a regularization parameter.
Similarly, using the robust statistics instead of the Gaussian statistics, the estimation error can be re-defined as:
\begin{equation*}
\label{def:ar_J1}
\mathfrak{J}_1(\boldsymbol{\theta}):= \|\widehat{\mathbf{R}} \boldsymbol{\theta}- \widehat{\mathbf{r}}_{-p}^{-1}\|_{{1}},
\end{equation*}
we define the $\ell_1$-regularized estimates as
\begin{equation}
\label{eq:ar_ywl11}
\widehat{\boldsymbol{\theta}}_{{\sf yw,\ell_{1,1}}}:=\argmin\limits_{\boldsymbol{\theta}\in \boldsymbol{\Theta}} \quad \mathfrak{J}_1(\boldsymbol{\theta})+ \gamma_n\|\boldsymbol{\theta}\|_1.
\end{equation}


\vspace{-.3cm}
\subsection{Greedy estimation} \label{ar:subsec_greedy}
Although there exist fast solvers for the convex problems of the type given by (\ref{eq:ar_lasso}), (\ref{eq:ar_ywl21}) and (\ref{eq:ar_ywl11}), these algorithms are polynomial time in $n$ and $p$, and may not scale well with the dimension of data. This motivates us to consider greedy solutions for the estimation of $\boldsymbol{\theta}$. In particular, we will consider and study the performance of a generalized Orthogonal Matching Pursuit (OMP) algorithm \cite{OMP, zhang_omp}. A flowchart of this algorithm is given in Table \ref{tab:aromp} for completeness. At each iteration, a new component of $\boldsymbol{\theta}$ for which the gradient of the error metric $\mathfrak{f}(\boldsymbol{\theta})$ is the largest in absolute value is chosen and added to the current support. The algorithm proceeds for a total of $s^\star = \mathcal{O}(s\log s)$ steps, resulting in an estimate with $s^\star$ components. When the error metric $\mathfrak{L}(\boldsymbol{\theta})$ is chosen, the generalized OMP corresponds to the original OMP algorithm. For the choice of the YW error metric $\mathfrak{J}(\boldsymbol{\theta})$, we denote the resulting greedy algorithm by {\sf yw}OMP.
\begin{table}[h]
\centering
\framebox{$\begin{array}{l}
\text{Input: } \mathfrak{f}(\boldsymbol{\theta}) , s^\star\\
\text{Output: } \widehat{\boldsymbol{\theta}}_{\sf OMP}=\widehat{\boldsymbol{\theta}}_{\sf OMP}^{(s^\star)}\\
\text{Initialization:}\Big\{\begin{array}{l}
\text{Start with the index set } S^{(0)}=\emptyset\\
\text{and the initial estimate }\widehat{\boldsymbol{\theta}}^{(0)}_{{\sf OMP}} = 0
\end{array}\\
\textbf{for } k=1,2,\cdots,s^\star\\
\text{  }\begin{array}{l}
j = \argmax \limits_i \left| \left( \nabla \mathfrak{f} \; \left(\widehat{\boldsymbol{\theta}}_{{\sf OMP}}^{(k-1)}\right) \right)_i\right|\\
S^{(k)}=S^{(k-1)}\cup \{j\}\\
\widehat{\boldsymbol{\theta}}_{{\sf OMP}}^{(k)} = \argmin \limits_{\support (\boldsymbol{\theta}) \subset S^{(k)}} \mathfrak{f}(\boldsymbol{\theta})
\end{array}\\
\textbf{end }\\
\end{array}$}
\caption{Generalized Orthogonal Matching Pursuit (OMP)}
\label{tab:aromp}
\end{table}

\subsection{Estimation performance guarantees} 

The main theoretical result regarding the estimation performance of the $\ell_1$-regularized LS estimator is given by the following theorem:

\begin{thm}
\label{thm:ar_1}
If $\sigma_s(\boldsymbol{\theta}) = \mathcal{O}(\sqrt{s})$, there exist positive constants ${d_0}, d_1,d_2,d_3$ and $d_4$ such that for $n > s \max \{d_0 (\log p)^2, d_1 (p \log p)^{1/2}\}$ and a choice of regularization parameter $\gamma_n = d_2 \sqrt{\frac{\log p}{n}}$, any solution $\widehat{\boldsymbol{\theta}}_{{\ell_1}}$ to (\ref{eq:ar_lasso}) satisfies the bound
\begin{equation}
\label{eq:ar1error}
\left \|\widehat{\boldsymbol{\theta}}_{{\ell_1}}-\boldsymbol{\theta}\right \|_2 \leq d_3 \sqrt{\frac{s \log p}{n}}+ \sqrt{d_3\sigma_s(\boldsymbol{\theta})}\sqrt[4]{\frac{\log p}{n}},
\end{equation}
with probability greater than $1-\mathcal{O}(\frac{1}{n^{d_4}})$. The constants {depend on the spectral spread of the process and are} explicitly given in the proof.
\end{thm}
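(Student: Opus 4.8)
The plan is to instantiate the generic $M$-estimation machinery of Proposition~\ref{negahban_thm} for the quadratic loss $\mathfrak{L}$ in~(\ref{def:ar_L}), so that the whole argument reduces to two probabilistic facts: (i) the RSC condition~(\ref{RSC}) holds for $\mathfrak{L}$ with a constant $\kappa$ that is bounded below in terms of the spectral spread; and (ii) the choice $\gamma_n = d_2\sqrt{\log p/n}$ dominates $2\|\nabla\mathfrak{L}(\boldsymbol{\theta})\|_\infty$, as required by~(\ref{reg}). Once both hold on an event of probability $1-\mathcal{O}(n^{-d_4})$, substituting $\kappa\asymp 1$ and $\gamma_n\asymp\sqrt{\log p/n}$ into the error bound~(\ref{error_bound}) yields exactly~(\ref{eq:ar1error}); the hypothesis $\sigma_s(\boldsymbol{\theta})=\mathcal{O}(\sqrt{s})$ is precisely what keeps the residual $\sigma_s$-dependent (tolerance) term of the same order as the leading one. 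Since $\widehat{\boldsymbol{\theta}}_{\ell_1}$ is the minimizer over the convex set $\boldsymbol{\Theta}$ and $\boldsymbol{\theta}\in\boldsymbol{\Theta}$ by the sufficient stability assumption, the usual basic inequality confines the error vector $\widehat{\boldsymbol{\theta}}_{\ell_1}-\boldsymbol{\theta}$ to the cone $\{\boldsymbol{\psi}:\|\boldsymbol{\psi}_{S^c}\|_1\le 3\|\boldsymbol{\psi}_S\|_1+4\sigma_s(\boldsymbol{\theta})\}$, so it suffices to establish RSC on that cone.

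Because $\mathfrak{L}$ is quadratic, $\mathfrak{D}_{\mathfrak{L}}(\boldsymbol{\psi},\boldsymbol{\theta})=\tfrac1n\|\mathbf{X}\boldsymbol{\psi}\|_2^2$, so (i) is a restricted-eigenvalue statement for the Toeplitz design $\mathbf{X}$. I would prove the slack form $\tfrac1n\|\mathbf{X}\boldsymbol{\psi}\|_2^2\ge\kappa_1\|\boldsymbol{\psi}\|_2^2-\kappa_2\tfrac{\log p}{n}\|\boldsymbol{\psi}\|_1^2$ for all $\boldsymbol{\psi}\in\mathbb{R}^p$ (Raskutti--Wainwright--Yu style), and then restrict to the cone, where $\|\boldsymbol{\psi}\|_1\lesssim\sqrt{s}\|\boldsymbol{\psi}\|_2+\sigma_s(\boldsymbol{\theta})$, to recover~(\ref{RSC}). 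The population version is benign: $\mathbb{E}[\tfrac1n\mathbf{X}^{\!\top}\mathbf{X}]=\mathbf{R}$, and the sufficient stability assumption gives $\lambda_{\min}(\mathbf{R})\ge\inf_\omega S(\omega)\ge\sigma_{\sf w}^2/(1+\|\boldsymbol{\theta}\|_1)^2$, a constant. The real work is the fluctuation $\tfrac1n\|\mathbf{X}\boldsymbol{\psi}\|_2^2-\boldsymbol{\psi}^{\!\top}\mathbf{R}\boldsymbol{\psi}=\tfrac1n\sum_k(u_k^2-\mathbb{E}u_k^2)$, where $u_k:=\boldsymbol{\psi}^{\!\top}\mathbf{x}_{k-p}^{k-1}$ is itself a scalar stationary sub-Gaussian process, thanks to the $\ell_1$-summability of the AR impulse response (which the spectral-spread bound guarantees). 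For fixed $\boldsymbol{\psi}$ I would linearize $u_k$ as a geometrically decaying linear functional of the i.i.d.\ innovations and apply a Bernstein/Hanson--Wright-type deviation bound for this \emph{short-memory dependent} sum, then pass to a uniform bound over all $2s$-sparse directions through an $\varepsilon$-net of log-cardinality $\mathcal{O}(s\log p)$. Balancing the deviation against this entropy term is what produces the sampling requirement $n\gtrsim\max\{s(\log p)^2,\,s(p\log p)^{1/2}\}$: the $(\log p)^2$ piece is what the i.i.d.-Toeplitz analysis already costs, while the extra $(p\log p)^{1/2}$ factor is the price paid by the available dependent-concentration inequality for the AR interdependence.

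For (ii), note $\nabla\mathfrak{L}(\boldsymbol{\theta})=-\tfrac2n\mathbf{X}^{\!\top}\mathbf{w}$, where $\mathbf{w}$ collects the innovations, so $\|\nabla\mathfrak{L}(\boldsymbol{\theta})\|_\infty=\tfrac2n\max_{j\le p}\bigl|\sum_m x_{m-j}w_m\bigr|$. For each $j$ the partial sums form a martingale with respect to the innovation filtration (because $w_m$ is independent of the past while $x_{m-j}$ is measurable with respect to it), with conditionally sub-Gaussian increments of scale $|x_{m-j}|\sigma_{\sf w}$ and predictable quadratic variation $\sigma_{\sf w}^2\sum_m x_{m-j}^2\lesssim n\sigma_{\sf w}^2 r_0$ on a high-probability event (the $\boldsymbol{\psi}=\mathbf{e}_j$ instance of the concentration above). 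A Freedman-type inequality plus a union bound over $j\in[p]$ then gives $\|\nabla\mathfrak{L}(\boldsymbol{\theta})\|_\infty\lesssim\sigma_{\sf w}\sqrt{r_0}\,\sqrt{\log p/n}$, and $r_0\le\sup_\omega S(\omega)$ is again a constant by sufficient stability, so $\gamma_n=d_2\sqrt{\log p/n}$ with $d_2$ depending only on the spectral spread is admissible. Intersecting the two high-probability events and feeding $\kappa\asymp 1$, $\gamma_n\asymp\sqrt{\log p/n}$ into~(\ref{error_bound}) yields~(\ref{eq:ar1error}), with all constants tracked through the spectral-spread bounds.

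The main obstacle is clearly the restricted-eigenvalue step: unlike in classical CS, the rows and entries of $\mathbf{X}$ are strongly dependent, so off-the-shelf sub-Gaussian matrix concentration does not apply, and one must control $\tfrac1n\sum_k u_k^2$ for a dependent sequence while keeping the constants transparent enough to expose the spectral-spread dependence. I expect the cleanest route is the linearization-in-innovations plus a quadratic-form concentration inequality adapted to the short memory (or, alternatively, a block/decoupling argument), accepting the suboptimal $(p\log p)^{1/2}$ factor in general and recovering the sharper $(\log p)^2$ rate in the white-noise special case, where the design collapses to an i.i.d.\ Toeplitz matrix.
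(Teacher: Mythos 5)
Your overall architecture coincides with the paper's: the basic inequality giving the cone constraint, the gradient sup-norm bound via the martingale structure of $\frac{1}{n}\mathbf{X}'\mathbf{w}$ with respect to the innovation filtration plus a union bound over $p$ coordinates (the paper uses a sub-Gaussian martingale concentration result of van de Geer, Lemma \ref{hoeff_dep}, essentially your Freedman step), and the final assembly through Proposition \ref{negahban_thm} with $\kappa$ bounded below by a spectral-spread constant. Those parts are fine.

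The genuine gap is in your restricted-eigenvalue step. You reduce everything to a Hanson--Wright/Bernstein-type deviation bound for $\frac{1}{n}\sum_k (u_k^2-\mathbb{E}u_k^2)$ with $u_k=\boldsymbol{\psi}'\mathbf{x}_{k-p}^{k-1}$, uniformly over an $\varepsilon$-net of sparse directions of log-cardinality $\mathcal{O}(s\log p)$, and you assert that balancing deviation against entropy ``produces'' $n\gtrsim s\max\{(\log p)^2,(p\log p)^{1/2}\}$. No such off-the-shelf inequality exists for quadratic forms of \emph{sub-Gaussian} (non-Gaussian) linear processes: the Gaussian spectral arguments give sub-exponential tails and would yield $n\sim s\log p$ (better than the theorem, and known only for Gaussian innovations), while the sub-Gaussian mixing-based bounds in the literature are exactly what cost \cite{wong2016regularized} a quadratic $s^2$ factor. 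So the one probabilistic ingredient your plan hinges on is neither cited nor derivable by routine means, and its quantitative form is precisely what determines whether the advertised sampling condition, and hence the theorem as stated, comes out.

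The paper avoids this by never taking a union bound over directions at all. It applies Rudzkis's concentration inequality for biased sample autocorrelations of a stationary linear process with sub-Gaussian innovations (Lemma \ref{conc_biased}) to each of the $p^2$ entries of $\widehat{\mathbf{R}}-\mathbf{R}$, so the union bound only needs the exponent to beat roughly $\log\bigl(p^2(n+p)\bigr)\asymp\log p$; the transfer to sparse vectors is then deterministic, via $|\boldsymbol{\psi}'(\widehat{\mathbf{R}}-\mathbf{R})\boldsymbol{\psi}|\le t\,\|\boldsymbol{\psi}\|_1^2\le t\,s_\star\|\boldsymbol{\psi}\|_2^2$ (Lemmas \ref{lem:re}--\ref{lem:re2}), followed by the Bickel--Ritov--Tsybakov conversion of RE of order $(m+1)s$ into RSC on the cone (Lemma \ref{RE_RSC}), rather than your Raskutti--Wainwright--Yu slack-form route. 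The $(p\log p)^{1/2}$ factor in the theorem is an artifact of the $(n+k)$ prefactor and the $t^{3/2}\sqrt{n+k}$ term in Rudzkis's exponent under this \emph{cheap} union bound; if you instead had to push the same inequality through an $\varepsilon$-net, the exponent would have to dominate $s\log p$ rather than $\log p$, and the resulting sampling requirement would be strictly worse than the one claimed. To repair your proof you would either need to supply a genuinely sub-exponential dependent Hanson--Wright inequality for sub-Gaussian AR covariates (a substantial result in its own right), or switch to the paper's entrywise-concentration strategy.
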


Similarly, the following theorem characterizes the estimation performance bounds for the OMP algorithm:
\begin{thm}
\label{thm_OMP}
If $\boldsymbol{\theta}$ is $(s,\xi,2)$-compressible for some $\xi < 1/2$, there exist positive constants ${d'_0}, d'_1,d'_{2},d'_{3}$ and $d'_4$ such that for {{$n > s \log s \max \{d'_0 (\log p)^2, d'_1 {(p \log p)^{1/2}}\}$}}, the OMP estimate satisfies the bound
\begin{equation}
\label{eq:aromp}
\left \|\widehat{\boldsymbol{\theta}}_{{\sf OMP}}-\boldsymbol{\theta}\right\|_2 \leq d'_{2} \sqrt{\frac{s \log s \log p}{n}} + d'_{3} \frac{\log s}{s^{{\frac{1}{\xi}-2}}}
\end{equation}
after $s^\star ={ 4{\rho}s \log {20 \rho s}}$ iterations with probability greater than $1-\mathcal{O}\left(\frac{1}{n^{d'_4}}\right)$. The constants {depend on the spectral spread of the process and are} explicitly given in the proof.
\end{thm}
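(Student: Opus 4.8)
The plan is to derive the bound from the generic OMP guarantee of Proposition~\ref{prop:omp_gen} applied to the least-squares cost $\mathfrak{L}$ defined in~\eqref{def:ar_L}. Two ingredients are needed: (i) a restricted strong convexity bound for $\mathfrak{L}$, equivalently a restricted eigenvalue bound for the Toeplitz design $\mathbf{X}$, of the form $\mathfrak{D}_{\mathfrak{L}}(\boldsymbol{\psi},\boldsymbol{\theta}) = \frac1n\|\mathbf{X}\boldsymbol{\psi}\|_2^2 \ge \kappa\|\boldsymbol{\psi}\|_2^2$ uniformly over all $\boldsymbol{\psi}$ supported on at most $s^\star + s$ coordinates, with $\kappa$ depending only on the spectral spread $\rho$; and (ii) a high-probability bound on $\|\nabla\mathfrak{L}(\boldsymbol{\theta}_s)\|_\infty$. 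Given these, Proposition~\ref{prop:omp_gen} with $s^\star = 4\rho s\log(20\rho s) = \mathcal{O}(s\log s)$ gives $\|\widehat{\boldsymbol{\theta}}^{(s^\star)}_{\sf OMP} - \boldsymbol{\theta}_s\|_2 \le \sqrt{6}\,\epsilon_{s^\star}/\kappa$ with $\epsilon_{s^\star}$ bounded as in~\eqref{eps_bound}, and the stated error bound then follows from the triangle inequality $\|\widehat{\boldsymbol{\theta}}_{\sf OMP} - \boldsymbol{\theta}\|_2 \le \|\widehat{\boldsymbol{\theta}}_{\sf OMP} - \boldsymbol{\theta}_s\|_2 + \varsigma_s(\boldsymbol{\theta})$ together with $\varsigma_s(\boldsymbol{\theta}) = \mathcal{O}(s^{1-1/\xi})$.

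For the gradient I would use the matrix form of the AR recursion~\eqref{AR_def}, $\mathbf{x}_1^n = \mathbf{X}\boldsymbol{\theta} + \mathbf{w}_1^n$, to write $\nabla\mathfrak{L}(\boldsymbol{\theta}_s) = -\tfrac{2}{n}\mathbf{X}'\mathbf{w}_1^n - \tfrac{2}{n}\mathbf{X}'\mathbf{X}(\boldsymbol{\theta}-\boldsymbol{\theta}_s)$. The first term is a vector of normalized martingale sums $\tfrac1n\sum_k x_{k-j}w_k$, each increment pairing an innovation with a covariate from the strict past; a Bernstein/Azuma-type inequality for these sums of the AR process plus a union bound over the $p$ coordinates gives $\tfrac{2}{n}\|\mathbf{X}'\mathbf{w}_1^n\|_\infty = \mathcal{O}\!\big(\sqrt{\log p/n}\,\big)$ with probability $1-\mathcal{O}(n^{-d_4'})$, the constant scaling with $\rho$ and $\sigma^2_{\sf w}$. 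The second, bias term I would handle by replacing $\tfrac1n\mathbf{X}'\mathbf{X}$ by $\mathbf{R}$ and bounding $\|\mathbf{R}(\boldsymbol{\theta}-\boldsymbol{\theta}_s)\|_\infty$ by a multiple of the compressibility of $\boldsymbol{\theta}$, controlling the residual $\tfrac1n\|(\mathbf{X}'\mathbf{X}-n\mathbf{R})(\boldsymbol{\theta}-\boldsymbol{\theta}_s)\|_\infty$ with the same concentration estimates as in step~(i). Collecting terms, $\|\nabla\mathfrak{L}(\boldsymbol{\theta}_s)\|_\infty \lesssim \sqrt{\log p/n} + \rho\,\sigma^2_{\sf w}\,\varsigma_s(\boldsymbol{\theta})$, whence $\epsilon_{s^\star} \lesssim \sqrt{s\log s}\,\big(\sqrt{\log p/n} + \rho\,\sigma^2_{\sf w}\,\varsigma_s(\boldsymbol{\theta})\big)$ by~\eqref{eps_bound}.

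The crux is step~(i): establishing the restricted eigenvalue condition~\eqref{RSC} for the Toeplitz matrix $\mathbf{X}$, whose rows are not independent but a single realization of the AR process. By the sufficient stability assumption the power spectral density~\eqref{psd} is bounded away from zero, so $\boldsymbol{\psi}'\mathbf{R}\boldsymbol{\psi} \ge \big(\inf_\omega S(\omega)\big)\|\boldsymbol{\psi}\|_2^2$; it therefore suffices to show that $\sup\big\{\,|\boldsymbol{\psi}'(\tfrac1n\mathbf{X}'\mathbf{X}-\mathbf{R})\boldsymbol{\psi}| : \|\boldsymbol{\psi}\|_2=1,\ \support(\boldsymbol{\psi})\le s^\star+s\,\big\} \le \tfrac12\inf_\omega S(\omega)$ on a high-probability event, and then take $\kappa = \tfrac12\inf_\omega S(\omega)$. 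To this end I would expand $x_k$ in its moving-average representation $x_k = \sum_{\ell\ge 0}h_\ell w_{k-\ell}$, whose coefficients decay geometrically at a rate governed by $\eta$, so that each entry of $\mathbf{X}'\mathbf{X}$ is, up to a negligible truncation error after retaining $\mathcal{O}(\log n)$ lags, a quadratic form in the i.i.d.\ sub-Gaussian innovations. A Hanson--Wright-type inequality for such quadratic forms, combined with a split of the sample-autocovariance error into a small-lag band of $\mathcal{O}(\log n)$ lags (controlled by a sub-exponential tail bound and a union bound over these lags) and a large-lag tail where the true autocovariances are exponentially small (controlled by Bartlett-type variance estimates), and finally a union bound over the $\binom{p}{s^\star+s}$ sparse supports, yields the deviation bound under the sampling requirement $n > s\log s\,\max\{d'_0(\log p)^2,\, d'_1(p\log p)^{1/2}\}$, the two regimes corresponding to the band and tail contributions. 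The constant $s^\star = 4\rho s\log(20\rho s)$ is chosen so that it exceeds the multiple of $s\log s$ dictated by Proposition~\ref{prop:omp_gen} (a function of the restricted condition number, itself bounded by $\rho$) while remaining $\mathcal{O}(s\log s)$ as in~\eqref{sstar}.

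Finally I would assemble the pieces: on the intersection of the restricted eigenvalue event and the gradient event, whose complement has probability $\mathcal{O}(n^{-d_4'})$, Proposition~\ref{prop:omp_gen} gives $\|\widehat{\boldsymbol{\theta}}_{\sf OMP} - \boldsymbol{\theta}_s\|_2 \le \tfrac{\sqrt6}{\kappa}\epsilon_{s^\star} \lesssim \tfrac{1}{\kappa}\sqrt{s\log s}\big(\sqrt{\log p/n} + \rho\sigma^2_{\sf w}\varsigma_s(\boldsymbol{\theta})\big)$; adding $\varsigma_s(\boldsymbol{\theta})$ through the triangle inequality and substituting $\varsigma_s(\boldsymbol{\theta}) = \mathcal{O}(s^{1-1/\xi})$ produces a bound of the form $d'_2\sqrt{s\log s\log p/n} + d'_3\,\tfrac{\log s}{s^{1/\xi-2}}$, the second term absorbing both the $\sqrt{s\log s}\,\varsigma_s(\boldsymbol{\theta})$ and the bare $\varsigma_s(\boldsymbol{\theta})$ contributions (the former dominating since $\xi<1/2$), with $d'_2,d'_3$ collecting the spectral constants. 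I expect the entire difficulty to lie in step~(i): obtaining a concentration inequality for the quadratic functionals of the dependent AR sequence that is simultaneously sharp enough in $n$ and uniform enough over sparse supports to yield the $s(p\log p)^{1/2}$ scaling, rather than the weaker $s^2\log p$ that a naive entrywise control of $\mathbf{X}'\mathbf{X}$ would give; this is also the point where the argument must be kept parallel to, but tighter than, the restricted eigenvalue analysis underlying Theorem~\ref{thm:ar_1}.
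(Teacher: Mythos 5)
Your overall skeleton matches the paper's: invoke the generalized OMP guarantee with $s^\star = 4\rho s\log(20\rho s)$, bound $\|\nabla\mathfrak{L}(\boldsymbol{\theta}_S)\|_\infty$ by a martingale concentration term of order $\sqrt{\log p/n}$ plus a bias term $\|\mathbf{R}(\boldsymbol{\theta}-\boldsymbol{\theta}_S)\|_\infty \lesssim \lambda_{\max}\varsigma_s(\boldsymbol{\theta})$, and finish with the triangle inequality and $\varsigma_s(\boldsymbol{\theta})=\mathcal{O}(s^{1-1/\xi})$; those parts are essentially the paper's argument. The gap is in your step (i), which you yourself identify as the crux but do not actually establish. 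You dismiss entrywise control of $\frac1n\mathbf{X}'\mathbf{X}-\mathbf{R}$ as necessarily yielding $n\sim s^2\log p$, but this mis-locates where the paper's improvement comes from: the paper \emph{does} argue entrywise, via $|\boldsymbol{\psi}'(\widehat{\mathbf{R}}-\mathbf{R})\boldsymbol{\psi}|\le \big(\max_{i,j}|\widehat{R}_{ij}-R_{ij}|\big)\|\boldsymbol{\psi}\|_1^2$ on sparse vectors (Lemma~\ref{lem:re}) combined with Rudzkis' large-deviation inequality for sample autocovariances of linear processes with sub-Gaussian innovations (Lemma~\ref{conc_biased}), whose tail behaves like $\exp\big(-c\,t^2 n/(c'+t^{3/2}\sqrt{n})\big)$ rather than $\exp(-c n t^2)$. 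Setting the entrywise tolerance $t\propto 1/s$ in this non-sub-Gaussian tail is exactly what produces the two regimes $n\gtrsim s(\log p)^2$ and $n\gtrsim s(p\log p)^{1/2}$ (and, after the $\log s$ inflation of the sparsity to order $s^\star+s$, the stated requirement); the quadratic-in-$s$ loss you fear only arises if one uses a sub-Gaussian entrywise bound. The RE-to-RSC transfer is then Lemma~\ref{RE_RSC}, with $\kappa$ depending only on $\sigma_{\sf w}^2$ and $\eta$.

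Your proposed replacement — truncate the moving-average representation at $\mathcal{O}(\log n)$ lags, apply a Hanson--Wright bound to the resulting quadratic form in the innovations, split the autocovariance error into a small-lag band and a large-lag tail, and union-bound over the $\binom{p}{s^\star+s}$ supports — is only asserted to yield the target scaling; the claim that ``the two regimes correspond to the band and tail contributions'' is not backed by any computation, and it is not how the $(p\log p)^{1/2}$ term actually arises. Moreover, this route hits a concrete obstacle you do not address: for a fixed sparse $\boldsymbol{\psi}$, the matrix of the quadratic form $\frac1n\|\mathbf{X}\boldsymbol{\psi}\|_2^2$ has operator norm controlled by $\sup_\omega|\Psi(\omega)|^2\sup_\omega S(\omega)\lesssim \|\boldsymbol{\psi}\|_1^2\lesssim s$, so a naive Hanson--Wright application followed by a union bound over supports reintroduces a quadratic-in-$s$ sample requirement — precisely the $n\sim s^2(\log p)^2$ barrier of the existing sub-Gaussian results the theorem is meant to improve upon (the $n\sim s\log p$ results cited for the Gaussian case rely on Gaussian-specific arguments). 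So as written, the central concentration step of your proof is missing, and the mechanism you propose in its place is neither carried out nor obviously capable of delivering the stated $s\log s\,(p\log p)^{1/2}$ bound; the missing ingredient is the autocovariance concentration of Lemma~\ref{conc_biased} (or an equivalent result) applied entrywise together with the $\|\boldsymbol{\psi}\|_1^2\le (s^\star+s)\|\boldsymbol{\psi}\|_2^2$ reduction.
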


The results of Theorems \ref{thm:ar_1} and \ref{thm_OMP} suggest that under suitable compressibility assumptions on the AR parameters, one can estimate the parameters reliably using the $\ell_1$-regularized LS and OMP estimators with much fewer measurements compared to those required by the Yule-Walker/LS based methods. To illustrate the significance of these results further, several remarks are in order:

{\noindent  \textit{\textbf{Remark 1.}} The sufficient stability assumption of $\|\boldsymbol{\theta}\|_1 \leq 1- \eta <1$ is restrictive compared to the class of stable AR models. In general, the set of parameters $\boldsymbol{\theta}$ which admit a stable AR process is not necessarily convex. This condition ensures that the resulting estimates of (\ref{eq:ar_lasso})-(\ref{eq:ar_ywl11}) pertain to stable AR processes and at the same time can be obtained by convex optimization techniques, for which fast solvers exist. A common practice in AR estimation, however, is to solve for the unconstrained problem and check for the stability of the resulting AR process \emph{post hoc}. In our numerical studies in Section \ref{sec:ar_sim}, this procedure resulted in a stable AR process in all cases. Nevertheless, the stability guarantees of Theorems \ref{thm:ar_1} and \ref{thm_OMP} hold for the larger class of stable AR processes, even though they may not necessarily be obtained using convex optimization techniques. We further discuss this generalization in Appendix \ref{app:ar_main}.
}

\noindent  \textit{\textbf{Remark 2.}} {When $\boldsymbol{\theta} = \mathbf{0}$, i.e., the process is a sub-Gaussian white noise and hence the matrix $\mathbf{X}$ is i.i.d. Toeplitz with sub-Gaussian elements, the constants $d_1$ and $d'_1$ in Theorems 1 and 2 vanish, and the measurement requirements strengthen to $n > d_0 s (\log p)^2$  and $n > d'_0 s \log s (\log p)^2$, respectively. Comparing this sufficient condition with that of \cite{Toeplitz} given by $n \sim \mathcal{O}(s^2 \log p)$ reveals an improvement of order $s (\log p)^{-1}$ by our results.}

\noindent  \textit{\textbf{Remark 3.}} {When $\boldsymbol{\theta} \neq \mathbf{0}$, the dominant measurement requirements are $n > d_1 s { (p \log p)^{1/2}}$ and $n > d'_1 s \log s {(p \log p)^{1/2}}$.} Comparing the sufficient condition $n \sim \mathcal{O}(s {(p \log p)^{1/2}})$ of Theorem \ref{thm:ar_1} with those of \cite{donoho2006compressed, candes2006compressive, candes2008introduction,wainwright2009sharp} for linear models with i.i.d. measurement matrices or row-i.i.d. correlated designs \cite{zhao2006model,raskutti2010restricted} given by $n \sim \mathcal{O}(s \log p)$  a loss of order $\mathcal{O}({(p/ \log p)^{1/2}})$ is incurred, although all these conditions require $n \ll p$. However, the loss seems to be natural as it stems from {a major difference of our setting as compared to traditional CS:} each row of the measurement matrix $\mathbf{X}$ highly depends on the entire observation sequence $\mathbf{x}_1^n$, whereas in traditional CS, each row of the measurement matrix is only related to the corresponding measurement. Hence, the aforementioned loss can be viewed as the price of self-averaging of the process accounting for the low-dimensional nature of the covariate sample space and the high inter-dependence of the covariates to the observation sequence. Recent results on M-estimation of sparse {MVAR} processes with sub-Gaussian and {heavy-tailed} innovations \cite{wu2016performance,wong2016regularized} require $n \sim \mathcal{O}(s^2 (\log p)^2)$ {when specialized to the univariate case,} which compared to our results improve the loss of $\mathcal{O}({(p/\log p)^{1/2}})$ to $(\log p)^2$ with the additional cost of quadratic requirement in the sparsity $s$. {However, in the over-determined regime of $s \propto p^{\frac{1}{2} + \delta}$ for some $\delta \ge 0$, our results imply $n \sim \mathcal{O} (p^{1 + \delta} (\log p)^{1/2})$, providing a saving of order ${p^{\delta} (\log p)^{3/2}}$ over those of \cite{wu2016performance,wong2016regularized}.}

\noindent \textit{\textbf{Remark 4.}} It can be shown that the estimation error for the LS method {in general} scales as $\sqrt{p/n}$ \cite{Toeplitz} which is not desirable when $n \ll p$. Our result, however, guarantees a much smaller error rate of the order $\sqrt{s \log p/n}$. Also, the sufficiency conditions of Theorem \ref{thm_OMP} require high compressibility of the parameter vector $\boldsymbol{\theta}$  ($\xi < 1/2$), whereas Theorem \ref{thm:ar_1} does not impose any extra restrictions on $\xi \in (0,1)$. Intuitively speaking, these two comparisons reveal the trade-off between computational complexity and measurement/compressibility requirements for convex optimization vs. greedy techniques, which are well-known for linear models \cite{Bruckstein2009}.

\noindent  \textit{\textbf{Remark 5.}} The condition $\sigma_s(\boldsymbol{\theta})=\mathcal{O}(\sqrt{s})$ in Theorem \ref{thm:ar_1} is not restricting for the processes of interest in this chapter. This is due to the fact that the boundedness assumption on the spectral spread implies an exponential decay of the parameters (See Lemma 1 of \cite{goldenshluger2001nonasymptotic}). Finally, {the constants $d_1$, $d'_{1}$ are increasing with respect to the spectral spread of the process $\rho$. Intuitively speaking, the closer the roots of the filter given by (\ref{eq:ar_tf}) get to the unit circle (corresponding to larger $\rho$ and smaller $\eta$), the slower the convergence of the process will be to its ergodic state, and hence more measurements are required.} A similar dependence to the spectral spread has appeared in the results of \cite{goldenshluger2001nonasymptotic} for $\ell_2$-regularized least squares estimation of AR processes. 


\noindent  \textit{\textbf{Remark 6.}} The main ingredient in the proofs of Theorems \ref{thm:ar_1}  and \ref{thm_OMP} is to establish the restricted eigenvalue (RE) condition introduced in \cite{bickel2009simultaneous} for the covariates matrix $\mathbf{X}$. Establishing the RE condition for the covariates matrix $\mathbf{X}$ is a nontrivial problem due to the high interdependence of the matrix entries. We will indeed show that if the sufficient stability assumption holds, then with $n \sim \mathcal{O}\left (s \max \{d_0 (\log p)^2, d_1 {(p \log p)^{1/2}} \} \right)$ the sample covariance matrix is sharply concentrated around the true covariance matrix and hence the RE condition can be guaranteed. All constants appearing in Theorems \ref{thm:ar_1} and \ref{thm_OMP} are explicitly given in Appendix \ref{app:ar_main}. As a typical numerical example, for $\eta = 0.9$ and $\sigma_w^2 = 0.1$, the constants of Theorem \ref{thm:ar_1} can be chosen as $d_0 \approx 1000, {d_1 \approx 3 \times 10^{8}}, {d_2 \approx  0.15} , d_3 \approx 140$, and  $d_4 = 1$. The full proofs are given in Appendix \ref{app:ar_main}.

\subsection{Minimax optimality}
\label{sec:ar_minimax}
{In this section}, we establish the minimax optimality of the $\ell_1$-regularized LS estimator for AR processes with sparse parameters. To this end, we will focus on the class $\mathcal{H}$ of stationary processes which admit an AR($p$) representation with $s$-sparse parameter $\boldsymbol{\theta}$ such that $\|\boldsymbol{\theta}\|_1 \leq 1-\eta <1$. The theoretical results of this section are inspired by the results of \cite{goldenshluger2001nonasymptotic} on non-asymptotic order selection {via $\ell_2$-regularized LS estimation in the absence of sparsity}, and extend them by studying the $\ell_1$-regularized LS estimator of (\ref{eq:ar_lasso}). 

We define the maximal \textit{estimation} risk over $\mathcal{H}$ to be
\begin{equation}
\label{eq:ar_minimax_risk}
\mathcal{R}_{\sf est} (\widehat{\boldsymbol{\theta}}):= \sup_{\mathcal{H}} \left(\mathbb{E}\left[\|\widehat{\boldsymbol{\theta}}-\boldsymbol{\theta}\|_2^2\right]\right)^{1/2}.
\end{equation}
The minimax estimator is the one minimizing the maximal estimation risk, i.e.,
\begin{equation}
\label{eq:ar_minimax_est}
\widehat{\boldsymbol{\theta}}_{\sf minimax} := \argmin\limits_{\boldsymbol{\theta}\in \boldsymbol{\Theta}} \quad \mathcal{R}_{\sf est} (\widehat{\boldsymbol{\theta}}).
\end{equation}
Minimax estimator $\widehat{\boldsymbol{\theta}}_{\sf minimax}$, in general, cannot be constructed explicitly \cite{goldenshluger2001nonasymptotic}, and the common practice in non-parametric estimation is to construct an estimator $\widehat{\boldsymbol{\theta}}$ which is \emph{order optimal} as compared to the minimax estimator:
\begin{equation}
\label{eq:ar_minimax_optinorder}
\mathcal{R}_{\sf est} (\widehat{\boldsymbol{\theta}}) \leq L \mathcal{R}_{\sf est} (\widehat{\boldsymbol{\theta}}_{\sf minimax}).
\end{equation}
with $L \ge 1$ being a constant. One can also define the minimax \textit{prediction} risk by the maximal prediction error over all possible realizations of the process:
\begin{equation}
\label{eq:ar_minimax_pred_risk}
\mathcal{R}^2_{\sf pre} (\widehat{\boldsymbol{\theta}}) := \sup_{\mathcal{H}} \mathbb{E}\left[\left(x_{k}-\widehat{\boldsymbol{\theta}}'\mathbf{x}_{k-p}^{k-1}\right)^2 \right].
\end{equation}
In \cite{goldenshluger2001nonasymptotic}, it is shown that an $\ell_2$-regularized LS estimator with an {order} $p^\star = \mathcal{O} (\log n) $ is minimax optimal. This order pertains to the denoising regime where $n \gg p$. Hence, in order to capture long order lags of the process, one requires a sample size exponentially large in $p$, which may make the estimation problem computationally infeasible. For instance, consider a $2$-sparse parameter with only $\theta_1$ and $\theta_p$ being non-zero. Then, in order to achieve minimax optimality, $n \sim \mathcal{O}(2^p)$ measurements are required. In contrast, in the compressive regime where $s, n \ll p$, the goal, instead of selecting $p$,  is to find conditions on the sparsity level $s$, so that for a given $n$ and large enough $p$, the $\ell_1$-regularized estimator is minimax optimal without explicit knowledge of the value of $s$ (See for example, \cite{candes2006modern}).

In the following proposition, we establish the minimax optimality of the $\ell_1$-regularized estimator over the class of sparse AR processes with $\boldsymbol{\theta} \in \boldsymbol{\Theta}$:
\begin{prop}
\label{thm:ar_minimax}
Let $\mathbf{x}_1^n$ be samples of an AR process with $s$-sparse parameters satisfying $\|\boldsymbol{\theta}\|_1 \leq 1-\eta$ and $s  \le  \min \left \{\frac{1- \eta}{\sqrt{8\pi} \eta} \sqrt{\frac{n}{\log p}}\ , \frac{n}{d_1{(p \log p)^{1/2}}}, {\frac{n}{d_0 (\log p)^2}} \right\}$. Then, we have:
\begin{equation*}
\mathcal{R}_{\sf est} (\widehat{\boldsymbol{\theta}}_{\ell_1}) \leq L \mathcal{R}_{\sf est} (\widehat{\boldsymbol{\theta}}_{\sf minimax}).
\end{equation*}
where $L$ is a constant and is only a function of $\eta$ and $\sigma_{\sf w}^2$ and is explicitly given in the proof.
\end{prop}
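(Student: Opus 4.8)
The plan is to sandwich both risks between multiples of $\sqrt{s\log p / n}$. Since the minimax estimator is by construction no worse than $\widehat{\boldsymbol{\theta}}_{\ell_1}$, the inequality $\mathcal{R}_{\sf est}(\widehat{\boldsymbol{\theta}}_{\sf minimax}) \le \mathcal{R}_{\sf est}(\widehat{\boldsymbol{\theta}}_{\ell_1})$ is automatic; what must be proved is a matching lower bound on the minimax risk of the same order as the upper bound that Theorem \ref{thm:ar_1} supplies for $\widehat{\boldsymbol{\theta}}_{\ell_1}$. Concretely I would establish $\mathcal{R}_{\sf est}^2(\widehat{\boldsymbol{\theta}}_{\ell_1}) \le c_1 \frac{s\log p}{n}$ and $\mathcal{R}_{\sf est}^2(\widehat{\boldsymbol{\theta}}_{\sf minimax}) \ge c_2 \frac{s\log p}{n}$ with $c_1,c_2$ depending only on $\eta$ and $\sigma_{\sf w}^2$, so that $L=\sqrt{c_1/c_2}$ works.

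\emph{Upper bound.} The three constraints on $s$ in the hypothesis are exactly what is needed to apply Theorem \ref{thm:ar_1}: $s \le n/(d_1 (p\log p)^{1/2})$ and $s \le n/(d_0 (\log p)^2)$ give $n > s\max\{d_0(\log p)^2, d_1(p\log p)^{1/2}\}$, and exact $s$-sparsity gives $\sigma_s(\boldsymbol{\theta}) = 0 = \mathcal{O}(\sqrt{s})$. Hence, with $\gamma_n = d_2\sqrt{\log p / n}$, we get $\|\widehat{\boldsymbol{\theta}}_{\ell_1}-\boldsymbol{\theta}\|_2 \le d_3\sqrt{s\log p / n}$ with probability at least $1-\mathcal{O}(n^{-d_4})$. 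On the complementary event I would use the deterministic bound $\|\widehat{\boldsymbol{\theta}}_{\ell_1}-\boldsymbol{\theta}\|_2 \le \|\widehat{\boldsymbol{\theta}}_{\ell_1}\|_1 + \|\boldsymbol{\theta}\|_1 < 2(1-\eta) < 2$, valid because both vectors lie in $\boldsymbol{\Theta}$, so $\mathbb{E}\|\widehat{\boldsymbol{\theta}}_{\ell_1}-\boldsymbol{\theta}\|_2^2 \le d_3^2\frac{s\log p}{n} + 4\,\mathcal{O}(n^{-d_4})$; for $d_4 \ge 1$ the remainder is $\mathcal{O}(1/n)$, which is absorbed, and taking the supremum over $\mathcal{H}$ yields $c_1$.

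\emph{Lower bound.} Since Gaussian innovations are sub-Gaussian, restricting nature to Gaussian AR($p$) processes only lowers the minimax risk, so it suffices to lower-bound the latter over this sub-class, adapting the minimax argument of \cite{goldenshluger2001nonasymptotic} to the sparse regime via a Fano/Varshamov--Gilbert packing. I would take a packing set $\{\boldsymbol{\theta}^{(1)},\dots,\boldsymbol{\theta}^{(M)}\} \subset \boldsymbol{\Theta}$ of exactly $s$-sparse vectors whose nonzero entries all equal $a$, with $\log M \gtrsim s\log(p/s)$ from the Varshamov--Gilbert bound on constant-weight codes (here $\log(p/s)\asymp\log p$ since in the regime $n\ll p$ considered the constraints force $s\lesssim\sqrt{n}\ll\sqrt{p}$) and pairwise separation $\|\boldsymbol{\theta}^{(i)}-\boldsymbol{\theta}^{(j)}\|_2 \ge c\,a\sqrt{s}$. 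Choosing $a\asymp\eta\sqrt{\log p/n}$ puts the separation at order $\eta\sqrt{s\log p/n}$, while the feasibility requirement $\|\boldsymbol{\theta}^{(i)}\|_1 = sa \le 1-\eta$ is precisely the condition $s \le \frac{1-\eta}{\sqrt{8\pi}\,\eta}\sqrt{n/\log p}$ once the numerical factor is pinned down from the KL estimate. For the KL divergences I would either represent $P_{\boldsymbol{\theta}^{(i)}}$ as a stationary mean-zero Gaussian with Toeplitz covariance, or use the chain rule along the natural filtration together with $x_k\mid\mathbf{x}_{k-p}^{k-1}\sim\mathcal{N}\!\left((\boldsymbol{\theta}^{(i)})'\mathbf{x}_{k-p}^{k-1},\sigma_{\sf w}^2\right)$, to get $\mathrm{KL}(P_{\boldsymbol{\theta}^{(i)}}\Vert P_{\boldsymbol{\theta}^{(j)}}) \le \frac{n}{2\sigma_{\sf w}^2}(\boldsymbol{\theta}^{(i)}-\boldsymbol{\theta}^{(j)})'\mathbf{R}^{(i)}(\boldsymbol{\theta}^{(i)}-\boldsymbol{\theta}^{(j)})$ (up to a lower-order initialization term); the sufficient stability assumption forces $\lambda_{\max}(\mathbf{R}^{(i)}) \le \sup_\omega S^{(i)}(\omega) \le \sigma_{\sf w}^2/\eta^2$, hence $\mathrm{KL}\le\frac{n}{2\eta^2}\|\boldsymbol{\theta}^{(i)}-\boldsymbol{\theta}^{(j)}\|_2^2 \lesssim s\log p$, which can be calibrated below $\frac{1}{4}\log M$. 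Fano's inequality then gives $\mathcal{R}_{\sf est}^2(\widehat{\boldsymbol{\theta}}_{\sf minimax}) \gtrsim a^2 s \asymp \eta^2\frac{s\log p}{n}$, i.e., $c_2$.

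\emph{Main obstacle.} The delicate step is the KL bound for the \emph{dependent} observations: one cannot merely add per-sample KL terms, and one must control either the mismatch between the stationary initializations of the competing AR processes or, if one conditions on a common deterministic initial block, the transient growth of variance before the process reaches its ergodic regime; both are tamed by the geometric ergodicity implied by the bounded spectral spread, which is exactly where the spectral-density estimate $S^{(i)}(\omega)\le\sigma_{\sf w}^2/\eta^2$ and Parseval's identity (the source of the $2\pi$, and hence of the $\sqrt{8\pi}$) come in. A secondary chore is verifying that $\log(p/s)$ and $\log p$ are interchangeable up to constants throughout the region cut out by the three constraints on $s$, and that the final $L$ depends on $\eta$ and $\sigma_{\sf w}^2$ alone.
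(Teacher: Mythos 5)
Your proposal is correct and follows essentially the same route as the paper: the upper bound is obtained from Theorem \ref{thm:ar_1} on its high-probability event together with the deterministic fallback $\|\widehat{\boldsymbol{\theta}}_{\ell_1}-\boldsymbol{\theta}\|_2 \le 2(1-\eta)$, and the lower bound is a Fano argument over a packing of exactly $s$-sparse stable parameters with entry magnitude $\asymp \eta\sqrt{\log p/n}$, where the KL divergence of the conditionally Gaussian likelihoods is controlled by the spectral bound $\lambda_{\max}(\mathbf{R}) \le \sigma_{\sf w}^2/(2\pi\eta^2)$ --- precisely how the paper disposes of the dependence issue you flag as the main obstacle. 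The only cosmetic difference is that the paper packs over both supports (constant-weight codes) and signs ($\pm e^{-m}$ entries via Gilbert--Varshamov) rather than over supports alone, and its feasibility requirement $s e^{-m} \le 1-\eta$ is exactly your constraint $s \le \frac{1-\eta}{\sqrt{8\pi}\,\eta}\sqrt{n/\log p}$.
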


\noindent  \textit{\textbf{Remark 5.}} Proposition \ref{thm:ar_minimax} implies that $\ell_1$-regularized LS is minimax optimal in estimating the $s$-sparse parameter vector $\boldsymbol{\theta}$, for small enough $s$. The proof of the Proposition \ref{thm:ar_minimax} is given in Appendix \ref{prf:minimax}. This result can be extended to compressible $\boldsymbol{\theta}$ in a natural way with a bit more work, but we only present the proof for the case of $s$-sparse $\boldsymbol{\theta}$ for brevity. We also state the following proposition on the prediction performance of the $\ell_1$-regularized LS estimator:
\begin{prop}
\label{thm:ar_minimax2}
Let $\mathbf{x}_{-p+1}^n$ be samples of an AR process with $s$-sparse parameters and Gaussian innovations, then there exists  a positive constant $d_5$ such that for large enough $n,p$ and $s$ satisfying $n>d_1 s(p \log p)^{1/2}$, we have:
\begin{equation}
\mathcal{R}^2_{\sf pre}(\widehat{\boldsymbol{\theta}}_{\ell_1})  \leq d_5 \frac{s \log p}{n}+\sigma^2_{\sf w}.
\end{equation}
\end{prop}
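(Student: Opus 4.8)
The plan is to split the one-step-ahead prediction error into the irreducible innovation term and a term governed by the $\ell_2$-estimation error of $\widehat{\boldsymbol{\theta}}_{\ell_1}$, and then invoke Theorem~\ref{thm:ar_1}. Since $x_k = \boldsymbol{\theta}'\mathbf{x}_{k-p}^{k-1} + w_k$, for any estimate $\widehat{\boldsymbol{\theta}}$ one has $x_k - \widehat{\boldsymbol{\theta}}'\mathbf{x}_{k-p}^{k-1} = w_k + (\boldsymbol{\theta}-\widehat{\boldsymbol{\theta}})'\mathbf{x}_{k-p}^{k-1}$. Reading $\mathcal{R}^2_{\sf pre}$ as the prediction error on a pair $(x_k,\mathbf{x}_{k-p}^{k-1})$ drawn from the process independently of the data used to form $\widehat{\boldsymbol{\theta}}_{\ell_1}$ (equivalently, taking $k$ far ahead of the training window and appealing to the geometric decay of autocovariances implied by the bounded spectral spread, cf.\ Lemma~1 of \cite{goldenshluger2001nonasymptotic}), the innovation $w_k$ is zero-mean and independent of both $\mathbf{x}_{k-p}^{k-1}$ and $\widehat{\boldsymbol{\theta}}_{\ell_1}$; conditioning on $\widehat{\boldsymbol{\theta}}_{\ell_1}$ and using strict stationarity together with $\mathbb{E}[\mathbf{x}_{k-p}^{k-1}] = \mathbf{0}$ gives
\[
\mathbb{E}\big[(x_k - \widehat{\boldsymbol{\theta}}_{\ell_1}'\mathbf{x}_{k-p}^{k-1})^2 \,\big|\, \widehat{\boldsymbol{\theta}}_{\ell_1}\big] = \sigma^2_{\sf w} + (\boldsymbol{\theta}-\widehat{\boldsymbol{\theta}}_{\ell_1})'\mathbf{R}(\boldsymbol{\theta}-\widehat{\boldsymbol{\theta}}_{\ell_1}) \le \sigma^2_{\sf w} + \lambda_{\max}(\mathbf{R})\,\|\boldsymbol{\theta}-\widehat{\boldsymbol{\theta}}_{\ell_1}\|_2^2 ,
\]
with $\mathbf{R}$ the $p\times p$ covariance matrix of the process. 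Taking expectations and then the supremum over $\mathcal{H}$, the claim reduces to two bounds, both uniform over the class: a deterministic bound $\lambda_{\max}(\mathbf{R}) \le C(\eta,\sigma^2_{\sf w})$, and $\mathbb{E}\,\|\boldsymbol{\theta}-\widehat{\boldsymbol{\theta}}_{\ell_1}\|_2^2 \le C'(\eta,\sigma^2_{\sf w})\, s\log p / n$.

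The first bound is routine: for a stationary process the largest eigenvalue of $\mathbf{R}$ (for any order $p$) is at most $\sup_\omega S(\omega)$, and under the sufficient stability assumption $|1-\sum_{\ell=1}^p\theta_\ell e^{-j\ell\omega}| \ge 1 - \|\boldsymbol{\theta}\|_1 \ge \eta$, whence $\lambda_{\max}(\mathbf{R}) \le \sup_\omega S(\omega) \le \sigma^2_{\sf w}/\eta^2$, uniformly over $\mathcal{H}$. For the second bound I would upgrade the high-probability guarantee of Theorem~\ref{thm:ar_1} to a bound in expectation. For $s$-sparse $\boldsymbol{\theta}$ one has $\sigma_s(\boldsymbol{\theta})=0$, so Theorem~\ref{thm:ar_1} yields $\|\widehat{\boldsymbol{\theta}}_{\ell_1}-\boldsymbol{\theta}\|_2 \le d_3\sqrt{s\log p/n}$ on an event $\mathcal{E}$ with $\mathbb{P}(\mathcal{E}^c) = \mathcal{O}(n^{-d_4})$, as soon as $n > s\max\{d_0(\log p)^2, d_1(p\log p)^{1/2}\}$; for $p$ large enough the $(p\log p)^{1/2}$ term dominates $(\log p)^2$, so the hypothesis $n > d_1 s(p\log p)^{1/2}$ of the proposition suffices. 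On the complement $\mathcal{E}^c$ I would use the a priori bound coming from the feasible set: since $\boldsymbol{\theta},\widehat{\boldsymbol{\theta}}_{\ell_1}\in\boldsymbol{\Theta}$, $\|\boldsymbol{\theta}-\widehat{\boldsymbol{\theta}}_{\ell_1}\|_2 \le \|\boldsymbol{\theta}-\widehat{\boldsymbol{\theta}}_{\ell_1}\|_1 \le 2(1-\eta) \le 2$. Hence
\[
\mathbb{E}\,\|\boldsymbol{\theta}-\widehat{\boldsymbol{\theta}}_{\ell_1}\|_2^2 \;\le\; d_3^2\,\frac{s\log p}{n} \,+\, 4\,\mathbb{P}(\mathcal{E}^c) \;=\; d_3^2\,\frac{s\log p}{n} + \mathcal{O}(n^{-d_4}) ,
\]
and since $s\log p/n \ge 1/n$ and $d_4 \ge 1$ in the regime of interest, the remainder is absorbed into a slightly larger constant. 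Combining the two bounds gives $\mathcal{R}^2_{\sf pre}(\widehat{\boldsymbol{\theta}}_{\ell_1}) \le d_5\, s\log p/n + \sigma^2_{\sf w}$ with $d_5$ of order $(\sigma^2_{\sf w}/\eta^2)\,d_3^2$, a function of $\eta$ and $\sigma^2_{\sf w}$ only.

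The main obstacle is not computational but the bookkeeping around the dependence between the estimator $\widehat{\boldsymbol{\theta}}_{\ell_1}$ and the test pair $(x_k,\mathbf{x}_{k-p}^{k-1})$: the conditioning step above is exact only if the two are independent, which is why I would state the prediction risk for a fresh realization or, equivalently, pass to the limit in $k$ and use the exponential decay of the autocovariances (again Lemma~1 of \cite{goldenshluger2001nonasymptotic}); keeping $k$ inside the training window instead introduces cross terms that would have to be shown negligible at the $\mathcal{O}(s\log p/n)$ scale via the same mixing estimates. A secondary point is that every constant must be uniform over $\mathcal{H}$; this holds because $\lambda_{\max}(\mathbf{R}) \le \sigma^2_{\sf w}/\eta^2$ uniformly and the constants $d_0,d_1,d_3,d_4$ of Theorem~\ref{thm:ar_1} depend on the process only through the spectral spread $\rho$ (hence through $\eta$) and $\sigma^2_{\sf w}$. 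Gaussianity of the innovations enters only mildly here — it guarantees that $\sigma^2_{\sf w}$ is precisely the minimum mean-square prediction error and keeps the conditional second-moment computation exact — while the hypothesis $n>d_1 s(p\log p)^{1/2}$ is exactly what makes Theorem~\ref{thm:ar_1} applicable for large $p$.
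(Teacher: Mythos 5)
The paper does not actually spell out a proof of Proposition \ref{thm:ar_minimax2}; it only remarks that the argument is ``similar to Theorem 3 of \cite{goldenshluger2001nonasymptotic}'' and omits it. Your proposal is essentially that argument: decompose the one-step prediction error as $w_k + (\boldsymbol{\theta}-\widehat{\boldsymbol{\theta}}_{\ell_1})'\mathbf{x}_{k-p}^{k-1}$, reduce the excess risk to an $\mathbf{R}$-weighted estimation error, bound $\lambda_{\max}(\mathbf{R})$ by the spectral supremum under the sufficient stability assumption (per Corollary \ref{cor:eig_conv} the constant is $\sigma^2_{\sf w}/(2\pi\eta^2)$ rather than $\sigma^2_{\sf w}/\eta^2$, an immaterial difference), and then upgrade the high-probability bound of Theorem \ref{thm:ar_1} (with $\sigma_s(\boldsymbol{\theta})=0$ in the exactly sparse case) to a bound in expectation using the diameter of the feasible set $\boldsymbol{\Theta}$ and the freedom to take $d_4\ge 1$. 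All of these steps are sound and uniform over $\mathcal{H}$, so I regard the proposal as correct and in the same spirit as the cited source.

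The one point you should be explicit about, and which you flag but do not carry out, is the reading of $\mathcal{R}^2_{\sf pre}$. Under your ``fresh realization'' (or far-future $k$) interpretation, the conditioning step that produces $(\boldsymbol{\theta}-\widehat{\boldsymbol{\theta}}_{\ell_1})'\mathbf{R}(\boldsymbol{\theta}-\widehat{\boldsymbol{\theta}}_{\ell_1})$ is exact and the proof is complete. If instead the risk is meant as one-step-ahead prediction from the same sample (the Goldenshluger--Zeevi setting), then $\widehat{\boldsymbol{\theta}}_{\ell_1}$ and the test covariates $\mathbf{x}_{k-p}^{k-1}$ are dependent, and while $\mathbb{E}[w_k\,(\boldsymbol{\theta}-\widehat{\boldsymbol{\theta}}_{\ell_1})'\mathbf{x}_{k-p}^{k-1}]=0$ still holds (the innovation is independent of the past), the second moment of the error term can no longer be written as a quadratic form in $\mathbf{R}$ conditionally on $\widehat{\boldsymbol{\theta}}_{\ell_1}$; naive worst-case bounds (e.g.\ $\|\boldsymbol{\theta}-\widehat{\boldsymbol{\theta}}_{\ell_1}\|_2^2\|\mathbf{x}\|_2^2$ or $\|\cdot\|_1^2\|\mathbf{x}\|_\infty^2$) lose factors of $p$ or $s\log p$, so the mixing/conditioning work you defer to is genuinely needed at the $\mathcal{O}(s\log p/n)$ scale in that reading. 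Since the proposition leaves $k$ unspecified and the paper gives no proof to compare against, your argument as written suffices under the interpretation you state, but a complete write-up should either fix that interpretation or supply the dependence-handling step.
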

It can be readily observed that for {$n \gg s \log p$} the prediction error variance is very close to the variance of the innovations. The proof is similar to Theorem 3 of \cite{goldenshluger2001nonasymptotic} and is skipped in this chapter for brevity.

%
\section{Application to Simulated and Real Data}
\label{sec:ar_sim}
In this section, we study and compare the performance of Yule-Walker based  estimation methods with those of the $\ell_1$-regularized and greedy estimators given in Section \ref{sec:ar_theory}. These methods are applied to simulated data as well as real data from crude oil price and traffic speed.

\subsection{Simulation studies}
In order to simulate an AR process, we filtered a Gaussian white noise process using an IIR filter with sparse parameters. Figure \ref{fig:sample_ar} shows a typical sample path of the simulated AR process used in our analysis. For the parameter vector  $\boldsymbol{\theta}$, we chose a length of $p=300$, and employed $n = 1500$ generated samples of the corresponding process for estimation. The parameter vector $\boldsymbol{\theta}$ is of sparsity level $s=3$ and $\eta = 1-\|\boldsymbol{\theta}\|_1=0.5$. A value of $\gamma_n = 0.1$ is used, which is slightly tuned around the theoretical estimate given by Theorem \ref{thm:ar_1}. The order of the process is assumed to be known. We compare the performance of seven estimators: 1) $\widehat{\boldsymbol{\theta}}_{\sf LS}$ using LS, 2) $\widehat{\boldsymbol{\theta}}_{\sf yw}$ using the Yule-Walker equations, 3) $\widehat{\boldsymbol{\theta}}_{\ell_1}$ from $\ell_1$-regularized LS, 4) $\widehat{\boldsymbol{\theta}}_{\sf OMP}$ using OMP,  5) $\widehat{\boldsymbol{\theta}}_{\sf yw, \ell_{2,1}}$ using Eq. (\ref{eq:ar_ywl21}), 6) $\widehat{\boldsymbol{\theta}}_{\sf yw, \ell_{1,1}}$ using Eq. (\ref{eq:ar_ywl11}), and 7) $\widehat{\boldsymbol{\theta}}_{\sf ywOMP}$ using the cost function $\mathfrak{J}(\boldsymbol{\theta})$ in the generalized OMP. {Note that for the LS and Yule-Walker estimates, we have relaxed the condition of $\| \boldsymbol{\theta} \|_1 < 1$, to be consistent with the common usage of these methods. The Yule-Walker estimate is guaranteed to result in a stable AR process, whereas the LS estimate is not \cite{percival1993spectral}.} Figure \ref{fig:ar_param} shows the estimated parameter vectors using these algorithms. It can be visually observed that $\ell_1$-regularized and greedy estimators (shown in {purple}) significantly outperform the Yule-Walker-based estimates (shown in {orange}).

\begin{figure}[H]
\begin{center}
\noindent
\includegraphics[width=.9\columnwidth]{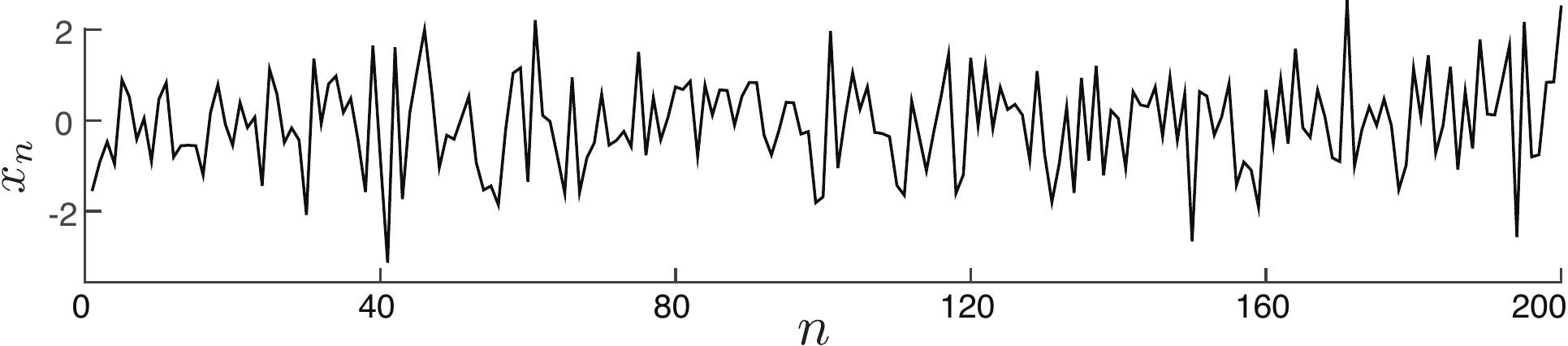}
\caption{Samples of the simulated AR process.}\label{fig:sample_ar}
\end{center}
\vspace{-5mm}
\end{figure}

\begin{figure}[H]
\begin{center}
\noindent
\includegraphics[width=.9\columnwidth]{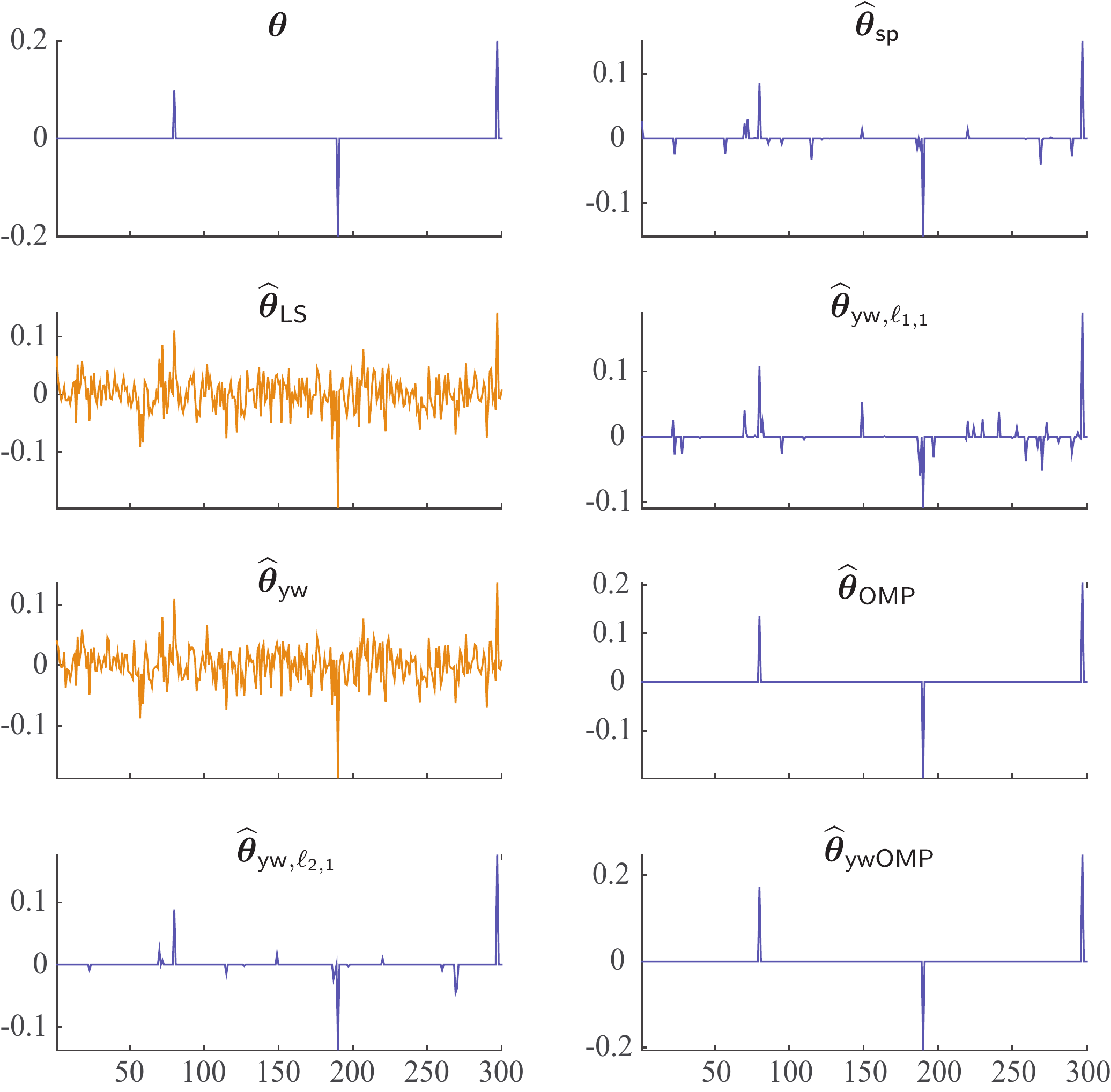}
\caption{Estimates of $\boldsymbol{\theta}$ for $n=1500$, $p =300$, and $s=3$ (These results are best viewed in the color version).}\label{fig:ar_param}
\end{center}
\vspace{-5mm}
\end{figure}

In order to quantify the latter observation precisely, we repeated the same experiment for $p=300, s=3$ and $10 \leq n \leq 10^5$. A comparison of the normalized MSE of the estimators vs. $n$ is shown in Figure \ref{fig:ar_mse}. As it can be inferred from Figure \ref{fig:ar_mse}, in the region where $n$ is comparable to or less than $p$ {(shaded in light {purple})}, the sparse estimators have a systematic performance gain over the Yule-Walker based estimates, with the $\ell_1$-regularized LS and ywOMP estimates outperforming the rest.

\begin{figure}[H]
\begin{center}
\noindent
\includegraphics[width=.75\columnwidth]{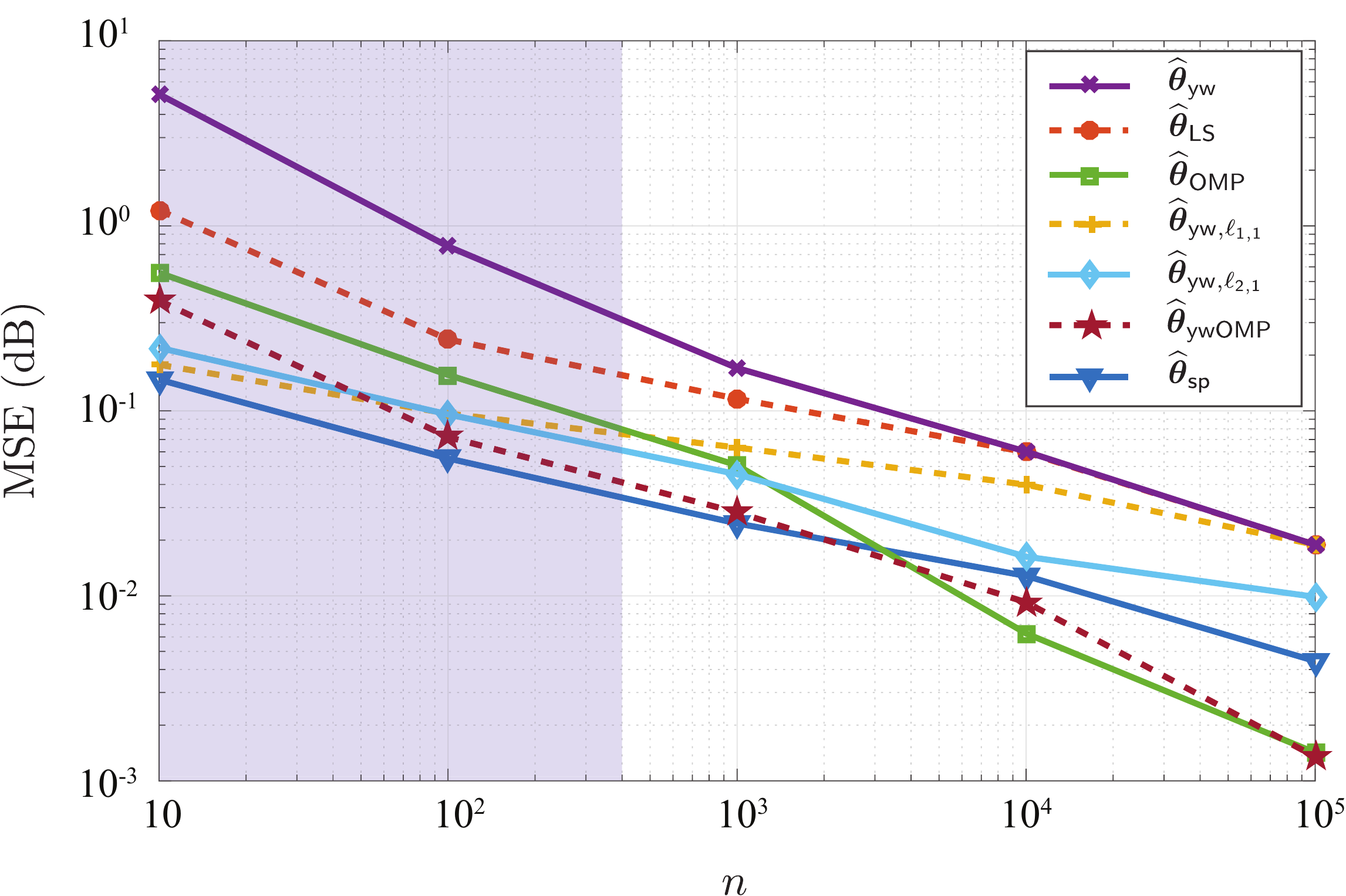}
\caption{MSE comparison of the estimators vs. the number of measurements $n$. {The shaded region corresponds to the compressive regime of $n < p$.}}\label{fig:ar_mse}
\end{center}
\vspace{-5mm}
\end{figure}

The MSE comparison in Figure \ref{fig:ar_mse} requires one to know the true parameters. In practice, the true parameters are not available for comparison purposes. In order to quantify the performance gain of these methods, we use statistical tests to assess the goodness-of-fit of the estimates. The common chi-square type statistical tests, such as the F-test, are useful when the hypothesized distribution to be tested against is discrete or categorical. For our problem setup with sub-Gaussian innovations, we will use a number of statistical tests appropriate for AR processes, namely, the Kolmogorov-Smirnov (KS) test, the Cram\'er-von Mises (CvM) criterion, {the spectral Cram\'er-von Mises (SCvM) test} and the Anderson-Darling (AD) \cite{d1986goodness,johansen1995likelihood,anderson1997goodness}. A summary of these tests is given in {Appendix \ref{app:ar_tests}}. Table \ref{tab:synthetic_table} summarizes the test statistics for different estimation methods. {Cells colored in orange (darker shade in grayscale) correspond to traditional AR estimation methods and those colored in blue (lighter shade in grayscale) correspond to the sparse estimator with the best performance among those considered in this work}. These tests are based on the known results on limiting distributions of error residuals. As noted from Table \ref{tab:synthetic_table}, our simulations suggest that the OMP estimate achieves the best test statistics for the CvM, AD and KS tests, whereas the $\ell_1$-regularized estimate achieves the best SCvM statistic.


\begin{table}[h!]
\centering
\caption{Goodness-of-fit tests for the simulated data}
\label{tab:synthetic_table}
\begin{tabular}{lllll}
\multicolumn{1}{l|}{\backslashbox{Estimate}{Test}} & CvM                  & AD                   & KS                  & SCvM \\ \cline{1-5} \hline
\multicolumn{1}{l|}{${\boldsymbol{\theta}}$}   &  0.31          &  1.54  &  0.031    &   0.009      \\
\multicolumn{1}{l|}{$\widehat{\boldsymbol{\theta}}_{\sf LS}$}  &   \cellcolor{orange!70}0.68         &   \cellcolor{orange!70}5.12         &  \cellcolor{orange!70}0.037      &  \cellcolor{orange!70}0.017   \\
\multicolumn{1}{l|}{$\widehat{\boldsymbol{\theta}}_{\sf yw}$}  & \cellcolor{orange!70}0.65 & \cellcolor{orange!70}4.87 &  \cellcolor{orange!70}0.034 &  \cellcolor{orange!70}0.025 \\
\multicolumn{1}{l|}{$\widehat{\boldsymbol{\theta}}_{\ell_1}$}  & 0.34 & 1.72 &  0.030  &    \cellcolor{blue!10}0.009        \\
\multicolumn{1}{l|}{$\widehat{\boldsymbol{\theta}}_{\sf OMP}$}  & \cellcolor{blue!10}0.29 &            \cellcolor{blue!10}1.45  &   \cellcolor{blue!10}0.028     &   0.009    \\
\multicolumn{1}{l|}{$\widehat{\boldsymbol{\theta}}_{{\sf yw},\ell_{2,1}}$}  &  0.35 & 1.80 &   0.032   & 0.009  \\
\multicolumn{1}{l|}{$\widehat{\boldsymbol{\theta}}_{{\sf yw},\ell_{1,1}}$}  &         0.42  &   2.33       &    0.040     &   0.008  \\
\multicolumn{1}{l|}{$\widehat{\boldsymbol{\theta}}_{\sf ywOMP}$}  &  0.29 &  1.46 &        0.030 &   0.009
\end{tabular}
\vspace{-2mm}
\end{table}

%

\subsection{Application to the analysis of {crude oil prices}}

In this and the following subsection, we consider applications with real-world data. As for the first application, we apply the {sparse} AR estimation techniques to analyze the crude oil price of the Cushing, OK WTI Spot Price FOB dataset \cite{cushing}. This dataset consists of 7429 daily values of oil prices in dollars per barrel. In order to avoid outliers, usually the dataset is filtered with a moving average filter of high order. We have skipped this procedure by visual inspection of the data and selecting $n=4000$ samples free of outliers. Such financial data sets are known for their non-stationarity and long order history dependence. In order to remove the deterministic trends in the data, one-step or two-step time differencing is typically used. We refer to \cite{robinson2003time} for a full discussion of this detrending method. We have used a first-order time differencing which resulted in a sufficient detrending of the data. Figure \ref{fig:sample_oil} shows the data used in our analysis. We have chosen $p=150$ by inspection. The histogram of first-order differences as well the estimates are shown in Figure \ref{fig:oil_estimate}.

\begin{figure}[H]
\begin{center}
\noindent
\includegraphics[width=.8\columnwidth]{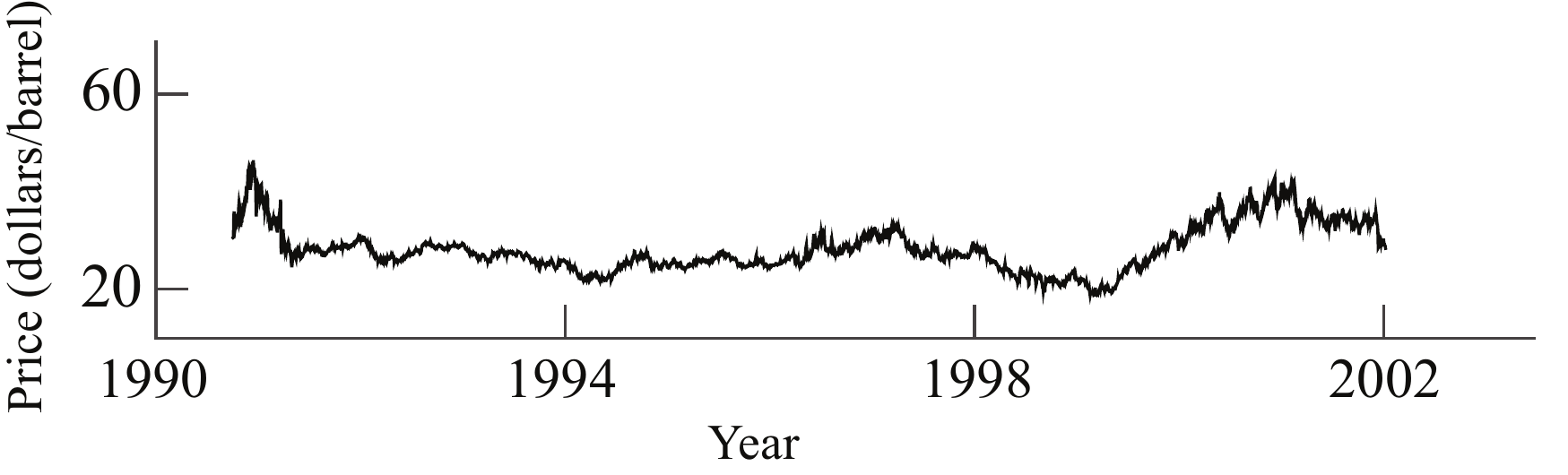}
\caption{{A sample segment of the Cushing, OK WTI Spot Price FOB data.}}\label{fig:sample_oil}
\end{center}
\vspace{-2mm}
\end{figure}

\begin{figure}[h!]
\begin{center}
\noindent
\includegraphics[width=.9\columnwidth]{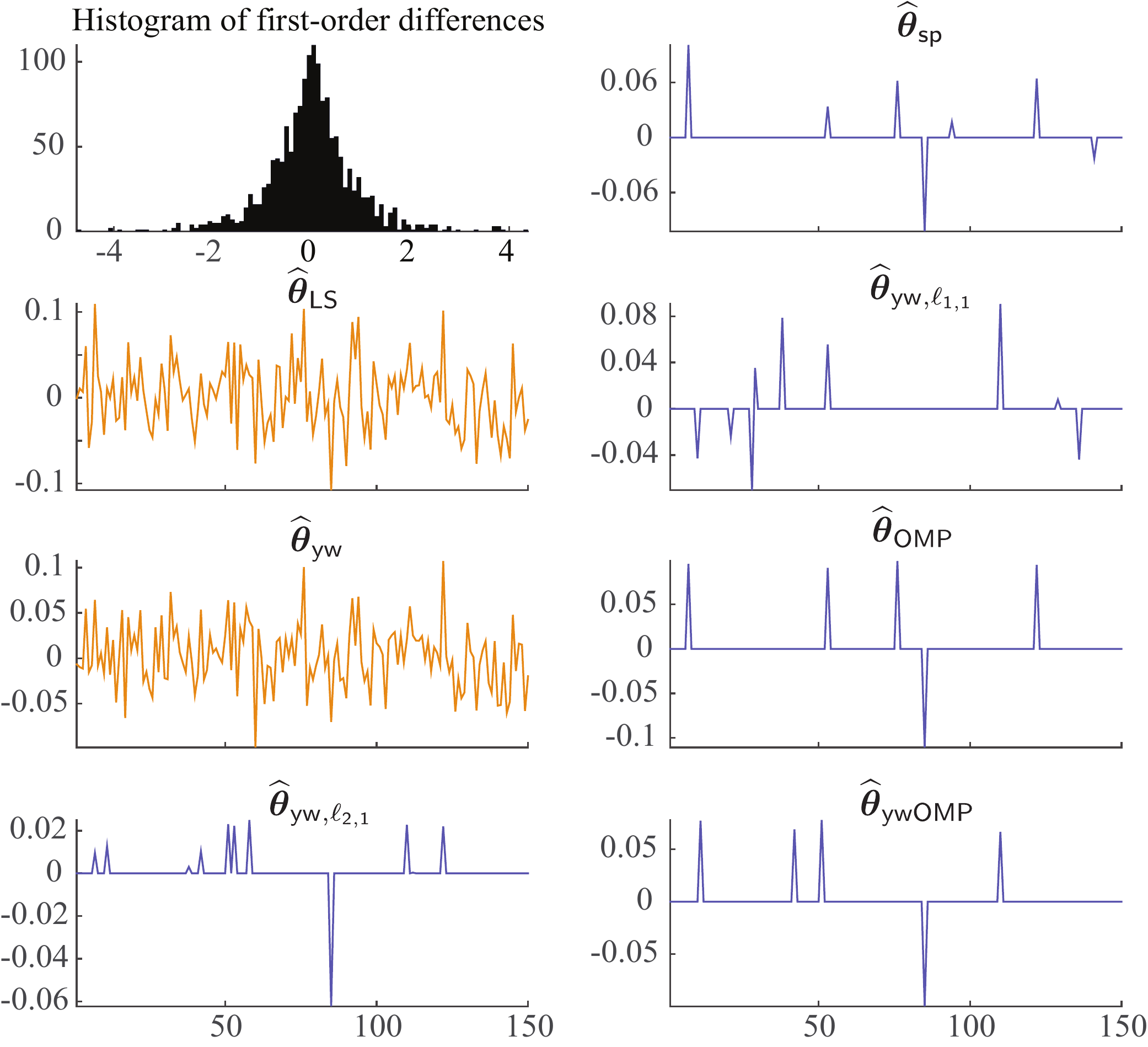}
\caption{Estimates of $\boldsymbol{\theta}$ for the second-order differences of the oil price data.}\label{fig:oil_estimate}
\end{center}
\vspace{-2mm}
\end{figure}

A visual inspection of the estimates in Figure \ref{fig:oil_estimate} shows that the $\ell_1$-regularized LS ($\widehat{\boldsymbol{\theta}}_{\ell_1}$) and OMP ($\widehat{\boldsymbol{\theta}}_{\sf OMP}$)  estimates consistently select specific time lags in the AR parameters, whereas the Yule-Walker and LS estimates seemingly overfit the data by populating the entire parameter space. In order to perform goodness-of-fit tests, we use an even/odd two-fold cross-validation. Table \ref{tab:oil_table} shows the corresponding test statistics, which reveal that indeed the $\ell_1$-regularized and OMP estimates outperform the traditional estimation techniques.

\begin{table}[h]
\centering
\caption{{Goodness-of-fit tests for the crude oil price data}}
\label{tab:oil_table}
\begin{tabular}{lllll}
\multicolumn{1}{l|}{\backslashbox{Estimate}{Test}} & CvM                  & AD                   & KS                  & SCvM \\ \cline{1-5} \hline
\multicolumn{1}{l|}{$\widehat{\boldsymbol{\theta}}_{\sf LS}$}  &   \cellcolor{orange!70}0.88         &   \cellcolor{orange!70}5.55        &  \cellcolor{orange!70}0.055      &  \cellcolor{orange!70}0.046   \\
\multicolumn{1}{l|}{$\widehat{\boldsymbol{\theta}}_{\sf yw}$}  & \cellcolor{orange!70}0.58 & \cellcolor{orange!70}3.60 &  \cellcolor{orange!70}0.043 &  \cellcolor{orange!70}0.037 \\
\multicolumn{1}{l|}{$\widehat{\boldsymbol{\theta}}_{\ell_1}$}  & 0.27 & 1.33 &  0.031  &    \cellcolor{blue!10}0.020       \\
\multicolumn{1}{l|}{$\widehat{\boldsymbol{\theta}}_{\sf OMP}$}  & \cellcolor{blue!10}0.22 &            \cellcolor{blue!10}1.18  &   \cellcolor{blue!10}0.025     &   0.022    \\
\multicolumn{1}{l|}{$\widehat{\boldsymbol{\theta}}_{{\sf yw},\ell_{2,1}}$}  &  0.28 & 1.40 &   0.027   & 0.021  \\
\multicolumn{1}{l|}{$\widehat{\boldsymbol{\theta}}_{{\sf yw},\ell_{1,1}}$}  &         0.24  &   1.26       &    0.027     &   0.022  \\
\multicolumn{1}{l|}{$\widehat{\boldsymbol{\theta}}_{\sf ywOMP}$}  &  0.23 &  \cellcolor{blue!10} 1.18 &        0.026 &   0.022
\end{tabular}
\vspace{-2mm}
\end{table}

\subsection{Application to the analysis of traffic data}
Our second real data application concerns traffic speed data. The data used in our simulations is the INRIX \textregistered\  speed data for I-495 Maryland inner loop freeway (clockwise) between US-1/Baltimore Ave/Exit 25 and Greenbelt Metro Dr/Exit 24 from 1 Jul, 2015 to 31 Oct, 2015 \cite{ritis1,ritis2}. The reference speed of 65 mph is reported. {Our aim is to analyze the long-term, large-scale periodicities manifested in these data by fitting high-order sparse AR models}. Given the huge length of the data and its high variability, the following pre-processing was made on the original data:
\begin{enumerate}
\item The data was downsampled by a factor of $4$ and averaged by the hour in order to reduce its daily variability, that is each lag corresponds to one hour.
\item The logarithm of speed was used for analysis and the mean was subtracted.  This reduces the high variability of speed due to rush hours and lower traffic during weekends and holidays.
\end{enumerate}

\begin{figure}[H]
\begin{center}
\noindent
\includegraphics[width=0.8\columnwidth]{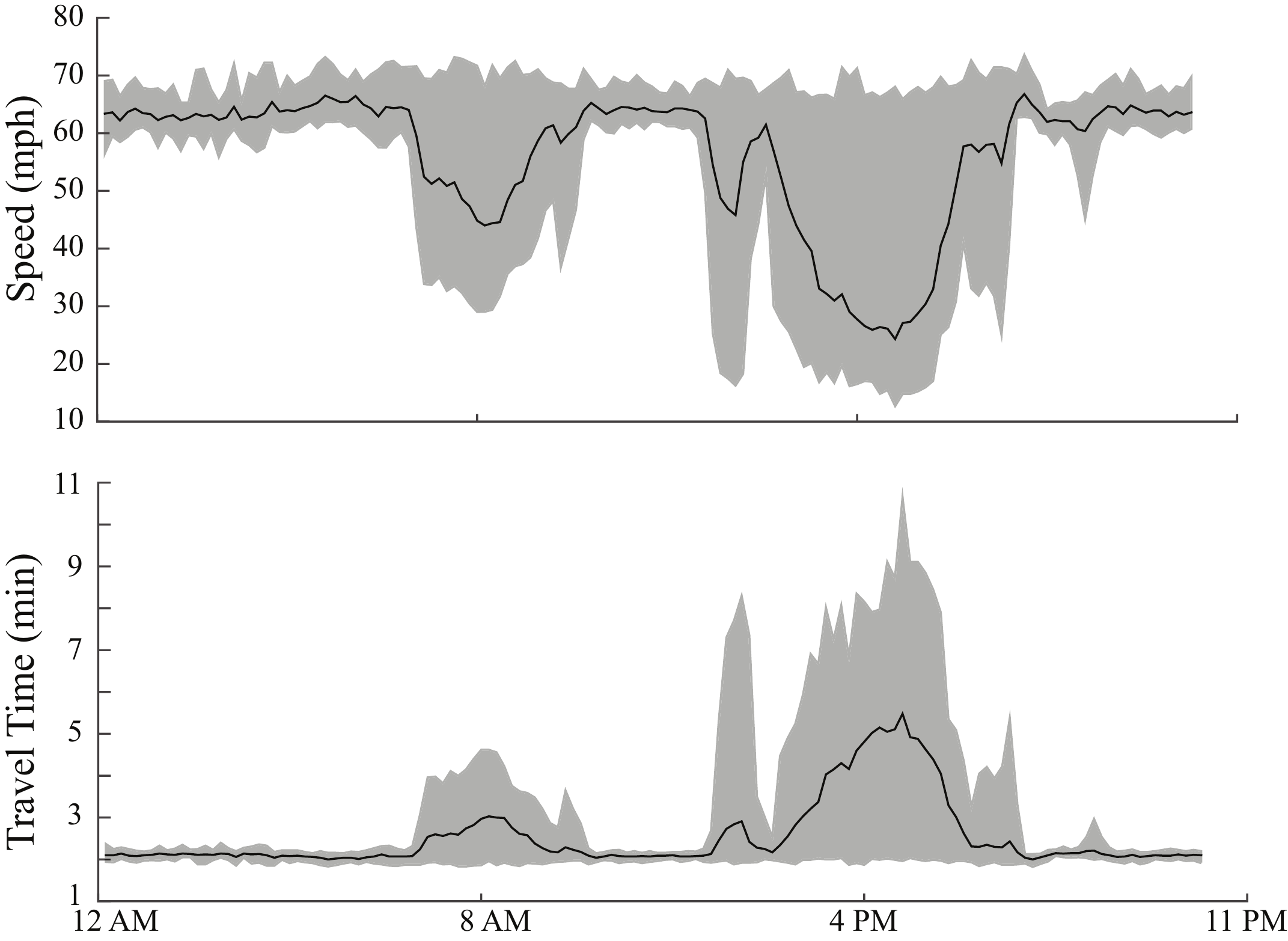}
\caption{{A sample of the speed and travel time data for I-495.}}\label{fig:ar_speed}
\end{center}
\vspace{-2mm}
\end{figure}

Figure \ref{fig:ar_speed} shows a typical average weekly speed and travel time in this dataset and the corresponding 25-75-th percentiles. As can be seen the data shows high variability around the rush hours of $8~\text{am}$ and $4~\text{pm}$. In our analysis, we used the first half of the data ($n=1500$) for fitting, from which the AR parameters and the distribution and variance of the innovations were estimated. The statistical tests were designed based on the estimated distributions, and the statistics were computed accordingly using the second half of the data. {We selected an order of $p = 200$ by inspection and noting that the data seems to have a periodicity of order $170$ samples.}

\begin{figure}[H]
\begin{center}
\noindent
\includegraphics[width=.9\columnwidth]{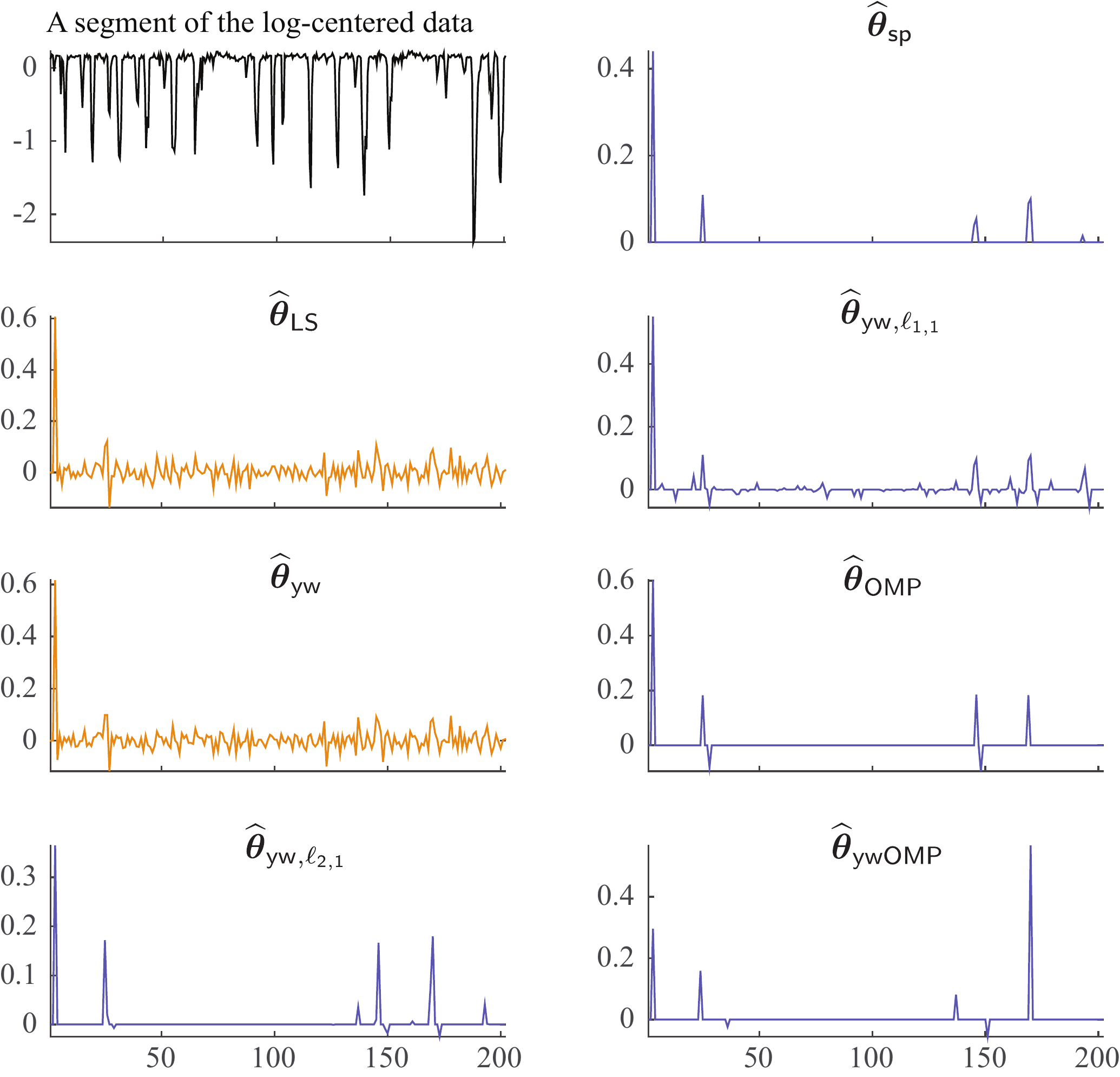}
\caption{{Estimates of $\boldsymbol{\theta}$ for the traffic speed data.}}\label{fig:traffic_estimate}
\end{center}
\vspace{-2mm}
\end{figure}

Figure \ref{fig:traffic_estimate} shows part of the data used in our analysis as well as the estimated parameters. The $\ell_1$-regularized LS ($\widehat{\boldsymbol{\theta}}_{\ell_1}$) and OMP ($\widehat{\boldsymbol{\theta}}_{\sf OMP}$) are consistent in selecting the same components of $\boldsymbol{\theta}$. These estimators pick up two major lags around which $\boldsymbol{\theta}$ has its largest components. The first lag corresponds to about $24$ hours which is mainly due to the rush hour periodicity on a daily basis. The second lag is around $150-170$ hours which corresponds to weekly changes in the speed due to lower traffic over the weekend. In contrast, the Yule-Walker and LS estimates do not recover these significant time lags. 

\begin{table}[h]
\centering
\vspace{-2mm}
\caption{Goodness-of-fit tests for the traffic speed data}
\label{tab:traffic_table}
\begin{tabular}{lllll}
\multicolumn{1}{l|}{\backslashbox{Estimate}{Test}} & CvM                  & AD                   & KS                  & SCvM \\ \cline{1-5} \hline
\multicolumn{1}{l|}{$\widehat{\boldsymbol{\theta}}_{\sf yw}$}  & \cellcolor{orange!70}0.012 & \cellcolor{orange!70}0.066 &  \cellcolor{orange!70}0.220 &  \cellcolor{orange!70} 0.05 \\
\multicolumn{1}{l|}{$\widehat{\boldsymbol{\theta}}_{\ell_1}$}  &  \cellcolor{blue!10}1.4$\times 10^{-7}$ &  \cellcolor{blue!10}2.1$\times 10^{-6}$ &   \cellcolor{blue!10}6.7$\times 10^{-4}$  &   0.25       \\
\multicolumn{1}{l|}{$\widehat{\boldsymbol{\theta}}_{\sf OMP}$}  & 0.017 &  0.082  & 0.220 &  1.49    \\
\multicolumn{1}{l|}{$\widehat{\boldsymbol{\theta}}_{\sf ywOMP}$}  &  0.025 &  0.122 &        0.270 &   0.14
\end{tabular}
\end{table}

Statistical tests for a selected subset of the estimators are shown in Table \ref{tab:traffic_table}. Interestingly, the $\ell_1$-regularized LS estimator significantly outperforms the other estimators in three of the tests. The Yule-Walker estimator, however, achieves the best SCvM test statistic.

\section{Concluding Remarks}\label{sec:conc}
In this chapter, we investigated sufficient sampling requirements for stable estimation of AR models in the non-asymptotic regime using the $\ell_1$-regularized LS and greedy estimation (OMP) techniques. We {have} further established the minimax optimality of the $\ell_1$-regularized LS estimator. Compared to the existing literature, our results provide several major contributions. {First, when $s \sim p^{\frac{1}{2} + \delta}$ for some $\delta \ge 0$, our results suggest an improvement of order $\mathcal{O}(p^{\delta} (\log p)^{3/2})$ in the sampling requirements for the estimation of univariate AR models with sub-Gaussian innovations using the LASSO, over those of \cite{wong2016regularized} and \cite{wu2016performance} which require $n \sim \mathcal{O}(p^2 (\log p)^2)$ for stable AR estimation.} {When specialized to a sub-Gaussian white noise process, i.e., establishing the RE condition of i.i.d. Toeplitz matrices, our results provide an improvement of order $\mathcal{O}(s/\log p)$ over those of \cite{Toeplitz}. Second, although OMP is widely used in practice, the choice of the number of greedy iterations is often ad-hoc. In contrast, our theoretical results prescribe an analytical choices of the number of iterations required for stable estimation, thereby promoting the usage of OMP as a low-complexity algorithm for AR estimation. Third, we established the minimax optimality of the $\ell_1$-regularized LS estimator for the estimation of sparse AR parameters.

We {further} verified the validity of our theoretical results through simulation studies as well as application to real financial and traffic data. These results show that the sparse estimation methods significantly outperform the widely-used Yule-Walker based estimators in fitting AR models to the data. Although we did not theoretically analyze the performance of sparse Yule-Walker based estimators, they seem to perform on par with the $\ell_1$-regularized LS and OMP estimators based on our numerical studies. Finally, as we will see in the next chapter our results provide a striking connection to estimation of sparse self-exciting discrete point process models. These models regress an observed binary spike train with respect to its history via Bernoulli or Poisson statistics, and are often used in describing spontaneous activity of sensory neurons. Our results {have shown} that in order to estimate a sparse history-dependence parameter vector of length $p$ and sparsity $s$ in a stable fashion, a spike train of length $n \sim \mathcal{O}(s^{2/3}p^{2/3}\log p)$ is required. {This leads us to conjecture} that these sub-linear sampling requirements are sufficient for a larger class of autoregressive processes, beyond those characterized by linear models.} {Finally, our minimax optimality result requires the sparsity level $s$ to grow at most as fast as $\mathcal{O}(n/(p\log p)^{1/2})$. We consider further relaxation of this condition, as well as the generalization of our results to sparse MVAR processes as future work.}


\chapter{Robust Estimation of Self-Exciting {Generalized Linear Models}}
\chaptermark{Robust Estimation of GLM's}

\label{chap:hawkes}

In this chapter, we close the gap in theory of compressed sensing for non-i.i.d. data by providing theoretical guarantees on stable estimation of self-exciting generalized linear models. We consider the problem of estimating self-exciting {generalized linear models} from limited binary observations, where the history of the process serves as the covariate. In doing so, we relax the assumptions of i.i.d. covariates and exact sparsity common in CS. Our results indicate that utilizing sparsity recovers important information about the intrinsic frequencies of such processes.  We analyze the performance of two classes of estimators, namely the $\ell_1$-regularized maximum likelihood and greedy estimators, for a canonical self-exciting process and characterize the sampling tradeoffs required for stable recovery in the non-asymptotic regime. Our results extend those of compressed sensing for linear and generalized linear models with i.i.d. covariates to those with highly inter-dependent covariates. We further provide simulation studies as well as application to real spiking data from the mouse's lateral geniculate nucleus and the ferret's retinal ganglion cells under different nonlinear forward models which agree with our theoretical predictions.

\section{Introduction}

The theory of compressed sensing (CS) has provided a novel framework for measuring and estimating statistical models governed by sparse underlying parameters {\cite{donoho2006compressed, candes2006compressive, candes2006stable, candes2008introduction, needell2009cosamp, bruckstein2009sparse}}. In particular, for linear models with random covariates and sparsity of the parameters, the CS theory provides sharp trade-offs between the number of measurement, sparsity, and estimation accuracy. Typical theoretical guarantees imply that when the number of random measurements are roughly proportional to sparsity, then stable recovery of these sparse models is possible.

Beyond those described by linear models, observations from binary phenomena form a large class of data in natural and social sciences. Their ubiquity in disciplines such as neuroscience, physiology, seismology, criminology, and finance has urged researchers to develop formal frameworks to model and analyze these data. In particular, the theory of point processes provides a statistical machinery for modeling and prediction of such phenomena. Traditionally, these models have been employed to predict the likelihood of self-exciting processes such as earthquake occurrences \cite{ogata1988statistical, vere1970stochastic}, but have recently found applications in several other areas. For instance, these models have been used to characterize heart-beat dynamics \cite{barbieri2005point, valenza2013point} and violence among gangs \cite{egesdal}. Self-exciting point process models have also found significant applications in analysis of neuronal data {\cite{brown2001analysis,smith2003estimating, brown2004multiple,  paninski2004maximum, truccolo2005point, paninski2007statistical, pillow2011model}.} 

In particular, point process models provide a principled way to regress binary spiking data with respect to extrinsic stimuli and neural covariates, and thereby forming predictive statistical models for neural spiking activity. {Examples include place cells in the hippocampus \cite{brown2001analysis}, spectro-temporally tuned cells in the primary auditory cortex \cite{calabrese2011generalized}, and spontaneous retinal or thalamic neurons spiking under tuned intrinsic frequencies \cite{Borowska,liets2003spontaneous}. Self-exciting point processes have also been utilized in assessing the functional connectivity of neuronal ensembles \cite{kim2011granger,brown_func_conn}.} When fitted to neuronal data, these models exhibit three main features: first, the underlying parameters are nearly sparse or compressible  \cite{Brown_pp, brown_func_conn}; second, the covariates are often highly structured and correlated; and third, the input-output relation is highly nonlinear. Therefore, the theoretical guarantees of compressed sensing do not readily translate to prescriptions for point process estimation.

Estimation of these models is typically carried out by Maximum Likelihood (ML) or regularized ML estimation {in discrete time, where the process is viewed as a Generalized Linear Model (GLM). In order to adjust the regularization level, empirical methods such as cross-validation are typically employed \cite{brown_func_conn}.} In the signal processing and information theory literature, sparse signal recovery under Poisson statistics has been considered in \cite{poi_sp_del} with application to the analysis of ranking data. In \cite{cs_poi}, a similar setting has been studied, with motivation from imaging by photon-counting devices. Finally, in theoretical statistics, high-dimensional $M$-estimators with decomposable regularizers, such as the $\ell_1$-norm, have been studied for GLMs \cite{Negahban}. 

A key underlying assumption in the existing theoretical analysis of estimating {GLMs} is the independence and identical distribution (i.i.d.) of covariates. This assumption does not hold for self-exciting processes, since the history of the process takes the role of the covariates. Nevertheless, regularized ML estimators show remarkable performance in fitting {GLMs} to neuronal data with history dependence and highly non-i.i.d. covariates.  In this chapter, we close this gap by presenting new results on robust estimation of compressible GLMs, relaxing the assumptions of i.i.d. covariates and exact sparsity common in CS. 

In particular, we will consider a {canonical GLM} and will analyze two classes of estimators for its underlying parameters: the $\ell_1$-regularized maximum likelihood and greedy estimators. We will present theoretical guarantees that extend those of CS theory and characterize fundamental trade-offs between the number of measurements, model compressibility, and estimation error of GLMs in the non-asymptotic regime. Our results reveal that when the number of measurements scale sub-linearly with the product of the ambient dimension and a generalized measure of sparsity (modulo logarithmic factors), then stable recovery of the underlying models is possible, even though the covariates solely depend on the history of the process. We will further discuss the extensions of these results to more general classes of {GLMs}. Finally, we will present applications to simulated as well as real data from {two classes of neurons exhibiting spontaneous activity}, namely the mouse's lateral geniculate nucleus and the ferret's retinal ganglion cells, which agree with our theoretical predictions. Aside from their theoretical significance, our results are particularly important in light of the technological advances in neural prostheses, which require robust neuronal system identification based on compressed data acquisition.

The rest of the chapter is organized as follows: In Section \ref{prelim}, we present our notational conventions, preliminaries and problem formulation. In Section \ref{theoretical}, we discuss the estimation procedures and state the main theoretical results of this chapter. Section \ref{simulations} provides numerical simulations as well as application to real data. In Section \ref{discussions}, we discuss the implications of our results and outline future research directions. Finally, we present the proofs of the main theoretical results and give a brief background on relevant statistical tests in Appendices \ref{appprf}--\ref{appks}.

\section{Preliminaries and Problem Formulation} \label{prelim}
We {first} give a brief introduction to self-exciting GLMs (see \cite{Daley2007} for a detailed treatment).

We consider a sequence of observations in the form of binary spike trains obtained by discretizing continuous-time observations (e.g. electrophysiology recordings), using bins of length $\Delta$. We assume that  not more than one event fall into any given bin. In practice, this can always be achieved by choosing $\Delta$ small enough. The binary observation at bin $i$ is denoted by $x_i$. The observation sequence can be modeled as the outcome of conditionally independent Poisson or Bernoulli trials, with a spiking probability given by $\mathbb{P}(x_i = 1) =: \lambda_{i|H_i}$, where $\lambda_{i|H_i}$ is the spiking probability at bin $i$ given the history of the process $H_i$ up to bin $i$.

These models are widely-used in neural data analysis and are motivated by the continuous time point processes with history dependent conditional intensity functions \cite{Daley2007}. For instance, given the history of a continuous-time point process $H_t$ up to time $t$, a conditional intensity of $\lambda(t | H_t) = \lambda$ corresponds to the {homogeneous Poisson} process.  As another example, a conditional intensity of $\lambda(t | H_t) = \mu + \int_{-\infty}^t \theta(t - \tau) dN(\tau)$ corresponds to a process known as the Hawkes process \cite{Hawkes_orig} with base-line rate $\mu$ and history dependence kernel $\theta(\cdot)$. Under the assumption of the orderliness of a continuous-time point process, a discretized approximation to these processes can be obtained by binning the process by bins of length $\Delta$, and defining the spiking probability by $\lambda_i := \lambda(i \Delta | H_{i\Delta}) \Delta + o(\Delta)$. In this chapter, we consider discrete {random processes} characterized by the spiking probability $\lambda_{i|H_i}$, which are either inherently discrete or employed as an approximation to continuous-time point process models.

Throughout the rest of the chapter, we drop the dependence of $\lambda_{i|H_i}$ on $H_i$ to simplify notation, denote it by $\lambda_i$ and refer to it {as} spiking probability. Given the sequence of {binary} observed data {$\mathbf{x}_1^n$}, the negative log-likelihood function under the {Bernoulli} statistics can be expressed as:

\begin{equation}
\label{L_def}
\mathfrak{L}(\boldsymbol{\theta}) = -\frac{1}{n} \sum\limits_{i=1}^n \left \{ x_i\log\lambda_{i}+(1-x_i)\log(1-\lambda_{i}) \right \}.
\end{equation}
Another common likelihood model used in the analysis of neuronal spiking data corresponds to Poisson statistics \cite{Brown_pp}, for which the negative log-likelihood takes the following form:
\begin{equation}
\label{poi_ll}
\mathfrak{L}(\boldsymbol{\theta}) :=  -\frac{1}{n} \sum\limits_{i=1}^n \left \{ x_i\log\lambda_{i}-\lambda_{i} \right \}.
\end{equation}
Throughout the chapter, we will focus on binary observations governed by Bernoulli statistics, whose negative log-likelihood is given in Eq. (\ref{L_def}). In applications such as electrophysiology in which neuronal spiking activities are recorded at a high sampling rate, the binning size $\Delta$ is very small and the Bernoulli and Poisson statistics coincide.

When the discrete process is viewed as an {approximation} to a continuous-time process, these log-likelihood functions are known as the Jacod log-likelihood approximations \cite{Daley2007}. We will present our analysis for the negative log-likelihood given by (\ref{L_def}), but our results can be extended to other statistics including (\ref{poi_ll}) (See {the remarks of Section \ref{theoretical}}).

Throughout this chapter {${\mathbf{x}_{-p+1}^{n}}$} will be considered as the observed spiking sequence which will be used for estimation purposes. A popular class of models for $\lambda_i$ is given by GLMs. In its general form, a GLM consists of two main components: an observation model and an equation expressing some (possibly nonlinear) function of the observation mean as a \textit{linear} combination of the covariates. In neural systems, the covariates consist of external stimuli as well as the history of the process. Inspired by spontaneous neuronal activity, we consider \textit{fully} self-exciting processes, in which the covariates are only functions of the process history. As for a canonical {GLM} inspired by the Hawkes process, we consider a process for which the spiking probability is a \textit{linear} function of the process history:
\begin{equation}
\label{CIF_models}
\lambda_i := \mu + \boldsymbol{\theta}' {\mathbf{x}_{i-p}^{i-1}},
\end{equation}
where $\mu$ is a positive constant representing the base-line rate, and $\boldsymbol{\theta}=[\theta_1,\theta_2,\cdots,\theta_p]'$ is a parameter vector denoting the history dependence of the process. {We further assume that the process is non-degenerate, i.e., it will not terminate in an infinite sequence of zeros.} We refer to this {GLM, viewed as a random process,} as the \textit{canonical self-exciting process}. Other popular models in the computational neuroscience literature include the log-link model where $\lambda_i = \exp ( \mu + \boldsymbol{\theta}'{\mathbf{x}_{i-p}^{i-1}} )$ and the logistic-link model where $\lambda_i = \frac{\exp(\mu + \boldsymbol{\theta}'{\mathbf{x}_{i-p}^{i-1}})}{1+\exp (\mu + \boldsymbol{\theta}'{\mathbf{x}_{i-p}^{i-1}})}$. The parameter vector $\boldsymbol{\theta}$ can be thought of as the binary equivalent of autoregressive (AR) parameters in linear AR models.

When fitted to neuronal spiking data, the parameter vector $\boldsymbol{\theta}$ exhibits a degree of sparsity \cite{Brown_pp, brown_func_conn}, that is, only certain lags in the history have a significant contribution in determining the statistics of the process. These lags can be thought of as the preferred or intrinsic {delays} in the spontaneous response of a neuron. {To be more precise, for a sparsity level $s < p$, we denote by $\boldsymbol{\theta}_s$ the best $s$-term approximation to $\boldsymbol{\theta}$. 

Finally, in this chapter, we are concerned with the compressed sensing regime where $n \ll p$, i.e., the observed data has a much smaller length than the ambient dimension of the parameter vector. The main estimation problem of this chapter is the following: \emph{given observations ${\mathbf{x}_{-p+1}^n}$ from the \textcolor{black}{canonical self-exciting process}, the goal is to estimate the unknown baseline rate $\mu$ and the $p$-dimensional $(s,\xi)$-compressible history dependence parameter vector $\boldsymbol{\theta}$ in a stable fashion (where the estimation error is controlled) when $n \ll p$.}

\section{Theoretical Results}\label{theoretical}
In this section, we consider two estimators for $\boldsymbol{\theta}$, namely, the $\ell_1$-regularized ML estimator and a greedy estimator, and present the main theoretical results of this chapter on the estimation error of these estimators. Note that when $\mu$ is not known, the following results can be applied to the augmented parameter vector $[\mu,\boldsymbol{\theta}']'$. We analyze the case of known $\mu$ for simplicity of presentation.

\subsection{$\ell_1$-Regularized ML Estimation}

The natural estimator for the parameter vector is the ML estimator, which is widely used in neuroscience \cite{Brown_pp}, which by virtue of (\ref{L_def}) is given by:
\begin{equation}
\label{ML_est_pp_L}
\widehat{\boldsymbol{\theta}}_{{\sf ML}}=\argmin\limits_{\boldsymbol{\theta}\in \boldsymbol{\Theta} } \mathfrak{L}(\boldsymbol{\theta}),
\end{equation}
where $\boldsymbol{\Theta}$ is the relaxed closed convex feasible region for which $0 \le \lambda_i \le 1$ given by the conditions:
\vspace{-1mm}
\begin{align}\label{eq:star}
\begin{tabular}{l} ${0 < \pi_{\min} \le \mu -\mathbf{1}' \boldsymbol{\theta}^-}$,\\
 ${\mu +\mathbf{1}' \boldsymbol{\theta}^+ \leq \pi_\max < 1/2}$,
 \end{tabular} {\tag{$\star$}}
\end{align}
for some constants $\pi_{\min}$ and $\pi_{\max}$. {This first inequality incurs minimal loss of generality, as $\pi_{\min}$ can be chosen to be arbitrarily small. The restriction of $\pi_{\max} < 1/2$ ensures that the process is fast mixing and has mainly been adopted for technical convenience. This assumption incurs some loss of generality, as it excludes processes for which the maximum spiking probability exceeds $1/2$. However, due to the low spiking probability of typical neuronal activity, this loss is tolerable for the applications of interest in this chapter (see Section \ref{simulations})}.

In the regime of interest when $n \ll p$, the ML estimator is ill-posed and is typically regularized with a smooth norm. In order to capture the compressibility of the parameters, we consider  the $\ell_1$-regularized ML estimator:
\begin{equation}
\label{sp_est_pp_L}
\widehat{\boldsymbol{\theta}}_{{\sf sp}}:=\argmin\limits_{\boldsymbol{\theta}\in \boldsymbol{\Theta}} \quad \mathfrak{L}(\boldsymbol{\theta})+ \gamma_n\|\boldsymbol{\theta}\|_1.
\end{equation}
where $\gamma_n > 0$ is a regularization parameter. It is easy to verify that {the objective function and constraints in Eq.} (\ref{sp_est_pp_L}) are convex in $\boldsymbol{\theta}$ and hence $\widehat{\boldsymbol{\theta}}_{\sf sp}$ can be obtained using standard numerical solvers. Note that the solution to (\ref{sp_est_pp_L}) might not be unique. However, we will provide error bounds that hold for all possible solutions of (\ref{sp_est_pp_L}), with high probability.

It is known that ML estimates are asymptotically unbiased under mild conditions, and with $p$ fixed, the solution converges to the true parameter vector as $n \rightarrow \infty$. However, it is not clear how fast the convergence rate is for finite $n$ or when $p$ is not fixed and is allowed to scale with $n$. This makes the analysis of ML estimators, and in general regularized M-estimators, very challenging \cite{Negahban}. Nevertheless, such an analysis has significant practical implications, as it will reveal sufficient conditions on $n$ with respect to $p$ as well as a criterion to choose $\gamma_n$, which result in a stable estimation of $\boldsymbol{\theta}$. Finally, note that we are fixing the ambient dimension $p$ throughout the analysis. In practice, the history dependence is typically negligible beyond a certain lag and hence for a large enough $p$, {GLMs} fit the data very well.

\subsection{Greedy Estimation}
Although there exist fast solvers to convex problems of the type given by Eq. (\ref{sp_est_pp_L}), these algorithms are polynomial time in $n$ and $p$, and may not scale well with high-dimensional data.  This motivates us to consider greedy solutions for the estimation of $\boldsymbol{\theta}$. In particular, we will consider a generalization of the Orthogonal Matching Pursuit (OMP) \cite{zhang_omp,OMP} for general convex cost functions. A flowchart of this algorithm is given in Table \ref{tab:1}, which we denote by the Point Process Orthogonal Matching Pursuit (POMP) algorithm. At each iteration, the component in which the objective function has the largest deviation is chosen and added to the current support. The algorithm proceeds for a total of $s^{\star}$ steps, resulting in an estimate with $s^\star$ components.

The main idea behind the generalized OMP is in the greedy selection stage, where the absolute value of the gradient of the cost function at the current solution is considered as the selection metric. Consider an estimate $\widehat{\boldsymbol{\theta}}^{(k-1)}$ at the $(k-1)$-st stage of the generalized OMP for a quadratic cost function of the form $\| \mathbf{b} - \mathbf{A} \boldsymbol{\theta}\|_2^2$, with $\mathbf{b}$ and $\mathbf{A}$ denoting the observation vector and covariates matrix, respectively. Then, the gradient takes the form $\mathbf{A}' (\mathbf{b} - \mathbf{A} \widehat{\boldsymbol{\theta}}^{(k-1)})$ which is exactly the correlation vector between the residual error and the columns of $\mathbf{A}$ as in the original OMP algorithm. 

\begin{table}
\centering
\framebox{$\begin{array}{l}
\text{Input: } \mathfrak{L}(\boldsymbol{\theta}) , s^\star\\
\text{Output: } \widehat{\boldsymbol{\theta}}_{\sf POMP}^{(s^\star)}\\
\text{Initialization:}\Big\{\begin{array}{l}
\text{Start with the index set } S^{(0)}=\emptyset\\
\text{and the initial estimate }\widehat{\boldsymbol{\theta}}^{(0)}_{{\sf POMP}} = 0
\end{array}\\
\textbf{for } k=1,2,\cdots,s^\star\\
\text{  }\begin{array}{l}
j = \argmax \limits_i \left| \left( \nabla \mathfrak{L} \; \left(\widehat{\boldsymbol{\theta}}_{{\sf POMP}}^{(k-1)}\right) \right)_i\right|\\
S^{(k)}=S^{(k-1)}\cup \{j\}\\
\widehat{\boldsymbol{\theta}}_{{\sf POMP}}^{(k)} = \argmin \limits_{\support ({\boldsymbol{\theta}}) \subset S^{(k)}} \mathfrak{L}(\boldsymbol{\theta})
\end{array}\\
\textbf{end }\\
\end{array}$
}
\caption{\small{Point Process Orthogonal Matching Pursuit (POMP)}}
\label{tab:1}
\end{table}

\subsection{Theoretical Guarantees}

Recall that the parameter vector $\boldsymbol{\theta} \in \mathbb{R}^p$ is assumed to be $(s,\xi)$-compressible, so that $\sigma_s(\boldsymbol{\theta}) = \|\boldsymbol{\theta}-\boldsymbol{\theta}_S\|_1 = \mathcal{O} (s^{1-\frac{1}{\xi}})$, and the observed data are given by the vector ${\mathbf{x}_{-p+1}^n} \in \{0,1\}^{n+p-1}$, all in the regime of $s, n \ll p$. In the remainder of this chapter, we assume that $\boldsymbol{\theta} \in \boldsymbol{\Theta}$. The main theoretical result regarding the performance of the $\ell_1$-regularized ML estimator is given by the following theorem:
\begin{thm}
\label{negahban_lambda}
If $\sigma_s(\boldsymbol{\theta}) = \mathcal{O}(\sqrt{s})$, there exist constants $d_1,d_2,d_3$ and $d_4$ such that for $n>d_1s^{2/3}p^{2/3} \log p$ and a choice of $\gamma_n=d_2 \sqrt{\frac{\log p}{n}}$, any solution $\widehat{\boldsymbol{\theta}}_{{\sf sp}}$ to (\ref{sp_est_pp_L}) satisfies the bound
\begin{equation}
\label{thm2}
\left \|\widehat{\boldsymbol{\theta}}_{{\sf sp}}-\boldsymbol{\theta}\right \|_2 \leq d_3 \sqrt{\frac{s \log p}{n}}+ \sqrt{d_3\sigma_s(\boldsymbol{\theta})}\sqrt[4]{\frac{\log p}{n}},
\end{equation}
with probability greater than $1-\mathcal{O}\left(\frac{1}{n^{d_4}}\right)$.
\end{thm}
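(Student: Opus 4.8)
The plan is to derive Theorem~\ref{negahban_lambda} from the generic $M$-estimation bound of Proposition~\ref{negahban_thm}, applied to the negative Bernoulli log-likelihood $\mathfrak{L}$ of Eq.~\eqref{L_def} with the affine link $\lambda_i = \mu + \boldsymbol{\theta}'\mathbf{x}_{i-p}^{i-1}$ of Eq.~\eqref{CIF_models}. It suffices to supply two ingredients: (i) a high-probability bound $\|\nabla\mathfrak{L}(\boldsymbol{\theta})\|_\infty \le \tfrac12 d_2\sqrt{\log p/n}$, which validates the choice $\gamma_n = d_2\sqrt{\log p/n}$ through the regularization condition~\eqref{reg}; and (ii) the restricted strong convexity condition~\eqref{RSC} for $\mathfrak{L}$ with an absolute constant $\kappa>0$ on the Negahban cone around $\boldsymbol{\theta}$, valid once $n > d_1 s^{2/3}p^{2/3}\log p$. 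Granted (i) and (ii), substituting $\gamma_n$ and $\kappa$ into the error bound~\eqref{error_bound} and using the hypothesis $\sigma_s(\boldsymbol{\theta}) = \mathcal{O}(\sqrt s)$ yields~\eqref{thm2} verbatim, with $d_3$ depending on $d_2$ and $\kappa$, and $d_4$ coming from the tail probabilities in (i)--(ii).

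For (i), affine dependence of $\lambda_i$ on $\boldsymbol{\theta}$ gives $\nabla\mathfrak{L}(\boldsymbol{\theta}) = -\tfrac1n\sum_{i=1}^n \frac{x_i-\lambda_i}{\lambda_i(1-\lambda_i)}\mathbf{x}_{i-p}^{i-1}$. Since $\mathbb{E}[x_i\mid\mathcal{H}_i]=\lambda_i$, each coordinate of $n\,\nabla\mathfrak{L}(\boldsymbol{\theta})$ is a sum of bounded martingale differences relative to the history filtration $\mathcal{H}_i$: by the feasibility constraints~\eqref{eq:star} the increments are bounded by $1/(\pi_{\min}(1-\pi_{\max}))$ and the predictable quadratic variation is at most $n/(\pi_{\min}(1-\pi_{\max}))$. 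A Bernstein/Freedman martingale inequality then gives $|(\nabla\mathfrak{L}(\boldsymbol{\theta}))_j| \lesssim \sqrt{\log(1/\delta)/n}$ per coordinate; a union bound over the $p$ coordinates with $\delta$ polynomially small in $n$ (possible because $p\gg n$ in the compressive regime) produces $\|\nabla\mathfrak{L}(\boldsymbol{\theta})\|_\infty \lesssim \sqrt{\log p/n}$ with probability $1-\mathcal{O}(n^{-d_4})$, which fixes $d_2$.

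For (ii), write $\mathfrak{D}_{\mathfrak{L}}(\boldsymbol{\psi},\boldsymbol{\theta}) = \int_0^1 (1-t)\,\boldsymbol{\psi}'\nabla^2\mathfrak{L}(\boldsymbol{\theta}+t\boldsymbol{\psi})\,\boldsymbol{\psi}\,dt$ with $\nabla^2\mathfrak{L}(\boldsymbol{\phi}) = \tfrac1n\sum_i w_i(\boldsymbol{\phi})\,\mathbf{x}_{i-p}^{i-1}(\mathbf{x}_{i-p}^{i-1})'$ and weights $w_i = x_i/\lambda_i^2 + (1-x_i)/(1-\lambda_i)^2 \ge 1$ along the segment $[\boldsymbol{\theta},\widehat{\boldsymbol{\theta}}_{\sf sp}]\subset\boldsymbol{\Theta}$ (convexity of $\boldsymbol{\Theta}$), so that $\mathfrak{D}_{\mathfrak{L}}(\boldsymbol{\psi},\boldsymbol{\theta}) \ge \tfrac12\,\boldsymbol{\psi}'\widehat{\mathbf{R}}\boldsymbol{\psi}$ with $\widehat{\mathbf{R}} := \tfrac1n\mathbf{X}'\mathbf{X}$. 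This reduces RSC to a restricted eigenvalue bound on the sample covariance of the spike train. I would then (a) argue that the population covariance $\mathbf{R}=\mathbb{E}[\mathbf{x}_{i-p}^{i-1}(\mathbf{x}_{i-p}^{i-1})']$ has bounded eigenvalue spread --- the self-exciting analogue of the AR spectral spread --- as a consequence of the fast-mixing regime $\pi_{\max}<1/2$ together with non-degeneracy, so $\lambda_{\min}(\mathbf{R})=\Omega(1)$; and (b) show that $\widehat{\mathbf{R}}$ concentrates around $\mathbf{R}$ in the restricted sense $|\boldsymbol{\psi}'(\widehat{\mathbf{R}}-\mathbf{R})\boldsymbol{\psi}|\le\tfrac12\lambda_{\min}(\mathbf{R})\|\boldsymbol{\psi}\|_2^2$ uniformly over the cone, using there $\|\boldsymbol{\psi}\|_1 \lesssim \sqrt s\|\boldsymbol{\psi}\|_2 + \sigma_s(\boldsymbol{\theta}) = \mathcal{O}(\sqrt s(\|\boldsymbol{\psi}\|_2+1))$. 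The concentration in (b) is for the binary covariates $\mathbf{x}_{i-p}^{i-1}$, whose coordinates come from a single realization of the process and carry memory of length $p$; I would prove a Bernstein-type bound for such dependent sums by a blocking/coupling argument exploiting the geometric forgetting implied by $\|\boldsymbol{\theta}\|_1<1/2$ (a consequence of~\eqref{eq:star}), then union-bound over the $\binom{p}{2s}$ sparse supports and an $\varepsilon$-net of the cone. Balancing the statistical fluctuation against the memory-induced bias and the target accuracy $\sim 1/s$ is what yields the requirement $n > d_1 s^{2/3}p^{2/3}\log p$ --- heavier than the $s\,(p\log p)^{1/2}$ of the linear model in Theorem~\ref{thm:ar_1}, reflecting the binary, memory-$p$ nature of the covariates.

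The main obstacle is precisely the concentration step~(b): establishing the restricted eigenvalue / RSC property for a design matrix whose rows are overlapping windows of a single, highly self-correlated binary process with memory of order $p$. None of the i.i.d.\ or row-independent arguments of standard CS apply, so one must directly quantify how fast the canonical self-exciting process self-averages, which is delicate because a nonzero coefficient at a large lag slows the mixing and thereby injects the dependence on $p$ into the sample complexity. By comparison, step (i), the reduction of RSC to the restricted eigenvalue of $\widehat{\mathbf{R}}$, and the passage from Proposition~\ref{negahban_thm} to~\eqref{thm2} are routine. The same template --- with Proposition~\ref{negahban_thm} replaced by the generalized-OMP guarantee, or with the Hessian weights $w_i$ recomputed --- would cover the POMP estimator and the Poisson log-likelihood~\eqref{poi_ll}, respectively.
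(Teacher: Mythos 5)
Your overall architecture is the same as the paper's: bound $\|\nabla\mathfrak{L}(\boldsymbol{\theta})\|_\infty$ by a martingale concentration argument (the paper uses an Azuma--Hoeffding-type inequality for sub-Gaussian martingale differences plus a union bound, exactly as you propose with Bernstein/Freedman), reduce RSC to a restricted-eigenvalue statement for $\widehat{\mathbf{R}}=\tfrac1n\mathbf{X}'\mathbf{X}$ via the second-order Taylor expansion with weights bounded below by $1$ thanks to \eqref{eq:star}, lower-bound the population quadratic form through the spectral density of the canonical self-exciting process (the paper derives a Yule--Walker-type recursion and invokes the Toeplitz eigenvalue bound, which is what your ``self-exciting analogue of the AR spectral spread'' would have to become), and finish by plugging $\gamma_n$ and $\kappa$ into Proposition \ref{negahban_thm}. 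Those parts are fine and essentially identical to the paper.

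The genuine gap is the step you yourself flag as the main obstacle: the uniform concentration of $\boldsymbol{\psi}'(\widehat{\mathbf{R}}-\mathbf{R})\boldsymbol{\psi}$ over the cone, and in particular whether your proposed route delivers the stated sample complexity $n>d_1 s^{2/3}p^{2/3}\log p$. The paper does \emph{not} use a blocking/coupling Bernstein bound with a union over $\binom{p}{2s}$ supports and an $\varepsilon$-net. Instead (Lemma \ref{lemma1} and Proposition \ref{prop:hawkes_concentration}) it treats the supremum over the cone as a single function of the sample path, shows that flipping one bit $x_j$ changes it by at most $3\|\boldsymbol{\psi}\|_1^2=\mathcal{O}(s)$ (here the cone constraint and $\sigma_s(\boldsymbol{\theta})=\mathcal{O}(\sqrt s)$ give $\|\boldsymbol{\psi}\|_1=\mathcal{O}(\sqrt s)$), and then applies the Kontorovich--Ramanan bounded-differences inequality for dependent sequences, with mixing coefficients bounded by $(2\pi_{\max})^{k}$ (this is where $\pi_{\max}<1/2$ enters), finally controlling $\mathbb{E}[\mathfrak{D}_n]$ by integrating the tail. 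This yields a deviation exponent of order $n^3/(s^2p^2)$, which is exactly what produces the $(sp)^{2/3}\log p$ requirement. Your blocking strategy, by contrast, is forced to use blocks of length at least $p$, because consecutive covariate windows $\mathbf{x}_{i-p}^{i-1}$ literally share coordinates over a range of $p$ lags regardless of how fast the process mixes; with effective sample size $n/p$ and a union bound costing $s\log p$ in the exponent, the natural outcome is a requirement of order $n\gtrsim p\,s^{2}\log p$, strictly weaker than the theorem's $s^{2/3}p^{2/3}\log p$. So as written, the plan would prove a version of the theorem with an inflated sample complexity; to recover the stated rate you would need either the measure-concentration-for-the-supremum argument the paper uses, or some other mechanism (e.g., a per-coordinate Doob-martingale decomposition of the quadratic form with increments of size $\mathcal{O}(s/n)$) that exploits the $\mathcal{O}(s)$ single-sample influence rather than paying a full factor of $p$ for decoupling.
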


Similarly, the following theorem characterizes the performance bounds for the POMP estimate:

\begin{thm}
\label{thm_OMP}
If $\boldsymbol{\theta}$ is $(s,\xi)$-compressible for some $\xi < 1/2$, there exist constants $d_1',d_2',d_3'$ and $d_4'$ such that for $n>d_1's^{2/3} p^{2/3} \left(\log s \right)^{2/3} \log p$, the POMP estimate satisfies the bound
\begin{equation}
\label{thm2_OMP}
\left \|\widehat{\boldsymbol{\theta}}_{{\sf POMP}}-\boldsymbol{\theta}\right\|_2 \leq d_2' \sqrt{\frac{s \log s \log p}{n}} + d_3' \frac{\log s}{s^{{\frac{1}{\xi}-2}}}
\end{equation}
after $s^\star =\mathcal{O}(s \log s)$ iterations with probability greater than $1-\mathcal{O}\left(\frac{1}{n^{d'_4}}\right)$.
\end{thm}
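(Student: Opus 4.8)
The plan is to prove Theorem~\ref{thm_OMP} by specializing the generalized orthogonal matching pursuit guarantee of Proposition~\ref{prop:omp_gen} to the canonical self-exciting process, whose cost function is the Bernoulli negative log-likelihood $\mathfrak{L}$ of \eqref{L_def} with linear conditional intensity \eqref{CIF_models}. Proposition~\ref{prop:omp_gen} reduces everything to two ingredients: a restricted strong convexity (RSC) constant $\kappa>0$ for $\mathfrak{L}$ over the cone of vectors dominated by their best $s^\star$-term parts, with $s^\star=\mathcal{O}(s\log s)$, and a high-probability bound on $\|\nabla\mathfrak{L}(\boldsymbol{\theta}_s)\|_\infty$. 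Given these, Proposition~\ref{prop:omp_gen} yields $\|\widehat{\boldsymbol{\theta}}^{(s^\star)}_{\sf POMP}-\boldsymbol{\theta}_s\|_2\le \sqrt{6}\,\epsilon_{s^\star}/\kappa$ with $\epsilon_{s^\star}\le\sqrt{s^\star+s}\,\|\nabla\mathfrak{L}(\boldsymbol{\theta}_s)\|_\infty$, and the theorem follows from the triangle inequality $\|\widehat{\boldsymbol{\theta}}_{\sf POMP}-\boldsymbol{\theta}\|_2\le\|\widehat{\boldsymbol{\theta}}_{\sf POMP}-\boldsymbol{\theta}_s\|_2+\varsigma_s(\boldsymbol{\theta})$ together with $\varsigma_s(\boldsymbol{\theta})\le\sigma_s(\boldsymbol{\theta})=\mathcal{O}(s^{1-1/\xi})$.

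First I would reduce RSC to a restricted eigenvalue statement for the binary design. Let $\mathbf{X}$ be the $n\times p$ matrix whose $i$th row is $(\mathbf{x}_{i-p}^{i-1})'$, which is Toeplitz in the single spike train $x_{-p+1},\dots,x_{n-1}$. The Hessian is $\nabla^2\mathfrak{L}(\boldsymbol{\eta})=\tfrac1n\sum_{i=1}^{n}w_i(\boldsymbol{\eta})\,\mathbf{x}_{i-p}^{i-1}(\mathbf{x}_{i-p}^{i-1})'$ with weight $w_i(\boldsymbol{\eta})=x_i/\lambda_i(\boldsymbol{\eta})^2+(1-x_i)/(1-\lambda_i(\boldsymbol{\eta}))^2$. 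On $\boldsymbol{\Theta}$ the constraints \eqref{eq:star} force $\pi_{\min}\le\lambda_i(\boldsymbol{\eta})\le\pi_{\max}<1/2$, hence $1\le w_i(\boldsymbol{\eta})\le c_\pi$ for a constant $c_\pi$ depending only on $\pi_{\min},\pi_{\max}$, so that $\tfrac1n\mathbf{X}'\mathbf{X}\preceq\nabla^2\mathfrak{L}(\boldsymbol{\eta})\preceq c_\pi\,\tfrac1n\mathbf{X}'\mathbf{X}$ for every $\boldsymbol{\eta}$ on the segment joining $\boldsymbol{\theta}$ and $\boldsymbol{\theta}+\boldsymbol{\psi}$. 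Taylor's theorem with exact remainder then gives $\mathfrak{D_L}(\boldsymbol{\psi},\boldsymbol{\theta})\ge\tfrac1{2n}\|\mathbf{X}\boldsymbol{\psi}\|_2^2$, so it suffices to bound $\tfrac1n\|\mathbf{X}\boldsymbol{\psi}\|_2^2$ below by a fixed multiple of $\|\boldsymbol{\psi}\|_2^2$ uniformly over the cone.

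Establishing this restricted eigenvalue bound is the main obstacle, precisely because $\mathbf{X}$ is formed from a single realization of a strongly dependent binary process. The route I would take is: (a) compute $\mathbb{E}[\tfrac1n\mathbf{X}'\mathbf{X}]=\mathbf{R}$, the $p\times p$ second-moment matrix of the stationary canonical process, and use that $\pi_{\max}<1/2$ makes the process fast mixing, which yields a strictly positive lower bound on $\lambda_{\min}(\mathbf{R})$ --- obtained from the conditional variance $\lambda_i(1-\lambda_i)\ge\pi_{\min}(1-\pi_{\min})$ of the innovations $x_i-\lambda_i$, playing the role that $\sigma_{\sf w}^2$ plays in the AR analysis --- together with exponentially decaying dependence along the chain; (b) exploit the Toeplitz structure so that $\tfrac1n\mathbf{X}'\mathbf{X}$ has only $\mathcal{O}(p)$ distinct entries, the sample autocorrelations $\widehat{r}_k=\tfrac1n\sum_i x_i x_{i+k}$, each a normalized sum of bounded, exponentially mixing terms, and control $\max_k|\widehat{r}_k-r_k|$ by a Bernstein-type inequality for mixing sequences followed by a union bound over these $\mathcal{O}(p)$ entries; (c) convert this entrywise deviation into the restricted eigenvalue bound by a cone argument exploiting $\|\boldsymbol{\psi}\|_1\lesssim\sqrt{s^\star}\,\|\boldsymbol{\psi}\|_2$ on the cone. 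Optimizing the trade-off between the self-averaging deviation --- larger here than in i.i.d.\ designs because the covariates occupy only an $\mathcal{O}(n+p)$-dimensional sample space --- and the cone radius $s^\star=\mathcal{O}(s\log s)$ is what forces $n>d_1'\,s^{2/3}p^{2/3}(\log s)^{2/3}\log p$, the fractional exponents being the signature of this balance.

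Finally, for the gradient, at the true $\boldsymbol{\theta}$ one has $\nabla\mathfrak{L}(\boldsymbol{\theta})=-\tfrac1n\sum_i \tfrac{x_i-\lambda_i}{\lambda_i(1-\lambda_i)}\,\mathbf{x}_{i-p}^{i-1}$, a normalized sum of bounded martingale differences with respect to the process filtration since $\mathbb{E}[x_i-\lambda_i\mid H_i]=0$; an Azuma/Freedman-type inequality and a union bound over the $p$ coordinates give $\|\nabla\mathfrak{L}(\boldsymbol{\theta})\|_\infty=\mathcal{O}(\sqrt{\log p/n})$ with probability $1-\mathcal{O}(n^{-d_4'})$. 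Passing to the best $s$-term vector via $\|\nabla\mathfrak{L}(\boldsymbol{\theta}_s)-\nabla\mathfrak{L}(\boldsymbol{\theta})\|_\infty\le\|\nabla^2\mathfrak{L}(\boldsymbol{\eta})\|_{\infty\to\infty}\,\sigma_s(\boldsymbol{\theta})$, with $\|\nabla^2\mathfrak{L}\|_{\infty\to\infty}=\mathcal{O}(1)$ from step (b) and $\sigma_s(\boldsymbol{\theta})=\mathcal{O}(s^{1-1/\xi})$ decaying since $\xi<1/2$, contributes only the compressibility residual; tracing $\kappa$, the gradient bound, $s^\star$, and $\varsigma_s(\boldsymbol{\theta})$ through Proposition~\ref{prop:omp_gen} and the triangle inequality above then assembles the two terms $d_2'\sqrt{s\log s\log p/n}$ and $d_3'\log s/s^{1/\xi-2}$, with all constants expressible through $\lambda_{\min}(\mathbf{R})$, $\pi_{\min}$, $\pi_{\max}$, and the mixing rate, which would complete the proof.
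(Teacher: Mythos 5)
Your skeleton is the same as the paper's: invoke the Zhang-type generalized OMP guarantee, establish RSC for the Bernoulli negative log-likelihood of the canonical self-exciting process, bound $\|\nabla\mathfrak{L}(\boldsymbol{\theta}_s)\|_\infty$ by a martingale (Azuma-type) concentration plus an $\mathcal{O}(\sigma_s(\boldsymbol{\theta}))$ compressibility correction, and finish with the triangle inequality. Your Taylor/Hessian reduction of RSC to a restricted-eigenvalue statement for $\tfrac1n\mathbf{X}'\mathbf{X}$ and your population lower bound via $\lambda_{\min}(\mathbf{R})$ and the spectral density are exactly the paper's Step 1 (Lemma \ref{lemma1}, using $\kappa_l=\pi_\star(1-\pi_\star)/\bigl(2\pi(1+2\pi_{\max})^4\bigr)$). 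One small slip: $\|\nabla^2\mathfrak{L}\|_{\infty\to\infty}$ (max row sum) is $\mathcal{O}(p\pi_\star)$, not $\mathcal{O}(1)$; the bound you want pairs the max-entry norm of the Hessian, which is $\mathcal{O}(1)$ under (\ref{eq:star}), with $\|\boldsymbol{\theta}-\boldsymbol{\theta}_s\|_1=\sigma_s(\boldsymbol{\theta})$, recovering the paper's $c_2\sigma_s(\boldsymbol{\theta})$ term. That is fixable.

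The genuine gap is your empirical-to-population step (b)--(c), which is the heart of the theorem. Your plan is the AR-chapter strategy: control $\max_k|\widehat r_k-r_k|$ entrywise by a Bernstein-type inequality for mixing sequences and then use the cone bound $\|\boldsymbol{\psi}\|_1\lesssim\sqrt{s^\star}\,\|\boldsymbol{\psi}\|_2$. Carried out, this gives a deviation of order $t\,s^\star\|\boldsymbol{\psi}\|_2^2$ with $t\asymp\sqrt{\log p/n}$ (the summands are bounded, so there is no mechanism for a $p$-dependent penalty), hence a sampling requirement of order $(s\log s)^2\log p$ --- quadratic in the sparsity and with no $p^{2/3}$ factor. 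Your assertion that ``optimizing the trade-off'' forces $n\gtrsim s^{2/3}p^{2/3}(\log s)^{2/3}\log p$ is therefore unsubstantiated, and since $(s\log s)^2\log p$ is not dominated by $s^{2/3}p^{2/3}(\log s)^{2/3}\log p$ once $s$ grows faster than roughly $\sqrt p$, this route would not establish the theorem under its stated hypothesis in all regimes. The paper obtains the fractional exponents by a different mechanism (Proposition \ref{prop:hawkes_concentration}): it bounds the supremum deviation $\mathfrak{D}_n$ over $\mathbb{S}_2(1)$ intersected with the cone directly as a functional of the binary sample path, shows via the cone condition that this functional is $\mathcal{O}(sp/n)$-Lipschitz in normalized Hamming distance (flipping one bit of $\mathbf{x}_{-p+1}^n$ perturbs up to $p$ covariate windows), applies Kontorovich's concentration inequality for the geometrically mixing chain with $\eta_{ij}\le(2\pi_{\max})^{n-j+1}$ --- this is where $\pi_{\max}<1/2$ is actually used --- to get a tail of the form $\exp\left(-c\,n^3t^2/(s^2p^2)\right)$, and integrates the tail to get $\mathbb{E}[\mathfrak{D}_n]\lesssim sp/n^{3/2}$; requiring this to be a fraction of $\kappa_l$ is precisely what yields $n\gtrsim(sp)^{2/3}\log p$, and replacing $s$ by $s^\star\sim s\log s$ for the OMP cone yields the stated $(\log s)^{2/3}$ factor. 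So you must either supply this Hamming-Lipschitz/mixing argument (or an equivalent uniform-over-the-cone concentration with the same $n^{3}/(s^2p^2)$ scaling), or actually prove your Bernstein-for-mixing inequality and accept that it proves a variant of the theorem with a different, incomparable sampling condition rather than the one claimed.
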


{Full proofs of Theorems \ref{negahban_lambda} and \ref{thm_OMP} are given in Appendix \ref{appprf}.}

\noindent  \textit{\textbf{Remarks.}} An immediate comparison of the sufficient condition $n = \mathcal{O}(s^{2/3} p^{2/3} \log p)$ of Theorem \ref{negahban_lambda} with those of \cite{Negahban} for GLM models with i.i.d. covariates given by $n = \mathcal{O}(s \log p)$ reveals that a loss of order $\mathcal{O}(p^{2/3} s^{-1/3})$ is incurred due to the inter-dependence of the covariates. However, the sample space of $n$ i.i.d. covariates is $np$-dimensional, whereas in our problem the sample space is only $(n+p)$-dimensional. Hence, the aforementioned loss can be viewed as the price of self-averaging of the process accounting for the low-dimensional nature of the covariate sample space. \textcolor{black}{To the best of our knowledge, the dominant loss of $\mathcal{O}(p^{2/3})$ in both theorems does not seem to be significantly improvable, as self-exciting processes are known to converge quite slowly to their ergodic state \cite{reynaud2007some}. {On a related note, the analysis of the sampling requirements of linear AR models reveals a loss of {$\mathcal{O}(p^{1/2})$} in the number of measurements \cite{arpaper}.}}

The sufficient condition of Theorem \ref{thm_OMP} given by $n = \mathcal{O}(s^{2/3} p^{2/3} \left(\log s \right)^{2/3} \log p)$ implies an extra loss of $\left(\log s \right)^{2/3}$ due to the greedy nature of the solution. Theorem \ref{thm_OMP} also requires a high compressibility level of the parameter vector $\boldsymbol{\theta}$  ($\xi < 1/2$), whereas Theorem \ref{negahban_lambda} does not impose any extra restrictions on $\xi \in (0,1)$. Intuitively speaking, this comparison reveals the trade-off between computational complexity and compressibility requirements for convex optimization vs. greedy techniques, which is well-known for linear models \cite{bruckstein2009sparse}.

The constants $d_i, d'_i$, $i=1,\cdots,4$, $\alpha$ and $\beta$ are explicitly given in the proof of the theorems in Appendix \ref{appprf}. As for a typical numerical example, for $\pi_\min = 0.01$ and $\pi_\max=0.49$, the constants of Theorem \ref{negahban_lambda} can be chosen as $d_1 \approx 10^3, d_2 = 50, d_3 \approx 10^4$ and $d_4 = 4$. 

The main ingredient in the proofs of Theorems \ref{negahban_lambda} and \ref{thm_OMP} is inspired by the beautiful treatment of Negahban et al. in \cite{Negahban} in establishing the notion of Restricted Strong Convexity (RSC).  The major technical challenge for the \textcolor{black}{canonical self-exciting process}, as opposed to the GLM models with i.i.d. covariates in \cite{Negahban}, lies in the fact that the covariates are highly inter-dependent as they are formed by the history of the process. Hence, it is not straightforward to establish RSC with high probability, as the large deviation techniques used for i.i.d. random vectors {do} not hold. We establish the RSC for the \textcolor{black}{canonical self-exciting process} in two steps (see Lemma \ref{lemma1} in Appendix \ref{appprf}). First, we show that RSC holds for the expected value of the negative log-likelihood $\mathbb{E} [ \mathfrak{L}(\boldsymbol{\theta})]$, and then by invoking results on concentration of dependent random variables show that the negative log-likelihood $\mathfrak{L}(\boldsymbol{\theta})$ resides in a sufficiently small neighborhood of $\mathbb{E} [ \mathfrak{L}(\boldsymbol{\theta})]$ with high probability, and hence satisfies the RSC.

The remainder of the proof of Theorem \ref{negahban_lambda} establishes that upon satisfying the RSC, the estimation error can be suitably bounded (Proposition \ref{negahban_thm}, Appendix \ref{appprf}). Similarly, Theorem 2 is proven using the RSC of the {canonical self-exciting process} together with the results adopted from \cite{zhang_omp} on the performance of OMP for convex cost functions (Proposition \ref{prop_omp}, Appendix \ref{appprf}).

\medskip

\noindent {\textbf{Extensions.}} For simplicity and clarity of presentation, we have opted to present the proofs for the case of known $\mu$ and for the {canonical self-exciting process} as a canonical {GLM.} The following corollary extends our results to the case of unknown $\mu$.

\begin{corollary}\label{cor:1} The claims of Theorems \ref{negahban_lambda} and \ref{thm_OMP} hold when $\mu$ is not known, except for possibly slightly different constants.
\end{corollary}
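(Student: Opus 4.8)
The plan is to reduce the unknown-$\mu$ case to Theorems \ref{negahban_lambda} and \ref{thm_OMP} by absorbing $\mu$ into an augmented parameter vector. Set $\widetilde{\boldsymbol{\theta}} := [\mu,\boldsymbol{\theta}']' \in \mathbb{R}^{p+1}$ and $\widetilde{\mathbf{x}}_i := [1,(\mathbf{x}_{i-p}^{i-1})']'$, so that $\lambda_i = \widetilde{\boldsymbol{\theta}}'\widetilde{\mathbf{x}}_i$ and the negative log-likelihood (\ref{L_def}) retains exactly its functional form. The estimators become
\begin{equation}
\widehat{\widetilde{\boldsymbol{\theta}}}_{\sf sp} = \argmin_{\widetilde{\boldsymbol{\theta}}\in\widetilde{\boldsymbol{\Theta}}} \; \mathfrak{L}(\widetilde{\boldsymbol{\theta}}) + \gamma_n\|\boldsymbol{\theta}\|_1,
\end{equation}
which penalizes only the $\boldsymbol{\theta}$-block (there is no reason to shrink the baseline rate), and the analogous POMP run in $\mathbb{R}^{p+1}$ with the $\mu$-coordinate kept in the active set throughout. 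Three things then need to be verified: that the combinatorial parameters change only by constants, that the regularization condition still holds, and that Restricted Strong Convexity (RSC) survives the augmentation.

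The first two are routine. Placing $\mu$ in the head of $\widetilde{\boldsymbol{\theta}}$ makes the effective sparsity $s+1 = \mathcal{O}(s)$, leaves the $\ell_1$-tail unchanged, $\sigma_{s+1}(\widetilde{\boldsymbol{\theta}}) = \sigma_s(\boldsymbol{\theta})$, and makes the ambient dimension $p+1 = \Theta(p)$; hence the sampling thresholds $n > d_1 s^{2/3}p^{2/3}\log p$ and $n > d'_1 s^{2/3}p^{2/3}(\log s)^{2/3}\log p$, the iteration count $s^\star = \mathcal{O}(s\log s)$, and the bounds (\ref{thm2}) and (\ref{thm2_OMP}) are preserved up to absolute constants. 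The gradient $\nabla\mathfrak{L}(\widetilde{\boldsymbol{\theta}})$ acquires one extra component, $\partial_\mu\mathfrak{L} = -\frac{1}{n}\sum_{i=1}^n\big(x_i/\lambda_i - (1-x_i)/(1-\lambda_i)\big)$, a normalized sum of terms bounded by $1/\pi_{\min}+1/(1-\pi_{\max})$ that forms a martingale-difference-type sequence; the concentration estimate already used in Appendix \ref{appprf} for the other components of $\nabla\mathfrak{L}$ shows this component is $\mathcal{O}(\sqrt{\log p/n})$ with high probability, so the choice $\gamma_n = d_2\sqrt{\log p/n}$ still dominates $2\|\nabla\mathfrak{L}(\widetilde{\boldsymbol{\theta}})\|_\infty$ after enlarging $d_2$.

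The crux — and the only real work — is re-establishing RSC for the augmented design. Because the process has stationary mean $m = \mu/(1-\mathbf{1}'\boldsymbol{\theta}) \in (c_1,1)$, the all-ones column is correlated with the history columns, so the augmented second-moment matrix
\begin{equation}
\mathbb{E}\big[\widetilde{\mathbf{x}}_i\widetilde{\mathbf{x}}_i'\big] = \begin{pmatrix} 1 & m\mathbf{1}' \\ m\mathbf{1} & \boldsymbol{\Gamma} + m^2\mathbf{1}\mathbf{1}' \end{pmatrix}
\end{equation}
is not block-diagonal, where $\boldsymbol{\Gamma}$ is the autocovariance matrix of the process. Completing the square gives, for $\boldsymbol{\psi} = [a,\mathbf{v}']'$,
\begin{equation}
\boldsymbol{\psi}'\,\mathbb{E}\big[\widetilde{\mathbf{x}}_i\widetilde{\mathbf{x}}_i'\big]\,\boldsymbol{\psi} = \big(a + m\mathbf{1}'\mathbf{v}\big)^2 + \mathbf{v}'\boldsymbol{\Gamma}\mathbf{v},
\end{equation}
so curvature in the $\boldsymbol{\theta}$-directions is inherited directly from $\mathbf{v}'\boldsymbol{\Gamma}\mathbf{v} \ge \lambda_{\min}(\boldsymbol{\Gamma})\|\mathbf{v}\|_2^2$ as in Lemma \ref{lemma1} (the spectral density of the centered process being bounded below by the same constants). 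Curvature in the $\mu$-direction must be extracted from the $(a+m\mathbf{1}'\mathbf{v})^2$ term: the point is that on the restricted cone the fully-dense direction $\mathbf{1}$ is essentially excluded, so $|\mathbf{1}'\mathbf{v}| \le \|\mathbf{v}\|_1 \lesssim \sqrt{s}\,\|\mathbf{v}\|_2 + \sigma_s(\boldsymbol{\theta})$, and — together with the feasibility constraint $\widehat{\mu}\in[\pi_{\min},\pi_{\max}]$, which keeps $|a| = \mathcal{O}(1)$, and the hypothesis $\sigma_s(\boldsymbol{\theta}) = \mathcal{O}(\sqrt{s})$ — the cross term $2am\mathbf{1}'\mathbf{v}$ is controlled, at worst producing an additive tolerance of the same order as the best-$s$-term tail already present in the weak-sparsity bound. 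Re-running the concentration step of Lemma \ref{lemma1} verbatim (appending a deterministic coordinate changes none of the mixing or boundedness estimates) transfers this lower bound from the population to the empirical matrix, establishing RSC for $\mathfrak{L}(\widetilde{\boldsymbol{\theta}})$ with a parameter of the same order as $\kappa$. Propositions \ref{negahban_thm} and \ref{prop_omp}, now invoked in $\mathbb{R}^{p+1}$, deliver the two bounds for the augmented estimators, and restricting the error vector to its $\boldsymbol{\theta}$-block (which only decreases its norm) yields the corollary. The main obstacle is exactly this control of the all-ones-versus-history correlation in the RSC step; everything else is a constant-tracking repetition of the known-$\mu$ proof.
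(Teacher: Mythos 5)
Your proposal follows essentially the same route as the paper's: the paper's entire proof of Corollary \ref{cor:1} consists of replacing $\boldsymbol{\theta}$ by the augmented vector $[\mu,\boldsymbol{\theta}']'$, appending a constant component $1$ to the covariate vectors (so that boundedness of the covariates is preserved), and asserting that all proof steps then repeat verbatim. You perform exactly this augmentation and additionally spell out the one step the paper leaves to the reader, namely checking restricted strong convexity for the augmented design despite the correlation between the constant column and the history columns, so your argument is consistent with, and somewhat more detailed than, the paper's own proof.
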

\begin{proof}
The proof is given in Appendix \ref{appprf}.
\end{proof}

The {canonical self-exciting process} can be generalized to a larger class of {GLMs} by generalization of its spiking probability function. In a more general form we can consider a spiking probability function given by
\begin{equation*}
\label{nhp}
\lambda_i = \phi\left(\mu+\boldsymbol{\theta}'{\mathbf{x}_{i-p}^{i-1}}\right),
\end{equation*}
where $\phi(\cdot)$ is a possibly nonlinear function for which $0 < \lambda_i < 1$. In their continuous form, such processes are referred to as the \textit{nonlinear Hawkes process} \cite{nlhp}. Two of the commonly-used models in neural data analysis are the log-link and logistic-link models. Our prior numerical studies in \cite{kazemipour} revealed a similar performance improvement of  the $\ell_1$-regularized ML and the greedy solution over the ML estimate for the log-link model. Stationarity of these discrete processes can be proved similar to the canonical self-exciting process (see Appendix \ref{app:hawkes_psd}). The latter fact is key to extending our proofs to other models and is summarized by the following corollary:

\begin{corollary}\label{cor:2} Theorems \ref{negahban_lambda} and \ref{thm_OMP} hold when the spiking probability is given by $\lambda_i = \phi\left(\mu+\boldsymbol{\theta}'{\mathbf{x}_{i-p}^{i-1}}\right)$ for some continuous, bounded, convex and twice-differentiable function $\phi(\cdot)$ (e.g., $\phi(x) = \exp(x)$ or $\phi(x) = {\sf logit}^{-1}(x)$)  for which $0 < \lambda_i < 1/2$, except for  different constants.
\end{corollary}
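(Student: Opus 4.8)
The plan is to retrace the proofs of Theorems \ref{negahban_lambda} and \ref{thm_OMP} almost verbatim, replacing every use of the linearity of $\lambda_i$ in $\boldsymbol{\theta}$ by the corresponding property of the composition $\lambda_i=\phi(u_i)$ with $u_i:=\mu+\boldsymbol{\theta}'\mathbf{x}_{i-p}^{i-1}$, and checking that all constants stay finite and strictly positive. Those proofs rest on exactly two substantive ingredients: (i) restricted strong convexity of $\mathfrak{L}$ on the relevant cone around $\boldsymbol{\theta}$, and (ii) the deviation bound $\|\nabla\mathfrak{L}(\boldsymbol{\theta})\|_\infty=\mathcal{O}(\sqrt{\log p/n})$; everything downstream --- Proposition \ref{negahban_thm} for the $\ell_1$-regularized estimator and Proposition \ref{prop_omp} for POMP --- is invoked as a black box. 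A preliminary observation does most of the bookkeeping: on $\boldsymbol{\Theta}$ one has $\pi_{\min}\le\lambda_i\le\pi_{\max}<1/2$ and $\|\boldsymbol{\theta}\|_1$ bounded, so (since $\mathbf{x}_{i-p}^{i-1}\in\{0,1\}^p$) the argument $u_i$ ranges over a fixed compact interval $\mathcal{I}$, on which the continuous functions $\phi,\phi',\phi''$ are bounded, with $\phi'$ bounded away from $0$ there for the link functions of interest (e.g. $\exp$, ${\sf logit}^{-1}$). I would also record that $\mathfrak{L}$ stays convex in $\boldsymbol{\theta}$, which for these links follows from convexity of $-\log\phi$ and $-\log(1-\phi)$; stationarity and fast mixing of the nonlinear process under $\pi_{\max}<1/2$ are already available (Appendix \ref{app:hawkes_psd}).

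For ingredient (i), I would compute the Hessian
\[
\nabla^2\mathfrak{L}(\boldsymbol{\theta})=\frac{1}{n}\sum_{i=1}^n\Big[\Big(\tfrac{x_i}{\lambda_i^2}+\tfrac{1-x_i}{(1-\lambda_i)^2}\Big)\phi'(u_i)^2-\Big(\tfrac{x_i}{\lambda_i}-\tfrac{1-x_i}{1-\lambda_i}\Big)\phi''(u_i)\Big]\,\mathbf{x}_{i-p}^{i-1}(\mathbf{x}_{i-p}^{i-1})',
\]
and pass to the conditional expectation given the history: since $\mathbb{E}[x_i\mid H_i]=\lambda_i$, the $\phi''$ cross term has zero conditional mean and the first bracket averages to $\phi'(u_i)^2/(\lambda_i(1-\lambda_i))$, which on $\mathcal{I}$ is sandwiched between two positive constants fixed by $\pi_{\min},\pi_{\max}$ and the bounds on $\phi'$. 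Hence the expected Hessian is, up to these constants, the same covariance-type object $\tfrac1n\sum_i\mathbf{x}_{i-p}^{i-1}(\mathbf{x}_{i-p}^{i-1})'$ handled in the canonical case, whose restricted-eigenvalue lower bound --- proved there via stationarity and concentration of the dependent covariates --- transfers with a rescaled $\kappa$. Going from the expected Hessian to the finite-difference form $\mathfrak{D}_\mathfrak{L}(\boldsymbol{\psi},\boldsymbol{\theta})\ge\kappa\|\boldsymbol{\psi}\|_2^2$ on a $\sqrt{s}$-neighborhood uses the same concentration-of-dependent-variables step as before; the only genuinely new point is to dominate the $\phi''$ cross term on that neighborhood using boundedness of $\phi''$ on a slight enlargement of $\mathcal{I}$ and Lipschitz continuity of $\phi'$, so that it cannot cancel the positive curvature.

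For ingredient (ii), note that $\nabla\mathfrak{L}(\boldsymbol{\theta})_j=-\tfrac1n\sum_{i=1}^n\tfrac{(x_i-\lambda_i)\phi'(u_i)}{\lambda_i(1-\lambda_i)}\,x_{i-j}$ is an average of a bounded martingale-difference sequence with respect to the history filtration (each summand is $H_i$-measurable times $x_i-\lambda_i$, and is bounded by $\|\phi'\|_{\infty,\mathcal{I}}/(\pi_{\min}(1-\pi_{\max}))$). An Azuma/Bernstein inequality for martingales together with a union bound over $j\in\{1,\dots,p\}$ yields $\|\nabla\mathfrak{L}(\boldsymbol{\theta})\|_\infty\le d_2\sqrt{\log p/n}$ with probability $1-\mathcal{O}(n^{-d_4})$, as in the canonical case but with $d_2$ now depending on $\|\phi'\|_{\infty,\mathcal{I}}$. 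Feeding (i) and (ii) into Propositions \ref{negahban_thm} and \ref{prop_omp} reproduces the bounds (\ref{thm2}) and (\ref{thm2_OMP}) under the same sample-complexity conditions $n\gtrsim s^{2/3}p^{2/3}\log p$ (respectively with the extra $(\log s)^{2/3}$ factor), only the multiplicative constants changing; the unknown-$\mu$ case is absorbed exactly as in Corollary \ref{cor:1} by passing to the augmented vector $[\mu,\boldsymbol{\theta}']'$.

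I expect the real obstacle to be the RSC verification for a general link, and within it the two spots where the constant-coefficient structure of the canonical model is genuinely lost: guaranteeing $\phi'$ stays bounded away from zero on $\mathcal{I}$ (automatic when $\phi=\mathrm{id}$, but in general requiring strict monotonicity of $\phi$ or a direct argument, so that the expected Hessian is nondegenerate), and controlling the $\phi''$ cross term uniformly over the RSC neighborhood so the curvature lower bound survives for $t\ne 0$ where $\mathbb{E}[x_i\mid H_i]\ne\lambda_i(\boldsymbol{\theta}+t\boldsymbol{\psi})$. By contrast, the concentration machinery for the dependent covariates is reused essentially unchanged, since it only needed stationarity and fast mixing, both of which persist for the nonlinear process.
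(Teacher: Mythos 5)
Your proposal is correct and takes essentially the same route as the paper, whose own proof of this corollary merely asserts that boundedness of the covariates confines $\phi$, $\phi'$, $\phi''$ to a compact interval, that stationarity and fast mixing persist, and that Lemma \ref{lemma1} and the remaining steps of Theorems \ref{negahban_lambda} and \ref{thm_OMP} then generalize with different constants—your write-up simply supplies those details (RSC via the Hessian with the $\phi''$ cross term killed in conditional expectation, and the gradient bound via the bounded martingale-difference argument). One caution on the step you flagged: for $\phi=\exp$ the $\phi''$ cross term exactly cancels the $\phi'^2$ curvature on spike bins ($\phi\phi''=\phi'^2$), so strict positivity in the RSC cannot come from magnitude domination of $\phi''$; it comes from the convexity of $-\log\phi$ and $-\log(1-\phi)$ (pointwise nonnegativity) together with the non-spike terms, whose conditional probability is at least $1-\pi_{\max}>1/2$, fed through the same concentration machinery.
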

\begin{proof}
The proof is given in Appendix \ref{appprf}.
\end{proof}

\section{Application to Simulated and Real Data}\label{simulations}

In this section, we study the performance of the conventional ML estimator, the $\ell_1$-regularized ML estimator, and the POMP estimator on simulated data as well as real spiking data recorded from the mouse's lateral geniculate nucleus (LGN) neurons and the ferret's retinal ganglion cells (RGC). {We have archived a MATLAB implementation of the estimators used in this chapter using the CVX package \cite{cvx} on the open source repository GitHub and made it publicly available \cite{hawkes_code}.}

\vspace{-3mm}
\subsection{Simulation Studies}

In order to simulate spiking data governed by the canonical self-exciting process, we sequentially generate spikes using (\ref{CIF_models}).   We have used $\mu = 0.1$, $\pi_\min = 0.01$, $\pi_\max = 0.49$, $p=1000$, $s=3$ and $n = 950$ for simulation purposes. Figure \ref{hawkes_fig} shows $500$ samples of the {canonical self-exciting process} generated using a history dependence parameter vector shown in Figure \ref{estimate_plot_synthetic}(a). The parameter vector $\boldsymbol{\theta}$ is compressible with a sparsity level of $s = 3$ and $\sigma_3(\boldsymbol{\theta}) = 0.05$. A value of $\gamma_n = 0.1$ is used to obtain the $\ell_1$-regularized ML estimate, which is slightly tuned around the theoretical estimate given by Theorem \ref{negahban_lambda}. Figures \ref{estimate_plot_synthetic}(b), \ref{estimate_plot_synthetic}(c), and \ref{estimate_plot_synthetic}(d) show the estimated history dependence parameter vectors using ML, $\ell_1$-regularized ML, and POMP, respectively. It can be readily visually observed that regularized ML and POMP significantly outperform the ML estimate in finding the correct values of $\boldsymbol{\theta}$. More specifically, the components at lags $405$ and $800$ (indicated by the gray arrows) are underestimated by the ML estimator, and their contribution is distributed among several falsely identified smaller lag components.

\begin{figure}[H]
\vspace{-2mm}
\centering
\includegraphics[width=.8\columnwidth]{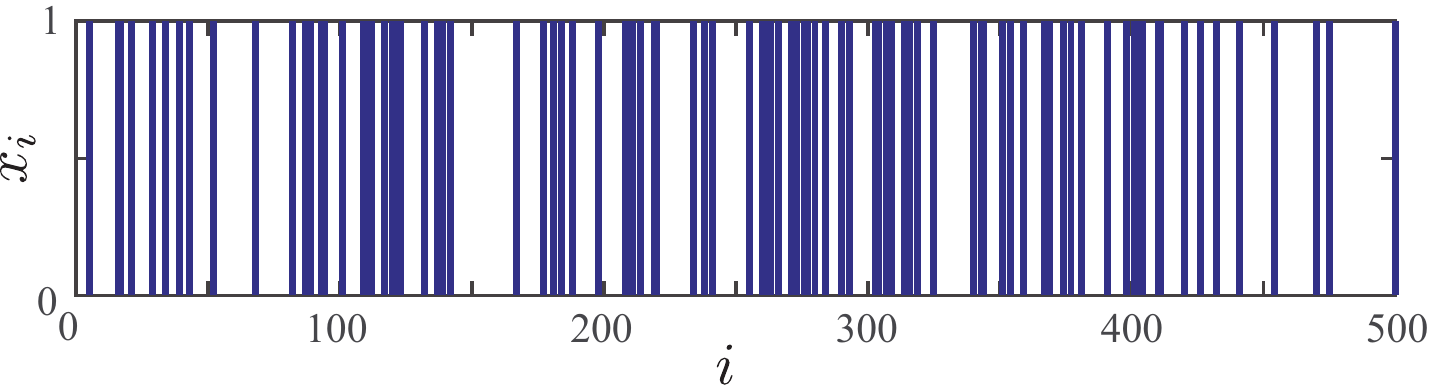}
\caption{{\small{A sample of the simulated {canonical self-exciting process}.}}}\label{hawkes_fig}
\vspace{-3mm}
\end{figure}

\begin{figure}[H]
\begin{center}
\includegraphics[width=.9\columnwidth]{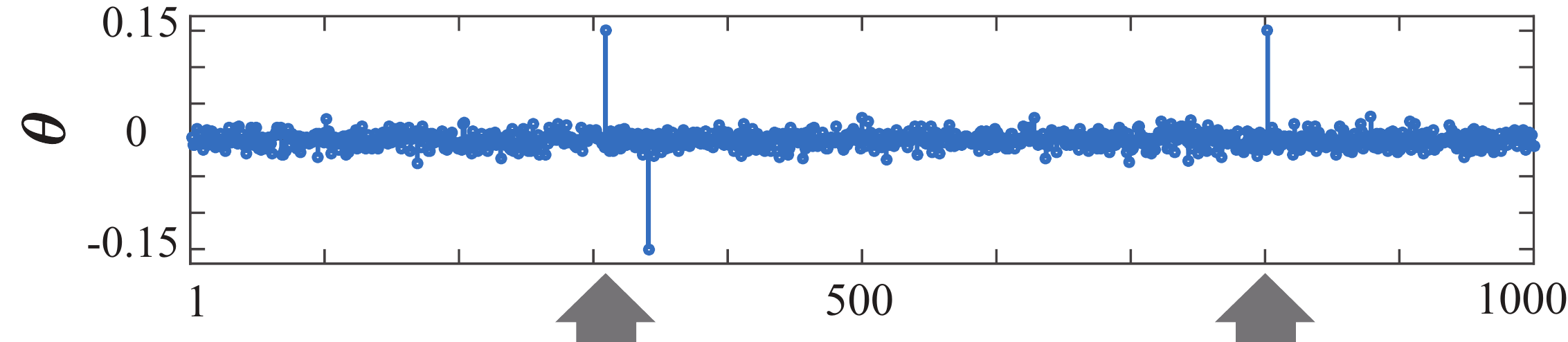}
\subcaption*{(a) True}
\vspace{3mm}
\includegraphics[width=.9\columnwidth]{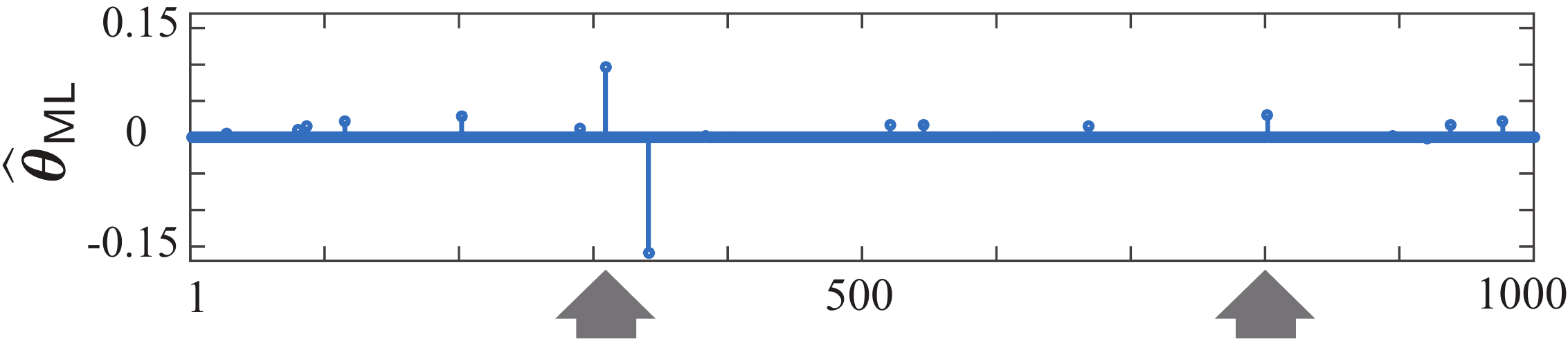}
\subcaption*{(b) ML}
\vspace{3mm}
\includegraphics[width=.9\columnwidth]{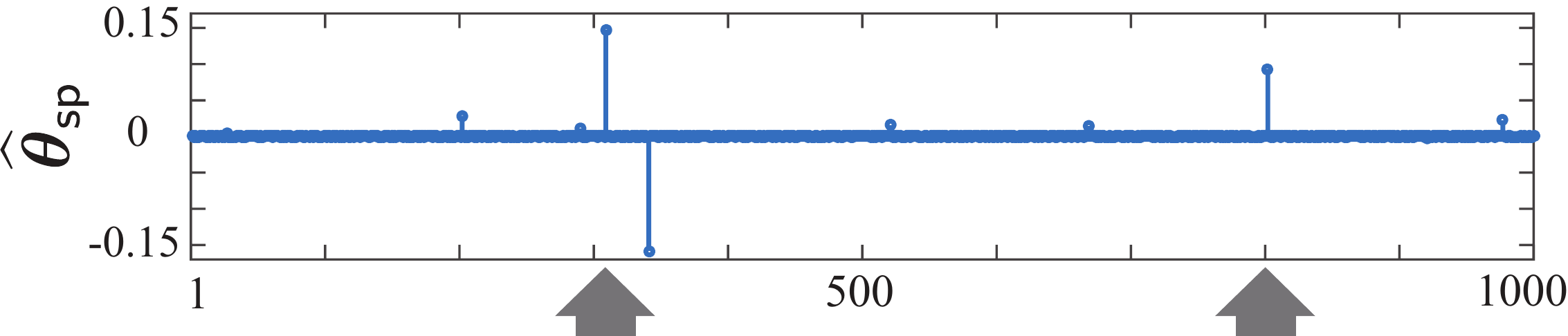}
\subcaption*{(c) $\ell_1$-regularized ML}
\vspace{3mm}
\includegraphics[width=.9\columnwidth]{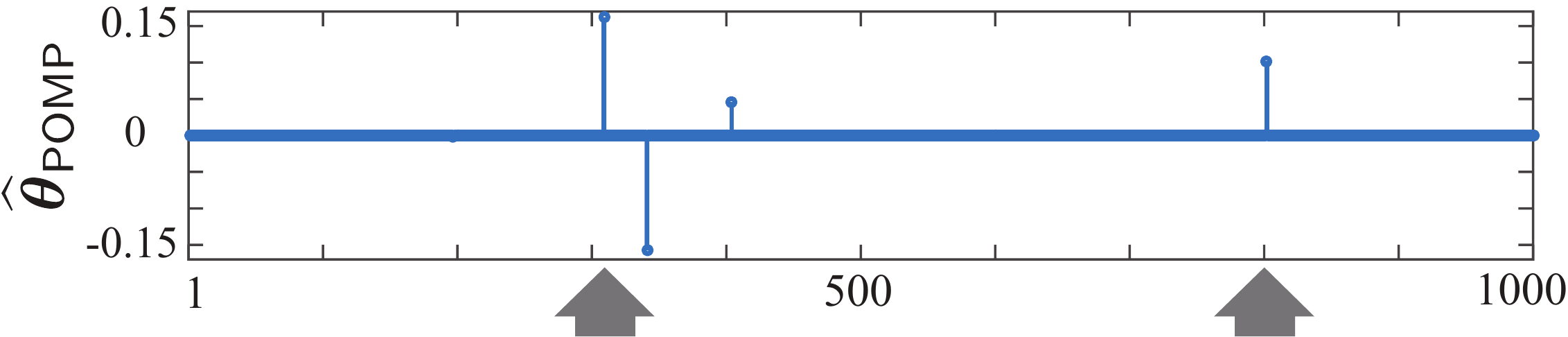}
\subcaption*{(d) POMP}
\end{center}
\vspace{-3mm}
\caption{{\small{(a) True parameters vs. (b) ML, (c) $\ell_1$-regularized ML, and (d) POMP estimates.}}}\label{estimate_plot_synthetic}
\vspace{-2mm}
\end{figure}


\begin{figure}[H]
\centering
\includegraphics[width=.9\columnwidth]{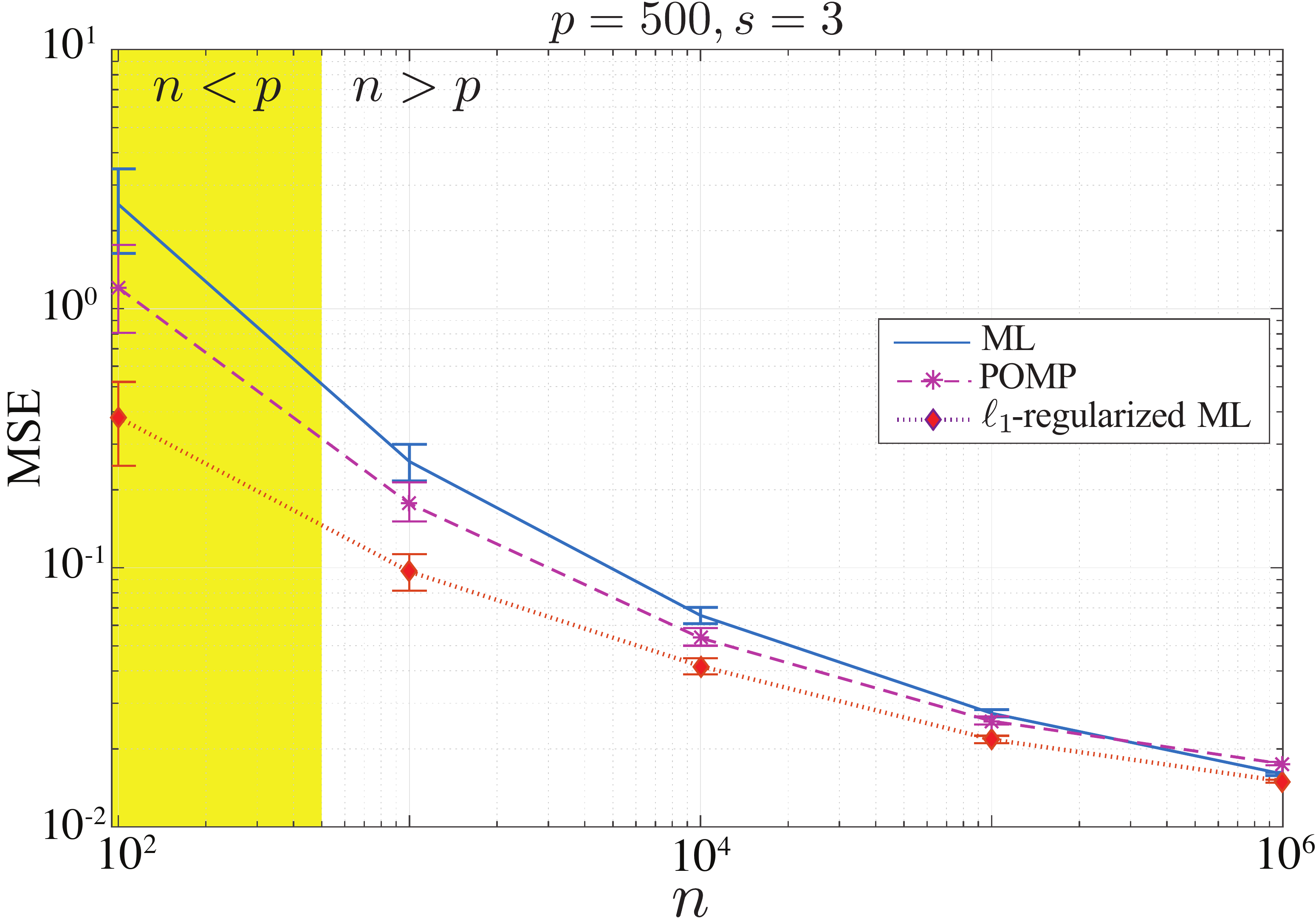}
\caption{{\small{MSE performance of the ML, $\ell_1$-regularized ML and POMP estimators.}}}\label{MSE_high_dim}
\vspace{-5mm}
\end{figure}

In order to quantify this performance gain, we repeated the same experiment {by generating  realizations corresponding to randomly chosen supports of size $s=3$ for $\boldsymbol{\theta}$ and spike trains of length $10^2 \le n \le 10^6$. In each case, the magnitudes of the components of $\boldsymbol{\theta}$ were chosen to satisfy the assumptions (\ref{eq:star}). For a given $\boldsymbol{\theta}$, the mean-square-error (MSE) of the estimate $\widehat{\boldsymbol{\theta}}$ is defined as $\widehat{\mathbb{E}}\{ \| \widehat{\boldsymbol{\theta}} - \boldsymbol{\theta} \|_2^2 \}$, where $\widehat{\mathbb{E}} \{ \cdot \}$ is the sample average over the realizations of the process.} Figure \ref{MSE_high_dim} shows the results of this simulation, where a similar systematic performance gain is observed. The left segment of the plot (shaded in yellow) and the right segment correspond to the compressive ($n < p$) and denoising ($n > p$) regimes, respectively. Error bars on the plot indicate 90\% quantiles of the MSE for this simulation obtained by multiple realizations. As it can be inferred from Figure \ref{MSE_high_dim}, the $\ell_1$-regularized ML and POMP have a systematic performance gain over the ML estimate {in the compressive regime, where $n \ll p$}, with the former outperforming the rest. In the denoising regime, the performance of the $\ell_1$-regularized and ML become closer, while the POMP saturates to a higher MSE floor. The latter observation can be explained by the fact that the POMP can only estimate $s^{\star}$ components (including those of $\boldsymbol{\theta}_s$), and fails to capture the $(p - s^{\star})$ compressible components. This results in an MSE floor above that obtained by ML, for large values of $n$.

\begin{figure}[H]
\begin{center}
\includegraphics[width=.8\columnwidth]{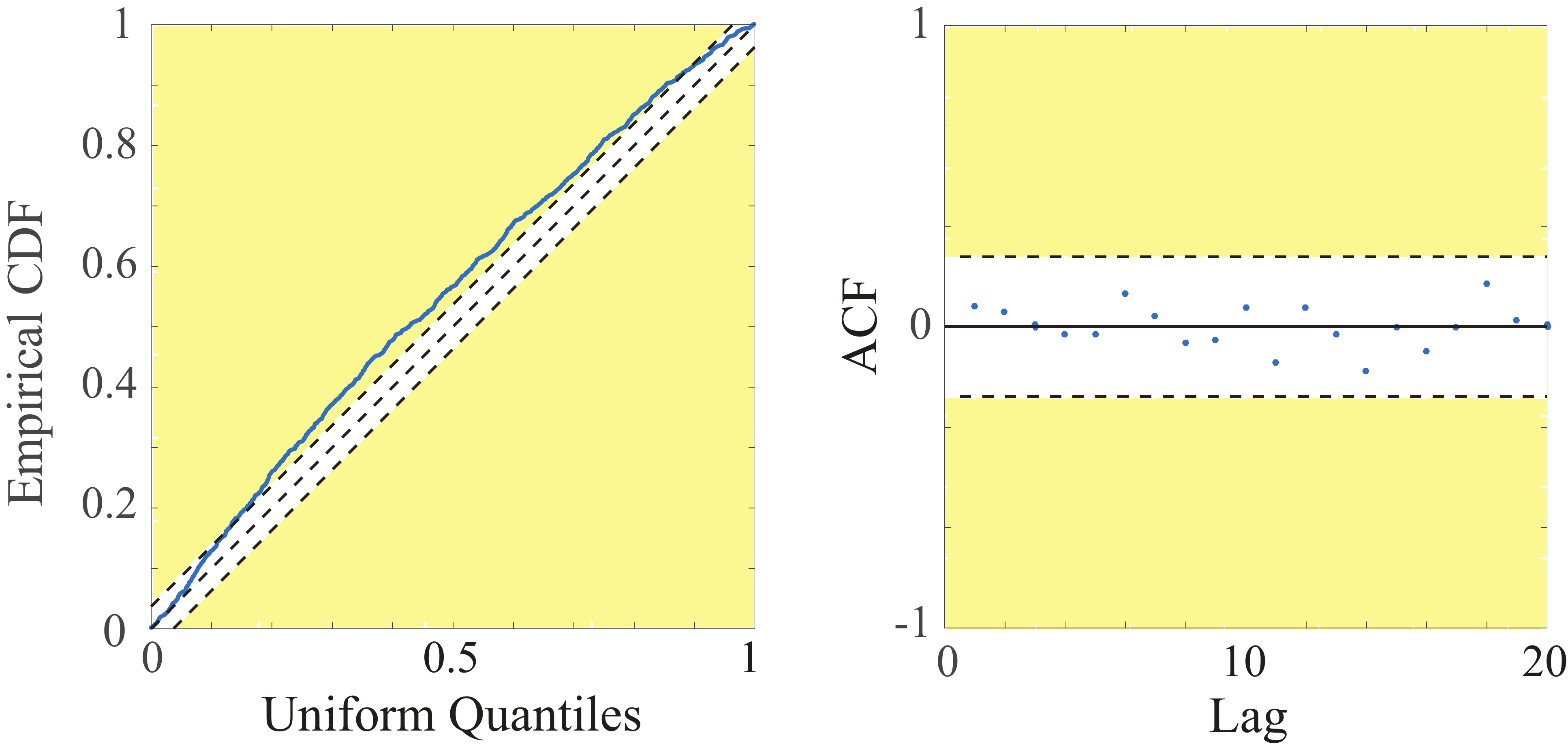}
\subcaption*{(a) ML}
\vspace{3mm}
\includegraphics[width=.8\columnwidth]{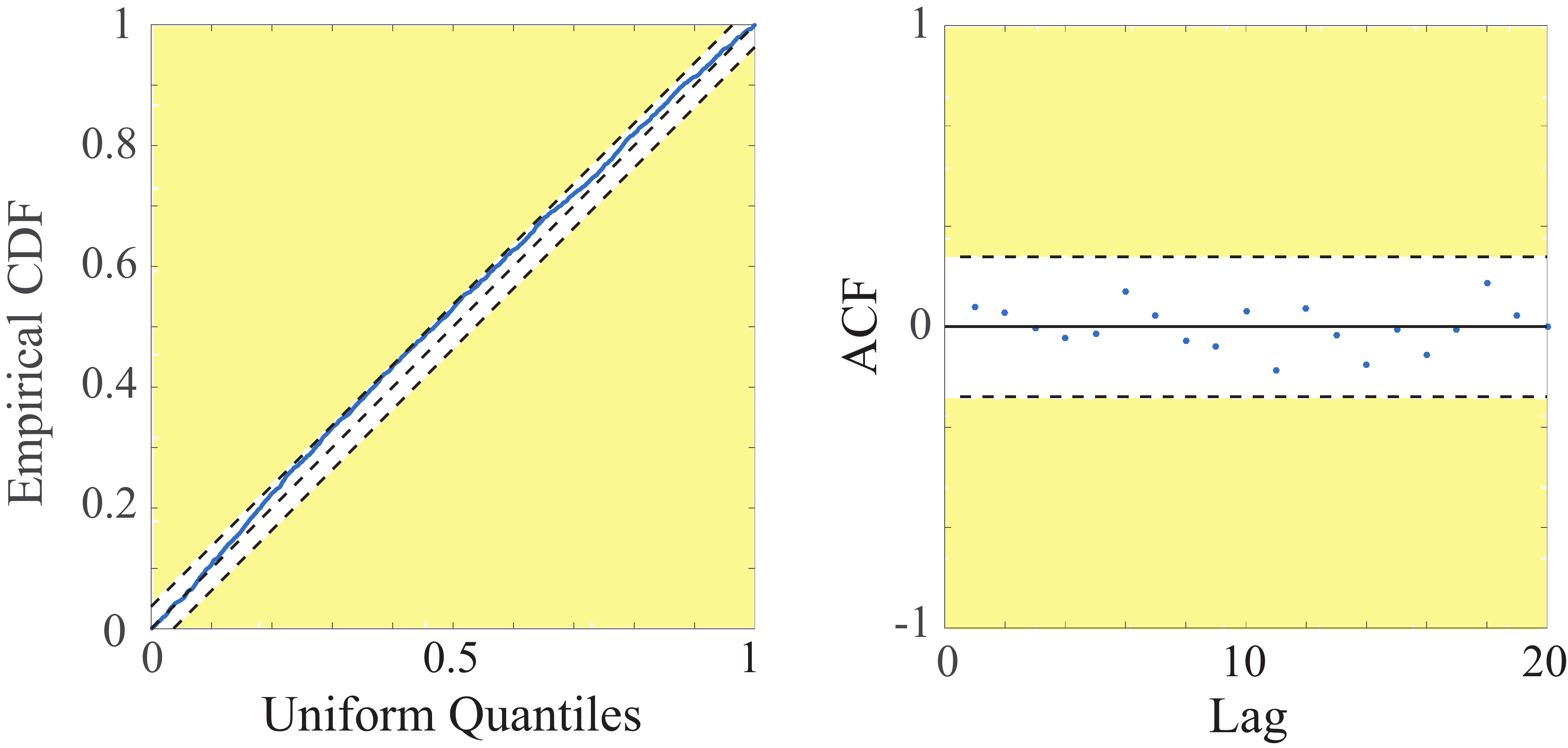}
\subcaption*{(b) $\ell_1$-regularized ML}
\vspace{3mm}
\includegraphics[width=.8\columnwidth]{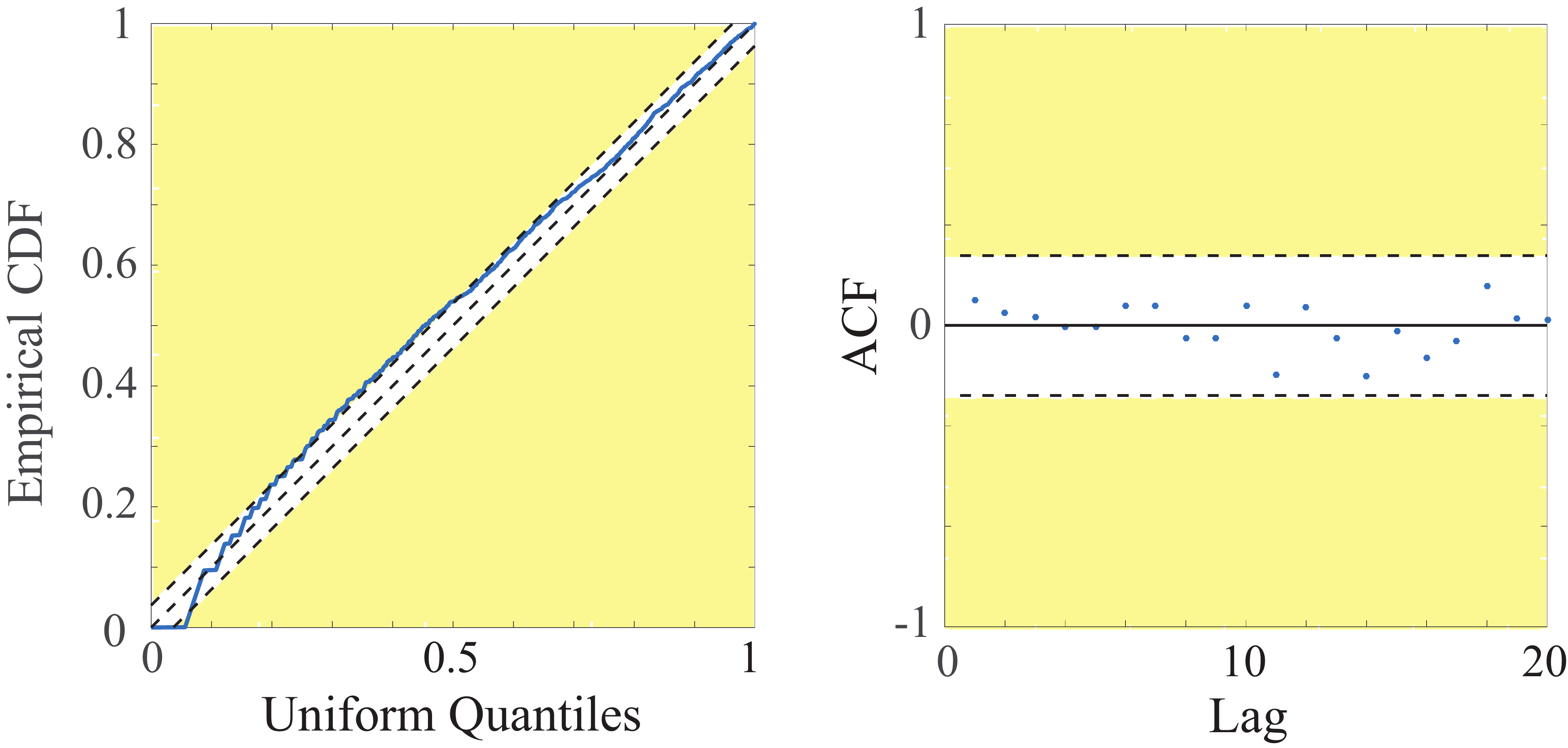}
\subcaption*{(c) POMP}
\end{center}
\vspace{-3mm}
\caption{{\small{KS and ACF tests at {$95\%$ confidence level}, for the ML, $\ell_1$-regularized ML and POMP estimates.}}}
\label{hawkes_ks_acf_synthetic}
\vspace{-5mm}
\end{figure}

{The MSE comparison in Figure \ref{MSE_high_dim}} requires one to know the true parameters. In practice, the true parameters are unknown, and statistical tests are typically used to assess the goodness-of-fit of the estimates to the observed data. We use the Kolmogorov-Smirnov (KS) test and the autocorrelation function (ACF) test to assess the goodness-of-fit. These tests are based on the time-rescaling theorem for point processes \cite{time_rescaling}, which states that if the time axis is rescaled using the {estimated} conditional intensity function of {the} inhomogeneous Poisson process, the resulting point process is a homogeneous Poisson process with unit rate. Thereby, one can test for the validity of the time-rescaling theorem via two statistical tests: the KS test reveals how close the empirical quantiles of the time-rescaled point process to the true quantiles of a unit rate Poisson process, and the ACF test reveals how close the ISI {distribution} of the time-rescaled process is to the true ISI distribution of a unit rate Poisson process. Details of these tests are given in Appendix \ref{appks}.



Figure \ref{hawkes_ks_acf_synthetic} shows the KS and ACF tests {at a $95\%$ confidence level} for the ML $\ell_1$-regularized ML, and the POMP estimates from Figure \ref{estimate_plot_synthetic}. The yellow shades mark the regions below the specified confidence levels. The ML estimate {fails to pass the KS test, while the regularized and POMP estimates pass both tests.}

\subsection{Application to Spontaneous Neuronal Spiking Activity}

\subsubsection{{\textbf{Background and motivation}}}
Early studies of spontaneous neuronal activity from the cat's cochlear nucleus \cite{gerstein1960approach} marked a significant breakthrough in computational neuroscience by going beyond the so-called Poisson hypothesis, by which single neurons were assumed to be firing according to homogeneous Poisson statistics. The diversity of the ISIs deduced from the spontaneous activity of the cochlear neurons led to the development of more sophisticated statistical models based on renewal process theory, resulting in the Gamma and inverse Gaussian ISI descriptions of spontaneous neuronal activity \cite{gerstein1964random,tuckwell2005introduction}. Due to the analytical difficulties involved in working with these models, their generalization to a broader range of spiking statistics is not straightforward. 

In light of the more recent discoveries on the role of spontaneous neuronal activity in brain development \cite{xu2011instructive,blankenship2010mechanisms}, its relation to functional architecture \cite{tsodyks1999linking}, and its functional significance in a variety of modalities including retinal \cite{xu2011instructive}, visual \cite{luck1997neural}, auditory \cite{tritsch2007origin}, hippocampal \cite{sombati1995recurrent}, cerebellar \cite{aizenman1999regulation}, and thalamic \cite{pinault1998intracellular} function, the modeling and analysis of this phenomenon has sparked a renewed interest among researchers in recent years. In particular, models based on GLMs have shown to overcome the analytical difficulties of the abovementioned models based on renewal theory, and have been successfuly used in relating the spontaneous neuronal activity to instrinsic and extrinsic neural covariates \cite{paninski2004maximum,paninski2007statistical,time_rescaling,barbieri2001construction} as well as inferring the functional connectivity of neuronal ensembles \cite{kim2011granger,brown_func_conn}. The above-mentioned results rely on the accuracy of the ML estimation of these models. In addition, the estimated parameters are typically sparse. Therefore, the $\ell_1$-regularized ML and POMP estimators are expected to offer a more robust alternative than the ML, especially under the limited observation setting.

{In order to evaluate the performance of these estimators on real data, in the remainder of this section we will compare the performance of the ML, $\ell_1$-regularized ML, and POMP estimators in modeling the spontaneous spiking activity recorded from two different types of neurons, namely the mouse's lateral geniculate nucleus and the ferret's retinal ganglion cells.}

In the following analysis, the regularization parameter $\gamma_n$ was chosen using a two-fold cross-validation refinement around the value obtained from our theoretical results. {The length of the history components $p$ was chosen by first selecting a large enough $p$ as an upper bound for the expected correlation length of neuronal spontaneous activity (estimated as $\sim 1.5~\text{s}$), followed by reducing $p$ to the point where an increase in the history length  does not result in significantly detected history components.}

\subsubsection{{\textbf{Application to LGN spiking activity}}}
\label{nsp}
 We first compare the performance of the estimators on the LGN neurons.  The LGN is part of the thalamus in the brain, which acts as a relay from the retina to the primary visual cortex \cite{thalamus}. The data {were} recorded at $1ms$ resolution from the mouse LGN neurons using single-unit recording \cite{scholl2013emergence}. We used about $5$ seconds of data from one neuron for the analysis. In order to capture the history dependence governing the spontaneous spiking activity of the LGN neuron, we model the spiking probability using the \textcolor{black}{canonical self-exciting process} model with $p=100$ ($\Delta = 1ms$). {Figure \ref{real_fig} shows the spiking data used in the analysis.}

\begin{figure}[H]
\centering
\includegraphics[width=.8\columnwidth]{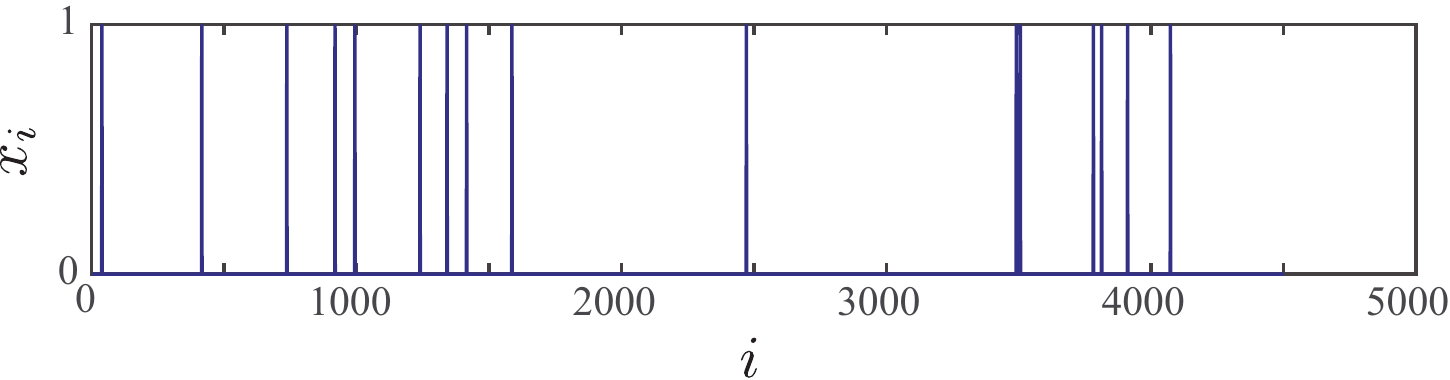}
\caption{\small{The LGN spiking data used in the analysis.}}\label{real_fig}
\vspace{-1mm}
\end{figure} 

\begin{figure}[H]
\begin{center}
\includegraphics[width=.8\columnwidth]{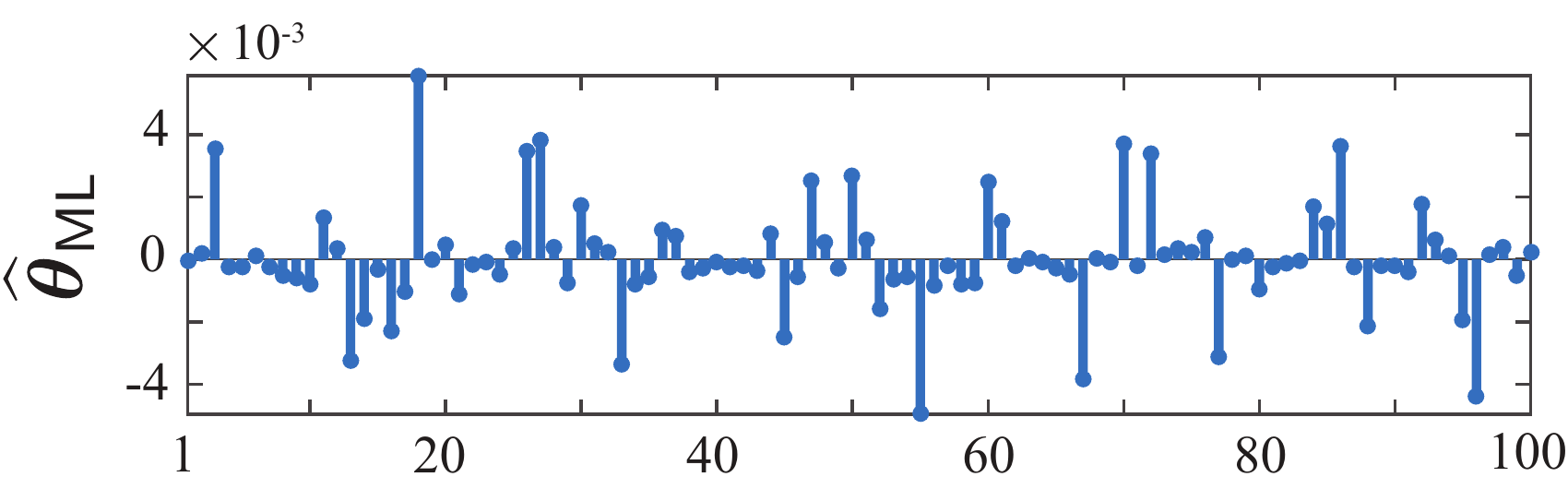}
\subcaption*{(a) ML}
\vspace{-1mm}
\includegraphics[width=.8\columnwidth]{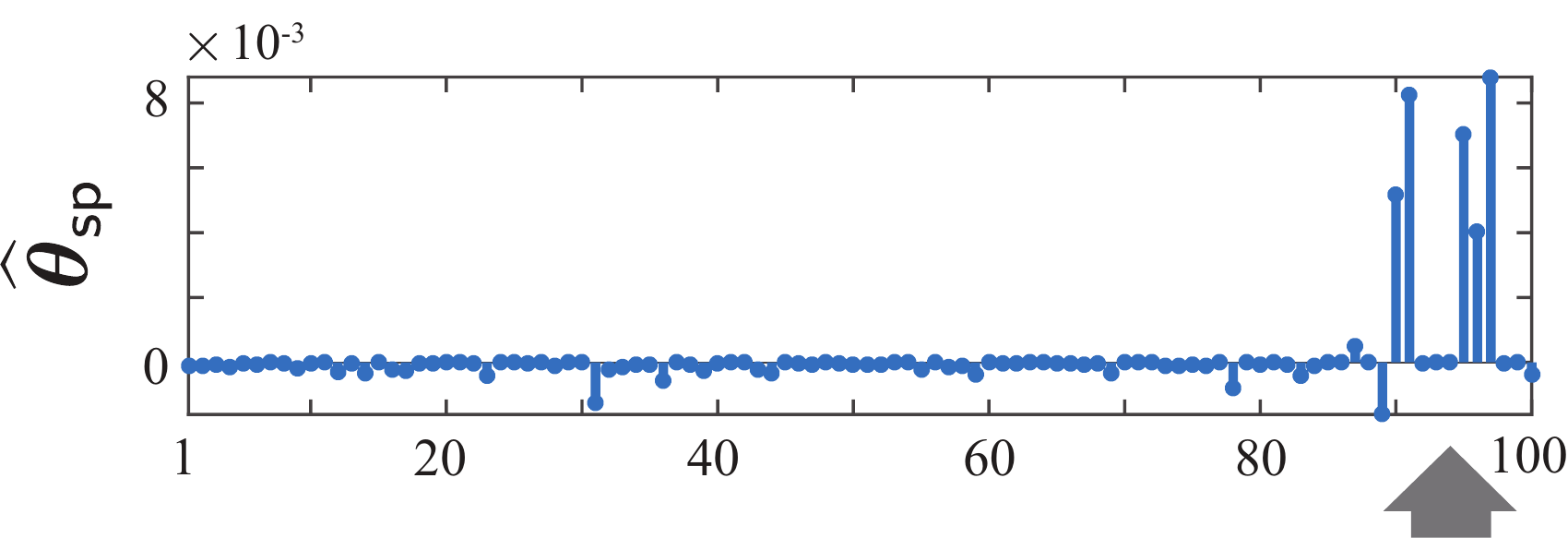}
\subcaption*{(b) $\ell_1$-regularized ML}
\vspace{3mm}
\includegraphics[width=.8\columnwidth]{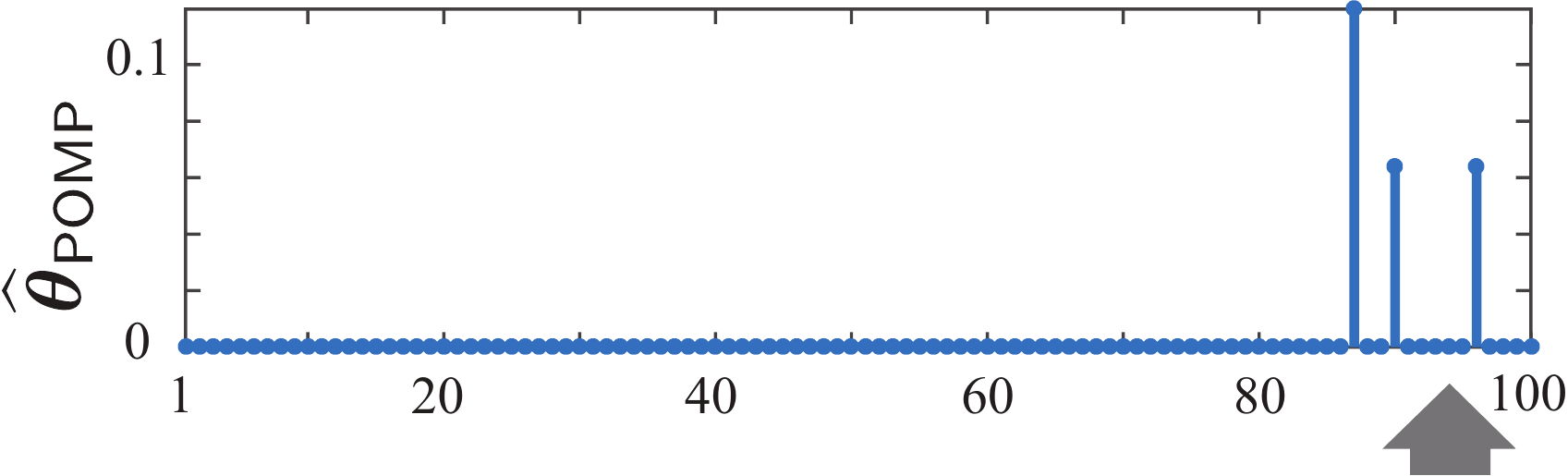}
\subcaption*{(c) POMP}
\end{center}
\vspace{-3mm}
\caption{\small{(a) ML, (b) $\ell_1$-regularized ML, and (c) POMP estimates of the LGN spiking parameters.}}
\label{lgn_mlvssp}
\vspace{-5mm}
\end{figure}

\begin{figure}[H]
\begin{center}
\includegraphics[width=.9\columnwidth]{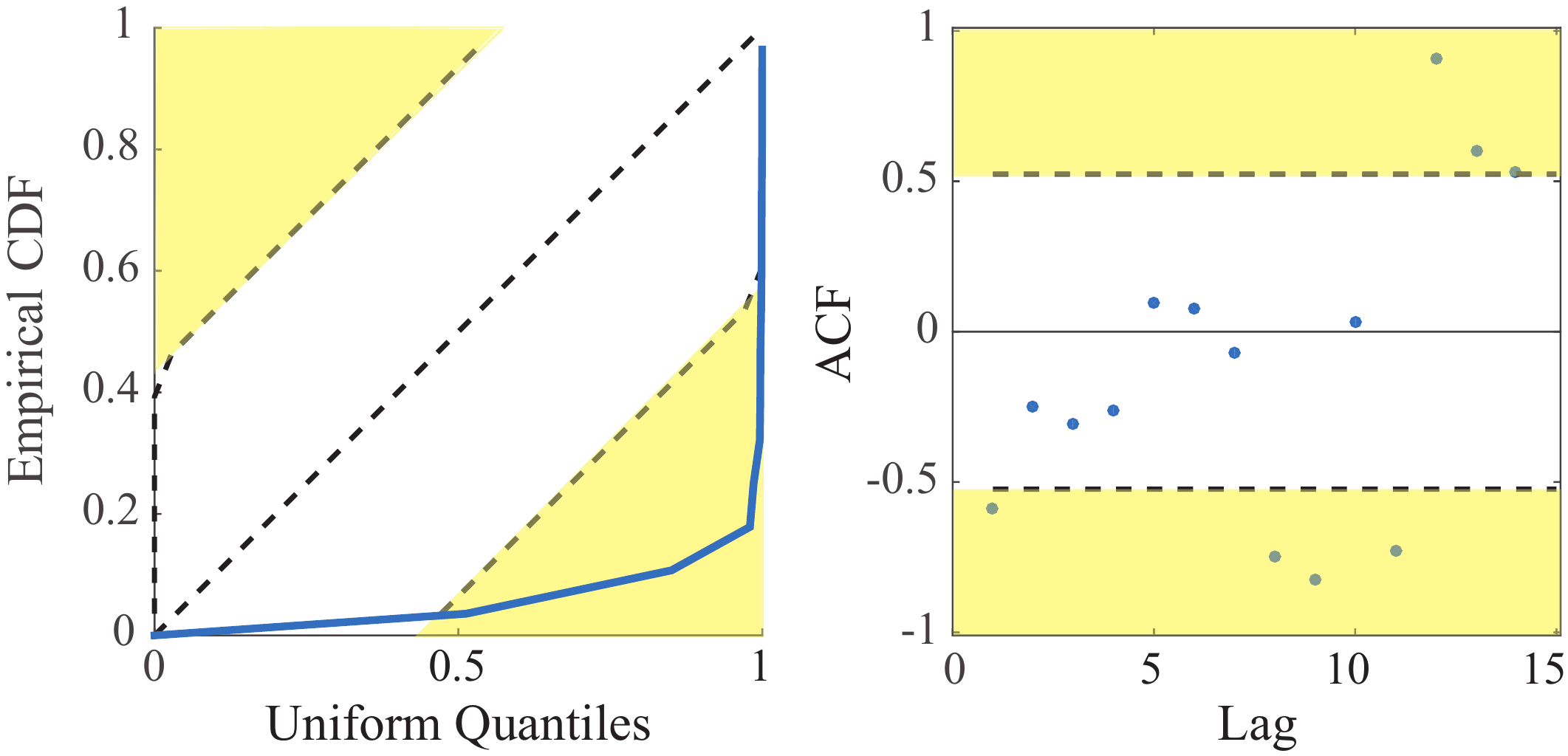}
\subcaption*{(a) ML}
\vspace{3mm}
\includegraphics[width=.9\columnwidth]{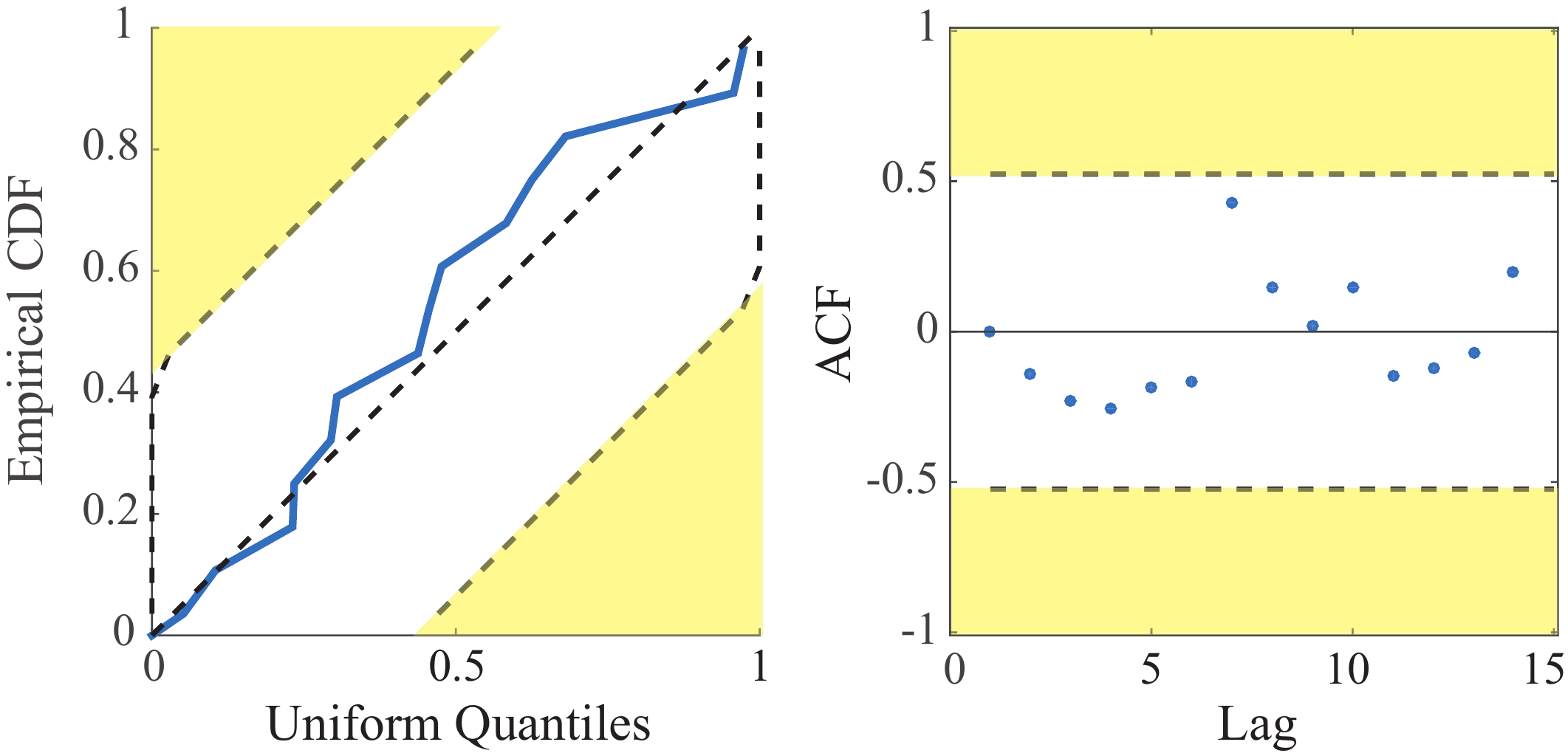}
\subcaption*{(b) $\ell_1$-regularized ML}
\vspace{3mm}
\includegraphics[width=.9\columnwidth]{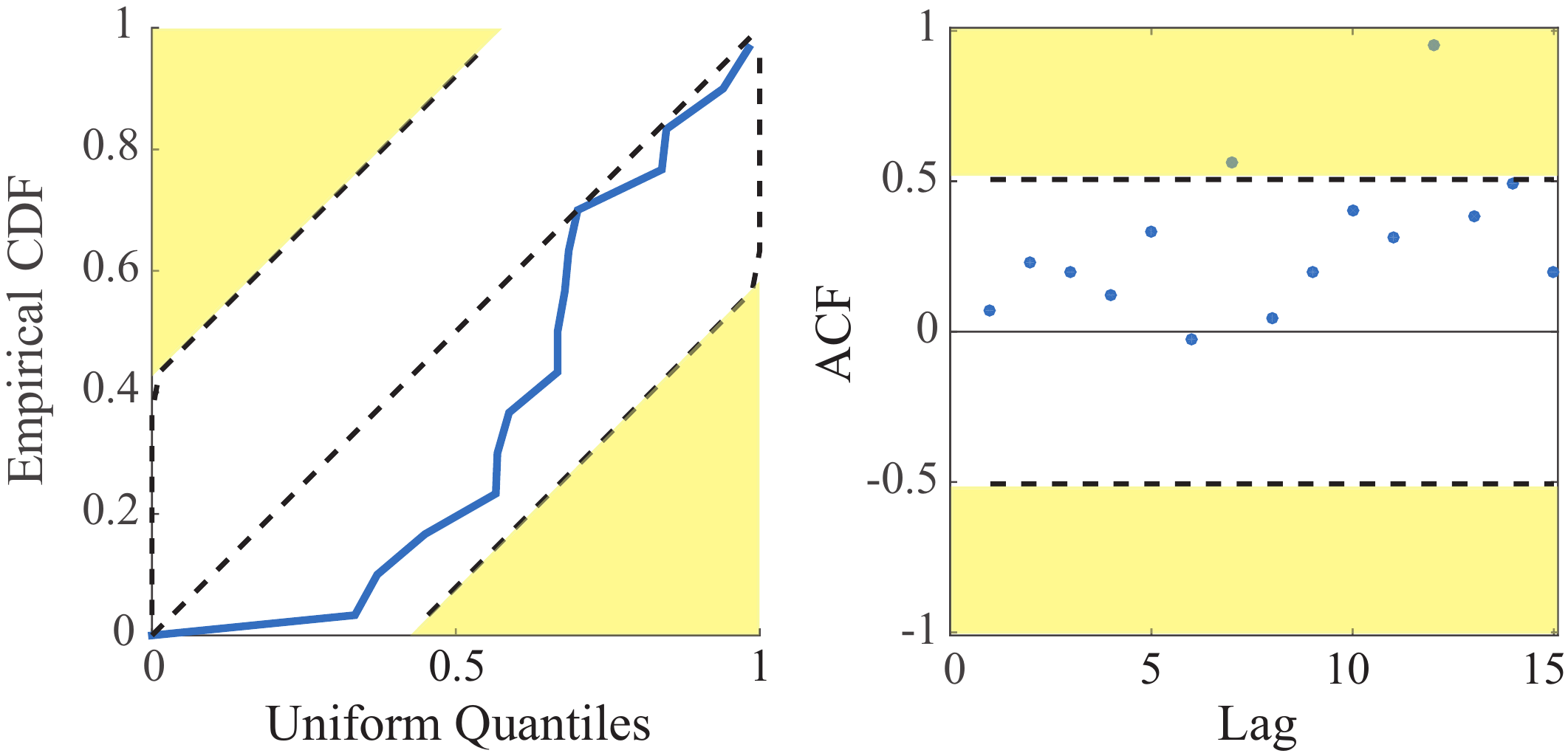}
\subcaption*{(c) POMP}
\end{center}
\vspace{-3mm}
\caption{\small{KS and ACF tests at $99\%$ confidence level, for the ML, $\ell_1$-regularized ML and POMP estimates.}}\label{lgn_ks_sp}
\vspace{-5mm}
\end{figure}

Figure \ref{lgn_mlvssp} shows the estimated history dependence parameter vectors using the three methods. Both the regularized ML (Figure \ref{lgn_mlvssp}(b)) and POMP (Figure \ref{lgn_mlvssp}(c)) estimates capture significant history dependence components around a lag of $90$--$95~\text{ms}$ (marked by the upward arrows). In \cite{Borowska}, an intrinsic neuronal oscillation frequency of around $10 \hertz$ has been reported in around $30\%$ of all classes of mouse retinal cells under experiment, using combined two-photon imaging and patch-clamp recording. Our results are indeed consistent with the above mentioned findings about the intrinsic spiking frequency of retinal neurons. \textcolor{black}{To see this, we consider the power spectral density of the \textcolor{black}{canonical self-exciting process} given by:
\begin{equation}
\label{bart_spec}
S(\omega) = \frac{1}{2\pi} \left( \pi_\star^2 \delta(\omega) + \frac{\pi_\star-\pi_\star^2}{\left(1-\mathbf{1}'\boldsymbol{\theta}\right)^2 \left|1 - \Theta(\omega)\right|^2} \right),
\end{equation}
where $\Theta(\omega)$ is the discrete-time Fourier transform of $\boldsymbol{\theta}$ and $\pi_\star = \mu/(1-\mathbf{1}'\boldsymbol{\theta})$ denotes the stationary distribution probability of spiking. The derivation of the power spectral density is given in Appendix \ref{app:hawkes_psd}. The power spectral density of the \textcolor{black}{canonical self-exciting process} resembles the Bartlett spectrum of the Hawkes process \cite{Bartlett1, Bartlett2, Hawkes_orig}, whose peaks correspond to the significant oscillatory components of the underlying process.} Our estimated parameter vectors $\boldsymbol{\theta}$ using the regularized ML and POMP have significant nonzero components around lags of $ 90 \le k \le 95$. As a result, $S(\omega)$ peaks at $\omega = \frac{2\pi}{k\Delta}$. Hence, $f=\frac{1}{k\Delta}$ is an estimate of the significant intrinsic frequency of the underlying self-exciting process. Using the estimated numerical values, the intrinsic frequency is around $10.5$--$11~\text{Hz}$, which is consistent with experimental findings of \cite{Borowska}. Compared to the method in \cite{Borowska}, our estimates are obtained using much shorter recordings of spiking activity and provide a principled framework to study the oscillatory behavior of LGN neurons using {sparse GLM estimation.}

Note that there is a difference in the orders of magnitudes of the POMP estimate compared to the ML and regularized ML estimates. This is due to the fact that the POMP estimate is exactly $s$-sparse, whereas the ML and regularized ML estimates consist of $p = 100$ non-zero values. In order to assess the goodness-of-fit of these estimates, we invoke the KS and ACF tests. Figure \ref{lgn_ks_sp} shows the corresponding KS and ACF test plots. As it is implied from Figure \ref{lgn_ks_sp}(a), the ML estimate fails both tests due to overfitting, whereas the regularized ML (Figure \ref{lgn_ks_sp}(b)) passes both tests at the specified confidence levels. The POMP estimate (Figure \ref{lgn_ks_sp}(c)), however, passes the KS test while marginally failing the ACF test. The latter observation implies that the seemingly negligible components of the parameter vector captured by the regularized ML estimate seem to be important in explaining the statistics of the observed data.

\subsubsection{{\textbf{Application to RGC spiking activity}}} \label{sec:rgc}
{We will next study the performance of the estimators on spiking} data recorded from the RGCs of neonatal and adult ferrets \cite{wong1993transient}. The retinal ganglion cells are located in the innermost layer of the retina. They integrate information from photoreceptors and project them into the brain \cite{bear2007neuroscience}. The data {were} recorded using a multi-electrode array from the ferret retina at $50~\mu s$ \cite{wong1993transient}. We used $5$ seconds of data from one neuron for the analysis (neuron 2, session 1, adult data set, CARMEN data base \cite{eglen2014data}). {Figure \ref{real_2_fig} shows a segment of the spiking data used in our analysis. The RGC activity in the adult ferret is characterized by bursts of activity with a mean firing rate of $9 \pm 7$~Hz, which are separated by $0.5$--$1~\text{s}$ intervals \cite{wong1993transient}.}

 {
\begin{figure}[H]
\centering
\includegraphics[width=.95\columnwidth]{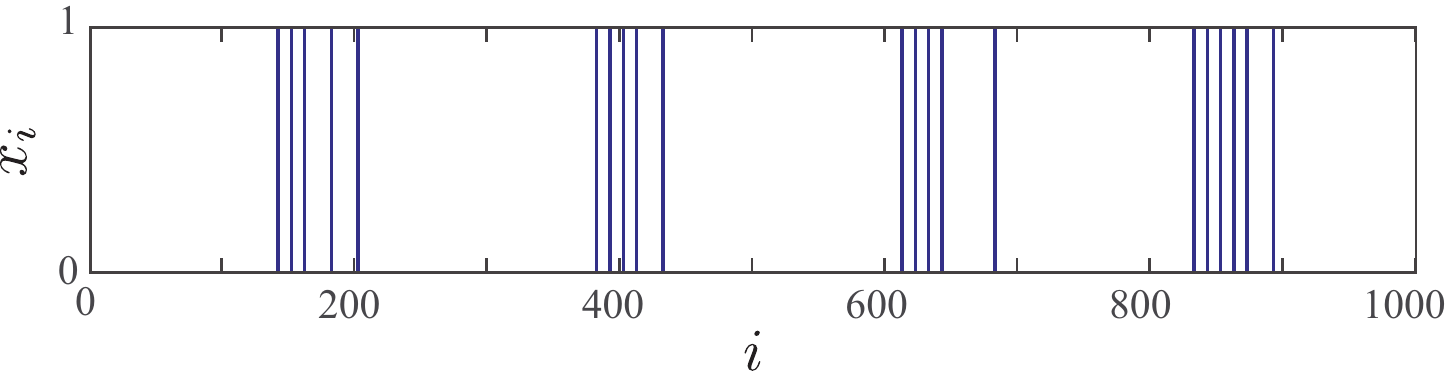}
\caption{\small{Segment of the RGC spiking data used in the analysis.}}\label{real_2_fig}
\end{figure} 

\begin{figure}[H]
\begin{center}
\includegraphics[width=.8\columnwidth]{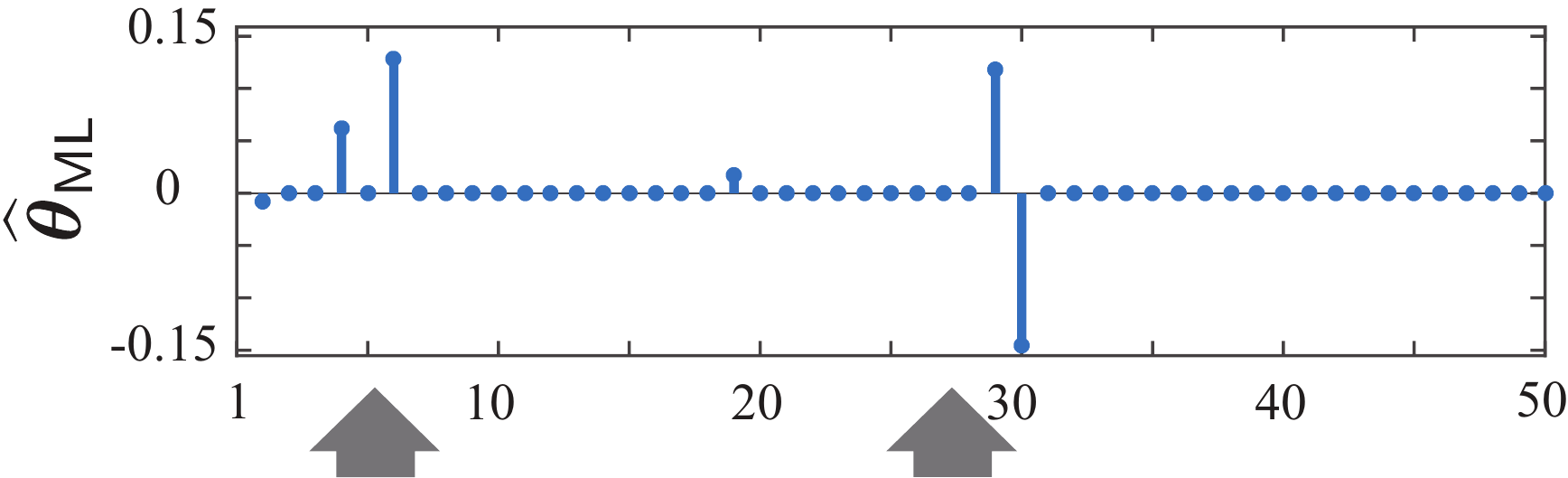}
\subcaption*{(a) ML}
\includegraphics[width=.8\columnwidth]{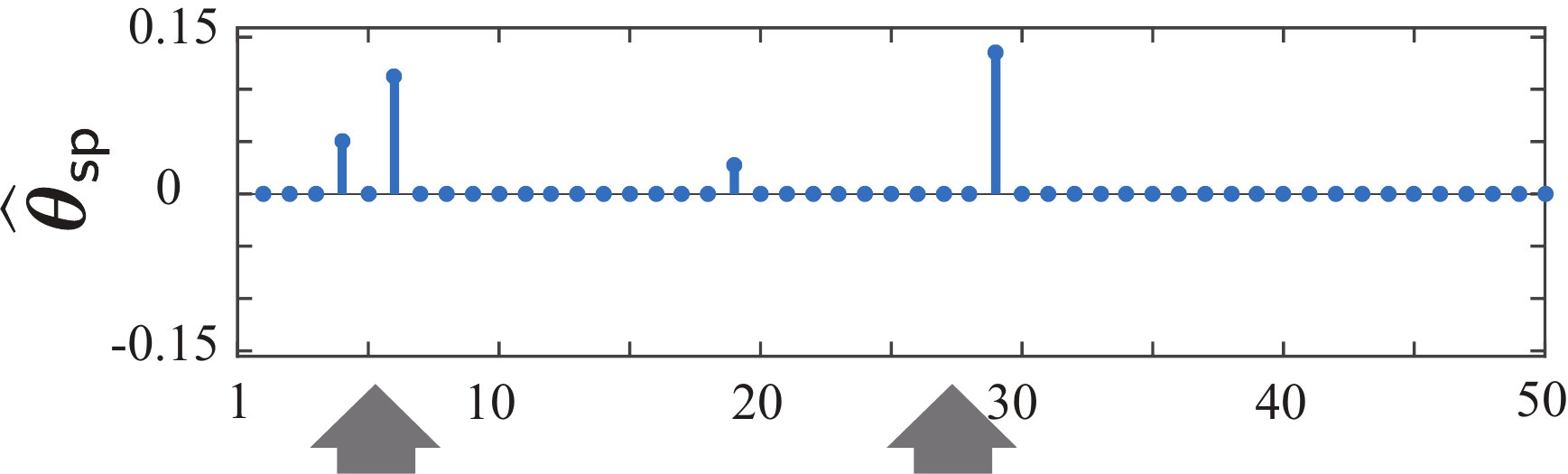}
\subcaption*{(b) $\ell_1$-regularized ML}
\includegraphics[width=.8\columnwidth]{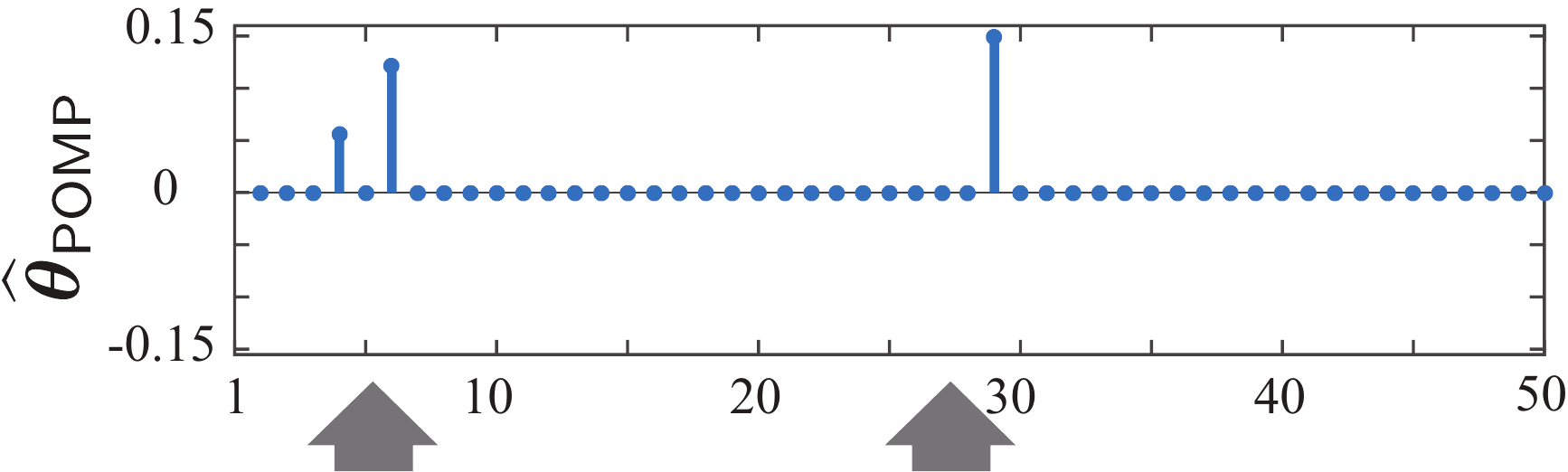}
\subcaption*{(c) POMP}
\end{center}
\caption{\small{(a) ML, (b) $\ell_1$-regularized ML, and (c) POMP estimates of the RGC spiking parameters {using the canonical self-exciting process model.}}}
\label{rgc_mlvssp_linear}
\end{figure}

In order to capture the history dependence governing the spontaneous spiking activity of the RGC neuron, we model the spiking probability using two different link models to further corroborate the generalization of our results to models beyond the canonical self-exciting process studied in this chapter. First, we consider the canonical self-exciting process model. We have chosen  $\pi_\max = 0.49$,  $p=50$ ($\Delta = 25~\text{ms}$) and $s_\star = 3$. The baseline parameter $\mu$ is estimated from the data and is set to be equal to half of empirical mean firing rate of the neuron.

\begin{figure}[H]
\begin{center}
\includegraphics[width=.8\columnwidth]{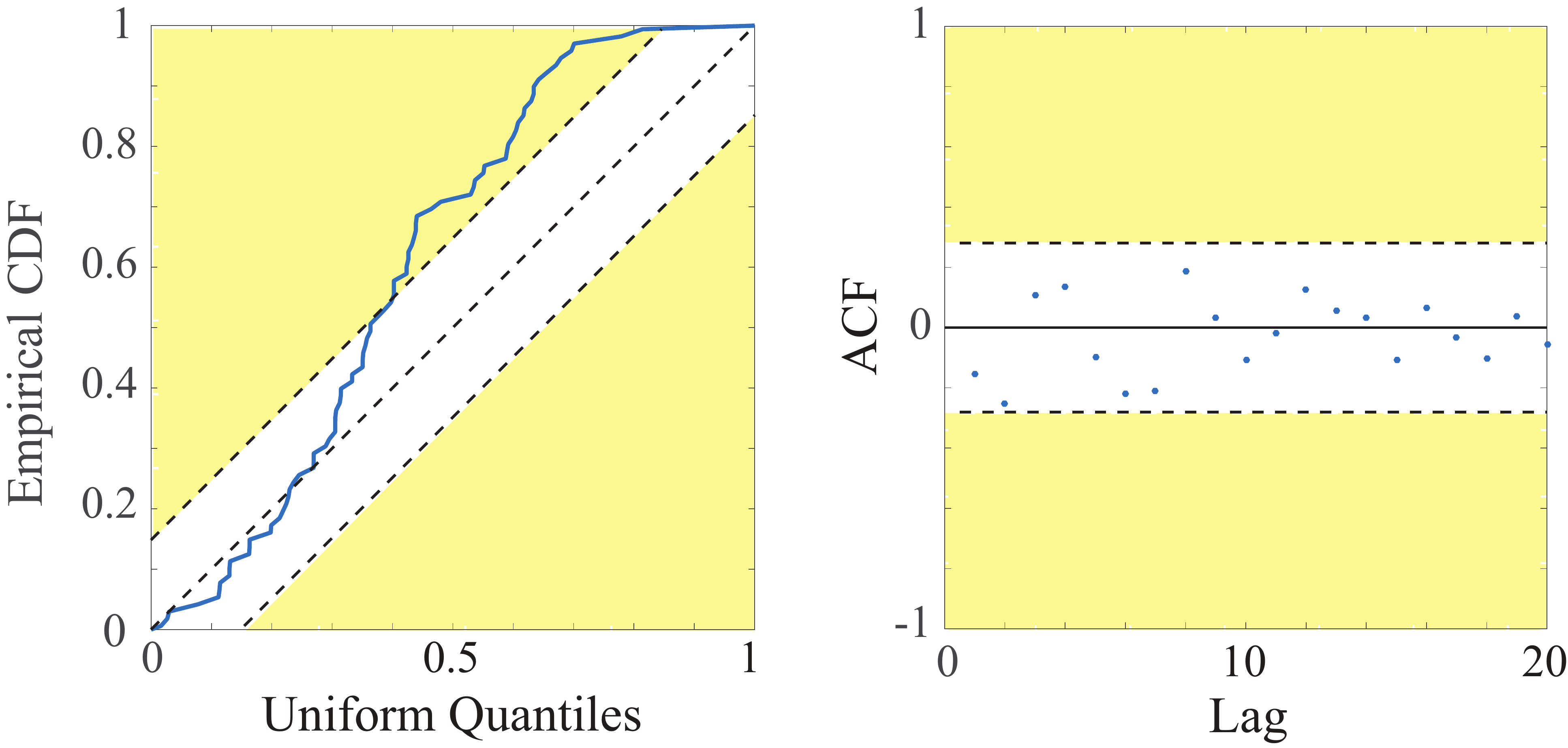}
\subcaption*{(a) ML}
\includegraphics[width=.8\columnwidth]{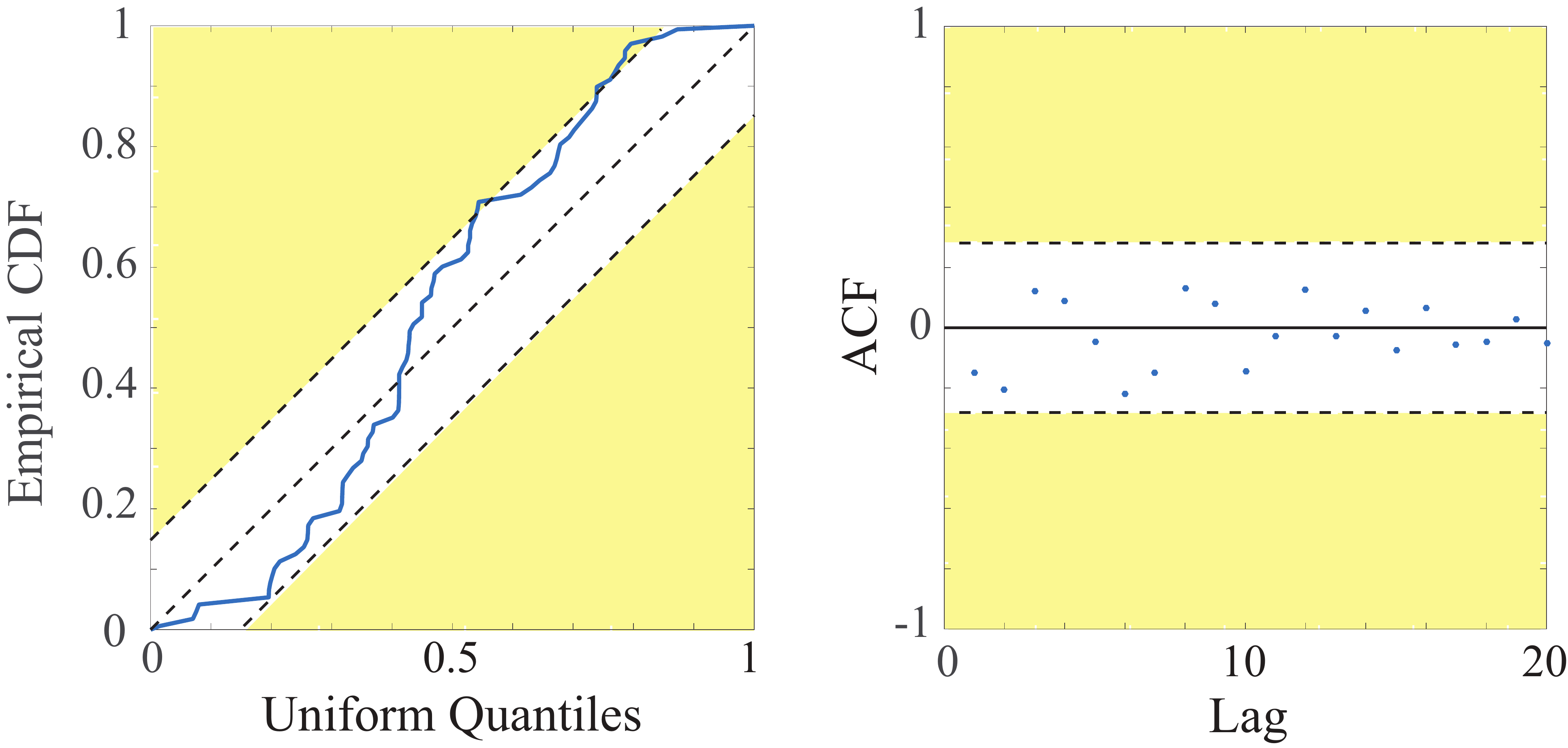}
\subcaption*{(b) $\ell_1$-regularized ML}
\includegraphics[width=.8\columnwidth]{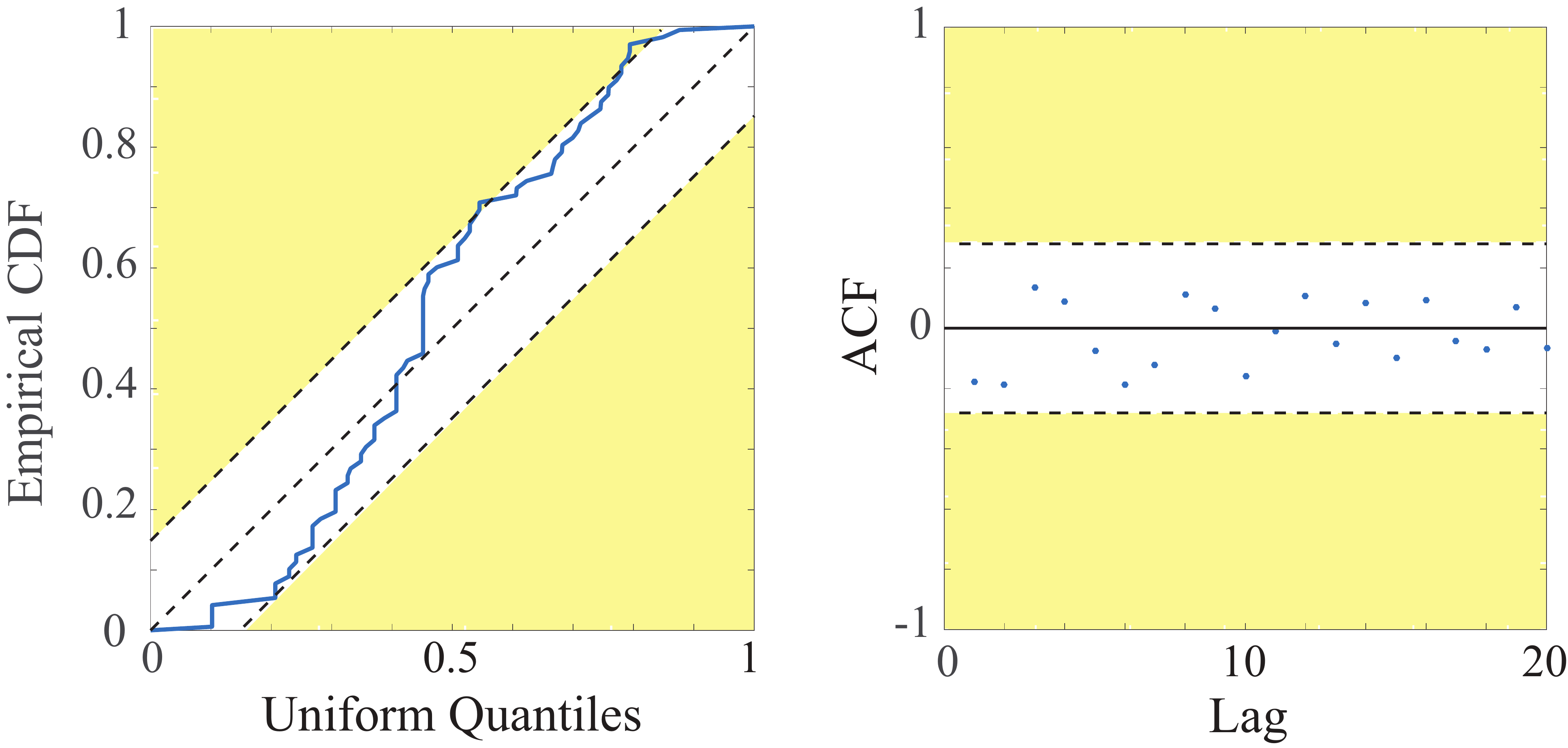}
\subcaption*{(c) POMP}
\end{center}
\caption{\small{KS and ACF tests at $95\%$ confidence level, for the ML, $\ell_1$-regularized ML and POMP estimates {using the canonical self-exciting process model.} }}\label{rgc_ks_sp_linear}
\end{figure}

\begin{figure}[H]
\begin{center}
\includegraphics[width=.8\columnwidth]{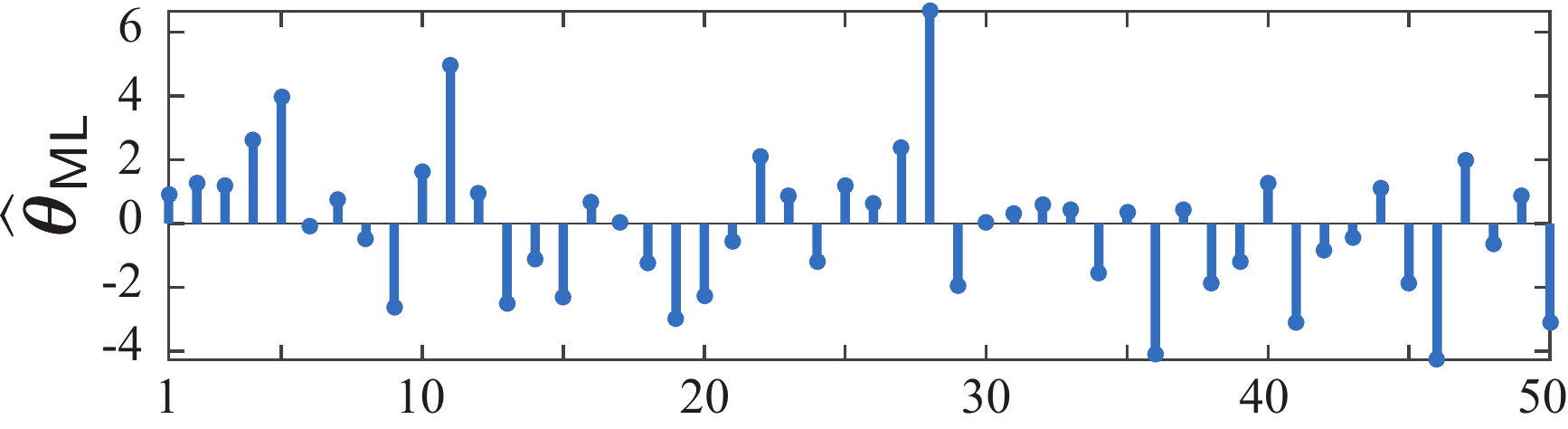}
\subcaption*{(a) ML}
\includegraphics[width=.8\columnwidth]{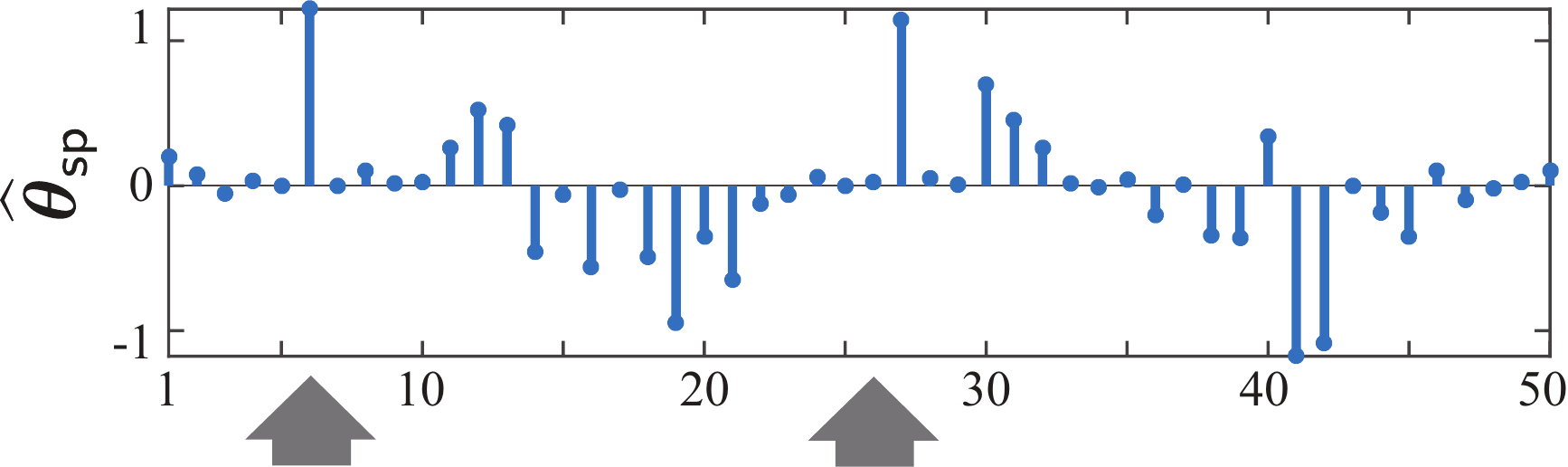}
\subcaption*{(b) $\ell_1$-regularized ML}
\includegraphics[width=.8\columnwidth]{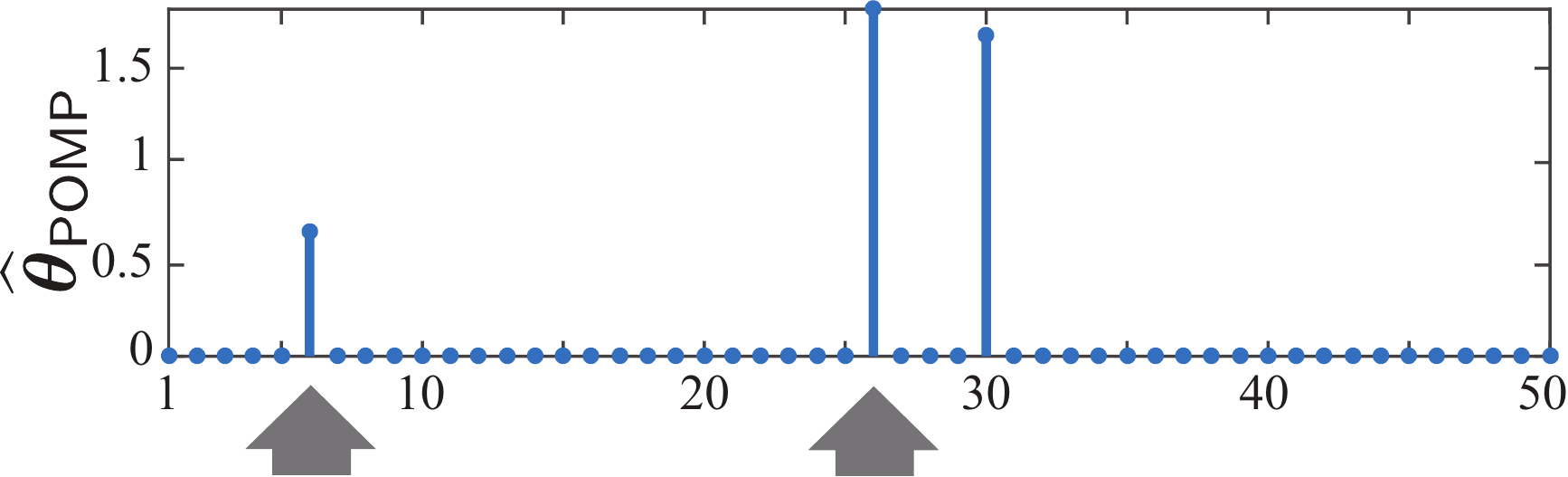}
\subcaption*{(c) POMP}
\end{center}
\caption{\small{(a) ML, (b) $\ell_1$-regularized ML, and (c) POMP estimates of the RGC spiking parameters  {using the logistic link model}.}}
\label{rgc_mlvssp}
\end{figure}

\begin{figure}[H]
\begin{center}
\includegraphics[width=.8\columnwidth]{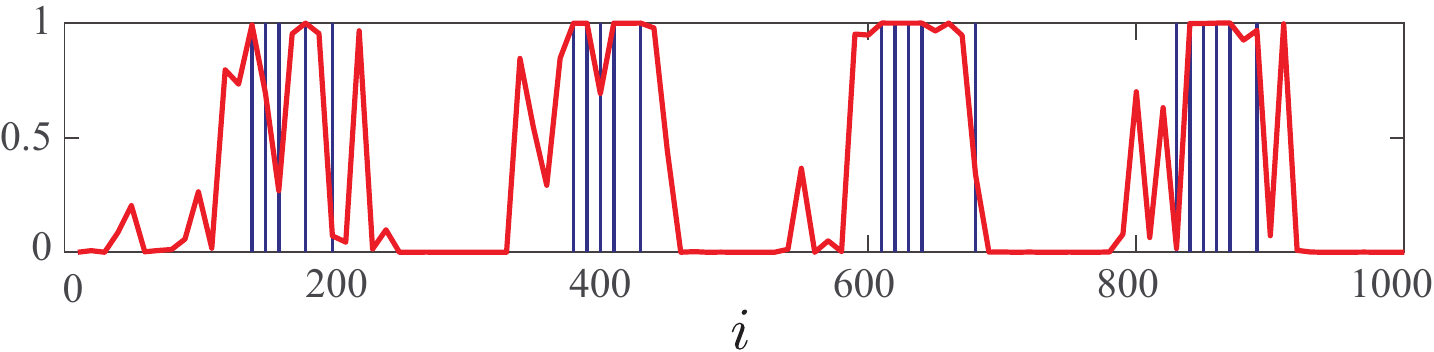}
\subcaption*{(a) ML}
\includegraphics[width=.8\columnwidth]{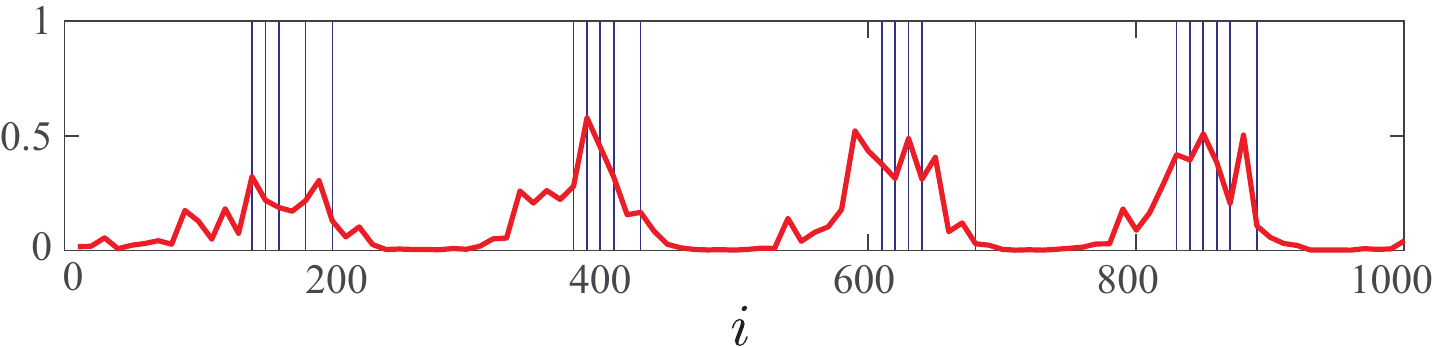}
\subcaption*{(b) $\ell_1$-regularized ML}
\includegraphics[width=.8\columnwidth]{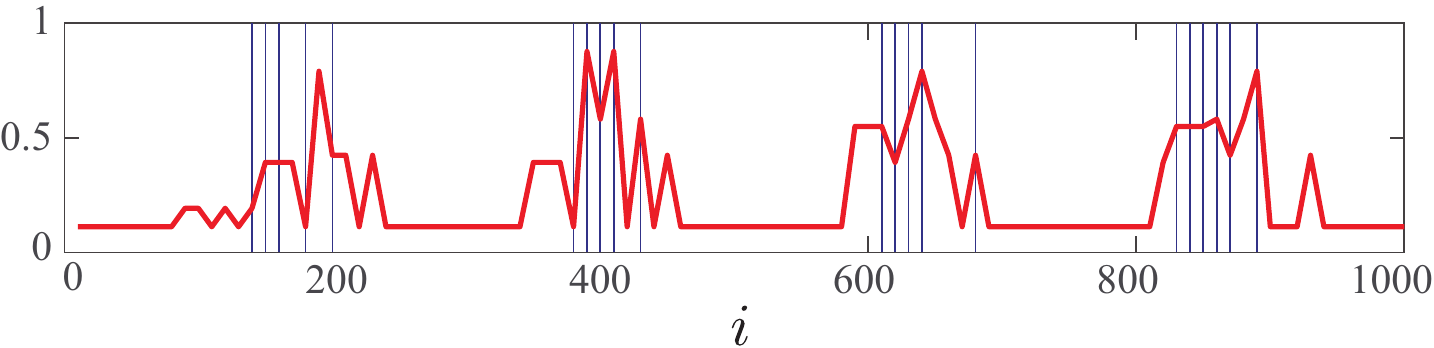}
\subcaption*{(c) POMP}
\end{center}
\caption{\small{(a) ML, (b) $\ell_1$-regularized ML, and (c) POMP estimates of the RGC spiking {probability using the unconstrained logistic link model}. Blue vertical lines show the locations of the spikes, and red traces show the estimated {probabilities}.}}
\label{rgc_rates}
\end{figure}

Figure \ref{rgc_mlvssp_linear} shows the estimated history components using the three estimators. {All three estimates} capture significant self-exciting history dependence components around the lags of $150~\text{ms}$ and $0.65$--$0.75~\text{s}$ (marked by the upward arrows). Invoking the foregoing argument for the LGN neuron regarding the power spectral density of the process (\ref{bart_spec}), these estimated lag components are consistent with the empirical estimates of \cite{wong1993transient}, as they indicate that the data can be characterized by a combination of $\frac{1}{150~\text{ms}} = 6.66$~Hz bursts separated by gaps of length $0.65$--$0.75~\text{s}$.
{The ML estimator predicts an extra self-inhibitory (negative) component, which results in over-fitting the data.}  This phenomenon can be observed by noting that the ML estimate fails the KS test shown in Figure \ref{rgc_ks_sp_linear}. 

We will next consider a logistic link model of the form $\lambda_i = \frac{\exp(\mu + \boldsymbol{\theta}'x_{i-p}^{i-1})}{C+\exp (\mu + \boldsymbol{\theta}' {\mathbf{x}_{i-p}^{i-1}})}$, with $C = 100$. This model is widely used in neuronal modeling literature (e.g., \cite{truccolo2005point, Brown_pp}), where the assumptions given by (\ref{eq:star}) are dropped and the optimization is performed in an unconstrained fashion. We adopt this approach and obtain all the estimates by dropping the assumptions of $(\star)$. Figure \ref{rgc_mlvssp} shows the estimated history components {using the unconstrained estimators. } {Compared to the canonical self-exciting process model with a linear link, both the regularized ML (Figure \ref{rgc_mlvssp}(b)) and POMP (Figure \ref{rgc_mlvssp}(c)) estimates capture similar significant self-exciting history dependence components, which are consistent across the two sets of estimates.}

{The KS and ACF test results for this case are very similar to Figure \ref{rgc_ks_sp_linear} are are thus omitted for brevity.} In order to further inspect the goodness-of-fit of these methods, we plot the estimated spiking probabilities in Figure \ref{rgc_rates}. The ML estimate shown in Figure \ref{rgc_rates}(a) overfits the spiking events by rapidly saturating the rate to either 0 and 1, which results in undesired high rate estimates where there are no spikes. On the contrary, the regularized ML (Figure \ref{rgc_rates}(b)) and POMP (Figure \ref{rgc_rates}(c)) provide a more reliable estimate of the rates consistent with the spiking events. {This analysis suggests that the sufficient assumptions of $(\star)$ are not necessary for the superior performance of the regularized and POMP estimators over that of ML.}

\section{Concluding Remarks}\label{discussions}
In this chapter, we studied the sampling properties of $\ell_1$-regularized ML and greedy estimators for a \textcolor{black}{canonical self-exciting process}. The main theorems provide non-asymptotic sampling bounds on the number of measurements, which lead to stable recovery of the parameters of the process. To the best of our knowledge, our results are the first of this kind, and can be readily generalized to various other classes of self-exciting {GLMs}, such as processes with logarithmic or logistic links. 

Compared to the existing literature, our results bring about two major contributions. First, we provide a theoretical underpinning for the advantage of $\ell_1$-regularization in ML estimation as well as greedy estimation in problems involving {binary} observations. These methods have been used in neuroscience in an ad-hoc fashion. Our results establish the utility of these techniques by characterizing the underlying sampling trade-offs. Second, our analysis relaxes the widely-assumed hypotheses of i.i.d. covariates. This assumption is often violated when working with history-dependent data such as neural spiking data. 

We also verified the validity of our theoretical results through simulation studies as well as application to real neuronal spiking data {from mouse's LGN and ferret's RGC neurons.} These results show that both the regularized ML and the greedy estimates significantly outperform the widely-used ML estimate. In particular, through making a connection with the spectrum of discrete point processes, we were able to quantify the estimation of the intrinsic firing frequency of LGN neurons. {In the spirit of easing reproducibility, we have archived a MATLAB implementation of the estimators studied in this work using the CVX package \cite{cvx} on the open source repository GitHub and made it publicly available \cite{hawkes_code}.}

One of the limitations of our analysis is the assumption that the spiking probabilities are bounded by $1/2$, which results in loss of generality. This assumption is made for the sake of theoretical analysis in bounding the mixing rate of the canonical self-exciting process. Our numerical experiments suggest that it is not necessary for the operation of the $\ell_1$-regularized and POMP estimators. We consider further inspection of the mixing properties of this process and thus relaxing this assumption as future work. Our future work also includes generalization of our analysis to multivariate GLMs, which will allow to infer network properties from multi-unit recordings of neuronal ensembles.

%
%

\chapter{Fast and Stable Signal Deconvolution via Compressible State-Space Models} \label{chap:css}
\chaptermark{Compressible State-Space Deconvolution}

Common biological measurements are in the form of noisy convolutions of signals of interest with possibly unknown and transient blurring kernels. Examples include EEG and calcium imaging data. Thus, signal deconvolution of these measurements is crucial in understanding the underlying biological processes. The objective of this chapter is to develop fast and stable solutions for signal deconvolution from noisy, blurred and undersampled data, where the signals are in the form of discrete events distributed in time and space. Due to their smoothness properties, in these application Gaussian state-space models exhibit poor performance in recovering the states and the sharp transitions. In this chapter we consider the problem of estimating compressible state space models, where the sparsity lies in the transitions of the states.

We introduce compressible state-space models as a framework to model and estimate such discrete events. These state-space models admit abrupt changes in the states and have a convergent transition matrix, and are coupled with compressive linear measurements. We consider a dynamic compressive sensing optimization problem and develop a fast solution, using two nested Expectation Maximization algorithms, to jointly estimate the states as well as their transition matrices. Under suitable sparsity assumptions on the dynamics, we prove optimal stability guarantees for the recovery of the states and present a method for the identification of the underlying discrete events with precise confidence bounds. We present simulation studies as well as application to calcium deconvolution and sleep spindle detection,  which verify our theoretical results and show significant improvement over existing techniques. Our results show that by explicitly modeling the dynamics of the underlying signals, it is possible to construct signal deconvolution solutions that are scalable, statistically robust, and achieve high temporal resolution.  Our proposed methodology provides a framework for modeling and deconvolution of noisy, blurred, and undersampled measurements in a fast and stable fashion, with potential application to a wide range of biological data.

\section{Introduction}

In many signal processing applications such as estimation of brain activity from magnetoencephalography (MEG) time-series \cite{phillips1997meg}, estimation of time-varying networks \cite{kolar2010estimating},  electroencephalogram (EEG) analysis \cite{nunez1995neocortical}, calcium imaging \cite{vogelstein2010fast}, functional magnetic resonance imaging (fMRI) \cite{chang2010time}, and video compression \cite{jung2010motion}, the signals often exhibit abrupt changes that are blurred through convolution with unknown kernels due to intrinsic measurement constraints. Extracting the underlying signals from blurred and noisy measurements is often referred to as signal deconvolution. Traditionally, state-space models have been used for such signal deconvolution problems, where the states correspond to the unobservable signals. Gaussian state-space models in particular are widely used to model smooth state transitions. Under normality assumptions, posterior mean filters and smoothers are optimal estimators, where the analytical solution is given respectively by the Kalman filter and the fixed interval smoother \cite{anderson1979optimal,haykin2008adaptive}.

When applied to observations from abruptly changing states, Gaussian state-space models exhibit poor performance in recovering sharp transitions of the states due to their underlying smoothing property. Although filtering and smoothing recursions can be obtained in principle for non-Gaussian state-space models, exact calculations are no longer possible. Apart from crude approximations like the extended Kalman filter, several methods have been proposed including numerical methods for low-dimensional states \cite{kitagawa1998self}, Monte Carlo filters \cite{kitagawa1998self,hurzeler1998monte}, posterior mode estimation \cite{fruhwirth1994applied,fruhwirth1994data}, and fully Bayesian smoothing using Markov chain Monte Carlo simulation \cite{knorr1999conditional, shephard1997likelihood}. In order to exploit sparsity, several dynamic compressed sensing (CS) techniques, such as the Kalman filtered CS algorithm, have been proposed that typically assume partial information about the sparse support or estimate it in a greedy and online fashion \cite{vaswani2010ls, vaswani2008kalman, carmi2010methods, ziniel2013dynamic, zhan2015time}. However, little is known about the theoretical performance guarantees of these algorithms.

In this chapter, we consider the problem of estimating state dynamics from noisy and undersampled observations, where the state transitions are governed by autoregressive models with compressible innovations. Motivated by the theory of CS, we employ an objective function formed by the $\ell_1$-norm of the state innovations \cite{ba2012exact}. Unlike the traditional compressed sensing setting, the sparsity is associated with the dynamics and not the states themselves. In the absence of observation noise, the CS recovery guarantees are shown to extend to this problem \cite{ba2012exact}. However, in a realistic setting in the presence of observation noise, it is unclear how the CS recovery guarantees generalize to this estimation problem.

We will present stability guarantees for this estimator under a convergent state transition matrix, which confirm that the CS recovery guarantees can be extended to this problem. The corresponding optimization problem in its Lagrangian form is akin to the MAP estimator of the states in a linear state-space model where the innovations are Laplace distributed. This allows us to integrate methods from Expectation-Maximization (EM) theory and Gaussian state-space estimation to derive efficient algorithms for the estimation of states as well as the state transition matrix, which is usually unknown in practice. To this end, we construct two nested EM algorithms in order to jointly estimate the states and the transition matrix. The outer EM algorithm for state estimation is akin to the fixed interval smoother, and the inner EM algorithm uses the state estimates to update the state transition matrix \cite{shumway1982approach}. The resulting EM algorithm is recursive in time, which makes the computational complexity of our method scale linearly with temporal dimension of the problem. This provides an advantage over existing methods based on convex optimization, which typically scale super-linearly with the temporal dimension.

Our results are related to parallel applications in spectral estimation, source localization, and channel equalization \cite{fevrier1999reduced}, where the measurements are of the form $\mathbf{Y} = \mathbf{A} \mathbf{X}+ \mathbf{N}$, with $\mathbf{Y}$ is the observation matrix, $\mathbf{X}$ denotes the unknown parameters, $\mathbf{A}$ is the measurement matrix, and $\mathbf{N}$ is the additive noise. These problems are referred to as Multiple Measurement Vectors (MMV) \cite{cotter2005sparse} and Multivariate Regression \cite{obozinski2011support}. In these applications, solutions with row sparsity in $\mathbf{X}$ are desired. Recovery of sparse signals with Gaussian innovations is studied in \cite{zhang2011sparse}. Several recovery algorithms including the $\ell_1\!\!\!-\!\!\ell_q$ minimization methods, subspace methods and greedy pursuit algorithms \cite{davies2012rank} have been proposed for support union recovery in this setup. Our contributions are distinct in that we directly model the state innovations as a compressible sequence, for recovery of which we present both sharp theoretical guarantees as well as fast algorithms from state-space estimation.

Finally, we provide simulation results as well as applications to two experimentally-acquired data sets: calcium imaging recordings of neuronal activity, and EEG data during sleep. In the former, the deconvolution problem concerns estimating the location of spikes given the temporally blurred calcium fluorescence, and in the latter, the objective is to detect the occurrence and onset of sleep spindles. Our simulation studies confirm our theoretical predictions on the performance gain obtained by compressible state-space estimation over those obtained by traditional estimators such as the basis pursuit denoising. Our real data analyses reveal that our compressible state-space modeling and estimation framework outperforms two of the commonly-used methods for calcium deconvolution and sleep spindle detection. In the spirit of easing reproducibility, we have made MATLAB implementations of our codes publicly available \cite{code}.

\vspace{-2mm}
\section{Methods}\label{sec:tv_formulation}
In this section we introduce the experimental procedures of recording the data for analysis and establish our problem formulation and notational conventions and present our main theoretical analysis and algorithm development. 
\subsection{Experimental Procedures}
\subsubsection{Surgery} 2 hours before surgery, $0.1~\text{cc}$ dexamethasone ($2~\text{mg/ml}$, VetOne) was injected subcutaneously to reduce brain swelling during craniotomy. Anesthesia is induced with $4\%$ isoflurane (Fluriso, VetOne) with a calibrated vaporizer (Matrx VIP 3000). During surgery, isoflurane level was reduced to and maintained at a level of $1.5\%$--$2\%$. Body temperature of the animal is maintained at $36.0$ degrees Celsius during surgery. Hair on top of head of the animal was removed using Hair Remover Face Cream (Nair), after which Betadine (Purdue Products) and $70\%$ ethanol was applied sequentially 3 times to the surface of the skin before removing the skin. Soft tissues and muscles were removed to expose the skull. Then a custom designed 3D printed stainless headplate was mounted over left auditory cortex and secured with C\&B-Metabond (Parkell). A craniotomy with a diameter of around $3.5~\text{mm}$ was then performed over left auditory cortex. A three layered cover slip was used as cranial window, which is made by gluing (NOA71, Norland Products) $2$ pieces of $3~\text{mm}$ coverslips (64-0720 (CS-3R), Warner Instruments) with a $5~\text{mm}$ coverslip ($64$--$0700$ (CS-5R), Warner Instruments). Cranial window was quickly dabbed in kwik-sil (World Precision Instruments) before mounted $3~\text{mm}$ coverslips facing down onto the brain. After kwik-sil cured, Metabond was applied to secure the position of the cranial window. Synthetic Black Iron Oxide (Alpha Chemicals) was then applied to the hardened Metabond surface. $0.05~\text{cc}$ Cefazolin ($1$ gram/vial, West Ward Pharmaceuticals) was injected subcutaneously when entire procedure was finished. After the surgery the animal was kept warm under heat light for $30$ minutes for recovery before returning to home cage. Medicated water (Sulfamethoxazole and Trimethoprim Oral Suspension, USP $200~\text{mg}/40~\text{mg}$ per $5~\text{ml}$, Aurobindo Pharms USA; $6~\text{ml}$ solution diluted in $100~\text{ml}$ water) substitute normal drinking water for $7$ days before any imaging was performed.

\subsubsection{Awake two-photon imaging} Spontaneous activity data of population of layer $2/3$ auditory cortex (A1) neurons is collected from adult (3-month old) Thy1-GCaMP6s female mouse implanted with chronic window following the above procedure, using two-photon imaging. Acquisition is performed using a two-photon microscope (Thorlabs Bscope 2) equipped with a Vision 2 Ti:Sapphire laser (Coherent), equipped with a GaAsP photo detector module (Hamamatsu) and resonant scanners enabling faster high-resoluation scanning at $30$--$60~\text{Hz}$ per frame. The excitation wavelength was $920~\text{nm}$. Regions ($\sim 300~\mu\text{m}^2$) within A1 were scanned at $30~\text{Hz}$ through a $20\text{x}$, $0.95$ NA water-immersion objective (Olympus). During imaging the animal was head-fixed and awake. The microscope was rotated $45$ degrees and placed over the left A1 where window was placed.  An average image of field of view was generated by choosing a time window where minimum movement of the brain was observed and used as reference image for motion correction using TurboReg plugin in ImageJ. GCaMP6s positive cells are selected manually by placing a ring like ROI over each identified cell. Neuropil masks were generated by placing a $20~\mu\text{m}$ radius circular region over each cell yet excluding all cell soma regions. Traces of soma and neuropil were generated by averaging image intensity within respective masks at each time point. A ratio of $0.7$ was used to correct for neuropil contamination.

\subsubsection{Cell-attached patch clamp recordings and two-photon imaging} Recordings were performed in vitro in voltage clamp to simultaneously measure spiking activity and $\Delta F / F$. Thalamocortical slices containing A1 were prepared as previously described \cite{meng2015visual}. The extracellular recording solution consisted of artificial cerebral spinal fluid (ACSF) containing: $130$ NaCl, $3$ KCl, $1.25$ KH2PO4, $20$ NaHCO3, $10$ glucose, $1.3$ MgSO4, $2.5$ CaCl2 (pH $7.35$-$7.4$, in $95\%$ O2 – $5\%$ CO2). Action potentials were recorded extracellularly in loose-seal cell-attached configuration (seal resistance typically $20$--$30~\text{M}\Omega$) in voltage clamp mode. Borosilicate glass patch pipettes were filled with normal ACSF diluted $10\%$, and had a tip resistance of $\sim3$-$5~\text{M}\Omega$ in the bath. Data were acquired with a Multiclamp 700B patch clamp amplifier (Molecular Devices), low-pass filtered at $3$-$6~\text{kHz}$, and digitized at $10~\text{kHz}$ using the MATLAB-based software. Action potentials were stimulated with a bipolar electrode placed in L1 or L23 to stimulate the apical dendrites of pyramidal cells (pulse duration $1$-$5~\text{ms}$). Data were analyzed offline using MATLAB. Imaging was largely performed using a two-photon microscope (Ultima, Prairie Technologies) and a MaiTai DeepSee laser (SpectraPhysics), equipped with a GaAsP photo detector module (Hamamatsu) and resonant scanners enabling faster high-resoluation scanning at $30$-$60~\text{Hz}$ per frame. Excitation was set at $900~\text{nm}$. Regions were scanned at $30~\text{Hz}$ through a $40\text{x}$ water-immersion objective (Olympus). Cells were manually selected as ring-like regions of interest (ROIs) that cover soma but exclude cell nuclei, and pixel intensity within each ROI was averaged to generate fluorescence over time and changes in fluorescence ($\Delta F / F$) were then calculated.

\vspace{-2mm}
\subsection{Problem Formulation and Theoretical Analysis}

We consider the linear compressible state-space model given by
\begin{equation}
\label{eq:tv_lap_state_space}
\mathbf{x}_t = \mathbf{\Theta} \mathbf{x}_{t-1}+ \mathbf{w}_t, \qquad \mathbf{y}_t = \mathbf{A}_t \mathbf{x}_t + \mathbf{v}_t,
\end{equation}
where $(\mathbf{x}_t)_{t=1}^{T} \in \mathbb{R}^p$ denote the sequence of unobservable states, $\mathbf{\Theta}$ is the state transition matrix satisfying $\|\mathbf{\Theta}\|<1$, $\mathbf{w}_t \in \mathbb{R}^p$ is the state innovation sequence, $(\mathbf{y}_t)_{t=1}^T \in \mathbb{R}^{n_t}$ are the linear observations, $\mathbf{A}_t \in \mathbb{R}^{n_t \times p}$ denotes the measurement matrix, and {$\mathbf{v}_t \in \mathbb{R}^{n_t}$} denotes the measurement noise. The main problem is to estimate the unobserved sequence $(\mathbf{x}_t)_{t=1}^T$ (and possibly $\boldsymbol{\Theta}$), given the sequence of observations $(\mathbf{y}_t)_{t=1}^T$. This problem is in general ill-posed, when $n_t < p$, for some $t$. We therefore need to make additional assumptions in order to seek a stable solution.
 
We assume that the state innovations are sparse (resp. compressible), i.e. $\mathbf{x}_t-\mathbf{\Theta} \mathbf{x}_{t-1}$ is $s_t$-sparse (resp. $(s_t,\xi)$-compressible) with $s_1 \gg s_t$ for $t \in [T]\backslash\{1\}$. Our theoretical analysis pertain to the compressed sensing regime where $1 \ll s_t < n_t \ll p$.  We assume that the rows of $\mathbf{A}_t$ are a subset of the rows of $\mathbf{A}_1$, i.e. $\mathbf{A}_t = (\mathbf{A}_{1})_{n_t}$, and define $\widetilde{\mathbf{A}}_t = \sqrt{\frac{n_1}{n_t}}\mathbf{A}_t$. Other than its technical usefulness, the latter assumption helps avoid prohibitive storage of all the measurement matrices. In order to promote sparsity of the state innovations, we consider the dynamic $\ell_1$-regularization (dynamic CS from now on) problem defined as
\begin{align}
\label{eq:tv_prob_def_primal}
\minimize \limits_{(\mathbf{x}_t)_{t=1}^{T},\mathbf{\Theta}} \quad \displaystyle \sum_{t=1}^T \frac{\|\mathbf{x}_t-\mathbf{\Theta} \mathbf{x}_{t-1}\|_1}{\sqrt{s_t}} \ \ \text{s.t.} \ \ \|\mathbf{y}_t-\mathbf{A}_t\mathbf{x}_t\|_2 \leq \sqrt{\frac{n_t}{n_1}}\varepsilon.
\end{align}
where $\varepsilon$ is an upper bound on the observation noise, i.e., $\| v_t \|_2 \le \varepsilon$ for all $t$. Note that this problem is a variant of the dynamic CS problem introduced in \cite{ba2012exact}. We also consider the modified Lagrangian form of (\ref{eq:tv_prob_def_primal}) given by
\begin{equation}
\label{eq:tv_prob_def_dual}
\minimize_{(\mathbf{x}_t)_{t=1}^{T},\mathbf{\Theta}} \quad \lambda \sum_{t=1}^T \frac{\|\mathbf{x}_t-\mathbf{\Theta} \mathbf{x}_{t-1}\|_1}{\sqrt{s_t}} +  \frac{1}{n_t}\frac{\|\mathbf{y}_t-\mathbf{A}_t\mathbf{x}_t\|_2^2}{2\sigma^2}.
\end{equation}
for some constants $\sigma_2^2$ and $\lambda \ge 0$. Note that if $\mathbf{v}_t \sim \mathcal{N}(\mathbf{0}, n_t \sigma^2 \mathbf{I})$, then Eq. (\ref{eq:tv_prob_def_dual}) is akin to the maximum \emph{a posteriori} (MAP) estimator of the states in (\ref{eq:tv_lap_state_space}), assuming that the \textit{state} innovations were independent Laplace random variables with respective parameters $\lambda / \sqrt{s_t}$. We will later use this analogy to derive fast solutions to the optimization problem in (\ref{eq:tv_prob_def_dual}).

Uniqueness and exact recovery of the sequence $(\mathbf{x}_t)_{t=1}^T$ in the absence of noise was proved in \cite{ba2012exact} for $\mathbf{\Theta} =\mathbf{I}$, by an inductive construction of dual certificates. The special case $\mathbf{\Theta} =\mathbf{I}$ can be considered as a generalization of the total variation (TV) minimization problem \cite{poon2015role}. Our main result on stability of the solution of (\ref{eq:tv_prob_def_primal}) is the following:

\begin{thm}[Stable Recovery of Activity in the Presence of Noise]
\label{thm:tv_main}
Let $(\mathbf{x}_t)_{t=1}^T \in \mathbb{R}^p$ be a sequence of states with a known transition matrix $\mathbf{\Theta}=\theta \mathbf{I}$, where $|\theta|<1$ and  $\widetilde{\mathbf{A}}_t$, $t\geq 1$ satisfies RIP of order ${4s}$ with $\delta_{4s} <1/3$. Suppose that $n_1 > n_2 = n_3 = \cdots = n_T$. Then, the solution $(\widehat{\mathbf{x}}_t)_{t=1}^{T}$ to the dynamic CS problem (\ref{eq:tv_prob_def_primal}) satisfies
\begin{align}
\label{eq:tv_main_stable}
 \frac{1}{T} \sum_{t=1}^T \|\mathbf{x}_t-\widehat{\mathbf{x}}_t\|_2 \le\displaystyle \frac{1-\theta^T}{1-\theta}\left(12.6 \left( 1+\frac{1}{T} \sqrt{\frac{n_1}{n_2}}-\frac{1}{T} \right)\varepsilon + \frac{3}{T} \sum_{t=1}^T \frac{\sigma_{s_t}(\mathbf{x}_t-\mathbf{\Theta} \mathbf{x}_{t-1})}{\sqrt{s_t}}\right).
\end{align}
\end{thm}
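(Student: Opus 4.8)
The plan is to adapt the classical stable-$\ell_1$-recovery argument (the one underlying Theorem~\ref{thm:thm1}) from a single linear inverse problem to the layered model~(\ref{eq:tv_lap_state_space}), using the contraction $|\theta|<1$ to decouple the time slices. Write $\mathbf{h}_t:=\widehat{\mathbf{x}}_t-\mathbf{x}_t$ with $\mathbf{h}_0=\mathbf{0}$ (recall $\mathbf{x}_0$ is the zero vector), set $\mathbf{w}_t:=\mathbf{x}_t-\theta\mathbf{x}_{t-1}$ and $\widehat{\mathbf{w}}_t:=\widehat{\mathbf{x}}_t-\theta\widehat{\mathbf{x}}_{t-1}$, and let $\mathbf{e}_t:=\widehat{\mathbf{w}}_t-\mathbf{w}_t=\mathbf{h}_t-\theta\mathbf{h}_{t-1}$ be the innovation error. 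Since $\mathbf{\Theta}=\theta\mathbf{I}$, unrolling the recursion gives $\mathbf{h}_t=\sum_{j=1}^{t}\theta^{\,t-j}\mathbf{e}_j$, and summing the geometric tails yields $\sum_{t=1}^{T}\|\mathbf{h}_t\|_2\le\big(\sum_{i=0}^{T-1}|\theta|^{i}\big)\sum_{t=1}^{T}\|\mathbf{e}_t\|_2=\frac{1-|\theta|^{T}}{1-|\theta|}\sum_{t=1}^{T}\|\mathbf{e}_t\|_2$. Hence it suffices to control $\frac1T\sum_t\|\mathbf{e}_t\|_2$; this is where the prefactor $\frac{1-\theta^{T}}{1-\theta}$ in~(\ref{eq:tv_main_stable}) comes from and where the assumption $|\theta|<1$ is indispensable.

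I would then establish two structural facts about the $\mathbf{e}_t$'s. \emph{Cone condition.} The ground-truth sequence $(\mathbf{x}_t)$ is feasible for~(\ref{eq:tv_prob_def_primal}) (the rescaled residual $\sqrt{n_1/n_t}\,\mathbf{v}_t$ has $\ell_2$-norm at most $\varepsilon$), so $\ell_1$-optimality of $(\widehat{\mathbf{x}}_t)$ gives $\sum_t\|\widehat{\mathbf{w}}_t\|_1/\sqrt{s_t}\le\sum_t\|\mathbf{w}_t\|_1/\sqrt{s_t}$; writing $\widehat{\mathbf{w}}_t=\mathbf{w}_t+\mathbf{e}_t$, splitting each term at $S_t$ (the support of the best $s_t$-term approximation of $\mathbf{w}_t$) and using $\|\mathbf{w}_t-(\mathbf{w}_t)_{S_t}\|_1=\sigma_{s_t}(\mathbf{w}_t)$, a routine rearrangement yields the aggregated cone inequality $\sum_t s_t^{-1/2}\|(\mathbf{e}_t)_{S_t^{c}}\|_1\le\sum_t s_t^{-1/2}\|(\mathbf{e}_t)_{S_t}\|_1+2\sum_t s_t^{-1/2}\sigma_{s_t}(\mathbf{w}_t)$. \emph{Tube condition.} Feasibility of both $(\widehat{\mathbf{x}}_t)$ and $(\mathbf{x}_t)$ gives $\|\widetilde{\mathbf{A}}_t\mathbf{h}_t\|_2\le2\varepsilon$ for every $t$; inserting $\mathbf{e}_t=\mathbf{h}_t-\theta\mathbf{h}_{t-1}$ and using that the rows of $\mathbf{A}_t$ are nested in those of $\mathbf{A}_1$ together with $n_2=\cdots=n_T$ (so $\widetilde{\mathbf{A}}_t=\widetilde{\mathbf{A}}_{t-1}$ for $t\ge3$, while $\|\widetilde{\mathbf{A}}_2\mathbf{h}_1\|_2\le2\sqrt{n_1/n_2}\,\varepsilon$) gives $\|\widetilde{\mathbf{A}}_1\mathbf{e}_1\|_2\le2\varepsilon$, $\|\widetilde{\mathbf{A}}_2\mathbf{e}_2\|_2\le2\big(1+|\theta|\sqrt{n_1/n_2}\big)\varepsilon$, and $\|\widetilde{\mathbf{A}}_t\mathbf{e}_t\|_2\le2(1+|\theta|)\varepsilon$ for $t\ge3$. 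The lone $\sqrt{n_1/n_2}$ is the step $t=2$, the only one straddling the change in measurement size --- this is the source of the factor $\big(1+\frac1T\sqrt{n_1/n_2}-\frac1T\big)$.

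With these in hand I would run the usual RIP recovery argument, but in time-aggregated form. For each $t$, let $\mathbf{P}_t$ be $\mathbf{e}_t$ restricted to $S_t$ together with its $2s_t$ next-largest-magnitude coordinates (support size $\le3s_t$), and split the remaining coordinates into successive $s_t$-blocks $\mathbf{e}_t^{(1)},\mathbf{e}_t^{(2)},\dots$ of non-increasing magnitude, so that the standard shifting bound gives $\sum_{k\ge1}\|\mathbf{e}_t^{(k)}\|_2\le s_t^{-1/2}\|(\mathbf{e}_t)_{S_t^{c}}\|_1$. RIP of order $4s$ (applied to $\mathbf{P}_t$ and to the cross terms $\langle\widetilde{\mathbf{A}}_t\mathbf{P}_t,\widetilde{\mathbf{A}}_t\mathbf{e}_t^{(k)}\rangle$, which have disjoint supports of total size $\le4s$) then yields $(1-\delta_{4s})\|\mathbf{P}_t\|_2\le\sqrt{1+\delta_{4s}}\,\|\widetilde{\mathbf{A}}_t\mathbf{e}_t\|_2+\delta_{4s}\sum_{k\ge1}\|\mathbf{e}_t^{(k)}\|_2$. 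Summing over $t$, substituting the tube bounds for $\|\widetilde{\mathbf{A}}_t\mathbf{e}_t\|_2$ and the cone inequality for $\sum_t s_t^{-1/2}\|(\mathbf{e}_t)_{S_t^{c}}\|_1$ (after $\|(\mathbf{e}_t)_{S_t}\|_1\le\sqrt{s_t}\|\mathbf{P}_t\|_2$), and solving the resulting scalar linear inequality for $\sum_t\|\mathbf{P}_t\|_2$ --- which needs $\delta_{4s}<1/3$ so that the self-coefficient stays below one --- gives a bound on $\sum_t\|\mathbf{e}_t\|_2\le\sum_t\|\mathbf{P}_t\|_2+\sum_t\sum_{k\ge1}\|\mathbf{e}_t^{(k)}\|_2$. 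Careful bookkeeping of the boundary terms at $t=1,2$ and the use of $|\theta|<1$ collapse this into $\frac1T\sum_t\|\mathbf{e}_t\|_2\le12.6\big(1+\frac1T\sqrt{n_1/n_2}-\frac1T\big)\varepsilon+\frac3T\sum_t s_t^{-1/2}\sigma_{s_t}(\mathbf{w}_t)$; combined with the reduction of the first paragraph, this is~(\ref{eq:tv_main_stable}).

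The main obstacle is the interaction between the tube step and the aggregation step. Unlike ordinary compressed sensing, the program never constrains $\widetilde{\mathbf{A}}_t\mathbf{e}_t$ directly --- sparsity lives on the innovations, but the measurements see the states, which are running geometric sums of the innovations --- so extracting a per-slice data-fidelity bound on $\mathbf{e}_t$ forces one to pay the contraction $|\theta|<1$ and to track the single time index where $n_t$ drops. Once that is done, the classical single-vector RIP argument must be pushed through in a coupled form governed by one joint cone constraint across all $T$ slices rather than $T$ independent ones, and it is precisely there that the margin $\delta_{4s}<1/3$ is consumed; verifying that the telescoped constants indeed close at $12.6$ and $3$ is the remaining, purely computational, part.
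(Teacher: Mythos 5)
Your proposal follows essentially the same route as the paper's proof: a cone constraint from $\ell_1$-optimality of the innovations, a tube constraint obtained from feasibility plus the row-nesting of the $\mathbf{A}_t$'s (you difference the state-error tubes via $\mathbf{e}_t=\mathbf{h}_t-\theta\mathbf{h}_{t-1}$, the paper equivalently differences consecutive measurement equations, isolating the same single $\sqrt{n_1/n_2}$ term at $t=2$), a Cand\`es-style block-RIP argument summed over the $T$ slices under the joint cone condition with $\delta_{4s}<1/3$, and finally the geometric-sum (lower-triangular) map from innovation errors to state errors producing the $\frac{1-\theta^{T}}{1-\theta}$ factor. The only differences are cosmetic bookkeeping choices (your $3s_t$ principal block versus the paper's $S_t\cup I_1$ splitting, and slightly different but equivalent per-slice tube constants), so the argument matches the paper's.
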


\noindent \textit{\textbf{Proof Sketch.}} The proof of Theorem \ref{thm:tv_main} is based on establishing a modified cone and tube constraint for the dynamic CS problem (\ref{eq:tv_prob_def_primal}) and using the boundedness of the Frobenius norm of the inverse first-order differencing operator. Details of the proof are given in Appendix \ref{app:tv_main_proof}.

\noindent { \textbf{Remark 1.}} The first term on the right hand side of Theorem \ref{thm:tv_main} implies that the average reconstruction error of the sequence $(\mathbf{x}_t)_{t=1}^T$ is upper bounded proportional to the noise level $\varepsilon$, which implies the stability of the estimate. The second term is a measure of compressibility of the innovation sequence and vanishes when the sparsity condition is exactly met.

\noindent \textbf{Remark 2.} Similar to the general compressive sensing setup, the results of Theorem \ref{thm:tv_main} are general and hold even when the sparsity assume is not exactly met, namely when the compressibility (sparsity) assumption is violated, the second term on the right hand side could be of order of a constant. However in order for $\widetilde{\mathbf{A}}_t$ to satisfy RIP of order $4s$ one requires $n \sim \mathcal{O}(s \log p)$ which in the absence of sparsity ($s \sim \mathcal{O}(p)$) reduces to the denoising regime.

\noindent \textbf{Remark 3.} The theory of LASSO suggests a choice of the regularization parameter ${\lambda}  \sim \mathcal{O} \left(\sigma \sqrt{\frac{\log p}{n_t}}\right)$ \cite{Negahban}.  In the absence of sparsity (compressibility) $s \sim \mathcal{O}(n)$, one can think of two possible scenarios: In the high-dimensional setup where $n \gg 1$ , $\lambda$ will be very small and (\ref{eq:tv_prob_def_dual}) will be similar to solving the Maximum-Likelihood problem which is known to result in an unbiased estimator. In the low-dimensional case $\lambda$ is chosen via cross-validation which adapts the problem to the sparsity level.

\noindent \textbf{Remark 4.} Finally, the choice of the $\ell_1$-regularized maximum-likelihood estimation in (\ref{eq:tv_prob_def_dual}) is motivated by its superior performance over ML estimation in the compressive sensing literature and the applications of interest in this chapter where exhibit sparse activity patterns in time. It is well-known that Laplace distribution is not a compressible distribution \cite{gribonval2012compressible}. Therefore we have not made the assumption that the innovations $\mathbf{w}_t$ are Laplace distributed, but have made the observation that an $\ell_1$-regularized ML estimator is akin to the MAP estimator for a Laplace state-space model. Theorem \ref{thm:tv_main} suggests that a Laplace state-space model is asymptotically \textit{as good as} any other compressible state-space model up to a constant factor in the error bounds.

\vspace{-2mm}
\subsection{Fast Iterative Solution via the EM Algorithm}
\label{sec:tv_algorithm}
Due to the high dimensional nature of the state estimation problem, algorithms with polynomial complexity exhibit poor scalability. Moreover, when the state transition matrix is not known, the dynamic CS optimization problem (\ref{eq:tv_prob_def_dual}) is not convex in $\left( (\mathbf{x}_t)_{t=1}^T,\mathbf{\Theta} \right)$. Therefore standard convex optimization solvers cannot be directly applied. This problem can be addressed by employing the Expectation-Maximization algorithm \cite{shumway1982approach}. A related existing result considers weighted $\ell_1$-regularization to adaptively capture the state dynamics \cite{charles2013dynamic}. Our approach is distinct in that we derive a fast solution to (\ref{eq:tv_prob_def_dual}) via two nested EM algorithms, in order to jointly estimate the states and their transition matrix. The outer EM algorithm converts the estimation problem to a form suitable for the usage of the traditional Fixed Interval Smoothing (FIS) by invoking the EM interpretation of the Iterative Re-weighted Least Squares (IRLS) algorithms \cite{babadi_IRLS}. The inner EM algorithm performs state and parameter estimation efficiently using the FIS. We refer to our estimated as the Fast Compressible State-Space ({\sf FCSS}) estimator.

\subsubsection*{The outer EM loop of {\sf FCSS}}

In \cite{babadi_IRLS}, the authors established the equivalence of the IRLS algorithm as an instance of the EM algorithm for solving $\ell_1$-minimization problems via the Normal/Independent (N/I) characterization of the Laplace distribution. Consider the $\epsilon$-perturbed $\ell_1$-norm as
\begin{equation}
\label{eq:tv_eps_l1}
\|\mathbf{x}\|_{1,\epsilon} = \sqrt{x_1^2 + \epsilon^2}+\sqrt{x_2^2 + \epsilon^2}+\cdots+\sqrt{x_p^2 + \epsilon^2}.
\end{equation}
Note that for $\epsilon =0$, $\|\mathbf{x}\|_{1,\epsilon}$ coincides with the usual $\ell_1$-norm. We define the $\epsilon$-perturbed version of the dual problem (\ref{eq:tv_prob_def_dual}) by 
\begin{equation}
\label{eq:tv_eps_lag}
\minimize_{\left(\mathbf{x}_t\right)_{t=1}^T, \mathbf{\Theta}} \quad \lambda \sum_{t=1}^T \frac{\|\mathbf{x}_t-\mathbf{\Theta} \mathbf{x}_{t-1}\|_{1,\epsilon}}{\sqrt{s_t}} +  \frac{1}{n_t}\frac{\|\mathbf{y}_t-\mathbf{A}_t\mathbf{x}_t\|_2^2}{2\sigma^2}.
\end{equation}
If instead of the $\ell_{1,\epsilon}$-norm, we had the square $\ell_2$ norm, then the above problem could be efficiently solved using the FIS. The outer EM algorithm indeed transforms the problem of Eq. (\ref{eq:tv_eps_lag}) into this form. Note that the $\epsilon$-perturbation only adds a term of the order $\mathcal{O}(\epsilon p)$ to the estimation error bound of Theorem \ref{thm:tv_main}, which is negligible for small enough $\epsilon$ \cite{babadi_IRLS}.

The problem of Eq. (\ref{eq:tv_eps_lag}) can be interpreted as a MAP problem: the first term corresponds to the state-space prior $-\log$ $ p(\mathbf{x}_t | \mathbf{x}_{t-1}, \boldsymbol{\Theta}) = -\log p_{s_t}(\mathbf{x}_t -  \boldsymbol{\Theta} \mathbf{x}_{t-1})$, where $p_{s_t}(\mathbf{x}) \sim \exp$~$(-\lambda \|\mathbf{x}\|_{1,\epsilon} / \sqrt{s_t} )$ denoting the $\epsilon$-perturbed Laplace distribution; the second term is the negative log-likelihood of the data given the state, assuming a zero-mean Gaussian observation noise with covariance $\sigma^2 \mathbf{I}$. Suppose that at the end of the $l^{\sf th}$ iteration, the estimates $(\widehat{\mathbf{x}}_t^{(l)})_{t=1}^T,\widehat{\mathbf{\Theta}}^{(l)}$ are obtained, given the observations $(\mathbf{y}_t)_{t=1}^T$. As it is shown in Appendix \ref{app:tv_EM}, the outer EM algorithm transforms the optimization problem to:
\begin{align}
\label{eq:tv_prob_def_dual_irls}
\minimize_{\left(\mathbf{x}_t\right)_{t=1}^T, \mathbf{\Theta}} \quad &\frac{\lambda}{2} \sum_{j=1}^p\sum_{t=1}^T \frac{\left( \mathbf{x}_{t}-\mathbf{\Theta} \mathbf{x}_{t-1}\right)_j^2+\epsilon^2}{\sqrt{s_t}\sqrt{\left( \widehat{\mathbf{x}}_{t}^{(l)}-\widehat{\mathbf{\Theta}}^{(l)} \widehat{\mathbf{x}}_{t-1}^{(l)}\right)_j^2+\epsilon^2}}  +  \sum_{t=1}^T \frac{1}{n_t}\frac{\|\mathbf{y}_t-\mathbf{A}_t\mathbf{x}_t\|_2^2}{2\sigma^2}.
\end{align}
in order to find $(\widehat{\mathbf{x}}_t^{(l+1)})_{t=1}^T,\widehat{\mathbf{\Theta}}^{(l+1)}$. Under mild conditions, convergence of the solution of (\ref{eq:tv_prob_def_dual_irls}) to that of (\ref{eq:tv_prob_def_dual}) was established in \cite{babadi_IRLS}. The objective function of (\ref{eq:tv_prob_def_dual_irls}) is still not jointly convex in $\left( (\mathbf{x}_t)_{t=1}^T,\mathbf{\Theta} \right)$. Therefore, to carry out the optimization, i.e. the outer M step, we will employ another instance of the EM algorithm, which we call the inner EM algorithm, to alternate between estimating of $\left(\mathbf{x}_t\right)_{t=1}^T$ and $\mathbf{\Theta}$.

\subsubsection*{The inner EM loop of {\sf FCSS}}

Let $\mathbf{W}_t^{(l)}$ be a diagonal matrix such that
\[
(\mathbf{W}_t^{(l)})_{j,j} = {s_t}^{-1/2} \left \{\left( \widehat{\mathbf{x}}_t^{(l)} -{\widehat{\mathbf{\Theta}}}^{(l)} \widehat{\mathbf{x}}_{t-1}^{(l)}\right)_j^2+\epsilon^2\right\}^{-1/2}.
\] 
Consider an estimate $\widehat{\mathbf{\Theta}}^{(l,m)}$, corresponding to the $m^{\sf th}$ iteration of the inner EM algorithm within the $l^{\sf th}$ M-step of the outer EM. In this case, Eq. (\ref{eq:tv_prob_def_dual_irls}) can be thought of the MAP estimate of the Gaussian state-space model given by:
\begin{align}
\label{eq:tv_dynamic}
\begin{array}{l}
\mathbf{x}_t = \widehat{\mathbf{\Theta}}^{(l,m)} \mathbf{x}_{t-1}+ \mathbf{w}_t, \quad \mathbf{w}_t\sim \mathcal{N}\left(\mathbf{0}, \frac{1}{\lambda} {\mathbf{W}_t^{(l)}}^{-1} \right) \\
\mathbf{y}_t = \mathbf{A}_t \mathbf{x}_t + \mathbf{v}_t, \quad \mathbf{v}_t\sim \mathcal{N}(\mathbf{0},n_t\sigma^2 I)
\end{array},
\end{align}
In order to obtain the inner E step, one needs to find the density of $(\mathbf{x}_t)_{t=1}^T$ given $(\mathbf{y}_t)_{t=1}^T$ and $\widehat{\mathbf{\Theta}}^{(l,m)} \}$. Given the Gaussian nature of the state-space in Eq. (\ref{eq:tv_dynamic}), this density is a multivariate Gaussian density, whose means and covariances can be efficiently computed using the FIS. For all $t \in [T]$, the FIS performs a forward Kalman filter and a backward smoother to generate \cite{rauch1965maximum,anderson1979optimal}:
\begin{align*}
\mathbf{x}^{(l, m+1)}_{{t|T}} := {\mathbb{E}}\left\{\mathbf{x}_t|(\mathbf{y}_t)_{t=1}^T, \widehat{\mathbf{\Theta}}^{(l,m)}\right\},
\end{align*}
\begin{equation*}
\boldsymbol{\Sigma}^{(l,m+1)}_{t|T} := {\mathbb{E}}\left\{\mathbf{x}_t \mathbf{x}_t'|(\mathbf{y}_t)_{t=1}^T, \widehat{\mathbf{\Theta}}^{(l,m)}\right\},
\end{equation*}
and
\begin{equation*}
\mathbf{\Sigma}^{(l,m+1)}_{t-1,t|T}={\mathbf{\Sigma}}^{(l,m+1)}_{t,t-1|T}={\mathbb{E}}\left\{\mathbf{x}_{t-1} \mathbf{x}_t'|(\mathbf{y}_t)_{t=1}^T,\widehat{\mathbf{\Theta}}^{(l,m)}\right\}.
\end{equation*}

Note that due to the quadratic nature of all the terms involving $(\mathbf{x}_t)_{t=1}^T$, the outputs of the FIS suffice to compute the expectation of the objective function in Eq. (\ref{eq:tv_prob_def_dual_irls}), i.e., the inner E step, which results in:
\begin{align}
\label{eq:tv_Q_calculated}
\notag \maximize_{\mathbf{\Theta}} & -\frac{\lambda}{2}  \displaystyle \left ({\mathbf{\Theta}}\left( \sum_{t=1}^T   \mathbf{W}_t^{(l)} \left (\mathbf{x}^{(l, m+1)}_{t-1|T}{\mathbf{x}'}^{(l, m+1)}_{t-1|T} + \mathbf{\Sigma}_{t-1|T}^{(l,m+1)} \right )\right){\mathbf{\Theta}}^T\right)\\
&  + \frac{\lambda}{2} \operatorname{Tr} \displaystyle { \left ({\mathbf{\Theta}} \left (\sum_{t=1}^T \mathbf{W}_t^{(l)} \left ( \mathbf{x}^{(l, m+1)}_{t-1|T} {\mathbf{x}'}^{(l, m+1)}_{t|T} + \mathbf{x}^{(l, m+1)}_{t|T} {\mathbf{x}'}^{(l, m+1)}_{t-1|T} + 2\mathbf{\Sigma}_{t-1,t|T}^{(l,m+1)} \right ) \right) \right)},
\end{align}
to obtain $\widehat{\boldsymbol{\Theta}}^{(l,m)}$. The solution has a closed-form given by:
\begin{align}
\label{eq:tv_theta_upd}
\notag  \widehat{\mathbf{\Theta}}^{(l, m+1)} = \displaystyle & \left(   \sum_{t=1}^T 2 \mathbf{W}_t^{(l)}\left(\mathbf{x}^{(l, m+1)}_{t-1|T}{\mathbf{x}'}^{(l, m+1)}_{t-1|T} + \mathbf{\Sigma}_{t-1|T}^{(l,m+1)} \right)  \right)^{-1}\\
 &  \displaystyle \left( \sum_{t=1}^T \mathbf{W}_t^{(l)} \left(\mathbf{x}^{(l, m+1)}_{t-1|T} {\mathbf{x}'}^{(l, m+1)}_{t|T}+ \mathbf{x}^{(l, m+1)}_{t|T} {\mathbf{x}'}^{(l, m+1)}_{t-1|T} + 2 \mathbf{\Sigma}_{t-1,t|T}^{(l,m+1)}\right) \right).
\end{align}

\noindent \begin{minipage}{\columnwidth}
\begin{center}
\begin{algorithm}[H]
\caption{\small The Fast Compressible State-Space ({\sf FCSS}) Estimator}
\label{alg:DS3}
\begin{algorithmic}[1]
\Procedure{{\sf FCSS}}{}
\State Initialize: $\widehat{\mathbf{\Theta}}^{(0)}=\mathbf{0}$, $(\widehat{\mathbf{x}}_t^{(0)} = \mathbf{0})_{t=1}^T $, $(\mathbf{W}^{(0)}_t = \mathbf{0})_{t=1}^T$.
\Repeat
\State $l =0$ .
\State \textbf{Outer E-step:}
\State $\mathbf{W}_t^{(l)} = {\sf diag} \Bigg \{ \frac{1}{\sqrt{s_t}\sqrt{\left( \widehat{\mathbf{x}}_t^{(l)} -{\widehat{\mathbf{\Theta}}}^{(l)} \widehat{\mathbf{x}}_{t-1}^{(l)}\right)_j^2+\epsilon^2}} \Bigg \}_{j=1}^p$.
\State \textbf{Outer M-step:}
\Repeat
\State $m =0$ .
\State \textbf{Inner E-Step:} Find the smoothed estimates  $\mathbf{x}^{(l,m+1)}_{{t|T}}$,  $\boldsymbol{\Sigma}^{(l,m+1)}_{{t|T}}$ and $\boldsymbol{\Sigma}^{(l,m+1)}_{{t-1,t|T}}$ using a Fixed Interval Smoother for
\begin{align*}
\left \{
\begin{array}{l}
\mathbf{x}_t = \widehat{\mathbf{\Theta}}^{(l,m)} \mathbf{x}_{t-1}+ \mathbf{w}_t, \quad \mathbf{w}_t\sim \mathcal{N}\left(\mathbf{0}, \frac{1}{\lambda} {\mathbf{W}_t^{(l)}}^{-1} \right) \\
\mathbf{y}_t = \mathbf{A}_t \mathbf{x}_t + \mathbf{v}_t, \quad \mathbf{v}_t\sim \mathcal{N}(\mathbf{0},n_t\sigma^2 I)
\end{array} \right. .
\end{align*}

\State \textbf{Inner M-Step:} Update $\widehat{\mathbf{\Theta}}^{(l, m+1)}$ via Eq. (\ref{eq:tv_theta_upd}).
\State $m \leftarrow m+1$.
\Until{convergence criteria met}
\State Update the estimates:
\begin{equation*}
\widehat{\mathbf{\Theta}}^{(l+1)} \leftarrow \widehat{\mathbf{\Theta}}^{(l,m)}, \quad \left(\widehat{\mathbf{x}}_t^{(l+1)}\right)_{t=1}^{T} \leftarrow \left({\mathbf{x}}_{t|T}^{(l,m)}\right)_{t=1}^{T}.
\end{equation*}
\State $l \leftarrow l+1$.
\Until{convergence criteria met}

$\widehat{\mathbf{\Theta}} \leftarrow \widehat{\mathbf{\Theta}}^{(l)}.  \quad \left(\widehat{\mathbf{x}}_t \right)_{t=1}^{T} \leftarrow  \left(\widehat{\mathbf{x}}_t^{(l)}\right)_{t=1}^{T} $
\EndProcedure
\end{algorithmic}
\end{algorithm}
\end{center}
\end{minipage}

\medskip
\medskip
\medskip

\noindent \textbf{Remark 1.} 
The EM algorithm is known to converge linearly in the number of iterations \cite{mclachlan2007algorithm}. In applications of interest in this chapter, the stability assumption results in finite time constants and the EM algorithm can be assumed to converge in finite number of steps. As a result the complexity of the {\sf FCSS} algorithm is equal to complexity per EM iteration.
 By virtue of the FIS procedure, the compexity of the {\sf FCSS} algorithm is \emph{linear} in $T$, i.e., the observation duration. As we will show in Section \ref{sec:tv_sim}, this makes the {\sf FCSS} algorithm scale favorably when applied to long data sets.  Each iteration of the FIS algorithm requires two inversions which is of complexity $\mathcal{O}(p^3)$. Similarly, updating $\mathbf{\Theta}$ requires an inversion which is of complexity $\mathcal{O}(p^3)$. Altogether, complexity of the {\sf FCSS} algorithm amounts to $\mathcal{O}(p^3 T)$. For some applications of interest in this chapter, such as calcium deconvolution in the denoising regime and sleep spindle detection, the inversion is performed on a diagonal matrix and hence the complexity reduces to $\mathcal{O}(T)$.

\noindent \textbf{Remark 2.} In order to update $\mathbf{\Theta}$ in the inner M-step given by E. (\ref{eq:tv_theta_upd}), we have not specifically enforced the condition $\|\mathbf{\Theta}\| <1$ in the maximization step. This condition is required to obtain a convergent state transition matrix which results in the stability of the state dynamics. It is easy to verify that the set of matrices $\mathbf{\Theta}$ satisfying $\|\mathbf{\Theta}\| < 1- \eta$, is a closed convex set for small positive $\eta$, and hence one can perform the maximization in (\ref{eq:tv_theta_upd}) by projection onto this closed convex set. Alternatively, matrix optimization methods with operator norm constraints can be used \cite{mishra2013low}. We have avoided this technicality by first finding the global minimum and examining the largest eigenvalue. In the applications of interest in this chapter which follow next, the largest eigenvalue has always been found to be less than $1$.

\vspace{-2mm}
\section{Results}
\label{sec:tv_sim}
In this section, we study the performance of the {\sf FCSS} estimator on simulated data as well real data from two-photon calcium imaging recordings of neuronal activity and sleep spindle detection from EEG.

\vspace{-2mm}
\subsection{Application to Simulated Data}
We first apply the {\sf FCSS} algorithm to simulated data and compare its performance with the Basis Pursuit Denoising (BPDN) algorithm. The parameters are chosen as $p = 200, T = 200, s_1 =8,s_2 = 4,\epsilon = 10^{-10}$, and $\mathbf{\Theta} = 0.95 \mathbf{I}$. We define the quantity $1- n/p$ as the compression ratio. We refer to the case of $n_t=p$, i.e., no compression, as the denoising setting. The measurement matrix $\mathbf{A}$ is an $n_t \times p$ i.i.d. Gaussian random matrix, where $n_t$ is chosen such that $\frac{s_t}{n_t}$ is a fixed ratio. An initial choice of ${\lambda}  \geq 2\sqrt{2} \sigma \sqrt{\frac{\log p}{n_t}}$ is made inspired by the theory of LASSO \cite{Negahban}, which is further tuned using two-fold cross-validation.

Figures \ref{fig:tv_ds}--(a) and \ref{fig:tv_ds}--(b) show the estimated states as well as the innovations for different compression ratios for one sample component. In the denoising regime, all the innovations (including the two closely-spaced components) are exactly recovered. As we take fewer measurements, the performance of the algorithm degrades as expected. However, the overall structure of the innovation sequence is captured even for highly compressed measurements.
\begin{figure}[htb!]
\vspace{-2mm}
\centering     
\includegraphics[width=1\columnwidth]{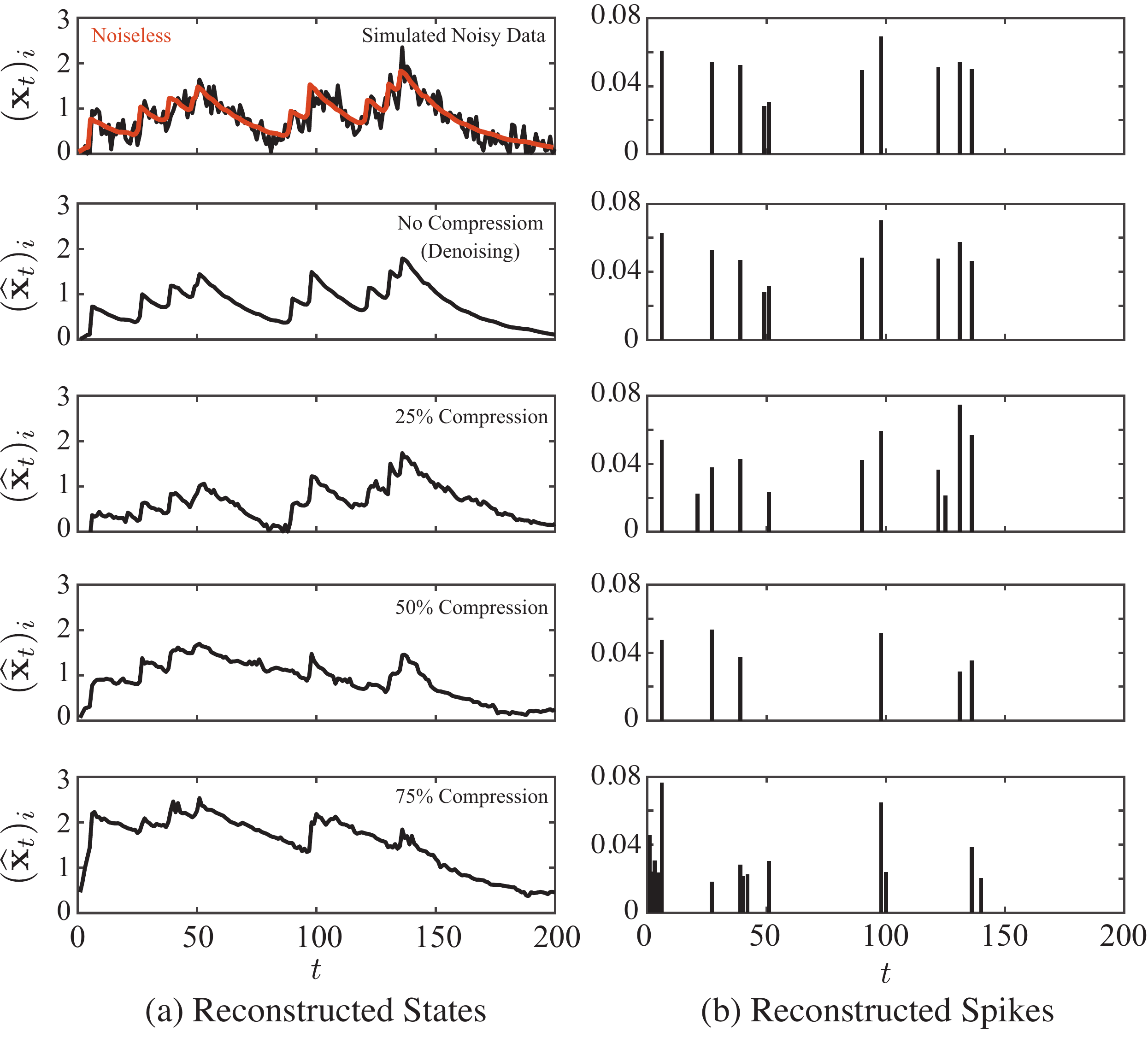}
\vspace{-2mm}
\caption{Reconstruction results of {\sf FCSS} on simulated data vs. compression levels. (a) reconstructed states, (b) reconstructed spikes. The {\sf FCSS} estimates degrade gracefully as the compression level increases.}\label{fig:tv_ds}
\vspace{-1mm}
\end{figure}

\begin{figure}[htb!]
\vspace{-2mm}
\centering     
\includegraphics[width=.7\columnwidth]{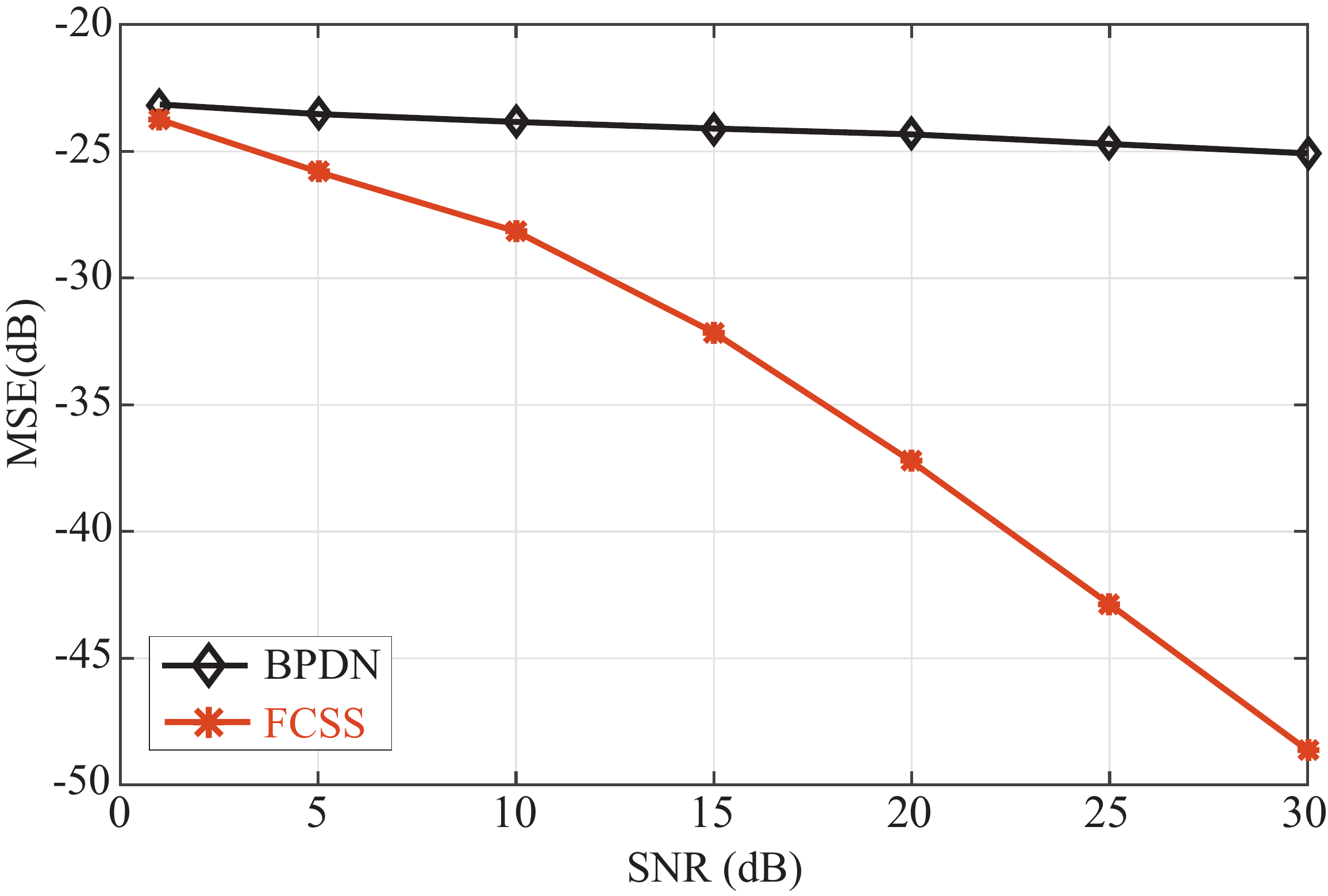}
\vspace{-2mm}
\caption{MSE vs. SNR comparison betwee {\sf FCSS} and BPDN. The {\sf FCSS} significantly outperforms the BPDN, even for moderate SNR levels.}
\label{fig:tv_mse_comp}
\vspace{-2mm}
\end{figure}

\begin{figure}[htb!]
\centering     
\includegraphics[width=.7\columnwidth]{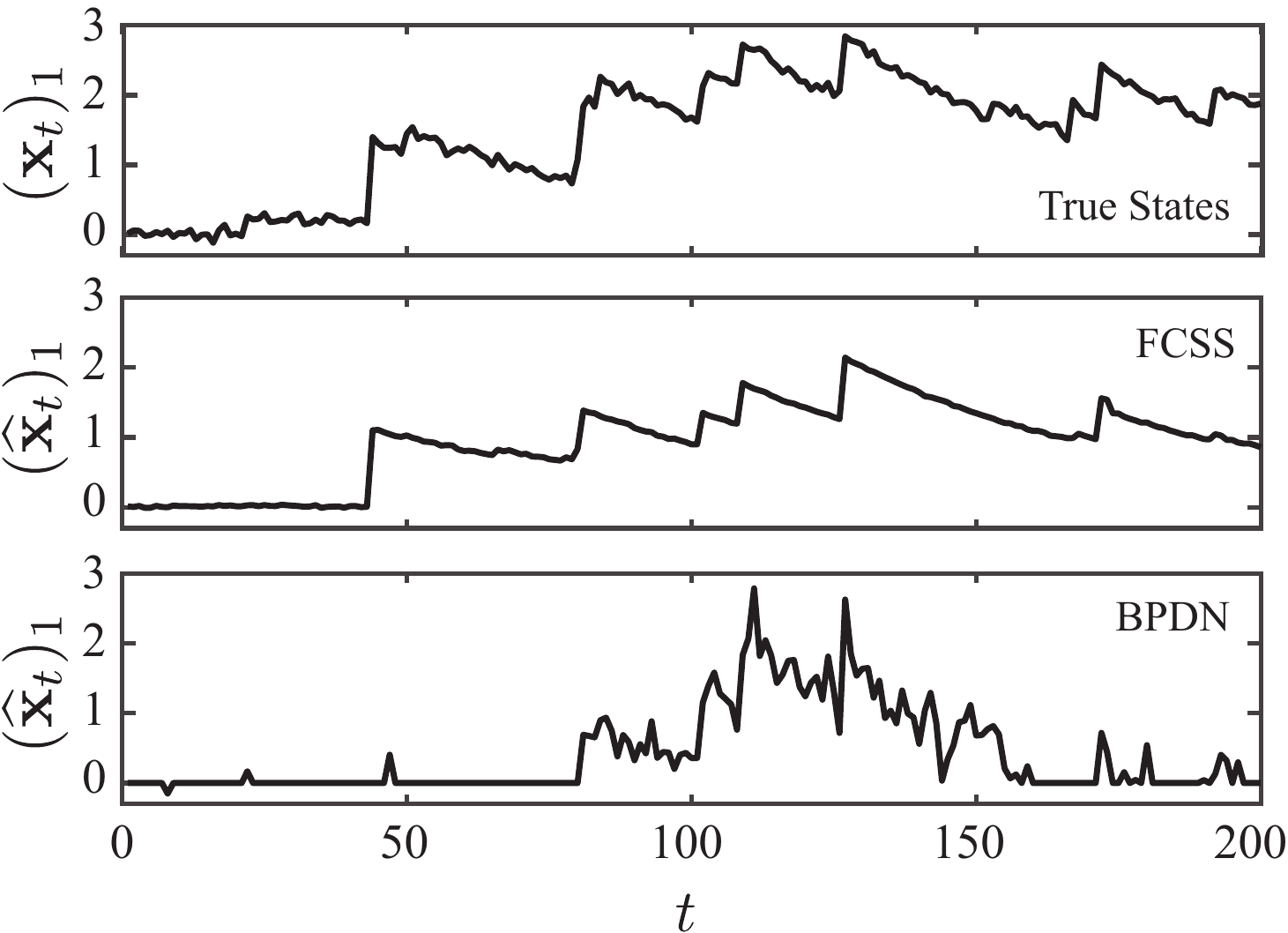}
\vspace{-2mm}
\caption{Example of state reconstruction results for {\sf FCSS} and BPDN. Top: true states, Middle: {\sf FCSS} state estimates, Bottom: BPDN estimates. The {\sf FCSS} reconstruction closely follows the true state evolution, while the BPDN fails to capture the state dynamics.}
\label{fig:tv_dcs_vs_ccs}
\end{figure}

\begin{figure}[htb!]
\centering     
\includegraphics[width=0.9\columnwidth]{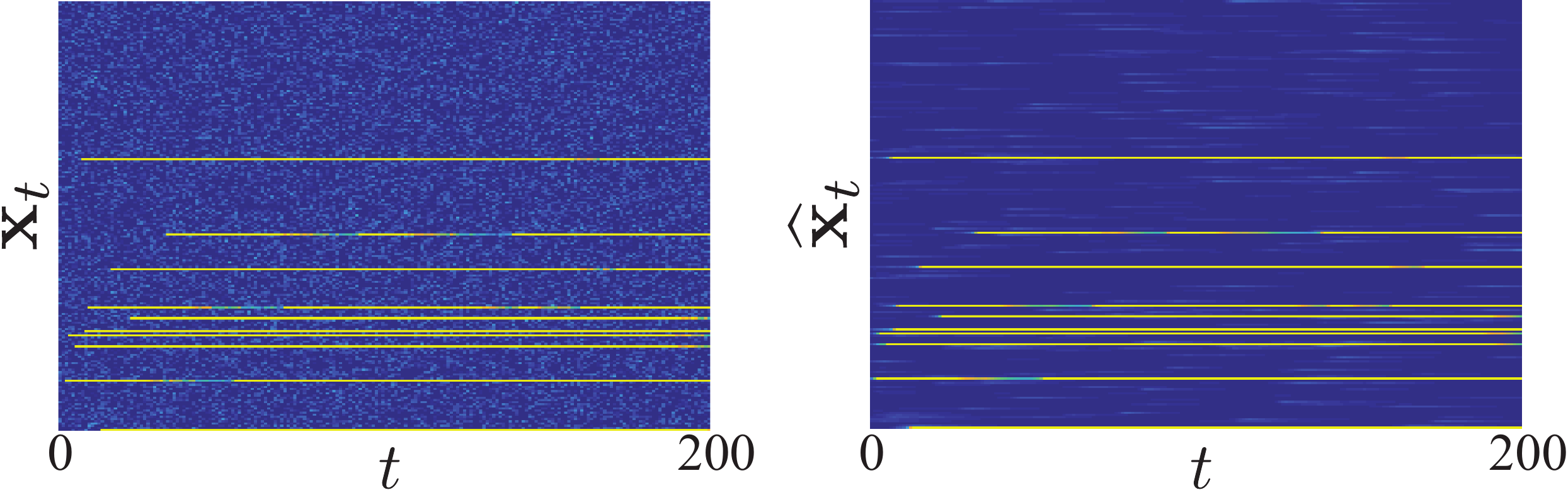}
\caption{Raster plots of the observed and estimated states via {\sf FCSS}. Left: noisy observations, Right: {\sf FCSS} estimates. The {\sf FCSS} significantly denoises the observed states.}
\label{fig:imagesc_sim}
\vspace{-4mm}
\end{figure}

Figure \ref{fig:tv_mse_comp} shows the MSE comparison of the {\sf FCSS} vs. BPDN, where the MSE is defined as $10\log_{10}\frac{1}{T}\sum_{t=1}^T \|\widehat{\mathbf{x}}_t-\mathbf{x}_t\|_2^2$. The {\sf FCSS} algorithm significantly outperforms BPDN, especially at high SNR values. Figure \ref{fig:tv_dcs_vs_ccs} compares the performance of {\sf FCSS} and BPDN on a sample component at a compression level of $n/p = 2/3$, in order to visualize the performance gain implied by Figure \ref{fig:tv_dcs_vs_ccs}.

Finally, Figure \ref{fig:imagesc_sim} shows the comparison of the estimated states for the entire simulated data in the denoising regime. As can be observed from the figure, the sparsity pattern of the states and innovations are captured while significantly denoising the observed states.


\vspace{-3mm}
\subsection{Application to Calcium Signal Deconvolution}

Calcium imaging takes advantage of intracellular calcium flux to directly visualize calcium signaling in living neurons. This is done by using calcium indicators, which are fluorescent molecules that can respond to the binding of calcium ions by changing their fluorescence properties and using a fluorescence or two-photon microscope and a CCD camera to capture the visual patterns \cite{smetters1999detecting, stosiek2003vivo}. Since spikes are believed to be the units of neuronal computation, inferring spiking activity from calcium recordings, referred to as calcium deconvolution, is an important problem in neural data analysis. Several approaches to calcium deconvolution have been proposed in the neuroscience literature, including model-free approaches such as sequential Monte Carlo methods \cite{vogelstein2009spike} and model-based approaches such as non-negative deconvolution methods \cite{vogelstein2010fast, pnevmatikakis2016simultaneous}. These approaches require solving convex optimization problems, which do not scale well with the temporal dimension of the data. In addition, they lack theoretical performance guarantees and do not provide clear measures for assessing the statistical significance of the detected spikes.

In order to construct confidence bounds for our estimates, we employ recent results from high-dimensional statistics \cite{van2014asymptotically}. We first compute the confidence intervals around the outputs of the {\sf FCSS} estimates using the node-wise regression procedure of \cite{van2014asymptotically}, at a confidence level of $1-\frac{\alpha}{2}$. We perform the node-wise regression separately for each time $t$. For an estimate $\widehat{\mathbf{x}}_t$, we obtain $\widehat{\mathbf{x}}_t^{\sf u}$ and $\widehat{\mathbf{x}}_t^{\sf l}$ as the upper and lower confidence bounds, respectively. Next, we partition the estimates into small segments, starting with a local minimum (trough) and ending in a local maximum (peak). For the $i^{\sf th}$ component of the estimate, let $t_{\sf min}$ and $t_{\sf max}$ denote the time index corresponding to two such consecutive troughs and peaks. If the difference $(\widehat{\mathbf{x}}_{t_{\sf max}}^{\sf l})_i - (\widehat{\mathbf{x}}_{t_{\sf min}}^{\sf u})_i$ is positive, the detected innovation component is declared significant (i.e., spike) at a confidence level of $1- \alpha$, otherwise it is discarded (i.e., no spike). We refer to this procedure as Pruned-{\sf FCSS} ({\sf PFCSS}).

\begin{figure*}[t!]
\centering     
\includegraphics[width=1\textwidth]{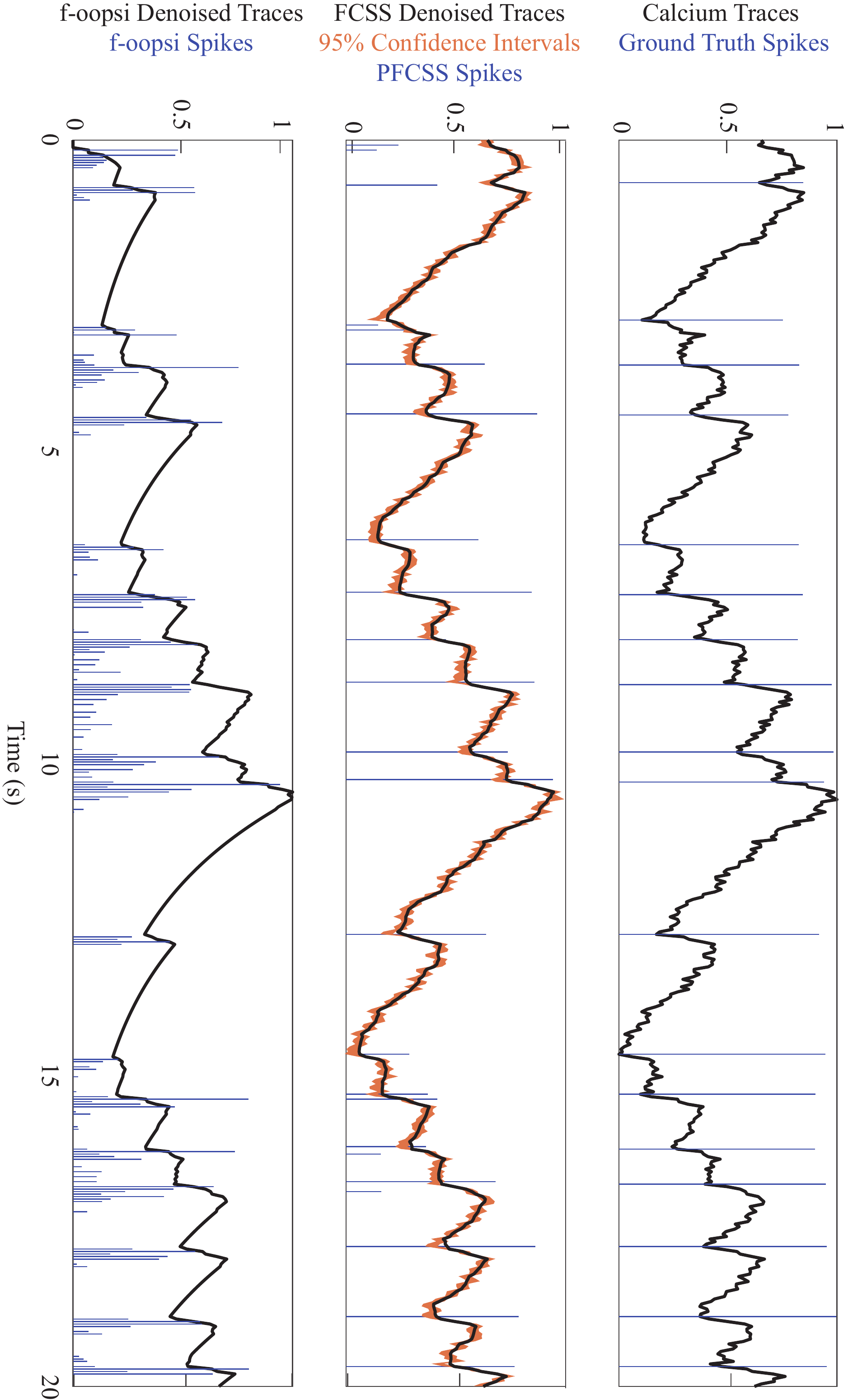}
\caption{Ground-truth performance comparison between {\sf PFCSS} and constrained f-oopsi. Top: the observed calcium traces (black) and ground-truth electrophysiology data (blue), Middle: {\sf PFCSS} state estimates (black) with $95\%$ confidence intervals (orange) and the detected spikes (blue), Bottom: the constrained f-oopsi state estimates (black) and the detected spikes (blue). The {\sf FCSS} spike estimates closely match the ground-truth spikes with only a few false detections, while the constrained f-oopsi estimates contain significant clustered false detections.}
\label{fig:tv_cal_simul_ss_vs_foopsi}
\end{figure*}

\begin{figure*}[htb!]
\centering     
\includegraphics[width=1\textwidth]{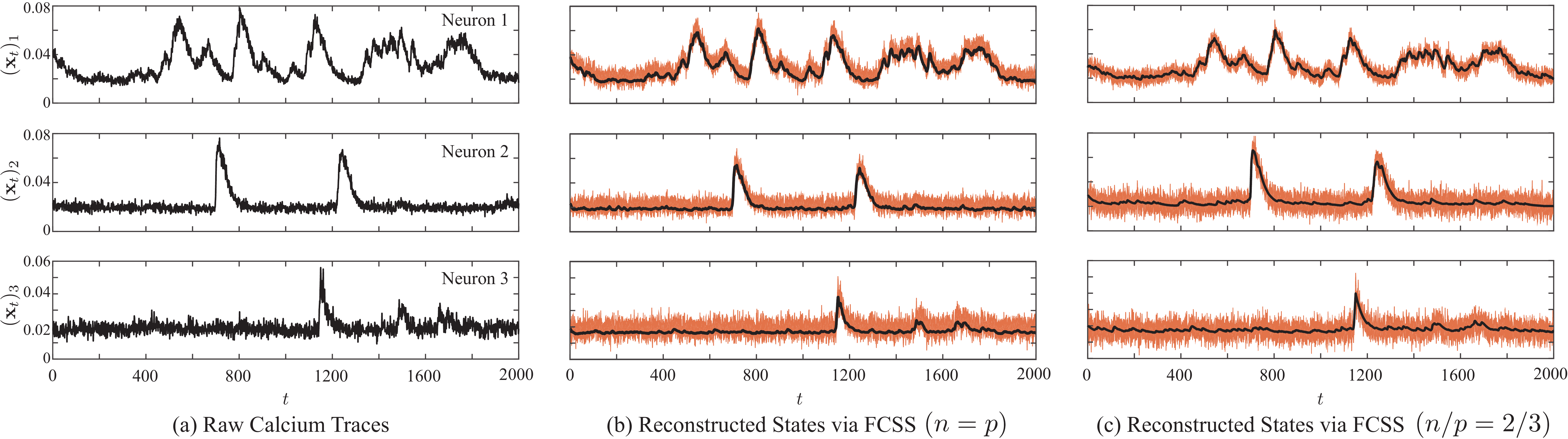}
\caption{Performance of {\sf FCSS} on large-scale calcium imaging data. Top, middle and bottom rows correspond to three selected neurons labeled as Neuron 1, 2, and 3, respectively. (a) raw calcium traces, (b) {\sf FCSS} reconstruction with no compression, (c) {\sf FCSS} reconstruction with $2/3$ compression ratio. Orange hulls show $95\%$ confidence intervals. The {\sf FCSS} significantly denoises the observed traces in both the uncompressed and compressed settings.\label{fig:tv_cal}}
\end{figure*}

\begin{figure*}[htb!]
\centering     
\includegraphics[width=1\textwidth]{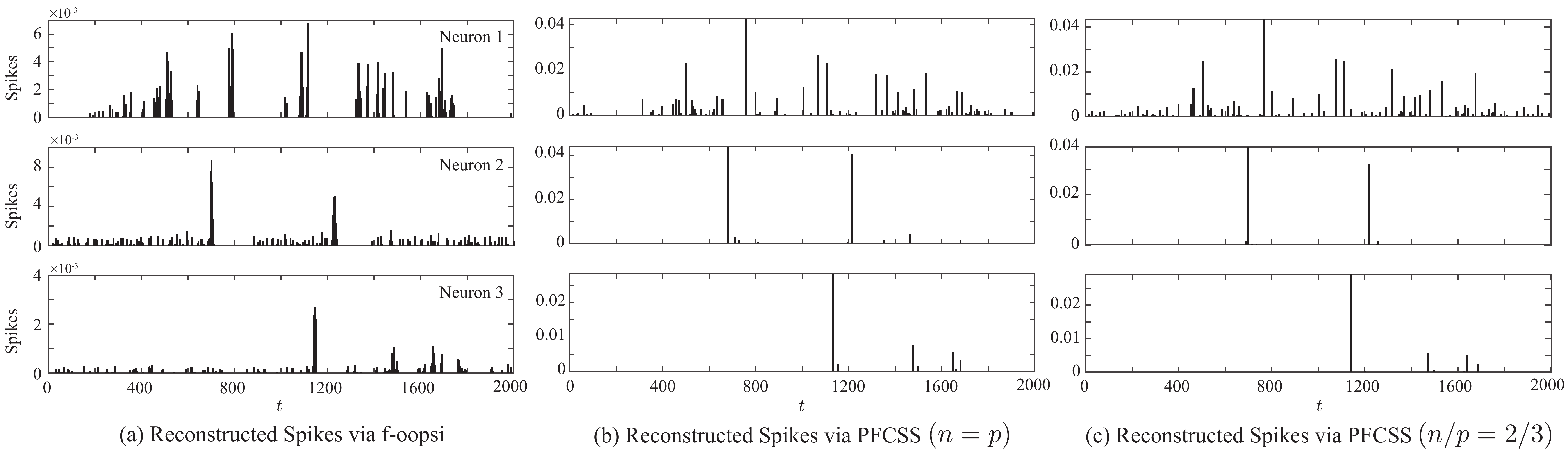}
\caption{Reconstructed spikes of {\sf PFCSS} and constrained f-oopsi from large-scale calcium imaging data. Top, middle and bottom rows correspond to three selected neurons labeled as Neuron 1, 2, and 3, respectively. (a) constrained f-oopsi spike estimates, (b) {\sf PFCSS} spike estimates with no compression, (c) {\sf PFCSS} spike estimates with $2/3$ compression ratio. The {\sf PFCSS} estimates in both the uncompressed and compressed settings are sparse in time, whereas the constrained f-oopsi estimates are in the form of clustered spikes. \label{fig:tv_cal_spikes}}
\end{figure*}

\begin{figure}[htb!]
\centering     
\includegraphics[width=1\columnwidth]{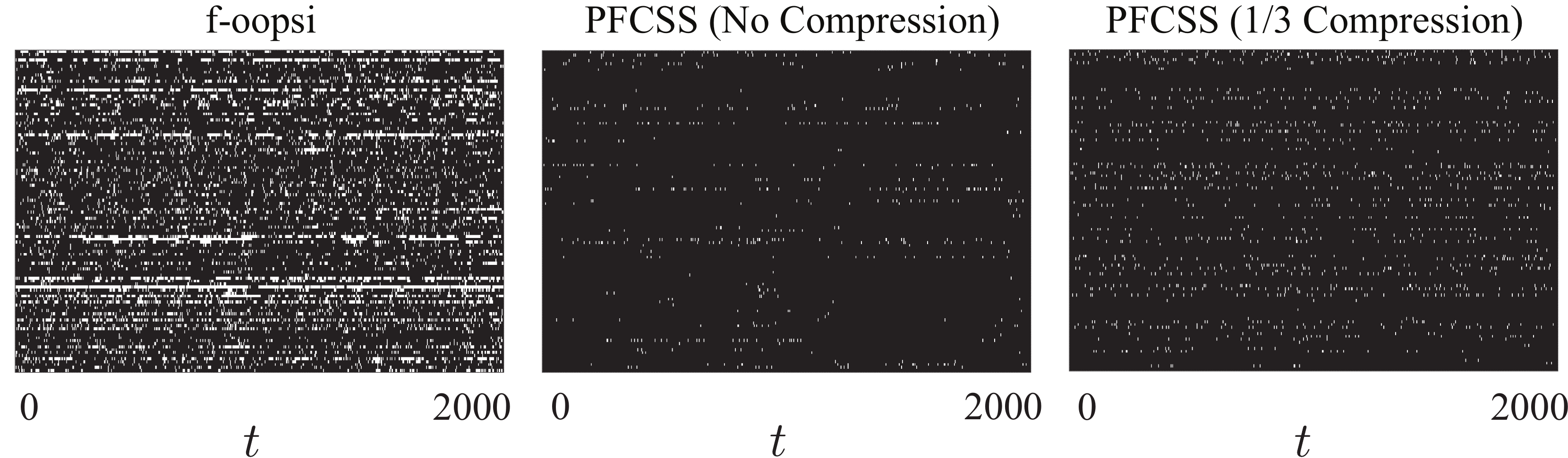}
\caption{Raster plot of the estimated spikes from large-scale calcium imaging data. Left: the constrained f-oopsi estimates, Middle: {\sf PFCSS} estimates with no compression, Right: {\sf PFCSS} estimates with a $\frac{1}{3}$ compression ratio. The {\sf PFCSS} estimates are spatiotemporally sparse, whereas the constrained f-oopsi outputs temporally clustered spike estimates.}
\label{fig:tv_raster_cal}
\end{figure}

We first apply the {\sf FCSS} algorithm for calcium deconvolution in a scenario where the ground-truth spiking is recorded \emph{in vitro} through simultaneous electrophysiology (cell-attached patch clamp) and two-photon calcium imaging. The calcium trace as well as the ground-truth spikes are shown for a sample neuron in Figure \ref{fig:tv_cal_simul_ss_vs_foopsi}--(a). The {\sf FCSS} denoised estimate of the states (black) and the detected spikes (blue) using $95\%$ confidence intervals (orange hulls) and the corresponding quantities for the constrained f-oopsi algorithm \cite{pnevmatikakis2016simultaneous} are shown in Figures \ref{fig:tv_cal_simul_ss_vs_foopsi}--(b) and --(c), respectively. Both algorithms detect the large dynamic changes in the data, corresponding to the spikes, which can also be visually captured in this case. However, in doing so, the f-oopsi algorithm incurs a high rate of false positive errors, manifested as clustered spikes around the ground truth events. Similar to f-oopsi, most state-of-the-art model-based methods suffer from high false positive rate, which makes the inferred spike estimates unreliable. Thanks to the aforementioned pruning process based on the confidence bounds, the {\sf PFCSS} is capable of rejecting the insignificant innovations, and hence achieve a lower false positive rate. One factor responsible for this performance gap can be attributed to the underestimation of the calcium decay rate in the transition matrix estimation step of f-oopsi. However, we believe the performance gain achieved by {\sf FCSS} is mainly due to the explicit modeling of the sparse nature of the spiking activity by going beyond the Gaussian state-space modeling paradigm. 

Next, we apply the {\sf FCSS} algorithm to large-scale \emph{in vivo} calcium imaging recordings, for which the ground-truth is not available due to measurement constraints. The data used in our analysis was recorded from $219$ spontaneously active neurons in mouse auditory cortex. The two-photon microscope operates at a rate of $30$ frames per second. We chose $T=2000$ samples corresponding to $1$ minute for the analysis. We chose $p=108$ well-separated neurons visually. We estimate the measurement noise variance by appropriate re-scaling of the power spectral density in the high frequency bands where the signal is absent. We chose a value of $\epsilon = 10^{-10}$. It is important to note that estimation of the measurement noise variance is critical, since it affects the width of the confidence intervals and hence the detected spikes. Moreover, we estimate the baseline fluorescence by averaging the signal over values within a factor of $3$ standard deviations of the noise. By inspecting Eq. (\ref{eq:tv_prob_def_dual}), one can see a trade-off between the choice of $\lambda$ and the estimate of the observation noise variance $\sigma^2$. We have done our analysis in both the compression regime, with a compression ratio of $1/3$ ($n/p = 2/3$), and the denoising regime. The measurements in the compressive regime were obtained from applying i.i.d. Gaussian random matrices to the observed calcium traces. The latter is done to motivate the use of compressive imaging, as opposed to full sampling of the field of view.

Figure \ref{fig:tv_cal}--(a) shows the observed traces for four selected neurons. The reconstructed states using {\sf FCSS} in the compressive and denoising regimes are shown in Figures \ref{fig:tv_cal}--(b) and --(c), respectively. The $90\%$ confidence bounds are shown as orange hulls. The {\sf FCSS} state estimates are significantly denoised while preserving the calcium dynamics. Figure \ref{fig:tv_cal_spikes} shows the detected spikes using constrained f-oopsi and {\sf PFCSS} in both the compressive and denoising regimes. Finally, Figure \ref{fig:tv_raster_cal} shows the corresponding raster plots of the reconstructed spikes for the entire ensemble of neurons. Similar to the preceding application on ground-truth date, the f-oopsi algorithm detects clusters of spikes, whereas the {\sf PFCSS} procedure results in sparser spike detection. This results in the detection of seemingly more active neurons in the raster plot. However, motivated by the foregoing ground-truth analysis, we believe that a large fraction of these detected spikes may be due to false positive errors. Strikingly, even with a compression ratio of $1/3$ the performance of the {\sf PFCSS} is similar to the denoising case. The latter observation corroborates the feasibility of compressed two-photon imaging, in which only a random fraction of the field of view is imaged, which in turn can result in higher acquisition rates.

In addition to the foregoing discussion on the comparisons in Figures \ref{fig:tv_cal_simul_ss_vs_foopsi}, \ref{fig:tv_cal}, \ref{fig:tv_cal_spikes}, and \ref{fig:tv_raster_cal}, two remarks are in order. First, the iterative solution at the core of {\sf FCSS} is linear in the observation length and hence significantly faster than the batch-mode optimization procedure used for constrained f-oopsi. Our comparisons suggest that the {\sf FCSS} reconstruction is at least 3 times faster than f-oopsi for moderate data sizes of the order of tens of minutes. Moreover, the vector formulation of {\sf FCSS} allows for easy parallelization {(without the need for GPU implementations)}, which allows simultaneous processing of ROI's without losing speed. {As a numerical example the results of Figure \ref{fig:tv_cal}--(b) took an average of 60-70 seconds to calculate for all $p=108$ ROI's and $T=2000$ frames on an Apple Macintosh desktop computer}. Second, using only about two-thirds of the measurements achieves similar results by {\sf FCSS} as using the full measurements.

\vspace{-2mm}
\subsection{Application to Sleep Spindle Detection}
In this section we use compressible state-space models in order to model and detect sleep spindles. A sleep spindle is a burst of oscillatory brain activity manifested in the EEG that occurs during stage 2 non-rapid eye movement (NREM) sleep. It consists of stereotypical $12$--$14~\text{Hz}$ wave packets that last for at least $0.5$ seconds \cite{de2003sleep}. The spindles occur with a rate of $2$--$5\%$ in time, which makes their generation an appropriate candidate for compressible dynamics. Therefore, we hypothesize that the spindles can be modeled using a combination of few echoes of the response of a second order compressible state-space model. As a result, the spindles can be decomposed as sums of modulated sine waves.

\begin{figure}[b!]
\centering     
\includegraphics[width=1\columnwidth]{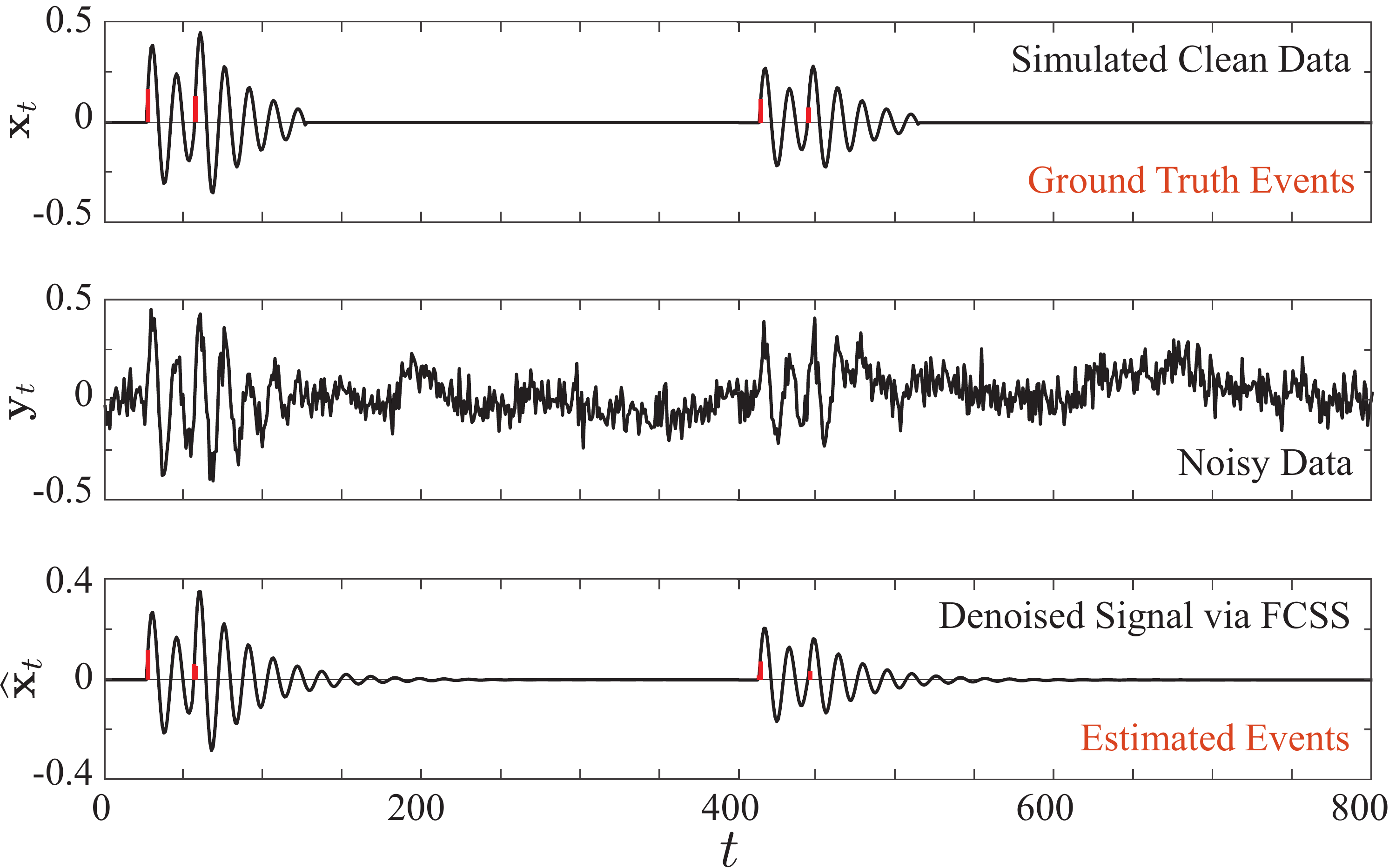}
\caption{Performance of {\sf FCSS} on simulated spindles. Top: simulated clean data (black) and ground-truth spindle events (red), Middle: simulated noisy data, Bottom: the denoised signal (black) and deconvolved spindle events (red). The {\sf FCSS} estimates are significantly denoised and closely match the ground-truth data shown in the top panel.}
\label{fig:tv_spindle_sim}
\end{figure}

\begin{figure*}[t!]
\centering     
\includegraphics[width=1\textwidth]{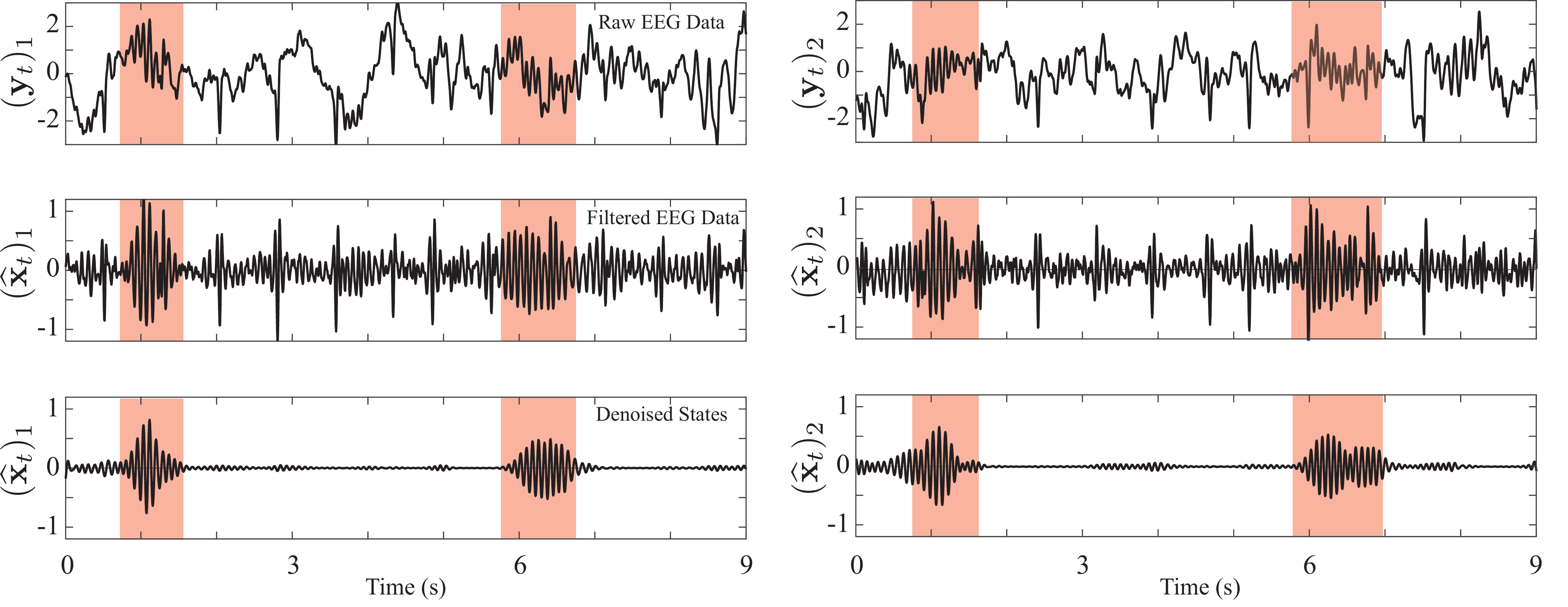}
\caption{Performance comparison between {\sf FCSS} and band-pass filtered EEG data. Left and right panels correspond to two selected electrodes labeled as 1 and 2, respectively. The orange blocks show the extent of the detected spindles by the expert. Top: raw EEG data, Middle: band-pass filtered EEG data in the $12$--$14~\text{Hz}$ band, Bottom: {\sf FCSS} spindle estimates. The {\sf FCSS} estimates closely match the expert annotations, while the band-pass filtered data contains significant signal components outside of the orange blocks.}
\label{fig:tv_spindle_dream}
\vspace{-3mm}
\end{figure*}

\begin{figure}[t!]
\centering     
\includegraphics[width=0.5\columnwidth]{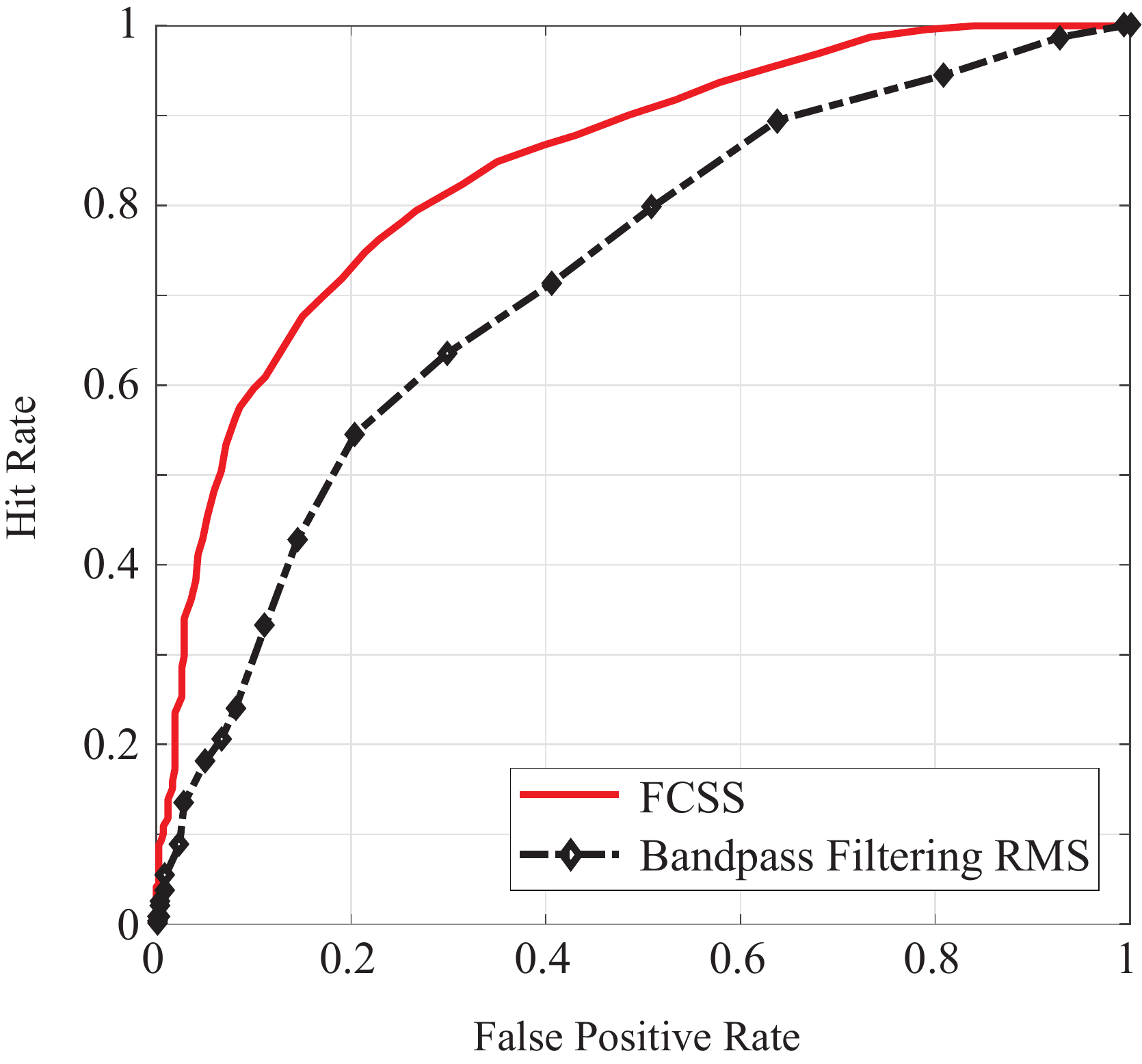}
\caption{The ROC curves for {\sf FCSS} (solid red) and bandpass-filtered RMS (dashed black). The {\sf FCSS} outperforms the widely-used band-pass filtered RMS method as indicated by the ROC curves.}
\label{fig:tv_spindle_roc}
\end{figure}

In order to model the oscillatory nature of the spindles, we consider a second order autoregressive (AR) model where the pole locations are given by $a e^{-j2\pi \frac{f}{f_s}}$ and $a e^{+j2\pi \frac{f}{f_s}}$, where $0<a<1$ is a positive constant controlling the effective duration of the impulse response, $f_s$ is the sampling frequency and $f$ is a putative frequency accounting for the dominant spindle frequency. The equivalent state-space model for which the MAP estimation admits the {\sf FCSS} solution is therefore:
\begin{equation}
\label{eq:tv_lap_state_space_spindle}
\begin{array}{ll}
\mathbf{x}_t = 2a\cos\left(\textstyle 2\pi \frac{f}{f_s}\right) \mathbf{x}_{t-1}-a^2 \mathbf{x}_{t-2}+ \mathbf{w}_t,\\
\mathbf{y}_t = \mathbf{A}_t \mathbf{x}_t + \mathbf{v}_t.
\end{array}
\vspace{-1mm}
\end{equation}

Note that impulse response of the state-space dynamics is given by $h_n  = a^n \cos\left(\textstyle 2\pi \frac{f}{f_s} n\right)u_n$, which is a stereotypical decaying sinusoid. By defining the augmented state $\widetilde{\mathbf{x}}_t = [\mathbf{x}_t', \mathbf{x}_{t-1}']'$, 
Eq. (\ref{eq:tv_lap_state_space_spindle}) can be expressed in the canonical form:
\begin{equation}
\label{eq:tv_lap_state_space_spindle_canonical}
\begin{array}{l}
\widetilde{\mathbf{x}}_t = \widetilde{\mathbf{\Theta}} \widetilde{\mathbf{x}}_{t-1}+ \widetilde{\mathbf{w}}_t, \qquad \mathbf{y}_t = \widetilde{\mathbf{A}}_t \widetilde{\mathbf{x}}_t + \mathbf{v}_t,
\end{array}
\end{equation}
where $\widetilde{\mathbf{w}}_t := [\mathbf{w}'_t
, \mathbf{0}']'$ , $\widetilde{\mathbf{A}}_t = \begin{array}{c:c}
[\mathbf{A}_t & \mathbf{0}]
\end{array}$ and
\[\widetilde{\mathbf{\Theta}} = \left[\begin{array}{c:c}
2a\cos\left(\textstyle 2\pi \frac{f}{f_s}\right)\mathbf{I} & -a^2 \mathbf{I}\\
\hdashline
\mathbf{I} & \mathbf{0}
\end{array}
\right].
\]

Eq. (\ref{eq:tv_Q_calculated}) can be used to update $\widetilde{\mathbf{\Theta}}$ in the M step of the {\sf FCSS} algorithm. However, $\widetilde{\mathbf{\Theta}}$ has a specific structure in this case, determined by $a$ and $f$, which needs to be taken into account in the optimization step. Let $\phi =: 2a\cos\left(2\pi \frac{f}{f_s}\right)$ and $\psi = a^2$, and let
\[
\sum_{t=1}^T   \widetilde{\mathbf{W}}_t^{(l)} \left ({\widetilde{\mathbf{x}}}^{(l, m+1)}_{t-1|T}{\operatorname{\widetilde{\mathbf{x}}}'}^{(l, m+1)}_{t-1|T} + \widetilde{\mathbf{\Sigma}}_{t-1|T}^{(l,m+1)} \right ) =:  \left [ \begin{tabular}{c:c} $\mathbf{A}$ & $\mathbf{B}$\\\hdashline$\mathbf{C}$ & $\mathbf{D}$ \end{tabular} \right],
\]
\begin{align}
\nonumber & \displaystyle \sum_{t=1}^T \widetilde{\mathbf{W}}_t^{(l)} \left(\widetilde{\mathbf{x}}^{(l, m+1)}_{t-1|T} {\operatorname{\widetilde{\mathbf{x}}}'}^{(l, m+1)}_{t|T}+ \widetilde{\mathbf{x}}^{(l, m+1)}_{t|T} {\operatorname{\widetilde{\mathbf{x}}}'}^{(l, m+1)}_{t-1|T} + 2 \widetilde{\mathbf{\Sigma}}_{t-1,t|T}^{(l,m+1)}\right) =: \left [ \begin{tabular}{c:c} $\mathbf{E}$ & $\mathbf{F}$\\\hdashline$\mathbf{G}$ & $\mathbf{H}$ \end{tabular} \right]\!.
\end{align}
\vspace{-2mm}

\noindent Then, Eq. (\ref{eq:tv_Q_calculated}) is equivalent to maximizing
\begin{align}
\label{eq:tv_estimate_af}
\maximize_{\phi,\psi} \ \ & \frac{\lambda}{2} (\phi^2 \text{Tr}(\mathbf{A}) \!-\! \phi \psi \text{Tr} (\mathbf{B}+\mathbf{C}) + \psi^2 \text{Tr} (\mathbf{D}) + \text{Tr}(\mathbf{A})) - {\textstyle \frac{\lambda}{2}} (\phi \text{Tr}(\mathbf{E}) - \psi \text{Tr}(\mathbf{G}) + \text{Tr}(\mathbf{F})). 
\end{align}
subject to $0 \le \psi \le 1$ and $\phi^2 \le 4 \psi$, which can be solved using interior point methods. In our implementation, we have imposed additional priors of $f \sim {\sf Uniform} (12, 14)$ Hz and $a \sim {\sf Uniform} (0.95,0.99)$, which simplifies the constraints on $\phi$ and $\psi$ to 
\[
0.95^2 \leq \psi \leq 0.99^2, \ \ \text{and} \ \  4\psi \cos^2\left(\textstyle 2\pi \frac{12}{f_s}\right)\leq \phi^2 \leq 4\psi \cos^2\left(\textstyle 2\pi \frac{14}{f_s}\right).
\]
 Given the convexity of the cost function in (\ref{eq:tv_estimate_af}), one can conclude from the KKT conditions that if the global minimum is not achieved inside the region of interest it must be achieved on the boundaries.


Figure \ref{fig:tv_spindle_sim}, top panel, shows two instances of simulated spindles (black traces) with parameters $f_s = 200$ Hz, $f = 13$ Hz and $a = 0.95$, with the ground truth events generating the wave packets shown in red. The middle panel shows the noisy version of the data with an SNR of $-7.5$ dB. The noise was chosen as white Gaussian noise plus slowly varying (2 Hz) oscillations to resemble the slow oscillations in real EEG data. As can be observed the simulated signal exhibits visual resemblance to real spindles, which verifies the hypothesis that spindles can be decomposed into few combinations of wave packets generated by a second order AR model. The third panel, shows the denoised data using {\sf FCSS}, which not only is successful in detecting the the ground-truth dynamics (red bars), but also significantly denoises the data.


We next apply {\sf FCSS} to real EEG recordings from stage-2 NREM sleep. Manual scoring of sleep spindles can be very time-consuming, and achieving accurate manual scoring on a long-term recording is a highly demanding task with the associated risk of decreased diagnosis. Although automatic spindle detection would be attractive, most available algorithms sensitive to variations in spindle amplitude and frequency that occur between both subjects and derivations, reducing their effectiveness \cite{nonclercq2013sleep, schimicek1994automatic}. Moreover most of these algorithms require significant pre- and post-processing and manual tuning. Examples include algorithms based on Empirical Mode Decomposition (EMD) \cite{yang2006detection, fraiwan2012automated, causa2010automated}, data-driven Bayesian methods \cite{babadi2012diba}, and machine learning approaches \cite{duman2009efficient, ventouras2005sleep}. Unlike our approach, none of the existing methods consider modeling the generative dynamics of spindles, as transient sparse events in time, in the detection procedure. 

The data used in our analysis is part of the recordings in the DREAMS project \cite{devuyst2011automatic}, recorded using a 32-channel polysomnograpgh. We have used the EEG channels in our analysis. The data was recorded at a rate of $f_s=200$ Hz for 30 minutes. The data was scored for sleep spindles independently by two experts. We have used expert annotations to separate regions which include spindles for visualization purposes. For comparison purposes, we use a bandpass filtered version of the data within $12$--$14~\text{Hz}$, which is the basis of several spindle detection algorithms \cite{schimicek1994automatic,clemens2005overnight, huupponen2007development}, hallmarked by the widely-used bandpass filtered root-mean-square (RMS) method \cite{schimicek1994automatic}.

Figure \ref{fig:tv_spindle_dream} shows the detection results along with the bandpass filtered version of the data for two of the EEG channels. The red bars show the expert markings of the onset and offset of the spindle events. The {\sf FCSS} simultaneously captures the spindle events and suppressed the activity elsewhere, whereas the bandpass filtered data produces significant activity in the $12$--$14~\text{Hz}$ throughout the observation window, resulting in high false positives. To quantify this observation, we have computed the ROC curves of the {\sf FCSS} and bandpass filtering followed by root mean square (RMS) computation in Figure \ref{fig:tv_spindle_roc}, which confirms the superior performance of the {\sf FCSS} algorithm over the data set. The annotations of one of the experts has been used for as the ground truth benchmark.

\vspace{-2mm}
\section{Discussion}
\label{sec:disc}
In this section, we discuss the implication of our techniques in regard to the application domains as well as existing methods.

\vspace{-3mm}
\subsection{Connection to existing literature in sparse estimation}
Contrary to the traditional compressive sensing, our linear measurement operator does not satisfy the RIP \cite{ba2012exact}, despite the fact that $\mathbf{A}_t$'s satisfy the RIP. Nevertheless, we have extended the near-optimal recovery guarantees of CS to our compressible state-space estimation problem via Theorem \ref{thm:tv_main}. Closely related problems to our setup are the super-resolution and sparse spike deconvolution problems \cite{candes2014towards,duval2015exact}, in which abrupt changes with minimum separation in time are resolved in fine scales using coarse (lowpass filtered) frequency information, which is akin to working in the compressive regime.

Theoretical guarantees of CS require the number of measurements to be roughly proportional to the sparsity level for stable recovery \cite{Negahban}. These results do not readily generalize to the cases where the sparsity lies in the dynamics, not the states per se. Most of the dynamic compressive sensing techniques such as Kalman filtered compressed sensing, assume partial information about the support or estimate them in a greedy and often ad-hoc fashion \cite{vaswani2010ls, vaswani2008kalman, carmi2010methods, ziniel2013dynamic, zhan2015time}. As another example, the problem of recovering discrete signals which are approximately sparse in their gradients using compressive measurements, has been studied in the literature using Total Variation (TV) minimization techniques \cite{needell2013stable,poon2015role}. For one-dimensional signals, since the gradient operator is not orthonormal, the Frobenius operator norm of its inverse grows linearly with the discretization level \cite{needell2013stable}. Therefore, stability results of TV minimization scale poorly with respect to discretization level. In higher dimensions, however, the fast decay of Haar coefficients allow for near-optimal theoretical guarantees \cite{cohen1999nonlinear}. A major difference of our setup with those of CS for TV-minimization is the \emph{structured} and \emph{causal} measurements, which unlike the non-causal measurements in \cite{needell2013stable}, do not result in an overall measurement matrix satisfying RIP. We have considered dynamics with convergent transition matrices, in order to generalize the TV minimization approach. To this end, we showed that using the state-space dynamics one can infer temporally global information from local and causal measurements. Another closely related problem is the fused lasso \cite{tibshirani2005sparsity} in which sparsity is promoted both on the covariates and their differences. 

\vspace{-3mm}
\subsection{Application to calcium deconvolution} In addition to scalability and the ability to detect abrupt transitions in the states governed by discrete events in time (i.e., spikes), our method provides several other benefits compared to other spike deconvolution methods based on state-space models, such as the constrained f-oopsi algorithm. First, our sampling-complexity trade-offs are known to be optimal from the theory of compressive sensing, whereas no performance guarantee exists for constrained f-oopsi. Second, we are able to construct precise confidence intervals on the estimated states, whereas constrained f-oopsi does not produce confidence intervals over the detected spikes. A direct consequence of these confidence intervals is estimation of spikes with high fidelity and low false alarm. Third, our comparisons suggest that the {\sf FCSS} reconstruction is at least 3 times faster than f-oopsi for moderate data sizes of the order of tens of minutes. Finally, our results corroborate the possibility of using compressive measurement for reconstruction and denoising of calcium traces. From a practical point of view, a compressive calcium imaging setup can lead to higher scanning rate as well as better reconstructions, which allows monitoring of larger neuronal populations \cite{pnevmatikakis2013sparse}. Due to the structured nature of our sampling and reconstruction schemes, we can avoid prohibitive storage problems and benefit from parallel implementations.


\vspace{-3mm}
\subsection{Application to sleep spindle detection}
Another novel application of our modeling and estimating framework is to case sleep spindle generation as a second-order dynamical system governed by compressive innovations, for which {\sf FCSS} can be efficiently used to denoise and detect the spindle events. Our modeling framework suggest that spectrotemporal spindle dynamics cannot be fully captured by just pure sinusoids via bandpass filtering, as the data consistently contains significant $12$--$14~\text{Hz}$ oscillations almost everywhere (See Figure \ref{fig:tv_spindle_dream}). Therefore, using the bandpass filtered data for further analysis purposes clearly degrades the performance of the resulting spindle detection and scoring algorithms. The {\sf FCSS} provides a robust alternative to bandpass filtering in the form of model-based denoising.

In contrast to state-of-the-art methods for spindle detection, our spindle detection procedure requires minimal pre- and post-processing steps. We expect similar properties for higher order AR dynamics, which form a useful generalization of our methods for deconvolution of other transient neural signals. In particular, K-complexes during the stage 2 NREM sleep form another class of transient signals with high variability. A potential generalization of our method using higher order models can be developed for simultaneous detection of K-complexes and spindles.

\vspace{-2mm}
\section{Concluding Remarks}\label{sec:tv_conc}

In this chapter, we considered estimation of compressible state-space models, where the state innovations consist of compressible discrete events. For dynamics with convergent state transition dynamics, using theory of compressed sensing we provided an optimal error bound and stability guarantees for the dynamic $\ell_1$-regularization algorithm which is akin to the MAP estimator for a Laplace state-space model. We also developed a fast and low-complexity iterative algorithm, namely {\sf FCSS}, for estimation of the states as well as their transition matrix. We further verified the validity of our theoretical results through simulation studies as well as application to spike deconvolution from calcium traces and detection of sleep spindles from EEG data. Our methodology has two unique major advantages: first, we have proven theoretically why our algorithm performs well, and characterized its error performance. Second, we have developed a fast algorithm, with guaranteed convergence to a solution of the deconvolution problem, which for instance, is $\sim 3$ times faster than the widely-used f-oopsi algorithm in calcium deconvolution applications.

While we focused on two specific application domains, our modeling and estimation techniques can be generalized to apply to broader classes of signal deconvolution problems: we have provided a framework to model transient phenomena which are driven by sparse generators in time domain, and whose event onsets are of importance. Examples include heart beat dynamics and rapid changes in the covariance structure of neural data (e.g., epileptic seizures). In the spirit of easing reproducibility, we have made a MATLAB implementation of our algorithm publicly available \cite{code}.

\chapter{Multiplicative Updates for Optimization Problems with Dynamics}
\chaptermark{Multiplicative Updates with Positivity Constraints}
\label{chap:multiplicative}
In this chapter we consider the problem of optimizing general {convex} objective functions with nonnegativity constraints. Using the Karush-Kuhn-Tucker (KKT) conditions for the nonnegativity constraints we will derive fast multiplicative update rules for several problems of interest in signal processing, including nonnegative deconvolution, point-process smoothing, ML estimation for Poisson observations, nonnegative least squares and nonnegative matrix factorization (NMF). Our algorithm can also account for temporal and spatial structure and regularization . We will analyze the performance of our algorithm on simultaneously recorded neuronal calcium imaging and electrophysiology data.


%

\section{Introduction}
The advent of big data has given rise to new challenges in signal processing. Fast and scalable solvers for solving large optimization problems remains a big challenge of optimization theory. In this chapter we consider the problem of solving general optimization problems under nonnegativity constraints. Such optimization problems arise in many applications of interest. Examples include nonnegative matrix factorization for images of objects \cite{lee1999learning}, Poisson image reconstruction \cite{willett2010poisson}, point process smoothing for stimulus-response experiments in neurophysiology \cite{smith2003estimating}, nonnegative least squares \cite{lawson1995solving} and nonnegative calcium deconvolution \cite{kazemipour2017fast}. In this chapter we will use the KKT conditions \cite{boyd2004convex} to provide a unified framework for solving such optimization problems with nonnegativity constraints. As we will see these conditions naturally lead to multiplicative updates with suitable convergence in many applications.

Multiplicative updates have been used for solving ML and MAP estimation as well as KL-divergence minimization. Many of these algorithms are special cases of the so-called proximal backward-forward scheme \cite{palomar2010convex}. These algorithms try to find fixed points of a set of equations resulting from setting gradients of the objective function to zero. A With the help of parallel computing and graphics processing units (GPUs), these iterative methods can be solved very fast. Therefore, they become increasingly important. An important application of these multiplicative updates is the Richardson-Lucy (RL) algorithm for image deconvolution \cite{lucy1974iterative}, which is widely used in astronomy and microscopy \cite{shepp1982maximum}. The RL algorithm recovers the ML estimate of a sample under Poisson statistics \cite{richardson1972bayesian}.

Multiuplicative updates are commonly contrasted with gradient descent methods. Their update steps do not necessarily follow the direction of the steepest descent. Multiplicative updates are argued to be insensitive to noise and  more flexible \cite{yan2013general}. Despite fast early convergenece multiplicative updates are claimed to converge slowly in later stages \cite{white1994image}. However, this argument has been refuted for Poisson image reconstruction  \cite{yan2013general}, the Weiszfeld problem \cite{palomar2010convex} and NMF \cite{lee2001algorithms} by showing their equivalence to a Majorization Minimization (MM) algorithm which has linear convergence in iterations \cite{wu2016convergence}. In contrast, both multiplicative updates and gradient descent based algorithms such as the proximal-gradient method have sublinear rate of convergence \cite{palomar2010convex} in general. Moreover, with specific choices of the stepsize, in many cases such as the Weiszfeld problem these algorithms have proven to be equivalent  \cite{palomar2010convex} . These findings suggest that slow convergence of multiplicative updates in some cases is due to absence of strong convexity in the objective function.

An advantage of multiplicative updates over gradient descent based algorithms is their flexibility in terms of adapting to the objective functions without the need for calculation dual functions or tuning extra parameters such as the step-size. Despite the recent breakthroughs in choosing these parameters \cite{kingma2014adam}, each step in calculation of the step size is usually as costly as an iteration of the algorithm which is not as effective for big data problems. In addition many problems such as image reconstruction and calcium deconvolution \cite{pnevmatikakis2016simultaneous} are spatially separable and are easily parallelized.

Finally, temporal dynamics and penalization play an important role in signal recovery from noisy data. Examples include state-space estimations, video reconstruction and total variation denoising problems. Apart from special cases, the solutions to these problems are generally batch mode and computationally demanding. In this chapter we provide a unified framework for generalizations of multiplicative updates to the problems with nonnegativity constraints and dynamics by adapting the update rules to different forms of penalties.  We have empirically found that multiplicative updates show superior convergence properties and speed to gradient descent methods for models that include dynamics and penalization.


\section{Problem Formulation}
\label{sec:slapmi_formulation}

We consider a convex optimization problem of the form
\begin{align}
\label{eq:slapmi_main}
\minimize_{\mathbf{X} \succeq 0} \mathcal{F}(\mathbf{X}):= \mathcal{L}(\mathbf{X}) + \lambda \mathcal{P}(\mathbf{X}),
\end{align}
where $\mathcal{L}(.)$ denotes a convex objective function and $\mathcal{P}(.)$ denotes a suitable penalty function. Typically $\mathcal{L}(.)$ is a negative log-likelihood and $\mathcal{P}(.)$ is a smooth norm. Additionally we make the assumption that both $\mathcal{L}$ and $\mathcal{P}$ are differentiable with respect to $\mathbf{X}$ on the positive orthant, 

Among the algorithms used for solving (\ref{eq:slapmi_main}) one can name the primal-dual algorithm and proximal gradient method. For specific  choices of the penalty functions  $\ell_1$ and $\ell_2$ (Tikhonov) regularization several fast algorithms exist. However these algorithms cannot be easily generalized to arbitrary penalties or temporal dynamics. In some cases such as the gradient based methods they require knowledge of the proximal map or have extra parameters such as the step size to be tuned and chosen. Calculation of the step size is usually as costly as a few iterations of the algorithm and could slow them down. However, our approach to solving (\ref{eq:slapmi_main}) does not require tuning of extra parameters and is very simple to implement. We will next discuss our solution.

\section{Solution to the Main Optimization Problem}
\label{sec:slapmi_solution}
In this section we will introduce our solution to (\ref{eq:slapmi_main}) via multiplicative updates.
The Lagrangian form of (\ref{eq:slapmi_main}) is given by
\begin{align}
\label{eq:slapmi_main_lag}
\minimize_{{\mathbf{X}},{\mathbf{S} \succeq 0}}  \mathcal{F}(\mathbf{X}) + \mathbf{S} \odot \mathbf{X}.
\end{align}
Assuming convexity and zero duality gap, the KKT conditions  for (\ref{eq:slapmi_main_lag}) can be expressed as
\begin{align}
\label{eq:slapmi_main_kkt}
&{\mathbf{X}^\star \succeq 0}, \;\; {\mathbf{S}^\star \succeq 0},\\
\label{eq:slapmi_kkt_mult}
&{\mathbf{S}^\star } \odot {\mathbf{X}^\star} = \mathbf{0},\\
\label{eq:slapmi_kkt_grad}
&\nabla_{\mathbf{X}}  \mathcal{F}(\mathbf{X}) + \mathbf{S} = \mathbf{0}.
\end{align}
In the rest of the chapter, we drop the subscripts and arguments whenever they can be understood from the context. Multiplying (\ref{eq:slapmi_kkt_grad}) by $\mathbf{X}$ and using (\ref{eq:slapmi_kkt_mult}) we obtain:
\begin{align}
\label{eq:slapmi_main_mult}
 \nabla  \mathcal{F}(\mathbf{X}) \odot \mathbf{X}= \mathbf{0}.
\end{align}
Our solution  to (\ref{eq:slapmi_main}) looks for a positive fixed point of (\ref{eq:slapmi_main_mult}). Therefore giving us the multiplicative update rule
\begin{equation}
\label{eq:slapmi_main_update_rule}
\mathbf{X}^{(k+1)} \leftarrow   \left( \nabla  \mathcal{F}(\mathbf{X}^{(k)})\right)^{-} \oslash \left( \nabla  \mathcal{F}(\mathbf{X}^{(k)})\right)^{+} \odot \mathbf{X}^{(k)}.
\end{equation}
In all application introduced in this chapter we initialize the algorithm with a positive solution, the choice of which depends on the application. The update rule will then ensure the solution remains positive. In order to provide more insight into our algorithm we will next provide several examples and applications.

In applications of interest in this chapter we consider temporal dynamics in $\mathbf{X}$, hence referring to our algorithm by FAst DEconvolution (FADE) algorithm. In the spirit of easing reproducibility, we have made MATLAB implementations of our codes publicly available \cite{code_mult}.

\section{Examples and Application to Real Data}
In this Section we will provide examples of the multiplicative updates in different applications of interest. 
\subsection{Nonnegative Deconvolution}

In its simplest form the nonnegative deconvolution problem can be formalized by considering the state-space model given by
\begin{equation}
\label{eq:slapmi_lap_state_space}
\mathbf{x}_t = \mathbf{\Theta} \mathbf{x}_{t-1}+ \mathbf{w}_t, \qquad \mathbf{y}_t = \mathbf{A}_t \mathbf{x}_t + \mathbf{v}_t,
\end{equation}
where $\mathbf{w}_t \succeq 0$ models the innovations at time  $t \in [T]$. Usually, the observation noise is assumed to be i.i.d normal, i.e. $\mathbf{v}_t \sim \mathcal{N}(0,\mathbf{\Sigma}_t )$ and the measurement matrices $\mathbf{A}_t$ are assumed to conserve positivity. For this problem we can identify $\mathbf{W} = \mathbf{W}_{[T]}$ and
\begin{align*}
\mathcal{L}(\mathbf{W})=\sum_{t=1}^T \left\| \mathbf{y}_t - \mathbf{A}_t \mathbf{x}_t\right\|_{\mathbf{\Sigma}_t}^2 = \sum_{t=1}^T \left\| \mathbf{y}_t - \mathbf{A}_t \sum_{\tau = 0}^{t-1} \mathbf{\Theta}^\tau \mathbf{w}_{t-\tau} \right\|_{\mathbf{\Sigma}_t}^2,
\end{align*}
from which we can calculate
\begin{align*}
\left(\nabla_{\mathbf{w}_t} \mathcal{L}(\mathbf{W})\right)^{+}= \sum_{\tau \geq t} \left(\mathbf{\Theta}^{\tau-t}\right)^T \mathbf{A}_\tau^T \mathbf{\Sigma}_\tau^{-1}\mathbf{y}_\tau,\\
\left(\nabla_{\mathbf{w}_t} \mathcal{L}(\mathbf{W})\right)^{-}= \sum_{\tau \geq t} \left(\mathbf{\Theta}^{\tau-t}\right)^T \mathbf{A}_\tau^T \mathbf{\Sigma}_\tau^{-1} \mathbf{A}_\tau \mathbf{x}_\tau.
\end{align*}

Typically one can use a smooth norm in order to enforce prior assumptions on the spikes, for example one can use a sparsity inducing prior $\mathcal{P} = \|\mathbf{W}\|_{1,1}$, for which $\left( \nabla \mathcal{P}\right)^+  = \mathbf{1}$ and $\left( \nabla \mathcal{P}\right)^-  = \mathbf{0}$. The choice of the penalty function on the spikes is arbitrary and could differ from application to application. In applications where such information is not readily available, one would like to enforce minimal assumptions on the spikes and hence would want to enforce non-informative priors. The most famous example of such priors is known as Jeffrey's prior \cite{jeffreys1946invariant}. However this problem is an active area of research as there is no unanimously agreed upon choice of non-informative priors.

\begin{figure}[t!]
\begin{center}
\includegraphics[width=.7\columnwidth]{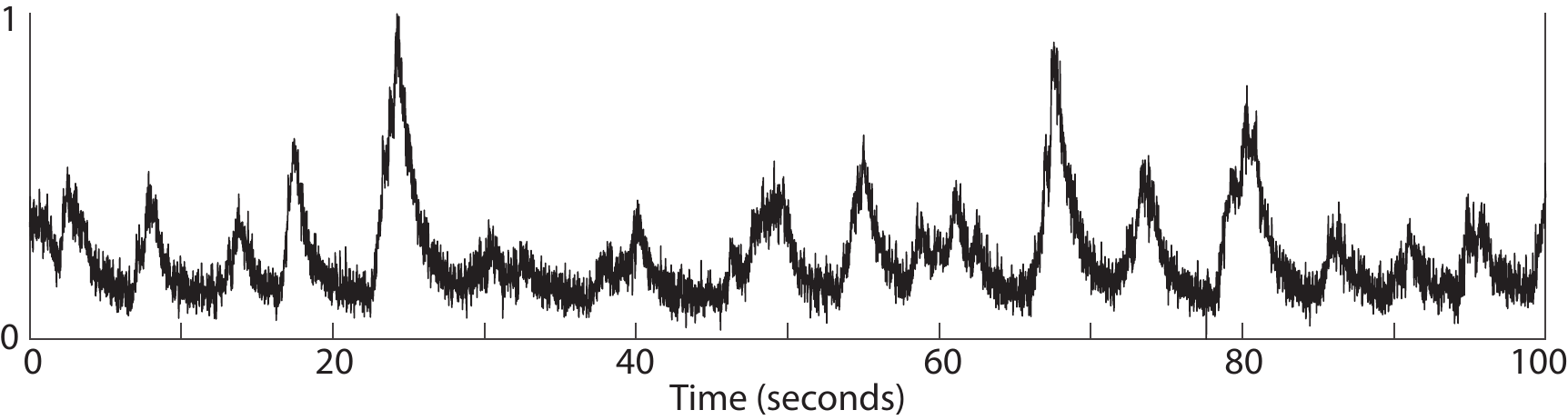}
\subcaption*{(a) Normalized calcium traces}
\vspace{3mm}
\includegraphics[width=.7\columnwidth]{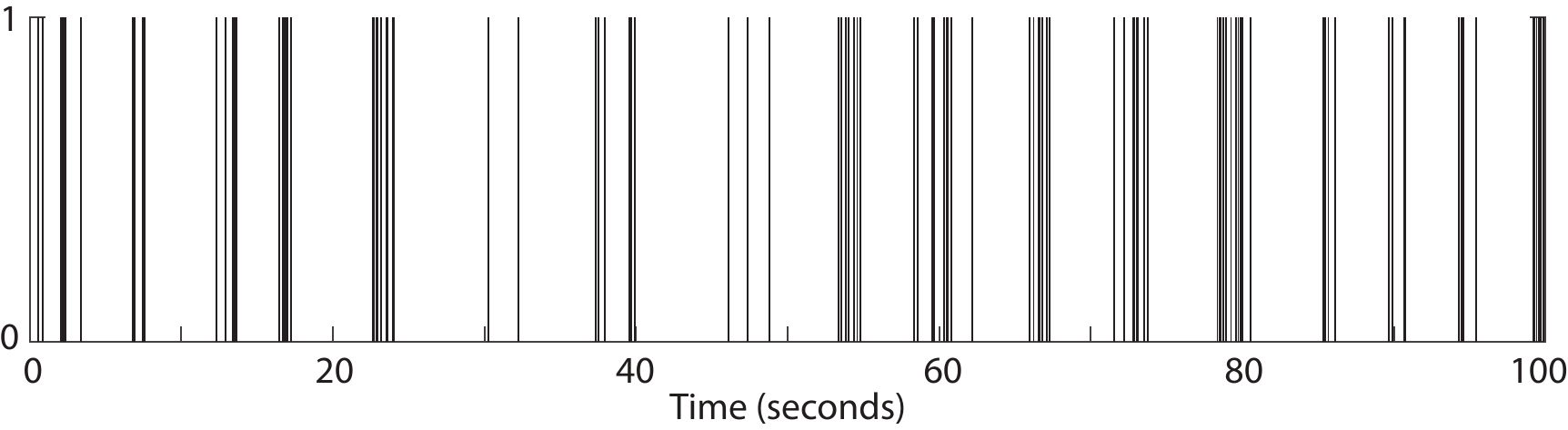}
\subcaption*{(b) Ground-truth spikes}
\vspace{3mm}
\includegraphics[width=.7\columnwidth]{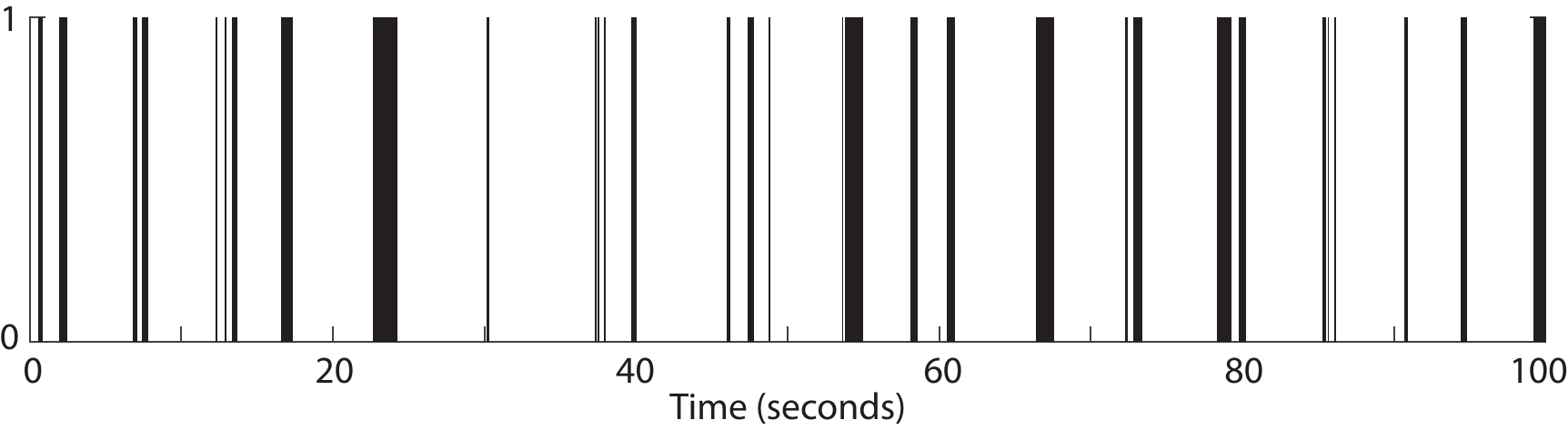}
\subcaption*{(c) Deconvolved spikes}
\end{center}
\vspace{-3mm}
\caption{\small{Application of the FADE algorithm to calcium deconvolution problem.} }\label{fig:ca_deconvolution}
\vspace{-2mm}
\end{figure}
\subsection{Application to Calcium Deconvolution}
Calcium imaging is used to visualize currents associated with action potentials in living neurons. This is done using fluorescent molecules that change their fluorescence properties upon binding calcium, and using a one- or two-photon fluorescence microscope to record these changes  \cite{smetters1999detecting, stosiek2003vivo}. Inferring action potentials (spikes) from calcium recordings, referred to as calcium deconvolution, is an important problem in neural data analysis. For the special case of calcium imaging we have  $\mathbf{\Sigma} = \sigma^2\mathbf{I}$, $\mathbf{A}_t = \mathbf{I}$ and $\mathbf{\Theta} = \theta \mathbf{I}$. Here the baseline is assumed to have been estimated and subtracted separately, but can be estimated similarly. We refer to  \cite{kazemipour2017fast} for details on estimation of the unknown parameters $\sigma^2$ and $\theta$ and a list of methods used for calcium deconvolution. These approaches require solving convex optimization problems, which do not scale well with the temporal dimension of the data. 

Figure \ref{fig:ca_deconvolution} shows application of the FADE algorithm to simultaneously recorded imaging and electrophysiology data. The algorithm has covnverged (less than 0.5\% change in spikes) in 28 iterations. The data is a 100 second interval from the spikefinder challenge \cite{spikefinder} (dataset 3, neuron 1). We have used an AR(2) model and an $\ell_{0.5,1}$ penalty on the spikes in order to enforce temporal sparsity. The spikes have been obtained by simply thresholding the deconvolved spikes at 3$\sigma$, where $\sigma$ is the estimated standard deviation of the observation noise. A  comparison of the performance of our algorithm with many other methods is provided on the spikefinder challenge website \cite{spikefinder}.

One can use spatial regularization on elements of $\mathbf{w}_t$ in this setup as well as compressive sensing regimes for when $\mathbf{A}$ satisfies the restricted isometry property RIP \cite{kazemipour2017fast}. We refer to \cite{kazemipour2017fast} for a more detailed discussion.

\subsection{Poisson Image Reconstruction and Point Process Smoothing }
\label{eq:slapmi_pp_smoothing}
State-space models with Poisson observations have also been studied in many applications of interest. In neuroscience, temporal dynamics of stimulus-response experiments in neurophysiology have been modeled using a Poisson state-space model. In emission tomography, dynamics of the photons hitting the detectors can be modeled with Poisson noise models.
Without loss of generality we consider the state-space model given by
\begin{equation}
\label{eq:slapmi_poisson_state_space}
\mathbf{x}_t = \mathbf{\Theta} \mathbf{x}_{t-1}+ \mathbf{w}_t, \qquad \mathbf{y}_t \sim \text{Poisson}\left( \phi \left( \mathbf{A}\mathbf{x}_t + \mathbf{b}_t \right) \right),
\end{equation}
where $\mathbf{w}_t \succeq 0$  and $\mathbf{b}_t \succeq 0 $ model the spikes and baseline rates at time  $t \in [T]$ respectively and $\phi(.)$ is a bijective convex function. Common examples include $\phi(x) = \exp(x)$, $\phi(x)= \frac{\exp(x)}{1+\exp(x)}$ and $\phi(x) = x$. We assume the latter in our derivations due to space considerations.

Several approaches have been proposed in the literature for finding the MAP solution to (\ref{eq:slapmi_poisson_state_space}). We refer to \cite{harmany2010spiral} for a detailed list of these methods. In \cite{smith2003estimating} the authors used  the maximum a posteriori derivation of the Kalman filter and proposed an approximate expectation maximization (EM) approach to this problem by Gaussian approximations of the posterior likelihood. This EM approach has several shortcomings. First, it requires solving a nonlinear system of equations which could potentially be computationally costly. Second, it only accounts for Gaussian spikes. Third its performance heavily depends on the Poisson rate model, especially when the rates are small, which is the usual case for spiking activities. In these  cases usually $\phi(x) = \exp(x)$ is considered for stability of approximations. Moreover due to nonlinear recursive filtering nature of the problem, the performance of the Gaussian approximation quickly degrades as the dimension of the latent space goes beyond 2 or 3.  Similarly, in \cite{harmany2010spiral} the authors proposed SPIRAL which uses  a  Gaussian approximation to $\mathcal{L}$ and is a gradient-based solution to (\ref{eq:slapmi_poisson_state_space}). Except for the special cases of $\ell_1$ and TV penalties, calculation of the Gaussian model is tedious leading to slow convergence. In \cite{sussillo2016lfads} the authors introduce a variational auto-encoder (gradient descent based) model to retrieve the low-dimensional temporal factors.

In applications such as fluorescence microscopy, it is also common to use to use variance stabilizing transforms \cite{harmany2010spiral} such as square root filtering \cite{roweis1999unifying} in order to make Gaussian approximations to the Poisson distribution. In the high photon regime such transformations are not necessary as one can use infinite divisibility property of the Poisson distribution for Gaussian approximations. However one would then need to deal with complications arising from equality of the mean and the covariance matrices for such approximations.
In contrast, our algorithm gives an exact solution, is fast, can account for any rate model and suitably scales with the problem dimensions.

The Gaussian approximations could then be used as an input to a Kalman smoother if the innovations (spikes) follow a half-normal or Gaussian distribution. Despite the fact that our solutions are faster, exact and do not involve approximations, for Gaussian state-spaces the Kalman smoother provides a smoothed estimate of the covariances which could be used for building confidence intervals, whereas the covariances are not a direct output of the multiplicative updates.

Considering the MAP estimator for $\mathbf{W}= \mathbf{W}_{[T]}$ we can identify
\begin{equation} 
\mathcal{L}(\mathbf{W}) = \sum_{t=1}^T \mathbf{1}^T \left( \mathbf{A}\mathbf{x}_t + \mathbf{b_t} \right) -  \mathbf{y}_t^T \log \left( \mathbf{A}\mathbf{x}_t + \mathbf{b_t} \right),
\end{equation}
for which we have
\begin{align*}
\left(\nabla_{\mathbf{w}_t} \mathcal{L}(\mathbf{W})\right)^{+}= \sum_{\tau \geq t} \left(\mathbf{\Theta}^{\tau-t}\right)^T \mathbf{A}_\tau^T \mathbf{1},
\end{align*}
and
\begin{align*}
\left(\nabla_{\mathbf{w}_t} \mathcal{L}(\mathbf{W})\right)^{-}= \sum_{\tau \geq t} \left(\mathbf{\Theta}^{\tau-t}\right)^T \mathbf{A}_\tau^T \left( \mathbf{y}_t \oslash \left( \mathbf{A}\mathbf{x}_t + \mathbf{b_t} \right) \right).
\end{align*}
The penalty function and the corresponding terms can be calculated similar to the nonnegative deconvolution problem. A similar update rule can be derived for the baseline. The special case of $\mathbf{\Theta} = \mathbf{0}$ (no dynamics with the convention $\mathbf{0}^0 = \mathbf{I}$) and $\lambda = 0$ (no penalization) is known as the Richardson-Lucy (RL) iterations. The RL algorithm has also been used with TV seminorm regularization in  \cite{dey2006richardson}.  Similar to the RL algorithm we can use FADE for blind deconvolution,  when the measurement matrix $\mathbf{A}$ is unknown. In this setup one can alternatively update $\mathbf{A}$ and $\mathbf{X}$.  We can also used FADE, for estimation of GLM models for self-exciting point process models \cite{kazemipour2017robust}.

\subsection{Combination with Other Constraints}
In many applications of interest the optimization problem could also include several inequality constraints. For example in fluorescence microscopy the maximum changes of the fluorescence level with respect to baseline (also referred to as $\mathbf{\frac{\Delta \mathbf{F}}{\mathbf{F}}}$) is controlled by the properties of the indicator in use. In these situations we need to satisfy the KKT conditions for the extra constraints. Here we will introduce an adaptive method in order to achieve this goal.
Consider the modified problem setup of Section \ref{eq:slapmi_pp_smoothing} given by 
\begin{equation}
\label{eq:slapmi_ss}
\begin{array}{ll}
\left( \frac{\Delta \mathbf{F}}{\mathbf{F}} \right)_t = \mathbf{\Theta} \left( \frac{\Delta \mathbf{F}}{\mathbf{F}} \right)_{t-1} + \mathbf{w}_t\\
\mathbf{y}_t \sim \text{Poisson}\Big(\mathbf{A} b_t ( 1+ \left( \frac{\Delta \mathbf{F}}{\mathbf{F}} \right)_t ) \Big) 
\end{array},
\end{equation}
where $b_t \ge 0$ denotes the known baseline fluorescence  at time $t$, on top of which $\left( \frac{\Delta \mathbf{F}}{\mathbf{F}} \right)_t$ lies. In addition to nonnegativity constraints we need to account for the following constraints
\begin{align}\label{eq:slapmi_star}
\begin{tabular}{ll}
$\left( \frac{\Delta \mathbf{F}}{\mathbf{F}} \right)_t  \preceq c_f $ & for all $t$
 \end{tabular} {\tag{$\mathcal{\star}$}}.
\end{align}
The constant $c_f$ is a characteristic of the indicator used and is assumed to be known.
In order to  enforce (\ref{eq:slapmi_star}) we proceed as in Algorithm \ref{alg:slapmi_reg}.

\noindent \begin{minipage}{\columnwidth}
\begin{center}
\begin{algorithm}[H]
\caption{\small Multiplicative Updates with Adaptive Regularization}
\label{alg:slapmi_reg}
\begin{algorithmic}[1]
\Procedure{{ Multiplicative Updates}}{}
\State Initialize: $\mathcal{P}\left( \frac{\Delta \mathbf{F}}{\mathbf{F}} \right)_t  = \| \left( \frac{\Delta \mathbf{F}}{\mathbf{F}} \right)_{[T]} \|_{\infty,\infty} $, $\lambda = 0$, $\lambda_0 = 0.01$, $i=0$.

\Repeat
 \If {$\max \left( \frac{\Delta \mathbf{F}}{\mathbf{F}} \right)_t \ge c_f$ and $i=0$}
 \State  $\lambda \leftarrow \lambda_0$, $i \leftarrow 1$
 \EndIf
 \If {$\lambda >0$}
\State Set $\lambda \leftarrow \lambda  \frac{ \left\| \left( \frac{\Delta \mathbf{F}}{\mathbf{F}} \right)_{[T]} \right\|_{\infty,\infty}}{c_f}$
\EndIf
\State Update $\mathbf{W}$.
\Until{convergence criteria met}
\EndProcedure
\end{algorithmic}
\end{algorithm}
\end{center}
\end{minipage}\\

The main idea behind Algorithm \ref{alg:slapmi_reg} is that when the constraints are violated the complimentary slackness condition should be met for the optimal dual variable $\lambda$ in Lagrangian form of the problem, meaning that the optimal solution should satisfy $ \| \left( \frac{\Delta \mathbf{F}}{\mathbf{F}} \right)_{[T]} \|_{\infty,\infty} = c_f$,  which is equivalent to finding  a fixed point of updates for the dual (regularization) variable $\lambda$.

\section{Other Examples}
\subsection{Dynamic Nonnegative Least Square (NLS)}
 The NLS problem can in general be formulated
\begin{align*}
&\mathbf{Y} = \mathbf{A} \mathbf{X} + \mathbf{V}, \qquad \mathbf{V} \sim\mathcal{N}(0,\sigma^2\mathbf{I}),\\
&\mathcal{L}(\mathbf{X}) = \|\mathbf{Y} - \mathbf{A} \mathbf{X}\|_2^2, \;  \left(\nabla \mathcal{L} \right)^+ = \mathbf{A}^T\mathbf{Y}, \;  \nabla \left(\mathcal{L}\right)^- = \mathbf{A}^T \mathbf{A} \mathbf{Y}.
\end{align*}
The most famous algorithm for solving the NLS problem is the active set method \cite{lawson1995solving} which does not account for temporal dynamics in $\mathbf{x}_t$ or other forms of penalty. In these settings our update rules are very similar to the nonnegative deconvolution problem. A very useful example from  the compressed sensing literature is the Multiple Measurement Vector (MMV) problem (without the positivity constraint) \cite{chen2005sparse}. A commonly used penalty in this setup is the $\|\mathbf{X}\|_{2,1}$  which enforces row sparsity.
\subsection{Dynamic Nonnegative Matrix Factorization (NMF)}
The NMF problem is very similar to the NLS problem except that the matrix $\mathbf{A}$ is not known. In this case we can alternatively update our estimates of $\mathbf{A}$ and $\mathbf{X}$ \cite{boyd2011distributed}. 
\begin{align*}
\label{eq:slapmi_nmf_state_space}
& \mathbf{Y} = \mathbf{A} \mathbf{X} + \mathbf{V}, \qquad\mathbf{V} \sim\mathcal{N}(0,\sigma^2\mathbf{I})\\
& \mathcal{L}(\mathbf{X}) = \|\mathbf{Y} - \mathbf{A} \mathbf{X}\|_2^2,\\
& \left( \nabla_{\mathbf{X}} \mathcal{L}\right)^+ = \mathbf{A}^T\mathbf{Y}, \quad \left( \nabla_{\mathbf{X}} \mathcal{L}\right)^- = \mathbf{A}^T \mathbf{A} \mathbf{Y} \\
&  \left(\nabla_{\mathbf{A}} \mathcal{L}\right)^+ = \mathbf{Y} \mathbf{X}^T, \quad  \left(\nabla_{\mathbf{A}} \mathcal{L}\right)^- =  \mathbf{Y} \mathbf{X}^T \mathbf{X}
\end{align*}
In the absence of penalization or dynamics we recover the multiplicative updates of \cite{lee1999learning}. Our update rules can also account for the dynamic case where
\begin{align*}
& \mathbf{X}_t = \alpha \mathbf{X}_{t-1} + \mathbf{W}_t, \qquad \mathbf{W}_t \succeq \mathbf{0}\\
& \mathbf{Y}_t = \mathbf{A} \mathbf{X}_t + \mathbf{V}_t, \qquad \mathbf{V}_t \sim \mathcal{N}(0,\sigma^2 \mathbf{I})
\end{align*}
For example one can account for sparsely changing temporal factors by considering a Laplacian distribution on $\mathbf{W}_t$.

\section{Concluding Remarks}
\label{sec:slapmi_conclusions}
In this chapter we considered convex optimization problems with nonnegativity constraints and provided unified multiplicative updates for them using the KKT conditions. These updates are easy to implement and parallelizable on a CPU. They do not require tuning of extra parameters such as the step size, exhibit fast convergence in practice and can account for temporal dynamics and smooth penalties without slowing down. 


Although in the absence of convexity the KKT conditions no longer hold, we have empirically observed that our updates exhibit good performance when the problem has simple nonconvexities. As an example one can model calcium saturation in the calcium deconvolution problem by adopting the calcium hill model given by  $\mathbf{y}_t = \alpha \frac{\mathbf{x}_t}{\mathbf{x}_t + \mathbf{c}} + \mathbf{v}_t$ \cite{yasuda2004imaging}. These observations suggest that suitable initializations result in convergence to a suitable local minimum. As another example one can combine the multiplicative updates with the IRLS algorithm \cite{chartrand2008iteratively} for $\ell_q$, $q <1$ minimization problems. The convergence of the IRLS algorithm was shown in the literature by showing an equivalence to a special case of the EM algorithm \cite{babadi_IRLS}. We applied this generalization to calcium imaging data using a nonconvex penalty.

Finally, the positivity constraint can easily be relaxed in the general form of the problems in two ways:  First, any variable $\mathbf{X}$ can be decomposed into $\mathbf{X} = \mathbf{X}^+ - \mathbf{X}^-$, where both $\mathbf{X}^+$ and $\mathbf{X}^-$ are positive. Second, generalized positivity and negativity could be defined with respect to the boundary of the convex set of feasible solutions, i.e. any point point inside/outside the feasibility set could be considered as positive/negative. Generalized positive and negative terms in the decompositions could be redefined similarly. Therefore by looking for a generalized positive fixed point of the gradient of the log-likelihood, the multiplicative updates can be generalized to a larger class of problems with not necessarily positivity constraints. We leave full details of these extensions and examples and their convergence properties to future work.

\chapter{Megapixel Two-photon imaging at kHz Framerates}
\chaptermark{Projection Laser Microscopy}
\label{chap:slapmi}

Two-photon laser scanning microscopy enables high-resolution imaging within scattering specimens such as the brain, but the sequential acquisition of image voxels fundamentally limits its speed. We developed a two-photon imaging technique that scans lines of excitation across a focal plane at multiple angles, recovering diffraction-limited images from relatively few incoherently multiplexed measurements. We use a static image as a prior for image reconstruction to track rapid brightness changes in neural activity sensors, and sparsity as a prior to track diffusing particles at over 1.4 billion voxels per second. We imaged glutamate sensor transients across hundreds of individually resolved dendritic spines in mouse cortex at framerates over 1kHz. This method surpasses a physical limit on the speed of sequential two-photon imaging imposed by fluorescence lifetime, enabling recordings that are not possible by raster scanning.

\section{Introduction} \label{slapmi:intro}

The study of brain function relies on measurement tools that achieve high spatial resolution over large volumes at high rates. Activity travels through neural circuits on a timescale of milliseconds. Interacting elements such as neurons or synapses are scattered over distances many times their size, requiring imaging volumes containing millions of voxels. The living brain is opaque, meaning that optical tools for monitoring brain activity must be insensitive to light absorption and scattering. Two-photon imaging achieves this insensitivity by using nonlinear absorption to confine fluorescence excitation to the high-intensity focus of a laser and prevent excitation by scattered light. All emitted fluorescence can be assigned to that focus regardless of scattering, without forming an optical image. Instead, an image is produced by scanning the focus in space. However, this serial approach to image acquisition creates a tradeoff between achievable framerates and pixel counts per frame. Common fluorophores have fluorescence lifetimes of approximately 3 ns, and brighter fluorophores tend to have longer lifetimes \cite{strickler1962relationship, striker1999photochromicity}. Consequently, approximately 10 ns must pass between consecutive measurements to obtain distinct samples. The maximum achievable framerate for a 1-megapixel field of view (FOV) under raster-scanning fluorescence imaging is therefore approximately 100 Hz. In practice, pixel rates have been further limited by factors such as photodamage, fluorophore saturation, and scanner technology  \cite{yang2017vivo}.

Calcium indicator transients have been widely adopted as a proxy for neuronal and synaptic activity \cite{yang2017vivo, szalay2016fast, chen2011functional}, in part because their large amplitude and slow speed can be recorded at low frame rates \cite{peterka2011imaging}. However, calcium transients are imperfect reporters of the biophysical quantities underlying neuronal communication, such as action potentials and synaptic events \cite{peterka2011imaging, sobczyk2005nmda, hao2012depolarization}. Faster reporters and imaging methods are needed to more directly monitor such signals at speeds commensurate with computations in the brain.

The pixel rate bottleneck for raster imaging can be avoided by more efficient sampling \cite{yang2017vivo, prevedel2016fast}. In most activity imaging paradigms, a dense pixel-based representation of the sample is recorded then reduced to a lower-dimensional space, e.g. by selecting regions of interest \cite{pnevmatikakis2016simultaneous, vogelstein2010fast}. By sampling this low-dimensional representation more directly, the equivalent result can be obtained with many fewer measurements. Several methods have been developed that can record from sample volumes more efficiently than raster scanning: 

Random Access imaging using Acousto-Optic Deflectors/Lenses4 \cite{nadella2016random, bullen1997high} enables sampling of any subset of points in the sampling region, with a fixed access time required to move the excitation focus between points.  If the desired points are sparse enough in space, the time saved by not sampling the intervening area significantly outweighs the access time costs. Multifocal multiphoton (MMP) methods scan a fixed pattern of focal points through the sample, allowing multiple subvolumes to be acquired simultaneously. If a single-pixel detector is used, the resulting image volumes sampled by each focus are superposed \cite{yang2016simultaneous}. A spatially resolved detector such as a camera, eye \cite{bewersdorf1998multifocal} or multianode photomultiplier \cite{kim2007multifocal} can also be used, but the resolution of resulting images is significantly degraded by scattering. Extended Depth of Field (EDoF) methods collapse the axial dimension of the sample, allowing projections of volumes to be acquired at the rate of two-dimensional images. EDoF excitation has been achieved at high resolutions using scanned Bessel beams \cite{lu2016video, botcherby2006scanning, dufour2006two, theriault2014extended}. Several other imaging methods involving patterned two-photon illumination have been described \cite{prevedel2016fast, field2016superresolved}.

Except random access imaging, the above techniques share a common feature: Signals from different locations in the sample are deliberately mixed, enabling a given volume to be measured with fewer samples. For most analyses, the underlying signals are then unmixed computationally. We refer to this approach as ``Projection Microscopy" because it deliberately projects multiple resolution elements into each measurement. Projection Microscopy has advantages over random access imaging when the specimen can move unpredictably, such as in awake animals or diffusing particles, when regions of interest are not well approximated by single points, such as cell membranes, or when it is difficult to rapidly select regions of interest, such as dendritic spines.

Recovery of signals from mixed measurements is common in imaging. Several methods combine images with distinct optical transfer functions to improve resolution \cite{gustafsson1999extended, heintzmann1999laterally, preibisch2014efficient, neil1997method}. In all imaging methods, finite resolution can cause pixels to contain signals from multiple sources, such as a neuron and its surrounding neuropil. It is common for analyses to explicitly model these mixed sources (e.g. \cite{pnevmatikakis2016simultaneous}). Source recovery is often posed as an optimization problem, using implicit or explicit regularization to impose a desired statistical structure on the recovered signals, such as independence \cite{brown2001analysis} or sparsity \cite{pnevmatikakis2016simultaneous}. Compressive Sensing \cite{Eldar} is a framework for the acquisition and unmixing of signals that admit a sparse representation in some basis. By acquiring mixed signals and regularizing for sparsity during recovery, structured systems with many fewer measurements than unknowns can be accurately recovered if the measurements are conducted appropriately \cite{candes2008introduction}. Highly coherent measurements (i.e. ones that tend to mix a given source with others in the same way) make recovery of the underlying sources ambiguous, while incoherent measurements can guarantee accurate recovery \cite{candes2008introduction, foucart2013mathematical}. The prior information used for recovery can take many forms, such as the distribution or dynamics of the sources \cite{babacan2010bayesian} \cite{kazemipour2017fast}. In medical imaging and microscopy, compressive sensing using sparsity has greatly increased imaging speed and reduced radiation dose \cite{chen2008prior, lustig2007sparse, pegard2016compressive}. 

In this Chapter, we recover neural activity from highly incoherent mixed measurements using strong priors obtained from a detailed structural image of the sample and a model of indicator dynamics. These priors constrain the space of solutions enough that regularization for sparsity is unnecessary.  We also image and track diffusing particles within a more canonical compressive sensing framework. The goal of our work has been to advance optical neurophysiology by developing a practical high-speed two-photon microscope. This microscope retains the high spatial resolution and insensitivity to scattering of conventional two-photon imaging, uses known spatiotemporal structure of samples to increase framerate, is insensitive to sample motion, enables highly accurate source recovery by performing efficient incoherent measurements, and adapts easily to a variety of experiments.

\section{Results} \label{slapmi:results}
\subsection{Scanned Line Angular Projection Microscopy} \label{slapmi:results_slapmi}
We built a microscope that scans line foci across the focal plane at four different angles, obtaining linear projections of the sample. We chose line foci because they: 
\begin{enumerate}
\item Are simple to produce optically,
\item efficiently sample a compact area by scanning 
\item achieve diffraction-limited spatial resolution
\item produce two-photon excitation more efficiently than non-contiguous foci of the same area
\item are a low-coherence basis for the sample space. 
\end{enumerate}

We named this approach Scanned Line Angular Projection Microscopy (SLAPMi). SLAPMi samples the entire FOV with four line scans. The frame time is proportional to the resolution, compared to resolution squared for a raster scan, resulting in greatly increased frame rates.  For example, SLAPMi images a 250 $\times$ 250 $\mu$m FOV with diffraction-limited optical resolution at a framerate of 1016 Hz, corresponding to over 1.49 billion pixels per second, recovered from 5 million multiplexed measurements.

To reduce optical power at the sample and enable control over degree of parallelization, we included a spatial light modulator (SLM) in an amplitude modulation geometry. This configuration selects an arbitrary pattern in the focal plane for imaging, and discards the remaining excitation light, making SLAPMi a random access microscope. This reduces excitation power in sparse samples and allows users to artificially introduce sparsity into densely labeled samples. Unlike AOD-based imaging, the SLAPMi framerate is independent of the number of pixels imaged, up to the entire field of view. When imaging sufficiently sparse samples, the SLM and other optimizations for excitation efficiency allow SLAPMi to use average powers lower than conventional raster scanning.

\subsection{Particle Localization and Tracking} \label{slapmi:particles}
Recovering images from mixed measurements requires identifying a representation of the sample with rank lower than the number of measurements. For sparsely labeled samples, this representation can be the nonzero pixels in the image \cite{lustig2007sparse, antipa2016single}. We demonstrated this regime by localizing and tracking fluorescent particles.  Each particle produces a bump of signal on each of the four scan axes corresponding to its position, and each frame consists of the superposition of signals for all particles in view. If particles are sufficiently sparse, three measurement axes are sufficient to localize all particles in the FOV. 

Using the microscope’s known projection matrix {(see \ref{slapmi:methods})}, we can recover the maximum likelihood image by incorporating dynamics into Richardson-Lucy (RL) deconvolution as discussed in Chapter \ref{chap:multiplicative} \cite{lucy1974iterative, richardson1972bayesian}. For large numbers of labeled pixels, maximum likelihood reconstruction results in spurious peaks in the recovered image, which are reduced by solvers that enforce sparsity. 

As a demonstration, we recorded the motion of thousands of 500 nm fluorescent beads within a 250 $\mu$m (diameter) $\times$ 250 $\mu$m (depth) imaging volume (87 equally-spaced planes; 1.016kHz frame rate, 5080 measurements per plane, 10 Hz volume rate), (Figure \ref{fig:slapmi_particles1}, supplemental movie 1). Particles were readily tracked within the recovered volume movies using established tracking methods for volumetric data (Figure \ref{fig:slapmi_particles2}).}

\begin{figure}[H]
\begin{center}
\noindent
\includegraphics[width=.9\columnwidth]{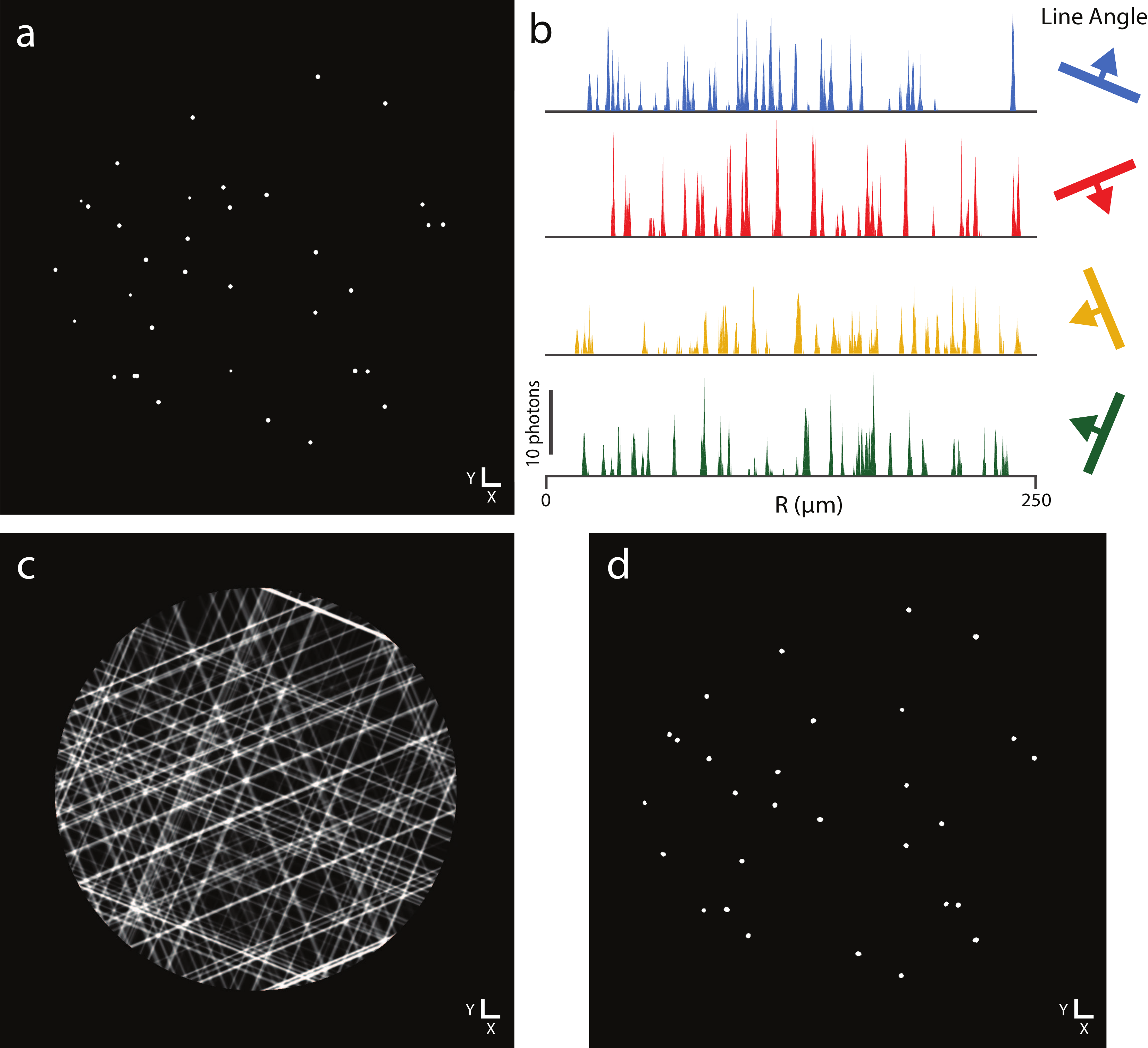}
\caption{Particle localization using SLAPMi: a) 2D raster image of 500nm fluorescent beads on a glass surface. b) 2D SLAPMi measurement of the same sample, consisting of projections along the four scan axes. R denotes position on the scan axis. c) Backprojection of the measurements in b, corresponding to one iteration of RL deconvolution. d) Recovered image following 20 iterations of pruned RL deconvolution }\label{fig:slapmi_particles1}
\end{center}
\vspace{-5mm}
\end{figure}

\begin{figure}[H]
\begin{center}
\noindent
\includegraphics[width=1\columnwidth]{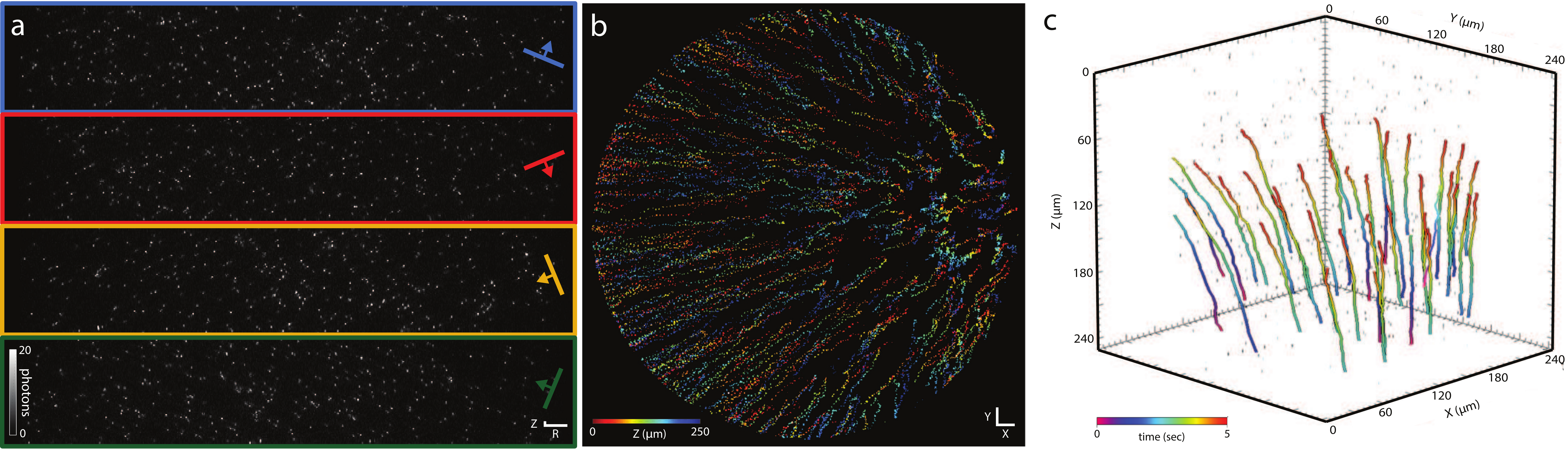}
\caption{Particle tracking using SLAPMi: a) 3D SLAPMi measurement of a 250 $\times$ 250 $\times$ 250 250 $\mu$m volume of 500 nm fluorescent beads in water.  b) Superposition of 40 volume images acquired with SLAPMi, showing convection and diffusion of beads. Color denotes depth. c) Example particle tracks obtained with a common software package
}\label{fig:slapmi_particles2}
\end{center}
\vspace{-5mm}
\end{figure}

\subsection{Imaging Neural Activity} \label{slapmi:inVivo}
To recover neural activity with SLAPMi, we adopted a sample representation in which fixed spatial components vary in brightness over time \cite{pnevmatikakis2016simultaneous, vogelstein2010fast}. The spatial components are obtained from a separate raster-scanned volume image. We identify compartments of labelled neurons with a manually trained pixel classifier (Ilastik \cite{sommer2011ilastik}), and a skeletonization-based algorithm that divides neurites into short segments {(Figure \ref{fig:slapmi_inVitro1}-b)}, resulting in up to 1000 segments per plane {(see Methods)}. Source recovery consists of assigning an intensity to each segment at each frame according to the following model:

\begin{align*}
& \left( \frac{\Delta \mathbf{F}}{\mathbf{F}} \right)_t = \theta \left( \frac{\Delta \mathbf{F}}{\mathbf{F}} \right)_{t-1} + \mathbf{w}_t\\
& \mathbf{y}_t \sim \text{Poisson}\Big(\mathbf{A} b_t ( 1+ \left( \frac{\Delta \mathbf{F}}{\mathbf{F}} \right)_t ) \Big)\\
& \subjectto \mathbf{W} \succeq \mathbf{0},
\end{align*}
where $\mathbf{Y}$ are the measurements (number of lixels $\times$ frames), $\mathbf{P}$ is the projection matrix and $\mathbf{S}$ is the segmented image obtained by a 2P raster scan. $\mathbf{P}$ and $\mathbf{S}$ are measured separately. We estimate $\mathbf{W}$, the innovations, by minimizing the negative log-likelihood as discussed in chapter \ref{chap:multiplicative}. Importantly, in this paradigm, regularization is unnecessary because the segmentation is low rank. 

\subsection{In Vitro Validation}
We validated SLAPMi in experiments with rat hippocampal cultures under conditions where ground truth activity was known.  In the first of these experiments, we co-cultured cells expressing a cytosolic fluorophore (tdTomato) with cells expressing the glutamate sensor Venus-iGluSnFR. The mixed culture was imaged using a single detector, while stimulating with a field electrode. Stimulation triggers transients only in Venus-iGluSnFR-expressing cells, and no change in tdTomato brightness. We collected separate two-channel raster images to verify the identity of the imaged cells, and quantified recovered signals in tdTomato-expressing cells to assess spatial crosstalk.

SLAPMi reliably reported stimulation-induced transients only in Venus-iGluSnFR labeled sample voxels {(Figure \ref{fig:slapmi_inVitro1}, supplemental movie 2)}.

In the second experiment, we imaged yGluSnFR-expressing neurons while uncaging glutamate at two locations at different times, to assess the timing precision of recovered signals {(Figure \ref{fig:slapmi_inVitro2}, supplemental movie 3)}.
We quantified the onset time of transients at each pixel of the reconstructed image.

\begin{figure}[H]
\begin{center}
\noindent
\includegraphics[width=1\columnwidth]{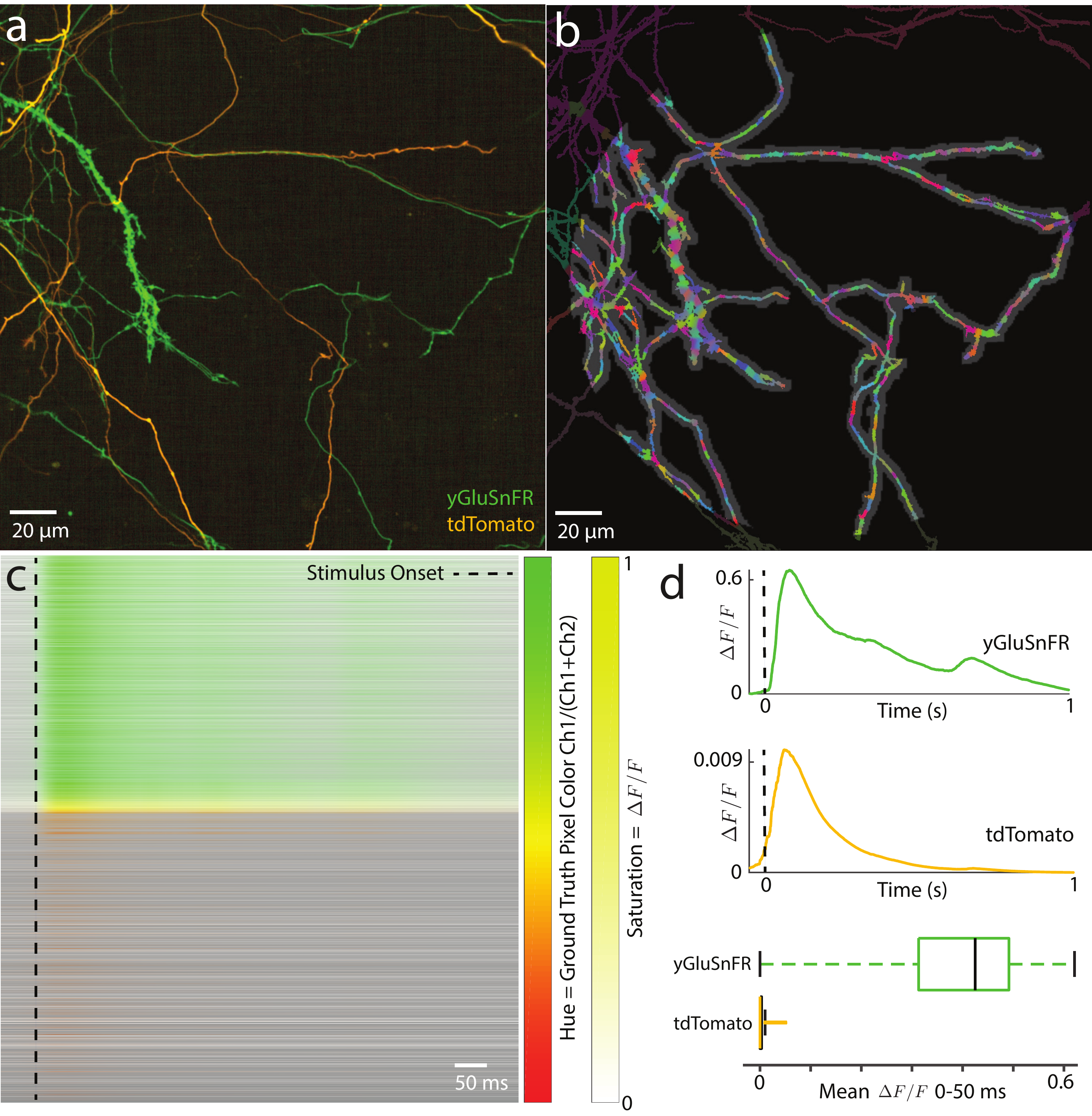}
\caption{In Vitro validation, red/green experiment: a) Superposed 2D raster image of cultures expressing yGluSnFR (Green, channel 1) and tdTomato  (Red, both channels). The activity was imaged only on channel 1, b)	Segmentation for a). The SLM boundaries are tinted in gray, c)	Recovered pixelwise $\frac{\Delta F}{F}$ for a). Pixels are sorted according to their color (redness). The value (darkness) represents square root of $F_0$, d)	Mean $\frac{\Delta F}{F}$  for most green and most red pixels (1000 pixels each). Top and Middle: Spatial mean $\frac{\Delta F}{F}$  for most green/red pixels. Bottom: Temporal mean $\frac{\Delta F}{F}$  (0-50 ms after stimulus) quantiles for most green/red pixels.
}\label{fig:slapmi_inVitro1}
\end{center}
\vspace{-5mm}
\end{figure}

\begin{figure}[H]
\begin{center}
\noindent
\includegraphics[width=1\columnwidth]{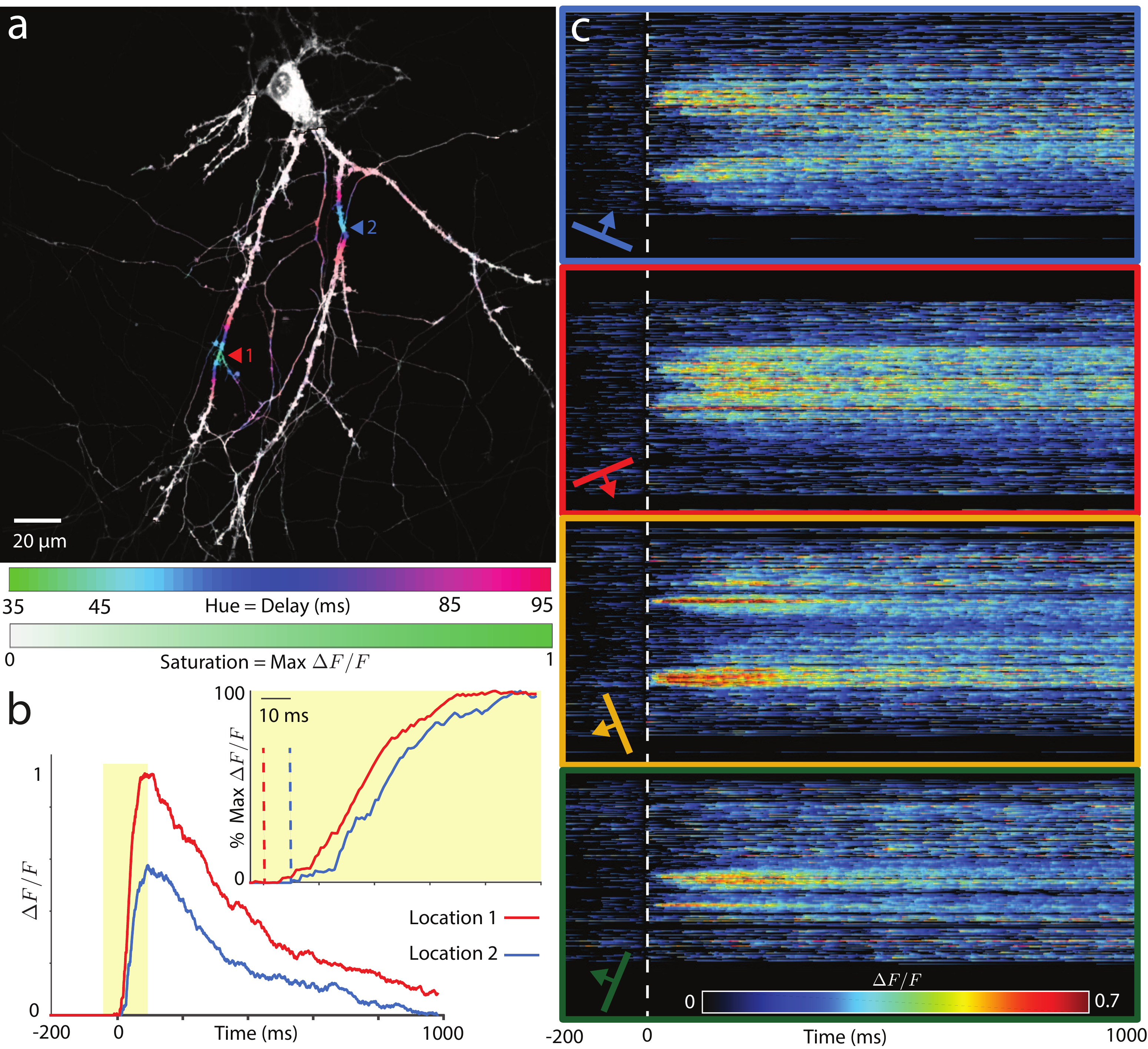}
\caption{In Vitro validation, two-spot uncaging experiment: a)	Raster image for 2spot uncaging, with uncaging locations (1 and 2) denoted by red and blue arrows, colored according to delay to half-max $\frac{\Delta F}{F}$,
b)	Dff traces for the two uncaging locations. Inset: zoomed in $\frac{\Delta F}{F}$ traces normalized to peak $\frac{\Delta F}{F}$. Dashed lines denote the stimulus onset,
c)	2D SLAPMi lixel-space measurement of the sample, consisting of projections along the four scan axes, showing diffusion. The dashed line marks the stimulus onset in uncaging location 1.
}\label{fig:slapmi_inVitro2}
\end{center}
\vspace{-5mm}
\end{figure}

\subsection{In Vivo Activity Imaging}

SLAPMi inherits the resolution enhancement and insensitivity to scattering that make two-photon imaging effective in vivo. However, we were concerned that tissue heating from light absorption could limit the practicality of two-photon projection microscopy. Economy of illumination power is critical for biological imaging \cite{laissue2017assessing}. Conventional two-photon imaging is limited by brain heating under common configurations \cite{podgorski2016brain}. Two-photon projection microscopy is even more limited by heating, because higher degrees of parallelization require a matched increase in power to maintain nonlinear excitation efficiency.  Lower degrees of parallelization make two-photon excitation more efficient and source recovery more effective. In general, multiphoton projection methods benefit by using the lowest degree of parallelization compatible with an experiment's required framerate \cite{podgorski2016brain, sofroniew2016large}. 

To reduce optical power at the sample and control degree of parallelization, we included a spatial light modulator (SLM) at an intermediate focal plane in an amplitude modulation geometry. This configuration selects an arbitrary pattern in the focal plane for imaging, and discards the remaining excitation light, thereby combining benefits of random access imaging and projection microscopy. Unlike AOD-based methods, random access SLAPMi has no access time. By retaining a buffer area surrounding each region of interest, SLAPMi remains insensitive to sample motion with no reduction in imaging rate. When imaging sparse samples such as dendrites, the SLM and other optimizations for excitation efficiency allow us to use average powers lower than conventional raster scanning.


\section{Methods} \label{slapmi:methods}

\subsection{Solver}
The solver is a slight modification of the one introduced in Chapter \ref{chap:multiplicative}, where the baseline was assumed to be known. The baseline is meant to capture model mismatches in the data due to imperfect alignment and motion correction. Optionally, we allow a regularization term to impose prior knowledge of the indicator (maximum $\Delta F /F$) though this was not necessary for the in vitro and in vivo datasets presented here. The RL iterations are known to amplify noise for large number of iterations, especially in very sparse samples such as the particle localization and tracking data. In such scenarios a small $\ell_q$-regularization ($q \le 1$) proved to be helpful in pruning such artifacts. (Figure \ref{fig:slapmi_particles1}). We also implemented an optional damping, which is known to be useful in suppressing the artifacts due to low photon counts \cite{white1994image}.

The only user-supplied parameters involved in recovery are:
\begin{itemize}
\item The decay time constant, a property of the indicator,
\item the number of segments (not needed),
\item the convergence threshold of the solver, or, number of multiplicative iterations,
\item the damping parameter which determines the level of suppression of the objective function for the low-photon counts.
\end{itemize}

Mixed measurements can introduce spurious correlations into recovered signals, even when the mixing matrix is invertible, because measurement noise is shared among recovered sources. In simulations of SLAPMi imaging and source recovery, we accurately recovered signal amplitudes and correlations between sources without significant bias (See Section \ref{slapmi:methods_sim}). Recovery was robust to errors in segmentation and kinetics. 
In addition to the existing solver, we tried several solvers and models including:
\begin{itemize}
\item A Kalman filtering and smoothing approach by fitting a Gaussian approximation to the state space model \cite{smith2003estimating}. The point process smoothing is very slow and exhibits weak approximation abilities for ambient dimensions higher than a few.
\item	Nonnegative least squares with temporal dynamics, which did not suit the Poisson noise model.
\item	Gradient descent methods such as spiral \cite{harmany2010spiral}, which happened to be sensitive to the step size.
\item We also considered a multiplicative baseline model and a rank 1 baseline model for the data, which showed inferior performance.
\end{itemize}

\subsection{Simulations}
\label{slapmi:methods_sim}
We evaluated performance of the solver under different conditions using simulated data. We used the delta PSF for the simulations. Unless otherwise stated, we use the following default parameters:
p = 500 segments,  40 random generations of the dynamics,  uniformly distributed between $0.5-5$.
The solver was initialized by a constant positive solution for the spikes and a time constant of  100 frames,
We assumed a photon count of 100 Photons per frame per segment which is close to the measured datasets,T = 500 frames with one spike per segments at frame 100, segments placed randomly within a 500 $\times$ 500 pixel circle at the center of the field of view and the maximum allowed $\Delta F/F $  for the solver set to 10. Segments are assumed to be 10 $\times$10 pixel squares. The null distributions were obtained by random shuffling of time points for each seed.

\textbf{Evaluation metric:} we report the Pearson correlation between the estimated signal and its ground-truth counterpart over  time for each segment. 

In the first simulation (Figure \ref{fig:corr_vs_ppp}) we evaluated the effect of sample brightness on performance of the solver. This was done by fixing the expected Poisson rates per segment per frame to vary in the range $0.1$ to $10^5$. As a comparison, typical values of this parameter in the recorded datasets in this work are around 10-100. Increasing the photon budget improves the reconstructions.

\begin{figure}[H]
\begin{center}
\noindent
\includegraphics[width=0.9\columnwidth]{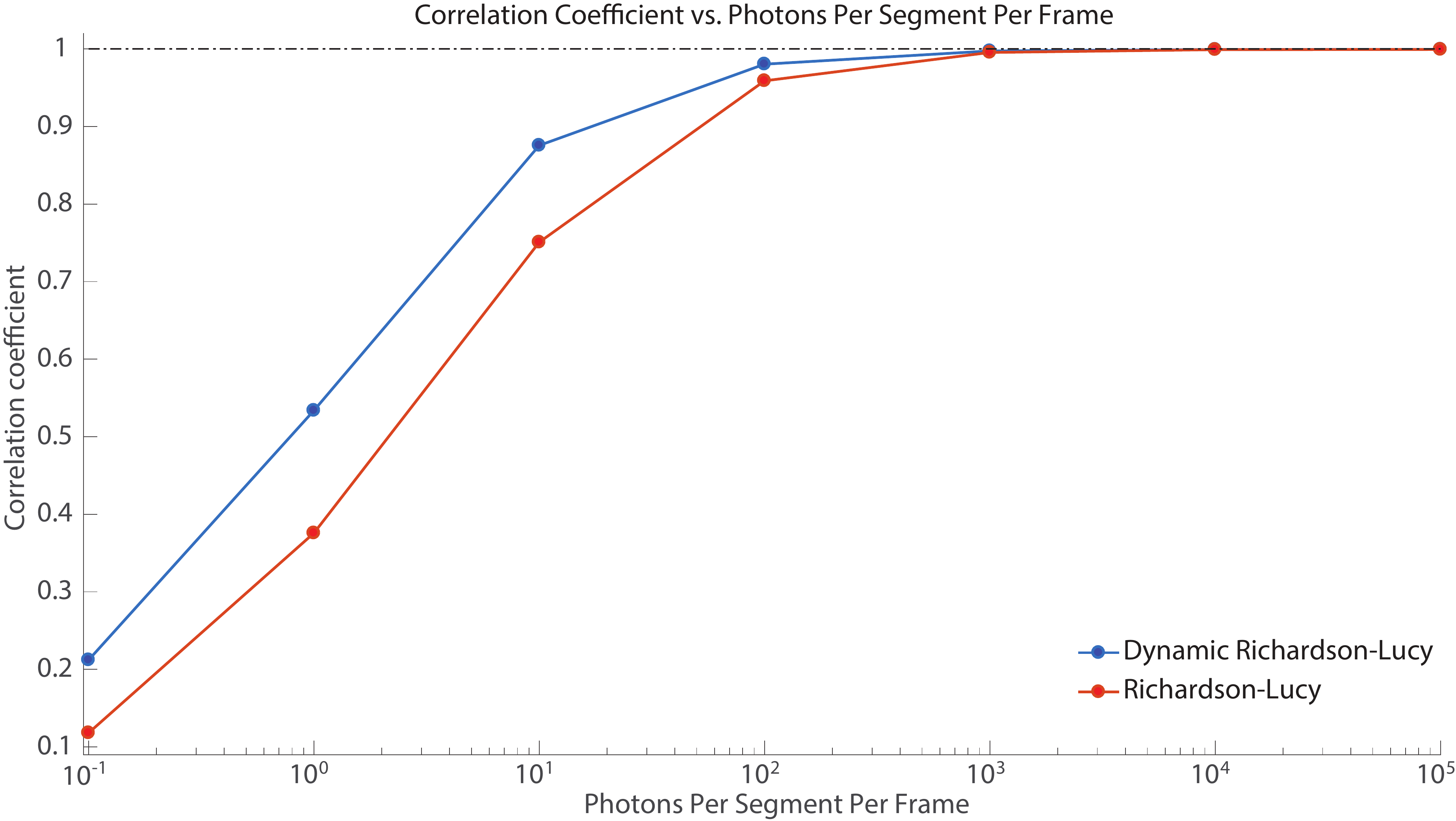}
\caption{Effect of sample brightness on the solver.
}\label{fig:corr_vs_ppp}
\end{center}
\vspace{-5mm}
\end{figure}
In the second simulation (Figure \ref{fig:corr_vs_nseeds}), we changed the number of sources in the range 10-1000, in both the generation of the ground truth and solver. Increasing the number of sources decreases solver performance.

\begin{figure}[H]
\begin{center}
\noindent
\includegraphics[width=0.9\columnwidth]{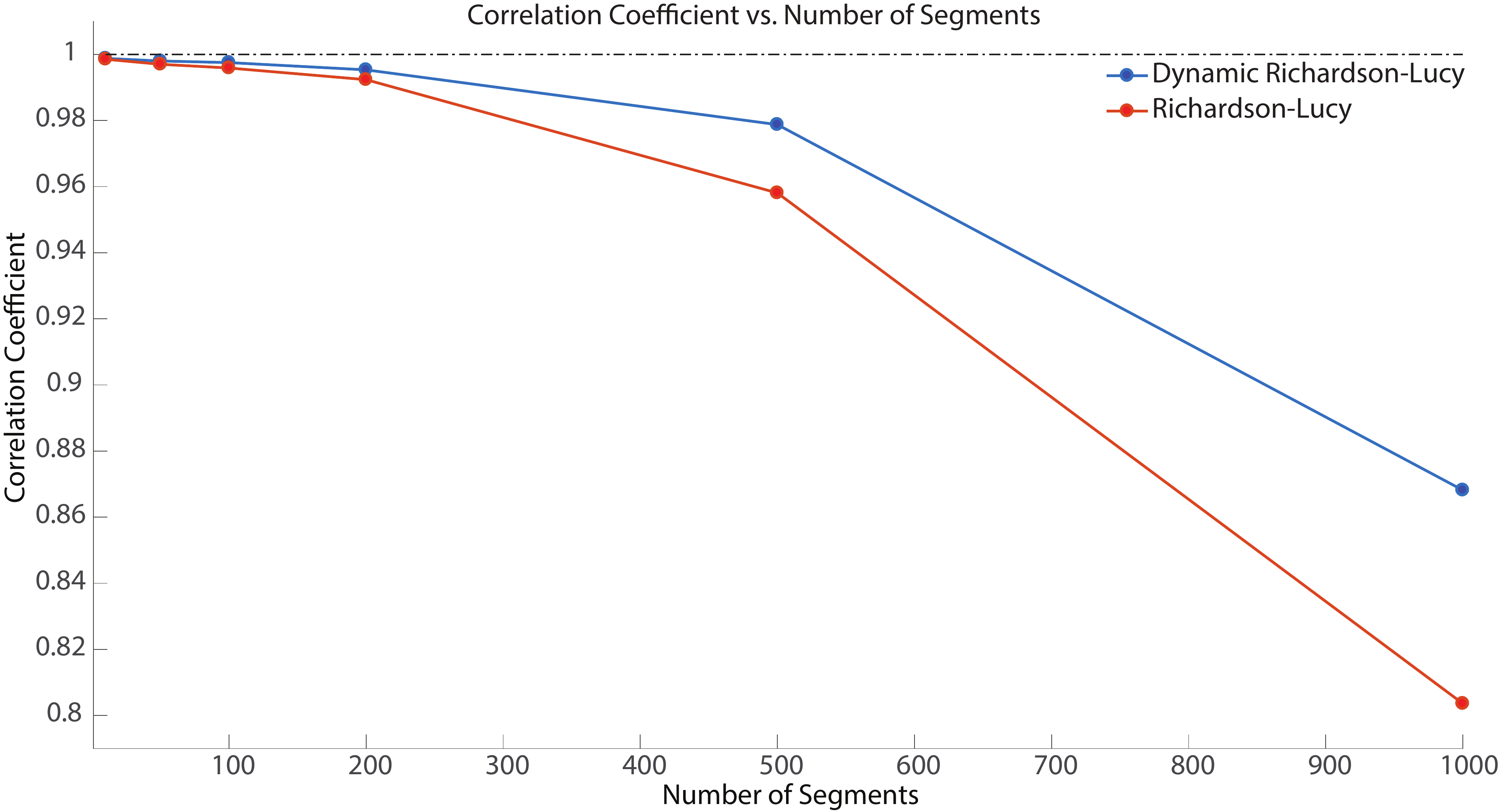}
\caption{Effect of number of sources on the solver.
}\label{fig:corr_vs_nseeds}
\end{center}
\vspace{-5mm}
\end{figure}

In the third simulation (Figure \ref{fig:corr_vs_tau}), we evaluated performance using different estimated decay time constants. The ground truth activity was generated using a time constant of 100 frames. We varied the time-constant in the range 0-1000. The solver performs best when the correct time constant is used. The solver is insensitive to underestimation of the time-constant, as this can be compensated with additional spikes but does reduce the denoising benefit of the dynamics model. A time constant of 0 corresponds to normal Richardson-Lucy iterations. Substantial overestimation of the time constant degrades the performance of the solver. The null distributions for different time constants is shown in Figure \ref{fig:corr_vs_tau_null}.

\begin{figure}[H]
\begin{center}
\noindent
\includegraphics[width=0.9\columnwidth]{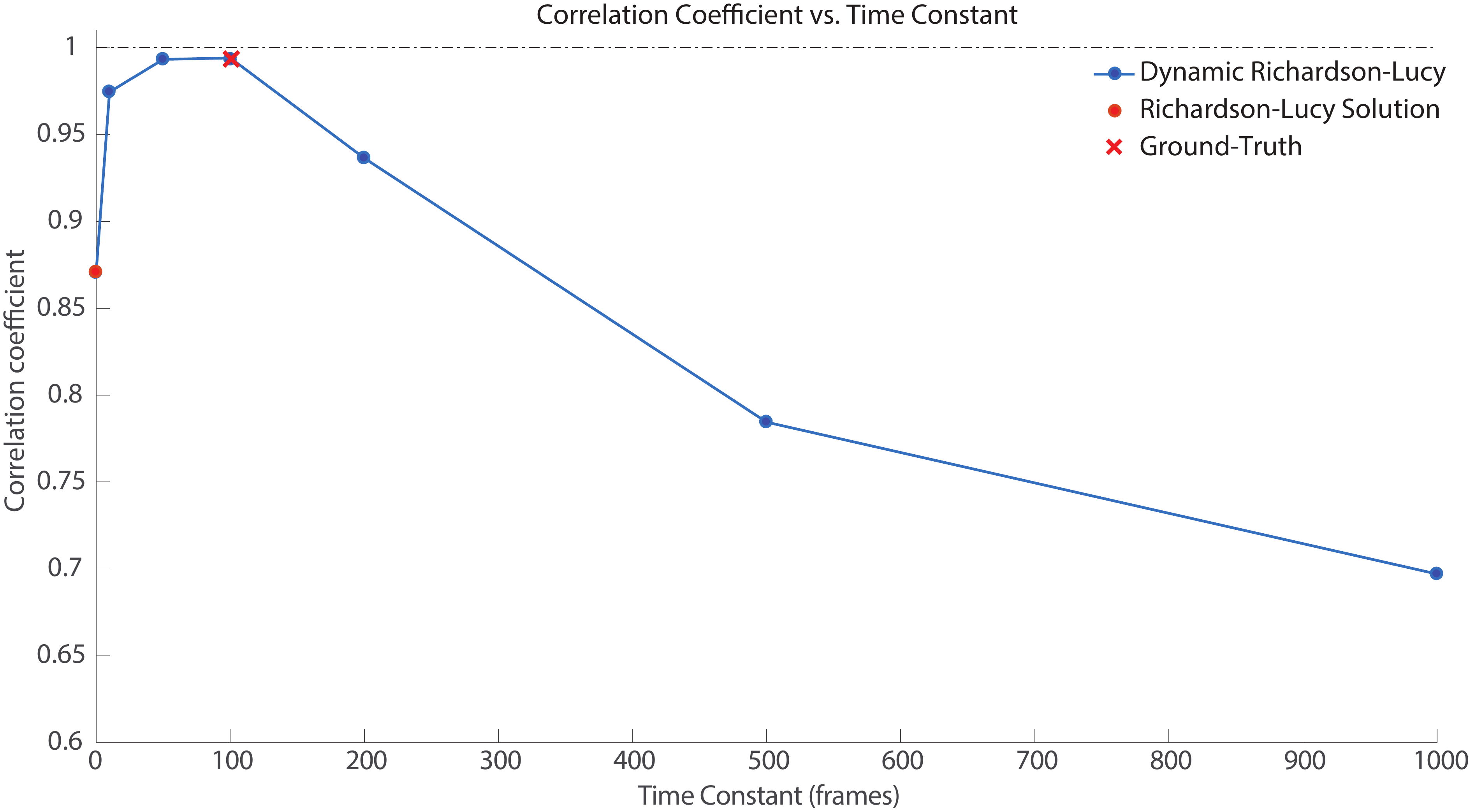}
\caption{The effect of erroneous decay time-constant on the solver.
}\label{fig:corr_vs_tau}
\end{center}
\vspace{-5mm}
\end{figure}

\begin{figure}[H]
\begin{center}
\noindent
\includegraphics[width=0.9\columnwidth]{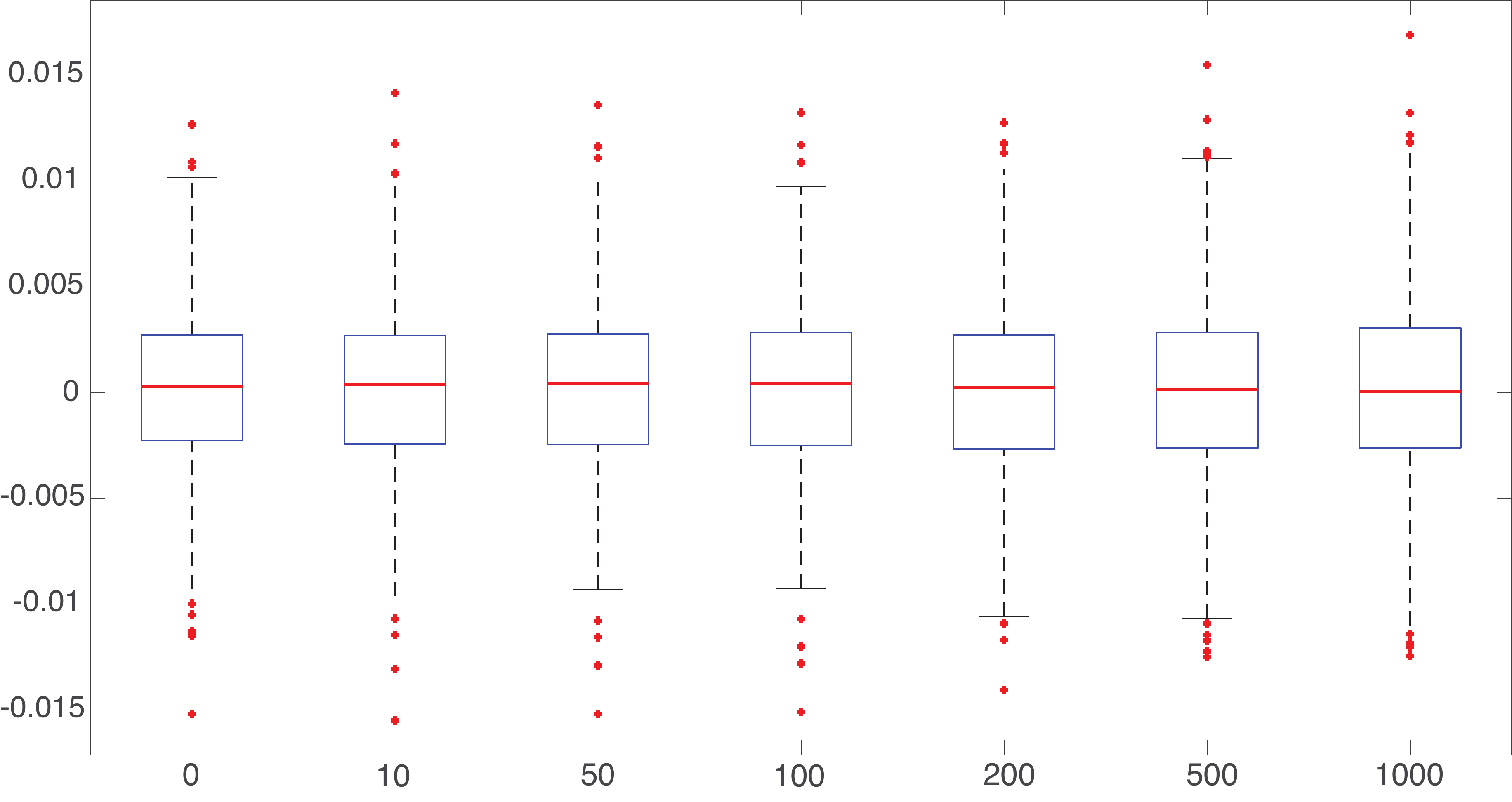}
\caption{Null distributions for different time-constants.
}\label{fig:corr_vs_tau_null}
\end{center}
\vspace{-5mm}
\end{figure}

In the fourth simulation (Figure \ref{fig:corr_vs_unsuspected}), we evaluated the performance of the solver in the case where not all sources in the field of view were included in the segmentation. We added 0-50\% additional unsegmented sources. This is meant to simulate fluorescence activity within the 'ON' region of the SLM but not detected in the reference image, a possibility in sensors with very low baseline fluorescence. Increasing unsegmented activity degrades the performance of the solver.

The null distributions for different levels of unsegmented activities are shown in Figures \ref{fig:corr_vs_unsuspected_null_dynamic} and \ref{fig:corr_vs_unsuspected_null_RL}.

\begin{figure}[H]
\begin{center}
\noindent
\includegraphics[width=0.9\columnwidth]{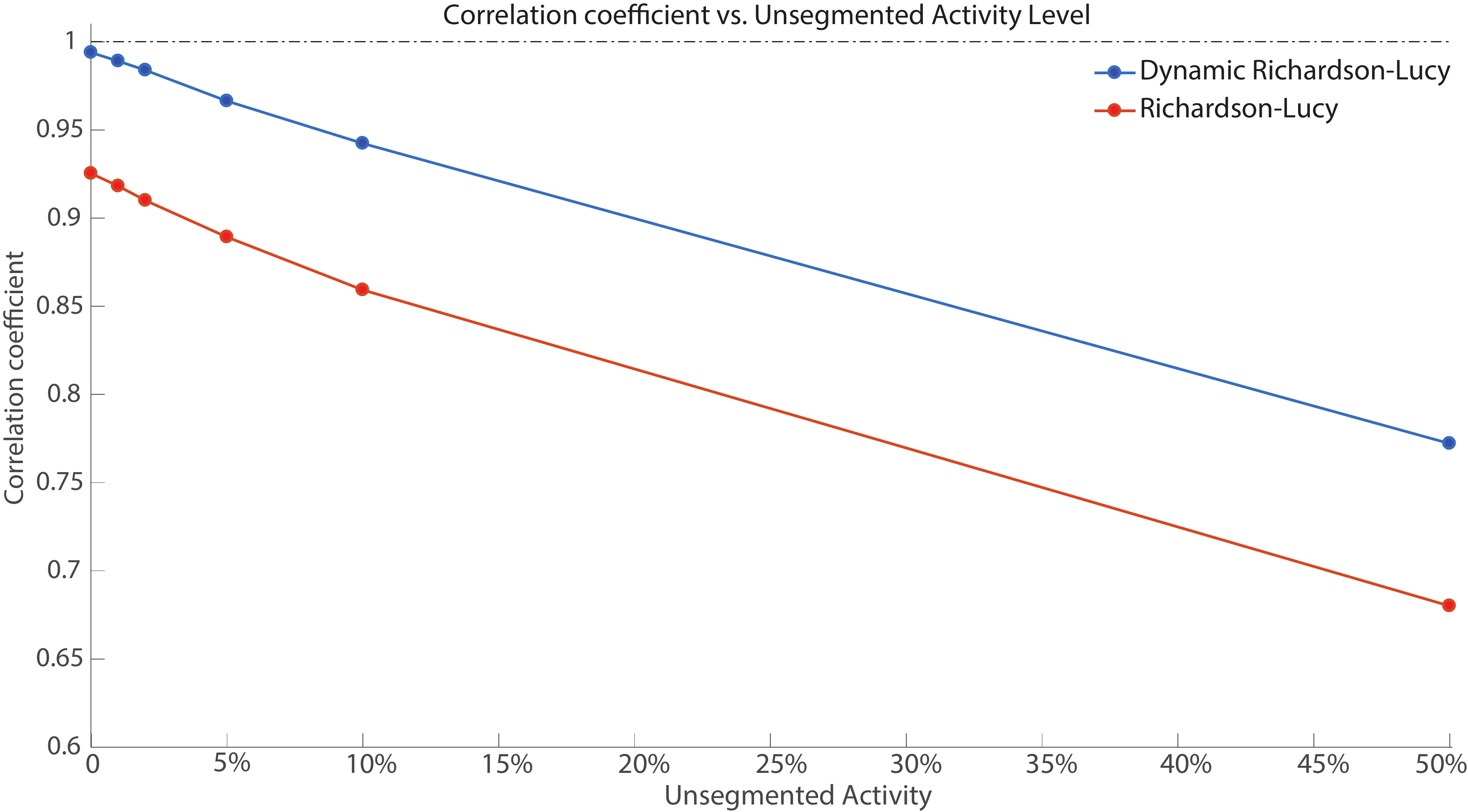}
\caption{The effect of unsegmented activity on the reconstructions.
}\label{fig:corr_vs_unsuspected}
\end{center}
\vspace{-5mm}
\end{figure}

\begin{figure}[H]
\begin{center}
\noindent
\includegraphics[width=0.9\columnwidth]{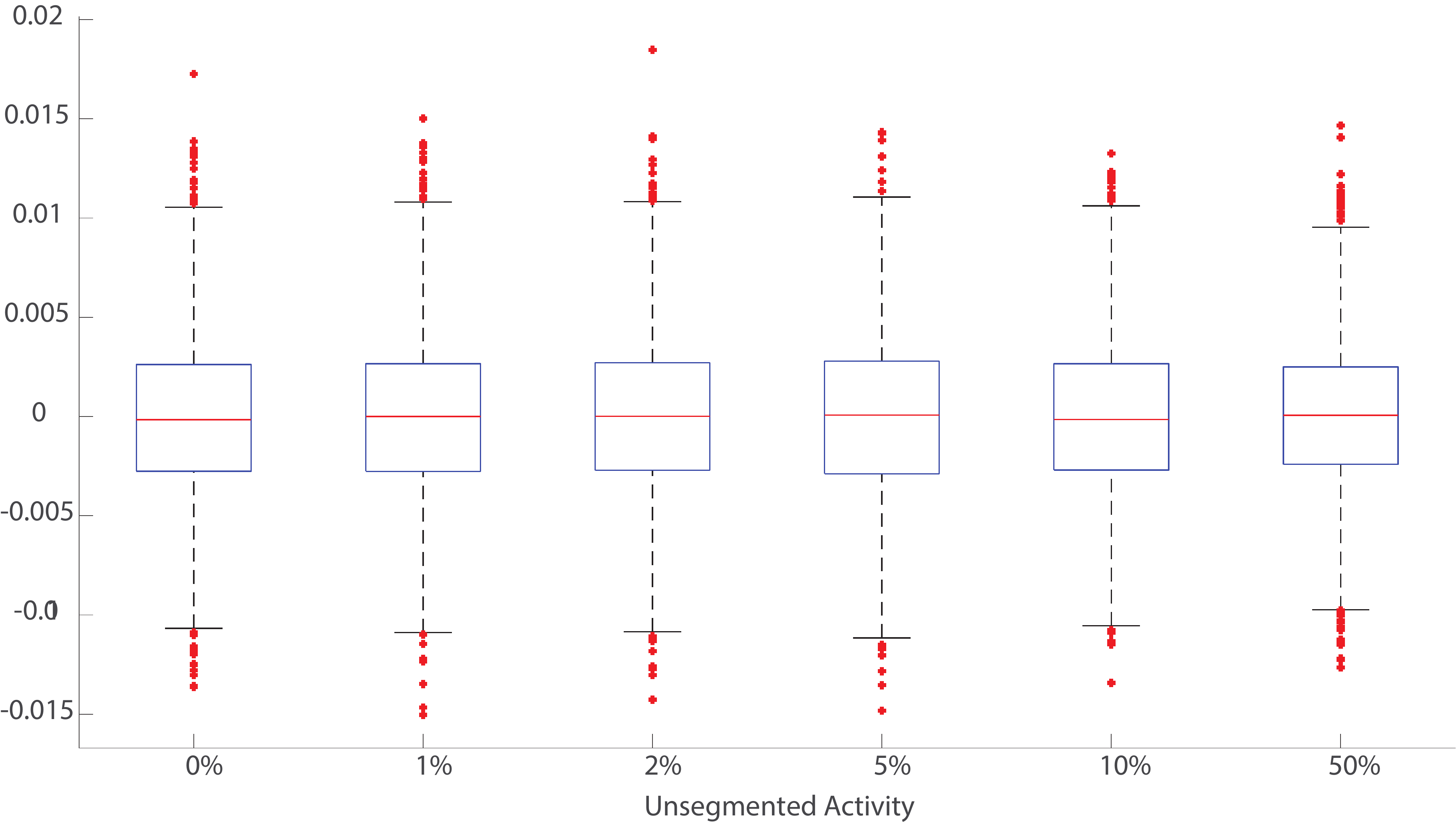}
\caption{Null distributions for different levels of unsegmented activity.
}\label{fig:corr_vs_unsuspected_null_dynamic}
\end{center}
\vspace{-5mm}
\end{figure}

\begin{figure}[H]
\begin{center}
\noindent
\includegraphics[width=0.9\columnwidth]{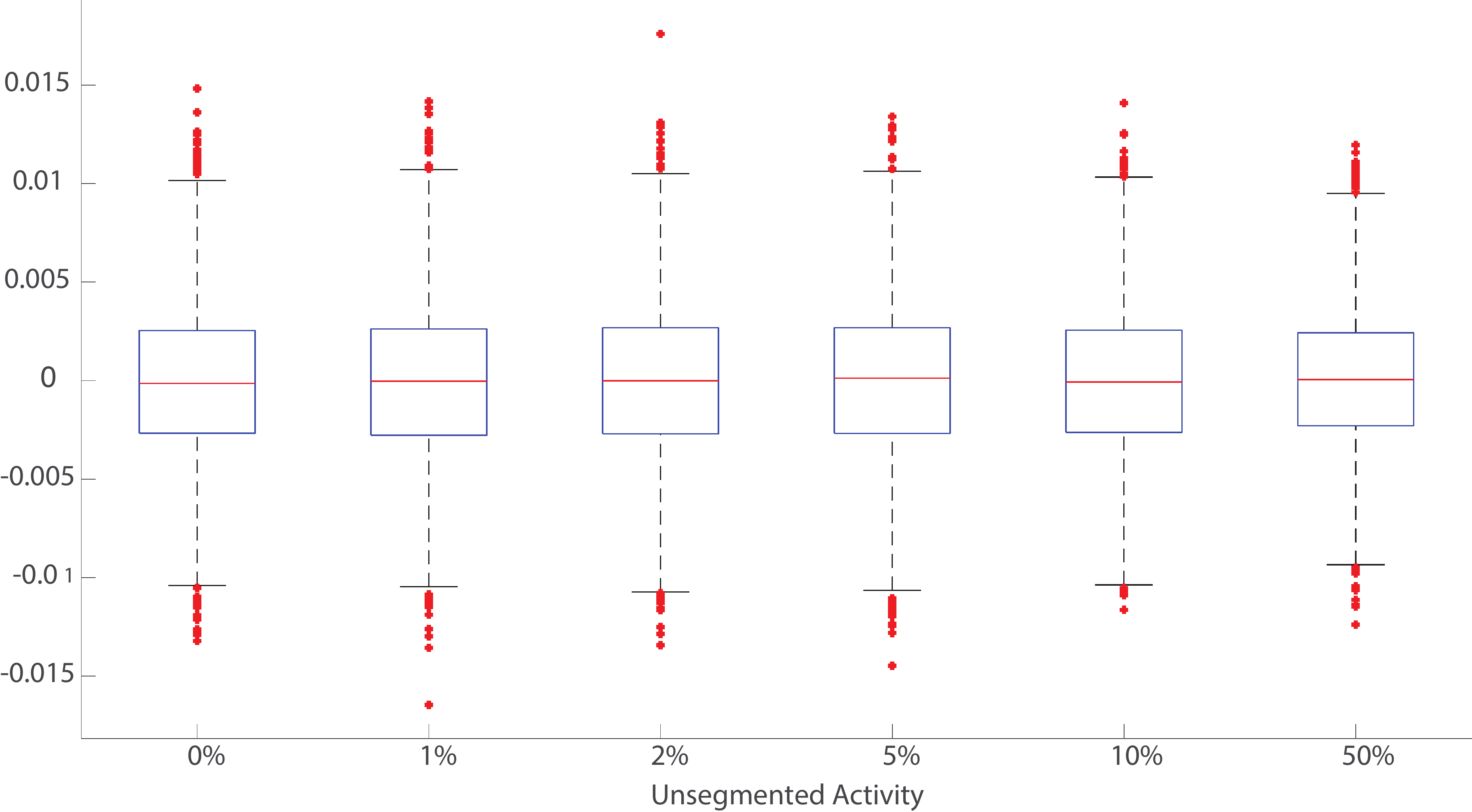}
\caption{Null distributions for the unsegmented activity for the frame by frame solver.
}\label{fig:corr_vs_unsuspected_null_RL}
\end{center}
\vspace{-5mm}
\end{figure}

In the fifth simulation (Figure \ref{fig:corr_vs_nshifts}) we evaluated sensitivity of the solver to alignment errors. In order to do so, we shifted the reference image by 0-10 pixels after registration. Better alignment improves the reconstructions.

\begin{figure}[H]
\begin{center}
\noindent
\includegraphics[width=0.9\columnwidth]{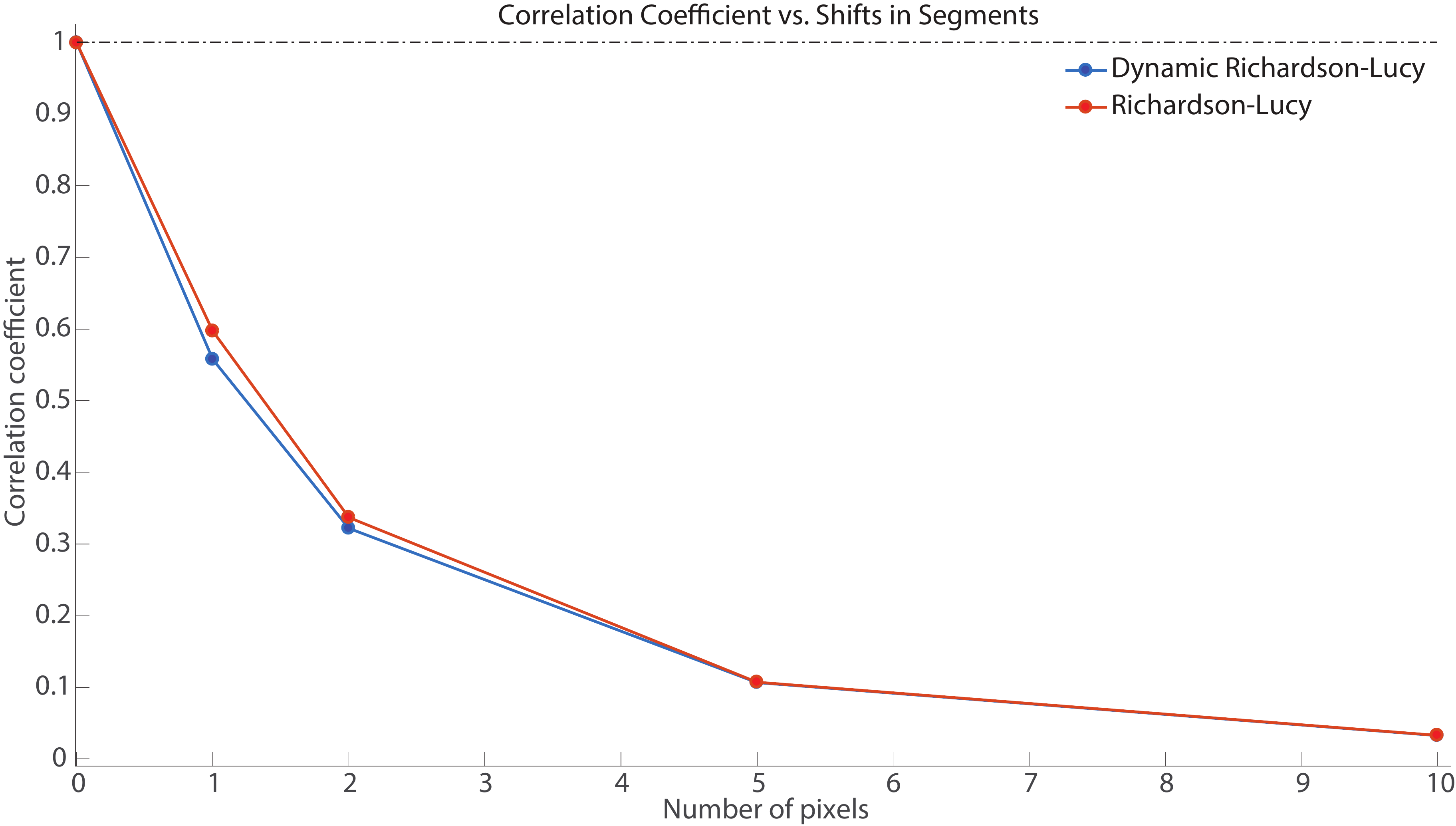}
\caption{The effect of alignment (shifts in pixels) on the reconstructions.
}\label{fig:corr_vs_nshifts}
\end{center}
\vspace{-5mm}
\end{figure}

In the sixth simulation (Figure \ref{fig:corr_vs_alternate_seg}) we evaluated performance of the solver when the reconstruction segmentations do not match the ground-truth activity segmentations. Alternate segmentations lead to a degraded performance which can be partially compensated for by using a finer segmentation than the ground truth.


\begin{figure}[H]
\begin{center}
\noindent
\includegraphics[width=0.9\columnwidth]{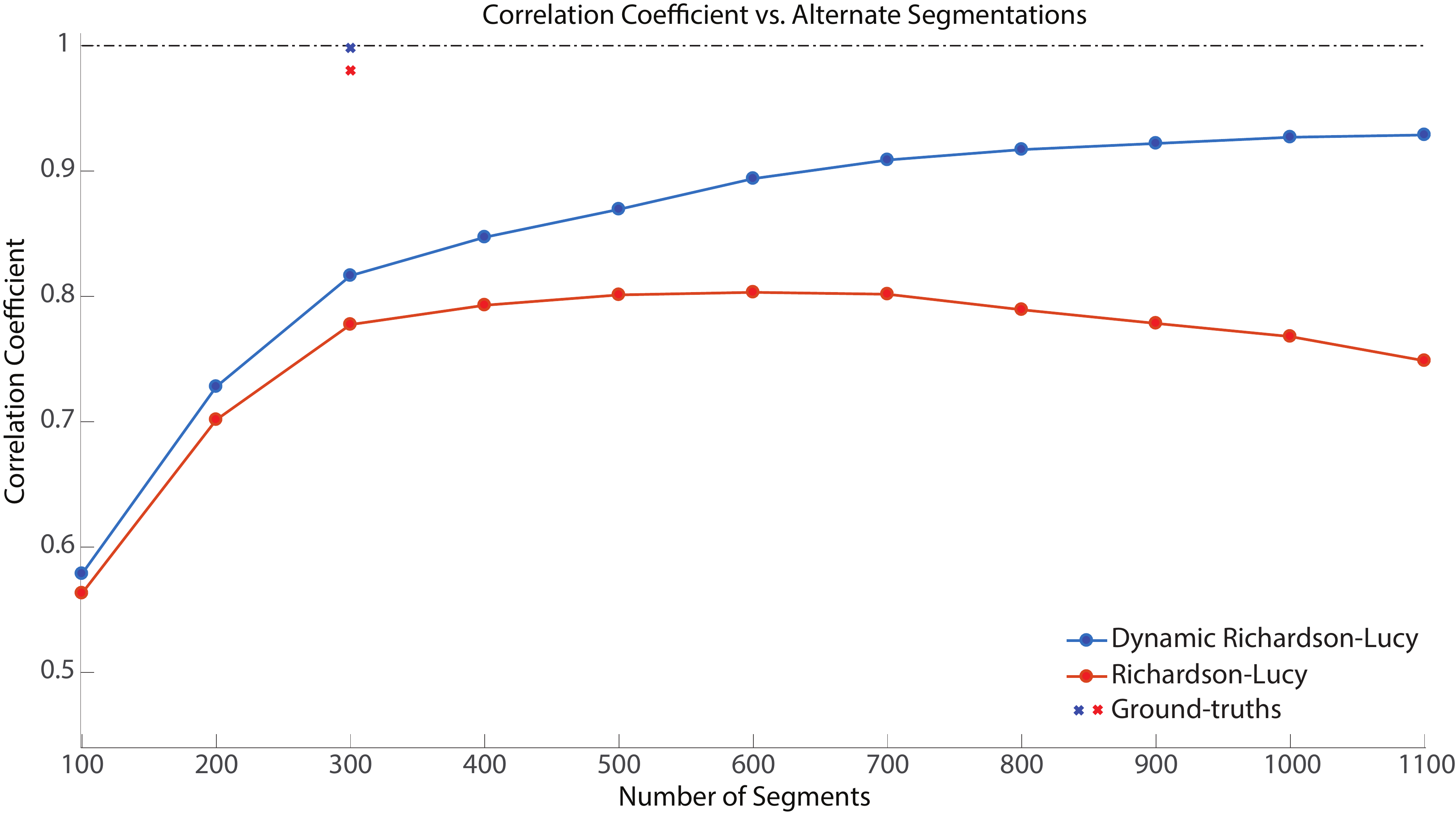}
\caption{The effect of alternate segmentations on the reconstructions.
}\label{fig:corr_vs_alternate_seg}
\end{center}
\vspace{-5mm}
\end{figure}

We next quantified the undesired correlations introduced to the activities (Figures \ref{fig:corr_mtx_dyn} and \ref{fig:corr_mtx_RL}) as a result of the solver for the dynamic RL solver compared to the RL iterations. We generated 3000 frames of activity with random spiking patterns using vines and the extended onion method \cite{lewandowski2009generating}, resulting in correlated  activity with correlations in the range -0.6 -0.6. As can be noted, recovered activities using the dynamic RL iterations significantly improves.


\begin{figure}[H]
\begin{center}
\noindent
\includegraphics[width=0.7\columnwidth]{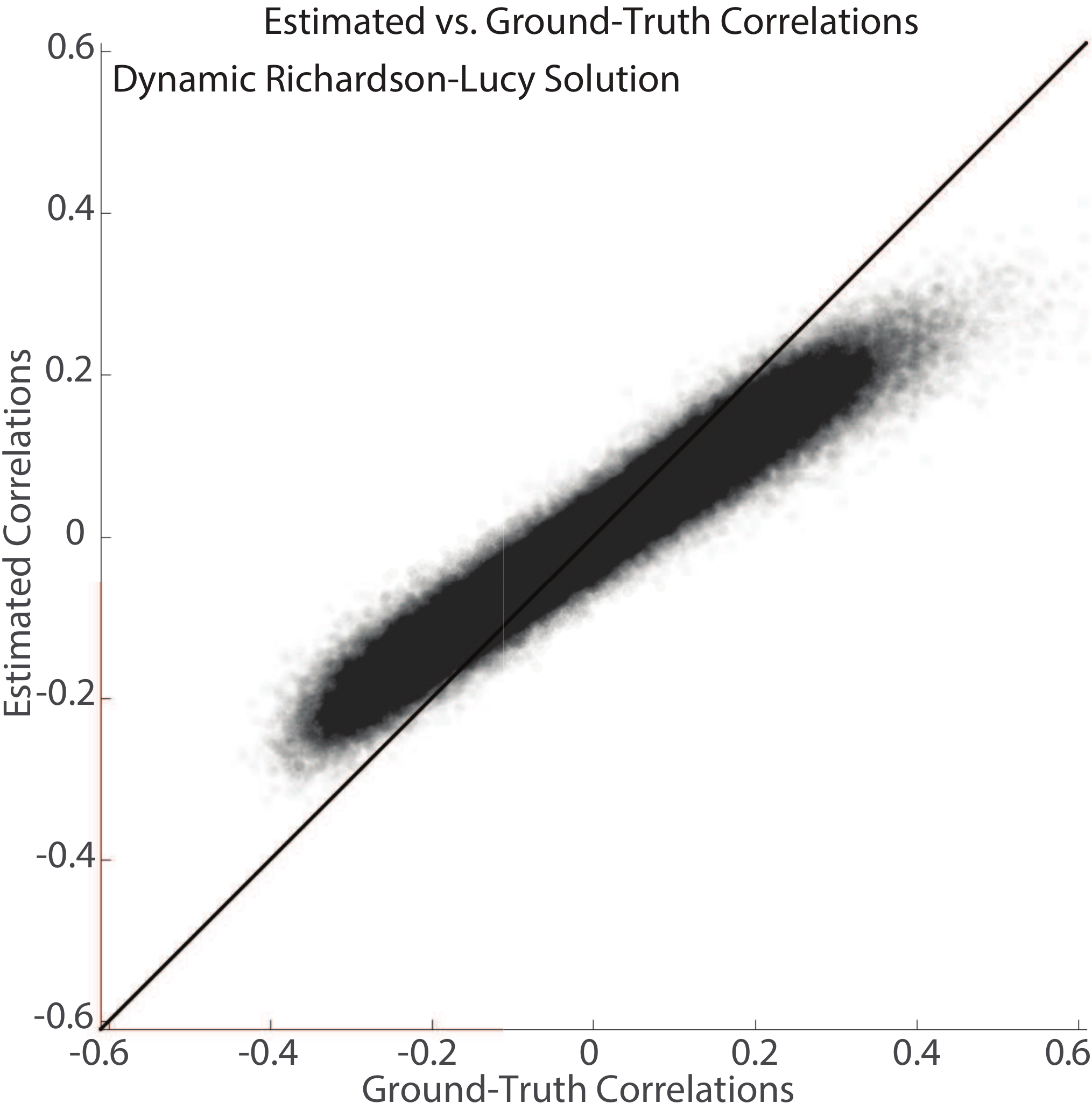}
\caption{Recovered activity correlations using the dynamic solver .
}\label{fig:corr_mtx_dyn}
\end{center}
\vspace{-5mm}
\end{figure}

\begin{figure}[H]
\begin{center}
\noindent
\includegraphics[width=0.7\columnwidth]{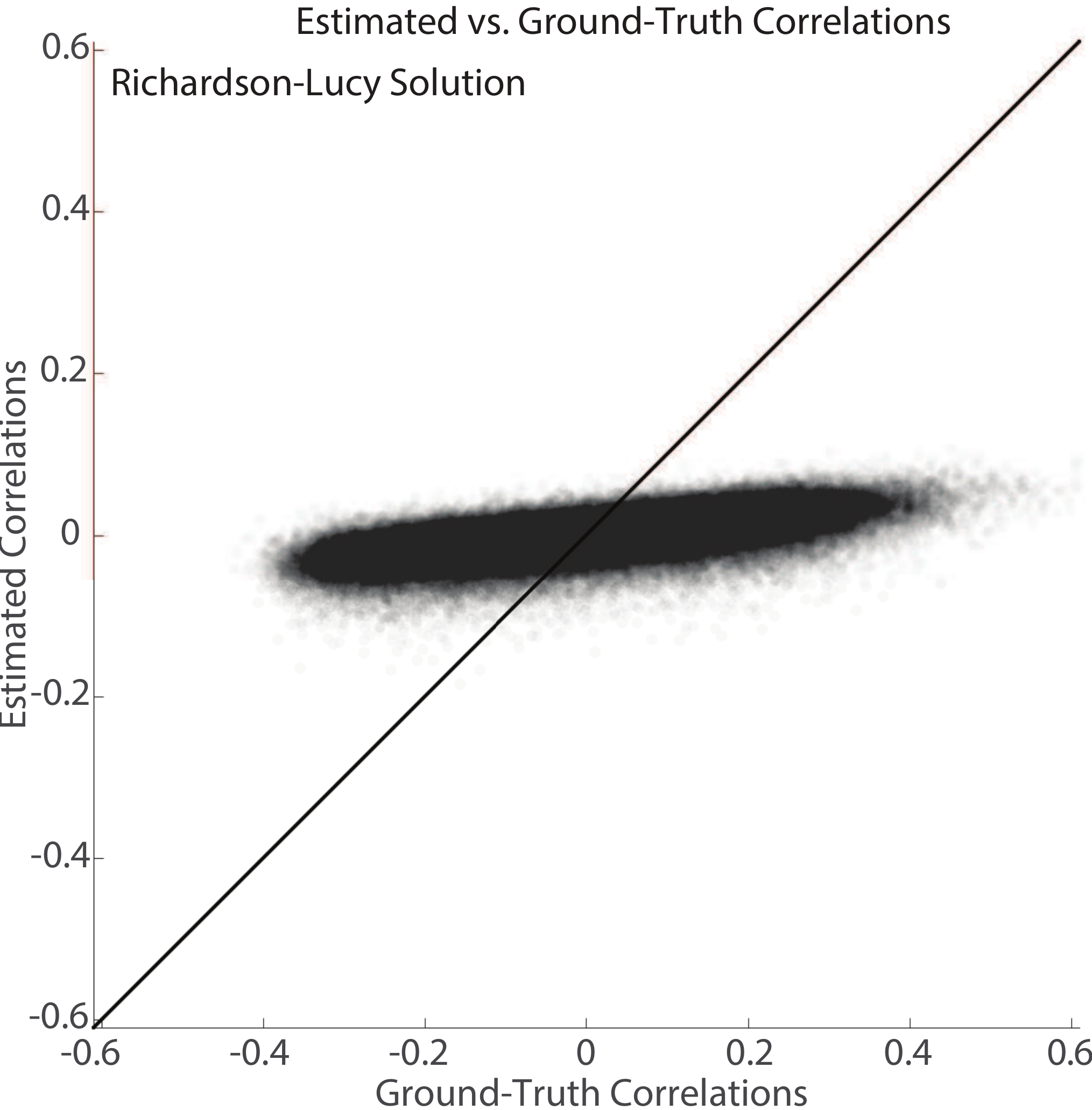}
\caption{Recovered activity correlations using the frame by frame solver.
}\label{fig:corr_mtx_RL}
\end{center}
\vspace{-5mm}
\end{figure}

\subsection{Software}
Software was written in Matlab and LabView FPGA. SLAPMi interfaces with ScanImage (Vidrio Technologies) to perform raster scanning.

\subsection{Measurement Matrix}
The projection matrix ($\mathbf{P}$) is measured in an automated calibration step using a thin ($\ll$1$\mu$m) fluorescent film. Images of the excitation focus in the film, collected by a camera, allow a correspondence to be made between the positions of galvanometer scanners and the location of the resulting line focus. The raster scanning focus is also mapped, allowing us to create a model of the line foci transformed into the space of the sample image obtained by the raster scan.

\subsection{Motion Registration and Alignment}
Recorded SLAPMi data are spatially registered to compensate for sample motion. As with raster imaging, translations of a single resolution element can be sufficient to impact recovery of activity in fine structures, necessitating precise registration.
Accurate registration and source recovery rely on accurate, minimally warped reference raster images. SLAPMi was designed to efficiently interface with freely available ScanImage software (Vidrio Technologies), which is used to collect raster stacks. These images can be warped by many factors, including nonlinearity in the scan pattern, sample motion, and the ‘rolling shutter’ artifact of the raster scan. We estimate and compensate for warping by collecting two sets of reference images interleaved, one with each of the two galvos acting as the fast axis. To compensate for motion and activity variations during reference image acquisition we obtain a large number of stacks and rely on consensus between aligned stacks to reject these artifacts.
Registration of SLAPMi recordings is performed by identifying the 3D translation that maximizes the sum of 1-dimensional correlations (or optionally, Dynamic Time Warping distances) between the recorded signal and the expected projection of the reference image on each of the four scan axes. If the SLM is not used, this objective is maximized using cross-correlations, where the SLM is used, we perform an iterative multiscale grid search.

\section{Concluding Remarks}

In vivo imaging techniques are becoming increasingly specialized, with different methods best suited for different organisms and experimental parameters \cite{yang2017vivo}. Until now, random access imaging has been the most effective method for imaging hundreds of target sites, spanning hundreds of microns, hundreds of times per second, in scattering tissue. SLAPMi performs such measurements in highly dynamic samples at rates exceeding 1kHz. SLAPMi improves upon existing projection microscopy techniques by having lower coherence, higher frame rates, and random access excitation. Methods that scan a static multifocal pattern or Bessel beam have tended to mix sample voxels coherently, making unmixing difficult where objects overlap in their measurements. SLAPMi's angular projections, in contrast, ensure that no two voxels are always mixed together in measurements. Scanned Bessel beam imaging records volume projections as fast as 2D scans, but the 2D scan rate is not increased. SLAPMi records from planes with just four 1D scans. The approach of scanning excitation patterns across an SLM dramatically reduces power usage, and could be used similarly with other projection schemes provided the excitation pattern lies in the focal plane.

SLAPMi has several limitations. Recovered traces become less precise as the number of distinct objects e.g. neurons, imaged increases above 1000. As with all projection methods, certain sample structures are adversarial to source recovery due to the compressed nature of the measurements. In particular, regions where many distinct objects are packed closely together may not be uniquely determined by SLAPMi measurements, and might not be accurately recovered even when the total number of objects imaged is less than 1000. Dim objects surrounded by extremely bright objects are difficult to recover, because Poisson noise originating from bright sources may exceed signal from the dim source on all projection axes. The SLM allows dense or bright regions to be selectively dimmed, and can ameliorate these issues. SLAPMi has a maximum frame rate limited by the 2D galvanometer scanners. In samples compatible with random access imaging having a very small number of target sites, random access imaging may allow higher frame rates. 

SLAPMi achieves the same lateral resolution as raster scanning, but axial sectioning is reduced, as two photon intensity of lines drops off as $\frac{1}{z}$, compared to $\frac{1}{z^2}$ for points. In general we find SLAPMi produces extremely robust results without parameter tuning in a wide variety of sample preparations, making it a practical alternative to other imaging methods.


\chapter{Conclusions and Future Work}
\label{chap:conclusions}

In this thesis we revisited and made improvements over several theoretical aspects of compressive sensing for nonlinear and dynamic models.
 
From a theoretical perspective, in Chapters \ref{chap:ar} and \ref{chap:hawkes} we derived minimax optimal sampling-complexity tradeoffs for autoregressive processes, point processes and generalized linear models where the covariates do not satisfy the conventional i.i.d. assumptions. We consider extension of these theoretical results to multivariate processes as future work. The results on point processes were motivated by their applications in characterizing the self-exciting and history dependence nature of neural spiking activities. Our results on autoregressive processes started from a class project for the Compressive 
Sensing class taught by Professor Piya Pal at UMD. The main idea behind the theoretical guarantees came from  the theoretical results on convergence of eigenvalues of covariance matrices that I learned during the course I took on adaptive filter theory taught by Professor Ali Olfat at University of Tehran. 

From an algorithm design point of view, in Chapter \ref{chap:css} we introduced the idea of compressible state-space models with sparse innovations and provided a fast, optimal recovery algorithm for such models. These models have huge applications in modeling biological signals. Many beautiful intuitions about such state-space models came from suggestions of Professor Prakash Narayan during my research proposal exam at UMD. We consider generalization of these models to heavy-tailed innovations as future work.

Finally, in Chapter \ref{chap:slapmi} we developed a two-photon imaging technique that scans lines of excitation across the sample at multiple angles, recovering high-resolution images from relatively few incoherently multiplexed measurements. By combining traditional Poisson image reconstruction techniques with temporal dynamics we managed to reconstruct neural activity in behaving animals at framerates higher than 1 kHz for Megapixel fields of view. This research was in collaboration with Janelia research campus. Many intuitions about the reconstruction algorithms and the experiment came from the suggestions of several group leaders at Janelia. Combined with the evolution of fast sensors we consider studying neuronal dynamical systems using SLAPMi as future work.

\appendix
\chapter{Proof of Theoretical Results}
\label{chap:proofs}
\section{Proofs of Main Theorems for Autoregressive Processes }
\subsection{The Restricted Strong Convexity of the matrix of covariates}
The first element of the proofs of both Theorems \ref{thm:ar_1} and \ref{thm_OMP} is to establish the Restricted Strong Convexity (RSC) for the matrix $\mathbf{X}$ of covariates formed from the observed data. First, we investigate the closely related {Restricted} Eigenvalue (RE) condition. Let $[\lambda_{\sf min}(s)$, $\lambda_{\sf max}(s)]$ be the smallest interval containing the singular values of $\frac{1}{n} (\mathbf{X}_S' \mathbf{X}_S)$, where $\mathbf{X}_S$ is a sub-matrix $\mathbf{X}$ over an index set $S$ of size $s$.

\begin{definition}[Restricted Eigenvalue Condition]
\label{RE_def}
A matrix $\mathbf{X}$ is said to satisfy the RE condition of order $s$ if $\lambda_{\sf min}(s) > 0$.
\end{definition}
Although the RE condition only restricts $\lambda_{\sf min}(s)$, in the following analysis we also keep track of $\lambda_{\sf max}(s)$, which appears in some of the bounds. Establishing the RSC for $\mathbf{X}$ proceeds in a sequence of lemmas (Lemmas \ref{eig_conv}--\ref{RE_RSC} culminating in Lemma \ref{lem:rsc}). We first show that the RE condition holds for the true covariance of an AR process:
\begin{lemma}[from \cite{grenander1958toeplitz}]
\label{eig_conv}
Let $\mathbf{R} \in \mathbb{R}^{k \times k}$ be the $k \times k$ covariance matrix of a stationary process with power spectral density $S(\omega)$, and denote its maximum and minimum eigenvalues by $\phi_{\max}(k)$ and $\phi_{\sf min}(k)$, respectively. Then, $\phi_{\max}(k)$ is increasing in $k$, $\phi_{ \sf min}(k)$ is decreasing in $k$, and we have
\begin{equation}
\phi_{\sf min}(k) \downarrow \inf_{\omega}S(\omega), \quad \mbox{and} \quad \phi_{\sf max}(k) \uparrow \sup_{\omega}S(\omega).
\end{equation}
\end{lemma}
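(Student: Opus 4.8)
The plan is to exploit the Toeplitz structure of $\mathbf{R}$ together with the integral (Fourier) representation of its Rayleigh quotient. Since the process is stationary, $\mathbf{R}=\mathbf{R}_k=[r_{i-j}]_{i,j=1}^k$ is a Hermitian Toeplitz matrix whose entries are the Fourier coefficients $r_\ell=\tfrac{1}{2\pi}\int_{-\pi}^{\pi}S(\omega)e^{j\ell\omega}\,d\omega$ of the power spectral density. Hence for any $\mathbf{a}=(a_1,\dots,a_k)\in\mathbb{C}^k$,
\[
\mathbf{a}^{*}\mathbf{R}_k\mathbf{a}=\sum_{i,j=1}^k\bar a_i a_j r_{i-j}=\frac{1}{2\pi}\int_{-\pi}^{\pi}S(\omega)\,\Bigl|\sum_{\ell=1}^k a_\ell e^{j\ell\omega}\Bigr|^2 d\omega,
\]
while Parseval's identity gives $\tfrac{1}{2\pi}\int_{-\pi}^{\pi}\bigl|\sum_\ell a_\ell e^{j\ell\omega}\bigr|^2 d\omega=\|\mathbf{a}\|_2^2$. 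Dividing, the Rayleigh quotient is a weighted average of the values of $S$, so
\[
\inf_{\omega}S(\omega)\le\phi_{\sf min}(k)\le\phi_{\sf max}(k)\le\sup_{\omega}S(\omega)
\]
for every $k$; since $S$ is the spectral density of a stable AR process it is continuous and bounded on $[-\pi,\pi]$, so both extrema are finite and attained.

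For monotonicity I would invoke the Cauchy interlacing theorem: $\mathbf{R}_k$ is the leading $k\times k$ principal submatrix of $\mathbf{R}_{k+1}$, hence $\phi_{\sf max}(k)\le\phi_{\sf max}(k+1)$ and $\phi_{\sf min}(k)\ge\phi_{\sf min}(k+1)$. Combined with the uniform bounds above, $(\phi_{\sf max}(k))_k$ is nondecreasing and bounded and $(\phi_{\sf min}(k))_k$ is nonincreasing and bounded, so both sequences converge; it remains only to identify the limits with $\sup_\omega S(\omega)$ and $\inf_\omega S(\omega)$.

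For this last and most delicate step, fix $\varepsilon>0$ and choose $\omega_0$ with $S(\omega_0)>\sup_\omega S(\omega)-\varepsilon$. I would take the unit test vector $a_\ell=\tfrac{1}{\sqrt k}e^{-j\ell\omega_0}$, for which $\bigl|\sum_{\ell=1}^k a_\ell e^{j\ell\omega}\bigr|^2$ is the Fej\'er kernel $F_k(\omega-\omega_0)$, an approximate identity concentrating at $\omega_0$ with $\tfrac{1}{2\pi}\int_{-\pi}^{\pi}F_k=1$. By continuity of $S$ at $\omega_0$ one gets $\mathbf{a}^{*}\mathbf{R}_k\mathbf{a}=\tfrac{1}{2\pi}\int_{-\pi}^{\pi}S(\omega)F_k(\omega-\omega_0)\,d\omega\to S(\omega_0)$, so $\phi_{\sf max}(k)\ge S(\omega_0)-o(1)>\sup_\omega S(\omega)-2\varepsilon$ for $k$ large; letting $\varepsilon\downarrow 0$ and recalling $\phi_{\sf max}(k)\le\sup_\omega S(\omega)$ yields $\phi_{\sf max}(k)\uparrow\sup_\omega S(\omega)$. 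The symmetric choice of $\omega_0$ near an argmin of $S$ gives $\phi_{\sf min}(k)\downarrow\inf_\omega S(\omega)$. The main obstacle is precisely this convergence of the Fej\'er means — verifying that the weighting kernels can be steered so as to place essentially all of their mass near an extremal frequency; the rest is bookkeeping with interlacing and Parseval. Alternatively, one may cite Szeg\H{o}'s classical limit theorem for Hermitian Toeplitz forms directly, which is the route underlying \cite{grenander1958toeplitz}.
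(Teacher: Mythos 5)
Your proof is correct. The paper itself does not prove this lemma at all: it is quoted directly from Grenander and Szeg\H{o}'s monograph on Toeplitz forms (and, in the GLM appendix, the analogous Proposition is likewise dispatched with a citation). What you have written is essentially a self-contained reconstruction of that classical argument — the integral representation of the Toeplitz quadratic form against $|{\textstyle\sum_\ell a_\ell e^{j\ell\omega}}|^2$ plus Parseval for the uniform bounds, Cauchy interlacing of leading principal submatrices for monotonicity, and Fej\'er-kernel test vectors to identify the limits. The only caveat worth flagging is that your identification step uses continuity of $S$ at a near-extremal frequency; for a general stationary process with merely measurable, bounded spectral density the correct statement involves the essential supremum and infimum (one then argues via Lebesgue points or sets of positive measure rather than a single point of continuity). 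For the processes actually used in the paper — stable AR models, whose spectral density is a continuous rational function of $e^{j\omega}$ — this distinction is immaterial, so your argument fully covers the setting in which the lemma is invoked.
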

\noindent This result gives us the following corollary:
\begin{corollary}[Singular Value Spread of $\mathbf{R}$]
\label{cor:eig_conv}
Under the {sufficient stability assumption}, the singular values  of the covariance $\mathbf{R}$ of an AR process lie in the interval $\left [\frac{\sigma^2_{\sf w}}{8 \pi}, \frac{\sigma^2_{\sf w}}{2 \pi \eta^2} \right]$.
\end{corollary}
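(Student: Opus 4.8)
\medskip
\noindent \textbf{Proof proposal for Corollary \ref{cor:eig_conv}.}
The plan is to reduce the statement to a pointwise estimate on the power spectral density and then invoke Lemma \ref{eig_conv}. Since $\mathbf{R}$ is the covariance matrix of a strictly stationary process, it is symmetric and positive semidefinite, so its singular values coincide with its eigenvalues; thus it suffices to bound the eigenvalues of $\mathbf{R}$. By Lemma \ref{eig_conv}, for the $k\times k$ covariance matrix of a stationary process with spectral density $S(\omega)$ one has $\phi_{\sf min}(k)\downarrow \inf_\omega S(\omega)$ and $\phi_{\sf max}(k)\uparrow \sup_\omega S(\omega)$; in particular $\inf_\omega S(\omega)\le \phi_{\sf min}(k)$ and $\phi_{\sf max}(k)\le \sup_\omega S(\omega)$ for \emph{every} finite $k$. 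Applying this to $\mathbf{R}=\mathbf{R}_{p\times p}$, the eigenvalues of $\mathbf{R}$ lie in $\left[\inf_\omega S(\omega),\ \sup_\omega S(\omega)\right]$, and it remains to control these two quantities.

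Next I would estimate the denominator of the spectral density (\ref{psd}). Set $\Theta(\omega):=\sum_{\ell=1}^p \theta_\ell e^{-j\ell\omega}$. By the triangle inequality and the sufficient stability assumption $\|\boldsymbol{\theta}\|_1=\sum_{\ell=1}^p|\theta_\ell|\le 1-\eta$,
\begin{equation*}
\eta \;\le\; 1-|\Theta(\omega)| \;\le\; \bigl|1-\Theta(\omega)\bigr| \;\le\; 1+|\Theta(\omega)| \;\le\; 2-\eta \;\le\; 2 ,
\end{equation*}
so that $\eta^2 \le |1-\Theta(\omega)|^2 \le 4$ uniformly in $\omega$. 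Substituting into (\ref{psd}), with the $1/(2\pi)$ normalization of $S(\omega)$ under which Lemma \ref{eig_conv} applies, gives $\frac{\sigma_{\sf w}^2}{8\pi}\le S(\omega)\le \frac{\sigma_{\sf w}^2}{2\pi\eta^2}$ for all $\omega$. Combining with the previous paragraph shows that the singular values of $\mathbf{R}$ lie in $\left[\frac{\sigma_{\sf w}^2}{8\pi},\ \frac{\sigma_{\sf w}^2}{2\pi\eta^2}\right]$, as claimed.

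This argument is essentially bookkeeping; the only points needing care are (i) noting that the Grenander--Szeg\H{o} monotonicity in Lemma \ref{eig_conv} yields the two-sided bound at every finite dimension $p$, not merely asymptotically, and (ii) keeping the normalization of $S(\omega)$ consistent with the autocovariance-inversion convention implicit in Lemma \ref{eig_conv} so that the constants $8\pi$ and $2\pi$ come out correctly.
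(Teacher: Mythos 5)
Your proof is correct and follows essentially the same route as the paper: bound $|1-\Theta(\omega)|$ between $\eta$ and $2$ using $\|\boldsymbol{\theta}\|_1\le 1-\eta$, deduce $\frac{\sigma_{\sf w}^2}{8\pi}\le S(\omega)\le \frac{\sigma_{\sf w}^2}{2\pi\eta^2}$, and invoke Lemma \ref{eig_conv} to transfer these spectral bounds to the eigenvalues of $\mathbf{R}$. The paper states this more tersely, but the argument is identical, and your remarks on the finite-$p$ validity of the Grenander--Szeg\H{o} bounds and the $1/(2\pi)$ normalization are exactly the right points of care.
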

\begin{proof}
For an AR($p$) process
\[
S(\omega) = \frac{1}{2\pi}\frac{\sigma^2_{\sf w}}{|1- \sum_{\ell=1}^p \theta_{\ell} e^{-j\ell\omega}|^2}.
\]
Combining $\|\boldsymbol{\theta}\|_1 \leq 1-\eta < 1$ with Lemma \ref{eig_conv} proves the claim.
\end{proof}

Note that by Lemma \ref{eig_conv}, the result of Corollary \ref{eig_conv} not only holds for AR processes, but also for \textit{any} stationary process satisfying $\inf_\omega S(\omega) >0$ and $\sup_\omega S(\omega) < \infty$, i.e., a process with finite spectral spread.

We next establish conditions for the RE condition to hold for the empirical covariance $\widehat{\mathbf{R}}$:

\begin{lemma}\label{lem:re}
If the singular values of $\mathbf{R}$ lie in the interval $[\lambda_{\sf min}, \lambda_{\sf max}]$, then $\mathbf{X}$ satisfies the RE condition of order { $s_\star$} with parameters ${\lambda}_{\sf min}(s_\star) = \lambda_{\sf min} - t s_\star$ and ${\lambda}_{\sf max} (s_\star)= \lambda_{\sf max} +ts_\star$, where $t = \max_{i,j} |\widehat{R}_{ij}-R_{ij}|$.
\end{lemma}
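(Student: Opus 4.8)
The plan is to derive the claim by combining Cauchy's eigenvalue interlacing theorem with a crude but sufficient perturbation bound. Write $\widehat{\mathbf{R}} = \tfrac{1}{n}\mathbf{X}'\mathbf{X}$ for the sample covariance whose principal submatrices define $\lambda_{\sf min}(\cdot)$ and $\lambda_{\sf max}(\cdot)$; for any index set $S$ with $|S| = s_\star$ the matrix $\tfrac{1}{n}\mathbf{X}_S'\mathbf{X}_S$ is exactly the principal submatrix $\widehat{\mathbf{R}}_{S,S}$ and, being positive semidefinite, its singular values coincide with its eigenvalues. Hence by the variational characterization
\[
\lambda_{\sf min}(s_\star) = \min_{|S| = s_\star}\ \min_{\substack{\|\mathbf{v}\|_2 = 1\\ \support(\mathbf{v})\subseteq S}} \mathbf{v}'\widehat{\mathbf{R}}\mathbf{v}, \qquad \lambda_{\sf max}(s_\star) = \max_{|S| = s_\star}\ \max_{\substack{\|\mathbf{v}\|_2 = 1\\ \support(\mathbf{v})\subseteq S}} \mathbf{v}'\widehat{\mathbf{R}}\mathbf{v},
\]
so it suffices to bound $\mathbf{v}'\widehat{\mathbf{R}}\mathbf{v}$ for an arbitrary unit vector $\mathbf{v}$ supported on a set $S$ of size $s_\star$, uniformly in $S$.

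For such $\mathbf{v}$, the next step is to split $\mathbf{v}'\widehat{\mathbf{R}}\mathbf{v} = \mathbf{v}'\mathbf{R}\mathbf{v} + \mathbf{v}'(\widehat{\mathbf{R}} - \mathbf{R})\mathbf{v}$. The first term equals $\mathbf{v}_S'\mathbf{R}_{S,S}\mathbf{v}_S$, and since $\mathbf{R}_{S,S}$ is a principal submatrix of $\mathbf{R}$, Cauchy interlacing forces its eigenvalues into $[\lambda_{\sf min},\lambda_{\sf max}]$; hence $\mathbf{v}'\mathbf{R}\mathbf{v}\in[\lambda_{\sf min},\lambda_{\sf max}]$. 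For the second term, observe that $\mathbf{E}_S := (\widehat{\mathbf{R}} - \mathbf{R})_{S,S}$ is a symmetric $s_\star\times s_\star$ matrix whose entries are bounded in magnitude by $t = \max_{i,j}|\widehat{R}_{ij} - R_{ij}|$. Using that for symmetric matrices the operator norm is dominated by the induced $\ell_\infty$ norm, i.e. the maximum absolute row sum, one gets $\|\mathbf{E}_S\|\le s_\star t$, so $|\mathbf{v}'(\widehat{\mathbf{R}} - \mathbf{R})\mathbf{v}|\le s_\star t\,\|\mathbf{v}\|_2^2 = s_\star t$. Combining the two estimates yields $\mathbf{v}'\widehat{\mathbf{R}}\mathbf{v}\in[\lambda_{\sf min} - s_\star t,\ \lambda_{\sf max} + s_\star t]$ uniformly over $S$ and $\mathbf{v}$, which is precisely $\widetilde{\lambda}_{\sf min}(s_\star) = \lambda_{\sf min} - s_\star t$ and $\widetilde{\lambda}_{\sf max}(s_\star) = \lambda_{\sf max} + s_\star t$; in particular the RE condition of Definition \ref{RE_def} holds as soon as $\lambda_{\sf min} > s_\star t$.

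The computation is essentially routine; the only point requiring care is the perturbation step, namely passing from an entrywise control of $\widehat{\mathbf{R}} - \mathbf{R}$ to a spectral-norm control on its $s_\star$-dimensional principal blocks. The bound $\|\mathbf{E}_S\|\le s_\star t$ is the source of the multiplicative factor $s_\star$ in $\widetilde{\lambda}_{\sf min}(s_\star)$ and $\widetilde{\lambda}_{\sf max}(s_\star)$, and hence of the requirement — made precise in the subsequent lemmas — that $n$ be large enough for the sample autocovariance to concentrate sharply (i.e. for $t$ to be small) so that $\widetilde{\lambda}_{\sf min}(s_\star)$ stays bounded away from zero.
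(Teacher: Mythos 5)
Your proposal is correct and takes essentially the same route as the paper: both split $\mathbf{v}'\widehat{\mathbf{R}}\mathbf{v}$ into $\mathbf{v}'\mathbf{R}\mathbf{v}$ plus a perturbation term controlled entrywise by $t$, with sparsity supplying the factor $s_\star$ and the spectral bound on $\mathbf{R}$ supplying $[\lambda_{\sf min},\lambda_{\sf max}]$. The paper simply bounds the perturbation by $t\|\boldsymbol{\theta}\|_1^2 \le t\,s_\star\|\boldsymbol{\theta}\|_2^2$ for $s_\star$-sparse $\boldsymbol{\theta}$ instead of passing through the row-sum operator-norm bound on the principal block $\mathbf{E}_S$, which is the same estimate in different clothing.
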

\begin{proof}
Let $\widehat{\mathbf{R}} = \frac{1}{n} (\mathbf{X}^T \mathbf{X})$. For every $s_\star$-sparse $\boldsymbol{\theta}$ we have
\[
\boldsymbol{\theta}' \widehat{\mathbf{R}} \boldsymbol{\theta} \geq \boldsymbol{\theta}' {\mathbf{R}} \boldsymbol{\theta} - t \|\boldsymbol{\theta}\|_1^2 \geq (\lambda_{\sf min} - t s_\star) \|\boldsymbol{\theta}\|_2^2,
\]
\[\boldsymbol{\theta}' \widehat{\mathbf{R}} \boldsymbol{\theta} \leq \boldsymbol{\theta}' {\mathbf{R}} \boldsymbol{\theta} + t \|\boldsymbol{\theta}\|_1^2 \leq (\lambda_{\sf max} + t s_\star) \|\boldsymbol{\theta}\|_2^2,\]
which proves the claim.
\end{proof}

We will next show that $t$ can be suitably controlled with high probability. Before doing so, we state a {key} result of Rudzkis \cite{rudzkis1978large} regarding the concentration of second-order empirical sums from stationary processes:
\begin{lemma}
\label{conc_biased}
Let $\mathbf{x}_{-p+1}^n$ be samples of a stationary process which satisfies 
\vspace{-.2cm}
\begin{equation}
\label{eq:wold}
x_k = \sum_{j= -\infty}^\infty b_{j-k} w_j,
\vspace{-.2cm}
\end{equation}
\noindent where $w_k$'s are i.i.d random variables with
\begin{equation}
\label{eq: bounded_moments}
|\mathbb{E}(|w_j|^k)| \leq ({\tilde{c} \sigma_{\sf w}})^k k!, \ k=2, 3, \cdots,
\end{equation} 
for some constant $\tilde{c}$ and
\vspace{-.2cm}
\begin{equation}
\label{eq: abs_sum}
\sum_{j=-\infty}^\infty |b_j| < \infty.
\end{equation}
\vspace{-.2cm}
Then, the \textit{biased} sample autocorrelation given by
\[\widehat{r}^b_k=\frac{1}{n+k}\sum_{i,j=1,j-i=k}^{n+k}x_ix_j\]
\vspace{-.2cm}
\noindent satisfies
\begin{equation}
\label{conc_ineq_biased}
\mathbb{P}(|\widehat{r}^b_k - r^b_k|>t) \leq c_1 (n+k) \exp \left(-{\frac{c_2}{\sigma_{\sf w}} \frac{t^2 (n+k)}{c_3 \sigma_{\sf w}^3 + t^{3/2} \sqrt{n+k}}}\right),
\end{equation}
for { positive absolute constants $c_1$, $c_2$ and $c_3$ which are independent of the dimensions of the problem. In particular, if $x_k = w_k$, i.e., a sub-Gaussian white noise process, $c_3$ vanishes.}
\end{lemma}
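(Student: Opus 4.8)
The statement to be proved, Lemma~\ref{conc_biased}, is attributed to Rudzkis~\cite{rudzkis1978large}, so the ``proof'' here is really a plan for assembling the large-deviation bound from the structure of the linear process. The plan is to reduce the concentration of the biased sample autocorrelation $\widehat r_k^b = \frac{1}{n+k}\sum_{j-i=k} x_i x_j$ to a concentration statement for a quadratic form in the underlying i.i.d.\ innovations $\{w_j\}$, and then apply a Bernstein-type inequality for such quadratic forms under the moment hypothesis~\eqref{eq: bounded_moments}.

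First I would substitute the Wold-type representation~\eqref{eq:wold} into $\widehat r_k^b$. Writing $x_i = \sum_m b_{m-i} w_m$, the centered sum $\widehat r_k^b - r_k^b$ becomes a bilinear form $\sum_{m,m'} a_{m,m'}^{(n,k)} (w_m w_{m'} - \mathbb{E}[w_m w_{m'}])$, where the coefficient array $a^{(n,k)}$ is built from convolutions of the filter coefficients $\{b_j\}$ with the indicator of the summation window $\{(i,j): 1\le i,j\le n+k,\ j-i=k\}$. The absolute summability~\eqref{eq: abs_sum} is what guarantees that the relevant operator norm and Hilbert--Schmidt norm of $a^{(n,k)}$ are controlled: the $\ell_1\!\to\!\ell_1$ (equivalently $\ell_\infty\!\to\!\ell_\infty$) norm is bounded by $\|b\|_1^2$ uniformly in $n,k$, and the Frobenius norm scales like $\sqrt{n+k}\,\|b\|_1^2$ because each of the $O(n+k)$ diagonal-type entries contributes an $O(1)$ convolution sum. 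The decoupling into a ``main diagonal'' part (giving a sum of independent, centered, sub-exponential terms $w_m^2 - \mathbb{E}w_m^2$) and an ``off-diagonal'' part (a genuinely bilinear chaos) is the standard Hanson--Wright decomposition; under the factorial moment growth~\eqref{eq: bounded_moments} each $w_m$ is sub-exponential with parameter $O(\tilde c \sigma_{\sf w})$, so the classical quadratic-form Bernstein bound applies.

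Next I would feed these two norm estimates into a Bernstein inequality for quadratic forms of sub-exponential variables: the tail has a Gaussian regime governed by $t^2/(\sigma_{\sf w}^2 \cdot \text{(Frobenius norm)}^2)$ and a heavier regime governed by $t/(\sigma_{\sf w} \cdot \text{(operator norm)})$. Substituting $\|a^{(n,k)}\|_F^2 \asymp (n+k)\sigma_{\sf w}^4$-type scaling and tracking the $\sigma_{\sf w}$ powers gives, after a union bound over the $O(n+k)$ ``lags'' inside the window (this is where the prefactor $c_1(n+k)$ comes from), precisely the form
\begin{equation*}
\mathbb{P}\bigl(|\widehat r_k^b - r_k^b| > t\bigr) \le c_1 (n+k) \exp\!\left(-\frac{c_2}{\sigma_{\sf w}}\,\frac{t^2 (n+k)}{c_3 \sigma_{\sf w}^3 + t^{3/2}\sqrt{n+k}}\right),
\end{equation*}
where the $t^{3/2}\sqrt{n+k}$ term in the denominator is the crossover between the two Bernstein regimes (reflecting the sub-exponential, rather than sub-Gaussian, nature of products $w_m w_{m'}$). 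When $x_k = w_k$ the filter is trivial, $b_j = \delta_{j0}$, so there is no off-diagonal chaos beyond the single shifted diagonal; the ``Gaussian-regime'' constant associated with the Frobenius term can be absorbed, and $c_3$ vanishes, matching the stated special case.

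The main obstacle I expect is bookkeeping the dependence of the constants on $\sigma_{\sf w}$ and on $\|b\|_1$ through the convolution structure: one must verify that the coefficient array $a^{(n,k)}$ really does have operator norm $O(1)$ and Frobenius norm $O(\sqrt{n+k})$ \emph{uniformly} in $n$ and $k$, which is exactly the point at which~\eqref{eq: abs_sum} is indispensable and where a naive bound would lose a factor of $(n+k)$. A secondary subtlety is that $\widehat r_k^b$ is \emph{biased} (normalized by $n+k$ rather than by the true count of overlapping pairs), which is deliberate here because it makes the summation window a clean fixed rectangle and keeps the quadratic-form coefficients non-negative and easy to bound; one should note that this bias is $O(k/(n+k))$ and is handled separately when Lemma~\ref{conc_biased} is later used to control $t = \max_{i,j}|\widehat R_{ij} - R_{ij}|$ in Lemma~\ref{lem:re}. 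Since the statement is quoted verbatim from~\cite{rudzkis1978large}, in the paper itself I would simply cite it and defer these verifications, but the sketch above is the route one would take to reconstruct it.
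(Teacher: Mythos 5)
Your proposal ends exactly where the paper does: the paper's entire proof of Lemma~\ref{conc_biased} is a citation, stating that it is a special case of Theorem~4 (under Condition~2 of Remark~3) of \cite{rudzkis1978large}, with the remark that for $x_k = w_k$ the constant $H$ in Lemma~7 of \cite{rudzkis1978large} vanishes, which is why $c_3$ vanishes. Since you also conclude that one would simply cite Rudzkis and defer the quadratic-form bookkeeping, your approach is essentially the same as the paper's; the Hanson--Wright-style sketch is extra scaffolding the paper never carries out.
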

\begin{proof}
The lemma is a special case of Theorem 4 under Condition 2 of Remark 3 in \cite{rudzkis1978large}. {For the special case of $x_k = w_k$, the constant $H$ in Lemma 7 of \cite{rudzkis1978large} and hence $c_3$ vanish.}
\end{proof}

{Using the result of} Lemma \ref{conc_biased}, we can control $t$ and establish the RE condition for $\widehat{\mathbf{R}}$ as follows:
\begin{lemma}\label{lem:re2}
Let $m$ be a positive integer. Then, $\mathbf{X}$ satisfies the RE condition of order $(m+1)s$ with a constant $\lambda_{\sf min}/2$ with probability at least
\vspace{-.4cm}
\begin{equation}
1 - c_1 p^2 (n+p) \exp \left(-\frac{c_4 \sqrt{\frac{n}{s}}}{1 + c_5 \frac{n+p}{\left(\frac{n}{s}\right)^{3/2}}}\right),
\end{equation}
\vspace{-.2cm}
\noindent where $c_1$ is the same as in Lemma \ref{conc_biased}, $c_4 = \frac{c_2}{\sigma_{\sf w}} \sqrt{ \frac{\lambda_{\sf min}}{2(m+1)}}$ and $c_5 = \frac{c_3 \sigma_{\sf w}^3}{\left( \frac{\lambda_{\sf min}}{2(m+1)}\right)^{3/2}}$.
\end{lemma}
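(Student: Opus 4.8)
The plan is to combine Lemma~\ref{lem:re} (which converts a uniform entrywise bound $t = \max_{i,j}|\widehat R_{ij} - R_{ij}|$ into an RE guarantee of order $s_\star$ with parameter $\lambda_{\sf min} - t s_\star$) with the concentration inequality of Lemma~\ref{conc_biased}. Setting $s_\star = (m+1)s$ and demanding $\lambda_{\sf min} - t (m+1) s \ge \lambda_{\sf min}/2$ forces the choice $t = \tfrac{\lambda_{\sf min}}{2(m+1)s}$; so the whole statement reduces to controlling $\mathbb{P}\big(\max_{i,j}|\widehat R_{ij} - R_{ij}| > t\big)$ for this value of $t$.

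First I would reduce the maximum over matrix entries to a single autocovariance deviation. The entries $\widehat R_{ij}$ of the $p\times p$ sample covariance are (up to the indexing used in the definition of $\mathbf{X}$) sample autocorrelations $\widehat r_{|i-j|}$ at lags ranging over $0,1,\dots,p-1$, so a union bound over at most $p^2$ pairs gives $\mathbb{P}(\max_{i,j}|\widehat R_{ij} - R_{ij}| > t) \le p^2 \max_k \mathbb{P}(|\widehat r_k - r_k| > t)$. Strictly speaking the $\widehat R_{ij}$ are the \emph{unbiased} sample autocorrelations formed from $n$ products, whereas Lemma~\ref{conc_biased} is stated for the biased estimator $\widehat r_k^b$ with normalization $1/(n+k)$; the gap $|\widehat r_k - \widehat r_k^b|$ is deterministically $\mathcal{O}(k/n)$ times a bounded quantity and, since $k \le p$ and we are in the regime $n > d_1 s(p\log p)^{1/2}$, this bias is of smaller order than $t$ and can be absorbed into the constants (alternatively one argues directly on $\widehat r_k^b$ and carries the bias through to Theorem~\ref{thm:ar_1}, which is what the later proof does). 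I would then note that the sufficient stability assumption guarantees the AR process admits the Wold-type representation $x_k = \sum_j b_{j-k} w_j$ with $\sum_j |b_j| < \infty$ and that sub-Gaussian innovations satisfy the moment bound \eqref{eq: bounded_moments}, so Lemma~\ref{conc_biased} applies with absolute constants $c_1,c_2,c_3$.

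Next I would plug $t = \tfrac{\lambda_{\sf min}}{2(m+1)s}$ into \eqref{conc_ineq_biased}. Writing $n+k \asymp n+p$ in the prefactor and using $k \ge 0$ so that $n+k \ge n$ in the numerator of the exponent, the exponent becomes
\[
-\frac{c_2}{\sigma_{\sf w}} \cdot \frac{t^2 n}{c_3 \sigma_{\sf w}^3 + t^{3/2}\sqrt{n}}
= -\frac{\frac{c_2}{\sigma_{\sf w}} t^2 n}{c_3\sigma_{\sf w}^3}\cdot\frac{1}{1 + \frac{t^{3/2}\sqrt{n}}{c_3\sigma_{\sf w}^3}},
\]
and substituting $t^2 = \big(\tfrac{\lambda_{\sf min}}{2(m+1)}\big)^2 s^{-2}$, $t^{3/2} = \big(\tfrac{\lambda_{\sf min}}{2(m+1)}\big)^{3/2} s^{-3/2}$ and simplifying (dividing numerator and denominator of the fraction by $t^2 n / s \cdot \text{const}$, i.e. rescaling so that $n$ appears as $n/s$) yields exactly an expression of the form $-\dfrac{c_4\sqrt{n/s}}{1 + c_5 (n+p)/(n/s)^{3/2}}$ after also reintroducing the full $n+k \asymp n+p$ dependence that I coarsely bounded. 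Matching coefficients gives $c_4 = \tfrac{c_2}{\sigma_{\sf w}}\sqrt{\tfrac{\lambda_{\sf min}}{2(m+1)}}$ and $c_5 = \tfrac{c_3\sigma_{\sf w}^3}{(\lambda_{\sf min}/2(m+1))^{3/2}}$, and the union bound over $p^2$ entries together with the $c_1(n+p)$ prefactor produces the claimed probability $1 - c_1 p^2 (n+p)\exp(\cdots)$.

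The main obstacle I anticipate is bookkeeping the exponent algebra cleanly so that the powers of $n$, $s$, $n+p$ and $\lambda_{\sf min}$ land in precisely the asymmetric form stated (note the numerator scales like $\sqrt{n/s}$ while the correction term in the denominator carries the full $(n+p)$), and handling the biased-versus-unbiased normalization discrepancy rigorously rather than sweeping it into "absolute constants." A secondary subtlety is that Lemma~\ref{conc_biased} is stated for a fixed lag $k$ with constants independent of $k$, so I must double-check uniformity of $c_1,c_2,c_3$ over $0 \le k \le p-1$; this is exactly what Rudzkis's Theorem~4 provides under the $\ell_1$ summability of $(b_j)$, which the sufficient stability assumption $\|\boldsymbol\theta\|_1 \le 1-\eta$ delivers via geometric decay of the impulse response.
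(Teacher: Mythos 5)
Your overall route---forcing $t=\lambda_{\sf min}/(2(m+1)s)$ through Lemma~\ref{lem:re}, a union bound over the $p^2$ entries, and Lemma~\ref{conc_biased}---is the same as the paper's, but there is a genuine gap in how you reconcile the $1/n$ normalization of $\widehat{R}_{ij}$ with the $1/(n+k)$ normalization of $\widehat{r}^b_k$. You propose to treat the discrepancy as an additive error ``deterministically $\mathcal{O}(k/n)$ times a bounded quantity'' that is of smaller order than $t$ because $n > d_1 s (p\log p)^{1/2}$. This is not true in the regime of interest: the lag $k=|i-j|$ ranges up to $p-1$, and in the compressive regime $n\ll p$ the relevant discrepancies (the estimator gap $\frac{k}{n}\widehat{r}^b_k$ and the centering gap $r_k-\frac{n}{n+k}r_k$, each of size roughly $\frac{k}{n+k}r_k$) can be of order $r_0\gtrsim \lambda_{\sf min}$ for lags $k\gtrsim n$, hence larger than $t\sim\lambda_{\sf min}/s$ by a factor of order $s$; the sampling condition only gives $p/n\lesssim \sqrt{p/\log p}/(d_1 s)$, which does not make $\frac{p}{n}r_0=o(1/s)$, and the decay of $r_k$ guaranteed by $\|\boldsymbol{\theta}\|_1\le 1-\eta$ is only exponential in $k/p$, so it cannot rescue lags $n\lesssim k\le p$. (Also, the gap is not deterministically bounded, since the covariates are sub-Gaussian.)

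What saves the argument---and what the paper actually does---is that no absorption is needed: each summand of $\widehat{R}_{i,i+k}=\frac1n\sum_m x_m x_{m+k}$ has mean $r_k$, so one has the exact identity $\widehat{R}_{i,i+k}-R_{i,i+k}=\frac{n+k}{n}\bigl(\widehat{r}^b_k-r^b_k\bigr)$ with $r^b_k=\frac{n}{n+k}r_k$; your two additive gaps cancel exactly, and $\mathbb{P}\bigl(|\widehat{R}_{i,i+k}-R_{i,i+k}|>\tau\bigr)=\mathbb{P}\bigl(|\widehat{r}^b_k-r^b_k|>\frac{n}{n+k}\tau\bigr)$. Plugging the rescaled threshold $t=\frac{n}{n+k}\tau$ into (\ref{conc_ineq_biased}) and then bounding $n+k\le n+p$ yields exactly the stated exponent: the factor $(n+p)$ in the correction term $c_5\,(n+p)/(n/s)^{3/2}$ is the footprint of this threshold rescaling, not of the $n+k$ you coarsely bounded inside the lemma, so your final ``reintroduce the $n+k\asymp n+p$ dependence and match coefficients'' step is precisely where the unproved content sits. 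Your parenthetical guess that the paper instead argues on $\widehat{r}^b_k$ and carries a bias through to Theorem~\ref{thm:ar_1} is also inaccurate; the reconciliation is done exactly, inside this lemma, via the multiplicative identity above.
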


\begin{proof}
First, note that for the given AR process, condition (\ref{eq:wold}) is verified by the Wold decomposition of the process, condition (\ref{eq: bounded_moments}) results from the sub-Gaussian assumption on the innovations, and condition (\ref{eq: abs_sum}) results from the stability of the process. Noting that
\vspace{-.2cm}
\begin{equation}
\widehat{R}_{i,i+k} = \frac{1}{n}\sum_{i=1}^n x_i x_{i+k} = \frac{1}{n}\sum_{i,j=1,j-i=k}^{n+k}x_ix_j = \frac{n+k}{n}\widehat{r}^b_k, 
\end{equation}
for $i=1,\cdots,n$ and $k = 0, \cdots, p-1$, Eq. (\ref{conc_ineq_biased}) implies:
\begin{equation}
\displaystyle \mathbb{P}\left ( |\widehat{R}_{i,i+k} - {R}_{i,i+k}|> \tau \right) \leq c_1 (n+k) \exp \left( - \frac{ c_2 \sqrt{\tau n}}{\frac{c_3 \sigma_{\sf w}^4 (n+k)}{\tau^{3/2} n^{3/2}} + \sigma_{\sf w}}\right).
\end{equation}
By the union bound and $k \le p$, we get:
\begin{align}\label{eq:max}
\displaystyle \mathbb{P}\left(\max_{i,j}|\widehat{R}_{ij}-R_{ij}|> \tau \right) \leq c_1 p^2 (n+p) \exp \left( - \frac{ c_2 \sqrt{\tau n}}{\frac{c_3 \sigma_{\sf w}^4 (n+p)}{\tau^{3/2} n^{3/2}} + \sigma_{\sf w}}\right).
\end{align}
Choosing $\tau= \frac{\lambda_{\sf min}}{2(m+1)s}$ and invoking the result of Lemma \ref{lem:re} establishes the result of the lemma.
\end{proof}

We next define the closely related notion of the Restricted Strong Convexity (RSC):

\begin{definition}[Restricted Strong Convexity \cite{Negahban}]
\label{RSC_def}
Let 
\begin{equation}
\label{cone_condition}
\mathbb{V}:= \{\mathbf{h}\in \mathbb{R}^p | \|\mathbf{h}_{S^c}\|_1 \leq 3\| \mathbf{h}_S\|_1+4\|\boldsymbol{\theta}_{S^c}\|_1\}.
\end{equation}
Then, $\mathbf{X}$ is said to satisfy the RSC condition of order $s$ if there exists a positive $\kappa > 0$ such that
\begin{equation}
\frac{1}{n}\mathbf{h}' \mathbf{X}' \mathbf{X} \mathbf{h} = \frac{1}{n}\|\mathbf{X}\mathbf{h}\|_2^2 \geq \kappa \|\mathbf{h}\|_2^2, \;\;\;\; \forall \mathbf{h} \in \mathbb{V}.
\end{equation}
\end{definition}

The RSC condition can be deduced from the RE condition according to the following result:
\begin{lemma}[Lemma 4.1 of \cite{bickel2009simultaneous}]
\label{RE_RSC}
If $\mathbf{X}$ satisfies the RE condition of order $s_\star = (m+1)s$ with a constant $\lambda_{\sf min}((m+1)s)$, then the RSC condition of order $s$ holds with
\vspace{-.2cm}
\begin{equation}
\kappa = {\lambda_{\sf min}((m+1)s)}\left( 1- 3 \sqrt{\frac{\lambda_{\sf max}(ms)}{m\lambda_{\sf min}\left((m+1)s\right)}}\right)^2.
\end{equation}
\end{lemma}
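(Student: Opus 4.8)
The plan is to reproduce the block-decomposition (``shelling'') argument of Bickel, Ritov and Tsybakov. Fix an arbitrary $\mathbf{h}\in\mathbb{V}$; to isolate the main mechanism I first treat the exactly-sparse case $\boldsymbol{\theta}_{S^c}=\mathbf{0}$, where the cone condition reads $\|\mathbf{h}_{S^c}\|_1\le 3\|\mathbf{h}_S\|_1$. Set $S_0:=S$, so $|S_0|=s$, and partition the complement $S^c$ into consecutive blocks $S_1,S_2,\ldots$ of size $ms$ each (the last one possibly smaller), ordered so that every entry of $\mathbf{h}$ restricted to $S_k$ dominates in magnitude every entry restricted to $S_{k+1}$. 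Write $S_{01}:=S_0\cup S_1$, which has cardinality $(m+1)s$. The triangle inequality then gives
\[
\|\mathbf{X}\mathbf{h}\|_2 \ \ge\ \|\mathbf{X}\mathbf{h}_{S_{01}}\|_2 \ -\ \sum_{k\ge 2}\|\mathbf{X}\mathbf{h}_{S_k}\|_2 .
\]

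Next I bound the two pieces. For the leading term, since $|S_{01}|=(m+1)s$ the RE condition yields $\|\mathbf{X}\mathbf{h}_{S_{01}}\|_2\ge \sqrt{n\,\lambda_{\sf min}((m+1)s)}\,\|\mathbf{h}_{S_{01}}\|_2\ge \sqrt{n\,\lambda_{\sf min}((m+1)s)}\,\|\mathbf{h}_{S_0}\|_2$. For each tail block, $|S_k|\le ms$ gives $\|\mathbf{X}\mathbf{h}_{S_k}\|_2\le \sqrt{n\,\lambda_{\sf max}(ms)}\,\|\mathbf{h}_{S_k}\|_2$, while the ordering of the blocks produces the standard pointwise estimate $\|\mathbf{h}_{S_k}\|_\infty\le \|\mathbf{h}_{S_{k-1}}\|_1/(ms)$, hence $\|\mathbf{h}_{S_k}\|_2\le \|\mathbf{h}_{S_{k-1}}\|_1/\sqrt{ms}$. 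Summing over $k\ge 2$ and telescoping, then invoking the cone condition and Cauchy--Schwarz on the $s$-sparse vector $\mathbf{h}_S$,
\[
\sum_{k\ge 2}\|\mathbf{h}_{S_k}\|_2 \ \le\ \frac{1}{\sqrt{ms}}\sum_{k\ge 1}\|\mathbf{h}_{S_k}\|_1 \ =\ \frac{\|\mathbf{h}_{S^c}\|_1}{\sqrt{ms}} \ \le\ \frac{3\|\mathbf{h}_S\|_1}{\sqrt{ms}} \ \le\ \frac{3}{\sqrt{m}}\,\|\mathbf{h}_S\|_2 .
\]

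Combining the two estimates gives
\[
\|\mathbf{X}\mathbf{h}\|_2 \ \ge\ \sqrt{n\,\lambda_{\sf min}((m+1)s)}\left(1-3\sqrt{\frac{\lambda_{\sf max}(ms)}{m\,\lambda_{\sf min}((m+1)s)}}\right)\|\mathbf{h}_S\|_2 ,
\]
so that, after squaring and dividing by $n$, $\tfrac{1}{n}\|\mathbf{X}\mathbf{h}\|_2^2\ge \kappa\,\|\mathbf{h}_S\|_2^2$ with the claimed value of $\kappa$, valid whenever $m$ is large enough that the parenthesized factor is positive. To replace $\|\mathbf{h}_S\|_2$ by the full $\|\mathbf{h}\|_2$ of Definition \ref{RSC_def}, the same shelling bound gives $\|\mathbf{h}_{S^c}\|_2\le\sum_{k\ge 1}\|\mathbf{h}_{S_k}\|_2\le \tfrac{3}{\sqrt m}\|\mathbf{h}_S\|_2$, whence $\|\mathbf{h}\|_2^2\le (1+9/m)\|\mathbf{h}_S\|_2^2$ and the bound transfers after a harmless rescaling of $\kappa$ by $(1+9/m)^{-1}$.

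For the genuinely compressible case the only modification is that the cone condition contributes an extra $4\|\boldsymbol{\theta}_{S^c}\|_1/\sqrt{ms}$ to the tail sum, producing an additive tolerance of order $\sqrt{n\,\lambda_{\sf max}(ms)/(ms)}\,\|\boldsymbol{\theta}_{S^c}\|_1$ on the right-hand side; under the hypothesis $\sigma_s(\boldsymbol{\theta})=\mathcal{O}(\sqrt{s})$ this is precisely the term that later resurfaces as the $\sqrt{\sigma_s(\boldsymbol{\theta})}$ contribution in Theorem \ref{thm:ar_1}. The steps needing the most care are the block-ordering bookkeeping — verifying $\|\mathbf{h}_{S_k}\|_\infty\le\|\mathbf{h}_{S_{k-1}}\|_1/(ms)$ and that the telescoping of the $\|\mathbf{h}_{S_k}\|_1$ exactly recovers $\|\mathbf{h}_{S^c}\|_1$ — and ensuring the factor $1-3\sqrt{\lambda_{\sf max}(ms)/(m\,\lambda_{\sf min}((m+1)s))}$ is strictly positive, which is where the choice of $m$ and the bound on the spectral spread of the process (via Corollary \ref{cor:eig_conv} and Lemma \ref{lem:re2}) enter.
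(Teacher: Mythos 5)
Your route is the standard shelling argument of Bickel--Ritov--Tsybakov, which is indeed the proof behind the cited Lemma 4.1 (the paper itself only cites it and gives no proof), and your chain of estimates is correct up to the inequality $\tfrac{1}{n}\|\mathbf{X}\mathbf{h}\|_2^2\ge\kappa\,\|\mathbf{h}_S\|_2^2$. The genuine gap is in the final transfer to the full norm. You assert $\|\mathbf{h}_{S^c}\|_2\le\sum_{k\ge1}\|\mathbf{h}_{S_k}\|_2\le\tfrac{3}{\sqrt m}\|\mathbf{h}_S\|_2$, but the shelling estimate $\|\mathbf{h}_{S_k}\|_2\le\|\mathbf{h}_{S_{k-1}}\|_1/\sqrt{ms}$ is only available for $k\ge2$: the entries of $S_1$ are the \emph{largest} off-support entries and are dominated by nothing, and $S_0=S$ has cardinality $s$, not $ms$. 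The claimed inequality is in fact false on the cone: take $\mathbf{h}$ with a single entry $1$ on $S$ and a single entry $3$ off $S$ (so $\|\mathbf{h}_{S^c}\|_1=3\le3\|\mathbf{h}_S\|_1$); then $\|\mathbf{h}_{S^c}\|_2=3\gg 3/\sqrt m$ and $\|\mathbf{h}\|_2^2=10$, so $\|\mathbf{h}\|_2^2\le(1+9/m)\|\mathbf{h}_S\|_2^2$ fails, and in general the ratio $\|\mathbf{h}\|_2^2/\|\mathbf{h}_S\|_2^2$ on the cone can grow like $s$. Hence the bound you derived in terms of $\|\mathbf{h}_S\|_2$ cannot be upgraded to the $\|\mathbf{h}\|_2$ of Definition \ref{RSC_def} by any dimension-free rescaling.

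The repair is small and standard: do not weaken the leading term from $\|\mathbf{h}_{S_{01}}\|_2$ to $\|\mathbf{h}_{S_0}\|_2$. Keeping $\|\mathbf{X}\mathbf{h}_{S_{01}}\|_2\ge\sqrt{n\,\lambda_{\sf min}((m+1)s)}\,\|\mathbf{h}_{S_{01}}\|_2$ and using $\|\mathbf{h}_S\|_2\le\|\mathbf{h}_{S_{01}}\|_2$ in the tail estimate gives $\tfrac{1}{n}\|\mathbf{X}\mathbf{h}\|_2^2\ge\kappa\,\|\mathbf{h}_{S_{01}}\|_2^2$ with the stated $\kappa$; then the shelling bound applied only to blocks $k\ge2$ yields $\|\mathbf{h}_{(S_0\cup S_1)^c}\|_2\le\tfrac{3}{\sqrt m}\|\mathbf{h}_{S_{01}}\|_2$, hence $\|\mathbf{h}\|_2^2\le(1+9/m)\|\mathbf{h}_{S_{01}}\|_2^2$ and RSC with constant $\kappa/(1+9/m)$. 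This matches the lemma's constant only up to that (harmless, since $m$ is large) factor; Bickel et al.\ avoid it because their restricted-eigenvalue quantity is defined relative to $\|\mathbf{h}_{S_{01}}\|_2$ rather than $\|\mathbf{h}\|_2$. Finally, in the compressible case your argument delivers RSC with an additive tolerance proportional to $\|\boldsymbol{\theta}_{S^c}\|_1$ rather than the pure quadratic bound; claiming this tolerance ``is precisely'' the $\sqrt{\sigma_s(\boldsymbol{\theta})}$ term of Theorem \ref{thm:ar_1} is not a substitute for carrying it explicitly through the error bound of Proposition \ref{negahban_thm}.
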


We can now establish the RSC condition of order $s$ for $\mathbf{X}$:
\begin{lemma}\label{lem:rsc}
The matrix of covariates $\mathbf{X}$ satisfies the RSC condition of order $s$ with a constant $\kappa = \frac{\sigma^2_{\sf w}}{16 \pi}$ with probability at least
\vspace{-.3cm}
\begin{equation}
\label{ar:eq_rsc}
1 - c_1 p^2 (n+p) \exp \left(-\frac{c_{\eta} \sqrt{\frac{n}{s}}}{1 + c'_{\eta} \frac{n+p}{\left(\frac{n}{s}\right)^{3/2}}}\right),\end{equation}
where $c_\eta = \frac{c_2 \eta}{\sqrt{16 \pi ( 72 + \eta^2)}}$ and $c'_\eta = \frac{c_3 (16 \pi (72 + \eta^2))^{3/2}}{\eta^3}$.
\end{lemma}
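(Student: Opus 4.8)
\textbf{Proof plan for Lemma~\ref{lem:rsc}.}
The plan is to chain together the ingredients assembled in Lemmas~\ref{eig_conv}--\ref{RE_RSC} with an appropriate choice of the oversampling parameter $m$. First I would invoke Corollary~\ref{cor:eig_conv}: under the sufficient stability assumption the singular values of the true covariance $\mathbf{R}$ lie in $\left[\frac{\sigma_{\sf w}^2}{8\pi}, \frac{\sigma_{\sf w}^2}{2\pi\eta^2}\right]$, so I may take $\lambda_{\sf min} = \frac{\sigma_{\sf w}^2}{8\pi}$ and $\lambda_{\sf max} = \frac{\sigma_{\sf w}^2}{2\pi\eta^2}$ in all subsequent steps. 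Next I would apply Lemma~\ref{lem:re2} to obtain the RE condition of order $(m+1)s$ for $\mathbf{X}$ with constant $\lambda_{\sf min}/2 = \frac{\sigma_{\sf w}^2}{16\pi}$, valid with the probability stated there, and then push this through Lemma~\ref{RE_RSC} to get the RSC condition of order $s$.

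The key quantitative step is to fix $m$ so that the factor $\left(1 - 3\sqrt{\frac{\lambda_{\sf max}(ms)}{m\,\lambda_{\sf min}((m+1)s)}}\right)^2$ in Lemma~\ref{RE_RSC} is bounded below by a universal constant. Using the bounds from Lemma~\ref{lem:re} together with Corollary~\ref{cor:eig_conv}, I would bound $\lambda_{\sf max}(ms) \le \lambda_{\sf max} + t\,ms$ and $\lambda_{\sf min}((m+1)s) \ge \lambda_{\sf min} - t(m+1)s = \lambda_{\sf min}/2$ (the latter being exactly what the choice $\tau = \lambda_{\sf min}/(2(m+1)s)$ in Lemma~\ref{lem:re2} guarantees on the good event), so the ratio inside the square root is at most $\frac{2(\lambda_{\sf max} + \lambda_{\sf min})}{m\,\lambda_{\sf min}} \le \frac{2(\lambda_{\sf max}/\lambda_{\sf min} + 1)}{m}$. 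With $\lambda_{\sf max}/\lambda_{\sf min} = 4/\eta^2$ this is $\le \frac{2(4/\eta^2 + 1)}{m}$; choosing $m$ a large enough absolute constant (independent of the problem dimensions, depending only on $\eta$ through $\lambda_{\sf max}/\lambda_{\sf min}$) makes $3\sqrt{\cdot} \le 1/\sqrt2$, hence the RSC constant is at least $\frac{\lambda_{\sf min}}{2}\cdot\frac12 = \frac{\sigma_{\sf w}^2}{16\pi}$. I would then substitute $s_\star = (m+1)s$ into the probability bound of Lemma~\ref{lem:re2}, absorbing the factor $m+1$ into the constants $c_4, c_5$; a short computation with $\tau = \frac{\lambda_{\sf min}}{2(m+1)s} = \frac{\sigma_{\sf w}^2}{16\pi(m+1)s}$ substituted into \eqref{eq:max} yields exponents of the form $c_2\sqrt{\tau n}/\sigma_{\sf w}$ in the numerator and $c_3\sigma_{\sf w}^4(n+p)/(\tau^{3/2}n^{3/2}) + \sigma_{\sf w}$ in the denominator, which rearrange into the claimed expression with $c_\eta = \frac{c_2\eta}{\sqrt{16\pi(72+\eta^2)}}$ and $c'_\eta = \frac{c_3(16\pi(72+\eta^2))^{3/2}}{\eta^3}$ once the particular admissible value of $m$ (making $72 + \eta^2$ appear) is pinned down.

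The main obstacle I anticipate is bookkeeping rather than conceptual: tracking how the constant $m$ propagates through the ratio $\lambda_{\sf max}(ms)/\lambda_{\sf min}((m+1)s)$ and then through the scaling $\tau \propto 1/((m+1)s)$ so that the final constants come out exactly as $c_\eta$ and $c'_\eta$ stated. In particular one must verify that the specific numerical choice of $m$ that forces $3\sqrt{2(\lambda_{\sf max}/\lambda_{\sf min}+1)/m} \le 1/\sqrt2$ is consistent with the appearance of the quantity $72 + \eta^2$ (i.e.\ $m$ chosen so that $16\pi(m+1) = 16\pi(72+\eta^2)/\eta^2$ up to the relevant rearrangement), and that on the complementary (bad) event the RE/RSC claims may simply fail — the statement is a high-probability one, so no control is needed there. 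Everything else is a direct concatenation of the cited lemmas.
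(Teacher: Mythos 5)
Your proposal follows essentially the same route as the paper's proof, which is literally the one-line concatenation of Corollary~\ref{cor:eig_conv}, Lemmas~\ref{lem:re}--\ref{lem:re2} and Lemma~\ref{RE_RSC} with the choice $m=\lceil 72/\eta^2\rceil$, and your reverse-engineering of $c_\eta$ and $c'_\eta$ from $\tau=\lambda_{\sf min}/(2(m+1)s)$ with $m+1\approx(72+\eta^2)/\eta^2$ is exactly how the stated constants arise. The only caveat is the constant bookkeeping you yourself flag: with $m=\lceil 72/\eta^2\rceil$ the prefactor $\left(1-3\sqrt{\lambda_{\sf max}(ms)/(m\lambda_{\sf min}((m+1)s))}\right)^2$ is not in fact bounded below by $1/2$ (and note $\tfrac{\lambda_{\sf min}}{2}\cdot\tfrac12=\sigma_{\sf w}^2/(32\pi)$, not $\sigma_{\sf w}^2/(16\pi)$), a looseness already present in the paper's own statement of $\kappa$ rather than a defect of your approach.
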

\begin{proof}
Choosing $m = \lceil \frac{72}{\eta^2} \rceil$, and using Lemmas \ref{lem:re}, \ref{lem:re2}, and \ref{RE_RSC} establishes the result. {Note that if $x_k = w_k$, i.e., a sub-Gaussian white noise process, then $c_3$ and hence $c'_\eta$ vanish.}
\end{proof}

We are now ready prove Theorems \ref{thm:ar_1} and \ref{thm_OMP}.

\subsection{Proof of Theorem \ref{thm:ar_1}}

We first establish the so-called vase (cone) condition for the error vector $\mathbf{h} = \widehat{\boldsymbol{\theta}}_{\ell_1}-{\boldsymbol{\theta}}$:

\label{app:ar_main}
\begin{lemma}
For a choice of the regularization parameter $\gamma_n \ge \| \nabla \mathfrak{L}(\boldsymbol{\theta}) \|_\infty = \frac{2}{n} \| \mathbf{X}' \left(\mathbf{x}_1^n-\mathbf{X}\boldsymbol{\theta}\right) \|_{\infty}$, the optimal error $\mathbf{h} = \widehat{\boldsymbol{\theta}}_{\ell_1}-{\boldsymbol{\theta}}$ belongs to the vase
\begin{equation}
\mathbb{V}:= \{\mathbf{h}\in \mathbb{R}^p | \|\mathbf{h}_{S^c}\|_1 \leq 3\| \mathbf{h}_S\|_1+4\|\boldsymbol{\theta}_{S^c}\|_1\}.
\end{equation}
\end{lemma}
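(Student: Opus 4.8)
The plan is to derive the cone (vase) condition from the optimality of $\widehat{\boldsymbol{\theta}}_{\ell_1}$ in the $\ell_1$-regularized least squares problem (\ref{eq:ar_lasso}), using the standard basic-inequality argument adapted to the compressible (rather than exactly sparse) setting. First I would exploit that $\widehat{\boldsymbol{\theta}}_{\ell_1}$ is a minimizer over $\boldsymbol{\Theta}$ while $\boldsymbol{\theta} \in \boldsymbol{\Theta}$ is feasible, so that
\[
\mathfrak{L}(\widehat{\boldsymbol{\theta}}_{\ell_1}) + \gamma_n \|\widehat{\boldsymbol{\theta}}_{\ell_1}\|_1 \le \mathfrak{L}(\boldsymbol{\theta}) + \gamma_n \|\boldsymbol{\theta}\|_1 .
\]
Writing $\mathbf{h} = \widehat{\boldsymbol{\theta}}_{\ell_1} - \boldsymbol{\theta}$ and expanding $\mathfrak{L}(\widehat{\boldsymbol{\theta}}_{\ell_1}) = \mathfrak{L}(\boldsymbol{\theta} + \mathbf{h})$, the quadratic nature of $\mathfrak{L}$ gives $\mathfrak{L}(\boldsymbol{\theta}+\mathbf{h}) - \mathfrak{L}(\boldsymbol{\theta}) = \mathbf{h}'\nabla\mathfrak{L}(\boldsymbol{\theta}) + \frac{1}{n}\|\mathbf{X}\mathbf{h}\|_2^2 \ge \mathbf{h}'\nabla\mathfrak{L}(\boldsymbol{\theta})$, since the quadratic term is nonnegative. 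Hence $\gamma_n(\|\boldsymbol{\theta}\|_1 - \|\widehat{\boldsymbol{\theta}}_{\ell_1}\|_1) \ge \mathbf{h}'\nabla\mathfrak{L}(\boldsymbol{\theta}) \ge -\|\mathbf{h}\|_1 \|\nabla\mathfrak{L}(\boldsymbol{\theta})\|_\infty \ge -\frac{\gamma_n}{1}\|\mathbf{h}\|_1 \cdot \tfrac{1}{1}$ — more precisely, using the hypothesis $\gamma_n \ge \|\nabla\mathfrak{L}(\boldsymbol{\theta})\|_\infty$ (note the statement as written has $\gamma_n \ge \|\nabla\mathfrak{L}\|_\infty$, which together with the factor-2 relation $\gamma_n \ge 2\|\nabla\mathfrak{L}\|_\infty$ used elsewhere only changes constants), I get $\|\boldsymbol{\theta}\|_1 - \|\widehat{\boldsymbol{\theta}}_{\ell_1}\|_1 \ge -\|\mathbf{h}\|_1$ up to the relevant constant.

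Next I would split every $\ell_1$-norm according to the support $S$ of the best $s$-term approximation. Using $\|\boldsymbol{\theta}\|_1 = \|\boldsymbol{\theta}_S\|_1 + \|\boldsymbol{\theta}_{S^c}\|_1$ and the decomposition $\widehat{\boldsymbol{\theta}}_{\ell_1} = \boldsymbol{\theta} + \mathbf{h}$, together with the triangle inequalities $\|\widehat{\boldsymbol{\theta}}_{\ell_1,S}\|_1 \ge \|\boldsymbol{\theta}_S\|_1 - \|\mathbf{h}_S\|_1$ and $\|\widehat{\boldsymbol{\theta}}_{\ell_1,S^c}\|_1 \ge \|\mathbf{h}_{S^c}\|_1 - \|\boldsymbol{\theta}_{S^c}\|_1$, I would substitute into $\|\boldsymbol{\theta}\|_1 - \|\widehat{\boldsymbol{\theta}}_{\ell_1}\|_1 \ge -\|\mathbf{h}\|_1 = -\|\mathbf{h}_S\|_1 - \|\mathbf{h}_{S^c}\|_1$. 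This yields, after cancellation of $\|\boldsymbol{\theta}_S\|_1$, an inequality of the shape $\|\mathbf{h}_{S^c}\|_1 - 2\|\boldsymbol{\theta}_{S^c}\|_1 - \|\mathbf{h}_S\|_1 \le \|\mathbf{h}_S\|_1 + \|\mathbf{h}_{S^c}\|_1$ type relation; collecting terms gives $\|\mathbf{h}_{S^c}\|_1 \le 3\|\mathbf{h}_S\|_1 + 4\|\boldsymbol{\theta}_{S^c}\|_1$, which is exactly membership in $\mathbb{V}$. One should be a little careful to track whether the constant in front of $\gamma_n$ is $1$ or $2$, since $\sigma_s(\boldsymbol{\theta}) = \|\boldsymbol{\theta}_{S^c}\|_1$ and the constants $3$ and $4$ in the definition of $\mathbb{V}$ are calibrated to a specific choice; if the factor is $2\|\nabla\mathfrak{L}\|_\infty$ as in Proposition \ref{negahban_thm}, the cone constants come out as $3$ and $4$ precisely, and I would use that version.

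I expect the main (minor) obstacle to be purely bookkeeping: getting the constants $3$ and $4$ to match exactly requires using the correct normalization $\gamma_n \ge 2\|\nabla\mathfrak{L}(\boldsymbol{\theta})\|_\infty$ and being careful with the direction of each triangle inequality, and also noting that the convex-feasibility constraint $\widehat{\boldsymbol{\theta}}_{\ell_1} \in \boldsymbol{\Theta}$ is only used to guarantee $\boldsymbol{\theta}$ is a valid competitor (so the basic inequality holds), not in the cone derivation itself. There is no deep difficulty here — this is the classical argument of \cite{bickel2009simultaneous} and \cite{Negahban}, specialized to the quadratic loss $\mathfrak{L}$ — but I would present it cleanly because the resulting cone $\mathbb{V}$ is precisely the set over which the RSC condition of Lemma \ref{lem:rsc} was established, so this lemma is the bridge that lets Lemma \ref{lem:rsc} be applied to the error vector $\mathbf{h}$ in the remainder of the proof of Theorem \ref{thm:ar_1}.
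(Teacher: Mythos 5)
Your proposal is correct and follows essentially the same route as the paper's proof: the basic inequality from optimality of $\widehat{\boldsymbol{\theta}}_{\ell_1}$ over the feasible set, a H\"older bound on the gradient cross-term (discarding the nonnegative quadratic term $\tfrac{1}{n}\|\mathbf{X}\mathbf{h}\|_2^2$), and splitting the $\ell_1$-norms over $S$ and $S^c$. Your bookkeeping remark is also on point: landing the constants $3$ and $4$ exactly requires the normalization $\gamma_n \ge 2\|\nabla\mathfrak{L}(\boldsymbol{\theta})\|_\infty$ of Eq.\ (\ref{reg}), a factor the paper's own derivation silently absorbs when bounding the cross-term, so adopting that version as you suggest is the right fix.
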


\begin{proof}
Using several instances of the triangle inequality we have:
\begin{align*}
0 & \geq  \frac{1}{n} \left(\|\mathbf{x}_1^n-\mathbf{X}(\boldsymbol{\theta}+\mathbf{h})\|_2^2- \|\mathbf{x}_1^n-\mathbf{X}\boldsymbol{\theta}\|_2^2 \right)+  \gamma_n \left( \|\boldsymbol{\theta}+\mathbf{h}\|_1 - \|\boldsymbol{\theta}\|_1 \right)\\
& \geq -  \frac{1}{n} \| \mathbf{X}^T \left(\mathbf{x}_1^n-\mathbf{X}\boldsymbol{\theta}\right) \|_{\infty} \|\mathbf{h}\|_1 + \gamma_n \left( \|\boldsymbol{\theta}_S+\mathbf{h}_{S^c}+\mathbf{h}_S +\boldsymbol{\theta}_{S^c}\|_1 - \|\boldsymbol{\theta}\|_1 \right)\\
& \geq -  \frac{\gamma_n}{2} (\|\mathbf{h}_{S^c}\|_1+\|\mathbf{h}_S\|_1) +\gamma_n \left( \|\boldsymbol{\theta}_S+\mathbf{h}_{S^c}\|_1-\|\mathbf{h}_S +\boldsymbol{\theta}_{S^c}\|_1 - \|\boldsymbol{\theta}\|_1 \right)\\
& = -  \frac{\gamma_n}{2} (\|\mathbf{h}_{S^c}\|_1+\|\mathbf{h}_S\|_1) + \gamma_n (\|\boldsymbol{\theta}_S\|_1+\|\mathbf{h}_{S^c}\|_1-\|\mathbf{h}_S\|_1-\|\boldsymbol{\theta}_{S^c}\|_1-\|\boldsymbol{\theta}_{S^c}\|_1- \|\boldsymbol{\theta}_S\|_1)\\
& =  \frac{\gamma_n}{2} (\|\mathbf{h}_{S^c}\|_1-3\|\mathbf{h}_{S}\|_1 - 4\|\boldsymbol{\theta}_{S^c}\|_1).
\end{align*}
\end{proof}

The following result of Negahban et al. \cite{Negahban} allows us to characterize the desired error bound: 
\begin{lemma}[Theorem 1 of \cite{Negahban}]
\label{RSC_thm}
If $\mathbf{X}$ satisfies the RSC condition of order $s$ with  a constant $\kappa > 0$ and $\gamma_n \ge \| \nabla \mathfrak{L}(\boldsymbol{\theta}) \|_\infty$, then any optimal solution $\widehat{\boldsymbol{\theta}}_{\ell_1}$ satisfies
\begin{align}\label{eq:rsc_bound}
\nonumber & \|\widehat{\boldsymbol{\theta}}_{\ell_1}-\boldsymbol{\theta}\|_2 \leq  \frac{2\sqrt{s}\gamma_n}{\kappa}+ \sqrt{\frac{2\gamma_n \sigma_s(\boldsymbol{\theta})}{\kappa}} \tag{$\star$}.
\end{align}
\end{lemma}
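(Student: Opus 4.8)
The plan is to prove Lemma \ref{RSC_thm} as a purely deterministic consequence of two facts already secured: the cone (vase) membership of the error vector $\mathbf{h}:=\widehat{\boldsymbol{\theta}}_{\ell_1}-\boldsymbol{\theta}\in\mathbb{V}$ established in the preceding (vase) lemma, and the RSC condition of Definition \ref{RSC_def}, which is precisely the hypothesis here. The engine is the \emph{basic inequality}: since $\widehat{\boldsymbol{\theta}}_{\ell_1}$ minimizes $\mathfrak{L}(\cdot)+\gamma_n\|\cdot\|_1$ over $\boldsymbol{\Theta}$ and $\boldsymbol{\theta}$ is feasible, optimality gives $\mathfrak{L}(\boldsymbol{\theta}+\mathbf{h})+\gamma_n\|\boldsymbol{\theta}+\mathbf{h}\|_1 \le \mathfrak{L}(\boldsymbol{\theta})+\gamma_n\|\boldsymbol{\theta}\|_1$. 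All randomness is confined to the earlier steps (verifying RSC via Lemma \ref{lem:rsc} and dominating $\|\nabla\mathfrak{L}(\boldsymbol{\theta})\|_\infty$ by $\gamma_n$); what remains is a deterministic chain.

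First I would exploit the exact quadratic structure of $\mathfrak{L}$: a second-order expansion gives $\mathfrak{L}(\boldsymbol{\theta}+\mathbf{h})-\mathfrak{L}(\boldsymbol{\theta}) = \langle\nabla\mathfrak{L}(\boldsymbol{\theta}),\mathbf{h}\rangle + \tfrac{1}{n}\|\mathbf{X}\mathbf{h}\|_2^2$ with no higher-order remainder, so the symmetrized Bregman divergence is exactly $\tfrac{1}{n}\|\mathbf{X}\mathbf{h}\|_2^2$. Substituting into the basic inequality isolates this quadratic term, which the RSC bounds below by $\kappa\|\mathbf{h}\|_2^2$ — the single place RSC enters, legitimate precisely because $\mathbf{h}\in\mathbb{V}$. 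The linear term is controlled by H\"older, $|\langle\nabla\mathfrak{L}(\boldsymbol{\theta}),\mathbf{h}\rangle|\le\|\nabla\mathfrak{L}(\boldsymbol{\theta})\|_\infty\|\mathbf{h}\|_1\le\gamma_n\|\mathbf{h}\|_1$, using the hypothesis $\gamma_n\ge\|\nabla\mathfrak{L}(\boldsymbol{\theta})\|_\infty$.

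Next I would handle the $\ell_1$ difference by splitting over $S$ and $S^c$ and applying the forward/reverse triangle inequalities to get $\|\boldsymbol{\theta}\|_1-\|\boldsymbol{\theta}+\mathbf{h}\|_1\le\|\mathbf{h}_S\|_1-\|\mathbf{h}_{S^c}\|_1+2\|\boldsymbol{\theta}_{S^c}\|_1$. Adding the $\gamma_n\|\mathbf{h}\|_1$ contribution from the linear term, the $\|\mathbf{h}_{S^c}\|_1$ pieces cancel and everything collapses to $\kappa\|\mathbf{h}\|_2^2 \le 2\gamma_n\|\mathbf{h}_S\|_1 + 2\gamma_n\sigma_s(\boldsymbol{\theta})$, where I identify $\|\boldsymbol{\theta}_{S^c}\|_1=\sigma_s(\boldsymbol{\theta})$. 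Cauchy--Schwarz on the $s$-element support, $\|\mathbf{h}_S\|_1\le\sqrt{s}\,\|\mathbf{h}_S\|_2\le\sqrt{s}\,\|\mathbf{h}\|_2$, then yields the scalar quadratic inequality $\kappa\|\mathbf{h}\|_2^2 - 2\gamma_n\sqrt{s}\,\|\mathbf{h}\|_2 - 2\gamma_n\sigma_s(\boldsymbol{\theta})\le 0$. Solving this for $\|\mathbf{h}\|_2$ and applying subadditivity $\sqrt{a+b}\le\sqrt{a}+\sqrt{b}$ to the discriminant reproduces exactly $\|\mathbf{h}\|_2\le \tfrac{2\gamma_n\sqrt{s}}{\kappa}+\sqrt{\tfrac{2\gamma_n\sigma_s(\boldsymbol{\theta})}{\kappa}}$.

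The argument is short, but the one genuinely delicate point — and the main obstacle — is the bookkeeping of constants so that the two uses of the regularization hypothesis (once to place $\mathbf{h}$ in $\mathbb{V}$ in the vase lemma, once to dominate $\langle\nabla\mathfrak{L},\mathbf{h}\rangle$ here) are mutually consistent and the discriminant splits to give the stated coefficients $2$ and $2$ rather than looser constants. In particular the factor-of-two between the convention $\gamma_n\ge\|\nabla\mathfrak{L}(\boldsymbol{\theta})\|_\infty$ used in this lemma and the convention $\gamma_n\ge2\|\nabla\mathfrak{L}(\boldsymbol{\theta})\|_\infty$ of Proposition \ref{negahban_thm} must be tracked carefully, since $\nabla\mathfrak{L}$ already carries the factor $2$ coming from differentiating the squared residual; with the coefficient-$1$ convention the closing quadratic yields precisely the asserted bound.
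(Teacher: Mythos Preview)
Your argument is correct and is in fact the standard Negahban--Ravikumar--Wainwright--Yu proof. Note, however, that the paper does not supply its own proof of this lemma: it is stated as a direct citation of Theorem~1 of \cite{Negahban} (and reappears as Proposition~\ref{negahban_thm} in the introductory chapter, again without proof). So there is nothing in the paper to compare against beyond the attribution; your write-up is a faithful and complete reconstruction of the cited result, specialized to the quadratic loss $\mathfrak{L}(\boldsymbol{\theta})=\tfrac{1}{n}\|\mathbf{x}_1^n-\mathbf{X}\boldsymbol{\theta}\|_2^2$ so that the Bregman remainder is exactly $\tfrac{1}{n}\|\mathbf{X}\mathbf{h}\|_2^2$, and your observation about the factor-of-two convention difference between $\gamma_n\ge\|\nabla\mathfrak{L}(\boldsymbol{\theta})\|_\infty$ here and $\gamma_n\ge 2\|\nabla\mathfrak{L}(\boldsymbol{\theta})\|_\infty$ in Proposition~\ref{negahban_thm} is well taken.
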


In order to use Lemma \ref{RSC_thm}, we need to control $\gamma_n = \| \nabla \mathfrak{L}(\boldsymbol{\theta}) \|_\infty$. We have:
\begin{equation}
\nabla \mathfrak{L}(\boldsymbol{\theta}) = \frac{2}{n} \mathbf{X}' (\mathbf{x}_1^n-\mathbf{X}\boldsymbol{\theta}),
\end{equation}
It is easy to check that by the uncorrelatedness of the innovations $w_k$'s, we have
\begin{equation}
\label{eq:expec=0}
\mathbb{E} \left[ \nabla \mathfrak{L}(\boldsymbol{\theta}) \right] = \frac{2}{n} \mathbb{E} \left[ \mathbf{X}' (\mathbf{x}_1^n-\mathbf{X}\boldsymbol{\theta}) \right]=\frac{2}{n}\mathbb{E} \left[ \mathbf{X}' \mathbf{w}_1^n \right]= \mathbf{0}.
\end{equation}
Eq. (\ref{eq:expec=0}) is known as the orthogonality principle. We next show that $ \nabla \mathfrak{L}(\boldsymbol{\theta})$ is concentrated around its mean. We can write 
\begin{equation*}
\left(\nabla \mathfrak{L}(\boldsymbol{\theta})\right)_i = \frac{2}{n} \mathbf{x}^{{n-i}'}_{{-i+1}} \mathbf{w}_1^n,
\end{equation*}
and observe that the $j$th element in this expansion is of the form $y_j = x_{n-i-j+1}w_{n-j+1}$. It is easy to check that the sequence $y_1^n$ is a martingale with respect to the filtration given by
\begin{equation*}
\label{filtration}
\mathcal{F}_{j}=\sigma \left( \mathbf{x}_{-p+1}^{n-j+1} \right),
\end{equation*}
where $\sigma(\cdot)$ denote the sigma-field generated by the random variables $x_{-p+1}, x_{-p+2}, \cdots, x_{n-j+1}$. 
We use the following concentration result for sums of dependent random variables \cite{van_de_geer}:
\begin{lemma}
\label{hoeff_dep}
Fix $n\geq 1$. Let $Z_j$'s be sub-Gaussian $\mathcal{F}_j$-measurable random variables, satisfying for each $j=1,2,\cdots,n$,
\begin{equation*}
\mathbb{E}\left[Z_j|\mathcal{F}_{j-1}\right] = 0, \;\; \text{almost surely},
\end{equation*}
then there exists a constant $c$ such that for all $t>0$,
\begin{equation*}
\mathbb{P} \left( \left| \frac{1}{n}  \sum_{j=1}^n Z_j - \mathbb{E}[Z_j] \right| \geq t \right)\leq \exp\left(-\frac{nt^2}{c^2}\right).
\end{equation*}
\end{lemma}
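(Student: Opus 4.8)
The plan is to treat $S_n := \sum_{j=1}^n Z_j$ as a martingale with respect to the filtration $\{\mathcal{F}_j\}$ and to control its upper and lower tails separately by the exponential moment (Chernoff) method, exactly in the spirit of the Azuma--Hoeffding inequality. First note that since $\mathbb{E}[Z_j \mid \mathcal{F}_{j-1}] = 0$ almost surely, the tower property gives $\mathbb{E}[Z_j] = 0$, so the claimed bound is really a bound on $\mathbb{P}(|S_n|/n \ge t)$, and $\{Z_j\}$ is a martingale difference sequence.

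The core step is a conditional moment generating function estimate: because each $Z_j$ is sub-Gaussian with sub-Gaussian norm bounded uniformly by some constant $K$ (this uniform bound is what the final constant $c$ will depend on), one shows there is $\sigma^2 = \mathcal{O}(K^2)$ with
\begin{equation}
\mathbb{E}\left[ e^{\lambda Z_j} \mid \mathcal{F}_{j-1} \right] \le e^{\lambda^2 \sigma^2 / 2}, \qquad \text{a.s., for all } \lambda \in \mathbb{R}.
\end{equation}
Iterating this bound from $j = n$ down to $j = 1$ via the tower property --- writing $\mathbb{E}[e^{\lambda S_n}] = \mathbb{E}\big[ e^{\lambda S_{n-1}} \, \mathbb{E}[e^{\lambda Z_n} \mid \mathcal{F}_{n-1}] \big] \le e^{\lambda^2 \sigma^2 / 2}\, \mathbb{E}[e^{\lambda S_{n-1}}]$ --- yields $\mathbb{E}[e^{\lambda S_n}] \le e^{n \lambda^2 \sigma^2 / 2}$. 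A Markov inequality applied to $e^{\lambda S_n}$ then gives $\mathbb{P}(S_n \ge n t) \le \exp(-\lambda n t + n \lambda^2 \sigma^2 / 2)$ for every $\lambda > 0$; optimizing over $\lambda$ (taking $\lambda = t/\sigma^2$) produces $\mathbb{P}(S_n \ge n t) \le \exp(-n t^2 / 2\sigma^2)$. Replacing $Z_j$ by $-Z_j$ (still a martingale difference sequence, still sub-Gaussian) gives the matching lower-tail bound, and a union bound over the two events yields $\mathbb{P}(|S_n|/n \ge t) \le 2\exp(-n t^2 / 2\sigma^2)$. Absorbing the factor $2$ into the constant --- enlarging $c$ so that $n t^2 / c^2 \le n t^2 / 2\sigma^2 - \log 2$, or equivalently using $2 e^{-x} \le e^{-x/2}$ past a threshold and treating small $t$ trivially --- gives the stated form with $c$ an absolute multiple of the uniform sub-Gaussian parameter.

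The main obstacle is the conditional sub-Gaussian MGF bound: the hypothesis only posits that each $Z_j$ is (unconditionally) sub-Gaussian and $\mathcal{F}_j$-measurable with conditional mean zero, so one must upgrade this to conditional exponential-moment control. In the application to $\nabla \mathfrak{L}(\boldsymbol{\theta})$ the relevant $Z_j$ are products $x_{n-i-j+1} w_{n-j+1}$ of a sub-Gaussian innovation with a state variable of the stable AR process; there one obtains the conditional bound by combining the conditional sub-Gaussianity of $w_{n-j+1}$ with uniform control of the conditional moments of $x_{n-i-j+1}$ (which follows from the bounded spectral spread / sufficient stability assumption, as in the moment hypotheses of Lemma \ref{conc_biased}), for instance through a Cauchy--Schwarz splitting of the conditional MGF. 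Making this upgrade precise --- and tracking how the uniform sub-Gaussian parameter propagates into $c$ --- is the only nontrivial part; the remaining steps are the textbook martingale Chernoff argument.
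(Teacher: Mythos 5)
Your plan follows the textbook martingale Chernoff route (conditional MGF bound, tower-property iteration, optimization in $\lambda$, two-sided union bound), whereas the paper does not prove the lemma from scratch at all: it simply cites Theorem 3.2 of van de Geer (1995) / Lemma 3.2 of van de Geer (2000), whose hypotheses are precisely conditional moment conditions on the increments. You have therefore located the crux in the right place, but the step you defer --- upgrading ``each $Z_j$ is unconditionally sub-Gaussian and conditionally centered'' to a uniform bound $\mathbb{E}[e^{\lambda Z_j}\mid\mathcal{F}_{j-1}]\le e^{\lambda^2\sigma^2/2}$ a.s.\ --- is not a technicality that can be ``made precise'': it is false in general, so no argument can extract the stated inequality (for all $t>0$, with $c$ depending only on the sub-Gaussian norms) from the lemma's literal hypotheses. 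Concretely, let $A\in\mathcal{F}_0$ with $\mathbb{P}(A)=e^{-a^2}$, let $\epsilon_1,\dots,\epsilon_n$ be i.i.d.\ Rademacher independent of $A$, and set $Z_j=\epsilon_j\bigl(1+(a-1)\mathbf{1}_A\bigr)$ with $a=\sqrt{n}$. Each $Z_j$ is conditionally centered and has sub-Gaussian norm bounded by an absolute constant uniformly in $n$, yet $\mathbb{P}\bigl(\frac{1}{n}\sum_j Z_j\ge \sqrt{n}\bigr)\ge e^{-n}2^{-n}$, which exceeds the claimed bound $e^{-n(\sqrt n)^2/c^2}=e^{-n^2/c^2}$ for every fixed $c$ once $n$ is large. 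So some form of conditional control must be assumed (as in Azuma--Hoeffding) or verified in the application; it cannot be deduced.

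The verification you sketch for the application also does not deliver the needed bound. There, conditionally on the relevant $\sigma$-field the increment is $x\cdot w$ with $x$ a measurable but unbounded realization and $w$ an independent sub-Gaussian innovation, so the conditional MGF is of order $e^{\lambda^2K^2x^2/2}$, which admits no deterministic bound; no Cauchy--Schwarz splitting makes the product of two sub-Gaussians conditionally sub-Gaussian with a uniform parameter --- it is only sub-exponential. Running your argument with the conditional Bernstein condition that does hold here (integrate out $w$ and use the moment bounds on $x$ coming from stability, as in the hypotheses of Lemma \ref{conc_biased}) yields $\mathbb{P}(|\frac{1}{n}\sum_j Z_j|\ge t)\le 2\exp\bigl(-c\,n\min(t^2,t)\bigr)$, i.e.\ a Bernstein/Azuma-type inequality. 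That is essentially what the cited van de Geer results provide, and it suffices for the way the lemma is used (with $t\asymp\sqrt{\log p/n}$, well inside the Gaussian regime), but it is strictly weaker than the displayed $\exp(-nt^2/c^2)$ for large $t$. To complete your proposal you should either add uniform conditional sub-Gaussianity of the increments as an explicit hypothesis, or prove the Bernstein-regime version and observe that it covers the range of $t$ in which the lemma is invoked; as written, the ``only nontrivial part'' is exactly where the argument fails.
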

\begin{proof}
This is a special case of Theorem 3.2 of \cite{van_de_geer} or Lemma 3.2 of \cite{geer2000empirical}, for sub-Gaussian-weighted sums of random variables. The constant $c$ depends on the sub-Gaussian constant of $Z_i$'s.
\end{proof}
Since $y_j$'s are a product of two independent sub-Gaussian random variables, they are sub-Gaussian as well. Lemma \ref{hoeff_dep} implies that
\vspace{-.2cm}  
\begin{equation}
\label{bound}
{\mathbb{P}\left( |\nabla \mathfrak{L}(\boldsymbol{\theta})_i|  \ge t \right) \leq  \exp\left(-\frac{nt^2}{c^2_0 \sigma^4_{\sf w}}\right).}
\end{equation}
where $c^2_0 := \frac{c^2}{\sigma_{\sf w}^4}$ is an absolute constant. By the union bound, we get:
\begin{equation}
\label{ubound}
{\mathbb{P}\Big(\left\| \nabla \mathfrak{L}(\boldsymbol{\theta})\right\|_\infty \ge t \Big) \leq  \exp\left(-\frac{t^2n}{c_0^2 \sigma^4_{\sf w}}+\log p\right).}
\end{equation}
Let $d_4$ be any positive integer. Choosing $t = c_0 \sigma^2_{\sf w}\sqrt{{1+d_4}}\sqrt{\frac{\log p}{n}}$, we get:
\vspace{-.2cm}
\begin{align}
\label{grad_bound}
\notag \mathbb{P}\left(\left\| \nabla \mathfrak{L}(\boldsymbol{\theta})\right\|_\infty \ge c_0 \sigma^2_{\sf w}\sqrt{{{1+d_4}}}\sqrt{\frac{\log p}{n}} \right) \leq \frac{2}{n^{d_4}}.
\end{align}
Hence, a choice of $\gamma_n = d_2 \sqrt{\frac{\log p}{n}}$ with $d_2 := c_0 \sigma^2_{\sf w}\sqrt{{1 + d_4}}$, satisfies $\gamma_n \ge \| \nabla \mathfrak{L}(\boldsymbol{\theta}) \|_\infty$ with probability at least $1 - \frac{2}{n^{d_4}}$. {Let $d_0:= \frac{(3+d_4)^2}{c_\eta^2}$} and {$d_1 = \frac{4 c'_\eta ( 3 + d_4)}{c_\eta}$}. Using Lemma \ref{lem:rsc}, the fact that {$n > s \max \{d_0 (\log p)^2, d_1 {(p \log p)^{1/2}}\}$} by hypothesis, and $p > n$ we have that the RSC of order $s$ hold for $\kappa = \frac{\sigma^2_{\sf w}}{16 \pi }$ with a probability at least $1- {\frac{2c_1}{p^{d_4}}} - \frac{1}{p^{d_4}}$. Combining these two assertions, the claim of Theorem 1 follows for $d_3 = 32 \pi c_0 \sqrt{1+d_4}$. \QEDB

\subsection{Proof of Theorem \ref{thm_OMP}}\label{prf:aromp}

The proof is mainly based on the following lemma, adopted from Theorem 2.1 of \cite{zhang_omp}, stating that the greedy procedure is successful in obtaining a reasonable $s^\star$-sparse approximation, if the cost function satisfies the RSC:
\begin{lemma}\label{prop_omp}
Let $s^\star$ be a constant such that
\begin{equation}
\label{sstar}
{s^\star \geq {4 \rho s}\log {20 \rho s}},
\end{equation}
and suppose that $\mathfrak{L}(\boldsymbol{\theta})$ satisfies RSC of order $s^\star$ with a constant $\kappa > 0$. Then, we have
\begin{equation*}
\left \|\widehat{\boldsymbol{\theta}}^{(s^\star)}_{{\sf OMP}}-\boldsymbol{\theta}_S \right \|_2 \leq \frac{\sqrt{6} \varepsilon_{s^\star}}{\kappa},
\end{equation*}
where $\eta_{s^\star}$ satisfies
\begin{equation}
\label{eps_bound}
\varepsilon_{s^\star} \leq \sqrt{s^\star+s} \|\nabla\mathfrak{L}(\boldsymbol{\theta}_S)\|_\infty .
\end{equation}
\end{lemma}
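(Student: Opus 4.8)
The plan is to read Lemma \ref{prop_omp} as nothing more than the explicit-constant version of the generalized-OMP guarantee already quoted as Proposition \ref{prop:omp_gen} (Theorem 2.1 of \cite{zhang_omp}), specialized to the AR covariate matrix $\mathbf{X}$, and then to indicate how it feeds the proof of Theorem \ref{thm_OMP}. First I would recall the precise hypotheses and conclusion of Zhang's theorem: under a lower restricted curvature $\kappa$ of $\mathfrak{L}$ and a matching upper restricted curvature at the working sparsity level $s^\star$, the greedy iterate satisfies $\|\widehat{\boldsymbol\theta}^{(s^\star)}_{\sf OMP}-\boldsymbol\theta_S\|_2\le\sqrt6\,\varepsilon_{s^\star}/\kappa$ once $s^\star$ exceeds a fixed absolute multiple of $(\text{restricted condition number})\times s\times\log\bigl((\text{restricted condition number})\, s\bigr)$, where $\varepsilon_{s^\star}$ bounds $\max_{|T|\le s^\star+s}\|(\nabla\mathfrak{L}(\boldsymbol\theta_S))_T\|_2$ over the index sets of size at most $s^\star+s$ that the greedy rule can generate. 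Bounding the $\ell_2$-norm of an $(s^\star+s)$-sparse vector by $\sqrt{s^\star+s}$ times its $\ell_\infty$-norm gives exactly the last display of the lemma, $\varepsilon_{s^\star}\le\sqrt{s^\star+s}\,\|\nabla\mathfrak{L}(\boldsymbol\theta_S)\|_\infty$; so the only genuine work in the lemma is to pin down the restricted condition number and the numerical constants $4$ and $20$.

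For the condition number I would use the Grenander--Szeg\H{o} theory of Toeplitz forms (Lemma \ref{eig_conv}) and Corollary \ref{cor:eig_conv}: the restricted eigenvalues of the true covariance $\mathbf{R}$ of an AR process of any order are squeezed between $\inf_\omega S(\omega)$ and $\sup_\omega S(\omega)$, so their ratio never exceeds the spectral spread $\rho$. On the high-probability event on which the empirical covariance $\frac1n\mathbf{X}'\mathbf{X}$ is uniformly close to $\mathbf{R}$ (the event supplied by the concentration inequality of Lemma \ref{conc_biased}), the restricted curvatures of $\frac1n\mathbf{X}'\mathbf{X}$ at order $s^\star$ inherit the same spread up to the factor-two slack already absorbed into $\kappa=\sigma_{\sf w}^2/(16\pi)$; substituting this $\rho$-bound for the condition number into Zhang's iteration count and retaining the absolute constants coming from his per-step progress and stopping arguments produces the stated threshold $s^\star\ge 4\rho s\log(20\rho s)$. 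That is essentially the whole lemma.

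To see why it suffices for Theorem \ref{thm_OMP}, I would then: (i) re-run the restricted-eigenvalue/RSC argument of Lemma \ref{lem:re2} and Lemma \ref{lem:rsc} at the working order $s^\star\asymp\rho s\log s$ rather than $s$; this replaces $\sqrt{n/s}$ by $\sqrt{n/s^\star}$ in the concentration exponent and therefore inflates the sampling requirement to $n>s\log s\cdot\max\{d_0'(\log p)^2,\,d_1'(p\log p)^{1/2}\}$, with $\kappa=\sigma_{\sf w}^2/(16\pi)$ unchanged; (ii) control $\|\nabla\mathfrak{L}(\boldsymbol\theta_S)\|_\infty$ by writing $\nabla\mathfrak{L}(\boldsymbol\theta_S)=\nabla\mathfrak{L}(\boldsymbol\theta)+\tfrac2n\mathbf{X}'\mathbf{X}(\boldsymbol\theta-\boldsymbol\theta_S)$, bounding the first term by $\mathcal{O}(\sqrt{\log p/n})$ via the martingale (sub-Gaussian weighted sum) concentration of Lemma \ref{hoeff_dep} exactly as in the proof of Theorem \ref{thm:ar_1}, and the second by a constant multiple of $\sigma_s(\boldsymbol\theta)$ via the entrywise closeness of $\frac1n\mathbf{X}'\mathbf{X}$ to $\mathbf{R}$ from Lemma \ref{conc_biased}; (iii) plug these into $\sqrt6\,\varepsilon_{s^\star}/\kappa\le\sqrt6\,\sqrt{s^\star+s}\,\|\nabla\mathfrak{L}(\boldsymbol\theta_S)\|_\infty/\kappa$, which yields the leading term $d_2'\sqrt{s\log s\log p/n}$, and use the triangle inequality $\|\widehat{\boldsymbol\theta}_{\sf OMP}-\boldsymbol\theta\|_2\le\|\widehat{\boldsymbol\theta}_{\sf OMP}-\boldsymbol\theta_S\|_2+\varsigma_s(\boldsymbol\theta)$ together with $(s,\xi,2)$-compressibility and $\xi<1/2$ --- which forces the sorted coefficients to decay like $|\theta|_{(m)}=\mathcal{O}(m^{1/2-1/\xi})$ with a summable tail --- to collapse the residual $\sqrt{s^\star}\,\sigma_s(\boldsymbol\theta)$ and $\varsigma_s(\boldsymbol\theta)$ into $d_3'\log s/s^{1/\xi-2}$; (iv) finish with a union bound over the RSC event and the gradient events.

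I expect the only real obstacles to be in steps (i) and (iii). Step (i) requires checking that upgrading the restricted-eigenvalue control from order $s$ to order $s^\star$ costs only an extra $\log s$ factor in $n$ and nothing worse --- one has to be careful that the constants $c_\eta,c_\eta'$ of Lemma \ref{lem:rsc}, and in particular the choice $m=\lceil 72/\eta^2\rceil$, interact correctly with the enlarged order. Step (iii) is pure bookkeeping but delicate: one must track how $\sqrt{s^\star}\asymp\sqrt{\rho s\log s}$ multiplies the compressibility error $\sigma_s(\boldsymbol\theta)$, verify that the resulting exponent of $s$ is exactly $2-1/\xi$ (which is where $\xi<1/2$ is used, so that $1/\xi-2>0$ and the term vanishes as $s$ grows), and absorb all remaining logarithmic and $\rho$-dependent factors into $d_2',d_3'$. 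The probabilistic core --- the martingale concentration of the gradient and the Rudzkis-type concentration of the sample autocovariances --- is imported essentially verbatim from the proof of Theorem \ref{thm:ar_1} and poses no new difficulty.
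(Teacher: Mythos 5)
Your proposal is correct and takes essentially the same route as the paper: the paper's proof of this lemma is precisely a specialization of Theorem 2.1 of \cite{zhang_omp} to the AR setting, with the restricted condition number in the iteration count replaced by the spectral spread $\rho$ (controlled via Corollary \ref{cor:eig_conv}) and the bound $\varepsilon_{s^\star}\le\sqrt{s^\star+s}\,\|\nabla\mathfrak{L}(\boldsymbol{\theta}_S)\|_\infty$ coming from the $\ell_2$--$\ell_\infty$ comparison on supports of size at most $s^\star+s$. Your downstream steps (i)--(iv) also mirror how the paper then deploys the lemma (martingale concentration of the gradient, the $\mathbf{R}(\boldsymbol{\theta}-\boldsymbol{\theta}_S)$ term absorbed into $\varsigma_s(\boldsymbol{\theta})$, and the final triangle inequality), so nothing essential is missing.
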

\begin{proof}
The proof is a specialization of the proof of Theorem 2.1 in \cite{zhang_omp} to our setting with the spectral spread ${\rho=1/4\eta^2}$.
\end{proof}

In order to use Lemma \ref{prop_omp}, we need to bound $\|\nabla\mathfrak{L}(\boldsymbol{\theta}_S)\|_\infty$. We have:
\begin{align*}
\mathbb{E} \left[\nabla\mathfrak{L}(\boldsymbol{\theta}_S)\right] &= \frac{1}{n}\mathbb{E} \left[\mathbf{X}' (\mathbf{x}_1^n-\mathbf{X}\boldsymbol{\theta}_S)\right] = \frac{1}{n}\mathbb{E} \left[\mathbf{X}' \mathbf{X} (\boldsymbol{\theta}-\boldsymbol{\theta}_S)\right] = \mathbf{R} (\boldsymbol{\theta}-\boldsymbol{\theta}_S) \le \frac{\sigma^2_{\sf w}}{2\pi\eta^2} \varsigma_s(\boldsymbol{\theta}) \mathbf{1},
\end{align*}
where in the second inequality we have used (\ref{eq:expec=0}), and the last inequality results from Corollary \ref{cor:eig_conv}. 
Let $d'_4$ be any positive integer. Using the result of Lemma \ref{hoeff_dep} together with the union bound yields:  
\begin{align*}
\displaystyle \mathbb{P}\left(\|\nabla\mathfrak{L}(\boldsymbol{\theta}_S)\|_\infty \geq  c_0 \sigma^2_{\sf w}\sqrt{{1+d'_4}} \sqrt{\frac{\log p}{n}} + \frac{\sigma^2_{\sf w}\varsigma_s(\boldsymbol{\theta})}{2 \pi \eta^2} \right) \leq \frac{2}{n^{d'_4}}.
\end{align*}
Hence, we get the following concentration result for $\varepsilon_{s^\star}$:
\begin{align}
\label{eps_prob}
\mathbb{P} \left(\varepsilon_{s^\star} \geq \sqrt{s^\star+s} \left(  c_0 \sigma^2_{\sf w}\sqrt{{1+d'_4}} \sqrt{\frac{\log p}{n}} + \frac{\sigma^2_{\sf w}\varsigma_s(\boldsymbol{\theta})}{2 \pi \eta^2}\right)\right)
\leq\frac{2}{n^{d'_4}}.
\end{align}
Noting that by (\ref{sstar}) we have $s^\star+s \le \frac{4 s \log s}{\eta^2} $. Let {$d'_0 = \frac{4(3+d'_4)^2}{\eta^2 c_\eta^2}$} and {$d'_{1} = \frac{16 c'_\eta ( 3 + d_4)}{c_\eta}$}. By the hypothesis of $\varsigma_s(\boldsymbol{\theta}) \le {A} s^{1- \frac{1}{\xi}}$ for some constant $A$, and invoking the results of Lemmas \ref{lem:rsc} and \ref{prop_omp}, we get:
\begin{align*}
\left \|\widehat{\boldsymbol{\theta}}^{(s^\star)}_{{\sf OMP}}-\boldsymbol{\theta}_S \right\|_2 &\leq d'_{2} \sqrt{\frac{s \log s \log p}{n}} + d''_{2} \sqrt{s\log s} \varsigma_s(\boldsymbol{\theta})\leq  d'_{2} \sqrt{\frac{s \log s \log p}{n}} + d''_{2} \frac{\sqrt{\log s}}{s^{\frac{1}{\xi}-\frac{3}{2}}},
\end{align*}
where $d'_{2} = \frac{16 \pi c_0 \sqrt{24 (1+d'_4)}}{\eta}$ and $d''_{2} = \frac{A }{ \pi \eta^3}$, 
with probability at least  $1- {\frac{2c_1}{p^{d'_4}}} - {\frac{1}{p^{d'_4}}} -\frac{2}{n^{d'_4}}$. Finally, we have: 
\begin{align*}
\left \|\widehat{\boldsymbol{\theta}}^{(s^\star)}_{{\sf OMP}}-\boldsymbol{\theta}\right\|_2 &= \left \|\widehat{\boldsymbol{\theta}}^{(s^\star)}_{{\sf OMP}}-\boldsymbol{\theta}_S +\boldsymbol{\theta}_S -\boldsymbol{\theta} \right \|_2  \leq \left \|\widehat{\boldsymbol{\theta}}^{(s^\star)}_{{\sf OMP}}-\boldsymbol{\theta}_S \right \|_2 + \|\boldsymbol{\theta}_S-\boldsymbol{\theta}\|_2.
\end{align*}
Choosing $d'_{3} = 2 d''_{2}$ completes the proof. \QEDB

\subsection{Proof of Proposition \ref{thm:ar_minimax}}
\label{prf:minimax}
\noindent Consider the event defined by
\begin{align*}
\mathcal{A}:=\left \{\max_{i,j}|\widehat{R}_{ij}-R_{ij}| \leq \tau \right \}.
\end{align*}
Eq. (\ref{eq:max}) in the proof of Lemma \ref{lem:re2} implies that:
\begin{align*}
\mathbb{P}(\mathcal{A}^c) \leq c_1 p^2 (n+p) \exp \left( - \frac{ c_2 \sqrt{\tau n}}{\frac{c_3 \sigma_{\sf w}^4 (n+p)}{\tau^{3/2} n^{3/2}} + \sigma_{\sf w}}\right).
\end{align*}
By choosing $\tau$ as in the proof of Theorem \ref{thm:ar_1}, we have
\begin{align*}
\mathcal{R}^2_{\sf est} (\widehat{\boldsymbol{\theta}}_{\sf minimax}) & \leq \mathcal{R}^2_{\sf est} (\widehat{\boldsymbol{\theta}}_{\ell_1}) = \sup_{\mathcal{H}} \left(\mathbb{E}\left[\|\widehat{\boldsymbol{\theta}}_{\ell_1}-\boldsymbol{\theta}\|_2^2\right]\right)\\
& \leq \mathbb{P}(\mathcal{A})  d^2_3 {\frac{s \log p}{n}} +\sup_{\mathcal{H}} \mathbb{E}_{{A}^c} \left [  \|\widehat{\boldsymbol{\theta}}_{\ell_1}-{\boldsymbol{\theta}}\|_2^2\right]\\
&\leq d^2_3 {\frac{s \log p}{n}}+ \displaystyle 8(1-\eta)^2 c_1 \exp \left(-\frac{ c_2 \sqrt{\tau n}}{\frac{c_3 \sigma_{\sf w}^4 (n+p)}{\tau^{3/2} n^{3/2}} + \sigma_{\sf w}} + 3 \log p\right),
\end{align*}
where the second inequality follows from Theorem \ref{thm:ar_1}, and the third inequality follows from the fact that $\| \widehat{\boldsymbol{\theta}}_{\ell_1} - \boldsymbol{\theta} \|_2^2 \le 4 (1-\eta)^2$ by the sufficient stability assumption. For {$n > s \max \{ d_0 (\log p)^2, d_1 {(p \log p)^{1/2}\}}$}, the first term will be the dominant, and thus we get $\mathcal{R}_{\sf est} (\widehat{\boldsymbol{\theta}}_{\sf minimax}) \le 2 d_3 \sqrt{\frac{s \log p}{n}}$, for large enough $n$.

As for a lower bound on $\mathcal{R}_{\sf est} (\widehat{\boldsymbol{\theta}}_{\sf minimax})$, we take the approach of \cite{goldenshluger2001nonasymptotic} by constructing a family of AR processes with sparse parameters $\boldsymbol{\theta}$ for which the minimax risk is optimal modulo constants. In our construction, we assume that the innovations are Gaussian. The key element of the proof is the Fano's inequality:
\begin{lemma}[Fano's Inequality]
\label{lem:ar_fano_ineq}
Let $\mathcal{Z}$ be a class of densities with a subclass $\mathcal{Z}^\star$ of densities $f_{\boldsymbol{\theta}_i}$, parameterized by $\boldsymbol{\theta}_i$, for $i \in \{0,\cdots,2^M \}$. Suppose that for any two distinct $\boldsymbol{\theta}_1, \boldsymbol{\theta}_2 \in \mathcal{Z}^\star$, 
$\mathcal{D}_{\sf KL}(f_{\boldsymbol{\theta}_1}\| f_{\boldsymbol{\theta}_2}) \leq \beta$ for some constant $\beta$. Let $\widehat{\boldsymbol{\theta}}$ be an estimate of the parameters. Then
\vspace{-.2cm}
\begin{equation}
\label{eq:fano}
\sup_j \mathbb{P}(\widehat{\boldsymbol{\theta}}\neq {\boldsymbol{\theta}_j}|H_j) \geq 1 - \frac{\beta+\log2}{M},
\end{equation}
where $H_j$ denotes the hypothesis that $\boldsymbol{\theta}_j$ is the true parameter, and induces the probability measure $\mathbb{P}(.|H_j)$.
\end{lemma}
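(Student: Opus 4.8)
The plan is to prove this via the standard reduction of estimation to multiple hypothesis testing, followed by the information-theoretic form of Fano's inequality. First I would convert the estimator $\widehat{\boldsymbol{\theta}}$ into a test $\widehat{J}$ over the finite index set $\mathcal{J} = \{0, 1, \ldots, 2^M\}$ by setting $\widehat{J} = k$ when $\widehat{\boldsymbol{\theta}} = \boldsymbol{\theta}_k$ and declaring a failure otherwise; then the event $\{\widehat{J} \ne j\}$ coincides exactly with $\{\widehat{\boldsymbol{\theta}} \ne \boldsymbol{\theta}_j\}$, so that under hypothesis $H_j$ the testing error equals $\mathbb{P}(\widehat{\boldsymbol{\theta}} \ne \boldsymbol{\theta}_j \mid H_j)$. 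Placing a uniform prior on $J$ over $\mathcal{J}$ and writing $X$ for the data drawn from $f_{\boldsymbol{\theta}_J}$, the chain $J \to X \to \widehat{J}$ is Markov, and the maximal error in (\ref{eq:fano}) dominates the average (Bayes) error $\bar{p}_e := \frac{1}{|\mathcal{J}|}\sum_j \mathbb{P}(\widehat{J} \ne j \mid H_j)$.

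Next I would invoke Fano's inequality in its entropic form for the Markov chain, namely $H(J \mid \widehat{J}) \le \log 2 + \bar{p}_e \log |\mathcal{J}|$, and combine it with the data-processing inequality $I(J; \widehat{J}) \le I(J; X)$ together with $H(J) = \log|\mathcal{J}|$ (uniform prior). Rearranging yields the clean bound
\begin{equation}
\label{eq:fano_rearranged}
\bar{p}_e \ge 1 - \frac{I(J; X) + \log 2}{\log |\mathcal{J}|}.
\end{equation}
Since $|\mathcal{J}| = 2^M + 1$, the denominator satisfies $\log|\mathcal{J}| \ge M$ in the base matching the convention used for $\mathcal{D}_{\sf KL}$, which already supplies the factor $M$ appearing in the statement.

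The crux of the argument is then to control the mutual information $I(J; X)$ by the pairwise hypothesis separation $\beta$. Using the uniform prior, I would write $I(J; X) = \frac{1}{|\mathcal{J}|}\sum_j \mathcal{D}_{\sf KL}\!\left(f_{\boldsymbol{\theta}_j} \,\big\|\, \bar{f}\right)$, where $\bar{f} = \frac{1}{|\mathcal{J}|}\sum_i f_{\boldsymbol{\theta}_i}$ is the mixture. Convexity of $\mathcal{D}_{\sf KL}(P \,\|\, \cdot)$ in its second argument gives $\mathcal{D}_{\sf KL}(f_{\boldsymbol{\theta}_j} \| \bar{f}) \le \frac{1}{|\mathcal{J}|}\sum_i \mathcal{D}_{\sf KL}(f_{\boldsymbol{\theta}_j} \| f_{\boldsymbol{\theta}_i}) \le \beta$, using the hypothesis that every pairwise divergence is at most $\beta$. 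Substituting $I(J;X) \le \beta$ and $\log|\mathcal{J}| \ge M$ into (\ref{eq:fano_rearranged}), and bounding the supremum below by the average $\bar{p}_e$, produces exactly $\sup_j \mathbb{P}(\widehat{\boldsymbol{\theta}} \ne \boldsymbol{\theta}_j \mid H_j) \ge 1 - \frac{\beta + \log 2}{M}$.

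The main obstacle is the mutual-information bound in the last step: establishing $I(J;X) \le \beta$ requires both the identity expressing mutual information as the average KL divergence to the mixture and the convexity estimate that trades the intractable mixture divergence for pairwise divergences, and this is precisely where the pairwise separation hypothesis is consumed. The reduction and the entropic Fano inequality are classical, but one must also verify the measurability and data-processing structure carefully, so that $\widehat{J}$ is a legitimate function of $X$ and the chain $J \to X \to \widehat{J}$ genuinely holds, thereby justifying the data-processing step $I(J;\widehat{J}) \le I(J;X)$.
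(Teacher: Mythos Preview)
The paper does not actually prove this lemma: it is stated as a classical result (hence the name ``Fano's Inequality'') and used immediately afterward in the minimax lower-bound construction without any argument. So there is no paper proof to compare against.

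Your proposal is the standard derivation and is correct: reduce estimation to testing over the finite hypothesis class, apply the entropic Fano inequality together with data processing on the Markov chain $J \to X \to \widehat J$, and bound $I(J;X)$ by $\beta$ via the identity $I(J;X) = \frac{1}{|\mathcal J|}\sum_j \mathcal{D}_{\sf KL}(f_{\boldsymbol\theta_j}\|\bar f)$ and convexity of KL in its second argument. The only delicate point is the bookkeeping $\log|\mathcal J| \ge M$, which you flag as a base-of-logarithm convention; in the paper this lemma is used only qualitatively (the constants are absorbed), so this looseness is harmless for its purpose.
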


{Consider a class $\mathcal{Z}$ of AR processes with $s$-sparse parameters over any subset $S \subset \{1,2,\cdots,p\}$ satisfying $|S| =s$,} with parameters given by
\begin{equation}
\label{eq:ar_minimax_params}
\theta_\ell = \pm e^{-m} \mathbbm{1}_S(\ell),
\end{equation}
where $m$ remains to be chosen. We also add the all zero vector $\boldsymbol{\theta}$ to $\mathcal{Z}$. For a fixed $S$,  we have $2^s + 1$ such parameters forming a subfamily $\mathcal{Z}_S$. Consider the maximal collection of ${p \choose s}$ subsets $S$ for which any two subsets differ in at least $s/4$ indices. The size of this collection can be identified by $A(p, \frac{s}{4}, s)$ in coding theory, where $A(n,d,w)$ represents the maximum size of a binary code of length $n$ with minimum distance $d$ and constant weight $w$ \cite{macwilliams1977theory}. We have
\[
A(p, {\textstyle \frac{s}{4} }, s) \ge \frac{p^{\frac{7}{8}s - 1}}{s!},
\]
for large enough $p$ (See Theorem 6 in \cite{graham1980lower}). Also, by the Gilbert-Varshamov bound \cite{macwilliams1977theory}, there exists a subfamily $\mathcal{Z}_S^\star \subset \mathcal{Z}_S$, of cardinality $|\mathcal{Z}_S^\star| \geq 2^{\lfloor s/8 \rfloor}+1$, such that any two distinct $\boldsymbol{\theta}_1,\boldsymbol{\theta}_2 \in \mathcal{Z}_S^\star$ differ at least in $s/16$ components. Thus  for $\boldsymbol{\theta}_1, \boldsymbol{\theta}_2 \in \mathcal{Z}^\star :=  \resizebox{!}{0.3cm}{$\displaystyle \bigcup_S$} \mathcal{Z}^\star_S$, we have
\vspace{-.2cm}
\begin{equation}
\label{eq:ar_theta_lowerbd}
\|\boldsymbol{\theta}_1-\boldsymbol{\theta}_2\|_2 \geq \frac{1}{4} \sqrt{s} e^{-m} =: \alpha,
\end{equation}
and $| \mathcal{Z}^\star | \ge \frac{p^{\frac{7}{8}s - 1}}{s!} 2^{\lfloor s/8 \rfloor}$. For an arbitrary estimate $\widehat{\boldsymbol{\theta}}$, consider the testing problem between the $\frac{p^{\frac{7}{8}s - 1}}{s!} 2^{\lfloor s/8 \rfloor}$ hypotheses $H_j: \boldsymbol{\theta}=\boldsymbol{\theta}_j \in \mathcal{Z}^\star$, using the minimum distance decoding strategy. Using Markov's inequality we have
\begin{align}
\label{eq:ar_sup_lowerbd}
 \sup_{\mathcal{Z}} \mathbb{E}\left[\|\widehat{\boldsymbol{\theta}}-{\boldsymbol{\theta}}\|_2\right] &\geq \sup_{\mathcal{Z}^\star} \mathbb{E}\left [\|\widehat{\boldsymbol{\theta}}-{\boldsymbol{\theta}}\|_2\right] \geq \frac{\alpha}{2} \sup_{\mathcal{Z}^\star}\mathbb{P}\left(\|\widehat{\boldsymbol{\theta}}-{\boldsymbol{\theta}}\|_2 \geq \frac{\alpha}{2} \right) = \frac{\alpha}{2}\sup_{j} \mathbb{P}\left(\widehat{\boldsymbol{\theta}}\neq {\boldsymbol{\theta}_j}|H_j \right).
 \end{align}
Let $f_{\boldsymbol{\theta}_j}$ denote joint probability distribution of $\{x_k\}_{k=1}^n$ conditioned on $\{x_k\}_{k=-p+1}^0$ under the hypothesis $H_j$. Using the Gaussian assumption on the innovations, for $i \neq j$, we have
\begin{align}
\label{eq:ar_kl_bound}
\notag
 \mathcal{D}_{\sf KL}(f_{\boldsymbol{\theta}_i}\|f_{\boldsymbol{\theta}_j}) & \leq \sup_{i\neq j} \mathbb{E}\left[\log \frac{f_{\boldsymbol{\theta}_i}}{f_{\boldsymbol{\theta}_j}} |H_i\right]\\
\notag & \leq \sup_{i \neq j}\displaystyle \mathbb{E}\left[-\frac{1}{2\sigma^2_{\sf w}} \sum_{k=1}^n \left( \left(x_k-\boldsymbol{\theta}_i'\mathbf{x}_{k-p}^{k-1} \right)^2 - \left(x_k-\boldsymbol{\theta}_j'\mathbf{x}_{k-p}^{k-1} \right)^2 \right)  \Big| H_i\right]\\
\notag & \leq \sup_{i \neq j} \frac{n}{2\sigma^2_{\sf w}} \mathbb{E}\left[ \left((\boldsymbol{\theta}_i-\boldsymbol{\theta}_j)'\mathbf{x}_{k-p}^{k-1}\right)^2 \Big| H_i \right]\\
\notag & = \frac{n}{2\sigma^2_{\sf w}}\sup_{i \neq j} (\boldsymbol{\theta}_i-\boldsymbol{\theta}_j)' \mathbf{R} (\boldsymbol{\theta}_i-\boldsymbol{\theta}_j)\\
& \leq \frac{n \lambda_{\sf max}}{2\sigma^2_{\sf w}}\sup_{i \neq j} \|\boldsymbol{\theta}_i-\boldsymbol{\theta}_j\|_2^2 \leq \frac{n s e^{-2m}}{64 \pi \eta^2} =: \beta.
\end{align}

Using Lemma \ref{lem:ar_fano_ineq}, (\ref{eq:ar_theta_lowerbd}), (\ref{eq:ar_sup_lowerbd}) and (\ref{eq:ar_kl_bound}) yield:
\begin{equation*}
\sup_{\mathcal{Z}} \mathbb{E}\left[\|\widehat{\boldsymbol{\theta}}-{\boldsymbol{\theta}}\|_2 \right] \geq \frac{\sqrt{s}  e^{-m}}{8}\left(1-\frac{2\left( \frac{  n s e^{-2m}}{64 \pi \eta^2}+\log 2\right)}{s \log p}\right).
\end{equation*}
for $p$ large enough so that $\log p \ge \frac{\log s - \frac{9}{8}}{\frac{3}{8} - \frac{1}{s}}$. Choosing $m = \frac{1}{2} \log \left ( \frac{n}{8 \pi \eta^2 \log p} \right )$ gives us the claim of Proposition \ref{thm:ar_minimax} with $L = \frac{d_3}{\eta \sqrt{2\pi}}$ for large enough $s$ and $p$ such that $s \log p \ge \log (256)$. The hypothesis of $s  \le \frac{1- \eta}{\sqrt{8\pi} \eta} \sqrt{\frac{n}{\log p}}$ guarantees that for all $\boldsymbol{\theta}\in \mathcal{Z}^\star$, we have $\|\boldsymbol{\theta}\|_1 \leq 1-\eta$. \QEDB

%
%


\vspace{-3mm}
\subsection{Generalization to stable AR processes}
We consider relaxing the sufficient stability assumption of $\| \boldsymbol{\theta} \|_1 \le 1- \eta < 1$ to $\boldsymbol{\theta}$ being in the set of stable AR processes. Given that the set of all stable AR processes is not necessarily convex, the LASSO and OMP estimates cannot be obtained by convex optimization techniques. Nevertheless, the results of Theorems \ref{thm:ar_1} and \ref{thm_OMP} can be generalized to the case of stable AR models:

\begin{corollary}\label{cor:ar_1} The claims of Theorems \ref{thm:ar_1} and \ref{thm_OMP} hold when $\boldsymbol{\Theta}$ is replaced by the set of stable AR processes, except for possibly slightly different constants.
\end{corollary}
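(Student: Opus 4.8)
\noindent \textit{\textbf{Proof Strategy.}} The sufficient stability condition $\|\boldsymbol{\theta}\|_1 \le 1-\eta < 1$ enters the proofs of Theorems \ref{thm:ar_1} and \ref{thm_OMP} in only two ways: it makes the feasible set $\boldsymbol{\Theta}$ convex (so that the estimators are computable by convex optimization — a property the corollary explicitly forgoes), and it furnishes the explicit two-sided spectral bound of Corollary \ref{cor:eig_conv} on the eigenvalues of $\mathbf{R}$. The plan is to replace Corollary \ref{cor:eig_conv} by its natural analogue valid for every stable AR process, and then to observe that each subsequent step in Appendix \ref{app:ar_main} uses only the \emph{finiteness} of the spectral spread, not its particular numerical value.

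First I would verify that any stable AR($p$) process has finite spectral spread $\rho$. Because the process is stable, the polynomial $A(z) = 1 - \sum_{\ell=1}^p \theta_\ell z^{-\ell}$ has no zeros on the unit circle, so $\omega \mapsto |A(e^{j\omega})|^2$ is continuous and strictly positive on the compact set $[-\pi,\pi]$; hence $0 < \inf_\omega S(\omega) \le \sup_\omega S(\omega) < \infty$ with $S(\omega) = \sigma_{\sf w}^2/(2\pi |A(e^{j\omega})|^2)$, and $\rho < \infty$. Lemma \ref{eig_conv} — stated for any stationary process with spectral density bounded and bounded away from zero — then places the eigenvalues of $\mathbf{R}$ of every order in the interval $[\lambda_{\sf min},\lambda_{\sf max}] := [\inf_\omega S(\omega), \sup_\omega S(\omega)]$. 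I would also note that stability forces the Wold (MA) coefficients $b_j$ to decay geometrically, so $\sum_j |b_j| < \infty$; this is precisely hypothesis (\ref{eq: abs_sum}) for Rudzkis' inequality (Lemma \ref{conc_biased}), while the sub-Gaussian innovations supply the moment bound (\ref{eq: bounded_moments}).

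With these two ingredients, the chain Lemma \ref{lem:re} $\to$ Lemma \ref{lem:re2} $\to$ Lemma \ref{RE_RSC} $\to$ Lemma \ref{lem:rsc} carries over unchanged, now with $\lambda_{\sf min},\lambda_{\sf max}$ denoting the general spectral bounds and with the integer $m$ in Lemma \ref{lem:rsc} chosen as $m = \lceil c\,\rho \rceil$ for an absolute constant $c$, in place of $\lceil 72/\eta^2 \rceil$; this yields the RSC of $\mathbf{X}$ with a constant $\kappa$ proportional to $\lambda_{\sf min}$ and a failure probability of the form (\ref{ar:eq_rsc}) with $c_\eta, c'_\eta$ replaced by constants depending on $\rho$. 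The control of $\gamma_n = \|\nabla\mathfrak{L}(\boldsymbol{\theta})\|_\infty$, and of $\|\nabla\mathfrak{L}(\boldsymbol{\theta}_S)\|_\infty$ for OMP, used only the orthogonality principle (\ref{eq:expec=0}), the martingale structure, sub-Gaussianity, and the bound $\lambda_{\sf max}$, all of which persist; so the concentration estimates (\ref{grad_bound}), (\ref{eps_prob}) hold with adjusted constants. Finally, the deterministic vase-condition lemma together with Lemma \ref{RSC_thm} (for Theorem \ref{thm:ar_1}) and Lemma \ref{prop_omp} (for Theorem \ref{thm_OMP}, with the general $\rho$ now entering $s^\star \ge 4\rho s\log 20\rho s$) only require that $\widehat{\boldsymbol{\theta}}$ be optimal over \emph{some} feasible set containing the true $\boldsymbol{\theta}$; convexity is never invoked there. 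Hence the error bounds (\ref{eq:ar1error}) and (\ref{eq:aromp}) follow verbatim, with all constants now functions of $\rho$ (equivalently of $\lambda_{\sf min},\lambda_{\sf max}$) instead of $\eta$.

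The main obstacle I anticipate is bookkeeping rather than conceptual: the set of stable AR parameters is open and unbounded in the relevant sense, so a minimizer need not literally exist. I would handle this either by restricting to a compact sub-class of stable models on which existence is clear, or — following the common practice recorded in Remark 1 — by solving the unconstrained problem and checking stability \emph{post hoc}; in both cases the true parameter stays feasible, which is all the error-bound arguments use. Everything else reduces to substituting $\rho$ (or $\lambda_{\sf min},\lambda_{\sf max}$) for the $\eta$-dependent constants throughout Appendix \ref{app:ar_main}.
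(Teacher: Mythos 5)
Your proposal is correct and follows essentially the same route as the paper: the paper's proof consists precisely of noting that stability guarantees a bounded, strictly positive spectral density and then replacing the interval $\left[\frac{\sigma^2_{\sf w}}{8\pi},\frac{\sigma^2_{\sf w}}{2\pi\eta^2}\right]$ in Corollary~\ref{cor:eig_conv} by $[\inf_\omega S(\omega),\sup_\omega S(\omega)]$, with all downstream constants adjusted accordingly. Your additional verification that the Rudzkis conditions, the martingale concentration, and the feasibility-only use of the optimality lemmas all survive this substitution is a more explicit rendering of the same argument, not a different one.
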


\begin{proof} Note that the stability of the process guarantees boundedness of the power spectral density. The result follows by simply replacing the bounds $\left [\frac{\sigma^2_{\sf w}}{8 \pi}, \frac{\sigma^2_{\sf w}}{2 \pi \eta^2} \right]$ on the singular values of the covariance matrix $\mathbf{R}$ in Corollary \ref{cor:eig_conv} by $[\inf_\omega S(\omega), \sup_\omega S(\omega)]$.
\end{proof}

\section{Proofs of Main Theorems for GLM's} \label{appprf}

\subsection{Roadmap of the Proofs}
This appendix contains the proofs of Theorems \ref{negahban_lambda} and \ref{thm_OMP}, as well as Corollaries \ref{cor:1} and \ref{cor:2}. Before presenting the proofs, we establish some of the basic properties of the canonical self-exciting process (Proposition \ref{prop:hawkes_properties}) as well as our notational conventions as preliminaries. We then state a key result, namely Lemma \ref{lemma1}, which is at the core of the proofs of Theorems \ref{negahban_lambda} and \ref{thm_OMP}. The proofs are presented via a sequence of three propositions (Propositions \ref{negahban_thm}--\ref{prop_omp}) based on existing results in the literature, in conjunction with Proposition \ref{prop:hawkes_properties} and Lemma \ref{lemma1}. Therefore, Appendix \ref{appprf} is stand-alone modulo the proofs of Proposition \ref{prop:hawkes_properties} and Lemma \ref{lemma1}.

The proofs of Proposition \ref{prop:hawkes_properties} and Lemma \ref{lemma1} are presented in Appendix \ref{app:hawkes_psd}. In particular, the proof of Lemma \ref{lemma1} follows from two propositions (Propositions \ref{eig_conv} and \ref{prop:hawkes_concentration}). Therefore, Appendix \ref{app:hawkes_psd} is stand-alone modulo the proofs of Propositions \ref{eig_conv} and \ref{prop:hawkes_concentration}.

While Proposition \ref{eig_conv} is a well-known result, Proposition \ref{prop:hawkes_concentration} requires a careful proof, which is presented in Appendix \ref{app:c} and relies on an existing result on the concentration of dependent random variables (Proposition \ref{conc_dep}).

\subsection{Preliminaries}
We state some useful properties of the canonical self-exciting process in the form of the following proposition:
\begin{prop} \label{prop:hawkes_properties}[Properties of the Canonical Self-Exciting Process] The canonical self-exciting process is stationary and we have
\begin{align}
\notag & \pi_\star = \frac{\mu}{1-\mathbf{1}'\boldsymbol{\theta}} >0, \quad  \mu > 0 \Rightarrow  \mathbf{1}' \boldsymbol{\theta}  <1 ,\quad \mu + \mathbf{1}' \boldsymbol{\theta}^+ < 1,\\
\notag & S(\omega) = \frac{1}{2\pi} \left( \pi_\star^2 \delta(\omega) + \frac{\pi_\star-\pi_\star^2}{\left(1-\mathbf{1}'\boldsymbol{\theta}\right)^2 \left|1 - \Theta(\omega)\right|^2} \right),\\
\notag & S(\omega) \ge  \frac{\pi_\star (1 - \pi_\star)}{2 \pi (1+2\pi_\max)^4} =: \kappa_l ,
\end{align}
where $\pi_\star$ denotes the stationary probability of spiking, $S(\omega)$ denotes the power spectral density of the process, and  $\boldsymbol{\theta}^\pm = \max\{\pm \boldsymbol{\theta},\mathbf{0}\}$.
\end{prop}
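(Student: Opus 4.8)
\textbf{Proof proposal for Proposition \ref{prop:hawkes_properties}.}

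The plan is to establish the listed properties in the natural logical order: stationarity first, then the formula for $\pi_\star$ and the sign constraints, then the power spectral density, and finally the lower bound $\kappa_l$. For \emph{stationarity}, I would construct the canonical self-exciting process as the stationary solution of its defining recursion by viewing the spike sequence $\{x_i\}$ as a $\{0,1\}$-valued process whose conditional one-step distribution is Bernoulli with parameter $\lambda_i = \mu + \boldsymbol{\theta}' \mathbf{x}_{i-p}^{i-1}$. Because the process has finite memory $p$, the augmented state $\mathbf{s}_i := \mathbf{x}_{i-p+1}^{i}$ is a finite-state (though exponentially large) Markov chain on $\{0,1\}^p$. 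The non-degeneracy assumption (the process does not terminate in an infinite run of zeros, equivalently $\mu > 0$) makes the all-zeros state non-absorbing; together with irreducibility/aperiodicity of the induced chain this yields a unique stationary distribution, and I would take the process to be initialized from it, so that $\{x_i\}$ is strictly stationary. Let $\pi_\star := \mathbb{E}[x_i] = \mathbb{P}(x_i = 1)$ under stationarity.

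For the \emph{formula for $\pi_\star$ and the sign constraints}: taking expectations of $x_i = \lambda_i + (\text{martingale increment})$, and using $\mathbb{E}[x_{i-\ell}] = \pi_\star$ for all $\ell$ by stationarity, I get $\pi_\star = \mu + \pi_\star \sum_{\ell=1}^p \theta_\ell = \mu + \pi_\star \mathbf{1}'\boldsymbol{\theta}$, hence
\[
\pi_\star = \frac{\mu}{1 - \mathbf{1}'\boldsymbol{\theta}}.
\]
Since $\pi_\star \in (0,1]$ and $\mu > 0$, the denominator must be positive, giving $\mathbf{1}'\boldsymbol{\theta} < 1$. For $\mu + \mathbf{1}'\boldsymbol{\theta}^+ < 1$: the constraint $(\star)$ gives $\mu + \mathbf{1}'\boldsymbol{\theta}^+ = \mu + \mathbf{1}'\boldsymbol{\theta}^+ \le \pi_\max < 1/2 < 1$ directly from the second inequality of $(\star)$ applied with the worst-case history (all past bins spiked on the positive-support coordinates), so this is immediate once we note $\boldsymbol{\theta}^+ = \max\{\boldsymbol{\theta},\mathbf{0}\}$ and the upper feasibility bound.

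For the \emph{power spectral density}, I would compute the autocovariance of the stationary $\{0,1\}$ process. Writing the innovation $\varepsilon_i := x_i - \lambda_i$ (a white, conditionally-mean-zero sequence with $\mathrm{Var}(\varepsilon_i \mid H_i) = \lambda_i(1-\lambda_i)$, and unconditional variance $\pi_\star(1-\pi_\star)$ by the law of total variance together with stationarity), the centered process $\tilde x_i := x_i - \pi_\star$ satisfies the linear AR-type recursion $\tilde x_i = \sum_{\ell=1}^p \theta_\ell \tilde x_{i-\ell} + \varepsilon_i$. Taking the discrete-time Fourier transform gives $\widetilde X(\omega)(1 - \Theta(\omega)) = E(\omega)$, so the continuous part of the spectrum is $\frac{1}{2\pi}\frac{\pi_\star(1-\pi_\star)}{|1-\Theta(\omega)|^2}$; note $\pi_\star(1-\pi_\star) = (1-\mathbf{1}'\boldsymbol{\theta})^{-2}\cdot(\pi_\star - \pi_\star^2)(1-\mathbf{1}'\boldsymbol{\theta})^2$, which after simplification matches the stated $\frac{\pi_\star - \pi_\star^2}{(1-\mathbf{1}'\boldsymbol{\theta})^2|1-\Theta(\omega)|^2}$ once the normalization is tracked carefully; the DC atom $\pi_\star^2\delta(\omega)/(2\pi)$ accounts for the nonzero mean. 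I would double-check the exact placement of the $(1-\mathbf{1}'\boldsymbol{\theta})^{-2}$ factor against the convention $\Theta(0) = \mathbf{1}'\boldsymbol{\theta}$, since that is where sign/normalization slips are easiest.

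Finally, for the \emph{lower bound} $S(\omega) \ge \kappa_l$: drop the nonnegative DC atom and bound the continuous part from below by bounding $|1 - \Theta(\omega)|$ from above. We have $|1 - \Theta(\omega)| \le 1 + \sum_\ell |\theta_\ell| = 1 + \|\boldsymbol{\theta}\|_1$, so $|1-\Theta(\omega)|^2 \le (1+\|\boldsymbol{\theta}\|_1)^2$; combined with $(1-\mathbf{1}'\boldsymbol{\theta})^2 \le (1+\|\boldsymbol{\theta}\|_1)^2$ this gives a denominator at most $(1+\|\boldsymbol{\theta}\|_1)^4$. It then remains to bound $\|\boldsymbol{\theta}\|_1 = \mathbf{1}'\boldsymbol{\theta}^+ + \mathbf{1}'\boldsymbol{\theta}^-$ in terms of $\pi_\min, \pi_\max$ using $(\star)$: from $(\star)$, $\mathbf{1}'\boldsymbol{\theta}^+ \le \pi_\max - \mu < \pi_\max$ and $\mathbf{1}'\boldsymbol{\theta}^- \le \mu - \pi_\min < \mu \le \pi_\max$ (using $\mu \le \mu + \mathbf{1}'\boldsymbol{\theta}^+ \le \pi_\max$), so $\|\boldsymbol{\theta}\|_1 < 2\pi_\max$, whence $(1+\|\boldsymbol{\theta}\|_1)^4 < (1 + 2\pi_\max)^4$. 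Therefore $S(\omega) \ge \frac{1}{2\pi}\cdot\frac{\pi_\star - \pi_\star^2}{(1+2\pi_\max)^4} = \frac{\pi_\star(1-\pi_\star)}{2\pi(1+2\pi_\max)^4} = \kappa_l$.

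The main obstacle I anticipate is not any single step but the bookkeeping: making the stationarity argument rigorous (irreducibility/aperiodicity of the exponential-size finite-memory chain under only the non-degeneracy assumption, and verifying that the stationary mean indeed lies in $(0,1)$ strictly so that all the divisions are legitimate), and keeping the normalization constants in the PSD consistent with the paper's Fourier convention so that the stated closed form — in particular the exact exponent on $(1-\mathbf{1}'\boldsymbol{\theta})$ and the $1/2\pi$ factors — comes out precisely as written rather than off by a constant. The spectral-bound step is then routine given $(\star)$.
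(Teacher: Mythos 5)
Your stationarity argument (the finite-state Markov chain on $\{0,1\}^p$ with state $\mathbf{x}_{i-p+1}^{i}$, irreducible and aperiodic under non-degeneracy, hence a unique stationary law), the iterated-expectation derivation of $\pi_\star=\mu/(1-\mathbf{1}'\boldsymbol{\theta})$ with the first two sign constraints, and the final lower-bound step (drop the DC atom, bound both $1-\mathbf{1}'\boldsymbol{\theta}$ and $|1-\Theta(\omega)|$ by $1+\|\boldsymbol{\theta}\|_1$, then bound $\|\boldsymbol{\theta}\|_1<2\pi_\max$ from ($\star$)) coincide with the paper's proof; the only cosmetic difference is that the paper obtains $\mu+\mathbf{1}'\boldsymbol{\theta}^+<1$ by noting that $\mu+\mathbf{1}'\boldsymbol{\theta}^+$ is the spiking probability on a realizable history and must be a valid probability, rather than by citing ($\star$).

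The genuine gap is in your derivation of $S(\omega)$. Writing $\tilde x_i=\boldsymbol{\theta}'\tilde{\mathbf{x}}_{i-p}^{i-1}+\varepsilon_i$ with $\varepsilon_i=x_i-\lambda_i$ is fine, but your assignment of unconditional variance $\pi_\star(1-\pi_\star)$ to $\varepsilon_i$ "by the law of total variance" is incorrect: the law of total variance gives $\mathrm{Var}(x_i)=\mathbb{E}[\lambda_i(1-\lambda_i)]+\mathrm{Var}(\lambda_i)$, so $\mathrm{Var}(\varepsilon_i)=\mathbb{E}[\lambda_i(1-\lambda_i)]=\pi_\star(1-\pi_\star)-\mathrm{Var}(\lambda_i)$, which is strictly smaller than $\pi_\star(1-\pi_\star)$ whenever $\boldsymbol{\theta}\neq\mathbf{0}$; you have conflated the innovation variance with $\mathrm{Var}(x_i)$. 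As a result, the continuous spectrum you obtain, $\frac{\pi_\star(1-\pi_\star)}{2\pi|1-\Theta(\omega)|^2}$, is not the stated $\frac{\pi_\star-\pi_\star^2}{2\pi(1-\mathbf{1}'\boldsymbol{\theta})^2|1-\Theta(\omega)|^2}$, and the "reconciliation" you offer—multiplying and dividing by $(1-\mathbf{1}'\boldsymbol{\theta})^2$—is a vacuous identity that cannot produce the missing factor; you in effect concede this by deferring the normalization check. The paper's route here is different: it derives Yule--Walker-type identities by iterated expectations, $r_k=\mu\pi_\star+\boldsymbol{\theta}'\mathbf{r}_{k-p}^{k-1}$ and, after centering, $c_k=\boldsymbol{\theta}'\mathbf{c}_{k-p}^{k-1}$ for $k>0$ together with $c_0=\pi_\star-\pi_\star^2$, and then matches this second-order structure to an AR($p$) process with parameter $\boldsymbol{\theta}$ and innovation variance $(\pi_\star-\pi_\star^2)/(1-\mathbf{1}'\boldsymbol{\theta})^2$, from which the displayed $S(\omega)$ is read off. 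Since the stated constant $\kappa_l$ (with the fourth power $(1+2\pi_\max)^4$, not a square) is inherited precisely from the $(1-\mathbf{1}'\boldsymbol{\theta})^2$ factor in that formula, your proposal as written does not establish the second and third displayed claims of the proposition, and this is a substantive gap rather than bookkeeping.
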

\begin{proof}
The proof is given in Appendix \ref{app:hawkes_psd}.
\end{proof}


The {stationarity gap of $1-\mathbf{1}'\boldsymbol{\theta}$  plays an important role in controlling the convergence rate of the process to its stationary distribution.}
Throughout the proof, we will also use the notation
\begin{align*}
\mathbb{S}_p(t) :=\left\{\boldsymbol{\nu}\mid\|\boldsymbol{\nu}\|_p=t\right\}.
\end{align*}
to denote the $p$-norm ball of radius $t$.  For simplicity of notation, we also define the $n$-sample empirical expectation as follows:
\[
\hat{\mathbb{E}}_n \{ f(x_{\cdot}) \} := \frac{1}{n} \sum_{i=1}^n f(x_i)
\]
for any measurable function $f(x_{\cdot})$. Note that the subscript $x_{\cdot}$ refers to an index in the set $\{ 1,2, \cdots, n\}$.

\subsection{Establishing the Restricted Strong Convexity}

The proof of Theorems \ref{negahban_lambda} and \ref{thm_OMP} relies on establishing the Restricted Strong Convexity (RSC) for the negative log-likelihood given by (\ref{L_def}).  Recall that if the log-likelihood is twice differentiable with respect to $\boldsymbol{\theta}$, the {RSC} property implies the existence of a lower quadratic bound on the negative log-likelihood:

\begin{equation}
\label{RSC}
\mathfrak{D_L}({\boldsymbol{\psi}},\boldsymbol{\theta}):= \mathfrak{L}(\boldsymbol{\theta}+{\boldsymbol{\psi}})-  \mathfrak{L}(\boldsymbol{\theta})-{\boldsymbol{\psi}}'\nabla\mathfrak{L}(\boldsymbol{\theta})\geq \kappa\|{\boldsymbol{\psi}}\|_2^2,
\end{equation}
for a positive constant $\kappa > 0$ and all ${\boldsymbol{\psi}}\in\mathbb{R}^p$ satisfying:
\begin{equation}
\label{eq:cone}
\|\boldsymbol{\psi}_{S^{c}} \|_1 \leq 3 \|{\boldsymbol{\psi}}_S\|_1 + 4 \| \boldsymbol{\theta}_{S^{c}}\|_1.
\end{equation}

for any index set $S \subset \{1,2,\cdots,p\}$ of cardinality $s$. The latter condition is known as the cone constraint.

The following key lemma establishes the Restricted Strong Convexity condition for the {canonical self-exciting process}:

\begin{lemma}[Restricted strong convexity of the {canonical self-exciting process}]\label{lemma1}
Let ${\mathbf{x}_{-p+1}^n}$ denote a sequence of samples from the {canonical self-exciting process} with parameters $\{\mu,\boldsymbol{\theta}\}$ satisfying the conditions given by (\ref{eq:star}). Then, for $n \ge d_1 s^{2/3} p^{2/3} \log p$, the negative log-likelihood function $\mathfrak{L}(\boldsymbol{\theta})$ satisfies the RSC property with a positive constant $\kappa > 0$ with probability at least $1-2 \exp\left(-\frac{c\kappa^2n^3}{s^2p^2}\right)$, for some constant $c$, and both $\kappa$ and $c$ are only functions of $d_1$, $c_1$, and $\pi_\max$.
\end{lemma}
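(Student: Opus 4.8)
\textbf{Proof proposal for Lemma \ref{lemma1} (RSC of the canonical self-exciting process).}

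The plan is to establish the lower quadratic bound (\ref{RSC}) in two stages: first for the \emph{expected} negative log-likelihood, and then transfer it to the empirical negative log-likelihood via a concentration argument for dependent sums. For the first stage, I would compute the Hessian of $\mathfrak{L}(\boldsymbol{\theta})$. Since $\lambda_i = \mu + \boldsymbol{\theta}'\mathbf{x}_{i-p}^{i-1}$ is linear in $\boldsymbol{\theta}$, the second derivative of the per-sample Bernoulli negative log-likelihood $-x_i\log\lambda_i - (1-x_i)\log(1-\lambda_i)$ with respect to $\boldsymbol{\theta}$ is $\left(\frac{x_i}{\lambda_i^2} + \frac{1-x_i}{(1-\lambda_i)^2}\right)\mathbf{x}_{i-p}^{i-1}(\mathbf{x}_{i-p}^{i-1})'$, which is positive semidefinite. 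Taking the conditional expectation given the history $H_i$, the bracketed coefficient averages to $\frac{1}{\lambda_i(1-\lambda_i)} \ge 4$ (since $\lambda_i \in (0, 1/2)$ under (\ref{eq:star}) forces $\lambda_i(1-\lambda_i) \le 1/4$); hence $\nabla^2 \mathbb{E}[\mathfrak{L}(\boldsymbol{\theta})] \succeq 4\,\mathbf{R}$, where $\mathbf{R} = \mathbb{E}[\mathbf{x}_{i-p}^{i-1}(\mathbf{x}_{i-p}^{i-1})']$ is the $p\times p$ covariance matrix of the stationary process. By Proposition \ref{prop:hawkes_properties}, the power spectral density of the process is bounded below by $\kappa_l > 0$, and then by the spectral-theoretic argument (the same one underlying Corollary \ref{cor:eig_conv} and Lemma \ref{eig_conv} of the AR chapter — Grenander--Szeg\H{o}) the smallest eigenvalue of $\mathbf{R}$ is bounded below by $2\pi\kappa_l$. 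This yields $\mathfrak{D}_{\mathbb{E}[\mathfrak{L}]}(\boldsymbol{\psi},\boldsymbol{\theta}) \ge \kappa_0 \|\boldsymbol{\psi}\|_2^2$ for $\kappa_0 = 8\pi\kappa_l$ and \emph{all} $\boldsymbol{\psi}$, not just those in the cone.

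For the second stage, I would write $\mathfrak{D}_{\mathfrak{L}}(\boldsymbol{\psi},\boldsymbol{\theta}) = \mathfrak{D}_{\mathbb{E}[\mathfrak{L}]}(\boldsymbol{\psi},\boldsymbol{\theta}) + \left(\mathfrak{D}_{\mathfrak{L}} - \mathfrak{D}_{\mathbb{E}[\mathfrak{L}]}\right)(\boldsymbol{\psi},\boldsymbol{\theta})$ and control the deviation term uniformly over $\boldsymbol{\psi}$ in the restricted cone. Because the Bregman divergence $\mathfrak{D}_{\mathfrak{L}}(\boldsymbol{\psi},\boldsymbol{\theta})$ is, after expanding in $\boldsymbol{\psi}$, governed by quadratic forms $\boldsymbol{\psi}'\widehat{\mathbf{R}}\boldsymbol{\psi}$ in the empirical second-moment matrix $\widehat{\mathbf{R}} = \frac{1}{n}\sum_i \mathbf{x}_{i-p}^{i-1}(\mathbf{x}_{i-p}^{i-1})'$ (together with lower-order terms controlled by $\mu$ and the bounds $\pi_{\min}, \pi_{\max}$), it suffices to show that the entries of $\widehat{\mathbf{R}}$ concentrate around those of $\mathbf{R}$. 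I would invoke a concentration inequality for second-order sums of the dependent, bounded (binary) sequence $\{x_i\}$; such an inequality is available precisely because the canonical self-exciting process mixes fast, a consequence of the restriction $\pi_{\max} < 1/2$ in (\ref{eq:star}). Concretely, I expect a bound of the form $\mathbb{P}\big(\max_{k,\ell}|\widehat{R}_{k\ell} - R_{k\ell}| > t\big) \le 2p^2 \exp(-c' n t^2)$ for small $t$ (analogous to Lemma \ref{conc_biased} and Eq. (\ref{eq:max}) in the AR proof, but using the mixing of the self-exciting process rather than the Wold decomposition). Then, on the cone $\|\boldsymbol{\psi}_{S^c}\|_1 \le 3\|\boldsymbol{\psi}_S\|_1 + 4\|\boldsymbol{\theta}_{S^c}\|_1$ with $\sigma_s(\boldsymbol{\theta}) = \mathcal{O}(\sqrt{s})$, one has $\|\boldsymbol{\psi}\|_1 \lesssim \sqrt{s}\|\boldsymbol{\psi}\|_2$ (up to the additive compressibility term which contributes at a lower order), so $|\boldsymbol{\psi}'(\widehat{\mathbf{R}} - \mathbf{R})\boldsymbol{\psi}| \le t\|\boldsymbol{\psi}\|_1^2 \lesssim t\,s\,\|\boldsymbol{\psi}\|_2^2$. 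Choosing $t \asymp \kappa_0/s$ makes the deviation at most $\kappa_0/2\,\|\boldsymbol{\psi}\|_2^2$, and the concentration bound then holds with probability at least $1 - 2p^2\exp(-c' n \kappa_0^2 / s^2)$. Matching this with the sample-size hypothesis $n \ge d_1 s^{2/3}p^{2/3}\log p$ absorbs the $p^2$ prefactor into the exponent and produces the stated failure probability $2\exp(-c\kappa^2 n^3/(s^2 p^2))$ with $\kappa = \kappa_0/2$, after one verifies that the scaling $n^3/(s^2p^2) \gtrsim \log p$ is implied. The dependence of $\kappa$ and $c$ on $d_1$, $c_1$, and $\pi_{\max}$ is then read off: $\kappa$ from $\kappa_l$ (hence $\pi_{\max}$ via Proposition \ref{prop:hawkes_properties}), and $c$ from the mixing constant in the dependent concentration inequality (again a function of $\pi_{\max}$) together with the bookkeeping that exchanges the $p^2\log p$ factor for the cubic-in-$n$ exponent.

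The main obstacle I anticipate is the dependent concentration step for $\widehat{\mathbf{R}}$. Unlike the i.i.d.\ covariate setting of \cite{Negahban}, the rows $\mathbf{x}_{i-p}^{i-1}$ of the design are overlapping windows of a single realization, so the classical large-deviation / matrix-Bernstein machinery does not apply directly; one needs a Bernstein-type inequality for sums of functions of a mixing sequence (or a martingale-difference decomposition combined with a bounded-difference / McDiarmid-type argument adapted to the fast-mixing regime $\pi_{\max} < 1/2$), and then a union bound over the $p^2$ entries — exactly as the AR chapter had to invoke Rudzkis' result. Establishing the requisite geometric mixing rate for the canonical self-exciting process, with an explicit constant depending on $\pi_{\max}$, and then combining it with the sample-complexity bookkeeping so that the $s^{2/3}p^{2/3}\log p$ threshold cleanly yields the $n^3/(s^2p^2)$ exponent, is where the technical weight of the argument lies; everything else (the Hessian lower bound, the spectral bound on $\mathbf{R}$, and the cone-to-$\ell_2$ comparison) is routine.
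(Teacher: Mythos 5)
Your proposal is essentially correct and shares the paper's skeleton (reduce RSC to a lower bound on the quadratic form, bound the population version through the spectral floor $\kappa_l$ of Proposition \ref{prop:hawkes_properties}, then transfer to the empirical version by concentration for the dependent binary sequence), but the concentration step takes a genuinely different route. The paper never controls $\widehat{\mathbf{R}}$ entrywise: it first lower-bounds the \emph{empirical} Bregman divergence deterministically by $\hat{\mathbb{E}}_n\big[(\boldsymbol{\psi}'\mathbf{x}_{\cdot-p}^{\cdot-1})^2\big]$, using only that both denominators in the second-order Taylor term are at most one under (\ref{eq:star}) (note that your factor-of-$4$ bound via $\mathbb{E}[x_i\mid H_i]=\lambda_i(\boldsymbol{\theta})$ is valid only at the true parameter; at the intermediate point $\boldsymbol{\theta}+\nu\boldsymbol{\psi}$ the same trick yields the constant $1$, which is what the paper uses and is all that is needed — working at the level of the quadratic form also spares you from handling the random denominators when transferring from the expected to the empirical likelihood). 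It then concentrates the supremum $\mathfrak{D}_n=\sup_{\boldsymbol{\psi}\in\mathbb{S}_2(1)}\big|\hat{\mathbb{E}}_n[(\boldsymbol{\psi}'\mathbf{x}_{\cdot-p}^{\cdot-1})^2]-\mathbb{E}[(\boldsymbol{\psi}'\mathbf{x}_{\cdot-p}^{\cdot-1})^2]\big|$ directly, viewing it as a single $\mathcal{O}(sp/n)$-Lipschitz functional of the sample path in the normalized Hamming metric (the $\sqrt{s}$ coming from the cone bound $\|\boldsymbol{\psi}\|_1\lesssim\sqrt{s}$, the $p$ from the overlapping covariate windows), and invokes the Kontorovich--Ramanan inequality (Proposition \ref{conc_dep}) with mixing coefficients bounded by $(2\pi_{\max})^{n-j+1}$ — this is exactly where $\pi_{\max}<1/2$ enters — plus a separate bound $\mathbb{E}[\mathfrak{D}_n]=\mathcal{O}(ps/n^{3/2})$ obtained by integrating the tail, which is precisely what the threshold $n\ge d_1 s^{2/3}p^{2/3}\log p$ is tuned to push below $\kappa_l/4$; this sup-level argument needs no covering or union bound over $\boldsymbol{\psi}$ and is what produces the exact $n^3/(s^2p^2)$ exponent. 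Your route — entrywise concentration of $\widehat{\mathbf{R}}$, a union bound over $p^2$ entries, and the cone $\ell_1$-to-$\ell_2$ comparison with $t\asymp\kappa_l/s$ — is the AR-chapter strategy and does work here too, since the needed per-entry inequality follows from the same geometric mixing (e.g., apply the Kontorovich--Ramanan/bounded-difference machinery to each $\widehat{R}_{k\ell}$, which moves by at most $2/n$ per flipped sample), and it can even yield a nominally stronger exponent of order $n\kappa^2/s^2$; the two pieces of bookkeeping you should make explicit are the mixing-rate derivation from $\pi_{\max}<1/2$ and the fact that absorbing the $p^2$ prefactor while dominating the stated bound $2\exp(-c\kappa^2 n^3/(s^2p^2))$ requires $n/s^2\gtrsim\log p$ together with $n\le p$, i.e.\ it implicitly uses the compressive regime $s\lesssim\sqrt{p}$, a step the paper's union-bound-free argument sidesteps.
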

\begin{proof}
The proof is given in Appendix \ref{app:hawkes_psd}.
\end{proof}

Lemma \ref{lemma1} can be viewed as the key result in the proofs of Theorems \ref{negahban_lambda} and \ref{thm_OMP} which follow next.

\subsection{{Proof of Theorem \ref{negahban_lambda}}}

Given the results of Lemma \ref{lemma1} and Proposition \ref{negahban_thm}, it only remains to establish an upper bound on $\gamma_n$. To this end, we establish a suitable upper bound on $\| \nabla \mathfrak{L}(\boldsymbol{\theta})\|_\infty$ which holds with high probability and provides the appropriate scaling of $\gamma_n$. From Eq. (\ref{L_def}), we have
\begin{equation}
\label{llgrad}
\nabla \mathfrak{L}(\boldsymbol{\theta}) = \frac{1}{n}\sum_{i=1}^n \left[ x_i-(\mu+\boldsymbol{\theta}'  {\mathbf{x}_{i-p}^{i-1}})\right] \frac{{\mathbf{x}_{i-p}^{i-1}}}{{\lambda_i (1-\lambda_i)}}.
\end{equation}
We proceed in two steps:
\medskip

\noindent {\bf Step 1.} We first show that

\begin{equation}
\label{expec=0}
\mathbb{E} \left[ \nabla \mathfrak{L}(\boldsymbol{\theta}) \right]=\mathbf{0}.
\end{equation}
To see this, we use the law of iterated expectations on the $i$th term as follows:
\begin{align}
\label{martingale}
\notag \mathbb{E} \left[ [x_i-(\mu+\boldsymbol{\theta}'  {\mathbf{x}_{i-p}^{i-1}})]\frac{{\mathbf{x}_{i-p}^{i-1}}}{{\lambda_i (1-\lambda_i)}}\right] &=\mathbb{E} \left[ \mathbb{E} \left[ x_i-(\mu+\boldsymbol{\theta}'  {\mathbf{x}_{i-p}^{i-1}})\frac{{\mathbf{x}_{i-p}^{i-1}}}{{\lambda_i (1-\lambda_i)}}\bigg | {\mathbf{x}_{i-p}^{i-1}} \right]\right]\\
&= \mathbb{E} \left[ \underbrace{\mathbb{E} \left[ x_i-(\mu+\boldsymbol{\theta}'  {\mathbf{x}_{i-p}^{i-1}})\Big | {\mathbf{x}_{i-p}^{i-1}} \right]}_{0} \frac{{\mathbf{x}_{i-p}^{i-1}}}{{\lambda_i (1-\lambda_i)}} \right] = 0.
\end{align}
Summing over $i$, establishes (\ref{expec=0}).

\noindent {\bf Step 2.} We next show that the summation given by (\ref{llgrad}) is concentrated around its mean. The iterated expectation argument used in establishing (\ref{martingale}) implies that {the sequence
\begin{equation*}
\left\{ \left[ x_i-(\mu+\boldsymbol{\theta}'  {\mathbf{x}_{i-p}^{i-1}})\right] \frac{{\mathbf{x}_{i-p}^{i-1}}}{{\lambda_i (1-\lambda_i)}} \right\}_{i=1}^n
\end{equation*}
}
is a martingale with respect to the filtration given by
\begin{equation*}
\label{filtration}
\mathcal{F}_{i}=\sigma \left( {\mathbf{x}_{-p+1}^i} \right),
\end{equation*}
where $\sigma(\cdot)$ denote the sigma-field generated by the random variables in its argument.

By Lemma \ref{hoeff_dep} we have
\begin{equation}
\label{bound}
\mathbb{P}\left( \left | \left ( \nabla \mathfrak{L}(\boldsymbol{\theta})\right)_i\right|  \ge t \right) \leq  \exp(-c nt^2).
\end{equation}
By the union bound, we get:
\begin{equation}
\label{ubound}
\mathbb{P}\Big(\left\| \nabla \mathfrak{L}(\boldsymbol{\theta})\right\|_\infty \ge t \Big) \leq  \exp(-c t^2n+\log p).
\end{equation}
Choosing $t = \sqrt{\frac{{1+\alpha_1}}{c}}\sqrt{\frac{\log p}{n}}$ for some $\alpha_1>0$ yields
\begin{align}
\label{grad_bound}
\notag \mathbb{P}\left(\left\| \nabla \mathfrak{L}(\boldsymbol{\theta})\right\|_\infty \ge \sqrt{\frac{{1+\alpha_1}}{c}}\sqrt{\frac{\log p}{n}} \right) &\leq 2 \exp(-\alpha_1\log p)\\
&\leq \frac{2}{n^{\alpha_1}}.
\end{align}
Hence, a choice of $\gamma_n = d_2 \sqrt{\frac{\log p}{n}}$ with $d_2 := \sqrt{\frac{1 + \alpha_1}{c}}$ satisfies (\ref{reg}) with probability at least $1 - \frac{2}{n^{\alpha_1}}$.
Combined with the result of Lemma \ref{lemma1} for $n > d_1 s^{2/3} p^{2/3} \log p$, we have that the RSC is satisfied with a constant $\kappa$ with a probability at least $1 - \frac{1}{p^{\alpha_2}} \ge 1 - \frac{1}{n^{\alpha_2}}$ for some constant $\alpha_2$. The latter results in conjunction with Proposition \ref{negahban_thm} establishes the claim of Theorem 1. \QEDB

\noindent \textit{\textbf{Remark.}} The choice of $\pi_\min$ does not affect the proof of Theorem \ref{negahban_lambda}, and can be chosen as $0$ in defining the set $\boldsymbol{\Theta}$, thereby relaxing the first inequality in ($\star$). However, as we will show below, the assumption of $\pi_{\min} > 0$ is required for the proof of Theorem \ref{thm_OMP}.

\subsection{{Proof of Theorem \ref{thm_OMP}}}\label{prf_thm2}

The proof is mainly based on the following proposition, adopted from Theorem 2.1 of \cite{zhang_omp}, stating that the greedy procedure is successful in obtaining a reasonable $s^\star$-sparse approximation, if the cost function satisfies the RSC:
\begin{prop}\label{prop_omp}
Suppose that $\mathfrak{L}(\boldsymbol{\theta})$ satisfies RSC with a constant $\kappa > 0$. Let $s^\star$ be a constant such that
\begin{equation}
\label{sstar}
s^\star \geq \frac{4s}{\pi_\min^2\kappa}\log \frac{20s}{\pi_\min^2 \kappa}= \mathcal{O}(s\log s),
\end{equation}
Then, we have
\begin{equation*}
\left \|\widehat{\boldsymbol{\theta}}^{(s^\star)}_{{\sf POMP}}-\boldsymbol{\theta}_s \right \|_2 \leq \frac{\sqrt{6} \epsilon_{s^\star}}{\kappa},
\end{equation*}
where $\epsilon_{s^\star}$ satisfies
\begin{equation}
\label{eps_bound}
\epsilon_{s^\star} \leq \sqrt{s^\star+s} \|\nabla\mathfrak{L}(\boldsymbol{\theta}_s)\|_\infty .
\end{equation}
\end{prop}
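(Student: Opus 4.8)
The plan is to follow the analysis of forward greedy selection for convex $M$-estimators in \cite{zhang_omp}, specializing its restricted-strong-convexity and restricted-smoothness constants to those of the canonical self-exciting process. The object I track is the excess objective at the $k$-th iterate, $\Delta^{(k)} := \mathfrak{L}(\widehat{\boldsymbol{\theta}}^{(k)}_{\sf POMP}) - \mathfrak{L}(\boldsymbol{\theta}_s)$, together with the running support $S^{(k)}$ of size $k$. Because each POMP estimate re-optimizes $\mathfrak{L}$ over $S^{(k)}$, the first-order optimality condition forces $\nabla\mathfrak{L}(\widehat{\boldsymbol{\theta}}^{(k)}_{\sf POMP})$ to vanish on $S^{(k)}$, so the greedy selection metric $\max_i |(\nabla\mathfrak{L}(\widehat{\boldsymbol{\theta}}^{(k)}_{\sf POMP}))_i|$ probes only coordinates outside the current support.

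First I would establish a per-step progress lemma. Expanding $\mathfrak{L}$ to second order along the newly added coordinate and using the boundedness of the Bernoulli curvature $\lambda_i(1-\lambda_i)$ --- which is controlled above and below via condition (\ref{eq:star}) by $\pi_{\min}$ and $\pi_{\max}$, together with the binary ($\|\mathbf{x}_{i-p}^{i-1}\|_\infty \le 1$) nature of the covariates --- yields a restricted-smoothness upper bound on the objective increase of any step. Combining this with the exact minimization implicit in the re-optimization step gives a guaranteed decrease
\[
\Delta^{(k)} - \Delta^{(k+1)} \;\ge\; c_\pi \,\big\|\nabla\mathfrak{L}(\widehat{\boldsymbol{\theta}}^{(k)}_{\sf POMP})\big\|_\infty^2,
\]
where $c_\pi$ depends only on $\pi_{\min}$ and $\pi_{\max}$.

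Next I would convert gradient-based progress into a contraction of $\Delta^{(k)}$. Writing $T = S \setminus S^{(k)}$ for the as-yet-unselected target coordinates, with $|T| \le s$, the greedy metric satisfies $\max_i|(\nabla\mathfrak{L})_i| \ge \|\nabla_T\mathfrak{L}(\widehat{\boldsymbol{\theta}}^{(k)}_{\sf POMP})\|_2/\sqrt{s}$; this is where the comparison lives on the size-$s$ set $S$ rather than on the larger $S^{(k)}\cup S$. By convexity $\Delta^{(k)} \le \langle \nabla\mathfrak{L}(\widehat{\boldsymbol{\theta}}^{(k)}_{\sf POMP}), \widehat{\boldsymbol{\theta}}^{(k)}_{\sf POMP} - \boldsymbol{\theta}_s\rangle$, and applying RSC to $\boldsymbol{\psi} = \boldsymbol{\theta}_s - \widehat{\boldsymbol{\theta}}^{(k)}_{\sf POMP}$ bounds $\|\widehat{\boldsymbol{\theta}}^{(k)}_{\sf POMP} - \boldsymbol{\theta}_s\|_2 \lesssim \sqrt{\Delta^{(k)}/\kappa}$, so the selected gradient magnitude obeys $\|\nabla\mathfrak{L}(\widehat{\boldsymbol{\theta}}^{(k)}_{\sf POMP})\|_\infty^2 \gtrsim \kappa\,\Delta^{(k)}/s$. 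Substituting into the progress lemma yields a recursion of the form
\[
\Delta^{(k+1)} \;\le\; \Big(1 - \tfrac{c_\pi \kappa}{s}\Big)\,\Delta^{(k)} + \mathcal{O}\!\left(\tfrac{\epsilon_{s^\star}^2}{\kappa}\right),
\]
and the explicit $\pi_{\min}^{-2}$-dependent factor in the stated choice of $s^\star$ is precisely what makes this geometric decay drive $\Delta^{(s^\star)}$ below $\mathcal{O}(\epsilon_{s^\star}^2/\kappa)$ after $s^\star = \mathcal{O}(s\log s)$ iterations.

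Finally I would pass from the objective gap to the parameter error by invoking RSC once more with $\boldsymbol{\psi} = \widehat{\boldsymbol{\theta}}^{(s^\star)}_{\sf POMP} - \boldsymbol{\theta}_s$, giving $\kappa\|\boldsymbol{\psi}\|_2^2 \le \mathfrak{D_L}(\boldsymbol{\psi},\boldsymbol{\theta}_s) = \Delta^{(s^\star)} - \boldsymbol{\psi}'\nabla\mathfrak{L}(\boldsymbol{\theta}_s) \le \Delta^{(s^\star)} + \epsilon_{s^\star}\|\boldsymbol{\psi}\|_2$; solving this quadratic in $\|\boldsymbol{\psi}\|_2$ with $\Delta^{(s^\star)} \lesssim \epsilon_{s^\star}^2/\kappa$ collects to the bound $\|\widehat{\boldsymbol{\theta}}^{(s^\star)}_{\sf POMP} - \boldsymbol{\theta}_s\|_2 \le \sqrt{6}\,\epsilon_{s^\star}/\kappa$. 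Here $\epsilon_{s^\star}$ is the maximal $\ell_2$-norm of $\nabla\mathfrak{L}(\boldsymbol{\theta}_s)$ restricted to any index set of size at most $s^\star + s$, and the advertised inequality $\epsilon_{s^\star} \le \sqrt{s^\star+s}\,\|\nabla\mathfrak{L}(\boldsymbol{\theta}_s)\|_\infty$ is the elementary estimate $\|v_T\|_2 \le \sqrt{|T|}\,\|v\|_\infty$. The main obstacle will be the progress lemma: one must establish a \emph{restricted-smoothness} (uniformly bounded curvature) estimate valid along the entire greedy trajectory, controlling the Bernoulli-variance factors $\lambda_i(1-\lambda_i)$ appearing in the Hessian uniformly in terms of $\pi_{\min},\pi_{\max}$, so that $c_\pi$ --- and hence the explicit $\pi_{\min}^{-2}$ factor in $s^\star$ --- is dimension-free.
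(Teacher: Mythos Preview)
Your proposal is correct and matches the paper's approach: the paper's own proof is simply ``a specialization of the proof of Theorem~2.1 in \cite{zhang_omp} to our setting,'' and what you have outlined is precisely the mechanics of that specialization (per-step progress via restricted smoothness, contraction of the excess objective, and the final RSC-to-$\ell_2$ conversion). You are in fact supplying more detail than the paper does, but the route is identical.
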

\begin{proof}
The proof is a specialization of the proof of Theorem 2.1 in \cite{zhang_omp} to our setting.
\end{proof}

Recall that Lemma \ref{lemma1} establishes the RSC for the negative log-likelihood function. In order to complete the proof of Theorem \ref{thm_OMP}, it only remains to upper bound $\|\nabla\mathfrak{L}(\boldsymbol{\theta}_s)\|_\infty$. {Let $\lambda_{i,s} := \mu+\boldsymbol{\theta}'_s  \mathbf{x}_{i-p}^{i-1}$.} We have
\begin{align*}
\mathbb{E} \left[\nabla\mathfrak{L}(\boldsymbol{\theta}_s)\right] &= \mathbb{E} \left [ \frac{1}{n}\sum_{i=1}^n \left[ x_i-(\mu+\boldsymbol{\theta}'_s  {\mathbf{x}_{i-p}^{i-1}})\right] \frac{{\mathbf{x}_{i-p}^{i-1}}}{{\lambda_{i,s}(1-\lambda_{i,s})}} \right ]\\
&=\frac{1}{n}\sum_{i=1}^n  \displaystyle \mathbb{E} \left [ \mathbb{E} \left [ (\boldsymbol{\theta}-\boldsymbol{\theta}_s)'  {\mathbf{x}_{i-p}^{i-1}} \Big | {\mathbf{x}_{i-p}^{i-1}} \right ] \frac{{\mathbf{x}_{i-p}^{i-1}}}{{\lambda_{i,s}(1-\lambda_{i,s})}} \right ]\\
& \le  c_2 \sigma_s(\boldsymbol{\theta}) \mathbf{1}.
\end{align*}
where we have used the fact that $0 \le x_i \le 1$ for all $i$, and {$c_2 := \frac{1}{\pi_\min (1-\pi_\max)}$}. 
Invoking the result of Proposition \ref{hoeff_dep} together with the union bound yields:  
\begin{align*}
\mathbb{P}\left(\|\nabla\mathfrak{L}(\boldsymbol{\theta}_s)\|_\infty \geq  c_1 \sqrt{\frac{\log p}{n}} + c_2 \sigma_s(\boldsymbol{\theta}) \right) \leq \frac{2}{n^{\beta_1}}.
\end{align*}
for some constants $c_1$ and $\beta_1$. Hence, we get the following concentration result for $\epsilon_{s^\star}$:
\begin{equation}
\label{eps_prob}
\mathbb{P}\left(\epsilon_{s^\star} \geq \sqrt{s^\star+s} \left(  c_1 \sqrt{\frac{\log p}{n}} + c_2 \sigma_s(\boldsymbol{\theta})\right)\right) \leq\frac{2}{n^{\beta_1}}.
\end{equation}
Noting that by (\ref{sstar}) we have $s^\star+s = \mathcal{O}(s\log s) \leq c_0 s \log s$, for some constant $c_0$, and invoking the result of Lemma \ref{lemma1}, we get:
\begin{align*}
\left \|\widehat{\boldsymbol{\theta}}^{(s^\star)}_{{\sf POMP}}-\boldsymbol{\theta}_S \right\|_2 &\leq d'_2 \sqrt{\frac{s \log s \log p}{n}} + d'_3 s\log s \sigma_s(\boldsymbol{\theta}) \leq  d'_2 \sqrt{\frac{s \log s \log p}{n}} + d'_3 \frac{\log s}{s^{\frac{1}{\xi}-2}},
\end{align*}
where $d'_2 = \sqrt{c_0} c_1$ and $d'_3 = \sqrt{c_0} c_2$.
with probability $\left(1-\exp\left(-\frac{c\kappa^2n^3}{s^2 (\log s)^2 p^2 }\right)\right)\left(1-\frac{2}{n^{\beta_1}}\right)$. Choosing $n > d'_1 s^{2/3} (\log s)^{2/3} p^{2/3}\log p$ establishes the claimed success probability of Theorem \ref{thm_OMP}. Finally, we have: 
\begin{align*}
\left \|\widehat{\boldsymbol{\theta}}^{(s^\star)}_{{\sf POMP}}-\boldsymbol{\theta}\right\|_2 &= \left \|\widehat{\boldsymbol{\theta}}^{(s^\star)}_{{\sf POMP}}-\boldsymbol{\theta}_s +\boldsymbol{\theta}_s -\boldsymbol{\theta} \right \|_2  \leq \left \|\widehat{\boldsymbol{\theta}}^{(s^\star)}_{{\sf POMP}}-\boldsymbol{\theta}_s \right \|_2 + \|\boldsymbol{\theta}_s-\boldsymbol{\theta}\|_2.
\end{align*}
Using $\|\boldsymbol{\theta}_s-\boldsymbol{\theta}\|_2 \leq \sigma_s(\boldsymbol{\theta}) = \mathcal{O} \left ( s^{1- \frac{1}{\xi}}\right)$ completes the proof. 

\QEDB

\subsection{{Proofs of Corollaries \ref{cor:1} and \ref{cor:2}}}
\begin{proof}[{Proof of Corollary \ref{cor:1}}]
The claim is a direct consequence of the boundedness of covariates and can be treated by replacing $\boldsymbol{\theta}$ with the augmented parameter vector $[\mu,\boldsymbol{\theta}']'$ and augmenting the covariate vectors with an initial component of 1. The reader can easily verify that all the proof steps can be repeated in the same fashion.
\end{proof}

\begin{proof}[{Proof of Corollary \ref{cor:2}}]
The claim is a direct consequence of the boundedness of covariates which results in $\phi(\cdot)$ being Lipschitz and hence the stationarity of the underlying process. Moreover, for twice-differentiable $\phi(\cdot)$, the proof of Lemma \ref{lemma1} in Appendix \ref{appprf} can be generalized in a straightforward fashion. The reader can easily verify that all the remaining portions of the proofs of the main theorems can be repeated for such $\phi(\cdot)$ in a similar fashion to that of the {canonical self-exciting process}.
\end{proof}

\section{Proofs of Proposition \ref{prop:hawkes_properties} and Lemma \ref{lemma1}}\label{app:hawkes_psd}

\subsection{Proof of Proposition \ref{prop:hawkes_properties}}

The {canonical self-exciting process} can be viewed as a Markov chain with states $X_i= {\mathbf{x}_{i-p}^{i-1}}$. Since each $x_i$ has two possible values, there are $2^p$ possible states. This Markov chain is irreducible since transition from any state to any other state is possible in at most $p$ steps. Also, transition from an all-zero state to itself is possible. Hence the chain is aperiodic as well. This implies that there exists a stationary distribution for the Markov chain. We also know that if $\{X_i\}_{i=1}^\infty$ is a stationary Markov Chain, then for any functional $f(.)$, $\{f(X_i)\}_{i=1}^\infty$ is a strictly stationary stochastic process (SSS). Therefore the {canonical self-exciting process} and the spiking probability sequence $\lambda_1^n$ are both SSS. In particular, we have
\begin{align*}
\pi_\star:= \mathbb{E}[x_i]= \mathbb{E}\left[\mathbb{E}\left[x_i|\lambda_i\right]\right] = \mathbb{E}[\lambda_i]=\mu + \pi_\star\mathbf{1}'\boldsymbol{\theta}.
\end{align*}
Hence, the stationary probability $\pi_\star$ satisfies:
\begin{equation*}
\pi_\star = \frac{\mu}{1-\mathbf{1}'\boldsymbol{\theta}}.
\end{equation*}

In order to prove the first two inequalities, we make the necessary assumption that the baseline rate $\mu$ is positive, due to the non-degeneracy assumption. In order to highlight the necessity of this condition, consider a sample path which contains $p$ successive zeros starting from index $i+1$ to $i+p$, corresponding to an all-zero covariate vector ${\mathbf{x}_{i+1}^{i+p}}$ (note that this sample path will almost surely occur). We then have $\lambda_{i+p+1} = \mu +  \boldsymbol{\theta}' {\mathbf{x}_{i+1}^{i+p}} = \mu$. Therefore, if $\mu$ is not positive, the process becomes degenerate.

The third inequality follows from the fact that for a covariate vector ${\mathbf{x}_{i+1}^{i+p}}$ with a support matching that of $\boldsymbol{\theta}^+$ we have $\lambda_{i+p+1} = \mu +  \boldsymbol{\theta}' {\mathbf{x}_{i+1}^{i+p}} = \mu + \mathbf{1}'\boldsymbol{\theta}^{+}$, which should be a valid probability. Moreover, the inequality is strict since the stationary probability $\pi_\star = \frac{\mu}{1-\mathbf{1}'\boldsymbol{\theta}}$ must be well-defined.

We will next calculate the power spectral density of the process. {Let $\boldsymbol{r}_{-\infty}^{\infty}$ and $\boldsymbol{c}_{-\infty}^\infty$ denote the autocorrelation and autocovariance values of the process, respectively. By the stationarity of the process we have:
\begin{align*}
r_k & = \mathbb{E}\left[x_{\cdot+k} x_\cdot \right]= \mathbb{E}\left[x_k x_{0}\right] = \mathbb{E} \left[ \mathbb{E}\left[x_kx_0 | {\mathbf{x}_{-\infty}^{k-1}} \right]\right] = \mathbb{E}\left[\mu x_0 + \boldsymbol{\theta}' {\mathbf{x}_{k-p}^{k-1}}x_0  \right] = \mu \pi_\star +  \boldsymbol{\theta}'\boldsymbol{r}_{k-p}^{k-1}.
\end{align*}
for $k > 0$. Similarly, by subtracting the means we have the following identity for the autocovariance:
\begin{equation}
\label{eq:yw_hawkes}
c_k = \boldsymbol{\theta}'\boldsymbol{c}_{k-p}^{k-1}.
\end{equation}
A straightforward calculation gives $c_0 = \pi_\star - \pi_\star^2$. Eq. (\ref{eq:yw_hawkes}) resembles the Yule-Walker equations for an AR process of order $p$ with parameter $\boldsymbol{\theta}$ and the innovations variance given by $\sigma^2 = \frac{\pi_\star-\pi_\star^2}{\left(1-\mathbf{1}'\boldsymbol{\theta}\right)^2}$. Thus, the power spectral density of the \textcolor{black}{canonical self-exciting process} can be expressed as:}
\begin{equation}
S(\omega) = \frac{1}{2\pi} \left( \pi_\star^2 \delta(\omega) + \frac{\pi_\star-\pi_\star^2}{\left(1-\mathbf{1}'\boldsymbol{\theta}\right)^2 \left|1 - \Theta(\omega)\right|^2} \right).
\end{equation}
We have $1-\mathbf{1}'\boldsymbol{\theta} \le 1 + \| \boldsymbol{\theta} \|_1$. Moreover,
\begin{align*}
|1 - \Theta(\omega)| &= \left| 1- \sum_k \theta_k e^{-j \omega k} \right| \\
&\leq 1 + \|\boldsymbol{\theta}\|_1 =  1 + \|\boldsymbol{\theta}^+\|_1 +  \|\boldsymbol{\theta}^-\|_1\\
& \le 1 + 2 (\pi_\max - \mu)  \le 1 + 2 \pi_\max,
\end{align*}
which implies the lower bound on $S(\omega)$. \QEDB

\subsection{Proof of Lemma \ref{lemma1}}

The proof is inspired by the elegant treatment of Negahban et al. \cite{Negahban}. The major difficulty in the proof lies in the high inter-dependence of the covariates and observations. 

{Noticing that the negative log-likelihood (\ref{L_def}) is twice differentiable}, a second order Taylor expansion of the negative log-likelihood (\ref{L_def}) around $\boldsymbol{\theta}$ yields:

\begin{align*}
\mathfrak{D_L}({\boldsymbol{\psi}},\boldsymbol{\theta})
&= \mathfrak{L}(\boldsymbol{\theta}+{\boldsymbol{\psi}})-\mathfrak{L}(\boldsymbol{\theta})-{\boldsymbol{\psi}}'\nabla \mathfrak{L}(\boldsymbol{\theta})\\
       &= \frac{1}{n}\sum_{i=1}^n x_i \frac{\left({\boldsymbol{\psi}}' {\mathbf{x}_{i-p}^{i-1}}\right)^2}{\left(\mu+\boldsymbol{\theta}'{\mathbf{x}_{i-p}^{i-1}} + \nu({\boldsymbol{\psi}}'{\mathbf{x}_{i-p}^{i-1}})\right)^2} + \frac{1}{n}\sum_{i=1}^n (1-x_i) \frac{\left({\boldsymbol{\psi}}' {\mathbf{x}_{i-p}^{i-1}}\right)^2}{\left(1-\mu-\boldsymbol{\theta}'{\mathbf{x}_{i-p}^{i-1}} - \nu({\boldsymbol{\psi}}'{\mathbf{x}_{i-p}^{i-1}})\right)^2}\\
       & \geq  \frac{1}{n}\sum_{i=1}^n \left({\boldsymbol{\psi}}' {\mathbf{x}_{i-p}^{i-1}}\right)^2,
\end{align*}

for some $\nu \in [0,1]$. The inequality follows from the fact that both $\boldsymbol{\theta}$ and {$\boldsymbol{\theta}+\nu {\boldsymbol{\psi}}$} satisfy (\ref{eq:star}), and hence:

\begin{align}
\nonumber &\mu+\boldsymbol{\theta}'{\mathbf{x}_{i-p}^{i-1}}+ \nu{\boldsymbol{\psi}}'{\mathbf{x}_{i-p}^{i-1}}\leq \pi_\max <1,\\
\nonumber &1 - \mu - \boldsymbol{\theta}'{\mathbf{x}_{i-p}^{i-1}} -  \nu{\boldsymbol{\psi}}'{\mathbf{x}_{i-p}^{i-1}}\leq 1- \pi_{\min} <1.
\end{align}

The result of the Lemma \ref{lemma1} is equivalent to proving that

\begin{equation}
\label{rsceq}
\hat{\mathbb{E}}_n\left[\left({\boldsymbol{\psi}}'x_{\cdot-p}^{\cdot-1}\right)^2\right] \geq \kappa \|{\boldsymbol{\psi}}\|_2^2
\end{equation}

holds with probability greater than $1-2 \exp\left(-\frac{c\kappa^2n^3}{s^2p^2}\right)$. Since both sides of (\ref{rsceq}) are quadratic in ${\boldsymbol{\psi}}$, the statement is equivalent to proving

\begin{equation*}
\hat{\mathbb{E}}_n\left[({\boldsymbol{\psi}}' {\mathbf{x}_{\cdot-p}^{\cdot-1}})^2\right] \geq \kappa,
\end{equation*}

for all $ \|{\boldsymbol{\psi}}\|_2 \in \mathbb{S}_2(1)$. We establish this in two steps:
\medskip

\noindent {\bf Step 1.} First, we show that the statement holds for the true expectation:

\begin{equation}
\label{kappa}
\mathbb{E}\Big[ \left({\boldsymbol{\psi}}' {\mathbf{x}_{\cdot-p}^{\cdot-1}}\right)^2\Big] \geq \kappa_l >0
\end{equation}

for some $\kappa_l$ which will be specified below, for all $ \|{\boldsymbol{\psi}}\|_2 \in \mathbb{S}_2(1)$. To \textcolor{black}{establish} the inequality (\ref{kappa}), we use the following result:
\begin{prop}
\label{eig_conv}
Let ${\mathbf{R}} \in \mathbb{R}^{p \times p}$ be the $p \times p$ covariance matrix of a stationary process with power spectral density $S(\omega)$, and denote its maximum and minimum eigenvalues by $\lambda_{\max}(p)$ and $\lambda_{\min}(p)$ respectively then $\lambda_{\max}(p)$ is increasing in $p$, $\lambda_{\min}(p)$ is decreasing in $p$ and we have
\begin{equation}
\lambda_{\sf min}(p) \downarrow \inf_{\omega}S(\omega), \ \ \text{and} \ \ \lambda_{\sf max}(p) \uparrow \sup_{\omega}S(\omega).
\end{equation}
\end{prop}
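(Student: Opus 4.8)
The plan is to exploit the fact that the $p\times p$ covariance matrix $\mathbf{R}$ of a stationary process is a Hermitian Toeplitz matrix whose generating sequence is the Fourier coefficient sequence of the power spectral density: writing $r_k$ for the autocovariance at lag $k$, one has $r_k = \frac{1}{2\pi}\int_{-\pi}^{\pi} S(\omega)e^{jk\omega}\,d\omega$ (up to the usual normalization), and $\mathbf{R}$ has $(i,j)$ entry $r_{i-j}$. First I would establish the uniform sandwich bound. For a unit vector $\mathbf{v}=(v_1,\dots,v_p)^*$ set $V(\omega):=\sum_{k=1}^{p} v_k e^{-jk\omega}$; a direct substitution of the integral representation of $r_k$ yields the identity
\[
\mathbf{v}^*\mathbf{R}\,\mathbf{v} \;=\; \frac{1}{2\pi}\int_{-\pi}^{\pi} S(\omega)\,|V(\omega)|^2\,d\omega ,
\]
while Parseval's identity gives $\frac{1}{2\pi}\int_{-\pi}^{\pi}|V(\omega)|^2\,d\omega = \|\mathbf{v}\|_2^2 = 1$. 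Hence $\tfrac{1}{2\pi}|V(\omega)|^2$ is a probability density against which $S$ is averaged, so $\inf_\omega S(\omega)\le \mathbf{v}^*\mathbf{R}\,\mathbf{v}\le \sup_\omega S(\omega)$ for every unit $\mathbf{v}$; by the Rayleigh--Ritz variational characterization of the extreme eigenvalues this gives $\inf_\omega S(\omega)\le \lambda_{\min}(p)\le \lambda_{\max}(p)\le \sup_\omega S(\omega)$ for all $p$.

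Next I would handle monotonicity in $p$. Because both $\mathbf{R}$ for dimension $p$ and for dimension $p+1$ are Toeplitz generated by the same sequence $\{r_k\}$, the $p\times p$ matrix is precisely the leading principal $p\times p$ submatrix of the $(p+1)\times(p+1)$ one. The Cauchy interlacing theorem for principal submatrices then forces $\lambda_{\min}(p+1)\le \lambda_{\min}(p)$ and $\lambda_{\max}(p+1)\ge \lambda_{\max}(p)$, i.e.\ $\lambda_{\min}(p)$ is nonincreasing and $\lambda_{\max}(p)$ is nondecreasing. Combined with the uniform bounds of the previous paragraph, each sequence is monotone and bounded, hence convergent; what remains is to identify the two limits.

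Finally, to show $\lambda_{\max}(p)\uparrow\sup_\omega S(\omega)$, fix $\varepsilon>0$, pick $\omega_0$ with $S(\omega_0)\ge\sup_\omega S(\omega)-\varepsilon$ (using continuity of $S$, which holds for the rational spectra relevant here; in general one passes to the essential supremum), and take the test vector $v_k:=\tfrac{1}{\sqrt p}e^{jk\omega_0}$ so that $\tfrac{1}{2\pi}|V(\omega)|^2$ is the normalized Fej\'er kernel centered at $\omega_0$. This kernel converges weakly to the Dirac mass at $\omega_0$ as $p\to\infty$, so $\mathbf{v}^*\mathbf{R}\,\mathbf{v}\to S(\omega_0)$, and therefore $\lambda_{\max}(p)\ge \mathbf{v}^*\mathbf{R}\,\mathbf{v}$ exceeds $\sup_\omega S(\omega)-2\varepsilon$ for all large $p$; letting $\varepsilon\downarrow 0$ closes the gap with the upper bound. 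The argument for $\lambda_{\min}(p)\downarrow\inf_\omega S(\omega)$ is identical with $\omega_0$ chosen near a minimizer of $S$. The only genuinely delicate step is this last one --- verifying that a suitable family of trigonometric polynomials (equivalently, the Fej\'er kernel) concentrates the averaging measure at an extremal frequency; the rest is just Parseval's identity together with Cauchy interlacing. Since the result is classical, one may alternatively cite \cite{grenander1958toeplitz} directly.
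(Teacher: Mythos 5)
Your argument is correct, and it is worth noting that it does more than the paper does: the paper's ``proof'' of this proposition is a one-line citation to the classical Toeplitz-forms literature (Grenander and Szeg\H{o} \cite{grenander2001toeplitz}; the same statement appears earlier as a lemma citing \cite{grenander1958toeplitz}), whereas you reconstruct the standard Szeg\H{o}-type proof from scratch. Your three ingredients are exactly the canonical ones: the quadratic-form identity $\mathbf{v}^*\mathbf{R}\mathbf{v}=\frac{1}{2\pi}\int S(\omega)\lvert V(\omega)\rvert^2\,d\omega$ with Parseval, which sandwiches every Rayleigh quotient between $\inf_\omega S$ and $\sup_\omega S$; Cauchy interlacing applied to the nested leading principal submatrices of the Toeplitz family, which gives the monotonicity in $p$; and the Fej\'er-kernel test vectors $v_k=p^{-1/2}e^{jk\omega_0}$, whose associated averaging densities concentrate at $\omega_0$ and force the extreme eigenvalues to reach the extreme spectral values in the limit. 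The only caveats are ones you already flag: the last step needs $S$ continuous at the extremizing frequency (true for the rational spectra used in the paper), and for general measurable symbols the statement should be read with essential infimum/supremum, with the limit possibly $+\infty$ when $S$ is unbounded; note also that in the paper's self-exciting application the ``PSD'' contains a Dirac component, so there the proposition is really only invoked for the lower bound $\lambda_{\min}(p)\ge\inf_\omega S(\omega)$, which your argument delivers without any of these caveats. In short, the citation buys brevity; your proof buys a self-contained, elementary argument whose only nontrivial analytic input is the weak convergence of the Fej\'er kernel to a point mass.
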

\begin{proof}
This is a well-known result in stochastic processes. See \cite{grenander2001toeplitz} for a proof and detailed discussions.
\end{proof}

\noindent Using Proposition \ref{eig_conv}, we can lower-bound $\mathbb{E}\left[\left({\boldsymbol{\psi}}' {\mathbf{x}_{\cdot-p}^{\cdot-1}}\right)^2\right]$ by:
\begin{align*}
&\mathbb{E}\left[ \left({\boldsymbol{\psi}}'{\mathbf{x}_{\cdot-p}^{\cdot-1}}\right)^2\right]=  {\boldsymbol{\psi}}' \mathbf{R} {\boldsymbol{\psi}} \geq \lambda_\min(p) \geq \inf_{\omega}S(\omega).
\end{align*} 
Next, using Proposition \ref{prop:hawkes_properties} the bound of Eq. (\ref{kappa}) follows for
\[
\kappa_l :=  \frac{\pi_\star (1 - \pi_\star)}{2 \pi (1+2\pi_\max)^4}.
\]

\medskip

\noindent {\bf Step 2.} We now show that the empirical and the true expectations of $ \left({\boldsymbol{\psi}}'{\mathbf{x}_{\cdot-p}^{\cdot-1}}\right)^2$ are close enough to each other. Let 
\begin{equation*}
\mathfrak{D}_{{\boldsymbol{\psi}},n} := \hat{\mathbb{E}}_n\left[ \left({\boldsymbol{\psi}}'{\mathbf{x}_{\cdot-p}^{\cdot-1}}\right)^2\right]-\mathbb{E}\left[ \left({\boldsymbol{\psi}}'{\mathbf{x}_{\cdot-p}^{\cdot-1}}\right)^2\right].
\end{equation*}

and
\begin{equation*}
\mathfrak{D}_{n} := \sup \limits_{{\boldsymbol{\psi}} \in \mathbb{S}_2(1)} \left| \mathfrak{D}_{{\boldsymbol{\psi}},n} \right |.
\end{equation*}
The final step in proving Lemma \ref{lemma1} is given by the following proposition:
\begin{prop}\label{prop:hawkes_concentration} We have
\begin{equation}
\label{zbounded}
\mathbb{P}\left[\mathfrak{D}_{n} \geq \frac{\kappa_l}{4}\right] \leq 2 \exp\left(-\frac{c\kappa_l^2 n^3}{s^2p^2}\right),
\end{equation}
for some constant $c$.
\end{prop}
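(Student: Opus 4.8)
The goal is a uniform concentration bound: the empirical second moment $\hat{\mathbb{E}}_n[(\boldsymbol{\psi}' \mathbf{x}_{\cdot-p}^{\cdot-1})^2]$ stays within $\kappa_l/4$ of its mean, uniformly over $\boldsymbol{\psi}\in\mathbb{S}_2(1)$, with the stated probability. First I would reduce the supremum over the sphere to a finite net: since $\boldsymbol{\psi}\mapsto(\boldsymbol{\psi}'\mathbf{v})^2$ is a quadratic form in $\boldsymbol{\psi}$, one takes a $1/4$-net $\mathcal{N}$ of $\mathbb{S}_2(1)$ of cardinality at most $9^p$ (standard covering number bound), and by the usual quadratic-form argument $\mathfrak{D}_n \le 2\sup_{\boldsymbol{\psi}\in\mathcal{N}}|\mathfrak{D}_{\boldsymbol{\psi},n}|$. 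Actually, because the covariates are binary and $\|\mathbf{x}_{i-p}^{i-1}\|_\infty\le 1$, one can restrict attention to effectively sparse directions: the relevant $\boldsymbol{\psi}$ live in the cone constraint (\ref{eq:cone}), so rather than a net of the full sphere one nets the set of $(s,\xi)$-compressible directions, producing a net of size roughly $\binom{p}{s}9^s \le \exp(c\, s\log p)$. This is the step where the $s^2 p^2$ scaling in the exponent must be made to match the $n > d_1 s^{2/3}p^{2/3}\log p$ hypothesis, so the cardinality/net bookkeeping needs care.

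Next I would fix $\boldsymbol{\psi}$ and bound the single-point deviation $\mathbb{P}[|\mathfrak{D}_{\boldsymbol{\psi},n}|\ge t]$. Here $\mathfrak{D}_{\boldsymbol{\psi},n}$ is an empirical average of the stationary sequence $Z_i := (\boldsymbol{\psi}'\mathbf{x}_{i-p}^{i-1})^2 - \mathbb{E}[(\boldsymbol{\psi}'\mathbf{x}_{i-p}^{i-1})^2]$, which is \emph{not} a martingale difference and whose terms are highly dependent — each $Z_i$ depends on a window of $p$ consecutive binary covariates, and consecutive windows overlap in $p-1$ coordinates. The key tool is the cited concentration inequality for dependent (mixing) sequences, which I would invoke as Proposition \ref{conc_dep} (referenced in the roadmap). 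To apply it I need: (i) boundedness of $Z_i$ — immediate, since $|\boldsymbol{\psi}'\mathbf{x}_{i-p}^{i-1}| \le \|\boldsymbol{\psi}\|_2\|\mathbf{x}_{i-p}^{i-1}\|_2 \le \sqrt{p}$ for $\boldsymbol{\psi}\in\mathbb{S}_2(1)$, so $|Z_i|\le p$; and (ii) control of the mixing coefficients of the Markov chain $X_i = \mathbf{x}_{i-p}^{i-1}$. For (ii) I would use that the canonical self-exciting process, by Proposition \ref{prop:hawkes_properties} and the assumption $\pi_{\max}<1/2$, is a geometrically ergodic (in fact uniformly ergodic, since the state space is finite and the chain is irreducible aperiodic) Markov chain, so its $\beta$-mixing coefficients decay geometrically with a rate governed by the stationarity gap $1-\mathbf{1}'\boldsymbol\theta$ and $\pi_{\max}$. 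Feeding the bound $|Z_i|\le p$ and the geometric mixing rate into the dependent-variable Bernstein-type inequality yields something of the form $\mathbb{P}[|\mathfrak{D}_{\boldsymbol{\psi},n}|\ge t] \le 2\exp(-c\, n t^2/p^2)$ (the $p^2$ coming from the squared bound on $Z_i$, and the variance proxy absorbed into the constant via the mixing-summability).

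Finally I would combine the two pieces by a union bound over the net: choosing $t = \kappa_l/8$ so that $\mathfrak{D}_n \le 2\cdot \kappa_l/8 = \kappa_l/4$,
\begin{equation*}
\mathbb{P}\Big[\mathfrak{D}_n \ge \tfrac{\kappa_l}{4}\Big] \le |\mathcal{N}|\cdot 2\exp\Big(-\frac{c\kappa_l^2 n}{p^2}\Big) \le 2\exp\Big(c' s\log p - \frac{c\kappa_l^2 n}{p^2}\Big).
\end{equation*}
Under the hypothesis $n > d_1 s^{2/3}p^{2/3}\log p$ one checks that $c\kappa_l^2 n/p^2 \gg s\log p$ is not quite what is wanted directly; instead one uses $n^3 > d_1^3 s^2 p^2 (\log p)^3$, i.e. $n/p^2 > d_1^3 s^2 (\log p)^3/n^2$, and reorganizes so that the exponent becomes $-c\kappa_l^2 n^3/(s^2 p^2)$ as claimed (this is exactly the algebraic trick that forces the peculiar $n^3/(s^2p^2)$ form — matching it to kill the $s\log p$ entropy term is the bookkeeping I flagged above). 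The main obstacle I anticipate is step two: getting a clean, quantitative geometric-mixing rate for the self-exciting Markov chain with an explicit dependence on $\pi_{\max}$ and $\mathbf{1}'\boldsymbol\theta$, and then correctly plugging it into the dependent-data concentration inequality so that the variance proxy and the $|Z_i|\le p$ bound produce precisely the exponent $c\kappa_l^2 n^3/(s^2p^2)$ rather than something weaker; the covering-number / cone-restriction argument and the quadratic-form net reduction are comparatively routine.
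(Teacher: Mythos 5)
There is a genuine gap: the net-plus-pointwise-concentration route you outline cannot produce the claimed exponent, and the final ``reorganization'' step is not a legitimate move. With your envelope $|Z_i|\le p$ (Cauchy--Schwarz in $\ell_2$), the per-direction tail $2\exp(-cnt^2/p^2)$ combined with the entropy $c's\log p$ of any net of the cone gives $\exp(c's\log p - c\kappa_l^2 n/p^2)$, which is vacuous unless $n\gtrsim \kappa_l^{-2}sp^2\log p$ --- impossible in the compressive regime $n\ll p$ assumed throughout. Worse, the target exponent $c\kappa_l^2 n^3/(s^2p^2)$ exceeds $c\kappa_l^2 n/p^2$ by the factor $n^2/s^2>1$, so no algebra using the lower bound $n>d_1 s^{2/3}p^{2/3}\log p$ can convert your bound into (\ref{zbounded}): that would be deriving a strictly stronger tail from a strictly weaker one. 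Even if you repair the envelope by using the cone constraint (\ref{eq:cone}) together with $\sigma_s(\boldsymbol{\theta})=\mathcal{O}(\sqrt{s})$ --- which gives $\|\boldsymbol{\psi}\|_1\le(4+c_0)\sqrt{s}$ and hence $(\boldsymbol{\psi}'\mathbf{x}_{i-p}^{i-1})^2\le \mathcal{O}(s)$ because the covariates are binary, a bound you invoke only for the net and then abandon --- the pointwise exponent becomes $\sim nt^2/s^2$, and absorbing the $s\log p$ entropy would require $n\gtrsim s^3\log p$, again not implied by the hypothesis; in either case the shape $n^3/(s^2p^2)$ never appears.

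The paper's proof uses no net at all. It shows that $\mathfrak{D}_n$ itself --- evaluated through its sample-dependent maximizer $\boldsymbol{\psi}^\star$, which also satisfies (\ref{eq:cone}) --- is an $\mathcal{O}(sp/n)$-Lipschitz function of the binary sample path $\mathbf{x}_{-p+1}^n$ in the normalized Hamming metric: flipping a single $x_j$ changes $\sum_i(\boldsymbol{\psi}'\mathbf{x}_{i-p}^{i-1})^2$ by at most $3\|\boldsymbol{\psi}\|_1^2=\mathcal{O}(s)$. It then applies the Kontorovich--Ramanan bounded-differences inequality for dependent sequences (Proposition \ref{conc_dep}), with the $\eta$-mixing coefficients bounded by $(2\pi_{\max})^{n-j+1}$ (this is exactly where $\pi_{\max}<1/2$ is consumed, in the spirit of your ergodicity remark), giving $\mathbb{P}[|\mathfrak{D}_n-\mathbb{E}[\mathfrak{D}_n]|\ge t]\le 2\exp(-cn^3t^2/(s^2p^2))$; the cubic $n^3$ is the signature of squaring the Hamming--Lipschitz constant $sp/n$, not of any per-point Bernstein bound. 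The sample-size hypothesis $n>d_1 s^{2/3}p^{2/3}\log p$ enters only afterwards, to show $\mathbb{E}[\mathfrak{D}_n]\lesssim sp/n^{3/2}\le\kappa_l/4$ --- a step your plan does not address. So the missing idea is to trade the union bound over directions for a Lipschitz/bounded-differences argument on the supremum itself, with the cone constraint supplying the $\mathcal{O}(s)$ per-coordinate oscillation.
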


\begin{proof}
{The proof is given in Appendix \ref{app:c}.}
\end{proof}

{Finally, the statement of Lemma \ref{lemma1} follows from Proposition \ref{prop:hawkes_concentration} by taking $\kappa = \kappa_l / 4$. \QEDB}

\section{Proof of Proposition \ref{prop:hawkes_concentration}}\label{app:c}

{In order to establish the concentration inequality of Eq. (\ref{zbounded}), we need to invoke a result from concentration of dependent random variables. We proceed in two steps:}
\medskip

\noindent {{\bf Step 1.}} {We first establish a geometric property of $\mathfrak{D}_n$, namely its $\mathcal{O}(\frac{sp}{n})$-Lipschitz property with respect to the normalized Hamming metric. Recall that the normalized Hamming metric between two sequences ${\mathbf{x}_1^n}$ and ${\mathbf{y}_1^n}$ is defined as $d({\mathbf{x}_1^n,\mathbf{y}_1^n)} = \frac{1}{n}\sum_{i=1}^n\mathbf{1}(x_i \neq y_i)$.}

First, by evaluating the first order optimality conditions of the solution $\widehat{\boldsymbol{\theta}}_{{\sf sp}}$, it can be shown that the error vector ${\boldsymbol{\psi}} = \widehat{\boldsymbol{\theta}}_{\sf sp}-\boldsymbol{\theta}$ satisfies the inequality:
\begin{equation*}
{\|\boldsymbol{\psi}_{S^{c}} \|_1 \leq 3 \|{\boldsymbol{\psi}}_S\|_1 + 4 \| \boldsymbol{\theta}_{S^{c}}\|_1,}
\end{equation*}
{with $S$ denoting the support of the best $s$-term approximation to $\boldsymbol{\theta}$} (see for example \cite{Negahban}). By the assumption of $\sigma_S(\boldsymbol{\theta}) = \mathcal{O}(\sqrt{s})$, we can choose a constant $c_0$ such that $\sigma_S(\boldsymbol{\theta}) \le c_0 \sqrt{s}$. Hence,
\begin{align}
\label{cone}
\|{\boldsymbol{\psi}}\|_1 & \leq 4 \|{\boldsymbol{\psi}}_S\|_1  + \sigma_s(\boldsymbol{\theta})  \leq (4 + c_0)\sqrt{s}\|{\boldsymbol{\psi}}_S\|_2 \leq (4 + c_0) \sqrt{s}
\end{align}
where we have used the fact that $\| {\boldsymbol{\psi}}_S \|_1 \le \sqrt{s} \| {\boldsymbol{\psi}}_S \|_2 \le \sqrt{s}$ for all ${\boldsymbol{\psi}} \in \mathbb{S}_2(1)$. Therefore for all $i \in \{ 1,2,\cdots, n\}$, we have:

\begin{equation}
\label{fbound}
0 \leq \left({\boldsymbol{\psi}}'{\mathbf{x}_{i-p}^{i-1}}\right)^2 \leq \left\|{\boldsymbol{\psi}}\right\|_1^2 \leq (4 + c_0)^2 s.
\end{equation}

We first prove the claim for $\mathfrak{D}_{{\boldsymbol{\psi}},n}$. To establish the latter, we need to prove

\[
\frac{1}{n}\left|\sum_{i=1}^n \left({\boldsymbol{\psi}}'{\mathbf{x}_{i-p}^{i-1}}\right)^2-  \left({\boldsymbol{\psi}}'{\mathbf{y}_{i-p}^{i-1}}\right)^2\right| \leq C d({\mathbf{x}_{-p+1}^{n}},{\mathbf{y}_{-p+1}^{n}}),
\]
for some $C = \mathcal{O}(\frac{sp}{n})$, or equivalently
\begin{equation}
\label{lip1}
\left|\sum_{i=1}^n  \left({\boldsymbol{\psi}}'{\mathbf{x}_{i-p}^{i-1}}\right)^2- \left({\boldsymbol{\psi}}'{\mathbf{y}_{i-p}^{i-1}}\right)^2\right| \leq C' \sum_{i=-p+1}^n\mathbf{1}(x_i \neq y_i),
\end{equation}

for some $C' = \mathcal{O}(s)$.
Let us start by setting the values of ${\mathbf{x}_{-p+1}^{n}}$ equal to those of ${\mathbf{y}_{-p+1}^{n}}$ and iteratively change $x_j$ to $1-x_j$ for all indices $j$ where $x_j \neq y_j$ to obtain the configuration given by ${\mathbf{x}_{-p+1}^{n}}$.  For each such change (say $x_j$ to $1-x_j$), the left hand side changes by at most

\begin{align*}
&\left| \sum_{i=1}^n \left({\boldsymbol{\psi}}'{\mathbf{x}_{i-p}^{i-1}}\right)^2_{|x_j =1} -  \left({\boldsymbol{\psi}}'{\mathbf{x}_{i-p}^{i-1}}\right)^2_{|x_j =0} \right| \leq   \left({\boldsymbol{\psi}}'{\mathbf{x}_{j-p}^{j-1}}\right)^2 + 2\sum_{i\neq j}  |\psi_{i-j}| \|{\boldsymbol{\psi}}\|_1 \leq 3 \|{\boldsymbol{\psi}}\|_1^2 \leq 3(4+c_0)^2 s,
\end{align*}

where we have used the inequality given by Eq. (\ref{fbound}). Hence, the $C$ can be taken as $3 (4 + c_0)^2 s p / n$ and the claim of the proposition for $\mathfrak{D}_{{\boldsymbol{\psi}},n}$ follows.  A very similar argument can be used to extend the claim to $\mathfrak{D}_n$. Let ${\boldsymbol{\psi}}^\star := {\boldsymbol{\psi}}^\star({\mathbf{x}_{-p+1}^{n}})$ be the ${\boldsymbol{\psi}}$ for which the supremum in the definition of $\mathfrak{D}_n$ is achieved (such a choice of ${\boldsymbol{\psi}}$ exists by the Weierstrass extreme value theorem). Since ${\boldsymbol{\psi}}^{\star}$ also satisfies (\ref{cone}), a similar argument shows that $\mathfrak{D}_n$ is $\mathcal{O}(\frac{sp}{n})$-Lipschitz (with possibly different constants). 
\medskip

\noindent {{\bf Step 2.} Next, we establish the concentration of $\mathfrak{D}_n$ around zero.} Let $H = [{\mathbf{x}_{i-p}^{i-2}},1]$ and $\widehat{H} = [{\mathbf{x}_{i-p}^{i-2}},0]$ be two vectors (history components) of length $p$ which only differ in their last component, and let the mixing coefficient $\bar{\eta}_{ij}$ for $j \ge i$ be defined as:
\begin{equation}
\label{eta}
\bar{\eta}_{ij} = \|p({\mathbf{x}_j^n}|H)-p({\mathbf{x}_j^n}|\widehat{H})\|_{TV},
\end{equation}
with $\| \cdot \|_{TV}$ denoting the total variation difference of the probability measures {induced} on $\{0,1\}^{n-j+1}$. Also, let
\[
\eta_{ij} = \sup_{H,\widehat{H}} \bar{\eta}_{ij},
\]
and
\[
Q_{n,i} := 1+ \eta_{i,i+1}+ \cdots + \eta_{i,n}.
\]
We now invoke Theorem 1.1 of \cite{kontorovich2008concentration} in the form of the following proposition:
\begin{prop}
\label{conc_dep}
If $\mathfrak{D}_n$ is $C$-Lipschitz and $q := \max_{1 \leq i \leq n} Q_{n,i}$, then
\begin{equation*}
\mathbb{P}\left [| \mathfrak{D}_n - \mathbb{E}[\mathfrak{D}_n] | \geq t \right] \leq 2 \exp \left(\ \frac{-2nt^2}{qC^2} \right).
\end{equation*}
\end{prop}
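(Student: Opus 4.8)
The plan is to prove the inequality by the martingale method of Kontorovich and Ramanan \cite{kontorovich2008concentration}: I will write the centered quantity $\mathfrak{D}_n - \mathbb{E}[\mathfrak{D}_n]$ as a telescoping sum of bounded martingale differences adapted to the natural filtration of the observation sequence, control the size of each difference through the mixing coefficients $\eta_{ij}$, and then invoke the Hoeffding--Azuma inequality. Since $\mathfrak{D}_n$ is a deterministic functional of the realization $\mathbf{x}_{-p+1}^n$, let $\mathcal{F}_i := \sigma(\mathbf{x}_{-p+1}^i)$ be the filtration generated by the coordinates up to index $i$, and define the Doob martingale $V_i := \mathbb{E}[\mathfrak{D}_n \mid \mathcal{F}_i] - \mathbb{E}[\mathfrak{D}_n \mid \mathcal{F}_{i-1}]$. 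Telescoping over the coordinates yields $\mathfrak{D}_n - \mathbb{E}[\mathfrak{D}_n] = \sum_i V_i$, so it suffices to obtain a uniform bound on the conditional range of each $V_i$ and then apply a martingale tail bound. Note that this step is entirely generic and uses nothing about the canonical self-exciting process beyond the two stated hypotheses ($C$-Lipschitzness and the bound $q$ on the aggregated mixing).

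The crux of the argument, and the step I expect to be the main obstacle, is bounding the oscillation of each martingale difference $V_i$ in terms of $C$ and the mixing matrix. The quantity $V_i$ measures the change in the conditional expectation of $\mathfrak{D}_n$ produced by revealing the single coordinate $x_i$. Fixing the past $\mathbf{x}_{-p+1}^{i-1}$ and comparing the two histories $H$ and $\widehat H$ that differ only in the $i$-th coordinate, I would build a coupling of the conditional laws of the future $\mathbf{x}_i^n$ under these two histories. Because flipping a single coordinate changes $\mathfrak{D}_n$ by at most $C/n$ (the normalized-Hamming Lipschitz property, i.e.\ per-coordinate coefficient $c_j = C/n$), the contribution to $V_i$ from coordinate $j \ge i$ is at most $c_j$ weighted by the probability that the coupled trajectories disagree at $j$, which is controlled by the total-variation distance $\eta_{ij}$ of (\ref{eta}). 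Summing over $j \ge i$ gives the key estimate
\[
\operatorname{osc}(V_i) \;\le\; \frac{C}{n}\Big(1 + \sum_{j>i}\eta_{ij}\Big) \;=\; \frac{C}{n}\,Q_{n,i} \;\le\; \frac{C\,q}{n}.
\]
Making this coupling precise, together with the total-variation bookkeeping that turns the Lipschitz coefficients into the $\eta_{ij}$-weighted sum, is the technical heart of the proof; the Markov structure of the chain is not invoked here, as only the generic coefficients $\eta_{ij}$ enter the estimate.

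With the increments bounded by $b_i := C\,Q_{n,i}/n \le C q/n$, the conclusion follows from the Hoeffding--Azuma inequality for bounded martingale differences, namely $\mathbb{P}(|\sum_i V_i| \ge t) \le 2\exp(-2t^2 / \sum_i b_i^2)$. Collecting the squared increment bounds over the $n$ increments and simplifying the resulting variance proxy against the hypothesis $\max_i Q_{n,i} = q$ then yields the stated sub-Gaussian tail $2\exp(-2nt^2/(qC^2))$, with the precise absolute constant and the exact power of $q$ emerging from this final bookkeeping. I would close by recording that, in the intended application, this bound is invoked with $\mathfrak{D}_n$ already shown to be $\mathcal{O}(sp/n)$-Lipschitz and with $q$ controlled by the geometric mixing of the canonical self-exciting process, which together reproduce the concentration rate asserted in Proposition \ref{prop:hawkes_concentration}.
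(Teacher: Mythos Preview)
Your approach is correct and is exactly what the paper does: the paper's own proof of this proposition is a single sentence invoking Theorem~1.1 of \cite{kontorovich2008concentration}, and your sketch faithfully reconstructs the Doob-martingale-plus-coupling argument underlying that theorem. One minor remark: the Azuma bookkeeping you outline naturally produces $q^{2}$ rather than $q$ in the exponent (as does the cited reference), but since $q=\mathcal{O}(1)$ in the application this is immaterial, and you appropriately hedged on the exact power.
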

\begin{proof}
The proof is identical to the beautiful treatment of \cite{kontorovich2008concentration} when specializing the underlying function of the variables ${\mathbf{x}_{-p+1}^i}$ to be $\mathfrak{D}_n$.
\end{proof}

As we showed in Step 1, $C = C' s p / n$, for some constant $C'$. Now, we have
\begin{align*}
\eta_{ij} \leq 2^{n-j+1} |\pi_{\max}^{n-j+1} -\pi_{\min}^{n-j+1}| \leq \left(2\pi_{\max}\right)^{n-j+1},
\end{align*}
where we have used the fact that each element of the measures $p({\mathbf{x}_j^n} | H)$ and $p({\mathbf{x}_j^n} | \widehat{H})$ satisfies the assumption (\ref{eq:star}) and that the size of the state space $\{0,1\}^{n-j+1}$ is given by $2^{n-j+1}$. By the assumption (\ref{eq:star}), we have $\eta_{ij} \le \rho^{n-j+1}$ for $\rho := 2 \pi_\max < 1$. Hence, $Q_{n,i} \le \frac{1}{1-\rho}$ for all $i$, and $q \le \frac{1}{1-\rho}$ by definition. Using the result of Proposition \ref{conc_dep}, we get:
\begin{equation}
\label{debound}
\mathbb{P}\left[\mathfrak{D}_{n} \ge \mathbb{E}[\mathfrak{D}_n] + \frac{\kappa_l}{2} \right ] \le 2 \exp \left(\ \frac{-n^3 \kappa_l^2 ( 1 - \rho)}{2 C' s^2 p^2} \right). 
\end{equation}
It only remains to show that the expectation in (\ref{debound}) can be suitably bounded. {Note that by a similar concentration argument for} $\mathfrak{D}_{ {\boldsymbol{\psi}}^\star,n}$, we have:

\begin{align*}
\mathbb{E}[\mathfrak{D}_n]&= \mathbb{E}[|\mathfrak{D}_{ {\boldsymbol{\psi}}^\star,n}|] = \int_0^\infty \left(1-F_{\left|\mathfrak{D}_{{\boldsymbol{\psi}}^\star,n}\right|}(t)\right)dt \leq \int_0^\infty 2 \exp\left(-\frac{2 (1-\rho) n^3t^2}{C' s^2 p^2}\right)dt = 2\sqrt{\frac{C'\pi}{(1-\rho)}}\frac{ps}{n^{3/2}}.
\end{align*}
Thus choosing $n \ge d_1 s^{2/3}p^{2/3} \log p$, for some positive constant $d_1$, $\mathbb{E}[\mathfrak{D}_n]$ drops as $1/\log^{3/2} p$, and will be smaller than $\kappa_l / 4$ for large enough $p$. Hence, combined with (\ref{debound}) and by defining $c := \frac{1-\rho}{2 C'}$ we have:
\begin{equation*}
\mathbb{P}\left[\mathfrak{D}_{n} \ge \frac{\kappa_l}{4} \right ] \le 2 \exp \left(\ \frac{-c n^3 \kappa_l^2}{s^2 p^2} \right),
\end{equation*}
which establishes the claim of Proposition \ref{prop:hawkes_concentration}. \QEDB


\section{Proof of Main Theorem on Compressible State-Spaces} \label{app:tv_main_proof}
\subsection{Proof of Theorem \ref{thm:tv_main}}
\begin{proof}
\label{proof:tv_main}
The main idea behind the proof is establishing appropriate cone and tube constraints \cite{candes2008introduction}. In order to avoid unnecessary complications we assume $s_1 \gg s_2 = \cdots=s_T$ and $n_1 \gg n_2 = \cdots=n_T$. Let $\widehat{\mathbf{x}}_t = \mathbf{x}_t + \mathbf{g}_t$ , $t \in [T]$ be an arbitrary solution to the primal form (\ref{eq:tv_prob_def_primal}). We define $\mathbf{z}_t = \mathbf{x}_t-\theta \mathbf{x}_{t-1}$ and $\mathbf{h}_t =  \mathbf{z}_t - \widehat{\mathbf{z}}_t  $ for $t \in [T]$. For a positive integer $p$, let $[p]:=\{1,2,\cdots,p\}$. For an arbitrary set $V \subset [p]$, $\mathbf{x}_{V}$ denotes the vector $\mathbf{x}$ restricted to the indices in $V$, i.e. all the components outside of $V$ set to zero. We can decompose 
$\left(\mathbf{h}_{t}\right)_{S_t^c} = \left(\mathbf{h}_{t}\right)_{I_1}+\left(\mathbf{h}_{t}\right)_{I_2}+\cdots+\left(\mathbf{h}_{t}\right)_{I_{r_t}},$ where $r_t=\lfloor p/4s_t \rfloor$, and $\left(\mathbf{h}_{t}\right)_{I_1}$ is the $4s_t$-sparse vector corresponding to $4s_t$ largest-magnitude entries remaining in $\left(\mathbf{h}_{t}\right)_{S_t^c}$, $\left(\mathbf{h}_{t}\right)_{I_2}$ is the $4s_t$-sparse vector corresponding to $4s$ largest-magnitude entries remaining in $\left(\mathbf{h}_{t}\right)_{S_t^c}-\left(\mathbf{h}_{t}\right)_{I_1}$ and so on. By the optimality of $ \left(\widehat{\mathbf{z}}_t\right)_{t=1}^T$ we have
\begin{equation*}
\label{eq:tv_cone1}
\sum_{t=1}^T \frac{\|\mathbf{z}_t + \mathbf{h}_t\|_1}{\sqrt{s_t}} \leq \sum_{t=1}^T \frac{\|\mathbf{z}_t\|_1}{\sqrt{s_t}}.
\end{equation*}
Using several instances of triangle inequality we have
\begin{align*}
\nonumber & \sum_{t=1}^T \frac{-\sigma_{s_t}(\mathbf{z}_t) + \|\left(\mathbf{h}_{t}\right)_{S_t^c}\|_1-\|\left(\mathbf{h}_{t}\right)_{S_t}\|_1+\|\left(\mathbf{z}_{t}\right)_{S_t}\|_1}{\sqrt{s_t}} & \leq \sum_{t=1}^T \frac{\|\mathbf{z}_t + \mathbf{h}_t\|_1}{\sqrt{s_t}} & \leq \sum_{t=1}^T \frac{\|\mathbf{z}_t\|_1}{\sqrt{s_t}} = \sum_{t=1}^T \frac{\|\left(\mathbf{z}_{t}\right)_{S_t}\|_1+\sigma_{s_t}(\mathbf{z}_t)}{\sqrt{s_t}},
\end{align*}
which after re-arrangement yields the cone condition given by
\begin{equation}
\label{eq:tv_cone_main}
\sum_{t=1}^T \frac{\|\left(\mathbf{h}_{t}\right)_{S_t^c}\|_1}{\sqrt{s_t}} \leq \sum_{t=1}^T \frac{ \|\left(\mathbf{h}_{t}\right)_{S_t}\|_1+\|\left(\mathbf{z}_{t}\right)_{S_t^c}\|_1}{\sqrt{s_t}}.
\end{equation}
Also, by the definition of partitions $(I_j)_{j=1}^{r_t}$ we have
\begin{align}
\label{eq:tv_s_largest}
\notag \sum_{t=1}^T \sum_{j=2}^{r_t} \|\left(\mathbf{h}_{t}\right)_{I_j}\|_2 &\leq \sum_{t=1}^T \sum_{j=2}^{r_t}2\sqrt{s_t} \|\left(\mathbf{h}_{t}\right)_{I_j}\|_\infty \\
\notag & \leq \sum_{t=1}^T \sum_{j=2}^{r_t} \frac{\|\left(\mathbf{h}_{t}\right)_{I_{j-1}}\|_1}{2\sqrt{s_t}} = \sum_{t=1}^T \frac{\|\left(\mathbf{h}_{t}\right)_{S_t^c}\|_1}{2\sqrt{s_t}} \\
& \leq \displaystyle \sum_{t=1}^T \frac{ \|\left(\mathbf{h}_{t}\right)_{S_t}\|_1+\sigma_{s_t}(\mathbf{z}_t)}{2\sqrt{s_t}} \leq \displaystyle\sum_{t=1}^T \frac{\|\left(\mathbf{h}_{t}\right)_{S_t}\|_2}{2}+\frac{ \sigma_{s_t}(\mathbf{z}_t)}{2\sqrt{s_t}}.
\end{align}
Moreover, using the feasibility of both $\mathbf{x}_t$ and $\widehat{\mathbf{x}}_t$ we have the tube constraints
\begin{equation*}
\begin{array}{l} 
\|\mathbf{y}_1-\mathbf{A}_1\mathbf{x}_1\|_2 \leq \varepsilon \Rightarrow \|\theta \left( \mathbf{y}_{1}\right)_{[n_2]}-\theta\mathbf{A}_2\mathbf{x}_1\|_2 \leq \theta  \varepsilon,\\
\|\mathbf{y}_2-\mathbf{A}_2\mathbf{x}_2\|_2 \leq \sqrt{\frac{n_2}{n_1}}\varepsilon ,
\end{array}
\end{equation*}
from which we conclude $\|\mathbf{y}_2-\theta \left( \mathbf{y}_{1}\right)_{[n_2]}-\mathbf{A}_2\mathbf{z}_2\|_2 \leq (1+\theta)\varepsilon$. Similarly $\|\mathbf{y}_2-\theta \left(\mathbf{y}_1\right)_{[n_2]}-\mathbf{A}_2\widehat{\mathbf{z}}_2\|_2 \leq (1+\theta)\varepsilon$. Therefore the triangle inequality yields $\|\mathbf{A}_2{\mathbf{h}}_2\|_2 \leq 2 (1+\theta)\varepsilon$. Similarly for all $t \in [T] \backslash \{2\}$, we have the tighter bound  
\begin{equation}
\label{eq:tv_tube}
\|\mathbf{A}_t{\mathbf{h}}_t\|_2 \leq 2 (1+\theta)\sqrt{\frac{n_t}{n_1}}\varepsilon,
\end{equation}
which is a consequence of having fewer measurements for $t \in [T] \backslash \{2\}$. In conjunction, (\ref{eq:tv_cone_main}), (\ref{eq:tv_s_largest}), and (\ref{eq:tv_tube}) yield
\begin{align*}
 \displaystyle 2(1+\theta)\left(T+\sqrt{\frac{n_1}{n_2}}-1\right) \varepsilon  \  & \geq \|\mathbf{A}_1\mathbf{h}_1\|_2+ \sum_{t=2}^T \sqrt{\frac{n_1}{n_t}}\|\mathbf{A}_t\mathbf{h}_t\|_2  \\
& \geq \sum_{t=1}^T \|\widetilde{\mathbf{A}}_t\left(\mathbf{h}_{t}\right)_{S_t \cup I_1}\|_2 -\sum_{t=1}^T \sum_{j=2}^{r_t}\|\widetilde{\mathbf{A}}_t
\left(\mathbf{h}_{t}\right)_{I_j}\|_2\\
& \geq \sqrt{1-\delta_{4s}} \sum_{t=1}^T \|\left(\mathbf{h}_{t}\right)_{S_t \cup I_1}\|_2 - \frac{\sqrt{1+\delta_{4s}}}{2} \sum_{t=1}^T \sum_{j=2}^{r_t}\|
\left(\mathbf{h}_{I_j ,t}\right)\|_2\\
& \geq \displaystyle \sqrt{1-\delta_{4s}} \sum_{t=1}^T \|\left(\mathbf{h}_{t}\right)_{S_t \cup I_1}\|_2 - \frac{\sqrt{1+\delta_{4s}}}{2} \sum_{t=1}^T \|\left(\mathbf{h}_{t}\right)_{S_t \cup I_1}\|_2+\frac{\sigma_{s_t}(\mathbf{z}_t)}{\sqrt{s_t}}\\
& \geq \displaystyle \left( \sqrt{1-\delta_{4s}}-\frac{\sqrt{1+\delta_{4s}}}{2}\right) \sum_{t=1}^T \|\left(\mathbf{h}_{t}\right)_{S_t \cup I_1}\|_2 -\frac{\sqrt{1+\delta_{4s}}}{2}\sum_{t=1}^T \frac{ \sigma_{s_t}(\mathbf{z}_t)}{\sqrt{s_t}}.
\end{align*}
Therefore after rearrangement for $\delta_{4s} <1/3$
\begin{align*}
\displaystyle \sum_{t=1}^T \|\left(\mathbf{h}_{t}\right)_{S_t \cup I_1}\|_2 \leq 8.37(1+\theta) \left(T+\sqrt{\frac{n_1}{n_2}}-1\right)\varepsilon + \frac{5}{2}\sum_{t=1}^T \frac{ \sigma_{s_t}(\mathbf{z}_t)}{\sqrt{s_t}}. 
\end{align*}
Next, using (\ref{eq:tv_s_largest}) yields
\begin{align}
\label{eq:tv_bound_h}
\notag \sum_{t=1}^T \|\mathbf{h}_{t}\|_2 & \leq \sum_{t=1}^T \sum_{j=2}^{r_t} \|\left(\mathbf{h}_{t} \right)_{I_j}\|_2 + \|\left(\mathbf{h}_{t}\right)_{S_t \cup I_1}\|_2 \\
& \leq \displaystyle 12.55 \left(T+\sqrt{\frac{n_1}{n_2}}-1\right) \varepsilon + 3\sum_{t=1}^T \frac{ \sigma_{s_t}(\mathbf{z}_t)}{\sqrt{s_t}}.
\end{align}
By definition we have $\mathbf{h}_t = \mathbf{g}_t - \theta \mathbf{g}_{t-1}$ for $t \in [T]$ with $\mathbf{g}_0 = \mathbf{0}$. Therefore by induction we have $\mathbf{g}_t = \sum_{j=1}^t \theta^{t-j} \mathbf{h}_j$ or in matrix form
\begin{equation}
\nonumber
\mathcal{G} \!:=\! \left[ \begin{array}{l}
\mathbf{g}_1 \\
\mathbf{g}_2 \\
\vdots\\
\mathbf{g}_t
\end{array} \right] = {\underbrace{\left[ \begin{array}{llll}
\mathbf{I} & 0& \cdots & 0\\
\theta \mathbf{I} & \mathbf{I} &  \cdots & 0\\
\theta^2 \mathbf{I} & \theta \mathbf{I} &  \cdots & 0\\
\vdots & \vdots & \ddots &\vdots\\
\theta^{T-1} \mathbf{I} &\theta^{T-2} \mathbf{I} & \cdots& \mathbf{I}
\end{array} \right]}_{\mathcal{A}}}  {\left[ \begin{array}{l}
\mathbf{h}_1 \\
\mathbf{h}_2 \\
\vdots\\
\mathbf{h}_t
\end{array} \right]} \!=:\! \mathcal{A}\mathcal{H}.
\end{equation}
Using several instances of the triangle inequality we get:
\begin{align*}
\sum_{t=1}^T \|\mathbf{g}_t\|_2 &= \sum_{t=1}^T \left\|\sum_{j=1}^t \theta^{t-j} \mathbf{h}_j \right\|_2 \leq \sum_{t=1}^T \sum_{j=1}^t \theta^{t-j} \left\|\mathbf{h}_j\right\|_2 \\
&\leq \sum_{j=1}^{T-1} \theta^j \sum_{t=1}^T \left\|\mathbf{h}_t\right\|_2 = \frac{1-\theta^T}{1-\theta} \sum_{t=1}^T \|\mathbf{h}_t\|_2,
\end{align*}
which in conjunction with (\ref{eq:tv_bound_h}) completes the proof.
\end{proof}

\subsection{The Expectation Maximization Algorithm} \label{app:tv_EM}
In this section we give a short overview of the EM algorithm and its connection to iteratively re-weighted least squares (IRLS) algorithms. More details can be found in \cite{babadi_IRLS} and the references therein. Given the observations $\mathbf{y}$, the goal of the EM algorithm is to find the ML estimates of a set of parameters $\mathbf{\Theta}$ by maximizing the likelihood $\mathfrak{L}({\mathbf{\Theta}}):= p(\mathbf{y}|{\mathbf{\Theta}})$. Such maximization problems are typically intractable, but often become significantly simpler by introducing a latent  variable $\mathbf{u}$. The EM algorithm connects solving the ML problem to maximizing $\widetilde{\mathfrak{L}}(\mathbf{\Theta}):=p(\mathbf{y},\mathbf{u}|\mathbf{\Theta})$, if one knew $\mathbf{u}$. 

Consider the state-space model:
\begin{equation}
\label{eq:NI_state_space}
\begin{array}{l}
\mathbf{x}_t = \mathbf{\Theta} \mathbf{x}_{t-1}+ \frac{{\boldsymbol{\omega}}_t}{\sqrt{\mathbf{u}_t}},\\
\mathbf{y}_t = \mathbf{A}_t \mathbf{x}_t + \mathbf{v}_t, \quad \mathbf{v}_t\sim \mathcal{N}(\mathbf{0},\sigma^2 \mathbf{I}),
\end{array}
\end{equation}
where ${\boldsymbol{\omega}} \sim \mathcal{N}(0, \mathbf{I})$, $\mathbf{u}_t$ is a positive i.i.d. random vector, and the square root operator and division of the two vectors are understood as element-wise operations. Let $\delta_{t,j}^2: = (\mathbf{x}_t-\mathbf{\Theta}\mathbf{x}_{t-1})_j^2$ for $j=1,2,\cdots,p$. For an appropriate choice of the density of $(\mathbf{u}_t)_j$ denoted by $p_U(\cdot)$, we have \cite{babadi_IRLS}:
\begin{equation}
\nonumber p\big({\textstyle \frac{\boldsymbol{\omega}_t}{\sqrt{\mathbf{u}_t}}}|\mathbf{\Theta}\big)= p(\mathbf{x}_t| \mathbf{x}_{t-1},\mathbf{\Theta})  \propto \exp \Big({\textstyle -\lambda \sum_{j=1}^p \kappa(\delta_{t,j}^2)}\Big),
\end{equation}
where
\begin{equation}
\kappa(z) := -2 \ln \left( \int_{0}^{\infty} u^{n/2} e^{-uz/2}p_U(u) du \right), \forall z \ge 0,
\end{equation}
and $\kappa'(z)$ is a completely monotone function \cite{lange1993normal}. Random vectors of the form $\mathbf{w}_t = \frac{{\boldsymbol{\omega}}_t}{\sqrt{\mathbf{u}_t}}$ are known as Normal/Independent \cite{lange1993normal}. Note that a choice of $\kappa(z) = \sqrt{z^2 + \epsilon^2}$ results in the $\epsilon$-perturbed Laplace distributions used in our model \cite{babadi_IRLS}. Given $T$ observations $(\mathbf{y}_t)_{t=1}^T \in \mathbb{R}^{n_t}$ and conditionally independent samples $(\mathbf{x}_t)_{t=1}^T \in \mathbb{R}^p$, we denote the objective function of the MAP estimator by $\mathfrak{L}(\left(\mathbf{x}_t\right)_{t=1}^T,\mathbf{\Theta})$, that is $\log \mathfrak{L}(\left(\mathbf{x}_t\right)_{t=1}^T,\mathbf{\Theta}) = \sum_{t=1}^T\log p(\mathbf{y}_t|\mathbf{x}_t, \mathbf{\Theta})+\log p\left(\mathbf{x}_t|\mathbf{x}_{t-1},\mathbf{\Theta}\right)$. Consider the current estimates $\left\{ \big(\widehat{\mathbf{x}}_t^{(l)}\big)_{t=1}^T,\widehat{\mathbf{\Theta}}^{(l)}\right\}$ at iteration $l$. Then:
\begin{align}
\label{eq:tv_EM_Q}
\notag &\log \mathfrak{L}(\left(\mathbf{x}_t\right)_{t=1}^T,\mathbf{\Theta}) - \sum_{t=1}^T\log p(\mathbf{y}_t|\mathbf{x}_t, \mathbf{\Theta})\\
\notag &=\sum_{t,j=1}^{T,p}\log \left(\int_{\left(\mathbf{u}_t\right)_j}p\left(\left({\boldsymbol{\omega}}_t\right)_j,\left(\mathbf{u}_t\right)_j|\mathbf{\Theta} \right) d\left(\mathbf{u}_t\right)_j\right) \\
\notag &= \displaystyle \sum_{t,j=1}^{T,p}\log \left(\int_{\left(\mathbf{u}_t\right)_j} \frac{p\left(\left(\mathbf{u}_t\right)_j\Big|\Big (\big(\widehat{\mathbf{x}}_{t}^{(l)}\big)_j\Big)_{t=1}^T,\widehat{\mathbf{\Theta}}^{(l)}\right)}{p\left(\left(\mathbf{u}_t\right)_j\Big|\Big (\big(\widehat{\mathbf{x}}_{t}^{(l)}\big)_j\Big)_{t=1}^T,\widehat{\mathbf{\Theta}}^{(l)}\right)}p \left(\left({\boldsymbol{\omega}}_t\right)_j,\left(\mathbf{u}_t\right)_j|\mathbf{\Theta}\right)d\left(\mathbf{u}_t\right)_j \right)\\
\notag & \geq \displaystyle \sum_{t,j=1}^{T,p}\int_{\left(\mathbf{u}_t\right)_j} p\left(\left(\mathbf{u}_t\right)_j\Big|\Big (\big(\widehat{\mathbf{x}}_{t}^{(l)}\big)_j\Big)_{t=1}^T,\widehat{\mathbf{\Theta}}^{(l)} \right) \log \left( \frac{p\left(\left({\boldsymbol{\omega}}_t\right)_j,\left(\mathbf{u}_t\right)_j|\mathbf{\Theta}\right)}{p \left(\left(\mathbf{u}_t\right)_j\Big|\Big (\big(\widehat{\mathbf{x}}_{t}^{(l)}\big)_j\Big)_{t=1}^T,\widehat{\mathbf{\Theta}}^{(l)}\right)}\right)d\left(\mathbf{u}_t\right)_j\\
 & =\displaystyle \sum_{t,j=1}^{T,p}\mathbb{E}_{\left(\mathbf{u}_t\right)_j \Big|\Big (\big(\widehat{\mathbf{x}}_{t}^{(l)}\big)_j\Big)_{t=1}^T,\widehat{\mathbf{\Theta}}^{(l)}}\Big\{\log p(\left({\boldsymbol{\omega}}_t\right)_j,\left(\mathbf{u}_t\right)_j|\mathbf{\Theta}) \Big\} + C,
\end{align}
where the inequality follows from Jensen's inequality and the constant $C$ accounts for terms which do not depend on $\mathbf{\Theta}$. The so called Q-function is defined as:
\begin{align}\label{eq:Qf}
\nonumber & Q\Big(\left(\mathbf{x}_t\right)_{t=1}^T,\mathbf{\Theta}\Big|\left(\widehat{\mathbf{x}}_t^{(l)}\right)_{t=1}^T,\widehat{\mathbf{\Theta}}^{(l)}\Big):= \sum_{t=1}^T\log p(\mathbf{y}_t|\mathbf{x}_t, \mathbf{\Theta}) \\
& +\sum_{t,j=1}^{T,p}\mathbb{E}_{\left(\mathbf{u}_t\right)_j \Big|\Big (\big(\widehat{\mathbf{x}}_{t}^{(l)}\big)_j\Big)_{t=1}^T,\widehat{\mathbf{\Theta}}^{(l)}}\Big \{ \log p(\left({\boldsymbol{\omega}}_t\right)_j,\left(\mathbf{u}_t\right)_j|\mathbf{\Theta}) \Big \}.
\end{align}
The EM algorithm maximizes the lower bound given by the Q-function of (\ref{eq:Qf}) instead of the log-likelihood itself. Moreover for all $t \in [T], j \in [p]$ and $\kappa(z) = \sqrt{z^2 + \epsilon^2}$ we have \cite{lange1993normal}:
\begin{align*}
\displaystyle \mathbb{E}_{\left(\mathbf{u}_t\right)_j \Big|\Big (\big(\widehat{\mathbf{x}}_{t}^{(l)}\big)_j\Big)_{t=1}^T,\widehat{\mathbf{\Theta}}^{(l)}}\Big\{\log p(\left({\boldsymbol{\omega}}_t\right)_j,\left(\mathbf{u}_t\right)_j|\mathbf{\Theta}) \Big\} = -\frac{\lambda}{2} \frac{(\mathbf{x}_t-\mathbf{\Theta}\mathbf{x}_{t-1})_j^2 + \epsilon^2}{\sqrt{\big(\widehat{\mathbf{x}}^{(l)}_t-\widehat{\mathbf{\Theta}}^{(l)}\widehat{\mathbf{x}}^{(l)}_{t-1}\big)_j^2 + \epsilon^2}},
\end{align*}
which after replacement results in the state-space model given by (\ref{eq:tv_dynamic}). This expectation gets updated in the outer EM loop using the \textit{final} outputs of the inner loop. The outer EM algorithm can thus be summarized as forming the Q-function (E-step) and maximizing over $\boldsymbol{\Theta}$ (M-step), which is known to converge to a stationary point due to its ascent property \cite{babadi_IRLS}. As discussed in Section \ref{sec:tv_algorithm}, the outer M-step is implemented by another instance of the EM algorithm by alternating between Fixed Interval Smoothing (E-step) and updating $\boldsymbol{\Theta}$ (M-step).



\chapter{Statistical Tests of Goodness of Fit}

In this appendix, we will give an overview of the statistical goodness-of-fit tests for assessing the accuracy of the AR model estimates. A detailed treatment can be found in  \cite{lehmann1986testing}. 
\section{Goodness-Of-Fit Tests forAutoregressive Models} \label{app:ar_tests}
\subsection{Residue-based tests}
Let $\widehat{\boldsymbol{\theta}}$ be an estimate of the parameters of the process. The residues (estimated innovations) of the process based on $\widehat{\boldsymbol{\theta}}$ are given by
\vspace{-.2cm}
\begin{equation*}
e_k = x_k - \widehat{\boldsymbol{\theta}}^T\mathbf{x}_{k-p}^{k-1}, \quad\quad i=1,2,\cdots,n. 
\end{equation*}
The main idea behind most of the available statistical tests is to quantify how close the sequence $\{e_i\}_{i=1}^{n}$ is to an i.i.d. realization of a known distribution $F_0$ which is most likely absolutely continuous . Let us denote the empirical distribution of the $n$-samples by $\widehat{F}_n$. If the samples are generated from $F_0$ the Glivenko-Cantelli theorem suggests that:
\begin{equation*}
\sup_t \;|\widehat{F}_n(t)-F_0(t)| \stackrel{\sf a.s.}\longrightarrow 0.
\vspace{-.2cm}
\end{equation*}
\noindent That is, for large $n$ the empirical distribution $\widehat{F}_n$ is uniformly close to $F_0$. The Kolmogorov-Smirnov (KS) test, Cram\'{e}r-von Mises (CvM) criterion and the Anderson-Darling (AD) test are three measures of discrepancy between $\widehat{F}_n$ and $F_0$ which are easy to compute and are sufficiently discriminant against alternative distributions. More specifically, the limiting distribution of the following three random variables are known:
\noindent The KS test statistic
\begin{equation*}
K_n:=\sup_t \; |\widehat{F}_n(t)-F_0(t)|,
\end{equation*}
the CvM statistic
\begin{equation*}
C_n:= \int \big(\widehat{F}_n(t)-F_0(t)\big)^2 dF_0(t),
\end{equation*}
and the AD statistic
\begin{equation*}
A_n:= \int \frac{\big(\widehat{F}_n(t)-F_0(t)\big)^2}{F_0(t)\left(1-F_0(t)\right)}dF_0(t).
\end{equation*}
For large values of $n$, the Glivenko-Cantelli theorem also suggests that these statistics should be small. A simple calculation leads to the following equivalent for the statistics:
\[
K_n = \max_{1\leq i \leq n} \max \left\{\left|\frac{i}{n}-F_0(e_i) \right|, \left|\frac{i-1}{n}-F_0(e_i) \right| \right\},
\]
\[
nC_n = \frac{1}{12n}+\sum_{i=1}^n \left(F_0(e_i)-\frac{2i-1}{2n} \right)^2,
\]
and
\vspace{-.2cm}
\[
nA_n = -n-\frac{1}{n}\sum_{i=1}^n\left(2i-1\right)\Big( \log F_0(e_i)+ \log\left(1-F_0(e_i)\Big)\right).
\]
\vspace{-.7cm}

\subsection{\textcolor{black}{Spectral domain} tests for Gaussian AR processes}
The aforementioned KS, CvM and AD tests all depend on the distribution of the innovations. For Gaussian AR processes, the spectral  versions of these tests are introduced in \cite{anderson1997goodness}. These tests are based on the similarities of the periodogram of the data and the estimated power-spectral density of the process. The key idea is summarized in the following lemma:
\begin{lemma}
\label{lemma:ar_spec_CvM}
Let $S(\omega)$ be the (normalized) power-spectral density of stationary process with bounded spectral spread, and $\widehat{S}_n(\omega)$ be the periodogram of the $n$ samples of a realization of such a process, then for all $\omega$ we have:
\begin{equation}
\label{eq:ar_spec_gaussian}
\sqrt{n}\left(2\int_{0}^{\omega}\left(\widehat{S}_n(\lambda)- S(\lambda)\right) d\lambda \right) \stackrel{\sf d.}\longrightarrow \mathcal{Z}(\omega),
\end{equation}
where $\mathcal{Z}(\omega)$ is a zero-mean Gaussian process.
\end{lemma}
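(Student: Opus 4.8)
The plan is to rewrite the integrated periodogram as a smooth linear functional of the sample autocovariances, invoke a joint central limit theorem for those autocovariances, and then upgrade the resulting finite-dimensional convergence to process-level convergence by a tightness argument. Recall that the periodogram admits the expansion $\widehat{S}_n(\lambda) = \frac{1}{2\pi}\sum_{|k|\le n-1}\widehat{r}^{\,b}_k e^{-ik\lambda}$ in terms of the biased sample autocovariances $\widehat{r}^{\,b}_k$ of Lemma~\ref{conc_biased}, while $S(\lambda) = \frac{1}{2\pi}\sum_{k\in\mathbb{Z}} r_k e^{-ik\lambda}$. Integrating term by term on $[0,\omega]$ turns the exponentials into the kernel $\frac{\sin k\omega}{k}$ (with the $k=0$ term contributing $\omega$), so that
\[
2\int_0^\omega\!\big(\widehat{S}_n(\lambda)-S(\lambda)\big)\,d\lambda = \frac{\omega}{\pi}\big(\widehat{r}^{\,b}_0 - r_0\big) + \frac{2}{\pi}\sum_{k=1}^{n-1}\frac{\sin k\omega}{k}\big(\widehat{r}^{\,b}_k - r_k\big) - \frac{2}{\pi}\sum_{k\ge n}\frac{\sin k\omega}{k}\,r_k .
\]
The final deterministic tail is $o(n^{-1/2})$ uniformly in $\omega$, because boundedness of the spectral spread forces absolutely summable (indeed geometric) decay of $r_k$ via the Wold decomposition (cf. Lemma~1 of \cite{goldenshluger2001nonasymptotic}); hence it does not affect the limit, and everything reduces to the joint behavior of $\sqrt{n}(\widehat{r}^{\,b}_k - r_k)$ for $k = 0,1,2,\dots$.

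Second, I would invoke the classical CLT for sample autocovariances of a causal linear process with absolutely summable coefficients and i.i.d.\ innovations with finite fourth moment --- which holds here since the sub-Gaussian assumption of this chapter supplies all moments, and in the Gaussian case it is automatic: for each fixed $m$, $\sqrt{n}\big(\widehat{r}^{\,b}_0 - r_0,\dots,\widehat{r}^{\,b}_m - r_m\big)$ converges to a centered Gaussian vector whose covariance is given by Bartlett's formula. Combined with the kernel representation above and a standard truncation argument (discarding lags $k \gtrsim m$, whose contribution is controlled by $\sum_{k>m} k^{-2}\,\mathrm{Var}(\sqrt{n}\,\widehat{r}^{\,b}_k)$ uniformly in $n$), this yields, for any finite set $\omega_1,\dots,\omega_\ell$, joint asymptotic normality of $\big(\sqrt{n}\cdot 2\int_0^{\omega_j}(\widehat{S}_n - S)\big)_{j=1}^\ell$ with zero mean and an explicit limiting covariance $\Gamma(\omega,\omega')$ obtained by plugging the kernels $\tfrac{\sin k\omega}{k}$ into Bartlett's formula. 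This identifies the finite-dimensional distributions of the candidate limit $\mathcal{Z}(\cdot)$.

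Third --- and this is the main technical obstacle --- I would establish tightness of the sequence of random functions $\omega\mapsto \sqrt{n}\cdot 2\int_0^\omega(\widehat{S}_n(\lambda)-S(\lambda))\,d\lambda$ in $C[0,\pi]$. The natural route is a Kolmogorov--Chentsov moment bound: show $\mathbb{E}\,\bigl|\sqrt{n}\int_{\omega}^{\omega'}(\widehat{S}_n - S)\bigr|^{2} \le C\,|\omega-\omega'|$ (or a higher moment with a super-linear exponent), using the kernel expansion together with uniform control of $\sum_k k^{-2}\,\mathrm{Var}(\sqrt{n}\,\widehat{r}^{\,b}_k)$ and the cross-covariances; summability of $r_k$ and the fourth-moment bounds make these series converge uniformly in $n$, and the $n-1$ truncation is absorbed by the same geometric-decay estimate as above, where the concentration control of Lemma~\ref{conc_biased} and bounded spectral spread enter. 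Finite-dimensional convergence plus tightness then gives weak convergence in $C[0,\pi]$ to a continuous centered Gaussian process with covariance $\Gamma$, which is the desired $\mathcal{Z}(\omega)$; restricting to a single $\omega$ yields the stated one-dimensional convergence. I expect the delicate point to be the tightness moment bound, specifically obtaining a clean $|\omega-\omega'|$-power without losing a logarithmic factor from the truncated sum over lags up to $n-1$.
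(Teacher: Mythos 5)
The paper does not actually prove this lemma: it is imported from \cite{anderson1997goodness}, where the limiting covariance of $\mathcal{Z}(\cdot)$ is computed, so there is no in-text argument to compare yours against; your sketch reconstructs essentially the classical proof underlying that citation (integrated periodogram written as a $\frac{\sin k\omega}{k}$-weighted functional of the sample autocovariances, Bartlett-formula CLT for autocovariances of a causal linear process with i.i.d.\ innovations possessing fourth moments, then a tightness upgrade), and the outline is sound. Two points deserve care. First, bounded spectral spread by itself does not force absolutely summable, let alone geometric, decay of $r_k$; in this chapter the geometric decay comes from the stable AR($p$) structure (your appeal to Lemma~1 of \cite{goldenshluger2001nonasymptotic} is what does that work), and in any case boundedness of $S$ gives $r_k \in \ell^2$, so Cauchy--Schwarz against the weights $k^{-1}$ already yields the $o(n^{-1/2})$ tail bound you need without geometric decay. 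Second, the lemma as stated only asserts convergence in distribution at each fixed $\omega$ (i.e.\ of finite-dimensional distributions), so your tightness step is not required for the statement itself --- it is only needed to justify convergence of the KS/CvM-type functionals built on $\mathcal{Z}$; and note that a second-moment increment bound of order exactly $|\omega-\omega'|$ is insufficient for Kolmogorov--Chentsov, but, as you anticipate, with Gaussian (or fourth-moment-controlled sub-Gaussian) innovations a fourth-moment computation gives the needed $|\omega-\omega'|^{2}$, closing that gap.
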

The explicit formula for the covariance function of $\mathcal{Z}(.)$ is calculated in \cite{anderson1997goodness}. Lemma \ref{lemma:ar_spec_CvM} suggests that for a good estimate $\widehat{\boldsymbol{\theta}}$ which admits a power spectral density $S(\omega;\widehat{\boldsymbol{\theta}})$,  one should get a (\textit{close} to) Gaussian process replacing $S(\omega)$ with $S(\omega;\widehat{\boldsymbol{\theta}})$ in (\ref{eq:ar_spec_gaussian}). The spectral form of the CvM, KS and AD statistics can thus be characterized given an estimate $\widehat{\boldsymbol{\theta}}$.

\section{Goodness-Of-Fit Tests for Point Process Models}\label{appks}

In this appendix, we will give an overview of the statistical tools used to assess the goodness-of-fit of point process models. A detailed treatment can be found in \cite{Brown_pp}.
\medskip

\subsection{The Time-Rescaling Theorem}
Let $0<t_1<t_2<\cdots$ be a realization of a continuous point process with conditional intensity $\lambda(t)>0$, i.e. $t_k$ is the first instance at which $N(t_k)=k$. Define the transformation

\begin{equation}
\label{trt}
z_k := Z(t_k)= \int_{t_{k-1}}^{t_k} \lambda(t) dt.
\end{equation}
Then, the transformed point process with events occurring at $t'_k = \sum_{i=1}^k z_k$ corresponds to a {homogeneous} Poisson process with rate 1. Equivalently,  $z_1,z_2,\cdots$ are \textit{i.i.d} \textit{exponential} random variables. The latter can be used to construct statistical tests for the goodness-of-fit.

\medskip

\subsection{The Komlogorov-Smirnov Test for Homogeneity}

Suppose that we have obtained the rescaled process through (\ref{trt}) with the \textit{estimated} conditional intensity. When applying the time-rescaling theorem to the discretized process, if the estimated conditional intensity is close to its true value, the rescaled process is expected to behave as a {homogeneous} Poisson process with rate $1$. The Kolmogorov-Smirnov (KS) test can be used to check for the homogeneity of the process. Let $z_k$'s be the rescaled times and define the transformed rescaled times by the inverse exponential CDF:
\begin{equation*}
u_k:= 1-e^{-z_k}.
\end{equation*}
If the true conditional intensity was used to rescale the process, the random variables $u_k$ must be i.i.d. ${\sf Uniform}(0,1]$ distributed. The KS test plots the empirical qualities of $u_k$'s versus the true quantiles of the uniform density given by $b_k = \frac{k-1/2}{J}$, where $J$ is the total number of observed spikes. If the conditional intensity is well estimated, the resulting curve  must lie near the $45^\circ$ line. The asymptotic statistics of the KS distribution can be used to construct confidence intervals for the test. For instance, the $95\%$ and $99 \%$ confidence intervals are approximately given by $\pm \frac{1.36}{\sqrt{J}}$ and $\pm \frac{1.63}{\sqrt{J}}$ hulls around the $45^\circ$ line, respectively.

\medskip
\subsection{The Autocorrelation Function Test for Independence}
 In order to check for the independence of the resulting rescaled intervals $z_k$, the following transformation is used:
\begin{equation*}
v_k = \Phi^{-1}(u_k)
\end{equation*}
where $\Phi$ is the standard Normal CDF. If the true conditional intensity was used to rescale the process, then $v_k$'s would be i.i.d. Gaussian and their uncorrelatedness would imply independence. The Autocorrelation Function (ACF) of the variables $v_k$ must then be close to the discrete delta function. The $95\%$ and $99 \%$ confidence intervals can be considered using the asymptotic statistics of the sample ACF, approximately given by $\pm \frac{1.96}{\sqrt{J}}$ and $\pm \frac{2.575}{\sqrt{J}}$, respectively.
\medskip

\noindent \textit{\textbf{Remark.}} The binning size used for discretizing the data can potentially affect the ISI distribution of the time-rescaled process. In order to avoid these issues, we have used the empirical ISI distribution estimated from a large realization of the process (estimated from the training data) as the null hypothesis for both tests (performed on the test data).

{
\small
\bibliographystyle{plain}
\bibliography{proposal}

\begin{thebibliography}{100}

\bibitem{cushing}
Cushing, ok wti spot price fob dataset.
\newblock (Date last accessed 14-December-2015).

\bibitem{ritis1}
Regional integrated transportation information system (ritis).
\newblock (Date last accessed 27-December-2015).

\bibitem{ritis2}
Regional integrated transportation information system (ritis).
\newblock (Date last accessed 27-December-2015).

\bibitem{hawkes_code}
{\em {MATLAB} implementation of algortihms for estimation of self-exciting
  point process moels}.
\newblock Available on GitHub Repository:
  \url{https://github.com/kaazemi/PPSelf}, 2016.

\bibitem{code}
{\em {MATLAB} implementation of the FCSS algorithm}.
\newblock Available on GitHub Repository:
  \url{https://github.com/kaazemi/FCSS}, 2016.

\bibitem{code_mult}
{\em {MATLAB} implementation of the FCSS algorithm}.
\newblock Available on GitHub Repository:
  \url{https://github.com/kaazemi/FADE}, 2016.

\bibitem{spikefinder}
{\em SpikeFinder Challenge}.
\newblock Available at: \url{http://spikefinder.codeneuro.org/}, 2016.

\bibitem{ahmed1979analysis}
Mohammed~S Ahmed and Allen~R Cook.
\newblock {\em Analysis of freeway traffic time-series data by using
  Box-Jenkins techniques}.
\newblock Number 722. 1979.

\bibitem{ahmed1982application}
Samir~A Ahmed and Allen~R Cook.
\newblock {\em Application of time-series analysis techniques to freeway
  incident detection}.
\newblock Number 841. 1982.

\bibitem{aizenman1999regulation}
Carlos~D Aizenman and David~J Linden.
\newblock Regulation of the rebound depolarization and spontaneous firing
  patterns of deep nuclear neurons in slices of rat cerebellum.
\newblock {\em Journal of Neurophysiology}, 82(4):1697--1709, 1999.

\bibitem{akaike1969fitting}
Hirotugu Akaike.
\newblock Fitting autoregressive models for prediction.
\newblock {\em Annals of the institute of Statistical Mathematics},
  21(1):243--247, 1969.

\bibitem{akaike1970statistical}
Hirotugu Akaike.
\newblock Statistical predictor identification.
\newblock {\em Annals of the Institute of Statistical Mathematics},
  22(1):203--217, 1970.

\bibitem{akaike1973maximum}
Htrotugu Akaike.
\newblock Maximum likelihood identification of gaussian autoregressive moving
  average models.
\newblock {\em Biometrika}, 60(2):255--265, 1973.

\bibitem{anderson1979optimal}
Brian Anderson and John~B Moore.
\newblock Optimal filtering.
\newblock {\em Prentice-Hall Information and System Sciences Series, Englewood
  Cliffs: Prentice-Hall, 1979}, 1979.

\bibitem{anderson1997goodness}
Theodore~Wilbur Anderson.
\newblock Goodness-of-fit tests for autoregressive processes.
\newblock {\em Journal of time series analysis}, 18(4):321--339, 1997.

\bibitem{antipa2016single}
Nicholas Antipa, Sylvia Necula, Ren Ng, and Laura Waller.
\newblock Single-shot diffuser-encoded light field imaging.
\newblock In {\em Computational Photography (ICCP), 2016 IEEE International
  Conference on}, pages 1--11. IEEE, 2016.

\bibitem{ba2012exact}
Demba Ba, Behtash Babadi, Patrick Purdon, and Emery Brown.
\newblock Exact and stable recovery of sequences of signals with sparse
  increments via differential $\ell_1$-minimization.
\newblock 2012.

\bibitem{babadi_IRLS}
Demba Ba, Behtash Babadi, Patrick~L Purdon, and Emery~N Brown.
\newblock Convergence and stability of iteratively re-weighted least squares
  algorithms.
\newblock {\em IEEE Transactions on Signal Processing}, 62(1):183--195, 2014.

\bibitem{babacan2010bayesian}
S~Derin Babacan, Rafael Molina, and Aggelos~K Katsaggelos.
\newblock Bayesian compressive sensing using laplace priors.
\newblock {\em IEEE Transactions on Image Processing}, 19(1):53--63, 2010.

\bibitem{babadi2012diba}
Behtash Babadi, Scott~M McKinney, Vahid Tarokh, and Jeffrey~M Ellenbogen.
\newblock Diba: a data-driven bayesian algorithm for sleep spindle detection.
\newblock {\em IEEE Transactions on Biomedical Engineering}, 59(2):483--493,
  2012.

\bibitem{baddour2005autoregressive}
Kareem~E Baddour and Norman~C Beaulieu.
\newblock Autoregressive modeling for fading channel simulation.
\newblock {\em IEEE Transactions on Wireless Communications}, 4(4):1650--1662,
  2005.

\bibitem{baraniuk2008simple}
Richard Baraniuk, Mark Davenport, Ronald DeVore, and Michael Wakin.
\newblock A simple proof of the restricted isometry property for random
  matrices.
\newblock {\em Constructive Approximation}, 28(3):253--263, 2008.

\bibitem{barbieri2005point}
Riccardo Barbieri, Eric~C Matten, AbdulRasheed~A Alabi, and Emery~N Brown.
\newblock A point-process model of human heartbeat intervals: new definitions
  of heart rate and heart rate variability.
\newblock {\em American Journal of Physiology-Heart and Circulatory
  Physiology}, 288(1):H424--H435, 2005.

\bibitem{barbieri2001construction}
Riccardo Barbieri, Michael~C Quirk, Loren~M Frank, Matthew~A Wilson, and
  Emery~N Brown.
\newblock Construction and analysis of non-poisson stimulus-response models of
  neural spiking activity.
\newblock {\em Journal of neuroscience methods}, 105(1):25--37, 2001.

\bibitem{barcelo2010travel}
Jaume Barcel{\'o}, Lidin Montero, Laura Marqu{\'e}s, and Carlos Carmona.
\newblock Travel time forecasting and dynamic origin-destination estimation for
  freeways based on bluetooth traffic monitoring.
\newblock {\em Transportation Research Record: Journal of the Transportation
  Research Board}, (2175):19--27, 2010.

\bibitem{Bartlett1}
Maurice~S Bartlett.
\newblock Statistical estimation of density functions.
\newblock {\em Sankhy{\=a}: The Indian Journal of Statistics, Series A}, pages
  245--254, 1963.

\bibitem{Bartlett2}
MS~Bartlett.
\newblock The spectral analysis of point processes.
\newblock {\em Journal of the Royal Statistical Society. Series B
  (Methodological)}, pages 264--296, 1963.

\bibitem{basu2015regularized}
Sumanta Basu and George Michailidis.
\newblock Regularized estimation in sparse high-dimensional time series models.
\newblock {\em The Annals of Statistics}, 43(4):1535--1567, 2015.

\bibitem{bear2007neuroscience}
Mark~F Bear, Barry~W Connors, and Michael~A Paradiso.
\newblock {\em Neuroscience}, volume~2.
\newblock Lippincott Williams \& Wilkins, 2007.

\bibitem{bewersdorf1998multifocal}
J{\"o}rg Bewersdorf, Rainer Pick, and Stefan~W Hell.
\newblock Multifocal multiphoton microscopy.
\newblock {\em Optics letters}, 23(9):655--657, 1998.

\bibitem{bickel2009simultaneous}
Peter~J Bickel, Ya'acov Ritov, and Alexandre~B Tsybakov.
\newblock Simultaneous analysis of lasso and dantzig selector.
\newblock {\em The Annals of Statistics}, pages 1705--1732, 2009.

\bibitem{blankenship2010mechanisms}
Aaron~G Blankenship and Marla~B Feller.
\newblock Mechanisms underlying spontaneous patterned activity in developing
  neural circuits.
\newblock {\em Nature Reviews Neuroscience}, 11(1):18--29, 2010.

\bibitem{Borowska}
Joanna Borowska, Stuart Trenholm, and Gautam~B Awatramani.
\newblock An intrinsic neural oscillator in the degenerating mouse retina.
\newblock {\em The Journal of Neuroscience}, 31(13):5000--5012, 2011.

\bibitem{botcherby2006scanning}
EJ~Botcherby, R~Ju{\v{s}}kaitis, and T~Wilson.
\newblock Scanning two photon fluorescence microscopy with extended depth of
  field.
\newblock {\em Optics communications}, 268(2):253--260, 2006.

\bibitem{boyd2011distributed}
Stephen Boyd, Neal Parikh, Eric Chu, Borja Peleato, and Jonathan Eckstein.
\newblock Distributed optimization and statistical learning via the alternating
  direction method of multipliers.
\newblock {\em Foundations and Trends{\textregistered} in Machine Learning},
  3(1):1--122, 2011.

\bibitem{boyd2004convex}
Stephen Boyd and Lieven Vandenberghe.
\newblock {\em Convex optimization}.
\newblock Cambridge university press, 2004.

\bibitem{brown2004multiple}
Emery~N Brown, Robert~E Kass, and Partha~P Mitra.
\newblock Multiple neural spike train data analysis: state-of-the-art and
  future challenges.
\newblock {\em Nature neuroscience}, 7(5):456--461, 2004.

\bibitem{brown2001analysis}
Emery~N Brown, David~P Nguyen, Loren~M Frank, Matthew~A Wilson, and Victor
  Solo.
\newblock An analysis of neural receptive field plasticity by point process
  adaptive filtering.
\newblock {\em Proceedings of the National Academy of Sciences},
  98(21):12261--12266, 2001.

\bibitem{time_rescaling}
ERVRL Brown, Riccardo Barbieri, Val{\'e}rie Ventura, R~Kass, and L~Frank.
\newblock The time-rescaling theorem and its application to neural spike train
  data analysis.
\newblock {\em Neural computation}, 14(2):325--346, 2002.

\bibitem{bruckstein2009sparse}
Alfred~M Bruckstein, David~L Donoho, and Michael Elad.
\newblock From sparse solutions of systems of equations to sparse modeling of
  signals and images.
\newblock {\em SIAM review}, 51(1):34--81, 2009.

\bibitem{Bruckstein2009}
Alfred~M Bruckstein, David~L Donoho, and Michael Elad.
\newblock From sparse solutions of systems of equations to sparse modeling of
  signals and images.
\newblock {\em SIAM review}, 51(1):34--81, 2009.

\bibitem{bullen1997high}
A~Bullen, SS~Patel, and P~Saggau.
\newblock High-speed, random-access fluorescence microscopy: I. high-resolution
  optical recording with voltage-sensitive dyes and ion indicators.
\newblock {\em Biophysical journal}, 73(1):477--491, 1997.

\bibitem{burg1967maximum}
John~Parker Burg.
\newblock Maximum entropy spectral analysis.
\newblock In {\em 37th Annual International Meeting.} Society of Exploration
  Geophysics, 1967.

\bibitem{calabrese2011generalized}
Ana Calabrese, Joseph~W Schumacher, David~M Schneider, Liam Paninski, and
  Sarah~MN Woolley.
\newblock A generalized linear model for estimating spectrotemporal receptive
  fields from responses to natural sounds.
\newblock {\em PloS one}, 6(1):e16104, 2011.

\bibitem{candes2006compressive}
Emmanuel~J Cand{\`e}s.
\newblock Compressive sampling.
\newblock In {\em Proceedings of the International Congress of Mathematicians
  Madrid, August 22--30}, pages 1433--1452, 2006.

\bibitem{candes2006modern}
Emmanuel~J Candes.
\newblock Modern statistical estimation via oracle inequalities.
\newblock {\em Acta numerica}, 15:257--325, 2006.

\bibitem{RIP_orig}
Emmanuel~J Candes.
\newblock The restricted isometry property and its implications for compressed
  sensing.
\newblock {\em Comptes Rendus Mathematique}, 346(9):589--592, 2008.

\bibitem{candes2014towards}
Emmanuel~J Cand{\`e}s and Carlos Fernandez-Granda.
\newblock Towards a mathematical theory of super-resolution.
\newblock {\em Communications on Pure and Applied Mathematics}, 67(6):906--956,
  2014.

\bibitem{candes2006stable}
Emmanuel~J Candes, Justin~K Romberg, and Terence Tao.
\newblock Stable signal recovery from incomplete and inaccurate measurements.
\newblock {\em Communications on pure and applied mathematics},
  59(8):1207--1223, 2006.

\bibitem{candes2008introduction}
Emmanuel~J Cand{\`e}s and Michael~B Wakin.
\newblock An introduction to compressive sampling.
\newblock {\em IEEE Signal Processing Magazine}, 25(2):21--30, 2008.

\bibitem{carmi2010methods}
Avishy Carmi, Pini Gurfil, and Dimitri Kanevsky.
\newblock Methods for sparse signal recovery using kalman filtering with
  embedded pseudo-measurement norms and quasi-norms.
\newblock {\em IEEE Transactions on Signal Processing}, 58(4):2405--2409, 2010.

\bibitem{causa2010automated}
Leonardo Causa, Claudio~M Held, Javier Causa, Pablo~A Est{\'e}vez, Claudio~A
  Perez, Rodrigo Chamorro, Marcelo Garrido, Cecilia Algar{\'\i}n, and Patricio
  Peirano.
\newblock Automated sleep-spindle detection in healthy children polysomnograms.
\newblock {\em IEEE Transactions on Biomedical Engineering}, 57(9):2135--2146,
  2010.

\bibitem{chang2010time}
Catie Chang and Gary~H Glover.
\newblock Time--frequency dynamics of resting-state brain connectivity measured
  with fmri.
\newblock {\em Neuroimage}, 50(1):81--98, 2010.

\bibitem{charles2013dynamic}
Adam~S Charles and Christopher~J Rozell.
\newblock Dynamic filtering of sparse signals using reweighted $\ell_1$.
\newblock In {\em Acoustics, Speech and Signal Processing (ICASSP), 2013 IEEE
  International Conference on}, pages 6451--6455. IEEE, 2013.

\bibitem{chartrand2008iteratively}
Rick Chartrand and Wotao Yin.
\newblock Iteratively reweighted algorithms for compressive sensing.
\newblock In {\em Acoustics, speech and signal processing, 2008. ICASSP 2008.
  IEEE international conference on}, pages 3869--3872. IEEE, 2008.

\bibitem{chen2008prior}
Guang-Hong Chen, Jie Tang, and Shuai Leng.
\newblock Prior image constrained compressed sensing (piccs): a method to
  accurately reconstruct dynamic ct images from highly undersampled projection
  data sets.
\newblock {\em Medical physics}, 35(2):660--663, 2008.

\bibitem{chen2005sparse}
Jie Chen and Xiaoming Huo.
\newblock Sparse representations for multiple measurement vectors (mmv) in an
  over-complete dictionary.
\newblock In {\em Acoustics, Speech, and Signal Processing, 2005.
  Proceedings.(ICASSP'05). IEEE International Conference on}, volume~4, pages
  iv--257. IEEE, 2005.

\bibitem{chen2011functional}
Xiaowei Chen, Ulrich Leischner, Nathalie~L Rochefort, Israel Nelken, and Arthur
  Konnerth.
\newblock Functional mapping of single spines in cortical neurons in vivo.
\newblock {\em Nature}, 475(7357):501--505, 2011.

\bibitem{brown_func_conn}
Zhe Chen, David~F Putrino, Soumya Ghosh, Riccardo Barbieri, and Emery~N Brown.
\newblock Statistical inference for assessing functional connectivity of
  neuronal ensembles with sparse spiking data.
\newblock {\em IEEE Transactions on Neural Systems and Rehabilitation
  Engineering}, 19(2):121--135, 2011.

\bibitem{clark2003traffic}
Stephen Clark.
\newblock Traffic prediction using multivariate nonparametric regression.
\newblock {\em Journal of transportation engineering}, 129(2):161--168, 2003.

\bibitem{clemens2005overnight}
Z~Clemens, D~Fabo, and P~Halasz.
\newblock Overnight verbal memory retention correlates with the number of sleep
  spindles.
\newblock {\em Neuroscience}, 132(2):529--535, 2005.

\bibitem{cohen1999nonlinear}
Albert Cohen, Ronald DeVore, Pencho Petrushev, and Hong Xu.
\newblock Nonlinear approximation and the space bv($\mathbb{R}^2$).
\newblock {\em American Journal of Mathematics}, pages 587--628, 1999.

\bibitem{cotter2005sparse}
Shane~F Cotter, Bhaskar~D Rao, Kjersti Engan, and Kenneth Kreutz-Delgado.
\newblock Sparse solutions to linear inverse problems with multiple measurement
  vectors.
\newblock {\em Signal Processing, IEEE Transactions on}, 53(7):2477--2488,
  2005.

\bibitem{cover2012elements}
Thomas~M Cover and Joy~A Thomas.
\newblock {\em Elements of information theory}.
\newblock John Wiley \& Sons, 2012.

\bibitem{d1986goodness}
Ralph~B D'Agostino.
\newblock {\em Goodness-of-fit-techniques}, volume~68.
\newblock CRC press, 1986.

\bibitem{Daley2007}
DJ~Daley and David Vere-Jones.
\newblock {\em {An Introduction to the Theory of Point Processes: Volume II:
  General Theory and Structure}}.
\newblock Springer Science \& Business Media, 2007.

\bibitem{davies2012rank}
Mike~E Davies and Yonina~C Eldar.
\newblock Rank awareness in joint sparse recovery.
\newblock {\em Information Theory, IEEE Transactions on}, 58(2):1135--1146,
  2012.

\bibitem{de2003sleep}
Luigi De~Gennaro and Michele Ferrara.
\newblock Sleep spindles: an overview.
\newblock {\em Sleep medicine reviews}, 7(5):423--440, 2003.

\bibitem{devuyst2011automatic}
St{\'e}phanie Devuyst, Thierry Dutoit, Patricia Stenuit, and Myriam Kerkhofs.
\newblock Automatic sleep spindles detection—overview and development of a
  standard proposal assessment method.
\newblock In {\em 2011 Annual International Conference of the IEEE Engineering
  in Medicine and Biology Society}, pages 1713--1716. IEEE, 2011.

\bibitem{dey2006richardson}
Nicolas Dey, Laure Blanc-Feraud, Christophe Zimmer, Pascal Roux, Zvi Kam,
  Jean-Christophe Olivo-Marin, and Josiane Zerubia.
\newblock Richardson--lucy algorithm with total variation regularization for 3d
  confocal microscope deconvolution.
\newblock {\em Microscopy research and technique}, 69(4):260--266, 2006.

\bibitem{donoho2006compressed}
David~L Donoho.
\newblock Compressed sensing.
\newblock {\em IEEE Transactions on Information Theory}, 52(4):1289--1306,
  2006.

\bibitem{dufour2006two}
Pascal Dufour, Michel Pich{\'e}, Yves De~Koninck, and Nathalie McCarthy.
\newblock Two-photon excitation fluorescence microscopy with a high depth of
  field using an axicon.
\newblock {\em Applied optics}, 45(36):9246--9252, 2006.

\bibitem{duman2009efficient}
Fazil Duman, Aykut Erdamar, Osman Erogul, Ziya Telatar, and Sinan Yetkin.
\newblock Efficient sleep spindle detection algorithm with decision tree.
\newblock {\em Expert Systems with Applications}, 36(6):9980--9985, 2009.

\bibitem{duval2015exact}
Vincent Duval and Gabriel Peyr{\'e}.
\newblock Exact support recovery for sparse spikes deconvolution.
\newblock {\em Foundations of Computational Mathematics}, 15(5):1315--1355,
  2015.

\bibitem{egesdal}
Mike Egesdal, Chris Fathauer, Kym Louie, Jeremy Neuman, George Mohler, and Erik
  Lewis.
\newblock Statistical and stochastic modeling of gang rivalries in {Los
  Angeles}.
\newblock {\em SIAM Undergraduate Research Online}, 3:72--94, 2010.

\bibitem{eglen2014data}
Stephen~John Eglen, Michael Weeks, Mark Jessop, Jennifer Simonotto, Tom
  Jackson, and Evelyne Sernagor.
\newblock A data repository and analysis framework for spontaneous neural
  activity recordings in developing retina.
\newblock {\em GigaScience}, 3(1):3, 2014.

\bibitem{Eldar}
Yonina~C Eldar and Gitta Kutyniok.
\newblock {\em Compressed sensing: theory and applications}.
\newblock Cambridge University Press, 2012.

\bibitem{farokhi2014vehicular}
Kaveh Farokhi~Sadabadi.
\newblock {\em Vehicular traffic modelling, data assimilation, estimation and
  short term travel time prediction}.
\newblock PhD thesis, University of Maryland, College Park, 2014.

\bibitem{fevrier1999reduced}
Ian~J Fevrier, Saul~B Gelfand, and Michael~P Fitz.
\newblock Reduced complexity decision feedback equalization for multipath
  channels with large delay spreads.
\newblock {\em Communications, IEEE Transactions on}, 47(6):927--937, 1999.

\bibitem{field2016superresolved}
Jeffrey~J Field, Keith~A Wernsing, Scott~R Domingue, Alyssa M~Allende Motz,
  Keith~F DeLuca, Dean~H Levi, Jennifer~G DeLuca, Michael~D Young, Jeff~A
  Squier, and Randy~A Bartels.
\newblock Superresolved multiphoton microscopy with spatial frequency-modulated
  imaging.
\newblock {\em Proceedings of the National Academy of Sciences},
  113(24):6605--6610, 2016.

\bibitem{foucart2013mathematical}
Simon Foucart and Holger Rauhut.
\newblock {\em A mathematical introduction to compressive sensing}, volume~1.
\newblock Birkh{\"a}user Basel, 2013.

\bibitem{fraiwan2012automated}
Luay Fraiwan, Khaldon Lweesy, Natheer Khasawneh, Heinrich Wenz, and Hartmut
  Dickhaus.
\newblock Automated sleep stage identification system based on time--frequency
  analysis of a single eeg channel and random forest classifier.
\newblock {\em Computer methods and programs in biomedicine}, 108(1):10--19,
  2012.

\bibitem{fruhwirth1994applied}
Sylvia Fr{\"u}hwirth-Schnatter.
\newblock Applied state space modelling of non-gaussian time series using
  integration-based kalman filtering.
\newblock {\em Statistics and Computing}, 4(4):259--269, 1994.

\bibitem{fruhwirth1994data}
Sylvia Fr{\"u}hwirth-Schnatter.
\newblock Data augmentation and dynamic linear models.
\newblock {\em Journal of time series analysis}, 15(2):183--202, 1994.

\bibitem{galbraith2001autoregression}
John Galbraith, Victoria Zinde-Walsh, et~al.
\newblock Autoregression-based estimators for arfima models.
\newblock Technical report, CIRANO, 2001.

\bibitem{galbraith1997some}
John~W Galbraith and Victoria Zinde-Walsh.
\newblock On some simple, autoregression-based estimation and identification
  techniques for arma models.
\newblock {\em Biometrika}, 84(3):685--696, 1997.

\bibitem{gerstein1960approach}
George~L Gerstein and Nelson Y-S Kiang.
\newblock An approach to the quantitative analysis of electrophysiological data
  from single neurons.
\newblock {\em Biophysical Journal}, 1(1):15, 1960.

\bibitem{gerstein1964random}
George~L Gerstein and Benoit Mandelbrot.
\newblock Random walk models for the spike activity of a single neuron.
\newblock {\em Biophysical journal}, 4(1 Pt 1):41, 1964.

\bibitem{goldenshluger2001nonasymptotic}
Alexander Goldenshluger and Assaf Zeevi.
\newblock Nonasymptotic bounds for autoregressive time series modeling.
\newblock {\em Annals of statistics}, pages 417--444, 2001.

\bibitem{graham1980lower}
Ronald~L Graham and Neil Sloane.
\newblock Lower bounds for constant weight codes.
\newblock {\em Information Theory, IEEE Transactions on}, 26(1):37--43, 1980.

\bibitem{cvx}
Michael Grant and Stephen Boyd.
\newblock {CVX}: Matlab software for disciplined convex programming, version
  2.1.
\newblock \url{http://cvxr.com/cvx}, March 2014.

\bibitem{grenander1958toeplitz}
Ulf Grenander and Gabor Szeg{\"o}.
\newblock {\em Toeplitz forms and their applications}, volume 321.
\newblock Univ of California Press, 1958.

\bibitem{grenander2001toeplitz}
Ulf Grenander and Gabor Szeg{\"o}.
\newblock {\em Toeplitz forms and their applications}, volume 321.
\newblock Univ of California Press, 2001.

\bibitem{gribonval2012compressible}
R{\'e}mi Gribonval, Volkan Cevher, and Mike~E Davies.
\newblock Compressible distributions for high-dimensional statistics.
\newblock {\em IEEE Transactions on Information Theory}, 58(8):5016--5034,
  2012.

\bibitem{gustafsson1999extended}
Mats~GL Gustafsson.
\newblock Extended resolution fluorescence microscopy.
\newblock {\em Current opinion in structural biology}, 9(5):627--628, 1999.

\bibitem{han2013transition}
Fang Han and Han Liu.
\newblock Transition matrix estimation in high dimensional time series.
\newblock In {\em ICML (2)}, pages 172--180, 2013.

\bibitem{hao2012depolarization}
Jiang Hao and Thomas~G Oertner.
\newblock Depolarization gates spine calcium transients and
  spike-timing-dependent potentiation.
\newblock {\em Current opinion in neurobiology}, 22(3):509--515, 2012.

\bibitem{harmany2010spiral}
Zachary~T Harmany, Roummel~F Marcia, and Rebecca~M Willett.
\newblock Spiral out of convexity: Sparsity-regularized algorithms for
  photon-limited imaging.
\newblock In {\em IS\&T/SPIE Electronic Imaging}, pages 75330R--75330R.
  International Society for Optics and Photonics, 2010.

\bibitem{Toeplitz}
Jarvis Haupt, Waheed~U Bajwa, Gil Raz, and Robert Nowak.
\newblock Toeplitz compressed sensing matrices with applications to sparse
  channel estimation.
\newblock {\em IEEE Transactions on Information Theory}, 56(11):5862--5875,
  2010.

\bibitem{Hawkes_orig}
Alan~G Hawkes.
\newblock Spectra of some self-exciting and mutually exciting point processes.
\newblock {\em Biometrika}, 58(1):83--90, 1971.

\bibitem{haykin2008adaptive}
Simon~S Haykin.
\newblock {\em Adaptive filter theory}.
\newblock Pearson Education India, 2008.

\bibitem{heintzmann1999laterally}
Rainer Heintzmann and Christoph Cremer.
\newblock Laterally modulated excitation microscopy: improvement of resolution
  by using a diffraction grating.
\newblock In {\em Proc. SPIE}, volume 3568, page~15, 1999.

\bibitem{hurzeler1998monte}
Markus H{\"u}rzeler and Hans~R K{\"u}nsch.
\newblock Monte carlo approximations for general state-space models.
\newblock {\em Journal of Computational and graphical Statistics},
  7(2):175--193, 1998.

\bibitem{huupponen2007development}
Eero Huupponen, Germ{\'a}n G{\'o}mez-Herrero, Antti Saastamoinen, Alpo
  V{\"a}rri, Joel Hasan, and Sari-Leena Himanen.
\newblock Development and comparison of four sleep spindle detection methods.
\newblock {\em Artificial intelligence in medicine}, 40(3):157--170, 2007.

\bibitem{ing2005order}
Ching-Kang Ing, Ching-Zong Wei, et~al.
\newblock Order selection for same-realization predictions in autoregressive
  processes.
\newblock {\em The Annals of Statistics}, 33(5):2423--2474, 2005.

\bibitem{jeffreys1946invariant}
Harold Jeffreys.
\newblock An invariant form for the prior probability in estimation problems.
\newblock In {\em Proceedings of the Royal Society of London a: mathematical,
  physical and engineering sciences}, volume 186, pages 453--461. The Royal
  Society, 1946.

\bibitem{johansen1995likelihood}
Soren Johansen.
\newblock Likelihood-based inference in cointegrated vector autoregressive
  models.
\newblock {\em OUP Catalogue}, 1995.

\bibitem{thalamus}
Edward~G Jones, Mircea Steriade, and David McCormick.
\newblock {\em The thalamus}.
\newblock Plenum Press New York, 1985.

\bibitem{jung2010motion}
Hong Jung and Jong~Chul Ye.
\newblock Motion estimated and compensated compressed sensing dynamic magnetic
  resonance imaging: What we can learn from video compression techniques.
\newblock {\em International Journal of Imaging Systems and Technology},
  20(2):81--98, 2010.

\bibitem{kazemipour}
Abbas Kazemipour, Behtash Babadi, and Min Wu.
\newblock Sparse estimation of self-exciting point processes with application
  to {LGN} neural modeling.
\newblock In {\em 2014 IEEE Global Conference on Signal and Information
  Processing (GlobalSIP)}, pages 478--482. IEEE, 2014.

\bibitem{kazemipour2017fast}
Abbas Kazemipour, Ji~Liu, Krystyna Solarana, Daniel Nagode, Patrick Kanold, Min
  Wu, and Behtash Babadi.
\newblock Fast and stable signal deconvolution via compressible state-space
  models.
\newblock {\em IEEE Transactions on Biomedical Engineering}, 2017.

\bibitem{arpaper}
Abbas Kazemipour, Sina Miran, Piya Pal, Behtash Babadi, and Min Wu.
\newblock Sampling requirements of stable autoregressive estimation.
\newblock {\em Preprint}.

\bibitem{kazemipour2017robust}
Abbas Kazemipour, Min Wu, and Behtash Babadi.
\newblock Robust estimation of self-exciting generalized linear models with
  application to neuronal modeling.
\newblock {\em IEEE Transactions on Signal Processing}, 2017.

\bibitem{kim2007multifocal}
Ki~Hean Kim, Christof Buehler, Karsten Bahlmann, Timothy Ragan, Wei-Chung~A
  Lee, Elly Nedivi, Erica~L Heffer, Sergio Fantini, and Peter~TC So.
\newblock Multifocal multiphoton microscopy based on multianode photomultiplier
  tubes.
\newblock {\em Optics express}, 15(18):11658--11678, 2007.

\bibitem{kim2011granger}
Sanggyun Kim, David Putrino, Soumya Ghosh, and Emery~N Brown.
\newblock A granger causality measure for point process models of ensemble
  neural spiking activity.
\newblock {\em PLoS Comput Biol}, 7(3):e1001110, 2011.

\bibitem{kingma2014adam}
Diederik Kingma and Jimmy Ba.
\newblock Adam: A method for stochastic optimization.
\newblock {\em arXiv preprint arXiv:1412.6980}, 2014.

\bibitem{kitagawa1998self}
Genshiro Kitagawa.
\newblock A self-organizing state-space model.
\newblock {\em Journal of the American Statistical Association}, pages
  1203--1215, 1998.

\bibitem{knight2000asymptotics}
Keith Knight and Wenjiang Fu.
\newblock Asymptotics for lasso-type estimators.
\newblock {\em Annals of statistics}, pages 1356--1378, 2000.

\bibitem{knorr1999conditional}
Leonhard Knorr-Held.
\newblock Conditional prior proposals in dynamic models.
\newblock {\em Scandinavian Journal of Statistics}, 26(1):129--144, 1999.

\bibitem{kolar2010estimating}
Mladen Kolar, Le~Song, Amr Ahmed, and Eric~P Xing.
\newblock Estimating time-varying networks.
\newblock {\em The Annals of Applied Statistics}, pages 94--123, 2010.

\bibitem{kontorovich2008concentration}
Leonid~Aryeh Kontorovich, Kavita Ramanan, et~al.
\newblock Concentration inequalities for dependent random variables via the
  martingale method.
\newblock {\em The Annals of Probability}, 36(6):2126--2158, 2008.

\bibitem{laissue2017assessing}
P~Philippe Laissue, Rana~A Alghamdi, Pavel Tomancak, Emmanuel~G Reynaud, and
  Hari Shroff.
\newblock Assessing phototoxicity in live fluorescence imaging.
\newblock {\em Nature Methods}, 14(7):657--661, 2017.

\bibitem{lange1993normal}
Kenneth Lange and Janet~S Sinsheimer.
\newblock Normal/independent distributions and their applications in robust
  regression.
\newblock {\em Journal of Computational and Graphical Statistics},
  2(2):175--198, 1993.

\bibitem{lawson1995solving}
Charles~L Lawson and Richard~J Hanson.
\newblock {\em Solving least squares problems}.
\newblock SIAM, 1995.

\bibitem{lee1999learning}
Daniel~D Lee and H~Sebastian Seung.
\newblock Learning the parts of objects by non-negative matrix factorization.
\newblock {\em Nature}, 401(6755):788--791, 1999.

\bibitem{lee2001algorithms}
Daniel~D Lee and H~Sebastian Seung.
\newblock Algorithms for non-negative matrix factorization.
\newblock In {\em Advances in neural information processing systems}, pages
  556--562, 2001.

\bibitem{lehmann1986testing}
Erich~Leo Lehmann, Joseph~P Romano, and George Casella.
\newblock {\em Testing statistical hypotheses}, volume 150.
\newblock Wiley New York et al, 1986.

\bibitem{lewandowski2009generating}
Daniel Lewandowski, Dorota Kurowicka, and Harry Joe.
\newblock Generating random correlation matrices based on vines and extended
  onion method.
\newblock {\em Journal of multivariate analysis}, 100(9):1989--2001, 2009.

\bibitem{liets2003spontaneous}
Lauren~C Liets, Bruno~A Olshausen, Guo-Yong Wang, and Leo~M Chalupa.
\newblock Spontaneous activity of morphologically identified ganglion cells in
  the developing ferret retina.
\newblock {\em The Journal of neuroscience}, 23(19):7343--7350, 2003.

\bibitem{loh2012high}
Po-Ling Loh and Martin~J Wainwright.
\newblock High-dimensional regression with noisy and missing data: Provable
  guarantees with non-convexity.
\newblock {\em The Annals of Statistics}, 40(3):1637--1664, 2012.

\bibitem{lu2016video}
Rongwen Lu, Wenzhi Sun, Yajie Liang, Aaron Kerlin, Jens Bierfeld, Johannes
  Seelig, Daniel~E Wilson, Benjamin Scholl, Boaz Mohar, Masashi Tanimoto,
  et~al.
\newblock Video-rate volumetric functional imaging of the brain at synaptic
  resolution.
\newblock {\em BioRxiv}, page 058495, 2016.

\bibitem{luck1997neural}
Steven~J Luck, Leonardo Chelazzi, Steven~A Hillyard, and Robert Desimone.
\newblock Neural mechanisms of spatial selective attention in areas v1, v2, and
  v4 of macaque visual cortex.
\newblock {\em Journal of neurophysiology}, 77(1):24--42, 1997.

\bibitem{lucy1974iterative}
Leon~B Lucy.
\newblock An iterative technique for the rectification of observed
  distributions.
\newblock {\em The astronomical journal}, 79:745, 1974.

\bibitem{lustig2007sparse}
Michael Lustig, David Donoho, and John~M Pauly.
\newblock Sparse mri: The application of compressed sensing for rapid mr
  imaging.
\newblock {\em Magnetic resonance in medicine}, 58(6):1182--1195, 2007.

\bibitem{macwilliams1977theory}
Florence~Jessie MacWilliams and Neil James~Alexander Sloane.
\newblock {\em The theory of error correcting codes}, volume~16.
\newblock Elsevier, 1977.

\bibitem{mann1999oscillatory}
Michael~E Mann and Jeffrey Park.
\newblock Oscillatory spatiotemporal signal detection in climate studies: A
  multiple-taper spectral domain approach.
\newblock {\em Advances in geophysics}, 41:1--132, 1999.

\bibitem{marple1987digital}
S~Lawrence Marple~Jr.
\newblock Digital spectral analysis with applications.
\newblock {\em Englewood Cliffs, NJ, Prentice-Hall, Inc., 1987, 512 p.}, 1,
  1987.

\bibitem{mclachlan2007algorithm}
Geoffrey McLachlan and Thriyambakam Krishnan.
\newblock {\em The EM algorithm and extensions}, volume 382.
\newblock John Wiley \& Sons, 2007.

\bibitem{meinshausen2006high}
Nicolai Meinshausen and Peter B{\"u}hlmann.
\newblock High-dimensional graphs and variable selection with the lasso.
\newblock {\em The annals of statistics}, pages 1436--1462, 2006.

\bibitem{meng2015visual}
Xiangying Meng, Joseph~PY Kao, Hey-Kyoung Lee, and Patrick~O Kanold.
\newblock Visual deprivation causes refinement of intracortical circuits in the
  auditory cortex.
\newblock {\em Cell reports}, 12(6):955--964, 2015.

\bibitem{mishra2013low}
Bamdev Mishra, Gilles Meyer, Francis Bach, and Rodolphe Sepulchre.
\newblock Low-rank optimization with trace norm penalty.
\newblock {\em SIAM Journal on Optimization}, 23(4):2124--2149, 2013.

\bibitem{poi_sp_del}
Delaram Motamedvaziri, Mohammad~H Rohban, and Venkatesh Saligrama.
\newblock Sparse signal recovery under {Poisson} statistics.
\newblock {\em arXiv preprint arXiv:1307.4666}, 2013.

\bibitem{nadella2016random}
KM~Naga~Srinivas Nadella, Hana Ro{\v{s}}, Chiara Baragli, Victoria~A Griffiths,
  George Konstantinou, Theo Koimtzis, Geoffrey~J Evans, Paul~A Kirkby, and
  R~Angus Silver.
\newblock Random-access scanning microscopy for 3d imaging in awake behaving
  animals.
\newblock {\em Nature methods}, 2016.

\bibitem{nardi2011autoregressive}
Yuval Nardi and Alessandro Rinaldo.
\newblock Autoregressive process modeling via the lasso procedure.
\newblock {\em Journal of Multivariate Analysis}, 102(3):528--549, 2011.

\bibitem{needell2009cosamp}
Deanna Needell and Joel~A Tropp.
\newblock {CoSaMP}: Iterative signal recovery from incomplete and inaccurate
  samples.
\newblock {\em Applied and Computational Harmonic Analysis}, 26(3):301--321,
  2009.

\bibitem{needell2013stable}
Deanna Needell and Rachel Ward.
\newblock Stable image reconstruction using total variation minimization.
\newblock {\em SIAM Journal on Imaging Sciences}, 6(2):1035--1058, 2013.

\bibitem{negahban2011estimation}
Sahand Negahban and Martin~J Wainwright.
\newblock Estimation of (near) low-rank matrices with noise and
  high-dimensional scaling.
\newblock {\em The Annals of Statistics}, pages 1069--1097, 2011.

\bibitem{Negahban}
Sahand~N Negahban, Pradeep Ravikumar, Martin~J Wainwright, and Bin Yu.
\newblock A unified framework for high-dimensional analysis of {M-}estimators
  with decomposable regularizers.
\newblock {\em Statistical Science}, 27(4):538--557, 2012.

\bibitem{neil1997method}
Mark~AA Neil, Rimas Ju{\v{s}}kaitis, and Tony Wilson.
\newblock Method of obtaining optical sectioning by using structured light in a
  conventional microscope.
\newblock {\em Optics letters}, 22(24):1905--1907, 1997.

\bibitem{nonclercq2013sleep}
Antoine Nonclercq, Charline Urbain, Denis Verheulpen, Christine Decaestecker,
  Patrick Van~Bogaert, and Philippe Peigneux.
\newblock Sleep spindle detection through amplitude--frequency normal
  modelling.
\newblock {\em Journal of neuroscience methods}, 214(2):192--203, 2013.

\bibitem{nunez1995neocortical}
Paul~L Nunez and Brian~A Cutillo.
\newblock {\em Neocortical dynamics and human EEG rhythms}.
\newblock Oxford University Press, USA, 1995.

\bibitem{obozinski2011support}
Guillaume Obozinski, Martin~J Wainwright, and Michael~I Jordan.
\newblock Support union recovery in high-dimensional multivariate regression.
\newblock {\em The Annals of Statistics}, pages 1--47, 2011.

\bibitem{ogata1988statistical}
Yosihiko Ogata.
\newblock Statistical models for earthquake occurrences and residual analysis
  for point processes.
\newblock {\em Journal of the American Statistical Association}, 83(401):9--27,
  1988.

\bibitem{palomar2010convex}
Daniel~P Palomar and Yonina~C Eldar.
\newblock {\em Convex optimization in signal processing and communications}.
\newblock Cambridge university press, 2010.

\bibitem{paninski2004maximum}
Liam Paninski.
\newblock Maximum likelihood estimation of cascade point-process neural
  encoding models.
\newblock {\em Network: Comp. in Neural Systems}, 15(4):243--262, 2004.

\bibitem{paninski2007statistical}
Liam Paninski, Jonathan Pillow, and Jeremy Lewi.
\newblock Statistical models for neural encoding, decoding, and optimal
  stimulus design.
\newblock {\em Progress in brain research}, 165:493--507, 2007.

\bibitem{parks1987digital}
Thomas~W Parks and C~Sidney Burrus.
\newblock {\em Digital filter design}.
\newblock Wiley-Interscience, 1987.

\bibitem{OMP}
Yagyensh~Chandra Pati, Ramin Rezaiifar, and PS~Krishnaprasad.
\newblock Orthogonal matching pursuit: Recursive function approximation with
  applications to wavelet decomposition.
\newblock In {\em Conference Record of The Twenty-Seventh Asilomar Conference
  on Signals, Systems and Computers}, pages 40--44. IEEE, 1993.

\bibitem{pegard2016compressive}
Nicolas~C P{\'e}gard, Hsiou-Yuan Liu, Nick Antipa, Maximillian Gerlock, Hillel
  Adesnik, and Laura Waller.
\newblock Compressive light-field microscopy for 3d neural activity recording.
\newblock {\em Optica}, 3(5):517--524, 2016.

\bibitem{percival1993spectral}
Donald~B Percival and Andrew~T Walden.
\newblock {\em Spectral analysis for physical applications}.
\newblock Cambridge University Press, 1993.

\bibitem{peterka2011imaging}
Darcy~S Peterka, Hiroto Takahashi, and Rafael Yuste.
\newblock Imaging voltage in neurons.
\newblock {\em Neuron}, 69(1):9--21, 2011.

\bibitem{phillips1997meg}
James~W Phillips, Richard~M Leahy, and John~C Mosher.
\newblock Meg-based imaging of focal neuronal current sources.
\newblock {\em Medical Imaging, IEEE Transactions on}, 16(3):338--348, 1997.

\bibitem{pillow2011model}
Jonathan~W Pillow, Yashar Ahmadian, and Liam Paninski.
\newblock Model-based decoding, information estimation, and change-point
  detection techniques for multineuron spike trains.
\newblock {\em Neural Comp.}, 23(1):1--45, 2011.

\bibitem{pinault1998intracellular}
Didier Pinault, Nathalie Leresche, Stephane Charpier, Jean-Michel Deniau,
  Christian Marescaux, Marguerite Vergnes, and Vincenzo Crunelli.
\newblock Intracellular recordings in thalamic neurones during spontaneous
  spike and wave discharges in rats with absence epilepsy.
\newblock {\em The Journal of Physiology}, 509(2):449--456, 1998.

\bibitem{pnevmatikakis2013sparse}
Eftychios~A Pnevmatikakis and Liam Paninski.
\newblock Sparse nonnegative deconvolution for compressive calcium imaging:
  algorithms and phase transitions.
\newblock In {\em Advances in Neural Information Processing Systems}, pages
  1250--1258, 2013.

\bibitem{pnevmatikakis2016simultaneous}
Eftychios~A Pnevmatikakis, Daniel Soudry, Yuanjun Gao, Timothy~A Machado, Josh
  Merel, David Pfau, Thomas Reardon, Yu~Mu, Clay Lacefield, Weijian Yang,
  et~al.
\newblock Simultaneous denoising, deconvolution, and demixing of calcium
  imaging data.
\newblock {\em Neuron}, 89(2):285--299, 2016.

\bibitem{podgorski2016brain}
Kaspar Podgorski and Gayathri Ranganathan.
\newblock Brain heating induced by near-infrared lasers during multiphoton
  microscopy.
\newblock {\em Journal of neurophysiology}, 116(3):1012--1023, 2016.

\bibitem{poon2015role}
Clarice Poon.
\newblock On the role of total variation in compressed sensing.
\newblock {\em SIAM Journal on Imaging Sciences}, 8(1):682--720, 2015.

\bibitem{poskitt2007autoregressive}
Donald~Stephen Poskitt.
\newblock Autoregressive approximation in nonstandard situations: the
  fractionally integrated and non-invertible cases.
\newblock {\em Annals of the Institute of Statistical Mathematics},
  59(4):697--725, 2007.

\bibitem{preibisch2014efficient}
Stephan Preibisch, Fernando Amat, Evangelia Stamataki, Mihail Sarov, Robert~H
  Singer, Eugene Myers, and Pavel Tomancak.
\newblock Efficient bayesian-based multiview deconvolution.
\newblock {\em nature methods}, 11(6):645--648, 2014.

\bibitem{prevedel2016fast}
Robert Prevedel, Aart~J Verhoef, Alejandro~J Pern{\'\i}a-Andrade, Siegfried
  Weisenburger, Ben~S Huang, Tobias N{\"o}bauer, Alma Fern{\'a}ndez, Jeroen~E
  Delcour, Peyman Golshani, Andrius Baltuska, et~al.
\newblock Fast volumetric calcium imaging across multiple cortical layers using
  sculpted light.
\newblock {\em Nature methods}, 13(12):1021, 2016.

\bibitem{cs_poi}
Maxim Raginsky, Rebecca~M Willett, Zachary~T Harmany, and Roummel~F Marcia.
\newblock Compressed sensing performance bounds under {Poisson} noise.
\newblock {\em IEEE Transactions on Signal Processing}, 58(8):3990--4002, 2010.

\bibitem{raskutti2010restricted}
Garvesh Raskutti, Martin~J Wainwright, and Bin Yu.
\newblock Restricted eigenvalue properties for correlated gaussian designs.
\newblock {\em The Journal of Machine Learning Research}, 11:2241--2259, 2010.

\bibitem{rauch1965maximum}
Herbert~E Rauch, CT~Striebel, and F~Tung.
\newblock Maximum likelihood estimates of linear dynamic systems.
\newblock {\em AIAA journal}, 3(8):1445--1450, 1965.

\bibitem{rauhut2012restricted}
Holger Rauhut, Justin Romberg, and Joel~A Tropp.
\newblock Restricted isometries for partial random circulant matrices.
\newblock {\em Applied and Computational Harmonic Analysis}, 32(2):242--254,
  2012.

\bibitem{reynaud2007some}
Patricia Reynaud-Bouret, Emmanuel Roy, et~al.
\newblock Some non asymptotic tail estimates for {Hawkes} processes.
\newblock {\em Bulletin of the Belgian Mathematical Society-Simon Stevin},
  13(5):883--896, 2007.

\bibitem{richardson1972bayesian}
William~Hadley Richardson.
\newblock Bayesian-based iterative method of image restoration.
\newblock {\em JOSA}, 62(1):55--59, 1972.

\bibitem{robinson2003time}
Peter~M Robinson.
\newblock {\em Time series with long memory}.
\newblock Oxford University Press, 2003.

\bibitem{roweis1999unifying}
Sam Roweis and Zoubin Ghahramani.
\newblock A unifying review of linear gaussian models.
\newblock {\em Neural computation}, 11(2):305--345, 1999.

\bibitem{rudelson2008sparse}
Mark Rudelson and Roman Vershynin.
\newblock On sparse reconstruction from fourier and gaussian measurements.
\newblock {\em Communications on Pure and Applied Mathematics},
  61(8):1025--1045, 2008.

\bibitem{rudzkis1978large}
R~Rudzkis.
\newblock Large deviations for estimates of spectrum of stationary series.
\newblock {\em Lithuanian Mathematical Journal}, 18(2):214--226, 1978.

\bibitem{sang2015simultaneous}
Hailin Sang and Yan Sun.
\newblock Simultaneous sparse model selection and coefficient estimation for
  heavy-tailed autoregressive processes.
\newblock {\em Statistics}, 49(1):187--208, 2015.

\bibitem{schimicek1994automatic}
P~Schimicek, J~Zeitlhofer, P~Anderer, and B~Saletu.
\newblock Automatic sleep-spindle detection procedure: aspects of reliability
  and validity.
\newblock {\em Clinical EEG and neuroscience}, 25(1):26--29, 1994.

\bibitem{scholl2013emergence}
Benjamin Scholl, Andrew~YY Tan, Joseph Corey, and Nicholas~J Priebe.
\newblock Emergence of orientation selectivity in the mammalian visual pathway.
\newblock {\em The Journal of Neuroscience}, 33(26):10616--10624, 2013.

\bibitem{schwarz1978estimating}
Gideon Schwarz.
\newblock Estimating the dimension of a model.
\newblock {\em The annals of statistics}, 6(2):461--464, 1978.

\bibitem{shephard1997likelihood}
Neil Shephard and Michael~K Pitt.
\newblock Likelihood analysis of non-gaussian measurement time series.
\newblock {\em Biometrika}, 84(3):653--667, 1997.

\bibitem{shepp1982maximum}
Lawrence~A Shepp and Yehuda Vardi.
\newblock Maximum likelihood reconstruction for emission tomography.
\newblock {\em IEEE transactions on medical imaging}, 1(2):113--122, 1982.

\bibitem{shibata1980asymptotically}
Ritei Shibata.
\newblock Asymptotically efficient selection of the order of the model for
  estimating parameters of a linear process.
\newblock {\em The Annals of Statistics}, pages 147--164, 1980.

\bibitem{shumway1982approach}
Robert~H Shumway and David~S Stoffer.
\newblock An approach to time series smoothing and forecasting using the em
  algorithm.
\newblock {\em Journal of time series analysis}, 3(4):253--264, 1982.

\bibitem{smetters1999detecting}
Diana Smetters, Ania Majewska, and Rafael Yuste.
\newblock Detecting action potentials in neuronal populations with calcium
  imaging.
\newblock {\em Methods}, 18(2):215--221, 1999.

\bibitem{smith2003estimating}
Andrew Smith and Emery~N Brown.
\newblock Estimating a state-space model from point process observations.
\newblock {\em Neural Comp.}, 15(5):965--991, 2003.

\bibitem{sobczyk2005nmda}
Aleksander Sobczyk, Volker Scheuss, and Karel Svoboda.
\newblock Nmda receptor subunit-dependent [ca2+] signaling in individual
  hippocampal dendritic spines.
\newblock {\em Journal of Neuroscience}, 25(26):6037--6046, 2005.

\bibitem{sofroniew2016large}
Nicholas~James Sofroniew, Daniel Flickinger, Jonathon King, and Karel Svoboda.
\newblock A large field of view two-photon mesoscope with subcellular
  resolution for in vivo imaging.
\newblock {\em Elife}, 5:e14472, 2016.

\bibitem{sombati1995recurrent}
SOMPONG Sombati and ROBERT~J Delorenzo.
\newblock Recurrent spontaneous seizure activity in hippocampal neuronal
  networks in culture.
\newblock {\em Journal of neurophysiology}, 73(4):1706--1711, 1995.

\bibitem{sommer2011ilastik}
Christoph Sommer, Christoph Straehle, Ullrich Koethe, and Fred~A Hamprecht.
\newblock Ilastik: Interactive learning and segmentation toolkit.
\newblock In {\em Biomedical Imaging: From Nano to Macro, 2011 IEEE
  International Symposium on}, pages 230--233. IEEE, 2011.

\bibitem{stoica1997introduction}
Petre Stoica and Randolph~L Moses.
\newblock {\em Introduction to spectral analysis}, volume~1.
\newblock Prentice hall Upper Saddle River, 1997.

\bibitem{stosiek2003vivo}
Christoph Stosiek, Olga Garaschuk, Knut Holthoff, and Arthur Konnerth.
\newblock In vivo two-photon calcium imaging of neuronal networks.
\newblock {\em Proceedings of the National Academy of Sciences},
  100(12):7319--7324, 2003.

\bibitem{strickler1962relationship}
SJ~Strickler and Robert~A Berg.
\newblock Relationship between absorption intensity and fluorescence lifetime
  of molecules.
\newblock {\em The Journal of chemical physics}, 37(4):814--822, 1962.

\bibitem{striker1999photochromicity}
George Striker, Vinod Subramaniam, Claus~AM Seidel, and Andreas Volkmer.
\newblock Photochromicity and fluorescence lifetimes of green fluorescent
  protein.
\newblock {\em The Journal of Physical Chemistry B}, 103(40):8612--8617, 1999.

\bibitem{sussillo2016lfads}
David Sussillo, Rafal Jozefowicz, LF~Abbott, and Chethan Pandarinath.
\newblock Lfads-latent factor analysis via dynamical systems.
\newblock {\em arXiv preprint arXiv:1608.06315}, 2016.

\bibitem{szalay2016fast}
Gergely Szalay, Linda Jud{\'a}k, Gergely Katona, Katalin {\'O}csai, G{\'a}bor
  Juh{\'a}sz, M{\'a}t{\'e} Veress, Zolt{\'a}n Szadai, Andr{\'a}s Feh{\'e}r,
  Tam{\'a}s Tompa, Bal{\'a}zs Chiovini, et~al.
\newblock Fast 3d imaging of spine, dendritic, and neuronal assemblies in
  behaving animals.
\newblock {\em Neuron}, 92(4):723--738, 2016.

\bibitem{theriault2014extended}
Gabrielle Th{\'e}riault, Martin Cottet, Annie Castonguay, Nathalie McCarthy,
  and Yves De~Koninck.
\newblock Extended two-photon microscopy in live samples with bessel beams:
  steadier focus, faster volume scans, and simpler stereoscopic imaging.
\newblock {\em Frontiers in cellular neuroscience}, 8, 2014.

\bibitem{tibshirani1996regression}
Robert Tibshirani.
\newblock Regression shrinkage and selection via the lasso.
\newblock {\em Journal of the Royal Statistical Society. Series B
  (Methodological)}, pages 267--288, 1996.

\bibitem{tibshirani2005sparsity}
Robert Tibshirani, Michael Saunders, Saharon Rosset, Ji~Zhu, and Keith Knight.
\newblock Sparsity and smoothness via the fused lasso.
\newblock {\em Journal of the Royal Statistical Society: Series B (Statistical
  Methodology)}, 67(1):91--108, 2005.

\bibitem{tritsch2007origin}
Nicolas~X Tritsch, Eunyoung Yi, Jonathan~E Gale, Elisabeth Glowatzki, and
  Dwight~E Bergles.
\newblock The origin of spontaneous activity in the developing auditory system.
\newblock {\em Nature}, 450(7166):50--55, 2007.

\bibitem{Brown_pp}
W.~Truccolo, U.~T. Eden, M.~R. Fellows, J.~P. Donoghue, and E.~N. Brown.
\newblock A point process framework for relating neural spiking activity to
  spiking history, neural ensemble, and extrinsic covariate effects.
\newblock {\em Journal of neurophysiology}, 93(2):1074--1089, 2005.

\bibitem{truccolo2005point}
Wilson Truccolo, Uri~T Eden, Matthew~R Fellows, John~P Donoghue, and Emery~N
  Brown.
\newblock A point process framework for relating neural spiking activity to
  spiking history, neural ensemble, and extrinsic covariate effects.
\newblock {\em Journal of neurophysiology}, 93(2):1074--1089, 2005.

\bibitem{tsodyks1999linking}
M~Tsodyks, Tal Kenet, Amiram Grinvald, and A~Arieli.
\newblock Linking spontaneous activity of single cortical neurons and the
  underlying functional architecture.
\newblock {\em Science}, 286(5446):1943--1946, 1999.

\bibitem{tuckwell2005introduction}
Henry~C Tuckwell.
\newblock {\em Introduction to theoretical neurobiology: volume 2, nonlinear
  and stochastic theories}, volume~8.
\newblock Cambridge University Press, 2005.

\bibitem{valenza2013point}
Gaetano Valenza, Luca Citi, Enzo~Pasquale Scilingo, and Riccardo Barbieri.
\newblock Point-process nonlinear models with {Laguerre} and {Volterra}
  expansions: Instantaneous assessment of heartbeat dynamics.
\newblock {\em IEEE Transactions on Signal Processing}, 61(11):2914--2926,
  2013.

\bibitem{van2014asymptotically}
Sara Van~de Geer, Peter B{\"u}hlmann, Ya’acov Ritov, Ruben Dezeure, et~al.
\newblock On asymptotically optimal confidence regions and tests for
  high-dimensional models.
\newblock {\em The Annals of Statistics}, 42(3):1166--1202, 2014.

\bibitem{geer2000empirical}
Sara~A van~de Geer.
\newblock {\em Empirical Processes in {M}-estimation}.
\newblock Cambridge university press, 2000.

\bibitem{van_de_geer}
Sara~A van~de Geer.
\newblock On {H}oeffding's inequality for dependent random variables.
\newblock In Herold Dehling and Walter Philipp, editors, {\em {Empirical
  Process Techniques for Dependent Data}}. Springer, 2001.

\bibitem{vaswani2008kalman}
Namrata Vaswani.
\newblock Kalman filtered compressed sensing.
\newblock In {\em 2008 15th IEEE International Conference on Image Processing},
  pages 893--896. IEEE, 2008.

\bibitem{vaswani2010ls}
Namrata Vaswani.
\newblock Ls-cs-residual (ls-cs): compressive sensing on least squares
  residual.
\newblock {\em IEEE Transactions on Signal Processing}, 58(8):4108--4120, 2010.

\bibitem{ventouras2005sleep}
Errikos~M Ventouras, Efstratia~A Monoyiou, Periklis~Y Ktonas, Thomas
  Paparrigopoulos, Dimitris~G Dikeos, Nikos~K Uzunoglu, and Constantin~R
  Soldatos.
\newblock Sleep spindle detection using artificial neural networks trained with
  filtered time-domain eeg: a feasibility study.
\newblock {\em Computer methods and programs in biomedicine}, 78(3):191--207,
  2005.

\bibitem{vere1970stochastic}
David Vere-Jones.
\newblock Stochastic models for earthquake occurrence.
\newblock {\em Journal of the Royal Statistical Society. Series B
  (Methodological)}, pages 1--62, 1970.

\bibitem{vogelstein2010fast}
Joshua~T Vogelstein, Adam~M Packer, Timothy~A Machado, Tanya Sippy, Baktash
  Babadi, Rafael Yuste, and Liam Paninski.
\newblock Fast nonnegative deconvolution for spike train inference from
  population calcium imaging.
\newblock {\em Journal of neurophysiology}, 104(6):3691--3704, 2010.

\bibitem{vogelstein2009spike}
Joshua~T Vogelstein, Brendon~O Watson, Adam~M Packer, Rafael Yuste, Bruno
  Jedynak, and Liam Paninski.
\newblock Spike inference from calcium imaging using sequential monte carlo
  methods.
\newblock {\em Biophysical journal}, 97(2):636--655, 2009.

\bibitem{wainwright2009sharp}
Martin~J Wainwright.
\newblock Sharp thresholds for high-dimensional and noisy sparsity recovery
  using-constrained quadratic programming (lasso).
\newblock {\em IEEE transactions on information theory}, 55(5):2183--2202,
  2009.

\bibitem{wang2007regression}
Hansheng Wang, Guodong Li, and Chih-Ling Tsai.
\newblock Regression coefficient and autoregressive order shrinkage and
  selection via the lasso.
\newblock {\em Journal of the Royal Statistical Society: Series B (Statistical
  Methodology)}, 69(1):63--78, 2007.

\bibitem{white1994image}
Richard~L White.
\newblock Image restoration using the damped richardson-lucy method.
\newblock In {\em 1994 Symposium on Astronomical Telescopes \& Instrumentation
  for the 21st Century}, pages 1342--1348. International Society for Optics and
  Photonics, 1994.

\bibitem{willett2010poisson}
Rebecca~M Willett, Zachary~T Harmany, and Roummel~F Marcia.
\newblock Poisson image reconstruction with total variation regularization.
\newblock In {\em Image Processing (ICIP), 2010 17th IEEE International
  Conference on}, pages 4177--4180. IEEE, 2010.

\bibitem{wong2016regularized}
Kam~Chung Wong, Ambuj Tewari, and Zifan Li.
\newblock Regularized estimation in high dimensional time series under mixing
  conditions.
\newblock {\em arXiv preprint arXiv:1602.04265}, 2016.

\bibitem{wong1993transient}
Rachel~OL Wong, Markus Meister, and Carla~J Shatz.
\newblock Transient period of correlated bursting activity during development
  of the mammalian retina.
\newblock {\em Neuron}, 11(5):923--938, 1993.

\bibitem{wu2016convergence}
Chong Wu, Can Yang, Hongyu Zhao, and Ji~Zhu.
\newblock On the convergence of the em algorithm: From the statistical
  perspective.
\newblock {\em arXiv preprint arXiv:1611.00519}, 2016.

\bibitem{wu2016performance}
Wei-Biao Wu and Ying~Nian Wu.
\newblock Performance bounds for parameter estimates of high-dimensional linear
  models with correlated errors.
\newblock {\em Electronic Journal of Statistics}, 10(1):352--379, 2016.

\bibitem{xu2011instructive}
Hong-ping Xu, Moran Furman, Yann~S Mineur, Hui Chen, Sarah~L King, David
  Zenisek, Z~Jimmy Zhou, Daniel~A Butts, Ning Tian, Marina~R Picciotto, et~al.
\newblock An instructive role for patterned spontaneous retinal activity in
  mouse visual map development.
\newblock {\em Neuron}, 70(6):1115--1127, 2011.

\bibitem{yan2013general}
Ming Yan, A~Bui, Jason Cong, and Luminita~A Vese.
\newblock General convergent expectation maximization (em)-type algorithms for
  image reconstruction.
\newblock {\em Inverse Problems and Imaging}, 7(3):1007--1029, 2013.

\bibitem{yang2016simultaneous}
Weijian Yang, Jae-eun~Kang Miller, Luis Carrillo-Reid, Eftychios Pnevmatikakis,
  Liam Paninski, Rafael Yuste, and Darcy~S Peterka.
\newblock Simultaneous multi-plane imaging of neural circuits.
\newblock {\em Neuron}, 89(2):269--284, 2016.

\bibitem{yang2017vivo}
Weijian Yang and Rafael Yuste.
\newblock In vivo imaging of neural activity.
\newblock {\em Nature Methods}, 14(4):349--359, 2017.

\bibitem{yang2006detection}
Zhihua Yang, Lihua Yang, and Dongxu Qi.
\newblock Detection of spindles in sleep eegs using a novel algorithm based on
  the hilbert-huang transform.
\newblock In {\em Wavelet Analysis and Applications}, pages 543--559. Springer,
  2006.

\bibitem{yasuda2004imaging}
Ryohei Yasuda, Esther~A Nimchinsky, Volker Scheuss, Thomas~A Pologruto,
  Thomas~G Oertner, Bernardo~L Sabatini, and Karel Svoboda.
\newblock Imaging calcium concentration dynamics in small neuronal
  compartments.
\newblock {\em Sci STKE}, 2004(219):pl5, 2004.

\bibitem{zhan2015time}
Jinchun Zhan and Namrata Vaswani.
\newblock Time invariant error bounds for modified-cs-based sparse signal
  sequence recovery.
\newblock {\em IEEE Transactions on Information Theory}, 61(3):1389--1409,
  2015.

\bibitem{zhang_omp}
Tong Zhang.
\newblock Sparse recovery with orthogonal matching pursuit under {RIP}.
\newblock {\em IEEE Transactions on Information Theory}, 57(9):6215--6221,
  2011.

\bibitem{zhang2011sparse}
Zhilin Zhang and Bhaskar~D Rao.
\newblock Sparse signal recovery with temporally correlated source vectors
  using sparse bayesian learning.
\newblock {\em Selected Topics in Signal Processing, IEEE Journal of},
  5(5):912--926, 2011.

\bibitem{zhao2006model}
Peng Zhao and Bin Yu.
\newblock On model selection consistency of lasso.
\newblock {\em Journal of Machine Learning Research}, 7(Nov):2541--2563, 2006.

\bibitem{nlhp}
Lingjiong Zhu.
\newblock Nonlinear {Hawkes} processes.
\newblock {\em arXiv preprint arXiv:1304.7531}, 2013.

\bibitem{ziniel2013dynamic}
Justin Ziniel and Philip Schniter.
\newblock Dynamic compressive sensing of time-varying signals via approximate
  message passing.
\newblock {\em IEEE transactions on signal processing}, 61(21):5270--5284,
  2013.

\end{thebibliography}
}

\end{document}